\setlist[enumerate, 1]{nosep, label=(\arabic*)}
\setlist[enumerate, 2]{nosep, label=(\roman*)}
\setlist[itemize, 1]{nosep, label={\bfseries $\bullet$}}
\setlist[itemize, 2]{nosep, label=$\circ$} 
\newcommand{\leqnomode}{\tagsleft@true}
\newcommand{\reqnomode}{\tagsleft@false}
\theoremstyle{plain} 
\newtheorem{theorem}{Theorem}[section]
\newtheorem{proposition}[theorem]{Proposition}
\newtheorem{lemma}[theorem]{Lemma}
\newtheorem{corollary}[theorem]{Corollary}
\theoremstyle{definition}
\newtheorem{definition}[theorem]{Definition}
\newtheorem{remark}[theorem]{Remark}
\newtheorem{example}[theorem]{Example}
\newtheorem{notation}[theorem]{Notation}
\newtheorem*{conclusion}{Conclusion}
\newtheorem{facts}[theorem]{Facts}
\newtheorem{auf}{Exercise}
\DeclareMathOperator{\trace}{Tr}
\DeclareMathOperator{\normalizedtrace}{tr}
\DeclareMathOperator{\realpart}{Re}
\DeclareMathOperator{\imaginarypart}{Im}
\DeclareMathOperator{\algebrageneratedby}{alg}
\DeclareMathOperator{\vonneumannalgebrageneratedby}{vN}
\DeclareMathOperator{\support}{supp}
\DeclareMathOperator{\range}{ran}
\newcommand{\complexnumbers}{\mathbb{C}}
\newcommand{\naturalnumbers}{\mathbb{N}}
\newcommand{\integers}{\mathbb{Z}}
\newcommand{\rationalnumbers}{\mathbb{Q}}
\newcommand{\realnumbers}{\mathbb{R}}
\newcommand{\equalperdefinition}{\coloneq}
\newcommand{\setofpartitionsof}{\mathcal{P}}
\newcommand{\setofnoncrossingpartitionsof}{{NC}}
\newcommand{\convergesindistributionto}{\overset{\mathrm{distr}}{\longrightarrow}}
\newcommand{\convergesindistributiontoo}{\longrightarrow}
\newcommand{\weingarten}{\mathrm{Wg}}
\newcommand{\expectation}{\mathbb{E}}
\newcommand{\unitcircle}{\mathbb{S}^1}
\newcommand{\GUE}{\hbox{\textsc{gue}}}
\newcommand{\GUEN}{\hbox{\textsc{gue(n)}}}
\newcommand{\Fundgr}{\mathcal F}
\newcommand{\ncpone}{{
  \begin{tikzpicture}[baseline=-2pt-0.375em]
       \node[inner sep=1pt] (n1) at (0em,0) {};
       \draw (n1) --++(0,-0.75em);      
     \useasboundingbox (-0.25em,-1em) rectangle (0.25em, 0.125em);
   \end{tikzpicture}
}}
\newcommand{\ncptwotwo}{{
\begin{tikzpicture}[baseline=-2pt-0.375em]
      \node[inner sep=1pt] (n1) at (0em,0) {};
      \node[inner sep=1pt] (n2) at (1em,0) {};
      \draw (n1) --++(0,-0.75em) -| (n2);
    \useasboundingbox (-0.25em,-0.875em) rectangle (1.25em, 0.125em);
  \end{tikzpicture}
}}
\newcommand{\ncptwoone}{{
  \begin{tikzpicture}[baseline=-2pt-0.375em]
      \node[inner sep=1pt] (n1) at (0em,0) {};
      \node[inner sep=1pt] (n2) at (1em,0) {};
      \draw (n1) --++(0,-0.75em);
      \draw (n2) --++(0,-0.75em);      
    \useasboundingbox (-0.25em,-0.875em) rectangle (1.25em, 0.125em);
  \end{tikzpicture}
}}
\newcommand{\ncpthreefive}{{
   \begin{tikzpicture}[baseline=-2pt-0.375em,xscale=0.7]
      \node[inner sep=1pt] (n1) at (0em,0) {};
      \node[inner sep=1pt] (n2) at (1em,0) {};
      \node[inner sep=1pt] (n3) at (2em,0) {};
      \draw (n1) --++(0,-0.75em) -| (n2);
      \draw ($(n2)+(0,-0.75em)$) -| (n3);      
    \useasboundingbox (-0.25em,-1em) rectangle (2.25em, 0.125em);
  \end{tikzpicture}
}}
\newcommand{\ncpthreefour}{{
  \begin{tikzpicture}[baseline=-2pt-0.375em,xscale=0.7]
      \node[inner sep=1pt] (n1) at (0em,0) {};
      \node[inner sep=1pt] (n2) at (1em,0) {};
      \node[inner sep=1pt] (n3) at (2em,0) {};
      \draw (n2) --++(0,-0.75em) -| (n3);
      \draw (n1) --++(0,-0.75em);      
    \useasboundingbox (-0.25em,-1em) rectangle (2.25em, 0.125em);
  \end{tikzpicture}
}}
\newcommand{\ncpthreethree}{{
  \begin{tikzpicture}[baseline=-2pt-0.5em,xscale=0.7]
      \node[inner sep=1pt] (n1) at (0em,0) {};
      \node[inner sep=1pt] (n2) at (1em,0) {};
      \node[inner sep=1pt] (n3) at (2em,0) {};
      \draw (n1) --++(0,-1em) -| (n3);
      \draw (n2) --++(0,-0.5em);      
    \useasboundingbox (-0.25em,-1.125em) rectangle (2.25em, 0.125em);
  \end{tikzpicture}
}}
\newcommand{\ncpthreetwo}{{
  \begin{tikzpicture}[baseline=-2pt-0.375em,xscale=0.7]
      \node[inner sep=1pt] (n1) at (0em,0) {};
      \node[inner sep=1pt] (n2) at (1em,0) {};
      \node[inner sep=1pt] (n3) at (2em,0) {};
      \draw (n1) --++(0,-0.75em) -| (n2);
      \draw (n3) --++(0,-0.75em);      
    \useasboundingbox (-0.25em,-1em) rectangle (2.25em, 0.125em);
  \end{tikzpicture}
}}
\newcommand{\ncpthreeone}{{
  \begin{tikzpicture}[baseline=-2pt-0.375em,xscale=0.7]
      \node[inner sep=1pt] (n1) at (0em,0) {};
      \node[inner sep=1pt] (n2) at (1em,0) {};
      \node[inner sep=1pt] (n3) at (2em,0) {};
      \draw (n1) --++(0,-0.75em);      
      \draw (n2) --++(0,-0.75em);
      \draw (n3) --++(0,-0.75em);      
    \useasboundingbox (-0.25em,-1em) rectangle (2.25em, 0.125em);
  \end{tikzpicture}
}}
\title{Free Probability Theory}
\author{Roland Speicher}
\date{}
\begin{document}
\includepdf{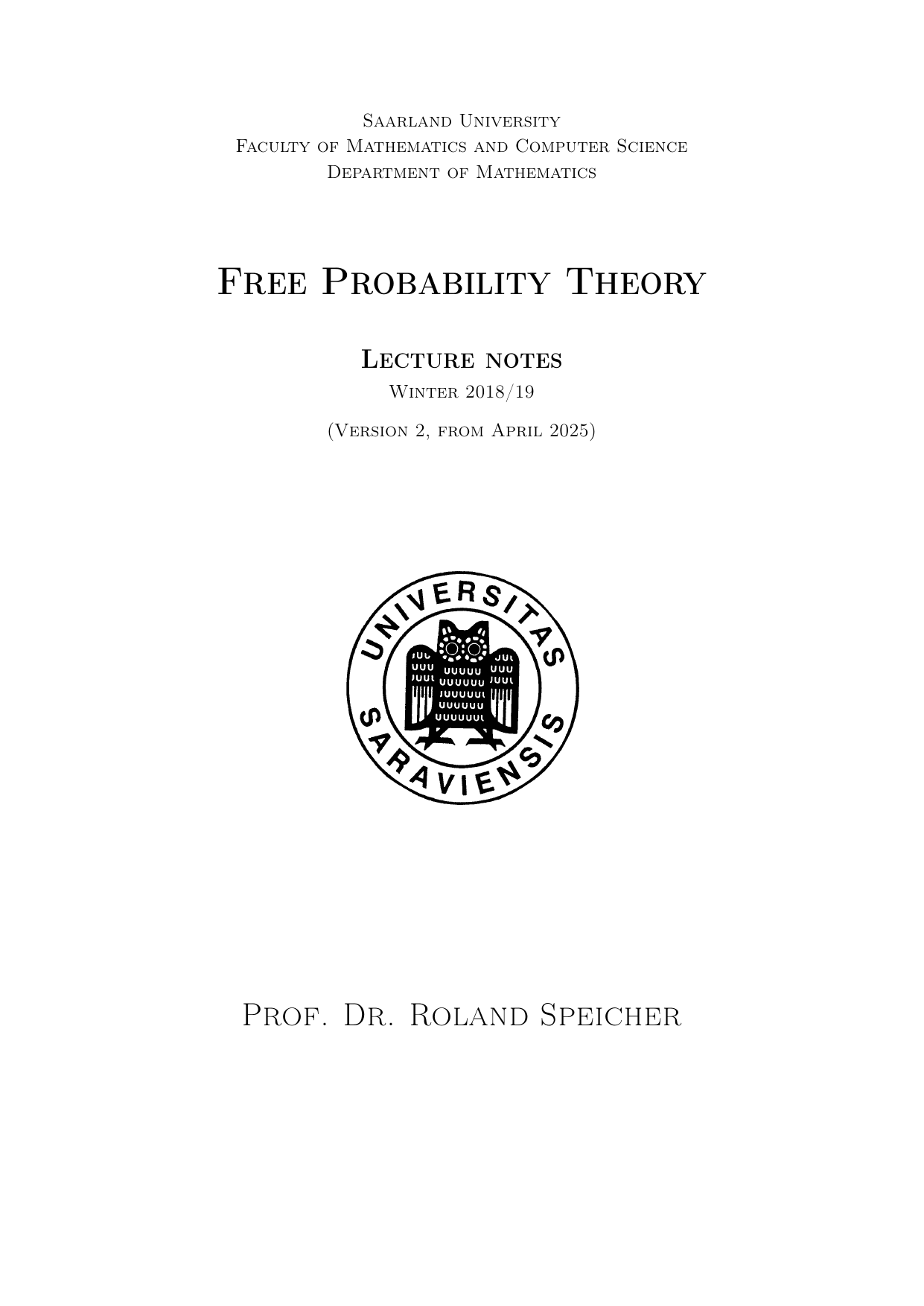}
\maketitle

\begin{abstract}
This in an introduction to free probability theory, covering the basic combinatorial and analytic theory, as well as the relations to random matrices and operator algebras. The material is mainly based on the two books of the lecturer, one joint with \href{http://rolandspeicher.com/literature/nica-speicher/}{Nica} \cite{NSp} and one joint with \href{http://rolandspeicher.com/literature/mingo-speicher/}{Mingo} \cite{MSp}. Free probability is here restricted to the scalar-valued setting, the operator-valued version is treated in the subsequent lecture series on \href{http://rolandspeicher.com/lectures/lectures-ncd19/}{``Non-Commutative Distributions''}.
The material here was presented in the winter term 2018/19 at Saarland University in 26 lectures of 90 minutes each. The lectures were recorded and can be found online at\\ \url{https://youtube.com/playlist?list=PLY11JnnnTUCYZni2Q7QNVa9hPGu77GK4M}
\end{abstract}
\quad
\vskip-2cm
\hskip-3.5cm
\begin{center} 
\centering
    \begin{tikzpicture}[scale=1]
      \useasboundingbox (-6,-4) rectangle (12,4);
      \node[draw=black, align=center] (a) at (1.5,-6) {free probability\\
\includegraphics[width=5cm]{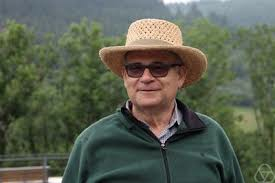}\\Dan Voiculescu};
      \node[draw=black, align=center] (b1) at (-4.5,0) {operator algebras\\
\includegraphics[width=2.5cm]{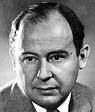}\\ von Neumann};
      \node[draw=black, align=center] (b2) at (7.5,0) {random matrices\\
\includegraphics[width=2.2cm]{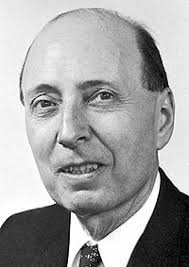}\\ Wigner};
      \draw (a) to (b1);
      \draw (a) to (b2);
      \draw[<->] (b1) to (b2);
    \end{tikzpicture}
 \end{center}
\thispagestyle{empty}
 
\newpage
\tableofcontents
\newpage
\setcounter{section}{-1}


\section{(Very Short) Introduction into Subject and History}
\begin{itemize}
\item The field of \emph{Free Probability} was created by {Dan Voiculescu} in the 1980s.
\item {Voiculescu} isolated its central concept of \emph{{freeness}} or, synonymously, \emph{{free independence}} in the context of operator algebras.
\item The philosophy of free probability is to  investigate this notion in analogy to the concept of \enquote{independence} from (classical) probability theory. In this sense there are correspondences between 
  \begin{itemize}
  \item independence $\hateq$ free independence,
  \item central limit theorem $\hateq$ free central limit theorem,
  \item convolution $\hateq$ free convolution.
  \end{itemize}
\item Starting about 1990, a combinatorial theory of freeeness was developed by {Speicher} and by {Nica and Speicher} \href{http://rolandspeicher.com/literature/nica-speicher/}{\cite{NSp}}, featuring prominently
  \begin{itemize}
  \item the lattice of non-crossing partitions and
  \item free cumulants.
    \end{itemize}
    \item About 1991, {Voiculescu} discovered freeness also asymptotically for many kinds of {random matrices}.
      \begin{center}
    \centering
    \begin{tikzpicture}[scale=0.666]
      \useasboundingbox (-6,-4) rectangle (12,4);
      \node[ellipse, draw=black, align=center] (a) at (0,0) {abstract notion of\\ freeness};
      \node[draw=black, align=center] (b1) at (-3,3) {operator\\ algebras};
      \node[draw=black, align=center] (b2) at (3,3) {random\\ matrices};
      \node[rounded corners, draw=black, align=center] (c1) at (-3,-3) {combinatorial theory\\of freeness};
      \node[rounded corners, draw=black, align=center] (c2) at (3,-3) {analytical theory\\of freeness};
      \node[align=left, text width=3.3cm] (d1) at (9,3) {application of\\ random matrices,\\ e.g., in wireless networks};
      \node[align=left, text width=3.3cm] (d2) at (9,0) {asymptotics of\\ representation theory of \enquote{large} groups};
      \draw (a) to (b1);
      \draw (a) to (b2);
      \draw (a) to (c1);
      \draw (a) to (c2);
      \draw[->] (b2.east) to (d1.west);
      \draw[->] (b2.east) to (d2.west);
      \draw[<->] (b1) to (b2);
    \end{tikzpicture}
  \end{center}
  In the wake of {Voiculescu}'s discovery, the study of operator algebras was influenced by  random matrix theory. The option of modeling operator algebras asymptotically by random matrices lead to new results on von Neumann algebras, in particular on the so-called \enquote{free group factors}.
  \par
  Conversely, free probability brought to random matrix theory a conceptual approach and new tools for describing the asymptotic eigenvalue distribution of random matrices, in particular, for {functions} of several random matrices.
    \end{itemize}


\newpage
\section{The Notion of Freeness: Definition, Example, and Basic Properties}
\label{section-1}

\begin{definition}[Voiculescu 1985]
  \label{definition:freeness}
The following are our basic definitions.
  \begin{enumerate}
  \item A \emph{non-commutative probability space} $(\mathcal{A},\varphi)$ consists of
    \begin{itemize}
    \item a unital (associative) algebra $\mathcal{A}$ (over $\complexnumbers$) and
    \item a unital linear functional $\varphi: \mathcal{A}\to \complexnumbers$ (meaning especially $\varphi(1)=1$).
    \end{itemize}
  \item Let $(\mathcal{A},\varphi)$ be a non-commutative probability space. Unital subalgebras $(\mathcal{A}_i)_{i\in I}$ of $\mathcal{A}$ are called \emph{free} (or \emph{freely independent}) in $(\mathcal A,\varphi)$ if
      $\varphi(a_1\ldots a_k)=0$
    whenever 
    \begin{itemize}
    \item $k\in \naturalnumbers$,
    \item $i(j)\in I$ for all $j=1,\ldots,k$,
    \item $a_j\in \mathcal{A}_{i(j)}$ for all $j=1,\ldots,k$,      
    \item neighboring elements in $a_1\ldots a_k$ are from \enquote{different subalgebras}, which is to say
      \begin{align*}
        i(1)\neq i(2) \neq i(3) \neq \ldots \neq i(k-1) \neq i(k),
      \end{align*}
      (however, e.g., $i(1)=i(3)$, or, in particular, $i(1)=i(k)$ are allowed),
    \item $\varphi(a_j)=0$ for all $j=1,\ldots,k$.
    \end{itemize}
      Note that we do not require $\mathcal A_{i}\neq \mathcal A_{i'}$ for $i\neq i'$; cf.\ however Proposition~\ref{proposition:classical-and-free-independence} below.
  \end{enumerate}
\end{definition}

Voiculescu gave this definition in the context of von Neumann algebras of free products of groups. We will now present the algebraic version of this.

\begin{example}
  \label{example:group-algebra}
  Let $G$ be a group.
  \begin{enumerate}
  \item\label{example:group-algebra-1} Its \emph{group algebra} $\complexnumbers G$ is
 a complex vector space having a basis indexed by the elements of $G$, i.e.
      \begin{align*}
        \complexnumbers G \equalperdefinition \bigg\{ \sum_{g\in G}\alpha_g g \ \big\mid\ \alpha_g\in \complexnumbers \text{ for all }g\in G,
\alpha_g\neq 0 \text{ for only finitely many }g\in G\bigg\},
      \end{align*}
equipped with the multiplication
      \begin{align*}
        \Bigl(\sum_{g\in G} \alpha_g g\Bigr)\cdot \Bigl(\sum_{h\in G} \beta_h h\Bigr)\equalperdefinition \sum_{g,h\in G}\alpha_g\beta_h (gh)=\sum_{k\in G}\Bigl(\sum_{\substack{g,h\in G\\gh=k}}\alpha_g\beta_h\Bigr)k
      \end{align*}
      for all $(\alpha_g)_{g\in G}, (\beta_g)_{g\in G}$ such that $\alpha_g,\beta_g\in \complexnumbers$ for all $g\in G$ and  $\alpha_g\neq 0$ or $\beta_g\neq 0$  for only finitely many $g\in G$.
      Then, $\complexnumbers G$ is a unital algebra with unit
        $1=e=1\cdot e$,
      where $e$ is the neutral element of $G$.
    \item\label{example:group-algebra-2} On $\complexnumbers G$  we define the unital functional $\tau_G: \complexnumbers  G\to \complexnumbers$ by
      \begin{align*}
        \tau_G\Bigl(\sum_{g\in G}\alpha_g g\Bigr)\equalperdefinition \alpha_e
      \end{align*}
            for all $(\alpha_g)_{g\in G}$ such that $\alpha_g\in \complexnumbers$ for all $g\in G$ and  $\alpha_g\neq 0$ for only finitely many $g\in G$.\par
      The pair $(\complexnumbers G,\tau_G)$ is then a non-commutative probability space.

(We can identify elements $\sum_{g\in G}\alpha_gg$ of $\complexnumbers G$ with functions  $\alpha: G\to \complexnumbers$ of finite support by defining
        $\alpha(g)\equalperdefinition\alpha_g$.
      Multiplication in $\complexnumbers G$ then corresponds to convolution with respect to the counting measure of $G$.)
    \item\label{example:group-algebra-3} If $(G_i)_{i\in I}$ is a family of subgroups of $G$, then $\complexnumbers G_i$ is a unital subalgebra of $\complexnumbers G$ for every $i\in I$.
    \item\label{example:group-algebra-4} Subgroups $(G_i)_{i\in I}$ of $G$ are called \emph{free} in $G$ (in an algebraic sense) if there are \enquote{no non-trivial relations between different elements of the family}. This can be formulated in terms of a universal property. But one can also define it concretely as follows: For all $k\in \naturalnumbers$, all $i(1),\ldots,i(k)\in I$ and $g_1,\ldots,g_k\in G$ such that $g_j\in G_{i(j)}$ ($j=1,\ldots,k$) we have:
        $g_1\ldots g_k\neq e$,
      whenever $g_1,\ldots,g_k\neq e$ and $i(1)\neq i(2)\neq \ldots\neq i(k)$.
  \end{enumerate}
\end{example}

\begin{proposition}
  \label{proposition:freeness-group-algebras}
  Let $(G_i)_{i\in I}$ be subgroups of a group $G$. Then the following statements are equivalent:
  \begin{enumerate}
  \item\label{proposition:freeness-group-algebras-1} The subgroups $(G_i)_{i\in I}$ are free in $G$.
  \item\label{proposition:freeness-group-algebras-2} The subalgebras $(\complexnumbers G_i)_{i\in I}$ are freely independent in the non-commutative probability space $(\complexnumbers G,\tau_G)$.
  \end{enumerate}
\end{proposition}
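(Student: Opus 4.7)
The plan is to prove both implications by exploiting the very simple fact that, on group elements, $\tau_G$ acts as the Kronecker delta: $\tau_G(g)=1$ if $g=e$ and $\tau_G(g)=0$ otherwise. In particular, an element $a\in\complexnumbers G_i$, written as $a=\sum_{g\in G_i}\alpha_g g$, satisfies $\tau_G(a)=0$ if and only if $\alpha_e=0$, i.e.\ if and only if $a$ is a linear combination of non-neutral elements of $G_i$. This is the bridge between the two formulations, and once it is in place both directions reduce to bookkeeping.

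For the implication \eqref{proposition:freeness-group-algebras-1}$\Rightarrow$\eqref{proposition:freeness-group-algebras-2} I would take $k\in\naturalnumbers$, indices $i(1)\neq i(2)\neq\cdots\neq i(k)$ and elements $a_j\in\complexnumbers G_{i(j)}$ with $\tau_G(a_j)=0$. Using the above observation I would write each $a_j=\sum_{g\in G_{i(j)}\setminus\{e\}}\alpha^{(j)}_g g$, and then expand
\begin{align*}
a_1\cdots a_k=\sum_{\substack{g_j\in G_{i(j)}\setminus\{e\}\\ j=1,\ldots,k}}\alpha^{(1)}_{g_1}\cdots\alpha^{(k)}_{g_k}\,(g_1\cdots g_k).
\end{align*}
By algebraic freeness of the subgroups, every product $g_1\cdots g_k$ appearing here is different from $e$. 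Hence the coefficient of $e$ in the expansion of $a_1\cdots a_k$ vanishes, which by the bridge observation means $\tau_G(a_1\cdots a_k)=0$, as required for free independence.

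For the implication \eqref{proposition:freeness-group-algebras-2}$\Rightarrow$\eqref{proposition:freeness-group-algebras-1} I would take $g_1,\ldots,g_k$ with $g_j\in G_{i(j)}\setminus\{e\}$ and $i(1)\neq\cdots\neq i(k)$. Since each $g_j$ is a basis element different from $e$, we have $\tau_G(g_j)=0$, and of course $g_j\in\complexnumbers G_{i(j)}$. Applying free independence of the subalgebras to the $a_j\equalperdefinition g_j$ gives $\tau_G(g_1\cdots g_k)=0$. But $g_1\cdots g_k$ is itself a single group element, so its $\tau_G$-value is $1$ if it equals $e$ and $0$ otherwise; thus $g_1\cdots g_k\neq e$, which is exactly the algebraic freeness of the subgroups.

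There is no real obstacle in this proof; the only mildly delicate point is making sure one sets things up so that the elements fed into the definition of free independence genuinely have $\varphi$-mean zero. The slick choice $a_j=g_j$ in the second direction (rather than something like $g_j-e$, which would have $\tau_G(g_j-e)=-1\neq 0$) is what makes the argument frictionless.
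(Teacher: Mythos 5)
Your proof is correct and follows essentially the same route as the paper's: both directions rest on the observation that $\tau_G$ picks out the coefficient of $e$, the forward direction expands the product of centered elements and invokes algebraic freeness termwise, and the reverse direction plugs the non-neutral group elements themselves (which are automatically centered) into the freeness condition. No gaps.
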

\begin{proof}
  \ref{proposition:freeness-group-algebras-2} $\Rightarrow$ \ref{proposition:freeness-group-algebras-1}: Let $k\in \naturalnumbers$ be arbitrary and let $i(1),\ldots, i(k)\in I$ with $i(1)\neq i(2)\neq \ldots \neq i(k)$ and $g_1,\ldots, g_k\in G$ be such that $g_i\in G_{i(j)}$ for every $j=1,\ldots k$ and such that $g_j\neq e$ for every $j=1,\ldots, k$. For every $j\in I$, since we can embed $G_j\hookrightarrow \complexnumbers G_{j}$, the condition $g_j\neq e$ requires $\tau_G(g_j)=0$ by definition of $\tau_G$. Assuming statement \ref{proposition:freeness-group-algebras-2} therefore implies $\tau_G(g_1\ldots g_k)=0$, which particularly necessitates $g_1\ldots g_k\neq e$.
  \par
  \ref{proposition:freeness-group-algebras-1} $\Rightarrow$ \ref{proposition:freeness-group-algebras-2}:  Conversely, consider $k\in \naturalnumbers$, $i(1),\ldots, i(k)\in I$ with $i(1)\neq i(2)\neq \ldots\neq i(k)$ and for every $j=1,\ldots,k$ an element
  \begin{align}
    a_j\equalperdefinition \sum_{g\in G_{i(j)}}\alpha_g^{(j)}g\in \complexnumbers G_{i(j)} 
\qquad
\text{with}\qquad
    \label{eq:proposition-freeness-group-algebras-1}
    \alpha_e^{(j)}=\tau_G(a_j)=0.
  \end{align}
  Then, by definition of the multiplication in $\complexnumbers G$,
  \begin{align*}
    \tau_G(a_1\ldots a_k)&=\tau_G\Bigl(\bigl(\sum_{g_1\in G_{i(1)}}\alpha_g^{(1)}g_1\bigr)\ldots \bigl(\sum_{g_k\in G_{i(k)}}\alpha_g^{(k)}g_k\bigr)\Bigr)\\
\quad\\
                             &=\sum_{g_1\in G_{i(1)},\ldots,g_k\in G_{i(k)}}\underset{(\ast)}{\underbrace{\alpha_{g_1}^{(1)}\ldots \alpha_{g_k}^{(k)}}}\tau_G(g_1\ldots g_k)\\
\quad\\&=0,
  \end{align*}
  where the last equality is justified by the following argument: For any $g_1,\ldots, g_k\in G$ with $g_j\in G_{i(j)}$ for all $j=1,\ldots,k$, assuming that the product $\alpha_{g_1}^{(1)}\ldots \alpha_{g_k}^{(k)}$ in $(\ast)$ is non-zero requires $\alpha_{g_j}^{(j)}\neq 0$ for all $j=1,\ldots, k$. Due to the assumption that the subgroups $G_i$ are free, the latter is only possible if $g_j\neq e$ for all $j=1,\ldots,k$. Hence, if so, then supposing statement \ref{proposition:freeness-group-algebras-1} implies $g_1\ldots g_k\neq e$ and thus $\tau_G(g_1\ldots g_k)=0$ by definition of $\tau_G$.
\end{proof}

\begin{remark}
  On the level of the group algebras, Proposition~\ref{proposition:freeness-group-algebras} is just a rewriting of the algebraic condition of \enquote{absence of non-trivial relations} in terms of the linear functional $\tau_G$. If one goes over to the corresponding $C^\ast$- or von Neumann algebras, which consist of infinite sums, then the algebraic condition does not make sense anymore, while the condition in terms of $\tau_G$  survives for those operator algebras.
\end{remark}

\begin{proposition}
  \label{proposition:freeness-state-factorization}
  Let $(\mathcal{A},\varphi)$ be a non-commutative probability space and let $(\mathcal{A}_i)_{i\in I}$  be a family of free unital subalgebras of $\mathcal A$. Let $\mathcal{B}\equalperdefinition \algebrageneratedby (\bigcup_{i\in I}\mathcal{A}_i)$ be the subalgebra generated by all $(\mathcal{A}_i)_{i\in I}$. Then $\left.\varphi\right|_{\mathcal{B}}$ is uniquely determined by $(\varphi|_{\mathcal{A}_i})_{i\in I}$ and by the free independence condition.\\ (That means, if $\psi$ is such that $(\mathcal A,\psi)$ is a non-commutative probability space, such that $(\mathcal A_i)_{i\in I}$ is freely independent in $(\mathcal A,\psi)$ and such that $\left.\psi\right|_{\mathcal{A}_i}=\left.\varphi\right|_{\mathcal{A}_i}$ for all $i\in I$, then $\varphi\vert_\mathcal{B}=\psi\vert_\mathcal{B}$.)
\end{proposition}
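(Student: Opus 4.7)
The strategy is to show that $\varphi(b)$ for any $b\in\mathcal{B}$ can be computed by a recursive algorithm whose only inputs are the restrictions $\varphi|_{\mathcal{A}_i}$ together with the vanishing clause of Definition~\ref{definition:freeness}. Since $\mathcal{B}=\algebrageneratedby(\bigcup_i\mathcal{A}_i)$, it is linearly spanned by $1$ and by products $a_1a_2\cdots a_k$ where $a_j\in\mathcal{A}_{i(j)}$ for some $i(j)\in I$. Because each $\mathcal{A}_i$ is a subalgebra, two consecutive factors from the same $\mathcal{A}_i$ can be merged into a single factor, so by linearity it is enough to determine $\varphi$ on \emph{alternating} products, i.e.\ on tuples with $i(1)\neq i(2)\neq\ldots\neq i(k)$.

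The core device is the \emph{centering trick}. For such an alternating word, decompose each factor as $a_j=a_j^{\circ}+\varphi(a_j)\cdot 1$, where $a_j^{\circ}\equalperdefinition a_j-\varphi(a_j)\cdot 1\in\mathcal{A}_{i(j)}$ and $\varphi(a_j^{\circ})=0$. Expanding the product into $2^k$ summands and applying $\varphi$ by linearity gives
\begin{align*}
\varphi(a_1\cdots a_k)=\varphi(a_1^{\circ}\cdots a_k^{\circ})+\sum_{\emptyset\neq S\subsetneq\{1,\ldots,k\}}\Bigl(\prod_{j\in S}\varphi(a_j)\Bigr)\,\varphi\Bigl(\prod_{j\notin S}a_j^{\circ}\Bigr),
\end{align*}
where the last product preserves the original ordering. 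The fully-centered term $\varphi(a_1^{\circ}\cdots a_k^{\circ})$ is zero by the freeness condition, since each $a_j^{\circ}$ lies in $\mathcal{A}_{i(j)}$ with $\varphi(a_j^{\circ})=0$ and the indices alternate. Every other summand carries an outer scalar factor $\prod_{j\in S}\varphi(a_j)$ (which is determined by $(\varphi|_{\mathcal{A}_i})_{i\in I}$) times an expression of the form $\varphi(a_{j_1}^{\circ}\cdots a_{j_{\ell}}^{\circ})$ with $\ell=k-|S|<k$.

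Conclude by induction on $k$. The base case $k=0$ is $\varphi(1)=1$. For the inductive step, each reduced expression $\varphi(a_{j_1}^{\circ}\cdots a_{j_{\ell}}^{\circ})$ has length $\ell<k$; if after removing the positions in $S$ two neighboring indices happen to coincide, we merge the corresponding factors (using that $\mathcal{A}_{i}$ is a subalgebra), producing an alternating product of length $\leq\ell<k$ to which the induction hypothesis applies. Hence $\varphi(a_1\cdots a_k)$ is expressed entirely in terms of the data $(\varphi|_{\mathcal{A}_i})_{i\in I}$ and the freeness rule. If $\psi$ is any other functional satisfying the same hypotheses, the same algorithm produces the same value, so $\psi|_{\mathcal{B}}=\varphi|_{\mathcal{B}}$.

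\textbf{Expected obstacle.} The only subtle point is ensuring the induction really does decrease the word length. When we excise a middle factor by replacing $a_j$ with the scalar $\varphi(a_j)$, the two neighbors $a_{j-1}$ and $a_{j+1}$ may come from the same subalgebra; but merging them is legitimate and yields a strictly shorter alternating word, so the recursion terminates. Everything else is bookkeeping within an expansion of a product, and the freeness axiom is invoked exactly once per level of the recursion to kill the fully-centered term.
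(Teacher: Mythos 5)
Your proof is correct and is essentially the paper's own argument: reduce to alternating words, center each factor, expand, kill the fully centered term by the freeness axiom, and handle the strictly shorter remainders by induction (your explicit remark about re-merging neighbors after excising a factor addresses a point the paper leaves implicit). The only blemish is a trivial indexing slip in your displayed expansion: the sum should also include $S=\{1,\ldots,k\}$, whose contribution is the scalar $\prod_{j}\varphi(a_j)$ (with the empty product read as $1$); this does not affect the induction.
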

\begin{proof}
  Elements in $\mathcal{B}$ are linear combinations of products $a_1\ldots a_k$ where $k\in \naturalnumbers$, $i(1),\ldots, \allowbreak i(k)\in I$ and $a_j\in \mathcal{A}_{i(j)}$ for every $j=1,\ldots, k$. We can also assume that $i(1)\neq i(2)\neq \ldots \neq i(k)$ by combining neighboring factors. Consider now such a product $a_1\ldots a_k\in \mathcal{B}$. Then, we  have to show that $\varphi(a_1\ldots a_k)$ is uniquely determined by $(\varphi |_{\mathcal{A}_i})_{i\in I}$. This we prove by induction over $k$.
  \par
  The base case $k=1$ is clear, since $a_1\in \mathcal{A}_{i(1)}$ by assumption. For general $k\in \naturalnumbers$, define
  \begin{align*}
    a^\circ_j\equalperdefinition a_j-\varphi(a_j)\cdot 1\in \mathcal{A}_{i(j)} \quad\text{for all } j=1,\ldots, k,
  \end{align*}
  where we have relied on the assumption that the subalgebras $(\mathcal A_i)_{i\in I}$ are unital. It then follows by linearity of $\varphi$ that
  \begin{align}
    \label{eq:proposition-freeness-state-factorization-1}
    \varphi(a_1\ldots a_k)=\varphi [(a_1^\circ+\varphi(a_1)\cdot 1)\ldots (a_k^\circ + \varphi(a_k)\cdot 1)]
    =\varphi(a_1^\circ a_2^\circ \ldots a_k^\circ)+\sum \ldots ,
  \end{align}
  where the remaining summands in \eqref{eq:proposition-freeness-state-factorization-1}  are all of the form
  \begin{align*}
    \varphi(a_{s(1)}^\circ)\ldots \varphi(a_{s(l)}^\circ)\cdot \varphi(a_{t(1)}^\circ \ldots a_{t(m)}^\circ)
  \end{align*}
  for some $l,m\in \naturalnumbers$,  $s(1),\ldots,s(l),t(1),\ldots,t(m)\in \{1,\ldots, k\}$ with, crucially, $m<k$. The latter namely ensures that the value of $\varphi$ at the product $a_{t(1)}^\circ \ldots a_{t(m)}^\circ$ is  determined by $(\varphi|_{\mathcal{A}_i})_{i\in I}$ by the induction hypothesis. Hence, \eqref{eq:proposition-freeness-state-factorization-1}  implies that the same is true for $\varphi(a_1\ldots a_k)$, since the term $\varphi(a_1^\circ a_2^\circ \ldots a_k^\circ)$ is zero by the definition of freeness.
\end{proof}

\begin{example}
  \label{example:moment-formulas-for-small-order}
  Let $\mathcal{A}_1$, $\mathcal{A}_2$ be free unital subalgebras  in $(\mathcal{A},\varphi)$.
  \begin{enumerate}
  \item \label{example:moment-formulas-for-small-order-1} Consider $a\in \mathcal{A}_1$ and $b\in \mathcal{A}_2$. Then, 
    \begin{align*}
      0&=\varphi[(a-\varphi(a)\cdot 1)(b-\varphi(b)\cdot 1)]\\
       &=\varphi(ab)-\varphi(a\cdot 1)\varphi(b)-\varphi(a)\varphi(1\cdot b)+\varphi(a)\varphi(b)\varphi(1)\\
       &=\varphi(ab)-\varphi(a)\varphi(b)
    \end{align*}
    implies
    \begin{align*}
      \varphi(ab)=\varphi(a)\varphi(b).
    \end{align*}
  \item \label{example:moment-formulas-for-small-order-2} Similarly, for $a_1,a_2\in \mathcal{A}_1$ and $b,b_1,b_2\in \mathcal{A}_2$, from
    \begin{align*}
      0=\varphi[(a_1-\varphi(a_1)\cdot 1)(b-\varphi(b)\cdot 1)(a_2-\varphi(a_2)\cdot 1)]
    \end{align*}
    one can derive
    \begin{align*}
      \varphi(a_1 ba_2)=\varphi(a_1a_2)\varphi(b).
    \end{align*}
    And, likewise, 
    \begin{align*}
      0=\varphi[(a_1-\varphi(a_1)\cdot 1)(b-\varphi(b_1)\cdot 1)(a_2-\varphi(a_2)\cdot 1)(b_2-\varphi(b_2)\cdot 1)]
    \end{align*}
    allows one to conclude (see Assignment~\hyperref[assignment-1]{1}, Exercise~2)
    \begin{align*}
      \varphi(a_1b_1a_2b_2)&=\varphi(a_1a_2)\varphi(b_1)\varphi(b_2)\\
                           &\phantom{{}={}}+\varphi(a_1)\varphi(a_2)\varphi(b_1b_2)\\
      &\phantom{{}={}}-\varphi(a_1)\varphi(a_2)\varphi(b_1)\varphi(b_2).
    \end{align*}
    \item \label{example:moment-formulas-for-small-order-3} For longer alternating products there is in the same way a formula, but the calculation via Proposition~\ref{proposition:freeness-group-algebras} is getting too complex; and there is no apparent structure of the final result.
  \end{enumerate}
\end{example}

\begin{remark}
  We can consider the formulas in Example~\ref{example:moment-formulas-for-small-order} as \enquote{non-commutative} analogues of formulas from  probability theory for the calculation of joint moments of independent random variables. Consider a classical probability space
    $(\Omega,\mathcal F,\mathbb P)$
  (meaning that $\Omega$ is a set of \enquote{outcomes}, $\mathcal F$ a sigma-algebra of \enquote{events} over $\Omega$ and $\mathbb P$ a probability measure on $(\Omega,\mathcal F)$, the \enquote{likelihood} of events.)
  Then, we choose as a unital algebra 
  \begin{align*}
    \mathcal A\equalperdefinition L^\infty(\Omega,\mathbb P)
  \end{align*}
  the algebra of bounded measurable functions (\enquote{random variables}) $X:\Omega\to \complexnumbers$
and as a unital linear functional   $\varphi$ on $\mathcal A$
the \enquote{expectation} $\mathbb{E}$ of random variables $X:\Omega\to \complexnumbers$ with respect to $\mathbb P$:
\begin{align*}
  \varphi(X)=\mathbb{E}[X]=\int_\Omega X(\omega) \, d\mathbb{P}(\omega).
\end{align*}
(Then, $\varphi(1)=1$ corresponds to $\mathbb{P}(\Omega)=1$.)
\par
Random variables $X,Y$ are independent if their joint moments factorize into the moments of the individual variables:
\begin{align*}
  \mathbb{E}[X^nY^m]=  \mathbb{E}[X^n]\mathbb{E}[Y^m]
\end{align*}
for all $m,n\in \naturalnumbers$.
Note that classical random variables commute.
\par
Furthermore, we have some more \enquote{positivity} structure in such a context. Namely our algebra carries a $\ast$-structure and the expectation is positive and faithful. Often  such additional structure is available in a non-commutative context as well. The notion of freeness is compatible with this extra structure.
\end{remark}
\begin{definition}
  Let $(\mathcal A,\varphi)$ be a non-commutative probability space.
  \begin{enumerate}
  \item If $\varphi$ is a \emph{trace}, i.e.\ if
    \begin{align*}
      \varphi(ab)=\varphi(ba) \quad \text{for all } a,b\in \mathcal{A},
    \end{align*}
    then we call $(\mathcal{A},\varphi)$ a \emph{tracial} non-commutative probability space.
  \item If $\mathcal{A}$ is a $\ast$-algebra and $\varphi$ is \emph{positive}, i.e.\ if
    \begin{align*}
      \varphi(a^\ast a)\geq 0 \quad \text{for all } a\in \mathcal{A},
    \end{align*}
    then we call $\varphi$ a \emph{state} and $(\mathcal{A},\varphi)$ a $\ast$-probability space.
    \par
    A state $\varphi$ is \emph{faithful} if for all $a\in \mathcal A$
    \begin{align*}
        \varphi(a^\ast a)=0 \implies a=0.
    \end{align*}
  \item Elements in $\mathcal{A}$ are called (non-commutative) \emph{random variables}. The \emph{moments} of a random variable $a\in \mathcal{A}$ are the numbers $(\varphi(a^n))_{n\in \naturalnumbers}$. The joint moments of a family $(a_1,\ldots, a_s)$ of random variables $a_1,\ldots, a_s \in \mathcal{A}$, $s\in \naturalnumbers$, is the collection of all numbers
    \begin{align*}
      \varphi(a_{r(1)}\ldots a_{r(n)}), \quad\text{where } n\in \naturalnumbers, r(1),\ldots, r(n)\in \{1,\ldots, s\}.
    \end{align*}
    \par
    If $(\mathcal A,\varphi)$ is a $\ast$-probability space, then the $\ast$-moments of a random variable $a\in \mathcal A$ are the joint moments of $(a,a^\ast)$ and the $\ast$-moments of $(a_1,\ldots, a_s)$ for $a_1,\ldots,a_s\in \mathcal A$, $s\in \naturalnumbers$, are the joint moments of $(a_1,a_1^\ast,\ldots, a_s,a_s^\ast)$.
    \par
    The ($\ast$-)distribution of $a$ or of $(a_1,\ldots, a_s)$ is the collection of all corresponding ($\ast$-)moments.
    \item Random variables $(a_i)_{i\in I}$ in $\mathcal A$ are called \emph{free} if the generated unital subalgebras $(\algebrageneratedby(1,a_i))_{i\in I}$ are free. In case $(\mathcal A,\varphi)$ is a $\ast$-probability space, then $(a_i)_{i\in I}$ are  \emph{$\ast$-free} if the generated unital $\ast$-subalgebras $(\algebrageneratedby(1,a_i,a_i^\ast))_{i\in I}$ are free.
  \end{enumerate}
\end{definition}

\begin{remark}
  \label{remark:moment-formulas-classical-and-free}
  So, we can now say: Freeness is a rule for calculating joint moments of free variables from the moments of the individual variables. For example, if  $a$ and $b$ are free from each other, then, as seen in Example~\ref{example:moment-formulas-for-small-order}, 
  \begin{align*}
    \varphi(ab)=\varphi(a)\varphi(b)
  \end{align*}
  and
  \begin{align*}
    \varphi(abab)=\varphi(a^2)\varphi(b)^2+\varphi(a)^2\varphi(b^2)-\varphi(a)^2\varphi(b)^2.
  \end{align*}
  Note that the first factorization is the same as for (classically) independent random variables. The second, however, is not compatible with commutativity.
\end{remark}

\begin{proposition}
  \label{proposition:classical-and-free-independence}
  Let $(\mathcal{A},\varphi)$ be a $\ast$-probability space and $\varphi$ faithful. Assume that the self-adjoint random variables $x,y\in \mathcal{A}$
  \begin{itemize}
  \item are free from each other and
  \item commute with each other.
  \end{itemize}
  Then, at least one of them must be a constant, i.e.\
  \begin{align*}
    x=\varphi(x)\cdot 1\quad\text{or} \quad y=\varphi(y)\cdot 1.
  \end{align*}
\end{proposition}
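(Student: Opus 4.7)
The plan is to center the two variables and then evaluate a single moment in two different ways; this will force one of the centered variables to have vanishing ``variance,'' and faithfulness will deliver the conclusion.

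First I reduce to the centered case. Set $x^\circ \equalperdefinition x-\varphi(x)\cdot 1$ and $y^\circ \equalperdefinition y-\varphi(y)\cdot 1$. Since $\varphi$ is a state and $x,y$ are self-adjoint, $\varphi(x)$ and $\varphi(y)$ are real, so $x^\circ$ and $y^\circ$ are again self-adjoint. They belong to the same unital subalgebras $\algebrageneratedby(1,x)$ and $\algebrageneratedby(1,y)$ which were assumed free, so they remain free; they still commute; and by construction $\varphi(x^\circ)=\varphi(y^\circ)=0$. The conclusion to prove becomes: either $x^\circ = 0$ or $y^\circ = 0$.

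The main step is to compute $\varphi(x^\circ y^\circ x^\circ y^\circ)$ in two different ways. On the one hand, the four factors alternate between the two free subalgebras and each is centered, so the defining condition of freeness immediately gives $\varphi(x^\circ y^\circ x^\circ y^\circ)=0$. On the other hand, commutativity of $x^\circ$ and $y^\circ$ lets us rewrite $x^\circ y^\circ x^\circ y^\circ = (x^\circ)^2 (y^\circ)^2$; and since $(x^\circ)^2\in\algebrageneratedby(1,x)$ and $(y^\circ)^2\in\algebrageneratedby(1,y)$, Example~\ref{example:moment-formulas-for-small-order}\ref{example:moment-formulas-for-small-order-1} yields $\varphi((x^\circ)^2(y^\circ)^2)=\varphi((x^\circ)^2)\varphi((y^\circ)^2)$. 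Equating the two expressions gives
\begin{align*}
\varphi\bigl((x^\circ)^2\bigr)\cdot \varphi\bigl((y^\circ)^2\bigr)=0,
\end{align*}
so at least one of the two factors vanishes.

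Faithfulness finishes the argument: since $x^\circ$ is self-adjoint, $\varphi((x^\circ)^2)=\varphi((x^\circ)^\ast x^\circ)$, and faithfulness of $\varphi$ then forces $x^\circ =0$, i.e.\ $x=\varphi(x)\cdot 1$; the symmetric reasoning applied to $y$ handles the other alternative. I do not anticipate a serious obstacle; the one subtle point is simply that both self-adjointness (needed to identify $(x^\circ)^2$ with $(x^\circ)^\ast x^\circ$, and to ensure that $\varphi(x)\cdot 1$ is itself self-adjoint) and faithfulness are essential in this very last step, so dropping either hypothesis would break the implication.
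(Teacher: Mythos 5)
Your proof is correct and follows the same underlying idea as the paper's: evaluate $\varphi$ of an alternating product of four factors in two ways and exploit faithfulness. The one genuine difference is that you center $x$ and $y$ first, which makes the computation cleaner: $\varphi(x^\circ y^\circ x^\circ y^\circ)=0$ drops out immediately from the \emph{definition} of freeness (alternating, centered product), whereas the paper works with the uncentered $x,y$ and therefore needs the expanded four-term formula $\varphi(a_1b_1a_2b_2)=\varphi(a_1a_2)\varphi(b_1)\varphi(b_2)+\varphi(a_1)\varphi(a_2)\varphi(b_1b_2)-\varphi(a_1)\varphi(a_2)\varphi(b_1)\varphi(b_2)$ from Example~\ref{example:moment-formulas-for-small-order}. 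Algebraically the two computations are equivalent --- note that $\varphi((x^\circ)^2)=\varphi(x^2)-\varphi(x)^2$, so the product $\varphi((x^\circ)^2)\varphi((y^\circ)^2)=0$ you obtain is exactly the identity $[\varphi(x^2)-\varphi(x)^2][\varphi(y^2)-\varphi(y)^2]=0$ in the paper --- but your version avoids invoking the heavier alternating-moment formula and relies only on the basic defining relation of freeness together with the elementary factorization $\varphi(ab)=\varphi(a)\varphi(b)$. You also correctly flag the two hypotheses that do real work at the end: self-adjointness to read $\varphi((x^\circ)^2)$ as $\varphi((x^\circ)^\ast x^\circ)$, and faithfulness to conclude $x^\circ=0$.
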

\begin{proof}
  Since $x$ and $y$ commute, Remark~\ref{remark:moment-formulas-classical-and-free} provides us with two distinct rules for calculating $\varphi(xyxy)$, justifying respectively the first and the last identity in 
  \begin{align*}
    \varphi(x^2)\varphi(y^2)&=\varphi(x^2y^2)\\
                            &=\varphi(xyxy)\\ 
    &=\varphi(x^2)\varphi(y)^2+\varphi(x)^2\varphi(y^2)-\varphi(x)^2\varphi(y)^2.
  \end{align*}
  It follows
  \begin{align*} 0&=\varphi(x^2)\varphi(y^2)+\varphi(x)^2\varphi(y)^2-\varphi(x^2)\varphi(y)^2-\varphi(x)^2\varphi(y^2)\\
    &=[\varphi(x^2)-\varphi(x)^2][\varphi(y^2)-\varphi(y)^2].
  \end{align*}
  Thus, at least one of the factors must vanish, say
  \begin{align*}
    0&=\varphi(x^2)-\varphi(x)^2\\
     &=\varphi[\hspace{-0.6em}\underset{\displaystyle =(x-\varphi(x)\cdot 1)^\ast}{\underbrace{(x-\varphi(x)\cdot 1)}}\hspace{-0.5em}(x-\varphi(x)\cdot 1)]\\
    &=\varphi[(x-\varphi(x)\cdot 1)^\ast(x-\varphi(x)\cdot 1)].
  \end{align*}
  Because $\varphi$ is faithful, we conclude
        $x-\varphi(x)\cdot 1=0$,
  proving  $x=\varphi(x)\cdot 1$ and thus the claim.
\end{proof}

\begin{proposition}
  \label{proposition:constants-are-free-from-anything}
  Let $(\mathcal{A},\varphi)$ be a non-commutative probability space. Then, constants are \enquote{free from anything}: For any unital subalgebra $\mathcal{B}$ of $\mathcal{A}$ we have that $\mathcal{B}$ and $\complexnumbers \cdot 1$  are free in $(\mathcal{A},\varphi)$.
\end{proposition}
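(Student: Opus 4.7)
The plan is to verify the definition of free independence directly, exploiting the fact that the ``centered'' part of any element of $\complexnumbers\cdot 1$ is trivial.

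First I would take an arbitrary alternating product $a_1\ldots a_k$ with $a_j\in \mathcal{A}_{i(j)}$, where for each $j$ the subalgebra $\mathcal{A}_{i(j)}$ is either $\mathcal{B}$ or $\complexnumbers\cdot 1$, with $i(1)\neq i(2)\neq\ldots\neq i(k)$, and with $\varphi(a_j)=0$ for all $j$. The goal is to show $\varphi(a_1\ldots a_k)=0$.

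The key observation is the following: if some $a_j\in \complexnumbers\cdot 1$, then $a_j=\lambda_j\cdot 1$ for some $\lambda_j\in\complexnumbers$, and $\varphi(a_j)=\lambda_j\varphi(1)=\lambda_j$. The centering condition $\varphi(a_j)=0$ then forces $\lambda_j=0$, so $a_j=0$ and hence $\varphi(a_1\ldots a_k)=0$. This immediately disposes of every case in which any factor lies in $\complexnumbers\cdot 1$.

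The remaining case is that no $a_j$ lies in $\complexnumbers\cdot 1$, i.e.\ every $a_j$ comes from $\mathcal{B}$. Then the alternation condition $i(1)\neq i(2)\neq\ldots\neq i(k)$ cannot be satisfied for $k\geq 2$, since all indices would be equal. So only $k=1$ is possible, in which case $\varphi(a_1\ldots a_k)=\varphi(a_1)=0$ by the centering assumption. This exhausts all cases and proves freeness.

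There is no real obstacle here; the argument is a short case split, and the whole content is the trivial remark that the only centered element of $\complexnumbers\cdot 1$ is $0$. This is in fact the reason the condition $\mathcal{A}_i\neq \mathcal{A}_{i'}$ was not required in Definition~\ref{definition:freeness}: if one of the subalgebras happens to be $\complexnumbers\cdot 1$, freeness is automatic and causes no conflict.
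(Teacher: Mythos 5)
Your proof is correct and follows essentially the same route as the paper's: the whole content is that the only centered element of $\complexnumbers\cdot 1$ is $0$, so any admissible alternating word of length $k\geq 2$ contains a zero factor, while $k=1$ is trivial. The paper organizes the case split by $k=1$ versus $k\geq 2$ rather than by whether a factor lies in $\complexnumbers\cdot 1$, but this is only a cosmetic difference.
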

\begin{proof}
Let $k\in \naturalnumbers$ and $a_1,\ldots, a_k$ be as in Definition~\ref{definition:freeness} of free independence. The case $k=1$ is trivial. Hence, let $k\geq 2$. But then, for at least for one $j=1,\ldots,k$ it must hold that $a_j\in \complexnumbers \cdot 1$. Thus, the assumption $\varphi(a_j)=0$ implies $a_j=0$ for this $j$. It follows $a_1\ldots a_k=0$ and thus $\varphi(a_1\ldots a_k)=0$.
\end{proof}

\begin{conclusion}
  What we are doing here has a kind of stochastic flavor, but our random variables typically do not commute. In this sense, free probability is a non-commutative probability theory.
\end{conclusion}

\newpage

\section{Emergence of the Combinatorics of Free Probability Theory: Free (Central) Limit Theorem}
\begin{remark}
\label{paragraph:non-crossing-moment-factorization}
 Let the unital subalgebras $(\mathcal{A}_i)_{i\in I}$ be free in a non-commutative probability space $(\mathcal{A},\varphi)$  and let $\mathcal{A}=\algebrageneratedby(\bigcup_{i\in I}\mathcal A_i)$. Then, by Proposition~\ref{proposition:freeness-state-factorization}, the functional $\varphi$ is completely determined by $(\varphi|_{\mathcal{A}_i})_{i\in I}$. We have to understand better the structure of those formulas relating the values of $\varphi$ to those of $(\varphi|_{\mathcal{A}_i})_{i\in I}$ as in Example~\ref{example:moment-formulas-for-small-order}. Since elements of $\mathcal{A}$ are linear combinations of products $a_1\ldots a_k$ for $k\in \naturalnumbers$, $i(1),\ldots, i(k)\in I$ with $i(1)\neq i(2)\neq \ldots\neq i(k)$ and $a_j\in \mathcal{A}_{i(j)}$ for all $j=1,\ldots,k$, and since $\varphi$ is linear, it suffices to understand the formulas for such products $\varphi(a_1\ldots a_k)$.
\par
Freeness tells us the following:
\begin{itemize}
\item If, in addition, $\varphi(a_1)=\varphi(a_2)=\ldots=\varphi(a_k)=0$, then  $\varphi(a_1\ldots a_k)=0$.
  \item The general case can be reduced to this, but might give complicated formulas.
  \end{itemize}

  We have seen both
  \begin{itemize}
  \item easy factorization formulas
    \begin{align}
      \label{equation:alternating-moment-fomula-three}
      \varphi(a_1ba_2)=\varphi(a_1a_2)\varphi(b) \quad \text{if $\{a_1,a_2\}$ and $b$ are free}
    \end{align}
  \item and complicated formulas with no apparent factorization structure, and many additive terms:
    \begin{align}
      \label{equation:alternating-moment-fomula-four}
      \varphi(a_1b_1a_2b_2)&=\varphi(a_1a_2)\varphi(b_1)\varphi(b_2)\notag\\
                           &\phantom{{}={}}+\varphi(a_1)\varphi(a_2)\varphi(b_1b_2)      \\
      &\phantom{{}={}}-\varphi(a_1)\varphi(a_2)\varphi(b_1)\varphi(b_2)\notag
    \end{align}
    if $\{a_1,a_2\}$ and $\{b_1,b_2\}$ are free.
  \end{itemize}
  \par
  Note that formula \eqref{equation:alternating-moment-fomula-three} has a \enquote{nested} structure
  \begin{center}\begin{tikzpicture}
    \begin{scope}[xshift=-0.5pt]
    \draw (0em,0) -- ++ (0,-1.5em) -| (2em,0);
    \draw (1em,0) -- ++ (0,-1em);
    \end{scope}
    \node[fill=white, inner sep=1pt] at (1em,0) {$a_1ba_2$};    
  \end{tikzpicture}
\end{center}
  with corresponding factorization
  \begin{center}
    \begin{tikzpicture}
      \begin{scope}[xshift=-4em]
        \draw (0em,0) -- ++ (0,-1.5em) -| (2em,0);
        \draw (1em,0) -- ++ (0,-1em);
      \end{scope}
      \begin{scope}[xshift=1.375em]
        \draw (0em,0) -- ++ (0,-1em) -| (1em,0);
      \end{scope}            
      \begin{scope}[xshift=3.625em]
        \draw (1em,0) -- ++ (0,-1em);
      \end{scope}
      \node[fill=white, inner sep=1pt] at (0,0) {$\varphi(a_1ba_2)=\varphi(a_1a_2)\varphi(b)$};  
    \end{tikzpicture}
\end{center}
and that this can be iterated to more complicated \enquote{nested} situations: For example, if $\{a_1,a_2,a_3\},\{b_1,b_2\},\{c\},\{d\}$ are free, then
\begin{align*}
  \begin{tikzpicture}[baseline=-2pt]
    \begin{scope}[xshift=-2.2em, xscale=0.8]
      \draw (0,0em) -- ++ (0,-2em) -- ++(4em,0) -- ++(0,2em);
      \draw (4em,-2em) -| ++(2em,2em);
      \draw (5em,0) -- ++ (0,-1em);
      \draw (1em,0) -- ++ (0,-1.5em) -| (3em,0);
      \draw (2em,0) -- ++ (0,-1em);      
    \end{scope}
     \node[fill=white, inner sep=1pt] at (0,0) {$\varphi(a_1b_1cb_2a_2da_3)$};  
   \end{tikzpicture}
  &= \varphi(\hspace{-0.5em}\underset{\displaystyle =\colon a_1'}{ \underbrace{a_1}}\hspace{-0.5em}\cdot \underset{\displaystyle=\colon b'}{\underbrace{(b_1cb_2)}}\cdot \underset{\displaystyle=\colon a_2'}{\underbrace{(a_2da_3)}}), \, \begin{tikzpicture}[baseline=1em+2pt]
    \node[align=left] at (0em,0.75em) {where $\{a_1',a_2'\},\{b'\}$ are free,\\ as $\{a_1,a_2,a_3,d\}$, $\{b_1,b_2,c\}$ are free,};    \end{tikzpicture}\\
    &\hspace{-0.15em}\overset{\text{\eqref{equation:alternating-moment-fomula-three}}}{=} \varphi(a_1'a_2')\varphi(b')\\
    &=
    \begin{tikzpicture}[baseline=-2pt]
    \begin{scope}[xshift=-0.7em, xscale=0.75]
      \draw (0,0em) -- ++ (0,-1.5em) -- ++(1em,0) -- ++(0,1.5em);
      \draw (1em,-1.5em) -| ++(2em,1.5em);
      \draw (2em,0) -- ++ (0,-1em);
    \end{scope}
    \node[fill=white, inner sep=1pt] at (0,0) {$\varphi(a_1a_2da_3)$};
    \useasboundingbox (-2.5em,-1.625em) rectangle (2.5em, 0.75em);
   \end{tikzpicture}
      \begin{tikzpicture}[baseline=-2pt]
          \begin{scope}[xshift=0.5em,xscale=0.75]
    \draw (0em,0) -- ++ (0,-1.5em) -| (2em,0);
    \draw (1em,0) -- ++ (0,-1em);
    \end{scope}
    \node[fill=white, inner sep=1pt] at (1em,0) {$\varphi(b_1cb_2)$};
    \useasboundingbox (-0.875em,-1.625em) rectangle (3em, 0.75em); 
  \end{tikzpicture} \\
    &=
          \begin{tikzpicture}[baseline=-2pt]
    \begin{scope}[xshift=-0.7em, xscale=1]
      \draw (0,0em) -- ++ (0,-1em) -- ++(1em,0) -- ++(0,1em);
      \draw (1em,-1em) -| ++(1em,1em);
    \end{scope}
    \node[fill=white, inner sep=1pt] at (0,0) {$\varphi(a_1a_2a_3)$};
    \useasboundingbox (-2.25em,-1.125em) rectangle (2.25em, 0.75em);
  \end{tikzpicture}
            \begin{tikzpicture}[baseline=-2pt]
          \begin{scope}[xshift=0.375em,xscale=1]
    \draw (1em,0) -- ++ (0,-1em);
    \end{scope}
    \node[fill=white, inner sep=1pt] at (1em,0) {$\varphi(d)$};
    \useasboundingbox (-0.05em,-1.125em) rectangle (2.075em, 0.75em); 
  \end{tikzpicture}
                \begin{tikzpicture}[baseline=-2pt]
    \begin{scope}[xshift=-0.125em, xscale=1]
      \draw (0em,0em) -- ++ (0,-1em) -| ++(1em,1em);
    \end{scope}
    \node[fill=white, inner sep=1pt] at (0,0) {$\varphi(b_1b_2)$};
    \useasboundingbox (-1.65em,-1.125em) rectangle (1.65em, 0.75em);
  \end{tikzpicture}
                  \begin{tikzpicture}[baseline=-2pt]
          \begin{scope}[xshift=0.375em,xscale=1]
    \draw (1em,0) -- ++ (0,-1em);
    \end{scope}
    \node[fill=white, inner sep=1pt] at (1em,0) {$\varphi(c)$};
    \useasboundingbox (-0.05em,-1.125em) rectangle (2.075em, 0.75em); 
  \end{tikzpicture}.
\end{align*}
Formula \eqref{equation:alternating-moment-fomula-four} on the other hand has no \enquote{nested} structure. Instead, it is \enquote{crossing}:
\begin{center}
    \begin{tikzpicture}[baseline=-2pt]
    \begin{scope}[xshift=-1.5em, xscale=0.95]
      \draw (0,0em) -- ++ (0,-1em) -- ++(2em,0) -- ++(0,1em);
      \draw (1em,0em) -- ++ (0,-1.5em) -- ++(2em,0) -- ++(0,1.5em);
    \end{scope}
    \node[fill=white, inner sep=1pt] at (0,0) {$a_1b_1a_2b_2$};
    \useasboundingbox (-2em,-1.625em) rectangle (2em, 0.75em);
   \end{tikzpicture}  
 \end{center}
 This \enquote{crossing moment}
 \begin{center}
    \begin{tikzpicture}[baseline=-2pt]
    \begin{scope}[xshift=-1.25em, xscale=0.95]
      \draw (0,0em) -- ++ (0,-1em) -- ++(2em,0) -- ++(0,1em);
      \draw (1em,0em) -- ++ (0,-1.5em) -- ++(2em,0) -- ++(0,1.5em);
    \end{scope}
    \node[fill=white, inner sep=1pt] at (0,0) {$\varphi(a_1b_1a_2b_2)$};
    \useasboundingbox (-2.625em,-1.625em) rectangle (2.625em, 0.75em);
   \end{tikzpicture}  
 \end{center}
 does not factorize. Actually the corresponding product $\varphi(a_1a_2)\varphi(b_1b_2)$ does not show up on the right hand side of \eqref{equation:alternating-moment-fomula-four} at all. Rather, there only \enquote{nested} contributions appear:
 \begin{align*}
       \begin{tikzpicture}[baseline=-2pt]
    \begin{scope}[xshift=-1.25em, xscale=0.95]
      \draw (0,0em) -- ++ (0,-1em) -- ++(2em,0) -- ++(0,1em);
      \draw (1em,0em) -- ++ (0,-1.5em) -- ++(2em,0) -- ++(0,1.5em);
    \end{scope}
    \node[fill=white, inner sep=1pt] at (0,0) {$\varphi(a_1b_1a_2b_2)$};
    \useasboundingbox (-2.625em,-1.625em) rectangle (2.625em, 0.75em);
  \end{tikzpicture}  &=\begin{tikzpicture}[baseline=-2pt]
    \begin{scope}[xshift=-0.125em, xscale=1]
      \draw (0em,0em) -- ++ (0,-1em) -| ++(1em,1em);
    \end{scope}
    \node[fill=white, inner sep=1pt] at (0,0) {$\varphi(a_1a_2)$};
    \useasboundingbox (-1.65em,-1.125em) rectangle (1.65em, 0.75em);
  \end{tikzpicture}
                  \begin{tikzpicture}[baseline=-2pt]
          \begin{scope}[xshift=0.375em,xscale=1]
    \draw (1em,0) -- ++ (0,-1em);
    \end{scope}
    \node[fill=white, inner sep=1pt] at (1em,0) {$\varphi(b_1)$};
    \useasboundingbox (-0.05em,-1.125em) rectangle (2.075em, 0.75em); 
  \end{tikzpicture}\begin{tikzpicture}[baseline=-2pt]
                                 \begin{scope}[xshift=0.375em,xscale=1]
    \draw (1em,0) -- ++ (0,-1em);
    \end{scope}
    \node[fill=white, inner sep=1pt] at (1em,0) {$\varphi(b_2)$};
    \useasboundingbox (-0.05em,-1.125em) rectangle (2.075em, 0.75em); 
                       \end{tikzpicture}\\[-0.5em]
                       &\phantom{{}={}} +            \begin{tikzpicture}[baseline=-2pt]                      \begin{scope}[xshift=0.375em,xscale=1]
    \draw (1em,0) -- ++ (0,-1em);
    \end{scope}
    \node[fill=white, inner sep=1pt] at (1em,0) {$\varphi(a_1)$};
    \useasboundingbox (-0.05em,-1.125em) rectangle (2.075em, 0.75em); 
  \end{tikzpicture}\begin{tikzpicture}[baseline=-2pt]
                                   \begin{scope}[xshift=0.375em,xscale=1]
    \draw (1em,0) -- ++ (0,-1em);
    \end{scope}
    \node[fill=white, inner sep=1pt] at (1em,0) {$\varphi(a_2)$};
    \useasboundingbox (-0.05em,-1.125em) rectangle (2.075em, 0.75em); 
                       \end{tikzpicture} \begin{tikzpicture}[baseline=-2pt]
    \begin{scope}[xshift=-0.125em, xscale=1]
      \draw (0em,0em) -- ++ (0,-1em) -| ++(1em,1em);
    \end{scope}
    \node[fill=white, inner sep=1pt] at (0,0) {$\varphi(b_1b_2)$};
    \useasboundingbox (-1.65em,-1.125em) rectangle (1.65em, 0.75em);
  \end{tikzpicture}\\
                         &\phantom{{}={}} -          \begin{tikzpicture}[baseline=-2pt]                        \begin{scope}[xshift=0.375em,xscale=1]
    \draw (1em,0) -- ++ (0,-1em);
    \end{scope}
    \node[fill=white, inner sep=1pt] at (1em,0) {$\varphi(a_1)$};
    \useasboundingbox (-0.05em,-1.125em) rectangle (2.075em, 0.75em); 
  \end{tikzpicture}
                           \begin{tikzpicture}[baseline=-2pt]                        \begin{scope}[xshift=0.375em,xscale=1]
    \draw (1em,0) -- ++ (0,-1em);
    \end{scope}
    \node[fill=white, inner sep=1pt] at (1em,0) {$\varphi(a_2)$};
    \useasboundingbox (-0.05em,-1.125em) rectangle (2.075em, 0.75em); 
  \end{tikzpicture}
                           \begin{tikzpicture}[baseline=-2pt]                        \begin{scope}[xshift=0.375em,xscale=1]
    \draw (1em,0) -- ++ (0,-1em);
    \end{scope}
    \node[fill=white, inner sep=1pt] at (1em,0) {$\varphi(b_1)$};
    \useasboundingbox (-0.05em,-1.125em) rectangle (2.075em, 0.75em); 
  \end{tikzpicture}
                           \begin{tikzpicture}[baseline=-2pt]                        \begin{scope}[xshift=0.375em,xscale=1]
    \draw (1em,0) -- ++ (0,-1em);
    \end{scope}
    \node[fill=white, inner sep=1pt] at (1em,0) {$\varphi(b_2)$};
    \useasboundingbox (-0.05em,-1.125em) rectangle (2.075em, 0.75em); 
  \end{tikzpicture}.
 \end{align*}
\end{remark}

\setcounter{theorem}{1}
 \begin{definition}
   Let $S$ be a finite set.
   \begin{enumerate}
   \item We call $\pi=\{V_1,\ldots,V_r\}$ a \emph{partition} of the set $S$ if
     \begin{itemize}
       \item $k\in \naturalnumbers$,
     \item $V_i\subseteq S$ and $V_i\neq \emptyset$ for all $i=1,\ldots,r$,
     \item $V_i\cap V_j=\emptyset$ for all $i,j=1,\ldots,r$ with $i\neq j$ and
     \item $V_1\cup \ldots \cup V_r=S$.
     \end{itemize}
     \par
     The elements $V_1,\ldots,V_r$ of $\pi$ are called its \emph{blocks}. The integer $\# \pi\equalperdefinition r$ denotes the number of blocks of $\pi$.
     For all $p,q\in S$ we write
     \begin{align*}
       p\sim_\pi q\quad \text{if and only if}\quad p,q \text{ are in the same block of }\pi.
     \end{align*}
   \item We write  $\setofpartitionsof(S)$ for the set of all partitions of $S$. And for all $n\in \naturalnumbers$, we abbreviate
     \begin{align*}
       [n]\equalperdefinition \{1,\ldots,n\}
\qquad
     \text{and}
\qquad
       \setofpartitionsof(n)\equalperdefinition \setofpartitionsof([n]).
     \end{align*}
   \item If $S$ is totally ordered, then a partition $\pi\in \setofpartitionsof(S)$  is called \emph{crossing} if there exist $p_1,q_1,p_2,q_2\in S$ such that
     \begin{gather*}
       p_1<q_1<p_2<q_2 \text{ and at the same time }
       p_1\sim_\pi p_2, \, q_1\sim_\pi q_2 \text{ and } p_1\not\sim_\pi q_1.\\
      \begin{tikzpicture}
        \node (n1) at (0,0) {};
        \node (n2) at (1.5,0) {$p_1$};
        \node (n3) at (3,0) {$q_1$};
        \node (n4) at (4.5,0) {$p_2$};
        \node (n5) at (6,0) {$q_2$};
        \node (n6) at (7.5,0) {};
        \path (n1) -- node[pos=0.5] {$\ldots$} (n2);
        \path (n2) -- node[pos=0.5] {$\ldots$} (n3);
        \path (n3) -- node[pos=0.5] {$\ldots$} (n4);
        \path (n4) -- node[pos=0.5] {$\ldots$} (n5);
        \path (n5) -- node[pos=0.5] {$\ldots$} (n6);
        \draw (n2) --++ (0,-0.75) -| (n4);
        \draw (n3) --++ (0,-1.5) -| (n5);
        \draw[dotted] ($(n2)+(0,-0.75)$) --++ (-1,0);
        \draw[dotted] ($(n4)+(0,-0.75)$) --++ (1,0);
        \draw[dotted] ($(n3)+(0,-1.5)$) --++ (-1,0);
        \draw[dotted] ($(n5)+(0,-1.5)$) --++ (1,0);                
     \end{tikzpicture}
     \end{gather*}
     If $\pi$ is not crossing, then it is called \emph{non-crossing}.
   \item Lastly, if $S$ is totally ordered, we write $\setofnoncrossingpartitionsof(S)$ for the subset of all partitions in $\setofpartitionsof(S)$ which are non-crossing. And, for every $n\in \naturalnumbers$, we abbreviate
     \begin{align*}
       \setofnoncrossingpartitionsof(n)\equalperdefinition \setofnoncrossingpartitionsof([n]).
     \end{align*}
   \end{enumerate}
 \end{definition}

 \begin{remark}
   \label{remark:interval-stripping}
   \begin{enumerate}
   \item\label{remark:interval-stripping-1}  The partition
     \begin{align*}
           \begin{tikzpicture}[baseline=-2pt]
      \node[inner sep=1pt] (n1) at (0em,0) {$1$};
      \node[inner sep=1pt] (n2) at (1em,0) {$2$};
      \node[inner sep=1pt] (n3) at (2em,0) {$3$};
      \node[inner sep=1pt] (n4) at (3em,0) {$4$};
      \draw (n1) --++(0,-1em) -| (n3);
      \draw (n2) --++(0,-1.5em) -| (n4);      
    \useasboundingbox (-0.25em,-1.625em) rectangle (3.25em, 0.75em);
  \end{tikzpicture}  \in \setofpartitionsof(4)
     \end{align*}
     is crossing, while the partition
     \begin{align*}
    \begin{tikzpicture}[baseline=-2pt]
      \node[inner sep=1pt] (n1) at (0em,0) {$1$};
      \node[inner sep=1pt] (n2) at (1em,0) {$2$};
      \node[inner sep=1pt] (n3) at (2em,0) {$3$};
      \node[inner sep=1pt] (n4) at (3em,0) {$4$};
      \node[inner sep=1pt] (n5) at (4em,0) {$5$};
      \node[inner sep=1pt] (n6) at (5em,0) {$6$};
      \node[inner sep=1pt] (n7) at (6em,0) {$7$};      
      \draw (n1) --++(0,-2em) -| (n5);
      \draw ($(n5)+(0,-2em)$) -| (n7);      
      \draw (n2) --++(0,-1.5em) -| (n4);
      \draw (n3) --++(0,-1em);
      \draw (n6) --++(0,-1em);      
    \useasboundingbox (-0.25em,-2.125em) rectangle (6.25em, 0.75em);
  \end{tikzpicture}  \in \setofnoncrossingpartitionsof(7)       
     \end{align*}
 is non-crossing.
\item\label{remark:interval-stripping-2} It is easy to see that \emph{non-crossing} is the same as \enquote{nested} in the sense of Remark~\ref{paragraph:non-crossing-moment-factorization}: For every $n\in \naturalnumbers$, a partition $\pi\in\setofpartitionsof(n)$ is non-crossing if and only if there exists a block $V\in\pi$ such that $V$ is an interval and such that $\pi\backslash \{V\}$ is non-crossing (meaning that $V=\{k,k+1,k+2,\ldots,k+p\}$ for some $k,p\in \naturalnumbers$ with $1\leq k\leq n$, $0\leq p$, $k+p\leq n$ and $\pi\backslash \{V\} \in \setofnoncrossingpartitionsof(\{1,\ldots, k-1,k+p+1,\ldots,n\})\cong \setofnoncrossingpartitionsof(n-(p+1))$.) 
  This means that we can reduce non-crossing partitions by successive \enquote{interval-stripping}.  Example:
  \begin{align*}
    \begin{tikzpicture}[baseline=-2pt]
      \node[inner sep=1pt] (n1) at (0em,0) {$1$};
      \node[inner sep=1pt] (n2) at (1em,0) {$2$};
      \node[inner sep=1pt] (n3) at (2em,0) {$3$};
      \node[inner sep=1pt] (n4) at (3em,0) {$4$};
      \node[inner sep=1pt] (n5) at (4em,0) {$5$};
      \node[inner sep=1pt] (n6) at (5em,0) {$6$};
      \node[inner sep=1pt] (n7) at (6em,0) {$7$};      
      \draw (n1) --++(0,-2em) -| (n5);
      \draw ($(n5)+(0,-2em)$) -| (n7);      
      \draw (n2) --++(0,-1.5em) -| (n4);
      \draw (n3) --++(0,-1em);
      \draw (n6) --++(0,-1em);      
    \useasboundingbox (-0.25em,-2.125em) rectangle (6.25em, 0.75em);
  \end{tikzpicture} \quad
    \underset{\displaystyle\displaystyle(3) \text{ and } (6)}{\displaystyle\overset{\displaystyle\text{remove intervals}}{\longrightarrow}} \quad&\quad
    \begin{tikzpicture}[baseline=-2pt]
      \node[inner sep=1pt] (n1) at (0em,0) {$1$};
      \node[inner sep=1pt] (n2) at (1em,0) {$2$};
      \node[inner sep=1pt] (n4) at (3em,0) {$4$};
      \node[inner sep=1pt] (n5) at (4em,0) {$5$};
      \node[inner sep=1pt] (n7) at (6em,0) {$7$};      
      \draw (n1) --++(0,-2em) -| (n5);
      \draw ($(n5)+(0,-2em)$) -| (n7);      
      \draw (n2) --++(0,-1.5em) -| (n4);
    \useasboundingbox (-0.25em,-2.125em) rectangle (6.25em, 0.75em);
  \end{tikzpicture}\\
       \underset{\displaystyle\text{interval } (2,4)}{\overset{\displaystyle\text{remove}}{\longrightarrow}}\quad&\quad
    \begin{tikzpicture}[baseline=-2pt]
      \node[inner sep=1pt] (n1) at (0em,0) {$1$};
      \node[inner sep=1pt] (n5) at (4em,0) {$5$};
      \node[inner sep=1pt] (n7) at (6em,0) {$7$};      
      \draw (n1) --++(0,-2em) -| (n5);
      \draw ($(n5)+(0,-2em)$) -| (n7);      
    \useasboundingbox (-0.25em,-2.125em) rectangle (6.25em, 0.75em);
  \end{tikzpicture}    \\
       \underset{\displaystyle\text{interval } (1,5,7)}{\overset{\displaystyle\text{remove}}{\longrightarrow}}\quad&\quad \hspace{3em}\emptyset
  \end{align*}
   \end{enumerate}
 \end{remark}

 \begin{definition}
   Let $I$ be a set. A multi-index $i=(i(1),\ldots, i(k))$ with $k\in \naturalnumbers$ and $i(1),\ldots, i(k)\in I$ will also be considered as a function $i:[k]\to I$, where $[k]\equalperdefinition \{1,\ldots,k\}$. For such an $i:[k]\to I$, we define its \emph{kernel} $\ker (i)\in \setofpartitionsof(k)$ by declaring for all $p,q\in [k]$:
   \begin{align*}
     p\sim_{\ker (i)}q\quad \text{if and only if}\quad i(p)=i(q).
   \end{align*}
 \end{definition}

 \begin{example}
   \begin{enumerate}
   \item The kernel of the multi-index $i=(1,2,3,2,1,4,1)$
     \begin{align*}
       i=    \begin{tikzpicture}[baseline=-2pt]
      \node[inner sep=1pt] (n1) at (0em,0) {$1$};
      \node[inner sep=1pt] (n2) at (1em,0) {$2$};
      \node[inner sep=1pt] (n3) at (2em,0) {$3$};
      \node[inner sep=1pt] (n4) at (3em,0) {$2$};
      \node[inner sep=1pt] (n5) at (4em,0) {$1$};
      \node[inner sep=1pt] (n6) at (5em,0) {$4$};
      \node[inner sep=1pt] (n7) at (6em,0) {$1$};      
      \draw (n1) --++(0,-2em) -| (n5);
      \draw ($(n5)+(0,-2em)$) -| (n7);      
      \draw (n2) --++(0,-1.5em) -| (n4);
      \draw (n3) --++(0,-1em);
      \draw (n6) --++(0,-1em);      
    \useasboundingbox (-0.25em,-2.125em) rectangle (6.25em, 0.75em);
  \end{tikzpicture}
     \end{align*}
     is given by
  \begin{align*}
    \ker(i) = \begin{tikzpicture}[baseline=-2pt-0.75em]
      \node[inner sep=1pt] (n1) at (0em,0) {};
      \node[inner sep=1pt] (n2) at (1em,0) {};
      \node[inner sep=1pt] (n3) at (2em,0) {};
      \node[inner sep=1pt] (n4) at (3em,0) {};
      \node[inner sep=1pt] (n5) at (4em,0) {};
      \node[inner sep=1pt] (n6) at (5em,0) {};
      \node[inner sep=1pt] (n7) at (6em,0) {};      
      \draw (n1) --++(0,-1.5em) -| (n5);
      \draw ($(n5)+(0,-1.5em)$) -| (n7);      
      \draw (n2) --++(0,-1em) -| (n4);
      \draw (n3) --++(0,-0.5em);
      \draw (n6) --++(0,-0.5em);      
    \useasboundingbox (-0.25em,-1.625em) rectangle (6.25em, 0.75em);
  \end{tikzpicture}
    =\{\{1,5,7\},\{2,4\}, \{3\}, \{6\}\}\in \setofnoncrossingpartitionsof(7).
  \end{align*}
  Note that
  \begin{align*}\ker(
    \begin{tikzpicture}[baseline=-3pt]
      \node[inner sep=1pt] (n1) at (0em,0) {$8$};
      \node[inner sep=1pt] (n2) at (1em,0) {$3$};
      \node[inner sep=1pt] (n3) at (2em,0) {$4$};
      \node[inner sep=1pt] (n4) at (3em,0) {$3$};
      \node[inner sep=1pt] (n5) at (4em,0) {$8$};
      \node[inner sep=1pt] (n6) at (5em,0) {$1$};
      \node[inner sep=1pt] (n7) at (6em,0) {$8$};      
      \draw (n1) --++(0,-2em) -| (n5);
      \draw ($(n5)+(0,-2em)$) -| (n7);      
      \draw (n2) --++(0,-1.5em) -| (n4);
      \draw (n3) --++(0,-1em);
      \draw (n6) --++(0,-1em);      
      \useasboundingbox (-0.25em,-2.125em) rectangle (6.25em, 0.75em);
  \end{tikzpicture})=\ker(i).
  \end{align*}
\item The multi-index $i=(1,2,1,2)$ has the crossing kernel
  \begin{align*}
    \ker(i)=    \begin{tikzpicture}[baseline=-2pt-0.75em]
      \node[inner sep=1pt] (n1) at (0em,0) {};
      \node[inner sep=1pt] (n2) at (1em,0) {};
      \node[inner sep=1pt] (n3) at (2em,0) {};
      \node[inner sep=1pt] (n4) at (3em,0) {};
      \draw (n1) --++(0,-1em) -| (n3);
      \draw (n2) --++(0,-1.5em) -| (n4);      
    \useasboundingbox (-0.25em,-1.625em) rectangle (3.25em, 0.125em);
  \end{tikzpicture}\in \setofpartitionsof (4)\backslash \setofnoncrossingpartitionsof(4).
  \end{align*}
   \end{enumerate}
 \end{example}

 \begin{remark}
   \label{remark:non-crossing-moments}
   \begin{enumerate}
   \item   \label{remark:non-crossing-moments-1} Let  $(\mathcal{A},\varphi)$ be a non-commutative probability space and therein $(\mathcal{A}_i)_{i\in I}$ a family of free unital subalgebras. Consider $k\in \naturalnumbers$, $i(1),\ldots, i(k)\in I$ and random variables $a_1,\ldots,a_k$ with $a_j\in \mathcal{A}_{i(j)}$ for every $j\in [k]$. If for the multi-index
     \begin{align*}
       i\equalperdefinition(i(1),\ldots,i(k))\qquad
\text{it holds that}
     \qquad
       \ker(i)\in \setofnoncrossingpartitionsof(k),
     \end{align*}
     then we have seen in Remark~\ref{paragraph:non-crossing-moment-factorization} that, writing
       $\ker(i)=\colon\{V_1,\ldots, V_r\}$,
     the moment $\varphi(a_1\ldots a_k)$ factorizes as 
     \begin{align*}
       \varphi(a_1\ldots a_k)&=\varphi\Bigl(\prod_{\substack{l\in [k]\\i(l)\in V_1}}^{\longrightarrow}a_l\Bigr)\cdot \ldots \cdot \varphi\Bigl(\prod_{\substack{l\in [k]\\i(l)\in V_r}}^{\longrightarrow}a_l\Bigr)=\prod_{V\in\, \ker(i)}\varphi\Bigl(\prod_{j\in V}^{\longrightarrow}a_j\Bigr),
     \end{align*}
     where $\overset{\rightarrow}{\prod}$ denotes the product of factors in the same order as they appear in the product $a_1\ldots a_k$, i.e., in this case, in  ascending order. (We don't even need to assume $i(1)\neq i(2)\neq \ldots\neq i(k)$ in order for this identity to hold.)
     \par
     As an example, let $\{1,2,3,7\}\subseteq I$ and
     \begin{align*}
       a_1,a_2,a_3 \in \mathcal{A}_1,\quad  b_1,b_2\in \mathcal A_3, \quad c\in \mathcal A_7 \quad\text{and}\quad d\in \mathcal A_2,
     \end{align*}
     and consider the moment
       $\varphi(a_1b_1cb_2a_2da_3)$,
     corresponding to the multi-index
       $i=(1,3,7,3,1,2,1)$,
     whose kernel is given by 
       \begin{align*}
                  \ker(i)&= \begin{tikzpicture}[baseline=-2pt-0.75em]
      \node[inner sep=1pt] (n1) at (0em,0) {};
      \node[inner sep=1pt] (n2) at (1em,0) {};
      \node[inner sep=1pt] (n3) at (2em,0) {};
      \node[inner sep=1pt] (n4) at (3em,0) {};
      \node[inner sep=1pt] (n5) at (4em,0) {};
      \node[inner sep=1pt] (n6) at (5em,0) {};
      \node[inner sep=1pt] (n7) at (6em,0) {};      
      \draw (n1) --++(0,-1.5em) -| (n5);
      \draw ($(n5)+(0,-1.5em)$) -| (n7);      
      \draw (n2) --++(0,-1em) -| (n4);
      \draw (n3) --++(0,-0.5em);
      \draw (n6) --++(0,-0.5em);      
    \useasboundingbox (-0.25em,-1.625em) rectangle (6.25em, 0.75em);
  \end{tikzpicture}
  =\{\{1,5,7\},\{2,4\},\{3\},\{6\}\}
  \in \setofnoncrossingpartitionsof(7).
       \end{align*}
       As already seen in Remark~\ref{paragraph:non-crossing-moment-factorization}, the moment is given by
       \begin{align*}
  \begin{tikzpicture}[baseline=-2pt]
    \begin{scope}[xshift=-2.2em, xscale=0.8]
      \draw (0,0em) -- ++ (0,-2em) -- ++(4em,0) -- ++(0,2em);
      \draw (4em,-2em) -| ++(2em,2em);
      \draw (5em,0) -- ++ (0,-1em);
      \draw (1em,0) -- ++ (0,-1.5em) -| (3em,0);
      \draw (2em,0) -- ++ (0,-1em);      
    \end{scope}
     \node[fill=white, inner sep=1pt] at (0,0) {$\varphi(a_1b_1cb_2a_2da_3)$};  
   \end{tikzpicture}
    &=
          \begin{tikzpicture}[baseline=-2pt]
    \begin{scope}[xshift=-0.7em, xscale=1]
      \draw (0,0em) -- ++ (0,-1em) -- ++(1em,0) -- ++(0,1em);
      \draw (1em,-1em) -| ++(1em,1em);
    \end{scope}
    \node[fill=white, inner sep=1pt] at (0,0) {$\varphi(a_1a_2a_3)$};
    \useasboundingbox (-2.25em,-1.125em) rectangle (2.25em, 0.75em);
  \end{tikzpicture}
                \begin{tikzpicture}[baseline=-2pt]
    \begin{scope}[xshift=-0.125em, xscale=1]
      \draw (0em,0em) -- ++ (0,-1em) -| ++(1em,1em);
    \end{scope}
    \node[fill=white, inner sep=1pt] at (0,0) {$\varphi(b_1b_2)$};
    \useasboundingbox (-1.65em,-1.125em) rectangle (1.65em, 0.75em);
  \end{tikzpicture}
                  \begin{tikzpicture}[baseline=-2pt]
          \begin{scope}[xshift=0.375em,xscale=1]
    \draw (1em,0) -- ++ (0,-1em);
    \end{scope}
    \node[fill=white, inner sep=1pt] at (1em,0) {$\varphi(c)$};
    \useasboundingbox (-0.05em,-1.125em) rectangle (2.075em, 0.75em); 
  \end{tikzpicture}
                  \begin{tikzpicture}[baseline=-2pt]
          \begin{scope}[xshift=0.375em,xscale=1]
    \draw (1em,0) -- ++ (0,-1em);
    \end{scope}
    \node[fill=white, inner sep=1pt] at (1em,0) {$\varphi(d)$};
    \useasboundingbox (-0.05em,-1.125em) rectangle (2.075em, 0.75em); 
  \end{tikzpicture},
       \end{align*}
       which precisely fits the above factorization formula.
     \item If in Part~\ref{remark:non-crossing-moments-1}, the partition $\ker(i)$  is crossing, then the structure of the formula for $\varphi(a_1\ldots a_k)$ is not clear. In order to get some insight also in those cases, we will treat now the free analogue of the central limit theorem.  Recall first the classical version, in our language.
   \end{enumerate}
 \end{remark}

 \begin{theorem}[Classical Central Limit Theorem]
   \label{theorem:classical-central-limit-theorem}   
   Let $(\mathcal{A},\varphi)$ be a non-com\-mu\-ta\-ti\-ve probability space  and $(a_i)_{i\in \naturalnumbers}$ a family of classically independent random variables in  $(\mathcal{A},\varphi)$, i.e., suppose:
   \begin{itemize}
   \item For all $i,j\in \naturalnumbers$ with $i\neq j$, the variables $a_i$ and $a_j$ commute.
   \item For all $i:[k]\to\naturalnumbers$ with $i(1)<i(2)<\ldots <i(k)$ and all $r_1,\ldots, r_k\in \naturalnumbers$,
     \begin{align*}
      \varphi(a^{r_1}_{i(1)}a^{r_2}_{i(2)}\ldots a^{r_k}_{i(k)})=\varphi(a^{r_1}_{i(1)})\varphi(a^{r_2}_{i(2)})\ldots \varphi(a^{r_k}_{i(k)}). 
     \end{align*}
     (Note that all joint moments of $(a_i)_{i\in \naturalnumbers}$ can, by commutativity, be brought in this form.)
   \end{itemize}
   Assume furthermore:
   \begin{itemize}
   \item The random variables $(a_i)_{i\in \naturalnumbers}$ are identically distributed: $\varphi(a_i^r)=\varphi(a_j^r)$ for all $i,j\in \naturalnumbers$ and all $r\in \naturalnumbers$.
   \item The random variables are centered: $\varphi(a_i)=0$ for all $i\in \naturalnumbers$.
   \item Their variances are normalized: $\varphi(a_i^2)=1$ for all $i\in \naturalnumbers$.
   \end{itemize}
   And define for every $n\in \naturalnumbers$
   \begin{align*}
    S_n\equalperdefinition \frac{a_1+\ldots+a_n}{\sqrt{n}} .
   \end{align*}
   \par
   Then, $(S_n)_{n\in \naturalnumbers}$ converges in distribution to a normal ($\hateq$ Gaussian) random variable, which in our algebraic setting means that, for all $k\in \naturalnumbers$,
      \begin{align*}
     \lim_{n\to\infty}\varphi(S_n^k)&=\frac{1}{\sqrt{2\pi}}\int_\realnumbers t^ke^{-{t^2}/{2}}\, dt=
       \begin{cases}
         0,& \text{if $k$ is odd},\\
         (k-1)!!, &\text{otherwise},
       \end{cases}
      \end{align*}
      where $(k-1)!!\equalperdefinition (k-1)\cdot (k-3)\cdot\ldots\cdot 5\cdot 3\cdot 1$.
 \end{theorem}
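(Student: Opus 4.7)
The plan is the classical method of moments, organized through the notion of the kernel partition just introduced. I start by expanding
\begin{align*}
\varphi(S_n^k) = \frac{1}{n^{k/2}} \sum_{i:[k]\to[n]} \varphi(a_{i(1)} a_{i(2)} \cdots a_{i(k)})
\end{align*}
and regrouping the sum according to $\pi \equalperdefinition \ker(i)\in \setofpartitionsof(k)$:
\begin{align*}
\varphi(S_n^k) = \frac{1}{n^{k/2}} \sum_{\pi \in \setofpartitionsof(k)} \sum_{\substack{i:[k]\to [n]\\ \ker(i)=\pi}} \varphi(a_{i(1)} \cdots a_{i(k)}).
\end{align*}

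For a fixed $i$ with $\ker(i)=\pi=\{V_1,\ldots,V_r\}$, classical independence together with commutativity lets us sort the product according to the blocks of $\pi$ and factorize; identical distribution then shows that $\varphi(a_{i(1)}\cdots a_{i(k)})$ depends only on $\pi$, namely
\begin{align*}
\varphi(a_{i(1)}\cdots a_{i(k)}) = \prod_{V \in \pi} \varphi(a_1^{\vert V\vert}) \equalperdefinition c_\pi.
\end{align*}
The number of $i:[k]\to [n]$ with $\ker(i)=\pi$ equals $n(n-1)\cdots(n-r+1)$, so each $\pi$ contributes $c_\pi \cdot n^r/n^{k/2}$ up to lower-order terms in $n$.

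Next I eliminate most partitions. The centering assumption $\varphi(a_i)=0$ makes $c_\pi=0$ as soon as $\pi$ has a singleton block; hence only partitions with all blocks of size $\geq 2$ survive, which forces $r \leq k/2$. Combined with $n^r/n^{k/2}\to 0$ for $r<k/2$, only the terms with $r=k/2$ contribute in the limit, and these exist only if $k$ is even and all blocks of $\pi$ are pairs. For such a pair partition, $c_\pi=\varphi(a_1^2)^{k/2}=1$ by the normalization $\varphi(a_i^2)=1$, and $n(n-1)\cdots(n-k/2+1)/n^{k/2}\to 1$. Therefore
\begin{align*}
\lim_{n\to\infty}\varphi(S_n^k) = \#\{\pi \in \setofpartitionsof(k) : \text{all blocks of }\pi\text{ have size }2\}
\end{align*}
for $k$ even, and $0$ otherwise. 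A short counting argument (pair the element $1$ with one of the remaining $k-1$ elements, then recurse) identifies this number with $(k-1)!!$, which matches the Gaussian moment integral. The only substantive obstacle is the asymptotic bookkeeping that isolates pair partitions as the exclusive survivors; everything else is essentially a controlled expansion.
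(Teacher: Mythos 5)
Your proof is correct and follows essentially the same route as the paper's: expand $\varphi(S_n^k)$ over multi-indices, group by the kernel partition, kill singleton blocks via centering, observe that the counting factor $n^{\#\pi}/n^{k/2}$ isolates pair partitions in the limit, and count those as $(k-1)!!$. The only cosmetic difference is that you specialize to the classical case from the start (writing the explicit block factorization $c_\pi=\prod_{V\in\pi}\varphi(a_1^{\#V})$), whereas the paper runs the identical argument for the classical and free cases in parallel and only branches at the final evaluation of $g(\pi)$.
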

 One of the first results in free probability theory was a free analogue of this by {Voiculescu}.
 \begin{theorem}[Free Central Limit Theorem, {Voiculescu} 1985]
   \label{theorem:free-central-limit-theorem}
   Let $(\mathcal{A},\varphi)$ be a non-com\-mu\-ta\-ti\-ve probability space and let $(a_i)_{i\in I}$ be a family of freely independent random variables in $(\mathcal{A},\varphi)$. Assume furthermore:
   \begin{itemize}
   \item The random variables $(a_i)_{i\in I}$ are identically distributed: $\varphi(a_i^r)=\varphi(a_j^r)$ for all $i,j\in \naturalnumbers$ and all $r\in \naturalnumbers$.
   \item The random variables are centered: $\varphi(a_i)=0$ for all $i\in \naturalnumbers$.
     \item Their variances are normalized: $\varphi(a_i^2)=1$ for all  $i\in \naturalnumbers$.
     \end{itemize}
     And define for every $n\in \naturalnumbers$
     \begin{align*}
       S_n\equalperdefinition \frac{a_1+\ldots+a_n}{\sqrt{n}}.
     \end{align*}
     \par
     Then, $(S_n)_{n\in \naturalnumbers}$ converges in distribution to a semicircular variable, which means that, for every $k\in \naturalnumbers$,
     \begin{align*}
       \lim_{n\to\infty}\varphi(S_n^k)&=\frac{1}{2\pi}\int_{-2}^2t^k\sqrt{4-t^2}\, dt=
         \begin{cases}
           0, &\text{if }k\text{ is odd},\\
           \frac{1}{m+1}\binom{2m}{m},&\text{if }k=2m \text{ for some }m\in \naturalnumbers.
         \end{cases}
     \end{align*}
   \end{theorem}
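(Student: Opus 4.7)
The plan is to expand $\varphi(S_n^k)$ combinatorially and classify summands by the kernel partition of the summation multi-index. By linearity,
\begin{align*}
\varphi(S_n^k) = \frac{1}{n^{k/2}}\sum_{i\colon[k]\to[n]} \varphi(a_{i(1)}\cdots a_{i(k)}).
\end{align*}
Proposition~\ref{proposition:freeness-state-factorization} together with identical distribution ensures that the summand depends on $i$ only through $\pi:=\ker(i)\in\setofpartitionsof(k)$; call this common value $\varphi_\pi$. Since exactly $n(n-1)\cdots(n-\#\pi+1)\sim n^{\#\pi}$ multi-indices have kernel $\pi$,
\begin{align*}
\varphi(S_n^k) = \sum_{\pi\in\setofpartitionsof(k)} \varphi_\pi\cdot n^{\#\pi-k/2}\bigl(1+o(1)\bigr),
\end{align*}
so only partitions with $\#\pi\geq k/2$ can contribute in the limit.

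\textbf{Vanishing lemma.} The heart of the proof is to show that $\varphi_\pi=0$ unless $\pi$ is a non-crossing pair partition, in which case $\varphi_\pi=1$. I would argue by iterated interval stripping (Remark~\ref{remark:interval-stripping}). Whenever $\pi$ has an interval block $V\subseteq[k]$, the subalgebra $\mathcal{A}_{i(j_0)}$ (for $j_0\in V$) is free from the algebra generated by the remaining variables, so by an iterated version of \eqref{equation:alternating-moment-fomula-three} we may pull the factor $\prod_{j\in V}a_{i(j)}$ out:
\begin{align*}
\varphi_\pi = \varphi\bigl(a_1^{|V|}\bigr)\cdot\varphi_{\pi\setminus V}.
\end{align*}
If $|V|=1$ this yields the factor $\varphi(a_1)=0$, so $\varphi_\pi=0$; if $|V|=2$ the factor is $\varphi(a_1^2)=1$ and we continue stripping. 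Iteration terminates either at $\emptyset$---which by Remark~\ref{remark:interval-stripping} means $\pi$ itself was non-crossing, and combined with the constraint $\#\pi\geq k/2$ forces $\pi$ to be a non-crossing pair partition, giving $\varphi_\pi=1$---or at a non-empty partition $\pi'$ containing no interval block. If $\pi'$ contains a block of size $\geq 3$, the block-size arithmetic forces $\#\pi<k/2$ and the contribution is asymptotically negligible; otherwise $\pi'$ is a pair partition in which every pair spans distance $\geq 2$, so no two neighbors share a block, the reduced multi-index is alternating, and applying the definition of freeness to the centered $a_{i(l)}$'s directly yields $\varphi_{\pi'}=0$, hence $\varphi_\pi=0$.

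\textbf{Counting and conclusion.} Only non-crossing pair partitions contribute, each with $\#\pi=k/2$ and $\varphi_\pi=1$. For odd $k$ the set is empty, so $\lim_n\varphi(S_n^k)=0$. For $k=2m$,
\begin{align*}
\lim_{n\to\infty}\varphi(S_n^{2m}) = \#\bigl\{\pi\in\setofnoncrossingpartitionsof(2m) : \text{every block of }\pi\text{ has size }2\bigr\},
\end{align*}
which a standard bijection with Dyck paths (or an induction on the position of the partner of $1$) identifies with the Catalan number $\frac{1}{m+1}\binom{2m}{m}$. The equality of this with $\frac{1}{2\pi}\int_{-2}^{2}t^{2m}\sqrt{4-t^2}\,dt$ is a standard computation via the substitution $t=2\sin\theta$ and the beta-integral. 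The main obstacle is the final terminal case of the vanishing lemma: the stalled crossing pair partition with no interval block is precisely the situation where true freeness (rather than classical independence or mere centering) forces $\varphi_\pi=0$. It is this extra vanishing---absent in the classical proof---that replaces the Gaussian moments $(k-1)!!$ by the Catalan/semicircular moments.
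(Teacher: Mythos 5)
Your proof proceeds along essentially the same lines as the paper's: expand $\varphi(S_n^k)$, classify summands by the kernel partition, use the $n^{\#\pi - k/2}$ scaling to isolate the contributing partitions, show by interval-stripping that crossing pairings and singleton-containing partitions contribute zero while non-crossing pairings contribute one, and count Catalan. The one thing to flag is that your ``vanishing lemma'' as stated --- ``$\varphi_\pi = 0$ unless $\pi$ is a non-crossing pair partition, in which case $\varphi_\pi = 1$'' --- is literally false when read as a statement about all $\pi\in\setofpartitionsof(k)$: for $\pi = 1_k$ you get $\varphi_\pi = \varphi(a_1^k)$, which is in general nonzero (the hypotheses only kill the first moment, not higher ones). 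What you actually prove and use is the restricted claim for $\pi$ with $\#\pi\geq k/2$, and indeed your own proof of the lemma invokes this constraint in two of its three terminal cases. The paper organizes this more transparently by handling singleton blocks first (giving $g(\pi)=0$), then using the scaling to restrict to pair partitions, and only then distinguishing crossing from non-crossing; you should either restate your lemma with the scope $\#\pi \geq k/2$ made explicit, or adopt the paper's three-step organization, since as written a reader could take the lemma at face value and be misled.
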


   \begin{proof}[Proof (of both Theorems~\ref{theorem:classical-central-limit-theorem} and~\ref{theorem:free-central-limit-theorem})]
     Assume that $(a_i)_{i\in \naturalnumbers}$ are either classically independent or freely independent. Let $k,n\in \naturalnumbers$ be arbitrary. Then,
          \begin{align*}
       \varphi[(a_1+\ldots+a_n)^k]&=\sum_{i:[k]\to [n]}\varphi(a_{i(1)}\ldots a_{i(k)})=\sum_{\pi\in \setofpartitionsof(k)} \ \sum_{\substack{i:[k]\to[n]\\\ker(i)=\pi}}\varphi(a_{i(1)}\ldots a_{i(k)}).
          \end{align*}
          For every  $\pi \in \setofpartitionsof(k)$ and $i:[k]\to [n]$ with $\ker(i)=\pi$ the value of
          \begin{align*}
            g(\pi)\equalperdefinition\varphi(a_{i(1)}\ldots a_{i(k)})
          \end{align*}
          depends only on $\pi$, meaning that for every $j: [k]\to[n]$ with $\ker(i)=\ker(j)$ it holds that $\varphi(a_{i(1)}\ldots a_{i(k)})=\varphi(a_{j(1)}\ldots a_{j(k)})$ 
     since both classical and free independence are rules for calculating joint moments from individual moments: e.g.
     \begin{align*}
         \begin{tikzpicture}[baseline=-2pt]
    \begin{scope}[xshift=-2.2em, xscale=1]
      \draw (0,0em) -- ++ (0,-1.5em) -- ++(2em,0) -- ++(0,1.5em);
      \draw (2em,-1.5em) -| ++(1em,1.5em);
      \draw (5em,0) -- ++ (0,-1em);
      \draw (1em,0) -- ++ (0,-1em) -| (4em,0);
    \end{scope}
     \node[fill=white, inner sep=1pt] at (0,0) {$\varphi(a_1a_2a_1a_1a_2a_3)$};  
   \end{tikzpicture}
       =
       \begin{tikzpicture}[baseline=-2pt]
    \begin{scope}[xshift=-2.2em, xscale=1]
      \draw (0,0em) -- ++ (0,-1.5em) -- ++(2em,0) -- ++(0,1.5em);
      \draw (2em,-1.5em) -| ++(1em,1.5em);
      \draw (5em,0) -- ++ (0,-1em);
      \draw (1em,0) -- ++ (0,-1em) -| (4em,0);
    \end{scope}
     \node[fill=white, inner sep=1pt] at (0,0) {$\varphi(a_7a_5a_7a_7a_5a_2)$};  
   \end{tikzpicture}
       =\colon
       g(\begin{tikzpicture}[baseline=-2pt-0.5em]
      \draw (0,0em) -- ++ (0,-1em) -- ++(2em,0) -- ++(0,1em);
      \draw (2em,-1em) -| ++(1em,1em);
      \draw (5em,0) -- ++ (0,-0.5em);
      \draw (1em,0) -- ++ (0,-0.5em) -| (4em,0);
   \end{tikzpicture})
     \end{align*}
     Hence, we can deduce
     \begin{align*}
       \varphi[(a_1+\ldots+a_n)^k]&=\sum_{\pi\in \setofpartitionsof(k)}g(\pi) \cdot\hspace{-4.5em} \underset{\hspace{4.5em}\begin{aligned}&= n\cdot (n-1)\cdot(n-2)\cdot \ldots \cdot (n-(\#\pi-1))\\ &\sim n^{\#\pi} \ (n\to\infty)\end{aligned}}{\underbrace{\#\{ i:[k]\to[n]\mid \ker(i)=\pi \}.}}
     \end{align*}
     \par
     If a partition  $\pi\in \setofpartitionsof(k)$ has a \emph{singleton block}, i.e.\ if there exists a block $V\in \pi$ with $\# V=1$, say $V=\{s\}$ for some $s\in [k]$, then, with $\ker(i)=\pi$, it follows by Example~\ref{example:moment-formulas-for-small-order} and the same rule for the classical case:
     \begin{align*}
       g(\pi)&=\varphi(a_{i(1)}\ldots a_{i(s-1)}\hspace{-4em}\overset{\substack{\displaystyle\text{free/ind. from the rest,}\\\displaystyle\text{appears only once}\\\displaystyle\downarrow}}{a_{i(s)}}\hspace{-4em}a_{i(s+1)}\ldots a_{i(k)})\\ &\hspace{-0.25em}\overset{\text{\ref{example:moment-formulas-for-small-order}}}{=} \varphi(a_{i(1)}\ldots a_{i(s-1)}a_{i(s+1)}\ldots a_{i(k)})\cdot \underset{=0}{\underbrace{\varphi(a_{i(s)})}}\\[-1.5em]
       &=0.
     \end{align*}
     Hence, if there exists $V\in\pi$ with $\# V=1$, then $g(\pi)=0$. Equivalently, in order  for $g(\pi)\neq 0$ to hold, we need $\#V\geq 2$ for all $V\in \pi$.
     For such $\pi\in\setofpartitionsof(k)$ it must always be true that $\# \pi\leq \frac{k}{2}$.
     We conclude
     \begin{align*}
       \varphi(S^k_n)&=\frac{1}{n^{\frac{k}{2}}}\sum_{\substack{\pi\in\setofpartitionsof(k)\\\#\pi\leq \frac{k}{2}}} g(\pi)\cdot \underset{\displaystyle \sim n^{\#\pi} \ (n\to\infty)}{\underbrace{\#\{i:[k]\to [n]\mid \ker(i)=\pi\}}}\\
\quad\\
       &\hspace{-0.5em}\overset{n\to\infty}{\longrightarrow}\sum_{\substack{\pi\in\setofpartitionsof(k)\\\#\pi\leq \frac{k}{2}}} g(\pi)\cdot\hspace{-3em}  \underset{\hspace{3em}\displaystyle=
       \begin{cases}
         1, & \text{if }\#\pi=\frac{k}{2},\\
         0, & \text{if }\#\pi<\frac{k}{2}.
       \end{cases}
}{\underbrace{\lim_{n\to\infty}\frac{n^{\# \pi}}{n^{\frac{k}{2}}}}.}
     \end{align*}
     Any $\pi\in \setofpartitionsof(k)$ with $\#\pi=\frac{k}{2}$ and without singleton blocks must be a \emph{pairing} or, synonmously, \emph{pair partition}, i.e.\ satisfy $\# V=2$ for all $V\in \pi$. Thus, we have shown
     \begin{align*}
       \lim_{n\to\infty}\varphi(S^k_n)=\sum_{\pi\in\setofpartitionsof_2(k)}g(\pi),
     \end{align*}
     where $\setofpartitionsof_2(k)$ denotes the subset of $\setofpartitionsof(k)$ consisting of all pairings. Note that $\setofpartitionsof_2(k)=\emptyset$ if $k$ is odd. Hence, the parts of the two claims saying
     \begin{align*}
       \lim_{n\to\infty}\varphi(S^k_n)=0 \text{ if }k \text{ is odd}
     \end{align*}
     has already been established. For even $k$, we distinguish between classical and free independence of $(a_i)_{i\in \naturalnumbers}$.
     \begin{enumerate}[wide]
     \item Classical case: Since $g(\pi)=1$ for all $\pi\in \setofpartitionsof_2(k)$, we deduce for even $k$, if $(a_i)_{i\in \naturalnumbers}$ are classically independent,
       \begin{align*}
         \lim_{n\to\infty}\varphi(S^k_n)&=\#\setofpartitionsof_2(k)=(k-1)\cdot (k-3)\cdot \ldots \cdot 5\cdot 3\cdot 1=(k-1)!!,
       \end{align*}
       thus proving Theorem~\ref{theorem:classical-central-limit-theorem}.
     \item Free case: Let $m\in \naturalnumbers$ be such that $k=2m$. Then, for all  $\pi\in \setofpartitionsof_2(2m)$,
       \begin{align*}
         g(\pi)=
         \begin{cases}
           1,&\text{if }\pi\text{ is non-crossing},\\
           0,&\text{otherwise}.
         \end{cases}
       \end{align*}
       For non-crossing $\pi\in \setofpartitionsof_2(2m)$ this follows from Remark~\hyperref[remark:non-crossing-moments-1]{\ref*{remark:non-crossing-moments}~\ref*{remark:non-crossing-moments-1}}. For crossing $\pi\in \setofpartitionsof_2(2m)$ we strip intervals as in Remark~\hyperref[remark:interval-stripping-2]{\ref*{remark:interval-stripping}~\ref*{remark:interval-stripping-2}} until we arrive at a situation where neighbors are from different algebras; $\varphi$ on this is then zero by the definition of freeness.  Example:
       \begin{align*}
                  \begin{tikzpicture}[baseline=-2pt]
    \begin{scope}[xshift=-3.2em, xscale=1]
      \draw (0,0em) -- ++ (0,-2em) -- ++(4em,0) -- ++(0,2em);
      \draw (1em,0) -- ++ (0,-1.5em) -| (7em,0);
      \draw (2em,0) -- ++ (0,-1em) -| ++(1em,1em);
      \draw (5em,0) -- ++ (0,-1em) -| ++(1em,1em);      
    \end{scope}
     \node[fill=white, inner sep=1pt] at (0,0) {$\varphi(a_1a_2a_3a_3a_1a_4a_4a_2)$};  
   \end{tikzpicture}
         =\varphi(a_4a_4)\cdot\varphi(a_3a_3)\cdot
             \begin{tikzpicture}[baseline=-2pt]
    \begin{scope}[xshift=-1.25em, xscale=0.95]
      \draw (0,0em) -- ++ (0,-1em) -- ++(2em,0) -- ++(0,1em);
      \draw (1em,0em) -- ++ (0,-1.5em) -- ++(2em,0) -- ++(0,1.5em);
    \end{scope}
    \node[fill=white, inner sep=1pt] at (0,0) {$\varphi(a_1a_2a_1a_2)$};
    \useasboundingbox (-2.625em,-1.625em) rectangle (2.625em, 0.75em);
   \end{tikzpicture}  
          =0.
       \end{align*}
       Hence, we infer: If $(a_{i})_{i\in \naturalnumbers}$ are freely independent, then
       \begin{align*}
         \lim_{n\to\infty}\varphi(S^k_n)=\#\setofnoncrossingpartitionsof_2(k),
       \end{align*}
       where $\setofnoncrossingpartitionsof_2(k)$ denotes the subset of $\setofpartitionsof_2(k)$ encompassing all \emph{non-crossing pairings}.
       That those numbers are the moments of the semicircle follows from the next lemma. Thus we have proved Theorem~\ref{theorem:free-central-limit-theorem}.\qedhere
     \end{enumerate}
   \end{proof}

   \begin{lemma}
     \label{lemma:catalan-numbers}
The number of non-crossing pairings can be determined as follows.
     \begin{enumerate}
     \item\label{lemma:catalan-numbers-1} If we define $C_m\equalperdefinition \# \setofnoncrossingpartitionsof_2(2m)$ for every $m\in \naturalnumbers$, then the sequence $(C_m)_{m\in\naturalnumbers}$ satisfies the recursion equation
       \begin{align*}
         C_m=\sum_{k=1}^mC_{k-1}C_{m-k} \quad\text{for all }m\in \naturalnumbers,
       \end{align*}
       where $C_0\equalperdefinition 1$.
     \item The unique solution to the recursion problem in Part~\ref{lemma:catalan-numbers-1} is given by the sequence $(C_m)_{m\in \naturalnumbers}$ with
       \begin{align*}
         C_m=\frac{1}{m+1}\binom{2m}{m}\quad \text{for all }m\in \naturalnumbers.
       \end{align*}
     \end{enumerate}
   \end{lemma}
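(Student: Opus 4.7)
For part~\ref{lemma:catalan-numbers-1} I would argue combinatorially. Given $\pi\in\setofnoncrossingpartitionsof_2(2m)$, let $j\in[2m]$ be the partner of $1$, i.e.\ $\{1,j\}\in\pi$. The non-crossing condition forbids any block from having exactly one element in the interior $\{2,\ldots,j-1\}$, so this interior set is paired internally; in particular its cardinality $j-2$ is even, forcing $j=2k$ for some $k\in\{1,\ldots,m\}$. The remaining elements $\{2k+1,\ldots,2m\}$ must then also be paired internally. Via the obvious order-preserving bijections $\{2,\ldots,2k-1\}\cong[2(k-1)]$ and $\{2k+1,\ldots,2m\}\cong[2(m-k)]$, both sub-pairings are still non-crossing and are therefore counted by $C_{k-1}$ and $C_{m-k}$ respectively. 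Summing over the $m$ possible values of $k$ and observing that the construction is clearly invertible gives the recursion.

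For the second part, uniqueness of the solution is immediate: the recursion together with $C_0=1$ determines $C_m$ from $C_0,\ldots,C_{m-1}$. For the explicit formula, I would introduce the generating function
\begin{align*}
  f(z)\equalperdefinition\sum_{m=0}^\infty C_m z^m
\end{align*}
and translate the convolution recursion, together with the initial condition, into the functional equation $f(z)=1+zf(z)^2$. Solving this quadratic in $f$ and selecting the branch that is analytic at $0$ with $f(0)=1$ yields
\begin{align*}
  f(z)=\frac{1-\sqrt{1-4z}}{2z}.
\end{align*}
Expanding $(1-4z)^{1/2}$ by the generalized binomial series and simplifying produces $[z^m]f(z)=\frac{1}{m+1}\binom{2m}{m}$, as desired.

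The main obstacle is the final coefficient extraction, which hinges on the identity $\binom{1/2}{m+1}(-4)^{m+1}=-\frac{2}{m+1}\binom{2m}{m}$; this is routine but needs some care with signs and shifts of index. An alternative, which bypasses generating functions altogether, is to define $\widetilde C_m\equalperdefinition \frac{1}{m+1}\binom{2m}{m}$, check $\widetilde C_0=1$, and verify directly by induction that the sequence $(\widetilde C_m)_{m\in\naturalnumbers}$ satisfies the recursion from part~\ref{lemma:catalan-numbers-1}; by uniqueness this forces $C_m=\widetilde C_m$. Either route works, and the genuinely combinatorial content of the lemma lies entirely in the interval-splitting argument for part~\ref{lemma:catalan-numbers-1}.
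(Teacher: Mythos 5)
Your proposal is correct and takes essentially the same approach as the paper: for part~(1) you decompose a non-crossing pairing according to the block containing $1$, exactly as in the paper (the paper sums over the partner's position $l$ and notes the count vanishes for odd $l$, whereas you observe directly that $l$ must be even — a cosmetic difference), and for part~(2) you use the generating-function equation $f(z)=1+zf(z)^2$, which is precisely what the paper delegates to Assignment~3, Exercise~1.
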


   \begin{definition}
     The numbers $(C_m)_{m\geq 0}$ from Lemma~\ref{lemma:catalan-numbers} are called the \emph{Catalan numbers}.
   \end{definition}

   \begin{example}
     The first four Catalan numbers are
     \begin{align*}
       C_0&=1,\\
       C_1&=C_0\cdot C_0=1,\\
       C_2&=C_1C_0+C_0C_1=1+1=2,\\
       C_3&=C_2C_0+C_1C_1+C_0C_2=2+1+2=5,\\
       C_4&=C_3C_0+C_2C_1+C_1C_2+C_0C_3=5+2+2+5=14.
     \end{align*}
     The sequence continues with $C_5=42$, $C_6=132$ and $C_7=429$.
   \end{example}

   \begin{proof}[Proof of Lemma~\ref{lemma:catalan-numbers}]
     \begin{enumerate}[wide]
     \item Let $m\in \naturalnumbers$ be arbitrary. Every non-crossing pair partition $\pi\in \setofnoncrossingpartitionsof_2(2m)$ is of the form
         $\pi=\{\{1,l\}\}\cup \tilde \pi$,
       for some $l\in [2m]$, where $\{1,l\}$ is the block of $\pi$ containing $1$, and where $$\tilde {\pi}\in \setofnoncrossingpartitionsof_2(\{2,\ldots,l-1,l+1,\ldots,2m\}).$$
\begin{center}
  \begin{tikzpicture}
    \node[circle, fill=white, draw=black, scale=0.4, label=above:{$1$}] (n1) at (0,0) {};
    \node[circle, fill=white, draw=black, scale=0.4] (n2) at (2,0) {};
    \node[circle, fill=white, draw=black, scale=0.4, label=above:{$l$}] (n3) at (2.5,0) {};
    \node[circle, fill=white, draw=black, scale=0.4] (n4) at (3,0) {};
    \node[circle, fill=white, draw=black, scale=0.4, label=above:{$2m$}] (n5) at (5,0) {};
    \path (n1) -- node [pos=0.5] {$\ldots$} (n2);
    \path (n4) -- node [pos=0.5] {$\ldots$} (n5);
    \draw[gray, dashed] ($(n1)+(0.2,-0.15)$)  rectangle ($(n2)+(0.2,0.15)$);
    \draw[gray, dashed] ($(n4)+(-0.2,-0.15)$)  rectangle ($(n5)+(0.2,0.15)$);
    \draw (n1) -- ++ (0,-1) -| (n3);
    \node[align=left] at (4,-2) {Those areas cannot be connected by $\tilde \pi$, otherwise\\there would be a crossing with block $\{1,l\}$.};
    \draw[densely dotted, ->] (1,-1.5) -- (1.25,-0.2);   
    \draw[densely dotted, ->] (1,-1.5) -- (3.75,-0.2);
    \node at ($(n1)+(-1.5em,0em)$) {$\pi$};
  \end{tikzpicture}
\end{center}
Hence, we can decompose $\tilde\pi=\pi_1\cup \pi_2$, where 
$$\pi_1\in \setofnoncrossingpartitionsof_2(\{2,\ldots,l-1\})\qquad
\text{and}\qquad \pi_2\in \setofnoncrossingpartitionsof_2(\{l+1,\ldots,2m\}).$$ 
Taking all possible locations of the partner $l$ of $1$ in $\pi$ into account, it follows
\begin{align*}
  C_m&=\sum_{l=2}^{2m}\underset{\displaystyle =
  \begin{cases}
    C_{k-1},&\text{if $l=2k$ is even},\\
    0,&\text{if $l$ is odd}.
  \end{cases}
\hspace{0.2em}}{\underbrace{\#\setofnoncrossingpartitionsof_2(\{2,\ldots,l-1\})}}\hspace{-0.2em}\cdot\hspace{-0.2em}\underset{\hspace{0.2em}\displaystyle =
  \begin{cases}
    C_{m-k},&\text{if $l=2k$ is even},\\
    0,&\text{if $l$ is odd}.
  \end{cases}}{\underbrace{\setofnoncrossingpartitionsof_2(\{l+1,\ldots,2m\})}}
=\sum_{k=1}^mC_{k-1}C_{m-k}.
\end{align*}
\item The second claim can be seen, for example, by a formal power series argument, see Assignment~\hyperref[assignment-3]{3}, Exercise~1.
\qedhere
     \end{enumerate}
   \end{proof}

   \begin{definition}
     \label{definition:semicircular-variable}
     Let $(\mathcal{A},\varphi)$ be a $\ast$-probability space and let $\sigma\in \realnumbers$, $\sigma>0$. A self-adjoint random variable $s\in \mathcal{A}$ is called \emph{semicircular element} of variance $\sigma^2$ if its moments are of the form
     \begin{align*}
       \varphi(s^k)=
       \begin{cases}
         0,&\text{if }k\text{ is odd},\\
         \sigma^{2m}C_m&\text{if }k=2m\text{ for some }m\in \naturalnumbers,
       \end{cases}
     \end{align*}
     for all $k\in \naturalnumbers$. (Note $\varphi(s^2)=\sigma^2$.)\par
     In the case $\sigma=1$ we call it a \emph{standard} semicircular element.
   \end{definition}

   \begin{definition}
     Let $(\mathcal{A}_n,\varphi_n)$ for every $n\in \naturalnumbers$ as well as $(\mathcal{A},\varphi)$ be non-commutative probability spaces. Let $(b_n)_{n\in \naturalnumbers}$ be a sequence of random variables with $b_n\in \mathcal{A}_n$ for every $n\in \naturalnumbers$ and let $b\in\mathcal{A}$. We say that $(b_n)_{n\in \naturalnumbers}$ \emph{converges in distribution} to $b$, denoted by
     \begin{align*}
       b_n \convergesindistributionto b,
\qquad
     \text{if it holds}\qquad
       \lim_{n\to\infty} \varphi_n(b^k_n)=\varphi(b^k)\quad\text{for all } k\in \naturalnumbers.
     \end{align*}
Often, we will also just write $b_n\convergesindistributiontoo b$ instead of $b_n \convergesindistributionto b$
   \end{definition}

   \begin{remark}
     \label{remark:limit-distribution}
     \begin{enumerate}
     \item\label{remark:limit-distribution-1} The moments of a semicircular element $s$ of variance $\sigma^2\in \realnumbers^+$ are 
       \begin{align*}
         \varphi(s^k)=\frac{1}{2\pi\sigma^2}\int_{-2\sigma}^{2\sigma}t^k\sqrt{4\sigma^2-t^2}\, dt
       \end{align*}
       for every $k\in \naturalnumbers$, see Assignment~\hyperref[assignment-3]{3}, Exercise~2.
       \begin{center}
         \begin{tikzpicture}
           \draw[->] (-3,0) -- (3,0);
           \draw[->] (0,-0.5) -- (0,2.5);
           \draw (2,0) arc (0:180:2);
           \node[below] at (-2,0) {$-2\sigma$};
           \node[below] at (2,0) {$+2\sigma$};           
         \end{tikzpicture}
       \end{center}
     \item\label{remark:limit-distribution-2} \enquote{Convergence in distribution} is a kind of probabilistic concept. If our random variables are operators from operator algebras, our notion of convergence is quite different from the ones usually considered there (uniform, strong, weak convergence). Note that for convergence in distribution the sequence and the limit do not even have to live in the same space! (And neither do the individual elements of the sequence have to share the same non-commutative probability space.)
     \item\label{remark:limit-distribution-3} In this language, the conclusion of our Free Central Limit Theorem~\ref{theorem:free-central-limit-theorem} can be expressed as
       \begin{align*}
      \frac{a_1+\ldots+a_n}{\sqrt{n}}\convergesindistributionto s
       \end{align*}
       for a standard semicircular element $s$ (in some non-commutative probability space) and with $(a_i)_{i\in \naturalnumbers}$ as in Theorem~\ref{theorem:free-central-limit-theorem}.
     \item\label{remark:limit-distribution-4} There are other important limit theorems in classical probability theory about the sum of independent and identically-distributed (\emph{i.i.d.}) random variables, in particular, the \enquote{Law of Rare Events}: Consider a family $(a_{n,i})_{n\in \naturalnumbers, i\in [n]}$ of $\{0,1\}$-valued random variables 
       \begin{center}
       \begin{tikzpicture}
         \node (n1) at (0,0em) {$a_{1,1}$};
         \node (n21) at (-0.5,-1.5em) {$a_{2,1}$};
         \node (n22) at (0.5,-1.5em) {$a_{2,2}$};
         \node (n31) at (-1,-3em) {$a_{3,1}$};
         \node (n32) at (0,-3em) {$a_{3,2}$};
         \node (n33) at (1,-3em) {$a_{3,3}$};
         \node at (0,-4.5em) {$\vdots$};
         \node at (2.5,-1.5em) {$\leftarrow$ i.i.d.};
         \node at (2.5,-3em) {$\leftarrow$ i.i.d.};
         \node at (-1.5,-4.5em) {\rotatebox{90}{$\ddots$}};
         \node at (1.5,-4.5em) {$\ddots$};
         \draw [densely dotted, darkgray] ($(n21)+(-1em,-0.5em)$) rectangle ($(n22)+(1em,0.65em)$);
         \draw [densely dotted, darkgray] ($(n31)+(-1em,-0.5em)$) rectangle ($(n33)+(1em,0.65em)$);
         \useasboundingbox (-10em,-5.5em) rectangle (10em,1em); 
       \end{tikzpicture}
     \end{center}
     such that, for each $n\in \naturalnumbers$, the random variables
       $a_{n,1},\ldots, a_{n,n}$ are i.i.d.,
     and such that, for some $\lambda\in \realnumbers^+$, the distribution of $a_{n,i}$ is given, for $n\geq \lambda$ and $i\in [n]$, by the Bernoulli distribution
     \begin{align}
       \label{eq:remark-limit-distribution-1}
       \frac{\lambda}{n}\delta_1+\left(1-\frac{\lambda}{n}\right)\delta_0.
     \end{align}
     \par
     Then, $(a_{n,1}+\ldots+ a_{n,n})_{n\in \naturalnumbers}$ converges in distribution to a Poisson variable with parameter $\lambda$, which is to say
       $a_{n,1}+\ldots+a_{n,n} \convergesindistributiontoo a$,
     for some random variable $a$ (in some non-commutative probability space)  whose distribution is given  by
     \begin{align*}
       \sum_{p=0}^\infty e^{-\lambda}\frac{\lambda^p}{p!}\delta_p.
     \end{align*} 
     An interpretation could be: The random variable $a_{n,i}$ indicates the decay of the radioactive atom $i$ within one time unit; $a_{n,1}+\ldots+a_{n,n}$ gives then the number of decayed atoms within one time unit.
     \par
     If we replace classical independence by freeness, the sequence $(a_{n,1}+\ldots+ a_{n,n})_{n\in \naturalnumbers}$ should converge to a \enquote{free Poisson variable}.
   \item\label{remark:limit-distribution-5}  We have to reformulate the condition in Equation~\eqref{eq:remark-limit-distribution-1} on the single variable distribution of  $a_{n,i}$ in terms of moments. It translates as
     \begin{align}
       \label{eq:remark-limit-distribution-2}
       \varphi(a_{n,i}^r)=\frac{\lambda}{n}\cdot 1^r+\left(1-\frac{\lambda}{n}\right)\cdot 0^r=\frac{\lambda}{n}
     \end{align}
     for $n\in \naturalnumbers$, $i\in[n]$ and for all $r\in \naturalnumbers$.
   \item\label{remark:limit-distribution-6} We will actually treat an even more general situation than in Equation~\eqref{eq:remark-limit-distribution-2}, namely we will allow the parameter $\lambda$ to depend on the order $r$ of the moment: For a given sequence  $(\kappa_r)_{r\in \naturalnumbers}$ in $\complexnumbers$ we assume for all $n\in \naturalnumbers$, $i\in [n]$ and $r\in \naturalnumbers$ that
     \begin{align}
       \label{eq:remark-limit-distribution-3}       
       \varphi(a^r_{n,i})=\frac{\kappa_r}{n}.
     \end{align}
     If we want positivity, i.e.\ if we want $\varphi$ to be a state, then we must make additional assumptions about the $\kappa_r$. Without positivity, however, we can choose  $(\kappa_r)_{r\in \naturalnumbers}$ arbitrarily.
     Let us now check what we get in the situation given by Equation~\eqref{eq:remark-limit-distribution-3} in the limit, again looking at the classical and free situation in parallel.
   \end{enumerate}   
 \end{remark}

 \begin{remark}[Calculation of the limit distribution]
   \label{paragraph:poisson-limit-theorem}
   Let $(\mathcal{A},\varphi)$ be a non-commutative probability space, let $(a_{n,i})_{n\in \naturalnumbers,i\in [n]}$ be a family of random variables in $\mathcal A$, let $(\kappa_r)_{r\in \naturalnumbers}$ be a sequence in $\complexnumbers$, and suppose that
   \begin{itemize}
   \item either for every $n\in \naturalnumbers$ the random variables $a_{n,1},\ldots, a_{n,n}$ are classically independent, or for every $n\in \naturalnumbers$ the random variables $a_{n,1},\ldots, a_{n,n}$ are freely independent, and 
   \item for all $n\in \naturalnumbers$, $i\in [n]$ and $r\in \naturalnumbers$ it holds
     \begin{align*}
       \varphi(a^r_{n,i}) =\frac{\kappa_r}{n}.
     \end{align*}
   \end{itemize}
   We want to see whether there exists a random variable $a$ in some non-com\-mu\-ta\-ti\-ve probability space such that
     $a_{n,1}+\ldots+a_{n,n}\convergesindistributiontoo a$
   and, if so, how we can describe the distribution of $a$.
   \par
   Let $n\in \naturalnumbers$ be arbitrary. Then, for every  $k\in \naturalnumbers$,
   \begin{align*}
     \varphi[(a_{n,1}+\ldots+a_{n,n})^k]&=\sum_{i:[k]\to[n]}\varphi(a_{n,i(1)}\ldots a_{n,i(k)})\\
\quad\\
                                        &=\sum_{\pi\in \setofpartitionsof(k)}\sum_{\substack{i:[k]\to[n]\\\ker(i)=\pi}}\hspace{-1em}\underset{\substack{\displaystyle =\colon g_n(\pi)\\\displaystyle\text{(as in proof of Theorems~\ref{theorem:classical-central-limit-theorem} and~\ref{theorem:free-central-limit-theorem},}\\\displaystyle\text{but now dependent on }n)}}
{\underbrace{\varphi(a_{n,i(1)}\ldots a_{n,i(k)})}}\\
\quad\\
    &= \sum_{\pi\in \setofpartitionsof(k)}g_n(\pi) \cdot \underset{\displaystyle\sim n^{\#\pi}\ (n\to\infty)}{\underbrace{n\cdot (n-1)\cdot\ldots\cdot (n-\#\pi+1)}}.
   \end{align*}
   Hence, in the limit, we obtain for every $k\in \naturalnumbers$
   \begin{align*}
     \varphi(a^k)=\lim_{n\to\infty}\varphi[(a_{n,1}+\ldots+a_{n,n})^k]=\sum_{\pi \in \setofpartitionsof(k)}\underset{\displaystyle=\colon g(\pi)}{\underbrace{\lim_{n\to\infty}g_n(\pi)\cdot n^{\#\pi}}}.
   \end{align*}
   As in the proof of Theorems~\ref{theorem:classical-central-limit-theorem} and~\ref{theorem:free-central-limit-theorem}, we now distinguish between classical and free independence. 
   \begin{enumerate}[wide]
   \item Classical case: Suppose that $(a_{n,i})_{i\in [n]}$ is classically independent for every $n\in \naturalnumbers$. Then, for all $n,k\in \naturalnumbers$ and $\pi\in \setofpartitionsof(k)$, the moment $g_{n}(\pi)$ factorizes into a product of terms $\kappa_{\# V}/n$ for each block $V\in \pi$:
     \begin{align*}
       \varphi(a^k)&=\lim_{n\to\infty} \sum_{\pi\in \setofpartitionsof (k)}\left[\prod_{V\in \pi}\frac{\kappa_{\# V}}{n}\right]\cdot n^{\# \pi}=\sum_{\pi \in \setofpartitionsof(k)}\prod_{V\in \pi}\kappa_{\# V}.
     \end{align*}
   \item Free case: Let now $(a_{n,i})_{i\in [n]}$ be freely independent for every $n\in \naturalnumbers$. Given $k\in \naturalnumbers$, for non-crossing $\pi\in \setofnoncrossingpartitionsof(k)$ the moment $g(\pi)$ factorizes as for the classical case: 
     \begin{align*}
       g(\pi)=\prod_{V\in \pi}{\kappa_{\# V}} \quad \text{if }\pi \in \setofnoncrossingpartitionsof (k).
     \end{align*}
     For every crossing $\pi\in \setofpartitionsof(k)$ and every $n\in \naturalnumbers$ we know, by Assignment~\hyperref[assignment-2]{2}, Exercise~3, that $g_n(\pi)$ is given by a polynomial in the moments where each summand contains at least $\#\pi+1$ many moments. Since each moment gives a factor $\frac{1}{n}$, that means
     \begin{align*}
       g_n(\pi)\sim \frac{1}{n^{\#\pi+1}} \ (n\to\infty) \quad\text{ if }\pi\text{ is crossing}.
     \end{align*}
     Hence, we can conclude that
       $g(\pi)=0$ for all $\pi\in \setofpartitionsof(k)\backslash \setofnoncrossingpartitionsof(k)$.
     Altogether, that means
     \begin{align*}
       \varphi(a^k)=\sum_{\pi\in \setofnoncrossingpartitionsof(k)}\prod_{V\in \pi}\kappa_{\# V}.
     \end{align*}
   \end{enumerate}
 \end{remark}

 \begin{theorem}[Poisson Limit Theorem]
   \label{theorem:poisson-limit-theorem}
      Let $(\mathcal{A},\varphi)$ be a non-commutative probability space, let $(a_{n,i})_{n\in \naturalnumbers,i\in [n]}$ be a family of random variables in $\mathcal A$, let $(\kappa_r)_{r\in \naturalnumbers}$ be a sequence in $\complexnumbers$, and suppose that
      \begin{itemize}
        \item for every $n\in \naturalnumbers$ the random variables $a_{n,1},\ldots, a_{n,n}$ are identically distributed,
   \item either for every $n\in \naturalnumbers$ the random variables $a_{n,1},\ldots, a_{n,n}$ are classically independent, or for every $n\in \naturalnumbers$ the random variables $a_{n,1},\ldots, a_{n,n}$ are freely independent, and 
   \item for all $n\in \naturalnumbers$ and $r\in \naturalnumbers$ it holds that
     \begin{align*}
       \lim_{n\to\infty}n\cdot\varphi(a^r_{n,i}) =\kappa_r.
     \end{align*}
   \end{itemize}
     Then, 
       $a_{n,1}+\ldots+a_{n,n}\convergesindistributiontoo a$ (for convenience, we can assume that $a\in\mathcal{A}$)
     and where the distribution of $a$ is given by
     \begin{align*}
       \varphi(a^k)&=
                      \begin{cases}
                        \displaystyle \hspace{0.3em}\sum_{\pi\in \setofpartitionsof(k)}\hspace{0.3em}\prod_{V\in \pi}\kappa_{\# V} &\text{ in the classical case and}\\
\quad\\
                     \displaystyle \sum_{\pi\in \setofnoncrossingpartitionsof(k)}\prod_{V\in \pi}\kappa_{\# V}&\text{ in the free case}.   
                      \end{cases}
     \end{align*}
     for all $k\in \naturalnumbers$.
 \end{theorem}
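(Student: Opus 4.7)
The plan is to follow the strategy already sketched in Remark~\ref{paragraph:poisson-limit-theorem} and to justify carefully the limiting arguments there. First I would expand
\begin{align*}
\varphi[(a_{n,1}+\ldots+a_{n,n})^k] = \sum_{i\colon [k]\to [n]} \varphi(a_{n,i(1)}\ldots a_{n,i(k)})
\end{align*}
and group the multi-indices $i$ according to their kernel partition $\pi = \ker(i)\in\setofpartitionsof(k)$. Since both classical and free independence, together with the assumption that the $(a_{n,i})_{i\in[n]}$ are identically distributed, are rules for determining joint moments from individual moments, the value $\varphi(a_{n,i(1)}\ldots a_{n,i(k)})$ depends on $i$ only through $\ker(i)$; call this common value $g_n(\pi)$. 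The number of $i\colon[k]\to[n]$ with $\ker(i)=\pi$ equals $n(n-1)\cdots(n-\#\pi+1)\sim n^{\#\pi}$, so everything reduces to computing $\lim_{n\to\infty} g_n(\pi)\cdot n^{\#\pi}$ for every $\pi\in\setofpartitionsof(k)$.

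For the classical case, commutativity plus independence allow us to rearrange the product and factor it as $g_n(\pi)=\prod_{V\in\pi}\varphi(a_{n,1}^{\#V})$. By the hypothesis $n\cdot\varphi(a_{n,1}^r)\to\kappa_r$, each factor behaves like $\kappa_{\#V}/n$, so $g_n(\pi)\cdot n^{\#\pi}\to \prod_{V\in\pi}\kappa_{\#V}$. Summing over all $\pi\in\setofpartitionsof(k)$ yields the claimed classical formula.

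For the free case, I would split the sum into non-crossing and crossing contributions. For $\pi\in\setofnoncrossingpartitionsof(k)$, Remark~\ref{remark:non-crossing-moments} gives precisely the same block-wise factorization $g_n(\pi)=\prod_{V\in\pi}\varphi(a_{n,1}^{\#V})$, so the same computation as in the classical case produces the limit $\prod_{V\in\pi}\kappa_{\#V}$. For crossing $\pi$, I would invoke Proposition~\ref{proposition:freeness-state-factorization}: applied recursively to an alternating product whose kernel is crossing (via centering the factors $a_j^\circ = a_j-\varphi(a_j)\cdot 1$ and expanding), it expresses $g_n(\pi)$ as a polynomial in the individual moments $\varphi(a_{n,1}^r)$ in which every monomial contains at least $\#\pi+1$ factors. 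Since each such moment is of order $1/n$, this forces $g_n(\pi)=O(1/n^{\#\pi+1})$, and hence $g_n(\pi)\cdot n^{\#\pi}\to 0$. Thus crossing $\pi$ make no contribution, leaving exactly the sum over $\setofnoncrossingpartitionsof(k)$.

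The main obstacle is establishing the polynomial bound for crossing $\pi$: namely, that the reduction of $g_n(\pi)$ via freeness produces only monomials involving strictly more than $\#\pi$ moments. This is the combinatorial heart of the argument, and it is the single mechanism that replaces $\setofpartitionsof(k)$ by $\setofnoncrossingpartitionsof(k)$ when passing from classical to free independence; it is the content of Assignment~2, Exercise~3, which I would take as given. Once it is in hand, the two cases assemble into the stated formulas and the convergence in distribution follows immediately.
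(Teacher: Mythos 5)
Your proof follows exactly the same route as the paper's, which is laid out in Remark~\ref{paragraph:poisson-limit-theorem}: expand in multi-indices, group by kernel, use $\#\{i:\ker(i)=\pi\}\sim n^{\#\pi}$, factorize $g_n(\pi)$ block-wise in the classical and non-crossing cases, and kill crossing partitions by the bound from Assignment~2, Exercise~3 showing $g_n(\pi)=O(n^{-\#\pi-1})$. The only (harmless) cosmetic difference is that you track the weaker hypothesis $n\varphi(a_{n,i}^r)\to\kappa_r$ throughout rather than the exact equality $\varphi(a_{n,i}^r)=\kappa_r/n$ used in the paper's remark, which is in fact the more careful treatment.
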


 \begin{example}
   \begin{enumerate}
   \item For $k=1,2,3$ there is no difference between the classical and free formulas in Theorem~\ref{theorem:poisson-limit-theorem} since $\setofpartitionsof(k)=\setofnoncrossingpartitionsof(k)$ for $k=1,2,3$.
     \begin{align*}
       \varphi(a^1)&=\kappa_1 &&\begin{tikzpicture}[baseline=-2pt-0.375em]
      \node[inner sep=1pt] (n1) at (0em,0) {};
      \draw (n1) --++(0,-0.75em);      
    \useasboundingbox (-0.25em,-1em) rectangle (0.25em, 0.125em);
  \end{tikzpicture}\\
       \varphi(a^2)&=\kappa_2+\kappa_1^2 &&
      \begin{tikzpicture}[baseline=-2pt-0.375em]
      \node[inner sep=1pt] (n1) at (0em,0) {};
      \node[inner sep=1pt] (n2) at (1em,0) {};
      \draw (n1) --++(0,-0.75em) -| (n2);
    \useasboundingbox (-0.25em,-1em) rectangle (1.25em, 0.125em);
  \end{tikzpicture}, \
                     \begin{tikzpicture}[baseline=-2pt-0.375em]
      \node[inner sep=1pt] (n1) at (0em,0) {};
      \node[inner sep=1pt] (n2) at (1em,0) {};
      \draw (n1) --++(0,-0.75em);
      \draw (n2) --++(0,-0.75em);      
    \useasboundingbox (-0.25em,-1em) rectangle (1.25em, 0.125em);
  \end{tikzpicture}
       \\
       \varphi(a^3)&=\kappa_3+3\kappa_2\kappa_1+\kappa_1^3. &&
\begin{tikzpicture}[baseline=-2pt-0.375em]
      \node[inner sep=1pt] (n1) at (0em,0) {};
      \node[inner sep=1pt] (n2) at (1em,0) {};
      \node[inner sep=1pt] (n3) at (2em,0) {};
      \draw (n1) --++(0,-0.75em) -| (n2);
      \draw ($(n2)+(0,-0.75em)$) -| (n3);      
    \useasboundingbox (-0.25em,-1em) rectangle (2.25em, 0.125em);
  \end{tikzpicture}, \
                     \begin{tikzpicture}[baseline=-2pt-0.375em]
      \node[inner sep=1pt] (n1) at (0em,0) {};
      \node[inner sep=1pt] (n2) at (1em,0) {};
      \node[inner sep=1pt] (n3) at (2em,0) {};
      \draw (n2) --++(0,-0.75em) -| (n3);
      \draw (n1) --++(0,-0.75em);      
    \useasboundingbox (-0.25em,-1em) rectangle (2.25em, 0.125em);
  \end{tikzpicture},  \
                     \begin{tikzpicture}[baseline=-2pt-0.5em]
      \node[inner sep=1pt] (n1) at (0em,0) {};
      \node[inner sep=1pt] (n2) at (1em,0) {};
      \node[inner sep=1pt] (n3) at (2em,0) {};
      \draw (n1) --++(0,-1em) -| (n3);
      \draw (n2) --++(0,-0.5em);      
    \useasboundingbox (-0.25em,-1.125em) rectangle (2.25em, 0.125em);
  \end{tikzpicture}, \ 
                     \begin{tikzpicture}[baseline=-2pt-0.375em]
      \node[inner sep=1pt] (n1) at (0em,0) {};
      \node[inner sep=1pt] (n2) at (1em,0) {};
      \node[inner sep=1pt] (n3) at (2em,0) {};
      \draw (n1) --++(0,-0.75em) -| (n2);
      \draw (n3) --++(0,-0.75em);      
    \useasboundingbox (-0.25em,-1em) rectangle (2.25em, 0.125em);
  \end{tikzpicture}, \
                     \begin{tikzpicture}[baseline=-2pt-0.375em]
      \node[inner sep=1pt] (n1) at (0em,0) {};
      \node[inner sep=1pt] (n2) at (1em,0) {};
      \node[inner sep=1pt] (n3) at (2em,0) {};
      \draw (n1) --++(0,-0.75em);      
      \draw (n2) --++(0,-0.75em);
      \draw (n3) --++(0,-0.75em);      
    \useasboundingbox (-0.25em,-1em) rectangle (2.25em, 0.125em);
  \end{tikzpicture}
     \end{align*}
     For $k\geq 4$, however, the formulas will differ:
     \begin{align*}
       \varphi(a^4_{\text{free}})&= \kappa_4 && \begin{tikzpicture}[baseline=-2pt-0.375em]
      \node[inner sep=1pt] (n1) at (0em,0) {};
      \node[inner sep=1pt] (n2) at (1em,0) {};
      \node[inner sep=1pt] (n3) at (2em,0) {};
      \node[inner sep=1pt] (n4) at (3em,0) {};
      \draw (n1) --++(0,-0.75em) -| (n2);
      \draw ($(n2)+(0,-0.75em)$) -| (n3);
      \draw ($(n3)+(0,-0.75em)$) -| (n4);
    \useasboundingbox (-0.25em,-1em) rectangle (3.25em, 0.125em);
  \end{tikzpicture}, \ \\
                                 &\phantom{{}={}} +4\kappa_3\kappa_1 &&
 \begin{tikzpicture}[baseline=-2pt-0.375em]
      \node[inner sep=1pt] (n1) at (0em,0) {};
      \node[inner sep=1pt] (n2) at (1em,0) {};
      \node[inner sep=1pt] (n3) at (2em,0) {};
      \node[inner sep=1pt] (n4) at (3em,0) {};
      \draw (n2) --++(0,-0.75em) -| (n3);
      \draw ($(n3)+(0,-0.75em)$) -| (n4);
      \draw (n1) --++(0,-0.75em);            
    \useasboundingbox (-0.25em,-1em) rectangle (3.25em, 0.125em);
  \end{tikzpicture}, \
 \begin{tikzpicture}[baseline=-2pt-0.5em]
      \node[inner sep=1pt] (n1) at (0em,0) {};
      \node[inner sep=1pt] (n2) at (1em,0) {};
      \node[inner sep=1pt] (n3) at (2em,0) {};
      \node[inner sep=1pt] (n4) at (3em,0) {};
      \draw (n1) --++(0,-1em) -| (n3);
      \draw ($(n3)+(0,-1em)$) -| (n4);
      \draw (n2) --++(0,-0.5em);            
    \useasboundingbox (-0.25em,-1.125em) rectangle (3.25em, 0.125em);
  \end{tikzpicture}, \
 \begin{tikzpicture}[baseline=-2pt-0.5em]
      \node[inner sep=1pt] (n1) at (0em,0) {};
      \node[inner sep=1pt] (n2) at (1em,0) {};
      \node[inner sep=1pt] (n3) at (2em,0) {};
      \node[inner sep=1pt] (n4) at (3em,0) {};
      \draw (n1) --++(0,-1em) -| (n2);
      \draw ($(n2)+(0,-1em)$) -| (n4);
      \draw (n3) --++(0,-0.5em);            
    \useasboundingbox (-0.25em,-1.125em) rectangle (3.25em, 0.125em);
  \end{tikzpicture}, \
 \begin{tikzpicture}[baseline=-2pt-0.375em]
      \node[inner sep=1pt] (n1) at (0em,0) {};
      \node[inner sep=1pt] (n2) at (1em,0) {};
      \node[inner sep=1pt] (n3) at (2em,0) {};
      \node[inner sep=1pt] (n4) at (3em,0) {};
      \draw (n1) --++(0,-0.75em) -| (n2);
      \draw ($(n2)+(0,-0.75em)$) -| (n3);
      \draw (n4) --++(0,-0.75em);            
    \useasboundingbox (-0.25em,-1em) rectangle (3.25em, 0.125em);
  \end{tikzpicture}, \
       \\
                                 &\phantom{{}={}} +6\kappa_2\kappa_1^2 &&
 \begin{tikzpicture}[baseline=-2pt-0.375em]
      \node[inner sep=1pt] (n1) at (0em,0) {};
      \node[inner sep=1pt] (n2) at (1em,0) {};
      \node[inner sep=1pt] (n3) at (2em,0) {};
      \node[inner sep=1pt] (n4) at (3em,0) {};
      \draw (n1) --++(0,-0.75em) -| (n2);
      \draw (n3) --++(0,-0.75em);
      \draw (n4) --++(0,-0.75em);                  
    \useasboundingbox (-0.25em,-1em) rectangle (3.25em, 0.125em);
  \end{tikzpicture}, \
 \begin{tikzpicture}[baseline=-2pt-0.5em]
      \node[inner sep=1pt] (n1) at (0em,0) {};
      \node[inner sep=1pt] (n2) at (1em,0) {};
      \node[inner sep=1pt] (n3) at (2em,0) {};
      \node[inner sep=1pt] (n4) at (3em,0) {};
      \draw (n1) --++(0,-1em) -| (n3);
      \draw (n2) --++(0,-0.5em);
      \draw (n4) --++(0,-0.5em);                  
    \useasboundingbox (-0.25em,-1.125em) rectangle (3.25em, 0.125em);
  \end{tikzpicture}, \
                                                                           \begin{tikzpicture}[baseline=-2pt-0.5em]
      \node[inner sep=1pt] (n1) at (0em,0) {};
      \node[inner sep=1pt] (n2) at (1em,0) {};
      \node[inner sep=1pt] (n3) at (2em,0) {};
      \node[inner sep=1pt] (n4) at (3em,0) {};
      \draw (n1) --++(0,-1em) -| (n4);
      \draw (n2) --++(0,-0.5em);
      \draw (n3) --++(0,-0.5em);                  
    \useasboundingbox (-0.25em,-1.125em) rectangle (3.25em, 0.125em);
  \end{tikzpicture}, \
 \begin{tikzpicture}[baseline=-2pt-0.5em]
      \node[inner sep=1pt] (n1) at (0em,0) {};
      \node[inner sep=1pt] (n2) at (1em,0) {};
      \node[inner sep=1pt] (n3) at (2em,0) {};
      \node[inner sep=1pt] (n4) at (3em,0) {};
      \draw (n2) --++(0,-1em) -| (n4);
      \draw (n1) --++(0,-0.5em);
      \draw (n3) --++(0,-0.5em);                  
    \useasboundingbox (-0.25em,-1.125em) rectangle (3.25em, 0.125em);
  \end{tikzpicture},\\
                                        &  &&
 \begin{tikzpicture}[baseline=-2pt-0.375em]
      \node[inner sep=1pt] (n1) at (0em,0) {};
      \node[inner sep=1pt] (n2) at (1em,0) {};
      \node[inner sep=1pt] (n3) at (2em,0) {};
      \node[inner sep=1pt] (n4) at (3em,0) {};
      \draw (n2) --++(0,-0.75em) -| (n3);
      \draw (n1) --++(0,-0.75em);
      \draw (n4) --++(0,-0.75em);                  
    \useasboundingbox (-0.25em,-1em) rectangle (3.25em, 0.125em);
  \end{tikzpicture}, \
\begin{tikzpicture}[baseline=-2pt-0.375em]
      \node[inner sep=1pt] (n1) at (0em,0) {};
      \node[inner sep=1pt] (n2) at (1em,0) {};
      \node[inner sep=1pt] (n3) at (2em,0) {};
      \node[inner sep=1pt] (n4) at (3em,0) {};
      \draw (n3) --++(0,-0.75em) -| (n4);
      \draw (n1) --++(0,-0.75em);
      \draw (n2) --++(0,-0.75em);                  
    \useasboundingbox (-0.25em,-1em) rectangle (3.25em, 0.125em);
  \end{tikzpicture}, \                                                                          
       \\
                                 &\phantom{{}={}} +2\kappa_2^2 &&
\begin{tikzpicture}[baseline=-2pt-0.375em]
      \node[inner sep=1pt] (n1) at (0em,0) {};
      \node[inner sep=1pt] (n2) at (1em,0) {};
      \node[inner sep=1pt] (n3) at (2em,0) {};
      \node[inner sep=1pt] (n4) at (3em,0) {};
      \draw (n1) --++(0,-0.75em) -| (n2);
      \draw (n3) --++(0,-0.75em) -| (n4);      
    \useasboundingbox (-0.25em,-1em) rectangle (3.25em, 0.125em);
  \end{tikzpicture}, \
 \begin{tikzpicture}[baseline=-2pt-0.5em]
      \node[inner sep=1pt] (n1) at (0em,0) {};
      \node[inner sep=1pt] (n2) at (1em,0) {};
      \node[inner sep=1pt] (n3) at (2em,0) {};
      \node[inner sep=1pt] (n4) at (3em,0) {};
      \draw (n2) --++(0,-0.5em) -| (n3);
      \draw (n1) --++(0,-1em) -| (n4);      
    \useasboundingbox (-0.25em,-1.125em) rectangle (3.25em, 0.125em);
  \end{tikzpicture}, \
       \\
                                 &\phantom{{}={}} +\kappa_1^4 &&
\begin{tikzpicture}[baseline=-2pt-0.375em]
      \node[inner sep=1pt] (n1) at (0em,0) {};
      \node[inner sep=1pt] (n2) at (1em,0) {};
      \node[inner sep=1pt] (n3) at (2em,0) {};
      \node[inner sep=1pt] (n4) at (3em,0) {};
      \draw (n1) --++(0,-0.75em);
      \draw (n2) --++(0,-0.75em);
      \draw (n3) --++(0,-0.75em);
      \draw (n4) --++(0,-0.75em);      
    \useasboundingbox (-0.25em,-1em) rectangle (3.25em, 0.125em);
  \end{tikzpicture}                                                                 
       \\
\quad\\
       \varphi(a^4_{\text{classical}})&=\varphi(a^4_{\text{free}})+\kappa_2^2 && \text{all the above plus} \
 \begin{tikzpicture}[baseline=-2pt-0.5em]
      \node[inner sep=1pt] (n1) at (0em,0) {};
      \node[inner sep=1pt] (n2) at (1em,0) {};
      \node[inner sep=1pt] (n3) at (2em,0) {};
      \node[inner sep=1pt] (n4) at (3em,0) {};
      \draw (n1) --++(0,-0.5em) -| (n3);
      \draw (n2) --++(0,-1em) -| (n4);      
    \useasboundingbox (-0.25em,-1.125em) rectangle (3.25em, 0.125em);
  \end{tikzpicture}.
     \end{align*}
   \item Let $(a_i)_{i\in \naturalnumbers}$ be as in the Central Limit Theorems~\ref{theorem:classical-central-limit-theorem} or~\ref{theorem:free-central-limit-theorem} and define
       $a_{n,i}\equalperdefinition {a_i}/{\sqrt{n}}$
     for every $n\in \naturalnumbers$ and $i\in [n]$. We thus obtain a family $(a_{n,i})_{n\in \naturalnumbers, i\in [n]}$ as in Theorem~\ref{theorem:poisson-limit-theorem} if we choose the sequence $(\kappa_r)_{r\in \naturalnumbers}$ in $\complexnumbers$ according to
     \begin{align*}
       \kappa_1&\overset{!}{=}\lim_{n\to\infty} n\cdot \frac{\varphi(a_i)}{\sqrt{n}}=0 && \text{since } \varphi(a_i)=0,\\
       \kappa_2&\overset{!}{=}\lim_{n\to\infty} n\cdot \frac{\varphi(a_i^2)}{n}=1 && \text{since } \varphi(a_i^2)=1 \text{ and}\\
       \kappa_r&\overset{!}{=}\lim_{n\to\infty} n\cdot \frac{\varphi(a_i^r)}{n^{{r}/{2}}}=0 && \text{for all }r\in \naturalnumbers \text{ with } r\geq 3.       
     \end{align*}
     Hence, the formulas in Theorem~\ref{theorem:poisson-limit-theorem} also prove Theorems~\ref{theorem:classical-central-limit-theorem} and~\ref{theorem:free-central-limit-theorem}.
   \item If we choose in  Theorem~\ref{theorem:poisson-limit-theorem} the family $(a_{i,n})_{n\in \naturalnumbers,i\in [n]}$ and the sequence $(\kappa_r)_{r\in \naturalnumbers}$ as given in Remark~\hyperref[remark:limit-distribution-5]{\ref*{remark:limit-distribution}~\ref*{remark:limit-distribution-5}} for the Law of Rare Events, and, in particular, for the parameter $1$, then we see with
     \begin{align*}
       \kappa_r\equalperdefinition \lim_{n\to\infty}n\cdot\frac{1}{n}=1
     \end{align*}
     that Theorem~\ref{theorem:poisson-limit-theorem} gives in this case
     \begin{align*}
       \varphi(a^k)&=
                     \begin{cases}
                       \#\setofpartitionsof(k) &\text{in the classical case and }  \\
                       \#\setofnoncrossingpartitionsof(k) &\text{in the free case.}
                     \end{cases}
     \end{align*}
     Indeed, the moments of the classical Poisson distribution of parameter $1$,
     \begin{align*}
       \sum_{p=0}^\infty\frac{1}{e}\frac{1}{p!}\delta_p,
     \end{align*}
     are given by the sequence $(\#\setofpartitionsof(k))_{k\in \naturalnumbers}$: For all $k\in \naturalnumbers_0$ it holds that 
     \begin{align*}
       \sum_{p=0}^\infty p^k\cdot\frac{1}{e}\frac{1}{p!}=\# \setofpartitionsof(k)
     \end{align*}
     (which is not obvious, see Assignment~\hyperref[assignment-4]{4}, Exercise~3).
     The numbers $\#\setofpartitionsof(k)$ are called the \emph{Bell numbers}. They satisfy nice recursion equations, but there is no easy explicit formula for them:
     \begin{align*}
              \begin{array}{c| c c c c c c c c}
         k & 1 & 2 & 3 & 4 & 5 & 6 & 7 &\ldots \\[0.2em] \hline
         \#\setofpartitionsof(k)&  1 & 2 & 5 & 15 & 52 & 203 & 877 & \ldots
       \end{array}
     \end{align*}
   \end{enumerate}   
 \end{example}

 \begin{definition}
   \label{definition:free-poisson-variable}
   Let $(\mathcal{A},\varphi)$ be a $\ast$-probability space and $\lambda\in \realnumbers$, $\lambda >0$. A self-adjoint random variable $x\in \mathcal{A}$ is called \emph{free Poisson element} with parameter $\lambda$ if its moments are of the form
   \begin{align*}
     \varphi(x^k)=\sum_{\pi\in \setofnoncrossingpartitionsof(k)} \lambda^{\#\pi}
   \end{align*}
   for every $k\in \naturalnumbers$. In particular, $\lambda=1$ means that
     $\varphi(x^k)=\#\setofnoncrossingpartitionsof(k)$.
 \end{definition}

 \begin{remark}
   So what are the \enquote{free Bell numbers}? It turns out that, for all $k\in \naturalnumbers$,
   \begin{align*}
     \#\setofnoncrossingpartitionsof(k)=C_k=\#\setofnoncrossingpartitionsof_2(2k).
   \end{align*}
   Hence the \enquote{free Bell numbers} are just the Catalan numbers again.
   \begin{align*}
     \begin{matrix}
       & \setofnoncrossingpartitionsof(k) & \# & \setofnoncrossingpartitionsof_2(2k) \\
       k=1 & \begin{tikzpicture}[baseline=-2pt-0.375em]
      \node[inner sep=1pt] (n1) at (0em,0) {};
      \draw (n1) --++(0,-0.75em);      
    \useasboundingbox (-0.25em,-1em) rectangle (0.25em, 0.125em);
  \end{tikzpicture} & 1 & \begin{tikzpicture}[baseline=-2pt-0.375em]
      \node[inner sep=1pt] (n1) at (0em,0) {};
      \node[inner sep=1pt] (n2) at (1em,0) {};
      \draw (n1) --++(0,-0.75em) -| (n2);
    \useasboundingbox (-0.25em,-1em) rectangle (1.25em, 0.125em);
  \end{tikzpicture} \\[1em]
       k=2 &       \begin{tikzpicture}[baseline=-2pt-0.375em]
      \node[inner sep=1pt] (n1) at (0em,0) {};
      \node[inner sep=1pt] (n2) at (1em,0) {};
      \draw (n1) --++(0,-0.75em) -| (n2);
    \useasboundingbox (-0.25em,-1em) rectangle (1.25em, 0.125em);
  \end{tikzpicture}, \
                     \begin{tikzpicture}[baseline=-2pt-0.375em]
      \node[inner sep=1pt] (n1) at (0em,0) {};
      \node[inner sep=1pt] (n2) at (1em,0) {};
      \draw (n1) --++(0,-0.75em);
      \draw (n2) --++(0,-0.75em);      
    \useasboundingbox (-0.25em,-1em) rectangle (1.25em, 0.125em);
  \end{tikzpicture}& 2 & \begin{tikzpicture}[baseline=-2pt-0.375em]
      \node[inner sep=1pt] (n1) at (0em,0) {};
      \node[inner sep=1pt] (n2) at (1em,0) {};
      \node[inner sep=1pt] (n3) at (2em,0) {};
      \node[inner sep=1pt] (n4) at (3em,0) {};
      \draw (n1) --++(0,-0.75em) -| (n2);
      \draw (n3) --++(0,-0.75em) -| (n4);      
    \useasboundingbox (-0.25em,-1em) rectangle (3.25em, 0.125em);
  \end{tikzpicture}, \
 \begin{tikzpicture}[baseline=-2pt-0.5em]
      \node[inner sep=1pt] (n1) at (0em,0) {};
      \node[inner sep=1pt] (n2) at (1em,0) {};
      \node[inner sep=1pt] (n3) at (2em,0) {};
      \node[inner sep=1pt] (n4) at (3em,0) {};
      \draw (n2) --++(0,-0.5em) -| (n3);
      \draw (n1) --++(0,-1em) -| (n4);      
    \useasboundingbox (-0.25em,-1.125em) rectangle (3.25em, 0.125em);
  \end{tikzpicture}\\[1em]
       k=3 & \begin{gathered}
\begin{tikzpicture}[baseline=-2pt-0.375em]
      \node[inner sep=1pt] (n1) at (0em,0) {};
      \node[inner sep=1pt] (n2) at (1em,0) {};
      \node[inner sep=1pt] (n3) at (2em,0) {};
      \draw (n1) --++(0,-0.75em) -| (n2);
      \draw ($(n2)+(0,-0.75em)$) -| (n3);      
    \useasboundingbox (-0.25em,-1em) rectangle (2.25em, 0.125em);
  \end{tikzpicture}, \
                     \begin{tikzpicture}[baseline=-2pt-0.375em]
      \node[inner sep=1pt] (n1) at (0em,0) {};
      \node[inner sep=1pt] (n2) at (1em,0) {};
      \node[inner sep=1pt] (n3) at (2em,0) {};
      \draw (n2) --++(0,-0.75em) -| (n3);
      \draw (n1) --++(0,-0.75em);      
    \useasboundingbox (-0.25em,-1em) rectangle (2.25em, 0.125em);
  \end{tikzpicture},  \
                     \begin{tikzpicture}[baseline=-2pt-0.5em]
      \node[inner sep=1pt] (n1) at (0em,0) {};
      \node[inner sep=1pt] (n2) at (1em,0) {};
      \node[inner sep=1pt] (n3) at (2em,0) {};
      \draw (n1) --++(0,-1em) -| (n3);
      \draw (n2) --++(0,-0.5em);      
    \useasboundingbox (-0.25em,-1.125em) rectangle (2.25em, 0.125em);
  \end{tikzpicture}, \\
                     \begin{tikzpicture}[baseline=-2pt-0.375em]
      \node[inner sep=1pt] (n1) at (0em,0) {};
      \node[inner sep=1pt] (n2) at (1em,0) {};
      \node[inner sep=1pt] (n3) at (2em,0) {};
      \draw (n1) --++(0,-0.75em) -| (n2);
      \draw (n3) --++(0,-0.75em);      
    \useasboundingbox (-0.25em,-1em) rectangle (2.25em, 0.125em);
  \end{tikzpicture}, \
                     \begin{tikzpicture}[baseline=-2pt-0.375em]
      \node[inner sep=1pt] (n1) at (0em,0) {};
      \node[inner sep=1pt] (n2) at (1em,0) {};
      \node[inner sep=1pt] (n3) at (2em,0) {};
      \draw (n1) --++(0,-0.75em);      
      \draw (n2) --++(0,-0.75em);
      \draw (n3) --++(0,-0.75em);      
    \useasboundingbox (-0.25em,-1em) rectangle (2.25em, 0.125em);
  \end{tikzpicture}
\end{gathered} & 5 & \begin{gathered}\begin{tikzpicture}[baseline=-2pt-0.375em]
      \node[inner sep=1pt] (n1) at (0em,0) {};
      \node[inner sep=1pt] (n2) at (1em,0) {};
      \node[inner sep=1pt] (n3) at (2em,0) {};
      \node[inner sep=1pt] (n4) at (3em,0) {};
      \node[inner sep=1pt] (n5) at (4em,0) {};
      \node[inner sep=1pt] (n6) at (5em,0) {};      
      \draw (n1) --++(0,-0.75em) -| (n2);
      \draw (n3) --++(0,-0.75em) -| (n4);
      \draw (n5) --++(0,-0.75em) -| (n6);      
    \useasboundingbox (-0.25em,-1em) rectangle (5.25em, 0.125em);
  \end{tikzpicture}, \
  \begin{tikzpicture}[baseline=-2pt-0.5em]
      \node[inner sep=1pt] (n1) at (0em,0) {};
      \node[inner sep=1pt] (n2) at (1em,0) {};
      \node[inner sep=1pt] (n3) at (2em,0) {};
      \node[inner sep=1pt] (n4) at (3em,0) {};
      \node[inner sep=1pt] (n5) at (4em,0) {};
      \node[inner sep=1pt] (n6) at (5em,0) {};      
      \draw (n1) --++(0,-1em) -| (n4);
      \draw (n2) --++(0,-0.5em) -| (n3);
      \draw (n5) --++(0,-0.75em) -| (n6);      
    \useasboundingbox (-0.25em,-1.125em) rectangle (5.25em, 0.125em);
  \end{tikzpicture}, \
    \begin{tikzpicture}[baseline=-2pt-0.5em]
      \node[inner sep=1pt] (n1) at (0em,0) {};
      \node[inner sep=1pt] (n2) at (1em,0) {};
      \node[inner sep=1pt] (n3) at (2em,0) {};
      \node[inner sep=1pt] (n4) at (3em,0) {};
      \node[inner sep=1pt] (n5) at (4em,0) {};
      \node[inner sep=1pt] (n6) at (5em,0) {};      
      \draw (n3) --++(0,-1em) -| (n6);
      \draw (n4) --++(0,-0.5em) -| (n5);
      \draw (n1) --++(0,-0.75em) -| (n2);      
    \useasboundingbox (-0.25em,-1.125em) rectangle (5.25em, 0.125em);
  \end{tikzpicture},\\
  \begin{tikzpicture}[baseline=-2pt-0.5em]
      \node[inner sep=1pt] (n1) at (0em,0) {};
      \node[inner sep=1pt] (n2) at (1em,0) {};
      \node[inner sep=1pt] (n3) at (2em,0) {};
      \node[inner sep=1pt] (n4) at (3em,0) {};
      \node[inner sep=1pt] (n5) at (4em,0) {};
      \node[inner sep=1pt] (n6) at (5em,0) {};      
      \draw (n1) --++(0,-1em) -| (n6);
      \draw (n2) --++(0,-0.5em) -| (n3);
      \draw (n4) --++(0,-0.5em) -| (n5);      
    \useasboundingbox (-0.25em,-1.125em) rectangle (5.25em, 0.125em);
  \end{tikzpicture}, \
      \begin{tikzpicture}[baseline=-2pt-0.75em]
      \node[inner sep=1pt] (n1) at (0em,0) {};
      \node[inner sep=1pt] (n2) at (1em,0) {};
      \node[inner sep=1pt] (n3) at (2em,0) {};
      \node[inner sep=1pt] (n4) at (3em,0) {};
      \node[inner sep=1pt] (n5) at (4em,0) {};
      \node[inner sep=1pt] (n6) at (5em,0) {};      
      \draw (n2) --++(0,-1em) -| (n5);
      \draw (n3) --++(0,-0.5em) -| (n4);
      \draw (n1) --++(0,-1.5em) -| (n6);      
    \useasboundingbox (-0.25em,-1.625em) rectangle (5.25em, 0.125em);
  \end{tikzpicture}
\end{gathered}\\[1em]
       k=4 & \ldots & 15 & \ldots \\       
     \end{matrix}
   \end{align*}
   This is very different from the classical world!
      \begin{align*}
     \begin{matrix}
       & \setofpartitionsof(k) & \# &\#& \setofpartitionsof_2(2k) \\
       k=1 & \begin{tikzpicture}[baseline=-2pt-0.375em]
      \node[inner sep=1pt] (n1) at (0em,0) {};
      \draw (n1) --++(0,-0.75em);      
    \useasboundingbox (-0.25em,-1em) rectangle (0.25em, 0.125em);
  \end{tikzpicture}&1 &1 & \begin{tikzpicture}[baseline=-2pt-0.375em]
      \node[inner sep=1pt] (n1) at (0em,0) {};
      \node[inner sep=1pt] (n2) at (1em,0) {};
      \draw (n1) --++(0,-0.75em) -| (n2);
    \useasboundingbox (-0.25em,-1em) rectangle (1.25em, 0.125em);
  \end{tikzpicture}\\[1em]
       k=2 & \begin{tikzpicture}[baseline=-2pt-0.375em]
      \node[inner sep=1pt] (n1) at (0em,0) {};
      \node[inner sep=1pt] (n2) at (1em,0) {};
      \draw (n1) --++(0,-0.75em) -| (n2);
    \useasboundingbox (-0.25em,-1em) rectangle (1.25em, 0.125em);
  \end{tikzpicture}, \
                     \begin{tikzpicture}[baseline=-2pt-0.375em]
      \node[inner sep=1pt] (n1) at (0em,0) {};
      \node[inner sep=1pt] (n2) at (1em,0) {};
      \draw (n1) --++(0,-0.75em);
      \draw (n2) --++(0,-0.75em);      
    \useasboundingbox (-0.25em,-1em) rectangle (1.25em, 0.125em);
  \end{tikzpicture}&2 &3 & \begin{tikzpicture}[baseline=-2pt-0.375em]
      \node[inner sep=1pt] (n1) at (0em,0) {};
      \node[inner sep=1pt] (n2) at (1em,0) {};
      \node[inner sep=1pt] (n3) at (2em,0) {};
      \node[inner sep=1pt] (n4) at (3em,0) {};
      \draw (n1) --++(0,-0.75em) -| (n2);
      \draw (n3) --++(0,-0.75em) -| (n4);      
    \useasboundingbox (-0.25em,-1em) rectangle (3.25em, 0.125em);
  \end{tikzpicture}, \
 \begin{tikzpicture}[baseline=-2pt-0.5em]
      \node[inner sep=1pt] (n1) at (0em,0) {};
      \node[inner sep=1pt] (n2) at (1em,0) {};
      \node[inner sep=1pt] (n3) at (2em,0) {};
      \node[inner sep=1pt] (n4) at (3em,0) {};
      \draw (n2) --++(0,-0.5em) -| (n3);
      \draw (n1) --++(0,-1em) -| (n4);      
    \useasboundingbox (-0.25em,-1.125em) rectangle (3.25em, 0.125em);
  \end{tikzpicture}, \  \begin{tikzpicture}[baseline=-2pt-0.5em]
      \node[inner sep=1pt] (n1) at (0em,0) {};
      \node[inner sep=1pt] (n2) at (1em,0) {};
      \node[inner sep=1pt] (n3) at (2em,0) {};
      \node[inner sep=1pt] (n4) at (3em,0) {};
      \draw (n1) --++(0,-0.5em) -| (n3);
      \draw (n2) --++(0,-1em) -| (n4);      
    \useasboundingbox (-0.25em,-1.125em) rectangle (3.25em, 0.125em);
  \end{tikzpicture}\\[1em]
       k=3 & \ldots &5 &15 & \ldots\\[1em]
       k=4 & \ldots &15 &105 & \ldots \\
       & & \uparrow& \uparrow & &\\
       &&\text{Bell}& (2k-1)!! &&\\
       &&\text{numbers}&  &&       
     \end{matrix}
      \end{align*}
      The \enquote{coincidence} that $\# \setofnoncrossingpartitionsof(k)=\# \setofnoncrossingpartitionsof_2(2k)$ for all $k\in \naturalnumbers$ means in particular that the square $s^2$ of a standard semicircle element $s$ is a free Poisson variable with parameter $1$. For every $k\in \naturalnumbers$,
      \begin{align*} \varphi((s^2)^k)=\varphi(s^{2k})=\#\setofnoncrossingpartitionsof_2(2k)=\#\setofnoncrossingpartitionsof(k).
      \end{align*}
      We can determine the density of the distribution of $s^2$. For every $k\in \naturalnumbers$,
      \begin{align*}
        \varphi((s^2)^k)&=\frac{1}{2\pi}\int_{-2}^2t^{2k}\sqrt{4-t^2}\, dt\\
                        &=\frac{1}{\pi}\int_{0}^2t^{2k}\sqrt{4-t^2}\, dt\quad\quad    \begin{aligned}x&=t^2\\dx&=2t\,dt\end{aligned}\\
                        &=\frac{1}{\pi}\int_{0}^4x^{k}\sqrt{4-x}\, \frac{dx}{2\sqrt{x}}   \\
        &=\frac{1}{2\pi}\int_{0}^4x^{k}\sqrt{\frac{4}{x}-1}\, dx,
      \end{align*}
      yielding as density
      \begin{align*}
      x\mapsto
        \begin{cases}
          \frac 1{2\pi}\sqrt{\frac{4}{x}-1}, &\text{if }x \in (0,4],\\
          0,&\text{otherwise.}
        \end{cases}
      \end{align*}
      In classical probability theory there is no relation between the square of a normal variable and a Poisson variable!
   
$$
        \begin{tikzpicture}[baseline=3cm]
          \begin{axis}[xmin=-0.20, ymin=-0.05, xmax=4*1.1, domain=0:4, ymax=1, samples=500, title={Free Poisson of parameter $1$}, axis lines=middle, width=18em, height=16em]
  \addplot[thick] {(0.5*pi^-1)*((4*(x^-1)-1)^0.5)};
\end{axis}
\end{tikzpicture}\quad
        \begin{tikzpicture}[baseline=3cm]
          \begin{axis}[xmin=-0.25, ymin=-0.05, xmax=7, ymax=1, samples at={0,1,2,3,4,5,6}, title={Classical Poisson of parameter $1$}, axis lines=middle, height=16em, width=20em]
  \addplot[thick, only marks] {(e^-1)*((factorial(x))^-1)};
\end{axis}
\end{tikzpicture}
$$
 \end{remark}

 \begin{remark}[Joint moments of free variables]
   Let $(\mathcal A,\varphi)$ be a non-com\-mu\-ta\-ti\-ve probability space, let $a_1,a_2\in \mathcal A$, let
     $(a^{(1)}_{n,i})_{n\in \naturalnumbers, i\in [n]}$ and $(a^{(2)}_{n,i})_{n\in \naturalnumbers, i\in [n]}$
   be two families of random variables
as in Remark~\ref{paragraph:poisson-limit-theorem}; in particular,  let $(\kappa_r^{(1)})_{r\in \naturalnumbers}$ and $(\kappa_r^{(2)})_{r\in \naturalnumbers}$ be two sequences in $\complexnumbers$ and assume that, for all $n\in \naturalnumbers$, $i\in [n]$ and $r\in \naturalnumbers$,
   \begin{align*}
     \varphi((a^{(1)}_{n,i})^r)=\frac{\kappa^{(1)}_r}{n} \qquad\text{and}\qquad \varphi((a^{(2)}_{n,i})^r)=\frac{\kappa^{(2)}_r}{n}
   \end{align*}
    and that
   \begin{align*}
     a^{(1)}_{n,1}+\ldots+a^{(1)}_{n,n}\convergesindistributionto a_1 \qquad\text{and}\qquad
     a^{(2)}_{n,1}+\ldots+a^{(2)}_{n,n}\convergesindistributionto a_2.
   \end{align*}
Let us look in the following at the case of free variables; the case of independent variables works in the same way.
  Assume in addition that, for every $n\in \naturalnumbers$, the sets 
     $\{a^{(1)}_{n,1},\ldots,a^{(1)}_{n,n}\}$ and $\{a^{(2)}_{n,1},\ldots,a^{(2)}_{n,n}\}$ are free.
   This implies then (see Assignment~\hyperref[assignment-3]{3}, Exercise~4) that also the limits $a_1$ and $a_2$ are free. Hence we can calculate joint moments in free $a_1, a_2$ via this representation and try to express it in terms of $(\kappa_r^{(1)})_{r\in \naturalnumbers}$ and $(\kappa^{(2)}_r)_{r\in \naturalnumbers}$. Fix $k\in \naturalnumbers$ and $p_1,\ldots,p_k\in \{1,2\}$. Then, 
   \begin{align*}
     \varphi(a_{p_1}\ldots a_{p_k})&=\lim_{n\to\infty}\varphi[(a^{(p_1)}_{n,1}+\ldots+a^{(p_1)}_{n,n})\ldots(a^{(p_k)}_{n,1}+\ldots+a^{(p_k)}_{n,n})]\\
                                   &=\lim_{n\to\infty}\hspace{-3.8em}\underset{\hspace{4em}\displaystyle \sim \sum_{\pi\in \setofpartitionsof(k)}g^{(p_1,\ldots,p_k)}_n(\pi)\cdot n^{\#\pi} \ (n\to\infty)}{\underbrace{\sum_{i:[k]\to[n]}\varphi(a^{(p_1)}_{n,i(1)}\ldots a^{(p_k)}_{n,i(k)})}}\\
     &=\sum_{\pi\in \setofpartitionsof(k)}\lim_{n\to\infty} \underset{\displaystyle (\ast)}{\underbrace{n^{\# \pi} \cdot g^{(p_1,\ldots,p_k)}_n(\pi)}}.
   \end{align*}
For which $\pi\in \setofpartitionsof(k)$ does the term $(\ast)$ survive in the limit $n\to\infty$? For $\pi\in \setofnoncrossingpartitionsof(k)$ the moment $g^{(p_1,\ldots,p_k)}_n(\pi)$ factorizes into products of moments according to the blocks of $\pi$. If in a block $V\in \pi$ of such a $\pi$ all $p_1,\ldots,p_k$ belonging to $V$ are $1$ or all $2$, then this $\pi$ gives the contribution $\kappa^{(1)}_{\# V}$ or $\kappa^{(2)}_{\# V}$ respectively. If, however, both $1$ and $2$ appear among the $p_1,\ldots,p_k$ in a given block $V\in \pi$, then this moment factorizes further into (joint) moments of $(a^{(1)}_{n,i})_{n\in \naturalnumbers,i\in [n]}$ and $(a^{(2)}_{n,i})_{n\in \naturalnumbers,i\in [n]}$. In that case $g^{(p_1,\ldots,p_k)}_n(\pi)$ provides more than $\# \pi$ many factors $\frac{1}{n}$, which we do not have enough $n$-factors to compensate for. Hence, such terms go to zero. Thus, we conclude:
   \begin{align*}
     \varphi(a_{p_1}\ldots a_{p_k})=\hspace{-2em}\sum_{\substack{\pi\in \setofnoncrossingpartitionsof(k)\\\pi\leq \ker(p)\\\displaystyle\uparrow\\\displaystyle p \text{ must be constant}\\\displaystyle\text{on each block of }\pi}}\hspace{-2em}\prod_{V\in \pi} \kappa^{(\left.p\right|_V)}_{\# V}.
   \end{align*}
   for every $k\in \naturalnumbers$ and every $p:[k]\to [2]$.
 \end{remark}

 \begin{theorem}
   \label{theorem:joint-moments}
   Let $(\mathcal{A},\varphi)$ be a non-commutative probability space, $a_1,a_2\in \mathcal{A}$ and $a_1$ and $a_2$ either classically or freely independent. If there are sequences $(\kappa_r^{(1)})_{r\in \naturalnumbers}$ and  $(\kappa_r^{(2)})_{r\in \naturalnumbers}$ in $\complexnumbers$ such that, for every $l\in [2]$, the moments of $a_l$ can be written in the form
        \begin{align*}
       \varphi(a^k_l)&=
                      \begin{cases}
                        \displaystyle \hspace{0.3em}\sum_{\pi\in \setofpartitionsof(k)}\hspace{0.3em}\prod_{V\in \pi}\kappa_{\# V}^{(l)} &\text{ in the classical case and}\\
\quad\\
                     \displaystyle \sum_{\pi\in \setofnoncrossingpartitionsof(k)}\prod_{V\in \pi}\kappa_{\# V}^{(l)}&\text{ in the free case},   
                      \end{cases}
        \end{align*}
        for all $k\in \naturalnumbers$,
        then the joint moments of $(a_1,a_2)$ are of the form
                \begin{align*}
       \varphi(a_{p_1}\ldots a_{p_k})&=
                      \begin{cases}
                        \displaystyle \hspace{0em}\sum_{\substack{\pi\in \setofpartitionsof(k)\\\ker(p)\leq \pi}}\hspace{0em}\prod_{V\in \pi}\kappa_{\# V}^{(\left.p\right|_V)} &\text{ in the classical case and}\\
\quad\\
                     \displaystyle \sum_{\substack{\pi\in \setofnoncrossingpartitionsof(k)\\\ker(p)\leq \pi}}\prod_{V\in \pi}\kappa_{\# V}^{(\left.p\right|_V)}&\text{ in the free case}   
                      \end{cases}
        \end{align*}
      for all $k\in \naturalnumbers$ and $p:[k]\to [2]$.
 \end{theorem}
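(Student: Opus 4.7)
The plan is to follow the strategy outlined in the remark immediately preceding the theorem: realize $a_1$ and $a_2$ as limits of Poisson-type approximating sums built from independent/free copies, and compute the joint moments directly. Since the hypothesis says the marginals of $a_1$ and $a_2$ are of exactly the shape produced by Theorem~\ref{theorem:poisson-limit-theorem}, such approximants exist, and the joint distribution of the limit is determined by Proposition~\ref{proposition:freeness-state-factorization} (or its classical analogue) from the marginals together with the independence/freeness assumption.

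Concretely, for each $l\in[2]$ I would first construct families $(a^{(l)}_{n,i})_{n\in\naturalnumbers,i\in[n]}$ such that, for every fixed $n$, the variables $a^{(l)}_{n,1},\ldots,a^{(l)}_{n,n}$ are identically distributed with $\varphi((a^{(l)}_{n,i})^r)=\kappa_r^{(l)}/n$ and freely (resp.\ classically) independent, and moreover such that the two collections $\{a^{(1)}_{n,i}\mid i\in[n]\}$ and $\{a^{(2)}_{n,i}\mid i\in[n]\}$ are free (resp.\ independent). This can be arranged by taking a suitable free/tensor product of single-variable probability spaces. By Theorem~\ref{theorem:poisson-limit-theorem}, $S_n^{(l)}\equalperdefinition a^{(l)}_{n,1}+\ldots+a^{(l)}_{n,n}\convergesindistributiontoo a_l'$ for variables $a_l'$ with the same marginal distribution as $a_l$; and by Assignment~3, Exercise~4 (mutual freeness/independence passes to limits), $a_1'$ and $a_2'$ are free (resp.\ independent). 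Hence $(a_1,a_2)$ and $(a_1',a_2')$ have identical joint distributions, so it suffices to compute the joint moments of $(a_1',a_2')$.

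Expanding and grouping by the kernel of the summation multi-index $i:[k]\to[n]$,
\begin{align*}
\varphi(a'_{p_1}\ldots a'_{p_k})=\lim_{n\to\infty}\sum_{\pi\in\setofpartitionsof(k)}\Bigl(\sum_{\substack{i:[k]\to[n]\\ \ker(i)=\pi}}\varphi(a^{(p_1)}_{n,i(1)}\ldots a^{(p_k)}_{n,i(k)})\Bigr),
\end{align*}
and the inner sum has $\sim n^{\#\pi}$ terms. Copying the reasoning from the Remark, I would then analyse which $\pi$ survive the limit. If $\ker(p)\leq\pi$, then on each block $V\in\pi$ the $p_j$ are constant equal to some $l(V)\in[2]$, so by independence/freeness of the $a^{(l)}_{n,i}$ among themselves the moment factorises across blocks into $\prod_{V\in\pi}\kappa_{\#V}^{(l(V))}/n$, giving a non-zero limit $\prod_{V\in\pi}\kappa_{\#V}^{(p|_V)}$. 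If $\ker(p)\not\leq\pi$, some block contains indices from both families, and freeness (resp.\ independence) of the two families forces a further factorisation into more than $\#\pi$ moment factors, each contributing a $1/n$, so the contribution vanishes. In the free case, the additional restriction to $\pi\in\setofnoncrossingpartitionsof(k)$ enters exactly as in the proof of Theorem~\ref{theorem:free-central-limit-theorem}, because crossing $\pi$ produce, via interval stripping, vanishing alternating moments.

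The main obstacle is the bookkeeping for partitions $\pi$ with $\ker(p)\not\leq \pi$: one must check carefully, using the same estimate cited in Remark~\ref{paragraph:poisson-limit-theorem} (Assignment~2, Exercise~3), that mixing different families within a single block genuinely costs an extra factor of $1/n$, both in the free and in the classical setting. Once this quantitative strengthening is in place, the remaining steps are a direct transcription of the single-variable Poisson Limit Theorem's combinatorics, adapted to track the labelling $p:[k]\to[2]$ on each block.
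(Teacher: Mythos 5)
Your approach matches the paper's almost verbatim: the unnumbered Remark ``[Joint moments of free variables]'' immediately preceding the theorem carries out precisely this construction (Poisson-type approximants, freeness/independence passing to the limit via Assignment~3, Exercise~4, and the asymptotic analysis of which $\pi$ survive, citing Assignment~2, Exercise~3), and this Remark serves in place of a formal proof. One small slip, which you inherited from the printed statement of the theorem: you write the summation condition as $\ker(p)\leq\pi$ but then correctly describe it as ``$p$ is constant on each block of $\pi$''; in the paper's ordering convention that condition is $\pi\leq\ker(p)$ (each block of $\pi$ sitting inside a block of $\ker(p)$), which is the form used in the preceding Remark and confirmed by the worked example $\varphi(a_1a_2a_1a_2)$ that follows the theorem.
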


 \begin{example}
   In the free case we have
   \begin{align*}
     \varphi(a_1a_2a_1a_2)&=\\
                           \begin{tikzpicture}[baseline=-2pt-0.5em]
      \node[inner sep=1pt] (n1) at (0em,0) {};
      \node[inner sep=1pt] (n2) at (1em,0) {};
      \node[inner sep=1pt] (n3) at (2em,0) {};
      \node[inner sep=1pt] (n4) at (3em,0) {};
      \draw (n1) --++(0,-1em) -| (n3);
      \draw (n2) --++(0,-0.5em);
      \draw (n4) --++(0,-0.5em);                  
    \useasboundingbox (-0.25em,-1.125em) rectangle (3.25em, 0.125em);
  \end{tikzpicture}
\hspace{0.8em}&\phantom{{}={}}\phantom{{}+{}}\kappa^{(1)}_2\kappa_1^{(2)}\kappa_1^{(2)}\\
 \begin{tikzpicture}[baseline=-2pt-0.5em]
      \node[inner sep=1pt] (n1) at (0em,0) {};
      \node[inner sep=1pt] (n2) at (1em,0) {};
      \node[inner sep=1pt] (n3) at (2em,0) {};
      \node[inner sep=1pt] (n4) at (3em,0) {};
      \draw (n2) --++(0,-1em) -| (n4);
      \draw (n1) --++(0,-0.5em);
      \draw (n3) --++(0,-0.5em);                  
    \useasboundingbox (-0.25em,-1.125em) rectangle (3.25em, 0.125em);
  \end{tikzpicture}                          \hspace{0.8em}&\phantom{{}={}}+\kappa^{(1)}_1\kappa_1^{(1)}\kappa_2^{(2)}\\
\begin{tikzpicture}[baseline=-2pt-0.375em]
      \node[inner sep=1pt] (n1) at (0em,0) {};
      \node[inner sep=1pt] (n2) at (1em,0) {};
      \node[inner sep=1pt] (n3) at (2em,0) {};
      \node[inner sep=1pt] (n4) at (3em,0) {};
      \draw (n1) --++(0,-0.75em);
      \draw (n2) --++(0,-0.75em);
      \draw (n3) --++(0,-0.75em);
      \draw (n4) --++(0,-0.75em);      
    \useasboundingbox (-0.25em,-1em) rectangle (3.25em, 0.125em);
  \end{tikzpicture}                                                                      \hspace{0.8em}&\phantom{{}={}}+\kappa^{(1)}_1\kappa_1^{(2)}\kappa_1^{(1)}\kappa_1^{(2)},
   \end{align*}
   whereas in the classical case we get the additional term 
   \begin{align*}\hspace{0.2em}
      \begin{tikzpicture}[baseline=-2pt-0.5em]
      \node[inner sep=1pt] (n1) at (0em,0) {};
      \node[inner sep=1pt] (n2) at (1em,0) {};
      \node[inner sep=1pt] (n3) at (2em,0) {};
      \node[inner sep=1pt] (n4) at (3em,0) {};
      \draw (n1) --++(0,-0.5em) -| (n3);
      \draw (n2) --++(0,-1em) -| (n4);      
    \useasboundingbox (-0.25em,-1.125em) rectangle (3.25em, 0.125em);
  \end{tikzpicture}\hspace{1.7em} +\kappa_2^{(1)}\kappa_2^{(2)}.\hspace{2.5em}
   \end{align*}
 \end{example}

 \begin{conclusion}
   If we write the moments of random variables $a_1,a_2$ as in Theorem~\ref{theorem:joint-moments} as sums over non-crossing partitions multiplicatively in terms of sequences $(\kappa_r^{(1)})_{r\in \naturalnumbers}$ and $(\kappa_r^{(2)})_{r\in \naturalnumbers}$,  then freeness between $a_1$ and $a_2$ corresponds to having in joint moments no \enquote{mixed $\kappa$'s} (no blocks connecting different variables)! The sequences  $(\kappa_r^{(1)})_{r\in \naturalnumbers}$ and $(\kappa_r^{(2)})_{r\in \naturalnumbers}$ will be called ``free cumulants''. We will treat them more systematically in the next section.
 \end{conclusion}

\newpage

 \section{The Combinatorics of Free Probability Theory: Free Cumulants}
 \begin{definition}Let $n\in \naturalnumbers$ be arbitrary.
   \begin{enumerate}
   \item Given $\pi,\sigma\in \setofnoncrossingpartitionsof(n)$, we write $\pi\leq \sigma$ if each block of $\pi$ is completely contained in one of the blocks of $\sigma$. With this partial order, $\setofnoncrossingpartitionsof(n)$ becomes a partially ordered set (\emph{poset}). We also write $\pi<\sigma$ for: $\pi\leq \sigma$ and $\pi\neq \sigma$.
   \item The unique maximal element of $\setofnoncrossingpartitionsof(n)$ is denoted by
     \begin{align*}
       1_n\equalperdefinition\{\{1,2,\ldots,n\}\}=  \begin{tikzpicture}[baseline=-2pt-0.375em]
      \node[inner sep=1pt] (n1) at (0em,0) {};
      \node[inner sep=1pt] (n2) at (1em,0) {};
      \node[inner sep=1pt] (n3) at (2em,0) {};
      \node[inner sep=1pt] (n4) at (5em,0) {};
      \node[inner sep=1pt] (n5) at (6em,0) {};      
      \draw (n1) --++(0,-0.75em) -| (n2);
      \draw ($(n1)+(0,-0.75em)$) -| (n3);
      \draw (n4) --++(0,-0.75em) -| (n5);
      \draw ($(n3)+(0,-0.75em)$) -- ++ (0.5em,0);
      \draw ($(n4)+(0,-0.75em)$) -- ++ (-0.5em,0);
      \path ($(n3)+(0,-0.75em)$) -- node[pos=0.5] {$\ldots$}  ($(n4)+(0,-0.75em)$);
    \useasboundingbox (-0.25em,-1em) rectangle (6.25em, 0.125em);
  \end{tikzpicture}
     \end{align*}
     and the unique minimal element of $\setofnoncrossingpartitionsof(n)$ by
     \begin{align*}
       0_n\equalperdefinition \{\{1\},\{2\},\ldots,\{n\}\}=  \begin{tikzpicture}[baseline=-2pt-0.375em]
      \node[inner sep=1pt] (n1) at (0em,0) {};
      \node[inner sep=1pt] (n2) at (1em,0) {};
      \node[inner sep=1pt] (n3) at (2em,0) {};
      \node[inner sep=1pt] (n4) at (5em,0) {};
      \node[inner sep=1pt] (n5) at (6em,0) {};      
      \draw (n1) --++(0,-0.75em);
      \draw (n2) --++(0,-0.75em);
      \draw (n3) --++(0,-0.75em);
      \draw (n4) --++(0,-0.75em);
      \draw (n5) --++(0,-0.75em);      
      \path ($(n3)+(0,-0.375em)$) -- node[pos=0.5] {$\ldots$}  ($(n4)+(0,-0.375em)$);
    \useasboundingbox (-0.25em,-1em) rectangle (6.25em, 0.125em);
  \end{tikzpicture}.
     \end{align*}     
   \end{enumerate}
 \end{definition}
 \begin{example}
   In $\setofnoncrossingpartitionsof(8)$,
   \begin{align*}
     \begin{matrix}
       1\ \, 2\ \,  3 \ \,  4 \  \, 5\ \, 6\ \, 7\ \, 8 \ \, & \\
\begin{tikzpicture}[baseline=-2pt-0.5em]
      \node[inner sep=1pt] (n1) at (0em,0) {};
      \node[inner sep=1pt] (n2) at (1em,0) {};
      \node[inner sep=1pt] (n3) at (2em,0) {};
      \node[inner sep=1pt] (n4) at (3em,0) {};
      \node[inner sep=1pt] (n5) at (4em,0) {};
      \node[inner sep=1pt] (n6) at (5em,0) {};
      \node[inner sep=1pt] (n7) at (6em,0) {};
      \node[inner sep=1pt] (n8) at (7em,0) {};            
      \draw (n2) --++(0,-0.5em);
      \draw (n4) --++(0,-0.5em)-| (n5);
      \draw (n1) --++(0,-1em)-| (n3);
      \draw ($(n3)+(0,-1em)$) -| (n6);
      \draw ($(n6)+(0,-1em)$) -| (n7);
      \draw ($(n7)+(0,-1em)$) -| (n8);                  
    \useasboundingbox (-0.25em,-1.125em) rectangle (7.25em, 0.125em);
  \end{tikzpicture}  \     &\pi\\
       &\rotatebox{90}{$\leq$}\\
\begin{tikzpicture}[baseline=-2pt-0.5em]
      \node[inner sep=1pt] (n1) at (0em,0) {};
      \node[inner sep=1pt] (n2) at (1em,0) {};
      \node[inner sep=1pt] (n3) at (2em,0) {};
      \node[inner sep=1pt] (n4) at (3em,0) {};
      \node[inner sep=1pt] (n5) at (4em,0) {};
      \node[inner sep=1pt] (n6) at (5em,0) {};
      \node[inner sep=1pt] (n7) at (6em,0) {};
      \node[inner sep=1pt] (n8) at (7em,0) {};            
      \draw (n2) --++(0,-0.5em);
      \draw (n4) --++(0,-0.75em)-| (n5);
      \draw (n1) --++(0,-1em)-| (n3);
      \draw (n7) --++(0,-0.5em);
      \draw (n6) --++(0,-1em)-| (n8);       
    \useasboundingbox (-0.25em,-1.125em) rectangle (7.25em, 0.125em);
  \end{tikzpicture}  \        &\sigma
     \end{matrix}
   \end{align*}
   is true. The binary relation
   \enquote{$\leq$} is indeed only a partial order. E.g., in $\setofnoncrossingpartitionsof(3)$ 
   \begin{center}
     \begin{tikzpicture}
\begin{scope}[shift={(0cm,1.5cm)}]
      \node[inner sep=1pt] (n1) at (0em,0) {};
      \node[inner sep=1pt] (n2) at (1em,0) {};
      \node[inner sep=1pt] (n3) at (2em,0) {};
      \draw (n1) --++(0,-0.75em) -| (n2);
      \draw ($(n2)+(0,-0.75em)$) -| (n3);      
    \useasboundingbox (-0.25em,-1em) rectangle (2.25em, 0.125em);
  \end{scope}
                     \begin{scope}[shift={(-2cm,0cm)}]
      \node[inner sep=1pt] (n1) at (0em,0) {};
      \node[inner sep=1pt] (n2) at (1em,0) {};
      \node[inner sep=1pt] (n3) at (2em,0) {};
      \draw (n2) --++(0,-0.75em) -| (n3);
      \draw (n1) --++(0,-0.75em);      
    \useasboundingbox (-0.25em,-1em) rectangle (2.25em, 0.125em);
  \end{scope}
                     \begin{scope}[shift={(0cm,0cm)}]
      \node[inner sep=1pt] (n1) at (0em,0) {};
      \node[inner sep=1pt] (n2) at (1em,0) {};
      \node[inner sep=1pt] (n3) at (2em,0) {};
      \draw (n1) --++(0,-1em) -| (n3);
      \draw (n2) --++(0,-0.5em);      
    \useasboundingbox (-0.25em,-1.125em) rectangle (2.25em, 0.125em);
  \end{scope}
                     \begin{scope}[shift={(2cm,0cm)}]
      \node[inner sep=1pt] (n1) at (0em,0) {};
      \node[inner sep=1pt] (n2) at (1em,0) {};
      \node[inner sep=1pt] (n3) at (2em,0) {};
      \draw (n1) --++(0,-0.75em) -| (n2);
      \draw (n3) --++(0,-0.75em);      
    \useasboundingbox (-0.25em,-1em) rectangle (2.25em, 0.125em);
  \end{scope}
                     \begin{scope}[shift={(0cm,-1.5cm)}]
      \node[inner sep=1pt] (n1) at (0em,0) {};
      \node[inner sep=1pt] (n2) at (1em,0) {};
      \node[inner sep=1pt] (n3) at (2em,0) {};
      \draw (n1) --++(0,-0.75em);      
      \draw (n2) --++(0,-0.75em);
      \draw (n3) --++(0,-0.75em);      
    \useasboundingbox (-0.25em,-1em) rectangle (2.25em, 0.125em);
  \end{scope}
  \begin{scope}[shift={(1em,-0.375em)}]
  \draw[->,gray,shorten >= 11pt, shorten <= 7.5pt] (0cm,-1.5cm) -- (0cm,0cm);
  \draw[->,gray,shorten >= 12.5pt, shorten <= 10pt] (0cm,-1.5cm) -- (-2cm,0cm);
  \draw[->,gray,shorten >= 12.5pt, shorten <= 10pt] (0cm,-1.5cm) -- (2cm,0cm);    
  \draw[->,gray,shorten >= 7.5pt, shorten <= 7.5pt] (0cm,0cm) -- (0cm,1.5cm);
  \draw[->,gray,shorten >= 12.5pt, shorten <= 12.5pt] (-2cm,0cm) -- (0cm,1.5cm);
  \draw[->,gray,shorten >= 12.5pt, shorten <= 12.5pt] (2cm,0cm) -- (0cm,1.5cm);
\end{scope}
\node[align=left] (la) at (6,-0.375em) {those three elements\\cannot be compared.};
\draw[densely dotted, ->] (la.west) --++ (-2em,0);
     \end{tikzpicture}
   \end{center}
 \end{example}
 
 \begin{definition}[Rota 1964]
   \label{definition:incidence-algebra}
     Let $P$ be a finite partially ordered set.
     \begin{enumerate}
     \item   \label{definition:incidence-algebra-1} We put
       \begin{align*}
         P^{(2)}\equalperdefinition \{(\pi,\sigma)\mid \pi,\sigma\in P, \pi\leq \sigma\}
       \end{align*}
       and define for every two functions $F,G:\, P^{(2)}\to \complexnumbers$
       their convolution
       \begin{align*}
         F\ast G: \, P^{(2)}\to \complexnumbers
       \end{align*}
       by demanding for all $(\pi,\sigma)\in P^{(2)}$ that
       \begin{align*}
         (F\ast G)(\pi,\sigma)\equalperdefinition \sum_{\substack{\tau\in P\\\pi\leq \tau\leq \sigma}} F(\pi,\tau)G(\tau,\sigma).
       \end{align*}
       We also introduce a one-variable version of this: For functions $f:\,P\to\complexnumbers$ and $G:\,P^{(2)}\to \complexnumbers$ we define $f\ast G:\, P\to \complexnumbers$ by requiring, for all $\sigma\in P$,
       \begin{align*}
         (f\ast G)(\sigma)\equalperdefinition \sum_{\substack{\tau\in P\\ \tau\leq \sigma}} f(\tau)G(\tau,\sigma).
       \end{align*}
     \item   \label{definition:incidence-algebra-2} The special functions $\delta,\zeta:\, P^{(2)}\to \complexnumbers$, the latter  named \emph{zeta function}, are defined by the condition that, for all  $(\pi,\sigma)\in P^{(2)}$,
       \begin{align*}
         \delta(\pi,\sigma)\equalperdefinition
         \begin{cases}
           1,&\text{if }\pi=\sigma,\\
           0,&\text{if }\pi<\sigma,
         \end{cases}
               \qquad\text{and}\qquad \zeta(\pi,\sigma)\equalperdefinition 1.
       \end{align*}
     \end{enumerate}
   \end{definition}

   \begin{remark}
    \label{remark:incidence-algebra}
     Let $P$ be a finite partially ordered set.
     \begin{enumerate}
     \item\label{remark:incidence-algebra-1} Suppose that $P$ has a unique minimal element $0$. Given a one-variable function $f:\, P\to\complexnumbers$, one should think of $f$ as the restriction of some function $F:\,P^{(2)}\to \complexnumbers$ with  $f(\sigma)=F(0,\sigma)$ for all $\sigma\in P$.
     \item\label{remark:incidence-algebra-2} Given two functions $F,G:\, P^{(2)}\to\complexnumbers$, we can think of $F$ and $G$ as functions operating not on pairs $(\pi,\sigma)\in P^{(2)}$ but on intervals
       \begin{align*}
         [\pi,\sigma]\equalperdefinition \{\tau\in P\mid \pi\leq \tau\leq \sigma\}.
       \end{align*}       
       In the Definition~\hyperref[definition:incidence-algebra-1]{\ref*{definition:incidence-algebra}~\ref*{definition:incidence-algebra-1}} of $(F\ast G)(\pi,\sigma)$ for $(\pi,\sigma)\in P^{(2)}$ we sum over all decompositions of the interval $[\pi,\sigma]$ into two subintervals $[\pi,\tau]$ and $[\tau,\sigma]$, where $\tau\in P$ with $\pi\leq \tau\leq \sigma$.
     \item\label{remark:incidence-algebra-3} The function $\delta$ from Definition~\hyperref[definition:incidence-algebra-2]{\ref*{definition:incidence-algebra}~\ref*{definition:incidence-algebra-2}} is clearly the unit of the convolution operation: For all $F:\,P^{(2)}\to\complexnumbers$, 
       \begin{align*}
         F\ast \delta=\delta\ast F=F.
       \end{align*}
     \item\label{remark:incidence-algebra-4}  Note that the convolution $\ast$ is associative: For all $F,G, H:\, P^{(2)}\to \complexnumbers$ it holds that
       \begin{align*}
         (F\ast G)\ast H=F\ast (G\ast H).
       \end{align*}
       But $\ast$ is, in general, not commutative.
     \item\label{remark:incidence-algebra-5} The set of all functions $F:P^{(2)}\to \complexnumbers$ equipped with pointwise defined addition and with the convolution $\ast$ as multiplication is a unital (associative) algebra over $\complexnumbers$, usually called the \emph{incidence algebra} of $P$.
     \item\label{remark:incidence-algebra-6} We are mainly interested in the special case of convolutions $f=g\ast\zeta$ with the zeta function of Definition~\hyperref[definition:incidence-algebra-2]{\ref*{definition:incidence-algebra}~\ref*{definition:incidence-algebra-2}} for functions $f,g:\,P\to\complexnumbers$, i.e.\ in equations of the form
       \begin{align*}
         f(\sigma)&=\sum_{\substack{\tau\in P\\\tau\leq \sigma}}g(\tau)\underset{\displaystyle=1}{\underbrace{\zeta(\tau,\sigma)}}
         =\sum_{\substack{\tau\in P\\\tau\leq \sigma}}g(\tau),
       \end{align*}
       holding for all $\sigma\in P$.
          For us, $P$ will correspond to the set of non-crossing partitions,  $f$ to moments and $g$ to cumulants. In order to define the cumulants in terms of the moments we should solve the equation $f=g\ast \zeta$ for $g$.
     \end{enumerate}
   \end{remark}

   \begin{proposition}
     \label{proposition:moebius-function}
     Let $P$ be a finite partially ordered set. 
     Its zeta function is invertible: There exists $\mu: P^{(2)}\to \complexnumbers$, called \emph{M\"obius function}, such that
     \begin{align*}
       \zeta\ast\mu=\delta=\mu\ast \zeta.
     \end{align*}
   \end{proposition}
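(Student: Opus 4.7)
The plan is to construct $\mu$ by induction as a right inverse of $\zeta$, to construct a candidate left inverse by the symmetric recursion, and to merge them using associativity of $\ast$.

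First, I would define $\mu:P^{(2)}\to\complexnumbers$ recursively: set $\mu(\sigma,\sigma)\equalperdefinition 1$ for every $\sigma\in P$ and, for $\pi<\sigma$, force $(\zeta\ast\mu)(\pi,\sigma)=0$ by declaring
\[
\mu(\pi,\sigma)\equalperdefinition -\sum_{\substack{\tau\in P\\\pi<\tau\leq\sigma}}\mu(\tau,\sigma).
\]
This recursion is well-defined by induction on the cardinality of $[\pi,\sigma]$, since every $\mu(\tau,\sigma)$ appearing on the right lives in a strictly smaller interval $[\tau,\sigma]\subsetneq[\pi,\sigma]$, and $P$ is finite. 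By construction $\sum_{\pi\leq\tau\leq\sigma}\mu(\tau,\sigma)=\delta(\pi,\sigma)$ for every $(\pi,\sigma)\in P^{(2)}$, i.e.\ $\zeta\ast\mu=\delta$, so $\mu$ is a right inverse.

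By the mirror-image recursion I would then define $\mu':P^{(2)}\to\complexnumbers$ with $\mu'(\pi,\pi)\equalperdefinition 1$ and, for $\pi<\sigma$,
\[
\mu'(\pi,\sigma)\equalperdefinition -\sum_{\substack{\tau\in P\\\pi\leq\tau<\sigma}}\mu'(\pi,\tau),
\]
which by the same reasoning satisfies $\mu'\ast\zeta=\delta$, so $\mu'$ is a left inverse.

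Finally, invoking the associativity of $\ast$ together with the fact that $\delta$ is the convolution unit (both recorded in Remark~\ref{remark:incidence-algebra}), I would compute
\[
\mu'=\mu'\ast\delta=\mu'\ast(\zeta\ast\mu)=(\mu'\ast\zeta)\ast\mu=\delta\ast\mu=\mu,
\]
so that the single function $\mu$ is a two-sided inverse of $\zeta$. The only point requiring more than routine bookkeeping is this last step: a naive recursion only delivers a one-sided inverse, and the observation that left- and right-inverses must coincide is really the standard ring-theoretic trick applied in the incidence algebra.
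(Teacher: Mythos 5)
Your proof is correct, and it takes a slightly different route than the paper at the one non-routine point. The paper constructs only the left inverse $\mu$ (via the recursion you use for $\mu'$, i.e.\ fixing the first argument and summing over $\pi\leq\tau<\sigma$), and then obtains $\zeta\ast\mu=\delta$ by encoding $F\ast G$ as multiplication of matrices indexed by $P\times P$ and invoking the linear-algebra fact that a one-sided inverse of a square matrix over a field is automatically two-sided. You instead run both recursions, producing a right inverse $\mu$ and a left inverse $\mu'$ explicitly, and then conclude $\mu'=\mu$ from associativity and the unit property of $\delta$ (both recorded in Remark~\ref{remark:incidence-algebra}). The two arguments buy essentially the same thing: the paper's matrix viewpoint is shorter once one accepts the identification of convolution with matrix multiplication (which tacitly uses a linear extension of the partial order so that $\hat\zeta$ is triangular, or at least that the matrices are square over a field), while yours is purely ring-theoretic and works verbatim in any unital associative algebra; the price is that you must verify well-foundedness of a second recursion, which you do correctly by induction on $\#[\pi,\sigma]$. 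Both proofs are complete.
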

   \begin{proof}
     The second desired relation for $\mu$ asks that for all $(\pi,\sigma)\in P^{(2)}$ we have
     \begin{align*}
       \left.
       \begin{aligned}
         &1,&&\text{if }\pi=\sigma,\\
         &0,&&\text{if }\pi<\sigma
       \end{aligned}\right\}=\delta(\pi,\sigma)\overset{!}{=}(\mu\ast \zeta)(\pi,\sigma)=\sum_{\substack{\tau\in P\\\pi\leq \tau\leq \sigma}}\mu(\pi,\tau)\underset{\displaystyle=1}{\underbrace{\zeta(\tau,\sigma)}}=\sum_{\substack{\tau\in P\\\pi\leq \tau\leq \sigma}}\mu(\pi,\tau);
     \end{align*}     
    this can be solved recursively by defining, for all $\pi\in P$,
     \begin{align*}
       \mu(\pi,\pi)\equalperdefinition 1
    \qquad \text{and}\qquad
       \mu(\pi,\sigma)&\equalperdefinition-\sum_{\substack{\tau\in P\\\pi\leq \tau <\sigma}}\mu(\pi,\tau),
     \end{align*}
for all $(\pi,\sigma)\in P^{(2)}$.

     Note that, given functions $F,G:\, P^{(2)}\to\complexnumbers$, we can also view the Definition~\hyperref[definition:incidence-algebra-1]{\ref*{definition:incidence-algebra}~\ref*{definition:incidence-algebra-1}} of $F\ast G$ as matrix multiplication. Let $r\in \naturalnumbers$ and $(\pi_i)_{i=1}^r$ be such that $P=\{\pi_1,\ldots,\pi_r\}$ and put, for all $F: \, P^{(2)}\to \complexnumbers$,
       $\hat F\equalperdefinition(F(\pi_i,\pi_j))_{i,j=1}^r$,
     where, for all $i,j\in [r]$,
     $ F(\pi_i,\pi_j) \equalperdefinition 0$ if $\pi_i\not\leq\pi_j$.
     Then, given functions $F,G,H:\, P^{(2)}\to \complexnumbers$, the identity $H=F\ast G$ is the same as
     \begin{align*}
       \hat H=\hat F\hspace{-4.35em}\underset{\substack{\displaystyle\uparrow\\\displaystyle\text{matrix multiplication}}}{\cdot}\hspace{-4.35em} \hat G.
     \end{align*}
     Since $\hat\delta$ is the identity matrix, the relation $\hat\mu\cdot\hat\zeta=\hat\delta$ implies automatically that also $\hat\zeta\cdot\hat\mu=\hat\delta$. 
     (Since we are in finite dimensions, any left inverse with respect to matrix multiplication is also a right inverse.)
     Hence, the function $\mu$ defined above satisfies also the first desired relation $\zeta\ast\mu=\delta$.
   \end{proof}

   \begin{corollary}[M\"obius inversion]
     \label{corollary:moebius-inversion}
     Let $P$ be a finite partially ordered set. There exists a uniquely determined M\"obius function $\mu: \, P^{(2)}\to\complexnumbers$ such that for any $f,g:\,P\to\complexnumbers$ the following statements are equivalent:
\begin{enumerate}
     \item $f=g\ast \zeta$, meaning  
$$f(\sigma)=\sum_{\substack{\pi\in P\\\pi\leq \sigma}}g(\pi) \quad\text{ for all }\pi\in P.$$
     \item $g=f\ast \mu$, meaning
$$g(\sigma)=\sum_{\substack{\pi\in P\\\pi\leq \sigma}}f(\pi)\mu(\pi,\sigma) \quad\text{ for all }\pi\in P.$$
     \end{enumerate}
   \end{corollary}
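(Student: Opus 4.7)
The existence and uniqueness of $\mu$ is already contained in Proposition~\ref{proposition:moebius-function}: since the incidence algebra is (via the matrix representation $F\mapsto \hat F$) isomorphic to a subalgebra of the matrix algebra, the two-sided inverse of $\zeta$ in the convolution algebra exists and is unique. Define $\mu$ to be this unique convolution inverse, so that $\zeta\ast\mu=\delta=\mu\ast\zeta$. The rest of the statement is then a formal manipulation with $\ast$.

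The plan is to reduce the whole corollary to the associativity of convolution plus the fact that $\delta$ is the unit. First I would record a mixed associativity lemma: for any $f:P\to\complexnumbers$ and $G,H:P^{(2)}\to\complexnumbers$,
\begin{align*}
  (f\ast G)\ast H=f\ast (G\ast H),
\end{align*}
which is a direct calculation by expanding both sides as a double sum over pairs $\rho\leq\tau\leq\sigma$ and reindexing, exactly as in Remark~\ref{remark:incidence-algebra}~\ref{remark:incidence-algebra-4}. I would also note that $f\ast\delta=f$, which is immediate from the definition of the one-variable convolution and the definition of $\delta$.

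With these two facts in hand, the equivalence is then a three-line argument in each direction. Suppose $f=g\ast\zeta$; convolving on the right with $\mu$ and using mixed associativity together with $\zeta\ast\mu=\delta$ gives
\begin{align*}
  f\ast\mu=(g\ast\zeta)\ast\mu=g\ast(\zeta\ast\mu)=g\ast\delta=g,
\end{align*}
which is exactly the identity $g(\sigma)=\sum_{\tau\leq\sigma}f(\tau)\mu(\tau,\sigma)$. Conversely, assuming $g=f\ast\mu$, convolving with $\zeta$ and using $\mu\ast\zeta=\delta$ yields
\begin{align*}
  g\ast\zeta=(f\ast\mu)\ast\zeta=f\ast(\mu\ast\zeta)=f\ast\delta=f,
\end{align*}
which unwinds to $f(\sigma)=\sum_{\tau\leq\sigma}g(\tau)$.

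There is no real obstacle here; the only minor technical point is that the associativity proved in Remark~\ref{remark:incidence-algebra} was stated for two-variable functions, so one has to check the mixed version once. Everything else is formal, and uniqueness of $\mu$ has already been handled by the matrix argument in the proof of Proposition~\ref{proposition:moebius-function}.
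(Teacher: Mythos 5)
Your proof is correct and is exactly the argument the paper intends: the corollary is stated without proof because it follows formally from Proposition~\ref{proposition:moebius-function} together with the (mixed) associativity of $\ast$ and the unit property of $\delta$, which is precisely what you carry out. The one point you rightly flag --- checking associativity in the mixed one-variable/two-variable form --- is the only detail not literally contained in Remark~\ref{remark:incidence-algebra}, and your reindexing over $\rho\leq\tau\leq\sigma$ handles it.
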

\reqnomode
   \begin{remark}
     Let $\setofnoncrossingpartitionsof\equalperdefinition \bigcup_{n\geq 1}\setofnoncrossingpartitionsof(n)$. Our upcoming cumulant functions $g:\,\setofnoncrossingpartitionsof\to \complexnumbers$ will have a quite special \enquote{multiplicative} structure. Namely, for every $\pi\in \setofnoncrossingpartitionsof$, the cumulant $g(\pi)$ will factorize into a product of cumulants according to the blocks of $\pi$. The same will then be the case for the moment functions $f:\, \setofnoncrossingpartitionsof \to \complexnumbers$. Actually, instead of looking at moments and cumulants of single random variables, we will consider multivariate joint versions.
   \end{remark}

   \begin{definition}
     \label{definition:multiplicative-family}
     Let $\mathcal{A}$ be a unital (associative) algebra over $\complexnumbers$. For a given sequence $(\rho_n)_{n\in \naturalnumbers}$ of multilinear functionals on $\mathcal{A}$, where, for every $n\in \naturalnumbers$,
     \begin{align*}
       \rho_n:\, \mathcal{A}^n\to \complexnumbers,\,       (a_1,\ldots,a_n)\mapsto \rho_n(a_1,\ldots,a_n) \quad \text{is }n\text{-linear},
     \end{align*}
     we extend $(\rho_n)_{n\in \naturalnumbers}$ to a family
       $(\rho_\pi)_{\pi\in \setofnoncrossingpartitionsof}$
     of multilinear functionals on
     \begin{align*}
       \setofnoncrossingpartitionsof\equalperdefinition \bigcup_{n\in \naturalnumbers}\setofnoncrossingpartitionsof(n)
     \end{align*}
     such that, again, for every $n\in \naturalnumbers$ and every $\pi\in \setofnoncrossingpartitionsof(n)$,
     \begin{align*}
              \rho_\pi:\,\mathcal{A}^n\to \complexnumbers,\,  (a_1,\ldots,a_n)\mapsto \rho_\pi(a_1,\ldots,a_n) \quad \text{is }n\text{-linear},
     \end{align*}
     by defining, for every $n\in \naturalnumbers$, every $\pi\in \setofnoncrossingpartitionsof(n)$ and all $a_1,\ldots,a_n\in \mathcal A$,
     \begin{align*}
       \rho_\pi(a_1,\ldots,a_n)\equalperdefinition \prod_{V\in \pi}\rho_{\# V}(a_1,\ldots,a_n\mid V),
     \end{align*}
     where for all  $s\in \naturalnumbers$, $i_1,\ldots, i_s\in [n]$ with $i_1<i_2<\ldots<i_s$ and  $V=\{i_1,\ldots,i_s\}$,
     \begin{align*}
       \rho_s(a_1,\ldots,a_n\mid V)\equalperdefinition \rho_s(a_{i_1},\ldots,a_{i_s}).
     \end{align*}
     Then, $(\rho_\pi)_{\pi\in \setofnoncrossingpartitionsof}$ is called the \emph{multiplicative family of functionals on $\setofnoncrossingpartitionsof$ determined by $(\rho_n)_{n\geq 1}$}.     
   \end{definition}

   \begin{example}
     Let $\mathcal A$ and $(\rho_n)_{n\in \naturalnumbers}$ be as in Definition~\ref{definition:multiplicative-family}.
     \begin{enumerate}
     \item For all $n\in \naturalnumbers$, note that $\rho_n=\rho_{1_n}$: 
       \begin{align*}
         \rho_{\begin{tikzpicture}[baseline=-2pt-0.375em, xscale=0.7]
      \node[inner sep=1pt] (n1) at (0em,0) {};
      \node[inner sep=1pt] (n2) at (1em,0) {};
      \node[inner sep=1pt] (n3) at (2em,0) {};
      \node[inner sep=1pt] (n4) at (5em,0) {};
      \node[inner sep=1pt] (n5) at (6em,0) {};      
      \draw (n1) --++(0,-0.75em) -| (n2);
      \draw ($(n1)+(0,-0.75em)$) -| (n3);
      \draw (n4) --++(0,-0.75em) -| (n5);
      \draw ($(n3)+(0,-0.75em)$) -- ++ (0.5em,0);
      \draw ($(n4)+(0,-0.75em)$) -- ++ (-0.5em,0);
      \path ($(n3)+(0,-0.75em)$) -- node[pos=0.5] {$\ldots$}  ($(n4)+(0,-0.75em)$);
    \useasboundingbox (-0.25em,-1em) rectangle (6.25em, 0.125em);
  \end{tikzpicture}}(a_1,\ldots,a_n)&=\rho_n(a_1,\ldots,a_n)
       \end{align*}
       for all $a_1,\ldots,a_n\in \mathcal A$. That justifies thinking of $(\rho_\pi)_{\pi\in \setofnoncrossingpartitionsof}$ as \enquote{extending} the original family $(\rho_n)_{n\in \naturalnumbers}$.
     \item For the partition $\pi=\{\{1,10\},\{2,5,9\}, \{3,4\}, \{6\}, \{7,8\} \}\in \setofnoncrossingpartitionsof(10)$ and $a_1,\ldots,a_{10}\in \mathcal A$,
Definition~\ref{definition:multiplicative-family} means
\begin{align*} \rho_\pi(a_1,\ldots,a_{10})&=\rho_2(a_1,a_{10})\rho_3(a_2,a_5,a_9)\rho_2(a_3,a_4)\rho_1(a_6)\rho_2(a_7,a_8),
\end{align*}
according to
       \begin{center}
         \begin{tikzpicture}[baseline=-2pt-1em]
      \node[inner sep=1pt] (n1) at (0em,0) {$a_1$};
      \node[inner sep=1pt] (n2) at (1.5em,0) {$a_2$};
      \node[inner sep=1pt] (n3) at (3em,0) {$a_3$};
      \node[inner sep=1pt] (n4) at (4.5em,0) {$a_4$};
      \node[inner sep=1pt] (n5) at (6em,0) {$a_5$};
      \node[inner sep=1pt] (n6) at (7.5em,0) {$a_6$};
      \node[inner sep=1pt] (n7) at (9em,0) {$a_7$};
      \node[inner sep=1pt] (n8) at (10.5em,0) {$a_8$};
      \node[inner sep=1pt] (n9) at (12em,0) {$a_9$};
      \node[inner sep=1pt] (n10) at (13.5em,0) {$a_{10}$};            
      \draw (n1) --++(0,-2em) -| (n10);
      \draw (n2) --++(0,-1.5em) -| (n5);      
      \draw ($(n5)+(0,-1.5em)$) -| (n9);
      \draw (n3) --++(0,-1em) -| (n4);
      \draw (n7) --++(0,-1em) -| (n8);      
      \draw (n6) --++(0,-1em);
    \useasboundingbox (-0.25em,-2.125em) rectangle (13.75em, 0.75em);
  \end{tikzpicture}
\end{center}

     \end{enumerate}
   \end{example}

   \begin{definition}[Speicher 1994]
     \label{definition:free-cumulants}
     Let $(\mathcal{A},\varphi)$ be a non-commutative probability space. Then, we define, for every $n\in \naturalnumbers$, the $n$-linear functional $\varphi_n$ on $\mathcal A$ by
     \begin{align*}
       \varphi_n(a_1,\ldots,a_n)\equalperdefinition \varphi(a_1\ldots a_n)
     \end{align*}
     for all $a_1,\ldots,a_n\in \mathcal A$, 
     and extend $(\varphi_n)_{n\in \naturalnumbers}$ to the corresponding multiplicative family of moment functionals $\varphi\equalperdefinition (\varphi_\pi)_{\pi\in \setofnoncrossingpartitionsof}$ on $\setofnoncrossingpartitionsof$ by defining
     \begin{align*}
       \varphi_\pi(a_1,\ldots,a_n)=\prod_{V\in \pi}\varphi_{\# V}(a_1,\ldots,a_n\mid V)
     \end{align*}
     for all $n\in \naturalnumbers$, $\pi\in \setofnoncrossingpartitionsof(n)$ and $a_1,\ldots,a_n\in \mathcal A$.
     \par
     The corresponding \emph{free cumulants} $\kappa\equalperdefinition (\kappa_\pi)_{\pi\in  \setofnoncrossingpartitionsof}$ are defined by
       $\kappa\equalperdefinition\varphi\ast \mu$,
     which means, by
     \begin{align*}
       \kappa_\sigma(a_1,\ldots,a_n)\equalperdefinition \sum_{\substack{\pi\in\setofnoncrossingpartitionsof(n)\\\pi\leq \sigma}}\varphi_\pi(a_1,\ldots,a_n)\mu(\pi,\sigma)
     \end{align*}
     for all $n\in \naturalnumbers$, $\sigma\in\setofnoncrossingpartitionsof(n)$ and $a_1,\ldots,a_n\in \mathcal{A}$.
   \end{definition}

   \begin{proposition}
     Let $(\mathcal A,\varphi)$ be a non-commutative probability space with free cumulants $(\kappa_\pi)_{\pi\in \setofnoncrossingpartitionsof}$.
     \begin{enumerate}
     \item For each $n\in \naturalnumbers$ and $\pi\in \setofnoncrossingpartitionsof(n)$, the free cumulant functional
         $\kappa_\pi: \,\mathcal{A}^n\to \complexnumbers$
       is linear in each of its $n$ arguments.
     \item The family $(\kappa_\pi)_{\pi\in \setofnoncrossingpartitionsof}$ is multiplicative, determined by the family $(\kappa_n)_{n\in \naturalnumbers}$, where, for every $n\in \naturalnumbers$,
        $\kappa_n\equalperdefinition \kappa_{1_n}$. 
     \end{enumerate}
   \end{proposition}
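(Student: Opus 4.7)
The plan is to reduce the statement to a single factorization identity for the M\"obius function on $\setofnoncrossingpartitionsof$. Part (1), multilinearity of each $\kappa_\sigma$, is immediate from Definition~\ref{definition:free-cumulants}: $\kappa_\sigma(a_1,\ldots,a_n)$ is a $\complexnumbers$-linear combination of the terms $\varphi_\pi(a_1,\ldots,a_n)$, and each $\varphi_\pi$ is multilinear as a product of the multilinear functionals $\varphi_{\# V}$, so linearity in each slot passes through the sum.

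For part (2), I would prove the stronger explicit formula
\begin{align*}
\kappa_\sigma(a_1,\ldots,a_n)=\prod_{V\in\sigma}\kappa_{\# V}(a_1,\ldots,a_n\mid V) \qquad\text{for all }\sigma\in\setofnoncrossingpartitionsof(n),
\end{align*}
where $\kappa_m\equalperdefinition\kappa_{1_m}$. Writing $\sigma=\{V_1,\ldots,V_r\}$, the key combinatorial observation is that every $\pi\in\setofnoncrossingpartitionsof(n)$ with $\pi\leq\sigma$ decomposes uniquely as $\pi=\pi_1\cup\cdots\cup\pi_r$, with $\pi_i\equalperdefinition\pi|_{V_i}\in\setofnoncrossingpartitionsof(V_i)$. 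This yields an order isomorphism $[\pi,\sigma]\cong\prod_{i=1}^r[\pi_i,1_{V_i}]$ between intervals in $\setofnoncrossingpartitionsof(n)$ and products of intervals in the factors $\setofnoncrossingpartitionsof(V_i)$. The multiplicativity built into $\varphi$ then delivers $\varphi_\pi(a_1,\ldots,a_n)=\prod_i\varphi_{\pi_i}(a_1,\ldots,a_n\mid V_i)$ at once.

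The main obstacle is the matching factorization of the M\"obius function, namely
\begin{align*}
\mu(\pi,\sigma)=\prod_{i=1}^r\mu(\pi_i,1_{V_i}),
\end{align*}
where on the right the M\"obius functions are taken in the smaller posets $\setofnoncrossingpartitionsof(V_i)$. This is an instance of the general principle that M\"obius functions multiply over direct products of finite posets. I would establish it by induction on the size of the interval $[\pi,\sigma]$ using the recursive characterization from the proof of Proposition~\ref{proposition:moebius-function}: the base case $\pi=\sigma$ is trivial, and in the inductive step the defining identity $\sum_{\pi\leq\tau\leq\sigma}\mu(\pi,\tau)=\delta_{\pi,\sigma}$ factors across the product of intervals into the corresponding identities for each $[\pi_i,1_{V_i}]$.

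Combining the two factorizations, the sum defining $\kappa_\sigma$ separates:
\begin{align*}
\kappa_\sigma(a_1,\ldots,a_n)&=\sum_{\pi\leq\sigma}\varphi_\pi(a_1,\ldots,a_n)\,\mu(\pi,\sigma)\\
&=\prod_{i=1}^r\biggl(\sum_{\pi_i\in\setofnoncrossingpartitionsof(V_i)}\varphi_{\pi_i}(a_1,\ldots,a_n\mid V_i)\,\mu(\pi_i,1_{V_i})\biggr)\\
&=\prod_{i=1}^r\kappa_{1_{V_i}}(a_1,\ldots,a_n\mid V_i),
\end{align*}
which is precisely the multiplicativity statement. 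Specializing to $\sigma=1_n$ also recovers the identification $\kappa_n=\kappa_{1_n}$ and thus establishes that the family $(\kappa_\pi)_{\pi\in\setofnoncrossingpartitionsof}$ is the multiplicative extension of $(\kappa_n)_{n\in\naturalnumbers}$ in the sense of Definition~\ref{definition:multiplicative-family}.
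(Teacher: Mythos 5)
Your proposal is correct and follows essentially the same route as the paper: part (1) from multilinearity of the $\varphi_\pi$, and part (2) via the decomposition $\pi=\pi_1\cup\cdots\cup\pi_r$, the interval isomorphism $[\pi,\sigma]\cong\prod_i[\pi_i,1_{V_i}]$, and the product formula for the M\"obius function. The only difference is that you sketch an inductive proof of the M\"obius factorization where the paper simply invokes the general principle that M\"obius functions multiply over products of posets.
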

   \begin{proof}
     \begin{enumerate}[wide]
     \item Clear by Definition~\ref{definition:free-cumulants} since all $\varphi_\pi$ are multilinear.
     \item Let $n\in \naturalnumbers$, $\sigma=\{V_1,\ldots,V_r\} \in \setofnoncrossingpartitionsof (n)$ and $a_1,\ldots,a_n\in \mathcal A$ and  consider $\kappa_\sigma(a_1,\ldots,a_n)$. Any $\pi\in \setofnoncrossingpartitionsof(n)$ with $\pi\leq \sigma$ decomposes then into
         $\pi=\pi_1\cup\ldots\cup \pi_r$,
       where $\pi_i\in \setofnoncrossingpartitionsof(V_i)$ for every $i\in [r]$. And, the interval $[\pi,\sigma]$ decomposes accordingly
       \begin{align*}
         [\pi,\sigma]\cong [\pi_1,1_{\# V_1}]\times \ldots\times [\pi_r,1_{\# V_r}]\subseteq \setofnoncrossingpartitionsof(V_1)\times \ldots\times \setofnoncrossingpartitionsof(V_r).
       \end{align*}
       \par
       Since the value $\mu(\pi,\sigma)$ of the M\"obius function at $(\pi,\sigma)$ depends only on the interval $[\pi,\sigma]$ (by the recursive definition in the proof of Proposition~\ref{proposition:moebius-function}) and since the M\"obius function of a product of partially ordered sets is the product of the M\"obius functions, we find
       \begin{align*}
         \mu(\pi,\sigma)=\mu(\pi_1,1_{\# V_1})\cdot \ldots \cdot \mu(\pi_r,1_{\# V_r})
       \end{align*}
       and thus
       \begin{align*}
         \kappa_\sigma(a_1,\ldots, a_n)&=\sum_{\substack{\pi\in \setofnoncrossingpartitionsof(n)\\\pi\leq \sigma}}\varphi_\pi(a_1,\ldots,a_n)\mu(\pi,\sigma)\\
                                       &=\sum_{\substack{\pi_1\in\setofnoncrossingpartitionsof(V_1),\ldots,\\
         \pi_r\in \setofnoncrossingpartitionsof(V_r)}}\prod_{i=1}^r\varphi_{\pi_i}(a_1,\ldots,a_n\mid V_i)\cdot \mu(\pi_i,1_{\# V_i})\\
                                       &=\prod_{i=1}^r\underset{\displaystyle=\kappa_{\# V_i}(a_1,\ldots,a_n\mid V_i)}{ \underbrace{\sum_{\pi_i\in \setofnoncrossingpartitionsof(V_i)}\varphi_{\pi_i}(a_1,\ldots,a_n\mid V_i)\cdot \mu(\pi_i,1_{\# V_i})}}\\
         &=\prod_{V\in \sigma}\kappa_{\# V}(a_1,\ldots,a_n\mid V).
       \end{align*}
       That is just what we needed to show.\qedhere
     \end{enumerate}     
   \end{proof}

   \begin{remark}
     \label{remark:moment-cumulant-formula}
     Let $(\mathcal A,\varphi)$ be a non-commutative probability space with free cumulants $\kappa$.
     By M\"obius inversion, Corollary~\ref{corollary:moebius-inversion}, the definition $\kappa\equalperdefinition\varphi\ast\mu$  of $\kappa$ in Definition~\ref{definition:free-cumulants} is equivalent to requiring
       $\varphi=\kappa\ast\zeta$.
     More precisely, the free cumulants are determined by the facts that $(\kappa_\pi)_{\pi\in \setofnoncrossingpartitionsof}$ is a multiplicative family of functionals and that, for all $n\in \naturalnumbers$ and $a_1,\ldots,a_n\in \mathcal{A}$,
     \begin{align*}
       \varphi(a_1\ldots a_n)=\varphi_n(a_1,\ldots,a_n)=\sum_{\pi\in \setofnoncrossingpartitionsof (n)}\kappa_\pi(a_1,\ldots,a_n).
     \end{align*}
   \end{remark}

   \begin{notation}
     Let $(\mathcal A,\varphi)$ be a non-commutative probability space with free cumulants $\kappa$.
     The formulas $\kappa=\varphi\ast\mu$ and $\varphi=\kappa\ast\zeta$ from Definition~\ref{definition:free-cumulants} and Remark~\ref{remark:moment-cumulant-formula} are called \emph{moment-cumulant formulas}.
   \end{notation}

   \begin{example}
     Let $(\mathcal A,\varphi)$ be a non-commutative probability space.
     \label{example:free-cumulants}
     \begin{enumerate}
     \item\label{example:free-cumulants-1} Let us calculate cumulants $\kappa_n$ for small $n\in \naturalnumbers$ by explicitly inverting the equation $\varphi=\kappa\ast\zeta$. In this way, we also learn values of the M\"obius function of $\setofnoncrossingpartitionsof$. 
       \begin{enumerate}
       \item\label{example:free-cumulants-1-1} Case~$n=1$: Since $\setofnoncrossingpartitionsof(1)=\{\ncpone\}=\{1_1\}$, the equation $\varphi=\kappa\ast\zeta$ yields, for every $a_1\in \mathcal A$,
   \begin{align*}
     \underset{\displaystyle =\varphi(a_1)}{\underbrace{\varphi_1(a_1)}}=\kappa_{\ncpone}(a_1)=\kappa_1(a_1),
   \end{align*}
   which allows us to conclude that
     $\kappa_1(a_1)=\varphi(a_1)$.
   Since this formula corresponds by definition to
   \begin{align*}
          \kappa_1(a_1)=\varphi_1(a_1)\underset{\displaystyle=1}{\underbrace{\mu(\ncpone,\ncpone)}}.
   \end{align*}
   we have verified $\mu(\ncpone,\ncpone)=1$.
          \item\label{example:free-cumulants-1-2} Case~$n=2$: It holds $\setofnoncrossingpartitionsof(2)=\{\ncptwoone,\ncptwotwo\}=\{0_2,1_2\}$. From $\varphi=\kappa\ast\zeta$ follows here, for all $a_1,a_2\in \mathcal A$,
   \begin{align*}
     \underset{\displaystyle = \varphi(a_1a_2)}{\underbrace{\varphi_2(a_1,a_2)}}=\underset{\displaystyle =\kappa_2(a_1,a_2)}{\underbrace{\kappa_\ncptwotwo(a_1,a_2)}}+\underset{\displaystyle =\kappa_1(a_1)\kappa_1(a_2)}{\underbrace{\kappa_\ncptwoone(a_1,a_2)}}.
   \end{align*}
   We solve for $\kappa_2(a_1,a_2)$ and use the results $\kappa_1(a_1)=\varphi(a_1)$ and $\kappa_1(a_2)=\varphi(a_2)$ from Part~\ref{example:free-cumulants-1-1}, yielding
   \begin{align*}
     \kappa_2(a_1,a_2)
     &=\varphi(a_1a_2)-\varphi(a_1)\varphi(a_2).
   \end{align*}
   We can learn further values of the M\"obius function by comparing with the definition:
        \begin{align*}
     \kappa_2(a_1,a_2)
       &=\varphi_2(a_1,a_2)\underset{\displaystyle = 1}{\underbrace{\mu(\ncptwoone,\ncptwotwo)}}+\varphi_1(a_1)\varphi_2(a_2)\underset{\displaystyle = -1}{\underbrace{\mu(\ncptwotwo,\ncptwotwo)}}.
   \end{align*}
 \item\label{example:free-cumulants-1-3} Case~$n=3$: Here, expanding the definitions in $\varphi=\kappa\ast\zeta$ leads to, for all $a_1,a_2,a_3\in \mathcal A$,
      \begin{align*}
     \underset{\displaystyle =\varphi(a_1a_2a_3)}{\underbrace{\varphi_3(a_1,a_2,a_3)}}&=\kappa_{\ncpthreefive}(a_1,a_2,a_3)+\kappa_{\ncpthreefour}(a_1,a_2,a_3)\\[-1.5em]
                                                                         &\phantom{{}=}+\kappa_{\ncpthreethree}(a_1,a_2,a_3)+\kappa_{\ncpthreetwo}(a_1,a_2,a_3)\\
     &\phantom{{}=}+\kappa_{\ncpthreeone}(a_1,a_2,a_3)\\
     &=\kappa_3(a_1,a_2,a_3)+\kappa_1(a_1)\kappa_2(a_2,a_3)\\
                                                                                      &\phantom{{}=}+\kappa_2(a_1,a_3)\kappa_1(a_2)+\kappa_2(a_1,a_2)\kappa_1(a_3)\\
     &\phantom{{}=}+\kappa_1(a_1)\kappa_1(a_2)\kappa_1(a_3).
   \end{align*}
      Combining the results of Parts~\ref{example:free-cumulants-1-1} and~\ref{example:free-cumulants-1-2}, we can thus conclude
   \begin{align*}
     \kappa_3(a_1,a_2,a_3)&=\varphi(a_1a_2a_3)-\varphi(a_1)\varphi(a_2a_3)\\
                          &\phantom{{}=}-\varphi(a_2)\varphi(a_1a_3)-\varphi(a_3)\varphi(a_1a_2)\\
     &\phantom{{}=}+2\varphi(a_1)\varphi(a_2)\varphi(a_3).
   \end{align*}
   And thus we can read off the following values of the M\"obius function:
   \begin{align*}
     \begin{array}{c||c|c|c|c|c}
       \sigma & \ncpthreefive & \ncpthreefour & \ncpthreethree & \ncpthreetwo & \ncpthreeone\\ \hline 
       \mu(\sigma,\ncpthreefive) & 1 & -1 & -1 & -1 & 2
     \end{array}
   \end{align*}
\end{enumerate}
\item\label{example:free-cumulants-2} Let $s$ be a standard semicircular element (see Definition~\ref{definition:semicircular-variable}). Then, for all $m\in \naturalnumbers$,
  \begin{align*}
    \varphi(s^{2m})=\#\setofnoncrossingpartitionsof_2(2m)=\sum_{\pi\in \setofnoncrossingpartitionsof_2(2m)}\prod_{V\in \pi}1.
  \end{align*}
  Thus the free cumulants of $s$ are, for every $n\in \naturalnumbers$, of the form
  \begin{align*}
    \kappa_n(s,s,\ldots,s)=
    \begin{cases}
      1,&\text{if }n=2,\\0,&\text{otherwise}.
    \end{cases}
  \end{align*}
  This is because, for every $n\in \naturalnumbers$, the corresponding multiplicative extension is given by
  \begin{align*}
    \kappa_\pi(s,s,\ldots,s)&=\prod_{V\in \pi}\underset{\displaystyle =\delta_{\# V,2}}{\underbrace{\kappa_{\# V}(s,\ldots,s)}}
    =
      \begin{cases}
        1,&\text{if }\pi\in\setofnoncrossingpartitionsof_2(n),\\
        0,&\text{otherwise}
      \end{cases}
  \end{align*}
  for all  $\pi\in \setofnoncrossingpartitionsof (n)$,
   which gives the right values for the moments, and because (by M\"obius inversion, Corollary~\ref{corollary:moebius-inversion}) the cumulants $(\kappa_n)_{n\in \naturalnumbers}$ are uniquely determined by the moments $(\varphi_n)_{n\in \naturalnumbers}$.
\item\label{example:free-cumulants-3} In the same way, for every $\lambda\in \realnumbers$, $\lambda >0$, the description of the moments of a free Poisson element $x$ of parameter $\lambda$ (see Definition~\ref{definition:free-poisson-variable}) as 
  \begin{align*}
    \varphi(x^k) =\sum_{\pi\in \setofnoncrossingpartitionsof(k)}\lambda^{\# \pi}=\sum_{\pi\in \setofnoncrossingpartitionsof(k)}\prod_{V\in \pi}\lambda,
  \end{align*}
  for every $k\in \naturalnumbers$,
  tells us that its cumulants are, for every $n\in \naturalnumbers$, of the form
  \begin{align*}
    \kappa_n(x,x,\ldots,x)=\lambda.
  \end{align*}
     \end{enumerate}
   \end{example}

   \begin{remark}
     Let $(\mathcal A,\varphi)$ be a non-commutative probability space.
     With respect to the additive structure of $\mathcal{A}$, the free cumulant $\kappa_\pi$ is a multilinear functional for every $\pi\in \setofnoncrossingpartitionsof$. We also need to understand their behavior with respect to the multiplicative structure of $\mathcal{A}$. For the moment functionals this is easy, they are \enquote{associative}: For example, for all $a_1,a_2,a_3\in \mathcal A$,
     \begin{align*}
       \varphi_2(a_1a_2,a_3)&=\varphi((a_1a_2)a_3)=\varphi(a_1(a_2a_3))=\varphi_2(a_1,a_2a_3).
     \end{align*}
     But, what about $\kappa_2$? The functional $\kappa_2$ is \emph{not} associative in this sense, i.e., for $a_1,a_2,a_3\in \mathcal A$,
       $\kappa_2(a_1a_2,a_3)\neq \kappa_2(a_1,a_2a_3)$ in general.
     However, there is a nice replacement for this.
   \end{remark}
   \begin{notation}
     \label{notation:cumulant-product-rule}
     Let $(\mathcal A,\varphi)$ be a non-commutative probability space, fix $m,n\in \naturalnumbers$ with $m<n$ and $i:[m]\to [n]$ with $1\leq i(1)<i(2)<\ldots<i(m)=n$. Consider $a_1,\ldots,a_n\in \mathcal{A}$ and put
     \begin{align*}
       A_1&\equalperdefinition a_1\ldots a_{i(1)}\\
       A_2&\equalperdefinition a_{i(1)+1}\ldots a_{i(2)}\\
          & \vdots\\
       A_m&\equalperdefinition a_{i(m-1)+1}\ldots a_{i(m)}.
     \end{align*}
     We want to relate the cumulants of $(a_1,\ldots,a_n)$ and $(A_1,\ldots, A_m)$. On the level of moments this is simple: For each $\pi\in \setofnoncrossingpartitionsof(m)$ there is a $\hat\pi\in \setofnoncrossingpartitionsof(n)$ such that
     \begin{align*}
       \varphi_\pi(A_1,\ldots,A_m)=\varphi_{\hat\pi}(a_1,a_2,\ldots,a_n).
     \end{align*}
     Namely, for $\pi\in\setofnoncrossingpartitionsof(m)$ this partition $\hat\pi$ is determined by writing $i_0\equalperdefinition 0$ and requiring, for all $j,k\in [n]$, 
     \begin{align*}
       j\sim_{\hat{ \pi}}k\quad\text{if and only if}\quad &\text{there exist }p,q\in [m] \text{ such that}\\
       &a_j\text{ is a factor in }A_p: \, j\in \{i(p-1)+1,\ldots, i(p)\},\\
        &a_k\text{ is a factor in }A_q: \, k\in \{i(q-1)+1,\ldots, i(q)\},\text{ and}\\       
       & p\sim_\pi q.
     \end{align*}
      The mapping  $\hat{\cdot}: \setofnoncrossingpartitionsof(m)\to \setofnoncrossingpartitionsof(n)$ is an embedding of partially ordered sets.
   \end{notation}

   \begin{example}
     \begin{enumerate}
     \item For $m=3$, $n=6$, $a_1,\ldots,a_6\in \mathcal A$, and
       \begin{align*}
        A_1\equalperdefinition a_1, \quad A_2\equalperdefinition a_2a_3a_4, \quad A_3\equalperdefinition a_5a_6,
       \end{align*}
the embedding $\hat{\cdot}: \setofnoncrossingpartitionsof(3)\to \setofnoncrossingpartitionsof(6)$ from Notation~\ref{notation:cumulant-product-rule} looks as follows:
       \begin{align*}
         \begin{matrix}
         A_1   A_2  A_3 &\overset{\hat{\cdot}}{\longrightarrow}& a_1  a_2   a_3   a_4   a_5   a_6\\
                     \begin{tikzpicture}[baseline=-2pt-0.375em]
      \node[inner sep=1pt] (n1) at (0em,0) {};
      \node[inner sep=1pt] (n2) at (1em,0) {};
      \node[inner sep=1pt] (n3) at (2em,0) {};
      \draw (n1) --++(0,-0.75em);      
      \draw (n2) --++(0,-0.75em);
      \draw (n3) --++(0,-0.75em);      
    \useasboundingbox (-0.25em,-1em) rectangle (2.25em, 0.125em);
  \end{tikzpicture}         && \begin{tikzpicture}[baseline=-2pt-0.375em]
      \node[inner sep=1pt] (n1) at (0em,0) {};
      \node[inner sep=1pt] (n2) at (1em,0) {};
      \node[inner sep=1pt] (n3) at (2em,0) {};
      \node[inner sep=1pt] (n4) at (3em,0) {};
      \node[inner sep=1pt] (n5) at (4em,0) {};
      \node[inner sep=1pt] (n6) at (5em,0) {};      
      \draw (n1) --++(0,-0.75em);      
      \draw (n2) --++(0,-0.75em) -| (n3);
      \draw ($(n3)+(0,-0.75em)$) -| (n4);
      \draw (n5) --++(0,-0.75em) -| (n6);      
    \useasboundingbox (-0.25em,-0.875em) rectangle (5.25em, 0.125em);
  \end{tikzpicture}\\[0.5em]
\begin{tikzpicture}[baseline=-2pt-0.375em]
      \node[inner sep=1pt] (n1) at (0em,0) {};
      \node[inner sep=1pt] (n2) at (1em,0) {};
      \node[inner sep=1pt] (n3) at (2em,0) {};
      \draw (n1) --++(0,-0.75em) -| (n2);
      \draw (n3) --++(0,-0.75em);      
    \useasboundingbox (-0.25em,-1em) rectangle (2.25em, 0.125em);
  \end{tikzpicture}                              && \begin{tikzpicture}[baseline=-2pt-0.375em]
      \node[inner sep=1pt] (n1) at (0em,0) {};
      \node[inner sep=1pt] (n2) at (1em,0) {};
      \node[inner sep=1pt] (n3) at (2em,0) {};
      \node[inner sep=1pt] (n4) at (3em,0) {};
      \node[inner sep=1pt] (n5) at (4em,0) {};
      \node[inner sep=1pt] (n6) at (5em,0) {};      
      \draw (n1) --++(0,-0.75em) -| (n2);
      \draw ($(n2)+(0,-0.75em)$) -| (n3);
      \draw ($(n3)+(0,-0.75em)$) -| (n4);      
      \draw (n5) --++(0,-0.75em) -| (n6);      
    \useasboundingbox (-0.25em,-0.875em) rectangle (5.25em, 0.125em);
  \end{tikzpicture}                     \\[0.5em]
\begin{tikzpicture}[baseline=-2pt-0.5em]
      \node[inner sep=1pt] (n1) at (0em,0) {};
      \node[inner sep=1pt] (n2) at (1em,0) {};
      \node[inner sep=1pt] (n3) at (2em,0) {};
      \draw (n1) --++(0,-1em) -| (n3);
      \draw (n2) --++(0,-0.5em);      
    \useasboundingbox (-0.25em,-1.125em) rectangle (2.25em, 0.125em);
  \end{tikzpicture}                              && \begin{tikzpicture}[baseline=-2pt-0.5em]
      \node[inner sep=1pt] (n1) at (0em,0) {};
      \node[inner sep=1pt] (n2) at (1em,0) {};
      \node[inner sep=1pt] (n3) at (2em,0) {};
      \node[inner sep=1pt] (n4) at (3em,0) {};
      \node[inner sep=1pt] (n5) at (4em,0) {};
      \node[inner sep=1pt] (n6) at (5em,0) {};      
      \draw (n1) --++(0,-1em) -| (n5);
      \draw ($(n5)+(0,-1em)$) -| (n6);
      \draw (n2) --++(0,-0.5em) -| (n3);
      \draw ($(n3)+(0,-0.5em)$) -| (n4);      
    \useasboundingbox (-0.25em,-1.125em) rectangle (5.25em, 0.125em);
  \end{tikzpicture}\\[0.5em]
       \begin{tikzpicture}[baseline=-2pt-0.375em]
      \node[inner sep=1pt] (n1) at (0em,0) {};
      \node[inner sep=1pt] (n2) at (1em,0) {};
      \node[inner sep=1pt] (n3) at (2em,0) {};
      \draw (n2) --++(0,-0.75em) -| (n3);
      \draw (n1) --++(0,-0.75em);      
    \useasboundingbox (-0.25em,-1em) rectangle (2.25em, 0.125em);
  \end{tikzpicture}                       && \begin{tikzpicture}[baseline=-2pt-0.375em]
      \node[inner sep=1pt] (n1) at (0em,0) {};
      \node[inner sep=1pt] (n2) at (1em,0) {};
      \node[inner sep=1pt] (n3) at (2em,0) {};
      \node[inner sep=1pt] (n4) at (3em,0) {};
      \node[inner sep=1pt] (n5) at (4em,0) {};
      \node[inner sep=1pt] (n6) at (5em,0) {};      
      \draw (n1) --++(0,-0.75em);      
      \draw (n2) --++(0,-0.75em) -| (n3);
      \draw ($(n3)+(0,-0.75em)$) -| (n4);
      \draw ($(n4)+(0,-0.75em)$) -| (n5);
      \draw ($(n5)+(0,-0.75em)$) -| (n6);      
    \useasboundingbox (-0.25em,-0.875em) rectangle (5.25em, 0.125em);
  \end{tikzpicture}\\[0.5em]
\begin{tikzpicture}[baseline=-2pt-0.375em]
      \node[inner sep=1pt] (n1) at (0em,0) {};
      \node[inner sep=1pt] (n2) at (1em,0) {};
      \node[inner sep=1pt] (n3) at (2em,0) {};
      \draw (n1) --++(0,-0.75em) -| (n2);
      \draw ($(n2)+(0,-0.75em)$) -| (n3);      
    \useasboundingbox (-0.25em,-1em) rectangle (2.25em, 0.125em);
  \end{tikzpicture}                              &&   \begin{tikzpicture}[baseline=-2pt-0.375em]
      \node[inner sep=1pt] (n1) at (0em,0) {};
      \node[inner sep=1pt] (n2) at (1em,0) {};
      \node[inner sep=1pt] (n3) at (2em,0) {};
      \node[inner sep=1pt] (n4) at (3em,0) {};
      \node[inner sep=1pt] (n5) at (4em,0) {};
      \node[inner sep=1pt] (n6) at (5em,0) {};      
      \draw (n1) --++(0,-0.75em) -| (n2);
      \draw ($(n2)+(0,-0.75em)$) -| (n3);      
      \draw ($(n3)+(0,-0.75em)$) -| (n4);
      \draw ($(n4)+(0,-0.75em)$) -| (n5);
      \draw ($(n5)+(0,-0.75em)$) -| (n6);      
    \useasboundingbox (-0.25em,-0.875em) rectangle (5.25em, 0.125em);
  \end{tikzpicture}   
                              \end{matrix}
       \end{align*}
            
       We can relate the moments of $(a_1,a_2,\ldots,a_6)$ and $(A_1,A_2,A_3)$  by
       \begin{align*}
         \varphi_{\begin{tikzpicture}[baseline=-2pt-0.5em,xscale=0.7]
      \node[inner sep=1pt] (n1) at (0em,0) {};
      \node[inner sep=1pt] (n2) at (1em,0) {};
      \node[inner sep=1pt] (n3) at (2em,0) {};
      \draw (n1) --++(0,-1em) -| (n3);
      \draw (n2) --++(0,-0.5em);      
    \useasboundingbox (-0.25em,-1.125em) rectangle (2.25em, 0.125em);
  \end{tikzpicture}}(A_1,A_2,A_3)&=\varphi(A_1A_3)\varphi(A_2)\\
                                &=\varphi(a_1a_5a_6)\varphi(a_2a_3a_4)
         =\varphi_{\begin{tikzpicture}[baseline=-2pt-0.5em,xscale=0.7]
      \node[inner sep=1pt] (n1) at (0em,0) {};
      \node[inner sep=1pt] (n2) at (1em,0) {};
      \node[inner sep=1pt] (n3) at (2em,0) {};
      \node[inner sep=1pt] (n4) at (3em,0) {};
      \node[inner sep=1pt] (n5) at (4em,0) {};
      \node[inner sep=1pt] (n6) at (5em,0) {};      
      \draw (n1) --++(0,-1em) -| (n5);
      \draw ($(n5)+(0,-1em)$) -| (n6);
      \draw (n2) --++(0,-0.5em) -| (n3);
      \draw ($(n3)+(0,-0.5em)$) -| (n4);      
    \useasboundingbox (-0.25em,-1.125em) rectangle (5.25em, 0.125em);
  \end{tikzpicture}}(a_1,a_2,\ldots,a_6).
       \end{align*}
     \item Note that $\hat{1}_m=1_n$, but $\hat{0}_m\neq \hat{0}_n$ (unless $m=n$) and that
       \begin{align*}
         \{ \hat{\pi}\mid \pi\in \setofnoncrossingpartitionsof(m) \}&=[\hat{0}_m,\hat{1}_n]=\{ \sigma\in \setofnoncrossingpartitionsof(n)\mid \hat{0}_m\leq \sigma \}.
       \end{align*}
     \item The mapping $\setofnoncrossingpartitionsof(m)\to \setofnoncrossingpartitionsof(n),\, \tau\mapsto\hat{\tau}$ preserves the partial order. Hence, in particular,
         $\mu(\pi,\tau)=\mu(\hat{\pi},\hat{\tau})$
       for all $\pi,\tau\in \setofnoncrossingpartitionsof(m)$ with $\pi\leq \tau$.
     \end{enumerate}
   \end{example}

   \begin{proposition}
\label{proposition:non-crossing-lattice}
     For each $n\in \naturalnumbers$, the partially ordered set $\setofnoncrossingpartitionsof(n)$ is a \emph{lattice}:
     \begin{enumerate}
     \item\label{proposition:non-crossing-lattice-1} For all $\pi,\sigma\in \setofnoncrossingpartitionsof(n)$ there exists a unique smallest $\tau\in \setofnoncrossingpartitionsof(n)$ with the properties $\pi \leq \tau$ and $\sigma\leq \tau$.  (It is denoted by $\pi\vee \sigma$ and called the \emph{join} (or \emph{maximum}) of $\pi$ and $\sigma$.)
     \item\label{proposition:non-crossing-lattice-2} For all $\pi,\sigma\in \setofnoncrossingpartitionsof(n)$ there exists a unique largest $\tau\in \setofnoncrossingpartitionsof(n)$ with the properties $\tau\leq \pi$ and $\tau\leq \sigma$. (It is denoted by $\pi\wedge \sigma$ and called the \emph{meet} (or \emph{minimum}) of $\pi$ and $\sigma$.)
     \end{enumerate}
   \end{proposition}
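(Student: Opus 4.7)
The plan is to prove the meet exists first (with a direct construction) and then derive the join formally from the meet together with the maximum element $1_n$ that we already have in $\setofnoncrossingpartitionsof(n)$.

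For Part~\ref{proposition:non-crossing-lattice-2}, I would define $\pi \wedge \sigma$ as the \enquote{common refinement} in $\setofpartitionsof(n)$, i.e.\ the partition whose equivalence relation is the intersection of those of $\pi$ and $\sigma$:
\begin{align*}
  p \sim_{\pi \wedge \sigma} q \quad \Longleftrightarrow \quad p \sim_\pi q \text{ and } p \sim_\sigma q.
\end{align*}
This is obviously the largest element of $\setofpartitionsof(n)$ that lies below both $\pi$ and $\sigma$, so the only thing to check is that it actually lies in $\setofnoncrossingpartitionsof(n)$. Suppose for contradiction that there were $p_1 < q_1 < p_2 < q_2$ with $p_1 \sim_{\pi\wedge\sigma} p_2$, $q_1 \sim_{\pi\wedge\sigma} q_2$ and $p_1 \not\sim_{\pi\wedge\sigma} q_1$. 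Then in particular $p_1 \sim_\pi p_2$ and $q_1 \sim_\pi q_2$; since $\pi \in \setofnoncrossingpartitionsof(n)$ this forces $p_1 \sim_\pi q_1$. The same argument applied to $\sigma$ gives $p_1 \sim_\sigma q_1$, and hence $p_1 \sim_{\pi \wedge \sigma} q_1$, a contradiction.

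For Part~\ref{proposition:non-crossing-lattice-1}, I would avoid constructing the join combinatorially (which is awkward, since the join in $\setofpartitionsof(n)$ can introduce crossings and one would have to \enquote{close it up} further) and instead use the standard poset lemma: a finite poset with a maximum in which all meets exist automatically admits all joins. Concretely, consider
\begin{align*}
  U \equalperdefinition \{ \tau \in \setofnoncrossingpartitionsof(n) \mid \pi \leq \tau \text{ and } \sigma \leq \tau \}.
\end{align*}
This set is non-empty, because $1_n \in U$, and it is finite. I would then set
\begin{align*}
  \pi \vee \sigma \equalperdefinition \bigwedge_{\tau \in U} \tau,
\end{align*}
where the meet on the right is well-defined by iterating Part~\ref{proposition:non-crossing-lattice-2}. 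The key observation that makes this work is that $U$ is closed under the meet operation: if $\tau_1, \tau_2 \in U$, then for every $p, q$ with $p \sim_\pi q$ one has $p \sim_{\tau_1} q$ and $p \sim_{\tau_2} q$, hence $p \sim_{\tau_1 \wedge \tau_2} q$, so $\pi \leq \tau_1 \wedge \tau_2$, and similarly for $\sigma$. Thus $\pi \vee \sigma$ itself belongs to $U$, and by construction it is the unique smallest element of $U$, i.e.\ the unique smallest common upper bound of $\pi$ and $\sigma$.

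The only real subtlety is the non-crossing check in the meet step, which is exactly the argument above; the rest is formal poset manipulation. I would not attempt a direct pictorial description of $\pi \vee \sigma$, since in $\setofnoncrossingpartitionsof(n)$ the join typically strictly dominates the join taken in $\setofpartitionsof(n)$ and is therefore best characterized only through this indirect \enquote{smallest non-crossing upper bound} definition.
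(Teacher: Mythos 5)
Your proof is correct and follows essentially the same route as the paper: the meet is the common refinement (intersection of equivalence relations), and the join is obtained as the meet of the finite, nonempty set of common upper bounds, anchored by $1_n$. You are in fact slightly more thorough than the paper in two places the paper leaves implicit, namely the verification that the common refinement is actually non-crossing and the check that $U$ is closed under meets so that $\bigwedge_{\tau\in U}\tau$ really lies in $U$.
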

   \begin{proof}
     Let $n\in \naturalnumbers$ and $\pi,\sigma\in \setofnoncrossingpartitionsof(n)$ be arbitrary. We prove Claim~\ref{proposition:non-crossing-lattice-2} first.
     \begin{enumerate}[wide]
      \setcounter{enumi}{1}
     \item If  $\pi=\{V_1,\ldots,V_r\}$ and $\sigma=\{W_1,\ldots,W_s\}$, then,
       \begin{align*}
         \pi\wedge \sigma=\{ V_i\cap W_j\mid i\in [r],\, j\in [s],\, V_i\cap W_j\neq \emptyset\}.
       \end{align*}
       In other words, for all $p,q\in [n]$,
       \begin{align*}
         p\sim_{\pi\wedge \sigma} q\quad \text{if and only if}\quad p\sim_\pi q\text{ and }p\sim_\sigma q.
       \end{align*}
       For example, 
       \begin{align*}
         \begin{tikzpicture}[baseline=-2pt-0.375em]
      \node[inner sep=1pt] (n1) at (0em,0) {};
      \node[inner sep=1pt] (n2) at (1em,0) {};
      \node[inner sep=1pt] (n3) at (2em,0) {};
      \node[inner sep=1pt] (n4) at (3em,0) {};
      \draw (n2) --++(0,-0.75em) -| (n3);
      \draw ($(n3)+(0,-0.75em)$) -| (n4);
      \draw (n1) --++(0,-0.75em);            
    \useasboundingbox (-0.25em,-1em) rectangle (3.25em, 0.125em);
  \end{tikzpicture}\quad\wedge\quad
        \begin{tikzpicture}[baseline=-2pt-0.5em]
      \node[inner sep=1pt] (n1) at (0em,0) {};
      \node[inner sep=1pt] (n2) at (1em,0) {};
      \node[inner sep=1pt] (n3) at (2em,0) {};
      \node[inner sep=1pt] (n4) at (3em,0) {};
      \draw (n2) --++(0,-0.5em) -| (n3);
      \draw (n1) --++(0,-1em) -| (n4);      
    \useasboundingbox (-0.25em,-1.125em) rectangle (3.25em, 0.125em);
  \end{tikzpicture}
      \qquad =\qquad
       \begin{tikzpicture}[baseline=-2pt-0.375em]
      \node[inner sep=1pt] (n1) at (0em,0) {};
      \node[inner sep=1pt] (n2) at (1em,0) {};
      \node[inner sep=1pt] (n3) at (2em,0) {};
      \node[inner sep=1pt] (n4) at (3em,0) {};
      \draw (n2) --++(0,-0.75em) -| (n3);
      \draw (n1) --++(0,-0.75em);
      \draw (n4) --++(0,-0.75em);                  
    \useasboundingbox (-0.25em,-1em) rectangle (3.25em, 0.125em);
  \end{tikzpicture}.
  \end{align*}

      \setcounter{enumi}{0}       
     \item The first guess, trying to define $\pi\vee \sigma$ by requiring  $p\sim_\pi q$ or $p\sim_\sigma q$ for $p,q\in [n]$, does not work. But we can reduce joins to meets as follows:
       \begin{itemize}
       \item By induction, any finite number of non-crossing partitions has a meet:
         \begin{align*}
           \pi_1\wedge \pi_2 \wedge \ldots \wedge \pi_k \in \setofnoncrossingpartitionsof(k)
         \end{align*}
         for all $k\in \naturalnumbers$ and $\pi_1,\ldots, \pi_k\in \setofnoncrossingpartitionsof(n)$.
       \item There is a maximal element $1_n$, which is the join of all of $\setofnoncrossingpartitionsof(n)$.
       \item It then follows
         \begin{align*}
           \pi\vee \sigma&= \bigwedge \underset{\displaystyle \neq \emptyset\text{ since it contains }1_n}{\underbrace{\{ \tau\in \setofnoncrossingpartitionsof(n)\mid  \pi\leq \tau, \,\sigma\leq \tau \}}}.
         \end{align*}
       \end{itemize}
       That concludes the proof.\qedhere
     \end{enumerate}     
   \end{proof}

   \begin{remark}
     \begin{enumerate}
     \item For $n\in \naturalnumbers$ and not necessarily non-crossing $\pi,\sigma\in \setofpartitionsof(n)$, we can define $\pi\leq \sigma$ in the same way as for $\pi,\sigma\in\setofnoncrossingpartitionsof(n)$. Then, $\setofpartitionsof(n)$ is also a lattice. The meet of $\setofpartitionsof(n)$ restricts to the meet on $\setofnoncrossingpartitionsof(n)$. But for the join, both operations are different. E.g., the non-crossing partitions
       \begin{align*}
          \begin{tikzpicture}[baseline=-2pt-0.5em]
      \node[inner sep=1pt] (n1) at (0em,0) {};
      \node[inner sep=1pt] (n2) at (1em,0) {};
      \node[inner sep=1pt] (n3) at (2em,0) {};
      \node[inner sep=1pt] (n4) at (3em,0) {};
      \draw (n2) --++(0,-1em) -| (n4);
      \draw (n1) --++(0,-0.5em);
      \draw (n3) --++(0,-0.5em);                  
    \useasboundingbox (-0.25em,-1.125em) rectangle (3.25em, 0.125em);
  \end{tikzpicture}                                  \quad\text{and}\quad
\begin{tikzpicture}[baseline=-2pt-0.5em]
      \node[inner sep=1pt] (n1) at (0em,0) {};
      \node[inner sep=1pt] (n2) at (1em,0) {};
      \node[inner sep=1pt] (n3) at (2em,0) {};
      \node[inner sep=1pt] (n4) at (3em,0) {};
      \draw (n1) --++(0,-1em) -| (n3);
      \draw (n2) --++(0,-0.5em);
      \draw (n4) --++(0,-0.5em);                  
    \useasboundingbox (-0.25em,-1.125em) rectangle (3.25em, 0.125em);
  \end{tikzpicture}
       \end{align*}
       have the joins
       \begin{align*}
         \begin{tikzpicture}[baseline=-2pt-0.5em]
      \node[inner sep=1pt] (n1) at (0em,0) {};
      \node[inner sep=1pt] (n2) at (1em,0) {};
      \node[inner sep=1pt] (n3) at (2em,0) {};
      \node[inner sep=1pt] (n4) at (3em,0) {};
      \draw (n1) --++(0,-0.5em) -| (n3);
      \draw (n2) --++(0,-1em) -| (n4);      
    \useasboundingbox (-0.25em,-1.125em) rectangle (3.25em, 0.125em);
  \end{tikzpicture}\text{ in }\setofpartitionsof(4) \quad\text{and}\quad
         \begin{tikzpicture}[baseline=-2pt-0.375em]
      \node[inner sep=1pt] (n1) at (0em,0) {};
      \node[inner sep=1pt] (n2) at (1em,0) {};
      \node[inner sep=1pt] (n3) at (2em,0) {};
      \node[inner sep=1pt] (n4) at (3em,0) {};
      \draw (n1) --++(0,-0.75em) -| (n2);
      \draw ($(n2)+(0,-0.75em)$) -| (n3);
      \draw ($(n3)+(0,-0.75em)$) -| (n4);
    \useasboundingbox (-0.25em,-1em) rectangle (3.25em, 0.125em);
  \end{tikzpicture}\text{ in }\setofnoncrossingpartitionsof(4).
       \end{align*}
     \item In general, the join in $\setofnoncrossingpartitionsof$ is given by taking the join in $\setofpartitionsof$ and then merging blocks together which have a crossing.
     \item Let $n\in \naturalnumbers$. Even in $\setofpartitionsof(n)$ the join $\pi\vee \sigma$ of partitions $\pi,\sigma\in \setofpartitionsof(n)$ is \emph{not} just found by declaring for $p,q\in [n]$ that $p\sim_{\pi\vee \sigma} q$ if and only if $p\sim_\pi q$ or $p\sim_\sigma q$. Instead, one has to allow longer alternating connections via $\pi$ and $\sigma$. For example, the join of
       \begin{align*}
         \begin{tikzpicture}[baseline=-2pt-0.375em]
      \node[inner sep=1pt] (n1) at (0em,0) {};
      \node[inner sep=1pt] (n2) at (1em,0) {};
      \node[inner sep=1pt] (n3) at (2em,0) {};
      \node[inner sep=1pt] (n4) at (3em,0) {};
      \node[inner sep=1pt] (n5) at (4em,0) {};
      \node[inner sep=1pt] (n6) at (5em,0) {};
      \node[inner sep=1pt] (n7) at (6em,0) {};
      \node[inner sep=1pt] (n8) at (7em,0) {};
      \node[inner sep=1pt] (n9) at (8em,0) {};
      \node[inner sep=1pt] (n10) at (9em,0) {};
      \draw (n1) --++(0,-0.75em);
      \draw (n10) --++(0,-0.75em);
      \draw (n2) --++(0,-0.75em) -| (n3);
      \draw (n4) --++(0,-0.75em) -| (n5);
      \draw (n6) --++(0,-0.75em) -| (n7);
      \draw (n8) --++(0,-0.75em) -| (n9);      
    \useasboundingbox (-0.25em,-0.875em) rectangle (9.25em, 0.125em);
  \end{tikzpicture}\quad\text{and}\quad
         \begin{tikzpicture}[baseline=-2pt-0.375em]
      \node[inner sep=1pt] (n1) at (0em,0) {};
      \node[inner sep=1pt] (n2) at (1em,0) {};
      \node[inner sep=1pt] (n3) at (2em,0) {};
      \node[inner sep=1pt] (n4) at (3em,0) {};
      \node[inner sep=1pt] (n5) at (4em,0) {};
      \node[inner sep=1pt] (n6) at (5em,0) {};
      \node[inner sep=1pt] (n7) at (6em,0) {};
      \node[inner sep=1pt] (n8) at (7em,0) {};
      \node[inner sep=1pt] (n9) at (8em,0) {};
      \node[inner sep=1pt] (n10) at (9em,0) {};
      \draw (n1) --++(0,-0.75em) -| (n2);      
      \draw (n3) --++(0,-0.75em) -| (n4);
      \draw (n5) --++(0,-0.75em) -| (n6);
      \draw (n7) --++(0,-0.75em) -| (n8);
      \draw (n9) --++(0,-0.75em) -| (n10);      
    \useasboundingbox (-0.25em,-0.875em) rectangle (9.25em, 0.125em);
  \end{tikzpicture}
       \end{align*}
       in $\setofpartitionsof(10)$ (and $\setofnoncrossingpartitionsof(10)$) is given by
       \begin{align*}
         \begin{tikzpicture}[baseline=-2pt-0.375em]
      \node[inner sep=1pt] (n1) at (0em,0) {};
      \node[inner sep=1pt] (n2) at (1em,0) {};
      \node[inner sep=1pt] (n3) at (2em,0) {};
      \node[inner sep=1pt] (n4) at (3em,0) {};
      \node[inner sep=1pt] (n5) at (4em,0) {};
      \node[inner sep=1pt] (n6) at (5em,0) {};
      \node[inner sep=1pt] (n7) at (6em,0) {};
      \node[inner sep=1pt] (n8) at (7em,0) {};
      \node[inner sep=1pt] (n9) at (8em,0) {};
      \node[inner sep=1pt] (n10) at (9em,0) {};
      \draw (n1) --++(0,-0.75em) -| (n2);
      \draw ($(n2)+(0,-0.75em)$) -| (n3);
      \draw ($(n3)+(0,-0.75em)$) -| (n4);
      \draw ($(n4)+(0,-0.75em)$) -| (n5);
      \draw ($(n5)+(0,-0.75em)$) -| (n6);
      \draw ($(n6)+(0,-0.75em)$) -| (n7);
      \draw ($(n7)+(0,-0.75em)$) -| (n8);
      \draw ($(n8)+(0,-0.75em)$) -| (n9);
      \draw ($(n9)+(0,-0.75em)$) -| (n10);            
    \useasboundingbox (-0.25em,-0.875em) rectangle (9.25em, 0.125em);
  \end{tikzpicture}.
       \end{align*}
     \end{enumerate}
   \end{remark}
 
   \begin{theorem}
     \label{theorem:cumulant-product-rule}
     Let $(\mathcal{A},\varphi)$ be a non-commutative probability space with free cumulants $(\kappa_\pi)_{\pi\in \setofnoncrossingpartitionsof}$, let $m,n\in \naturalnumbers$ satisfy $m<n$, let $i:[m]\to[n]$ with $1\leq i(1)<i(2)<\ldots <i(m)=n$ and $a_1,\ldots,a_n\in \mathcal A$ be arbitrary and define
\begin{align*}
       A_1\equalperdefinition a_1\ldots a_{i(1)},\qquad
       A_2\equalperdefinition a_{i(1)+1}\ldots a_{i(2)},\qquad
          & \ldots\qquad
       A_m\equalperdefinition a_{i(m-1)+1}\ldots a_{i(m)}.
      \end{align*}
      Then,  for all $\tau\in \setofnoncrossingpartitionsof(m)$,
      \begin{align*}
        \kappa_\tau(A_1,\ldots,A_m)=\sum_{\substack{\pi\in \setofnoncrossingpartitionsof(n)\\\pi\vee \hat{0}_m=\hat{\tau}}}\kappa_\pi(a_1,\ldots, a_n).
      \end{align*}
      In particular, 
      \begin{align*}
        \kappa_m(A_1, \ldots,A_m)=\sum_{\substack{\pi\in \setofnoncrossingpartitionsof(n)\\\pi\vee \hat{0}_m={1}_n}}\kappa_\pi(a_1,\ldots, a_n).
      \end{align*}
   \end{theorem}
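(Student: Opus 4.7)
The plan is to unfold the definition of $\kappa_\tau(A_1,\ldots,A_m)$ via the moment-cumulant formula, translate from the $A_j$-level to the $a_i$-level using the embedding $\hat{\cdot}\colon\setofnoncrossingpartitionsof(m)\hookrightarrow\setofnoncrossingpartitionsof(n)$, re-expand the resulting moments as sums of cumulants in the $a_i$, and then recognize the inner Möbius sum as forcing a single equation $\pi\vee\hat{0}_m=\hat{\tau}$.

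Concretely, I would start from the definition
\begin{align*}
\kappa_\tau(A_1,\ldots,A_m)=\sum_{\substack{\sigma\in\setofnoncrossingpartitionsof(m)\\\sigma\leq\tau}}\varphi_\sigma(A_1,\ldots,A_m)\,\mu(\sigma,\tau),
\end{align*}
use the block-wise definitions to rewrite $\varphi_\sigma(A_1,\ldots,A_m)=\varphi_{\hat{\sigma}}(a_1,\ldots,a_n)$, and invoke the order-preservation $\mu(\sigma,\tau)=\mu(\hat{\sigma},\hat{\tau})$ noted in the example just before Proposition~\ref{proposition:non-crossing-lattice}. Next, I expand each $\varphi_{\hat{\sigma}}(a_1,\ldots,a_n)=\sum_{\pi\leq\hat{\sigma}}\kappa_\pi(a_1,\ldots,a_n)$ via the cumulant form $\varphi=\kappa\ast\zeta$ of Remark~\ref{remark:moment-cumulant-formula}, substitute, and switch the order of summation to obtain
\begin{align*}
\kappa_\tau(A_1,\ldots,A_m)=\sum_{\pi\in\setofnoncrossingpartitionsof(n)}\kappa_\pi(a_1,\ldots,a_n)\Bigl(\sum_{\substack{\sigma\in\setofnoncrossingpartitionsof(m)\\ \pi\leq\hat{\sigma}\leq\hat{\tau}}}\mu(\hat{\sigma},\hat{\tau})\Bigr).
\end{align*}

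The key identification is that the range of $\hat{\cdot}$ is exactly the principal filter $\{\rho\in\setofnoncrossingpartitionsof(n):\rho\geq\hat{0}_m\}$, and on this filter the embedding is an order-isomorphism onto $\setofnoncrossingpartitionsof(m)$. The constraints $\pi\leq\hat{\sigma}$ and $\hat{\sigma}\geq\hat{0}_m$ combine (using that $\setofnoncrossingpartitionsof(n)$ is a lattice by Proposition~\ref{proposition:non-crossing-lattice}) into the single condition $\hat{\sigma}\geq\pi\vee\hat{0}_m$. Hence the inner sum reindexes as $\sum_{\rho\in[\pi\vee\hat{0}_m,\hat{\tau}]}\mu(\rho,\hat{\tau})$, which is empty unless $\pi\vee\hat{0}_m\leq\hat{\tau}$. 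By the defining property $\zeta\ast\mu=\delta$ of the Möbius function from Proposition~\ref{proposition:moebius-function}, this collapses to $\delta_{\pi\vee\hat{0}_m,\hat{\tau}}$. Only the $\pi$ with $\pi\vee\hat{0}_m=\hat{\tau}$ survive, giving the claimed formula. The \emph{In particular} assertion is the special case $\tau=1_m$, since $\hat{1}_m=1_n$.

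The main obstacle I anticipate is the bookkeeping of the reindexing step: verifying that $\sigma\mapsto\hat{\sigma}$ really is a bijection between $\{\sigma\in\setofnoncrossingpartitionsof(m):\sigma\leq\tau\}$ and the interval $[\hat{0}_m,\hat{\tau}]\subseteq\setofnoncrossingpartitionsof(n)$, and that the inequality $\pi\leq\hat{\sigma}$ is genuinely equivalent to $\hat{\sigma}\geq\pi\vee\hat{0}_m$ once we already know $\hat{\sigma}\geq\hat{0}_m$. Both facts rely on the observation that the blocks of every $\hat{\sigma}$ are unions of the intervals $\{i(j-1)+1,\ldots,i(j)\}$, so that any $\rho\geq\hat{0}_m$ is automatically in the image of $\hat{\cdot}$. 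Once this is in hand, the rest is a routine application of Möbius inversion.
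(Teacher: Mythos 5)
Your proposal is correct and follows essentially the same route as the paper's proof: unfold $\kappa_\tau$ via $\kappa=\varphi\ast\mu$, transfer to the $a_i$-level via the order-preserving embedding $\hat{\cdot}$, re-expand the moments via $\varphi=\kappa\ast\zeta$, swap the two sums, and collapse the inner interval sum with $\zeta\ast\mu=\delta$. The only cosmetic difference is that the paper performs the reindexing $\omega=\hat\sigma$ before expanding into cumulants, whereas you expand first and then reindex; the bookkeeping you flag (that the image of $\hat{\cdot}$ is precisely the principal filter above $\hat{0}_m$, so that $\pi\leq\hat\sigma$ together with $\hat\sigma\geq\hat{0}_m$ is equivalent to $\hat\sigma\geq\pi\vee\hat{0}_m$) is exactly what the paper uses as well.
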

   \begin{proof}
     For every $\tau\in \setofnoncrossingpartitionsof(m)$,
     \begin{IEEEeqnarray*}{rCl}
       \kappa_\tau(A_1,\ldots,A_m)&=&\sum_{\substack{\sigma\in \setofnoncrossingpartitionsof(m)\\\sigma\leq \tau}}\varphi_\sigma(A_1,\ldots,A_m)\mu(\sigma,\tau)\\
                                  &=&\sum_{\substack{\sigma\in \setofnoncrossingpartitionsof(m)\\ 0_m\leq \sigma\leq \tau}}\varphi_{\hat\sigma}(a_1,\ldots,a_n)\mu(\hat\sigma,\hat\tau)\\
                                  &\overset{\omega=\hat\sigma}{=}& \sum_{\substack{\omega\in \setofnoncrossingpartitionsof(n)\\\hat{0}_m\leq \omega\leq \hat{\tau}}} \varphi_\omega(a_1,\ldots,a_n)\mu(\omega,\hat\tau)\\
                                  &=&\underset{\displaystyle\sum_{\substack{\pi\in \setofnoncrossingpartitionsof(n)\\\pi\leq \hat{\tau}}}\sum_{\substack{\omega\in \setofnoncrossingpartitionsof(n)\\\hat{0}_m\vee \pi\leq \omega\leq \hat{\tau}}}}{\underbrace{\sum_{\substack{\omega\in \setofnoncrossingpartitionsof(n)\\\hat{0}_m\leq \omega\leq \hat{\tau}}} \sum_{\substack{\pi\in \setofnoncrossingpartitionsof(n)\\\pi\leq \omega}} }}\kappa_\pi(a_1,\ldots,a_n)\mu(\omega,\hat\tau)\\
       &=&\sum_{\substack{\pi\in \setofnoncrossingpartitionsof(n)\\\pi\leq \hat{\tau}}}  \kappa_\pi(a_1,\ldots,a_n)\hspace{-1em}\underset{\displaystyle \overset{\zeta\ast\mu=\delta}{=}\begin{cases}1,&\text{if }\hat{0}_m\vee\pi=\hat{\tau},\\0,&\text{if }\hat{0}_m\vee \pi<\hat\tau. \end{cases}}{\underbrace{\sum_{\substack{\omega\in \setofnoncrossingpartitionsof(n)\\\hat{0}_m\vee\pi\leq \omega\leq \hat\tau}} \mu(\omega,\hat{\tau})}}\\
&=&\sum_{\substack{\pi\in\setofnoncrossingpartitionsof(n)\\\pi \leq\hat{\tau}\\ \hat{0}_m\vee \pi=\hat{\tau}}}\kappa_\pi(a_1,\ldots,a_n).
     \end{IEEEeqnarray*}
     For every $\pi \in \setofnoncrossingpartitionsof(n)$ we have: $\pi\leq \hat{\tau}$ and $\hat{0}_m\vee \pi=\hat{\tau}$ holds if and only if $\hat{0}\vee \pi=\hat{\tau}$; this proves then the claim.
   \end{proof}

   \begin{example}
     Let $s$ be a standard semicircular in a $\ast$-probability space $(\mathcal A,\varphi)$ with free cumulants $(\kappa_\pi)_{\pi\in \setofnoncrossingpartitionsof}$. Then, by Example~\hyperref[example:free-cumulants-2]{\ref*{example:free-cumulants}~\ref*{example:free-cumulants-2}}, for every $n\in \naturalnumbers$,
     \begin{align*}
       \kappa_n(s,s,\ldots,s)=
       \begin{cases}
         1,&\text{if } n=2,\\
         0,&\text{otherwise}.
       \end{cases}
     \end{align*}
     Let us check that $x\equalperdefinition s^2$ is a free Poisson of parameter $1$, i.e.\ that, for all $m\in \naturalnumbers$, we have
       $\kappa_m(x,x,\ldots,x)=1$.
     By Theorem~\ref{theorem:cumulant-product-rule}, for all $m\in \naturalnumbers$,
     \begin{align*}
       \kappa_m(ss,ss,\ldots,ss)\quad=\sum_{\substack{\pi\in \setofnoncrossingpartitionsof(2m)\\\pi\vee \hat{0}_m=1_{2m}}}\qquad\hspace{-0em}\underset{\hspace{-2em}\displaystyle=
       \begin{cases}
         1,&\text{if }\pi\in\setofnoncrossingpartitionsof_2,\\
         0,&\text{otherwise}.
       \end{cases}}{\underbrace{\kappa_\pi(s,s,\ldots,s)}}
      \quad =\quad\# \{\pi\in \setofnoncrossingpartitionsof_2 (2m)\mid \pi\vee\hat{0}_m=1_{2m}\}.
     \end{align*}
     Note that, here, $\hat{0}_m=\{ \{1,2\}, \{3,4\},\ldots, \{2m-1,2m\}  \}$.
     \begin{align*}
       \begin{tikzpicture}[baseline=-2pt-0.375em]
      \node[circle, scale=0.4, fill=white, draw=black] (n1) at (0em,0) {};
      \node[circle, scale=0.4, fill=white, draw=black] (n2) at (1.5em,0) {};
      \node[circle, scale=0.4, fill=white, draw=black] (n3) at (3em,0) {};
      \node[circle, scale=0.4, fill=white, draw=black] (n4) at (4.5em,0) {};
      \node[circle, scale=0.4, fill=white, draw=black] (n5) at (6em,0) {};
      \node[circle, scale=0.4, fill=white, draw=black] (n6) at (7.5em,0) {};
      \node[circle, scale=0.4, fill=white, draw=black] (n7) at (9em,0) {};
      \node[circle, scale=0.4, fill=white, draw=black] (n8) at (10.5em,0) {};
      \draw (n1) --++(0,1.125em) -| (n2);
      \draw (n3) --++(0,1.125em) -| (n4);
      \draw (n5) --++(0,1.125em) -| (n6);
      \draw (n7) --++(0,1.125em) -| (n8);
      \draw ($(n2)+(0,-1.5em)$) --++(0,-1.125em) -| ($(n6)+(0,-1.5em)$);
      \draw ($(n2)+(0,-4.5em)$) --++(0,-1.125em) -| ($(n7)+(0,-4.5em)$);
      \draw ($(n1)+(0,-7.5em)$) --++(0,-1.5em) -| ($(n8)+(6em,-7.5em)$);
      \draw ($(n2)+(0,-7.5em)$) --++(0,-0.75em) -| ($(n3)+(0,-7.5em)$);
      \draw ($(n4)+(0,-7.5em)$) --++(0,-0.75em) -| ($(n5)+(0,-7.5em)$);
      \draw ($(n6)+(0,-7.5em)$) --++(0,-0.75em) -| ($(n7)+(0,-7.5em)$);
      \draw ($(n8)+(0,-7.5em)$) --++(0,-0.75em) -| ($(n8)+(0,-7.5em)+(1.5em,0)$);
      \path (n8) -- node[pos=0.5] {$\ldots$}  ++(6em,0);
      \path ($(n8)+(1em,-7.5em)$) -- node[pos=0.5] {$\ldots$}  ++(4.5em,0);
      \draw [gray, pattern=north east lines, pattern color=lightgray] ($(n2)+(0.25em,-2.25em-0.125)$) rectangle ($(n6)+(-0.25em,-1em)$);
      \node [ inner sep=1pt] at ($(n4)+(0,-1.5em-0.1em)$) {odd};
      \draw [gray, pattern=north east lines, pattern color= lightgray] ($(n2)+(0.25em,-5.25em-0.125)$) rectangle ($(n7)+(-0.25em,-4em)$);
      \node [ inner sep=1pt] at ($(n4)+(0.75em,-4.5em-0.1em)$) {not connected};
      \node[align=left,right] at ($(n8)+(8em,-1.5em)$) {not possible};
      \node[align=left,right] at ($(n8)+(8em,-4.5em)$) {not possible};
      \node[align=left,right] at ($(n8)+(8em,-7.5em)$) {only possibility};      
    \useasboundingbox (-0.25em,-9.125em) rectangle (28.75em, 1.25em);
       \end{tikzpicture}
       \end{align*}
       For every $m\in \naturalnumbers$, there is exactly one $\pi\in\setofnoncrossingpartitionsof_2(2m)$ which satisfies $\pi\vee \hat{0}_m=1_{2m}$, namely
       \begin{align*}
         \pi=\{\{1,2m\}, \{2,3\},\{4,5\}, \ldots \{2m-2,2m-1\} \}.
       \end{align*}
       It follows $\kappa_m(x,x,\ldots,x)=\# \{\pi\in\setofnoncrossingpartitionsof_2(2m)\mid \pi\vee \hat{0}_m=1_{2m}\}=1$ for every  $m\in\naturalnumbers$, which is what we wanted to see.
   \end{example}

   \begin{proposition}
     \label{proposition:cumulants-with-one}
     Let $(\mathcal A,\varphi)$ be a non-commutative probability space with free cumulants $(\kappa_n)_{n\in \naturalnumbers}$. Consider $n\in \naturalnumbers$ with $n\geq 2$ and $a_1,\ldots,a_n\in \mathcal A$. If there exists at least one $i\in [n]$ such that $a_i=1$, then 
       $\kappa_n(a_1,\ldots,a_n)=0$.
   \end{proposition}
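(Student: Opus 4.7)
The plan is to prove this by induction on $n \geq 2$, using the moment-cumulant formula $\varphi(a_1\cdots a_n)=\sum_{\pi\in\setofnoncrossingpartitionsof(n)}\kappa_\pi(a_1,\ldots,a_n)$ from Remark~\ref{remark:moment-cumulant-formula}, together with the multiplicativity of $\kappa_\pi$ over blocks. The base case $n=2$ is an immediate computation from Example~\hyperref[example:free-cumulants-1-2]{\ref*{example:free-cumulants}~\ref*{example:free-cumulants-1-2}}: if $a_1=1$ or $a_2=1$ then $\kappa_2(a_1,a_2)=\varphi(a_1a_2)-\varphi(a_1)\varphi(a_2)$ vanishes since $\varphi(1)=1$.

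For the inductive step, assume the claim holds for all $2\leq k<n$, fix $a_1,\ldots,a_n$ with $a_i=1$, and split the sum in the moment-cumulant formula according to the block $V\in\pi$ that contains $i$. There are three cases. First, $\pi=1_n$ contributes exactly $\kappa_n(a_1,\ldots,a_n)$. Second, $\pi\neq 1_n$ with $\#V\geq 2$: then $2\leq \#V<n$ and the induction hypothesis applied to $\kappa_{\#V}(a_j\mid V)$ (which contains the entry $a_i=1$) forces this factor, and hence all of $\kappa_\pi(a_1,\ldots,a_n)$, to vanish. Third, $\{i\}$ is a singleton block of $\pi$: then $\kappa_\pi(a_1,\ldots,a_n)=\kappa_1(1)\cdot\kappa_{\pi'}(a_1,\ldots,\widehat{a_i},\ldots,a_n)$ where $\pi'\in \setofnoncrossingpartitionsof(n-1)$ is obtained by deleting the singleton $\{i\}$ and relabeling; since $\kappa_1(1)=\varphi(1)=1$, this equals $\kappa_{\pi'}$ of the remaining arguments.

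Summing the third case over all such $\pi$ and using the bijection between $\{\pi\in\setofnoncrossingpartitionsof(n)\mid\{i\}\in\pi\}$ and $\setofnoncrossingpartitionsof(n-1)$ (which is straightforward: removing or reinserting a singleton never creates or destroys crossings), the total contribution of these partitions is
\[
\sum_{\pi'\in\setofnoncrossingpartitionsof(n-1)}\kappa_{\pi'}(a_1,\ldots,\widehat{a_i},\ldots,a_n)=\varphi(a_1\cdots\widehat{a_i}\cdots a_n),
\]
again by the moment-cumulant formula. Combining the three cases,
\[
\varphi(a_1\cdots a_n)=\kappa_n(a_1,\ldots,a_n)+\varphi(a_1\cdots\widehat{a_i}\cdots a_n)+0,
\]
and since $a_i=1$ the two moment terms are equal, yielding $\kappa_n(a_1,\ldots,a_n)=0$.

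The only subtle point is verifying cleanly that the map $\pi\mapsto\pi'$ is a bijection preserving the cumulant values in the desired way, i.e.\ that deleting the singleton $\{i\}$ genuinely produces a non-crossing partition of the $n-1$ remaining points and that every element of $\setofnoncrossingpartitionsof(n-1)$ arises uniquely this way; this is immediate from the definition of non-crossing but should be stated. Everything else is bookkeeping. This argument gives a self-contained combinatorial proof; alternatively one could try to extract the identity from Theorem~\ref{theorem:cumulant-product-rule} by grouping $a_{i-1}a_i=a_{i-1}$ (or $a_ia_{i+1}=a_{i+1}$ if $i=1$), but inverting the resulting sum over $\{\pi\mid\pi\vee\hat 0_{n-1}=1_n\}$ to isolate $\kappa_n$ is less direct than the induction above.
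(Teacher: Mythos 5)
Your proof is correct, and it takes a genuinely different route from the paper's. The paper fixes $a_n=1$ \enquote{for simplicity} and runs the induction through the cumulant product rule (Theorem~\ref{theorem:cumulant-product-rule}): it computes $\kappa_{n-1}(a_1,\ldots,a_{n-1}\cdot 1)$ as a sum over $\pi\in\setofnoncrossingpartitionsof(n)$ with $\pi\vee\hat{0}_{n-1}=1_n$, observes that the only such partitions are $1_n$ and the two-block partitions $\{\{1,\ldots,r,n\},\{r+1,\ldots,n-1\}\}$, kills all terms with $r\neq 0$ by the induction hypothesis, and is left with $\kappa_{n-1}=\kappa_n(\ldots,1)+\kappa_{n-1}$. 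You instead work directly from the moment--cumulant formula $\varphi=\kappa\ast\zeta$ and split $\setofnoncrossingpartitionsof(n)$ according to the block containing the position $i$ of the unit; the case analysis ($1_n$ itself; a proper block of size $\geq 2$, killed by induction; a singleton, resummed via the bijection $\{\pi:\{i\}\in\pi\}\cong\setofnoncrossingpartitionsof(n-1)$ back into $\varphi(a_1\cdots\widehat{a_i}\cdots a_n)$) is complete, and the singleton-removal bijection is indeed immediate since a singleton block can never participate in a crossing. What your version buys is that it is more self-contained — it needs only the definition of the cumulants and multiplicativity, not Theorem~\ref{theorem:cumulant-product-rule} — and it handles an arbitrary position of the unit uniformly, which the paper's proof only sketches. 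What the paper's version buys is that it exercises the product rule, which is the central tool in the immediately following proof of the vanishing of mixed cumulants, and it avoids having to justify the singleton-removal bijection. Your closing remark correctly identifies the paper's route as the \enquote{less direct} alternative you chose not to pursue.
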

Note that for $n=1$ we have $\kappa_1(1)=\varphi(1)=1$.
   \begin{proof}
     For simplicity, we only consider the case $a_n=1$. Then, we have to show, for all $n\in\naturalnumbers$ with $n\geq 2$, that
       $\kappa_n(a_1,\ldots,a_{n-1},1)\overset{!}{=}0$.
     We do this by induction over $n$.\par
     In the base case $n=2$, the claim is true by Example~\hyperref[example:free-cumulants-1-2]{\ref*{example:free-cumulants}~\ref*{example:free-cumulants-1}~\ref*{example:free-cumulants-1-2}}:
     \begin{align*}
       \kappa_2(a_1,1)=\varphi(a_1\cdot 1)-\varphi(a_1)\varphi(1)=0.
     \end{align*}
     Assume the statement is true for all $k\in \naturalnumbers$ with $k<n$. We prove it for $n$.
     By Theorem~\ref{theorem:cumulant-product-rule},
     \begin{align*}
       \kappa_{n-1}(a_1,\ldots,a_{n-1}\cdot 1)=\sum_{\substack{\pi\in \setofnoncrossingpartitionsof(n)\\\pi\vee \hat{0}_{n-1}=1_n}}\kappa_\pi(a_1,a_2,\ldots, a_{n-1},1),
     \end{align*}
     where $\hat{0}_{n-1}=\{ \{1\},\ldots, \{n-2\}, \{n-1,n\}  \}$.
     Possible $\pi\in \setofnoncrossingpartitionsof(n)$ with $\pi\vee \hat{0}_{n-1}=1_n$ can only be of two kinds: Clearly, $\pi=1_n$ satisfies this equation
          \begin{align*}
              \begin{tikzpicture}[baseline=-2pt-0.375em]
      \node[circle, scale=0.4, fill=white, draw=black] (n1) at (0em,0) {};
      \node[circle, scale=0.4, fill=white, draw=black] (n2) at (1.5em,0) {};
      \node[circle, scale=0.4, fill=white, draw=black] (n6) at (9em,0) {};
      \node[circle, scale=0.4, fill=white, draw=black] (n7) at (10.5em,0) {};
      \node[circle, scale=0.4, fill=white, draw=black] (n8) at (12em,0) {};
      \draw (n1) -- ++ (0,1.125em);
      \draw (n2) -- ++ (0,1.125em);
      \draw (n6) -- ++ (0,1.125em);
      \draw (n7) -- ++ (0,1.125em) -| (n8);
      \path (n2) -- node [pos=0.5] {\ldots} (n6);
      \draw ($(n1)+(0,-1.5em)$) --++(0,-0.75em) -| ($(n2)+(0,-1.5em)$);
      \draw ($(n2)+(0,-2.25em)$)  -- ++ (0.5em,0);
      \path ($(n2)+(0,-2.25em)$) -- node [pos=0.5] {\ldots} ($(n6)+(0,-2.25em)$);      
      \draw ($(n6)+(0,-2.25em)$)  -- ++ (-0.5em,0);      
      \draw ($(n6)+(0,-1.5em)$) --++(0,-0.75em) -| ($(n7)+(0,-1.5em)$);
      \draw ($(n7)+(0,-2.25em)$)  -| ($(n8)+(0,-1.5em)$);
      \node at (-2em, 0.5em) {$\hat{0}_n$};
      \node at (-2em, -2em) {$1_n$};      
    \useasboundingbox (-0.25em,-3.125em) rectangle (12.25em, 1.25em);
       \end{tikzpicture}
          \end{align*}
          and contributes
          \begin{align*}
            \kappa_{1_n}(a_1,\ldots,a_{n-1},1)=\kappa_n(a_1,\ldots, a_{n-1},1).
          \end{align*}
          The other possibility is that there exists $r\in \naturalnumbers_0$ with $r<n-1$ such that
          \begin{align*}
            \pi=\{\{1,2,\ldots,r,n\}, \{r+1,r+2,\ldots,n-1\}\},
          \end{align*}
     \begin{align*}
              \begin{tikzpicture}[baseline=-2pt-0.375em]
      \node[circle, scale=0.4, fill=white, draw=black] (n1) at (0em,0) {};
      \node[circle, scale=0.4, fill=white, draw=black] (n2) at (1.5em,0) {};
      \node[circle, scale=0.4, fill=white, draw=black] (n6) at (9em,0) {};
      \node[circle, scale=0.4, fill=white, draw=black] (n7) at (10.5em,0) {};
      \node[circle, scale=0.4, fill=white, draw=black] (n8) at (12em,0) {};
      \draw (n1) -- ++ (0,1.125em);
      \draw (n2) -- ++ (0,1.125em);
      \draw (n6) -- ++ (0,1.125em);
      \draw (n7) -- ++ (0,1.125em) -| (n8);
      \path (n2) -- node [pos=0.5] {\ldots} (n6);
      \node[inner sep=1pt] (l1) at (0em,-1.5em) {$1$};
      \node[inner sep=1pt] (l2) at (1.5em,-1.5em) {};
      \node[inner sep=1pt] (l3) at (4.5em,-1.5em) {$r$};
      \node[inner sep=1pt] (l4) at (6em,-1.5em) {$r\!+\!1$};
      \node[inner sep=1pt] (l5) at (7.5em,-1.5em) {};
      \node[inner sep=1pt] (l6) at (9em,-1.5em) {$\phantom{n\!\!-\!\!2}$};
      \node[inner sep=1pt] (l7) at (10.5em,-1.5em) {$n\!\!-\!\!1$};
      \node[inner sep=1pt] (l8) at (12em,-1.5em) {$n$};
      \draw (l1) --++(0,-2.25em) -- ++ (0.5em,0);
      \draw (l3) --++(0,-2.25em) -- ++ (-0.5em,0);
      \path ($(l1)+(0,-2.25em)$) -- node [pos=0.5] {\ldots} ($(l3)+(0,-2.25em)$);
      \draw ($(l3)+(0,-2.25em)$) -| (l8);
      \draw (l4) --++(0,-1.375em) --++(0.5em,0);
      \draw (l6) --++(0,-1.375em) --++(-0.5em,0);
      \draw ($(l6)+(0,-1.375em)$) -| (l7);
      \path ($(l4)+(0,-1.375em)$) -- node [pos=0.5] {\ldots} ($(l6)+(0,-1.375em)$);
      \node at (-2em, 0.5em) {$\hat{0}_n$};
      \node at (-2em, -2.5em) {$\pi$};            
    \useasboundingbox (-0.25em,-4.125em) rectangle (12.25em, 1.25em); 
       \end{tikzpicture}
     \end{align*}
     contributing
     \begin{align*}
       \kappa_\pi(a_1,\ldots,a_{n-1},1)=\kappa_{r+1}(a_1,\ldots, a_r,1)\kappa_{n-r-1}(a_{r+1},\ldots,a_{n-1}).
     \end{align*}
     If $r\neq 0$, then the induction hypothesis implies $\kappa_{r+1}(a_1,\ldots,a_r,1)=0$ and thus, for such $\pi$, $\kappa_\pi(a_1,\ldots,a_{n-1},1)=0$.
     Hence, the only potentially non-zero contribution comes for such $\pi$ from the one with $r=0$ and amounts to $\kappa_1(1)\kappa_{n-1}(a_1,\ldots,a_{n-1})$.
     \par
     Consequently, since $\kappa_1(1)=1$,
     \begin{align*}
       \kappa_{n-1}(a_1,\ldots,\underset{\displaystyle= a_{n-1}}{\underbrace{a_{n-1}\cdot 1}})=\kappa_n(a_1,\ldots,a_{n-1},1)+\kappa_{n-1}(a_1,\ldots,a_{n-1}),
     \end{align*}
     which proves $\kappa_{n}(a_1,\ldots,a_{n-1},1)=0$, as claimed.
        \end{proof}

        \begin{theorem}[Freeness $\hateq$ Vanishing of Mixed Cumulants, {Speicher} 1994]
          \label{theorem:freeness-vanishing-mixed-cumulants}
          Let $(\mathcal A,\varphi)$ be a non-commutative probability space with free cumulants $(\kappa_n)_{n\in \naturalnumbers}$ and let $(\mathcal A_i)_{i\in I}$ be a family of unital subalgebras of  $\mathcal A$. Then, the following statements are equivalent.
          \begin{enumerate}
          \item\label{theorem:freeness-vanishing-mixed-cumulants-1} The subalgebras  $(\mathcal A_i)_{i\in I}$ are freely independent in $(\mathcal A,\varphi)$.
          \item\label{theorem:freeness-vanishing-mixed-cumulants-2} Mixed cumulants in the subalgebras $(\mathcal A_i)_{i\in I}$ vanish: For all $n\in \naturalnumbers$ with $n\geq 2$, all $i:[n]\to I$ and all $a_1,\ldots,a_n\in \mathcal A$ with $a_j\in \mathcal A_{i(j)}$ for every $j\in [n]$ we have that
              $\kappa_n(a_1,\ldots,a_n)=0$
            whenever there exist $l,k\in [n]$ such that $i(l)\neq i(k)$.
          \end{enumerate}
        \end{theorem}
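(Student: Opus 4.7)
The plan is to prove both implications by repeatedly applying the moment-cumulant formula $\varphi = \kappa \ast \zeta$ together with the interval-block property of non-crossing partitions from Remark~\ref{remark:interval-stripping}.

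For \ref{theorem:freeness-vanishing-mixed-cumulants-2} $\Rightarrow$ \ref{theorem:freeness-vanishing-mixed-cumulants-1}, I fix $a_j \in \mathcal{A}_{i(j)}$ with $i(1) \neq i(2) \neq \ldots \neq i(k)$ and $\varphi(a_j) = 0$, and expand $\varphi(a_1 \ldots a_k) = \sum_{\pi \in \setofnoncrossingpartitionsof(k)} \kappa_\pi(a_1, \ldots, a_k)$. Every $\pi$ admits an interval block $V$ by Remark~\ref{remark:interval-stripping}~\ref{remark:interval-stripping-2}. If $|V|=1$, say $V = \{j\}$, then $\kappa_1(a_j) = \varphi(a_j) = 0$; if $|V| \geq 2$, then $V$ contains two consecutive indices $j, j+1$ with $i(j) \neq i(j+1)$, and hypothesis \ref{theorem:freeness-vanishing-mixed-cumulants-2} annihilates $\kappa_{|V|}$ on that block. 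Either way $\kappa_\pi = 0$, so $\varphi(a_1 \ldots a_k) = 0$ and freeness is verified.

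For \ref{theorem:freeness-vanishing-mixed-cumulants-1} $\Rightarrow$ \ref{theorem:freeness-vanishing-mixed-cumulants-2}, I induct on $n \geq 2$. The base case $n=2$ follows from Example~\ref{example:free-cumulants}~\ref{example:free-cumulants-1} and Example~\ref{example:moment-formulas-for-small-order}: $\kappa_2(a_1, a_2) = \varphi(a_1 a_2) - \varphi(a_1)\varphi(a_2) = 0$ when $\mathcal{A}_{i(1)}, \mathcal{A}_{i(2)}$ are free. For the inductive step, replace each $a_j$ by $(a_j - \varphi(a_j) \cdot 1) + \varphi(a_j) \cdot 1$, expand by multilinearity, and invoke Proposition~\ref{proposition:cumulants-with-one} to discard all contributions involving a $1$-entry; thus assume $\varphi(a_j) = 0$. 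If $i$ is alternating, freeness gives $\varphi(a_1 \ldots a_n) = 0$, and the interval-block argument above (combined with the induction hypothesis on blocks of size $<n$) leaves only $\kappa_n(a_1, \ldots, a_n)$ in the expansion, forcing $\kappa_n = 0$. If $i$ is not alternating, fix $j$ with $i(j) = i(j+1)$ and apply the product formula (Theorem~\ref{theorem:cumulant-product-rule}) with the grouping $(a_1, \ldots, a_{j-1}, a_j a_{j+1}, a_{j+2}, \ldots, a_n)$:
\[
\kappa_{n-1}(a_1, \ldots, a_j a_{j+1}, \ldots, a_n) = \sum_{\substack{\pi \in \setofnoncrossingpartitionsof(n) \\ \pi \vee \hat{0}_{n-1} = 1_n}} \kappa_\pi(a_1, \ldots, a_n).
\]
The left side is a mixed cumulant of $n-1 \geq 2$ arguments (the reduced index sequence is still non-constant) and vanishes by induction.

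It then remains to extract $\kappa_n(a_1, \ldots, a_n)$ from the right side. Since $\hat{0}_{n-1}$ only merges $j$ with $j+1$, the condition $\pi \vee \hat{0}_{n-1} = 1_n$ forces either $\pi = 1_n$ or $\pi$ to have exactly two blocks $V \ni j$ and $W \ni j+1$. For such a two-block $\pi$ to contribute non-trivially, the induction hypothesis requires $i$ constant on both $V$ and $W$; but then both blocks take the common value $i(j) = i(j+1)$, making $i$ globally constant and contradicting the mixedness hypothesis. Hence only $\pi = 1_n$ survives, giving $\kappa_n(a_1, \ldots, a_n) = 0$ and closing the induction. The main obstacle is this final combinatorial observation in the non-alternating case: one must correctly identify the non-crossing partitions satisfying $\pi \vee \hat{0}_{n-1} = 1_n$ and then exploit the tension between constancy of $i$ on each block and global non-constancy of $i$.
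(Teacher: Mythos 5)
Your proof is correct, and the (2)\,$\Rightarrow$\,(1) direction coincides with the paper's argument, but the (1)\,$\Rightarrow$\,(2) direction takes a genuinely different route in two places. First, for the centered alternating subcase you deduce $\kappa_n=0$ from the relation $\varphi=\kappa\ast\zeta$, i.e.\ $0=\varphi(a_1\ldots a_n)=\kappa_n+\sum_{\pi\neq 1_n}\kappa_\pi$, killing each $\kappa_\pi$ via an interval block and the induction hypothesis; the paper instead works directly from $\kappa=\varphi\ast\mu$, writing $\kappa_n=\sum_\pi\varphi_\pi\,\mu(\pi,1_n)$ and killing each $\varphi_\pi$ via its interval block and freeness, which disposes of the alternating case without any induction at all. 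Second, for the non-alternating subcase you merge only \emph{one} adjacent equal pair $(a_j,a_{j+1})$ and apply Theorem~\ref{theorem:cumulant-product-rule} with $\hat 0_{n-1}$, then identify by hand all $\pi\in\setofnoncrossingpartitionsof(n)$ with $\pi\vee\hat 0_{n-1}=1_n$ as $1_n$ or two-block partitions; the paper merges \emph{all} maximal runs at once to an alternating tuple $(A_1,\ldots,A_m)$ with $m\ge 2$ and uses the slicker observation that $\kappa_\pi\neq 0$ forces $\pi\le\ker(i)$, combined with $\hat 0_m\le\ker(i)$, so $1_n=\pi\vee\hat 0_m\le\ker(i)$ would contradict $m\ge 2$ --- a comparison that dispenses entirely with counting the blocks of $\pi$. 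One small remark: your explicit characterization (either $\pi=1_n$ or exactly two blocks $V\ni j$, $W\ni j+1$) is correct but asserted without proof; it rests on the fact that joining a non-crossing partition with an interval partition in $\setofpartitionsof(n)$ already yields a non-crossing partition, so the $\setofnoncrossingpartitionsof$-join agrees with the $\setofpartitionsof$-join and can increase the number of blocks by at most one merge. The paper's $\ker(i)$ argument tacitly uses the same fact (to ensure $\pi\vee_{\setofnoncrossingpartitionsof}\hat 0_m\le\ker(i)$), so neither version is more rigorous on this point, but the paper's formulation avoids having to enumerate the surviving partitions. Both proofs are valid; yours is slightly more hands-on where the paper is slightly more structural.
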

        \begin{proof}
          \ref{theorem:freeness-vanishing-mixed-cumulants-2} $\Rightarrow$ \ref{theorem:freeness-vanishing-mixed-cumulants-1}: Consider a situation as in the definition of freeness, i.e.\ let $n\in \naturalnumbers$, $i:[n]\to I$, $i(1)\neq i(2)\neq \ldots\neq i(n)$,  $a_1,\ldots,a_n\in \mathcal A$ with $a_j\in \mathcal{A}_{i(j)}$ and $\varphi(a_j)=0$ for all $j\in [n]$. Then, we have to show that $\varphi(a_1\ldots a_n)=0$. This follows from
          \begin{align*}
            \varphi(a_1\ldots a_n)=\sum_{\pi\in \setofnoncrossingpartitionsof(n)}\hspace{-4em}\underset{\hspace{4em}\displaystyle =\prod_{V\in \pi} \kappa_{\# V}(a_1,\ldots,a_n\mid V)}{\underbrace{\kappa_\pi(a_1,\ldots,a_n).}}
          \end{align*}
          by the following reasoning:
          For every $\pi\in \setofnoncrossingpartitionsof(n)$ and every block $V\in \pi$ which is an \emph{interval}  it holds that
            $\kappa_{\# V}(a_1,\ldots,a_n\mid V)=0$;
          in the case $\# V=1$ because $a_1,\ldots,a_n$ are centered, and in the case $\# V>1$ because of the Assumption~\ref{theorem:freeness-vanishing-mixed-cumulants-2} of vanishing of mixed cumulants. 
          Since each $\pi\in \setofnoncrossingpartitionsof(n)$ contains at least one interval block, it follows $\varphi(a_1\ldots a_n)=0$, as claimed.
          \par
          \ref{theorem:freeness-vanishing-mixed-cumulants-1} $\Rightarrow$ \ref{theorem:freeness-vanishing-mixed-cumulants-2}: Let $n\in \naturalnumbers$, $i:[n]\to I$, $a_1,\ldots,a_n\in \mathcal A$ and $a_j\in \mathcal A_{i(j)}$ for every $j\in [n]$. Assume first that $a_1,\ldots,a_n$ are centered and {alternating}, i.e.\ that
          \begin{gather*}
            \varphi(a_1)=\ldots=\varphi(a_n)=0 \quad\text{and}\quad i(1)\neq i(2)\neq \ldots\neq i(n).
          \end{gather*}
          Then, by definition,
          \begin{align*}
            \kappa_n(a_1,\ldots,a_n)&=\sum_{\pi\in \setofnoncrossingpartitionsof(n)}\hspace{-4em}\underset{\hspace{4em}\displaystyle = \prod_{V\in \pi}\varphi_{\# V}(a_1,\ldots,a_n\mid V)}{\underbrace{\varphi_\pi(a_1,\ldots,a_n)}}\hspace{-4em} \mu(\pi,1_n).
          \end{align*}
          Again, for every $\pi\in \setofnoncrossingpartitionsof(n)$ which contains an interval block $V\in \pi$ it holds that
           $\varphi_{\# V}(a_1,\ldots,a_n\mid V)=0$
          due to the freeness Assumption~\ref{theorem:freeness-vanishing-mixed-cumulants-1}. Thus, since every non-crossing partition has at least one interval block, it follows
            $\kappa_n(a_1,\ldots,a_n)=0$,
          and thus the claim for centered and {alternating} variables.
          \par
          Because, by Proposition~\ref{proposition:cumulants-with-one}, since $n\geq 2$,
          \begin{align*}
            \kappa_n(a_1,\ldots,a_n)=\kappa_n(a_1-\varphi(a_1)\cdot 1,\ldots,a_n-\varphi(a_n)\cdot 1),
          \end{align*}
          we can get rid of the assumption $\varphi(a_1)=\ldots=\varphi(a_n)=0$.
         Finally,  we also want to see the vanishing of the cumulant if arguments are only mixed, not necessarily alternating, i.e.\ if there exist $l,k\in [n]$ such that $i(l)\neq i(k)$, but \emph{not} necessarily $i(1)\neq \ldots\neq i(n)$. Given such {mixed} arguments $a_1,\ldots,a_n$ for $\kappa_n$, we multiply neighbors together to make them {alternating}: We choose $m\in \naturalnumbers$ and $i':[m]\to I$ such that
            $a_1\ldots a_n=A_1\ldots A_m$,
where $A_1,\ldots,A_m\in \mathcal A$, such that  $A_j\in \mathcal A_{i'(j)}$ for every $j\in [m]$ and such that $i'(1)\neq \ldots\neq i'(m)$.
          \par
          Note that $m\geq 2$ because $a_1,\ldots,a_n$ are {mixed}. Hence, for $A_1,\ldots,A_m$ we already know that $\kappa_m(A_1,\ldots,A_m)=0$ by what was shown above.
          On the other hand, by Theorem~\ref{theorem:cumulant-product-rule},
           \begin{align*}
            0=\kappa_m(A_1,\ldots,A_m)&=\sum_{\substack{\pi\in \setofnoncrossingpartitionsof(n)\\\pi\vee \hat{0}_m=1_n}}\kappa_\pi(a_1,\ldots,a_n)\\
                                    &=\underset{\displaystyle = \kappa_{n}(a_1,\ldots,a_n)}{\underbrace{\kappa_{1_n}(a_1,\ldots,a_n)}} +\sum_{\substack{\pi\in \setofnoncrossingpartitionsof(n)\\\pi\neq 1_n\\\pi\vee \hat{0}_m=1_n}}\kappa_\pi(a_1,\ldots,a_n).
           \end{align*}
      
    Note $\ker(i)\geq\hat {0}_m$.
  By induction, we can infer that any $\pi\in\setofnoncrossingpartitionsof(n)$ with $\pi\neq 1_m$ (which must have all blocks of size less than $n$) can only yield a potentially non-zero contribution $\kappa_\pi(a_1,\ldots,a_n)$ if each block of $\pi$ connects exclusively elements from the same subalgebra, i.e.\ if $\pi\leq \ker(i)$. For such $\pi$ the condition $\pi\vee \hat{0}_m=1_n$ would then give 
$\ker(i)=1_n$, saying that all $a_i$ are from the same subalgebra. But that would contradict $m\geq 2$. Hence, there are \emph{no} $\pi$ besides $1_n$ which could yield non-zero contributions. Thus,
          \begin{align*}
            \kappa_n(a_1,\ldots,a_n)=\kappa_m(A_1,\ldots,A_m)=0.
          \end{align*}
          To get the above induction started, consider the base case $n=2$. Use Example~\hyperref[example:free-cumulants-1-2]{\ref*{example:free-cumulants}~\ref*{example:free-cumulants-1}~\ref*{example:free-cumulants-1-2}} to find
          \begin{align*}
            \kappa_2(a_1,a_2)=\varphi(a_1a_2)-\varphi(a_1)\varphi(a_2).
          \end{align*}
 Assuming that $a_1,a_2$ are {mixed}, means that $a_1$ and $a_2$ are free, from which  then $\varphi(a_1a_2)=\varphi(a_1)\varphi(a_2)$  follows by Example \hyperref[example:moment-formulas-for-small-order-2]{\ref*{example:moment-formulas-for-small-order}~\ref*{example:moment-formulas-for-small-order-2}}. Hence, $\kappa_2(a_1,a_2)=0$, which completes the proof.          
        \end{proof}
        We can refine this in similar way (see Assignment~\hyperref[assignment-6]{6}, Exercise~2) to a characterization of freeness for random variables.

        \begin{theorem}
          \label{theorem:freeness-vanishing-mixed-cumulants-random-variables}
          Let $(\mathcal A,\varphi)$ be a non-commutative probability space with free cumulants  $(\kappa_n)_{n\in \naturalnumbers}$ and $(a_i)_{i\in I}$ a family of random variables in $\mathcal A$. Then, the following statements are equivalent:
          \begin{enumerate}
          \item The random variables $(a_i)_{i\in I}$ are freely independent in $(\mathcal A,\varphi)$.
          \item Mixed cumulants in the random variables $(a_i)_{i\in I}$ vanish: For all $n\in \naturalnumbers$ with $n\geq 2$ and all $i:[n]\to I$ we have that
              $\kappa_n(a_{i(1)},\ldots, a_{i(n)})=0$
            whenever there exist $l,k\in[n]$ such that $i(l)\neq i(k)$.
          \end{enumerate}
        \end{theorem}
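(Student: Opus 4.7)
The plan is to reduce Theorem~\ref{theorem:freeness-vanishing-mixed-cumulants-random-variables} to the subalgebra version Theorem~\ref{theorem:freeness-vanishing-mixed-cumulants} by comparing the two cumulant-vanishing conditions: one for the variables $(a_i)_{i\in I}$ themselves and one for the unital subalgebras $\mathcal{A}_i\equalperdefinition\algebrageneratedby(1,a_i)$.

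The direction \textbf{(1)$\Rightarrow$(2)} is immediate. Freeness of $(a_i)_{i\in I}$ means, by definition, freeness of the subalgebras $(\mathcal{A}_i)_{i\in I}$; by Theorem~\ref{theorem:freeness-vanishing-mixed-cumulants}, every mixed cumulant over $(\mathcal{A}_i)_{i\in I}$ vanishes, and specialising to the arguments $a_{i(j)}\in\mathcal{A}_{i(j)}$ yields the conclusion.

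For the converse \textbf{(2)$\Rightarrow$(1)} I would invoke Theorem~\ref{theorem:freeness-vanishing-mixed-cumulants} in the opposite direction: the goal is to show that mixed cumulants $\kappa_m(b_1,\ldots,b_m)$ over $(\mathcal{A}_i)_{i\in I}$ vanish whenever $b_l\in\mathcal{A}_{i(l)}$ for some non-constant $i:[m]\to I$. By multilinearity it suffices to treat the case where each $b_l$ is a monomial $a_{i(l)}^{k_l}$. If some $k_l=0$, so that $b_l=1$, Proposition~\ref{proposition:cumulants-with-one} immediately gives zero (note $m\geq 2$ by mixedness). Otherwise, setting $n\equalperdefinition k_1+\ldots+k_m$ and writing $j:[n]\to I$ for the multi-index obtained by repeating $i(l)$ a total of $k_l$ times as $l$ runs through $[m]$, the product formula of Theorem~\ref{theorem:cumulant-product-rule} gives
\begin{align*}
\kappa_m(b_1,\ldots,b_m)=\sum_{\substack{\pi\in\setofnoncrossingpartitionsof(n)\\ \pi\vee\hat{0}_m=1_n}}\kappa_\pi(a_{j(1)},\ldots,a_{j(n)}),
\end{align*}
so it remains to show that every summand on the right is zero.

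This last step is the combinatorial heart of the argument and the main obstacle. Each block of $\hat{0}_m$ is contained in a single $b_l$ and hence in a single $j$-class, so $\hat{0}_m\leq\ker(j)$. Suppose some contributing $\pi$ produced a non-zero term: then by hypothesis (2), applied blockwise to the multiplicative family $(\kappa_\pi)$, every block $V\in\pi$ with $\#V\geq 2$ must be contained in a single block of $\ker(j)$. Singleton blocks of $\pi$ contribute no additional identifications to the join, so one would obtain $\pi\vee\hat{0}_m\leq\ker(j)$; but mixedness of $i$ gives $\ker(j)\neq 1_n$, contradicting $\pi\vee\hat{0}_m=1_n$. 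Hence every $\pi$ in the sum must contain at least one block of size $\geq 2$ on which $j$ takes more than one value, and on such a block hypothesis (2) forces the corresponding cumulant factor—and thus the entire product $\kappa_\pi(a_{j(1)},\ldots,a_{j(n)})$—to vanish. This completes the verification that mixed cumulants over $(\mathcal{A}_i)_{i\in I}$ vanish, allowing Theorem~\ref{theorem:freeness-vanishing-mixed-cumulants} to deliver the desired freeness.
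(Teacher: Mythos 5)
The paper leaves Theorem~\ref{theorem:freeness-vanishing-mixed-cumulants-random-variables} as Assignment~6, Exercise~2, so there is no internal proof to compare against; your reduction to the subalgebra version, Theorem~\ref{theorem:freeness-vanishing-mixed-cumulants}, is the natural route, and your argument is correct up to one step that deserves scrutiny.

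That step is the inference of $\pi\vee\hat{0}_m\leq\ker(j)$ from $\pi\leq\ker(j)$ and $\hat{0}_m\leq\ker(j)$. The join in Theorem~\ref{theorem:cumulant-product-rule} is the join in the lattice $\setofnoncrossingpartitionsof(n)$, whereas $\ker(j)$ is in general a \emph{crossing} partition of $[n]$ (take $m=4$, $i=(1,2,1,2)$, all $k_l=1$: then $\ker(j)=\{\{1,3\},\{2,4\}\}$). The two order relations you cite only directly give $\pi\vee_{\setofpartitionsof}\hat{0}_m\leq\ker(j)$ for the join in the full partition lattice $\setofpartitionsof(n)$, which can be strictly smaller than the $\setofnoncrossingpartitionsof$-join. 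The gap is closed by the standard fact that the $\setofpartitionsof$-join of a non-crossing partition with an \emph{interval} partition such as $\hat{0}_m$ is itself non-crossing, hence coincides with the $\setofnoncrossingpartitionsof$-join: draw the arcs of $\pi$ and the edges joining consecutive points within each $\hat{0}_m$-block in the lower half-plane; none of these arcs cross, so the connected components form a non-crossing partition. With this lemma your contradiction goes through. (The paper's own proof of the (1)$\Rightarrow$(2) direction of Theorem~\ref{theorem:freeness-vanishing-mixed-cumulants} makes the same silent inference at the analogous point.) Two minor housekeeping remarks: if all $k_l=1$ then $m=n$, so Theorem~\ref{theorem:cumulant-product-rule} does not literally apply, but the desired vanishing is then verbatim hypothesis~(2); and it is worth saying explicitly that $\hat{0}_m\leq\ker(j)$ holds by construction of $j$, since you use it.
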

        \begin{remark}
          \label{remark:moments-cumulants-convolution}
          Consider now a fixed single random variable $a$ in some non-commutative probability space $(\mathcal A,\varphi)$ with free cumulants $(\kappa_n)_{n\in\naturalnumbers}$. Then, its moments $(m_n)_{n\in \naturalnumbers}$ and its cumulants $(\kappa_n)_{n\in \naturalnumbers}$, where, for all $n\in \naturalnumbers$,
$$
            m_n\equalperdefinition \varphi_n(a,a,\ldots,a)=\varphi(a^n)\qquad
\text{and}\qquad
            \kappa_n\equalperdefinition\kappa_n(a,a,\ldots,a)
$$
          are just sequences of numbers, which we extend to \enquote{multiplicative} functions $m:\, \setofnoncrossingpartitionsof\to\complexnumbers$ and $\kappa:\, \setofnoncrossingpartitionsof\to\complexnumbers$ via
          \begin{align}
            \label{eq:remark-moments-cumulants-convolution}
            m(\pi)\equalperdefinition m_\pi\equalperdefinition \prod_{V\in \pi}m_{\# V}\quad\text{and}\quad \kappa(\pi)\equalperdefinition \kappa_\pi\equalperdefinition \prod_{V\in \pi}\kappa_{\# V} 
          \end{align}
          for all $n\in \naturalnumbers$ and $\pi\in \setofnoncrossingpartitionsof(n)$.
          Then, $m$ and $\kappa$ satisfy the relations
            $\kappa=m\ast\mu$ and $m=\kappa\ast\zeta$.

          Those combinatorial relations are conceptually nice but usually not so useful for concrete calculations. We need a more analytic reformulation of this.
        \end{remark}

        \begin{theorem}
          \label{theorem:moment-cumulant-series}
          Let $(m_n)_{n\in \naturalnumbers}$ and $(\kappa_n)_{n\in \naturalnumbers}$ be two sequences in $\complexnumbers$ and let two corresponding multiplicative functions $m,\kappa:\setofnoncrossingpartitionsof\to \complexnumbers$ be determined via Equation~\eqref{eq:remark-moments-cumulants-convolution}.
          Consider the corresponding formal power series in $\complexnumbers \lsem z\rsem$:
          \begin{align*}
            M(z)=1+\sum_{n=1}^\infty m_nz^n\quad
            \text{and}\quad C(z)&=1+\sum_{n=1}^\infty \kappa_nz^n.
          \end{align*}
          Then, the following statements are equivalent:
          \begin{enumerate}
          \item\label{theorem:moment-cumulant-series-1} $m=k\ast\zeta$, i.e.
we have for all $n\in \naturalnumbers$
\begin{equation*}
m_n=\sum_{\pi\in \setofnoncrossingpartitionsof(n)}\kappa_\pi.
\end{equation*}
            \item\label{theorem:moment-cumulant-series-2} 
We have for all $n\in \naturalnumbers$
$$ m_n=\sum_{s=1}^n \sum_{\substack{i_1,\ldots,i_s\in \{0,1,\ldots,n-s\}\\i_1+\ldots+i_s+s=n}}\kappa_sm_{i_1}\ldots m_{i_s}.$$
            \item\label{theorem:moment-cumulant-series-3} 
We have as functional relation in $\complexnumbers\lsem z\rsem$
$$C(z\cdot M(z))=M(z).$$
            \item\label{theorem:moment-cumulant-series-4} 
We have as functional relation in $\complexnumbers\lsem z\rsem$
$$M\Bigl(\frac{z}{C(z)}\Bigr)=C(z).$$
          \end{enumerate}
        \end{theorem}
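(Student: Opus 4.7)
The plan is to prove the chain of equivalences \ref{theorem:moment-cumulant-series-1} $\Leftrightarrow$ \ref{theorem:moment-cumulant-series-2} $\Leftrightarrow$ \ref{theorem:moment-cumulant-series-3} $\Leftrightarrow$ \ref{theorem:moment-cumulant-series-4}. The combinatorial heart is the first equivalence; the remaining two reduce to a power series manipulation and a change of variable.

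For \ref{theorem:moment-cumulant-series-1} $\Leftrightarrow$ \ref{theorem:moment-cumulant-series-2}, I would classify each $\pi\in\setofnoncrossingpartitionsof(n)$ by the block $V_1\in\pi$ containing the element $1$. If $V_1=\{1,p_2,p_3,\ldots,p_s\}$ with $1<p_2<\ldots<p_s$, then because $\pi$ is non-crossing, the remaining blocks decompose into independent non-crossing partitions of the $s$ open intervals $(1,p_2),(p_2,p_3),\ldots,(p_{s-1},p_s),(p_s,n]$; write $i_1,\ldots,i_s\geq 0$ for the sizes of these intervals, so $i_1+\ldots+i_s+s=n$. Multiplicativity gives
\begin{align*}
  \kappa_\pi=\kappa_s\cdot \kappa_{\pi_1}\cdot\kappa_{\pi_2}\cdots\kappa_{\pi_s}
\end{align*}
where $\pi_j\in\setofnoncrossingpartitionsof(i_j)$. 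Summing over all $\pi$ and recognising $\sum_{\pi_j\in\setofnoncrossingpartitionsof(i_j)}\kappa_{\pi_j}=m_{i_j}$ (under assumption \ref{theorem:moment-cumulant-series-1}, with the convention $m_0=1$ corresponding to the empty partition) yields \ref{theorem:moment-cumulant-series-2}; conversely, statement \ref{theorem:moment-cumulant-series-2} determines the $m_n$ from the $\kappa_n$ recursively, as does the formula in \ref{theorem:moment-cumulant-series-1}, so the two agree.

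For \ref{theorem:moment-cumulant-series-2} $\Leftrightarrow$ \ref{theorem:moment-cumulant-series-3}, I expand $C(zM(z))$ as a formal power series. Using $M(z)^s=\sum_{i_1,\ldots,i_s\geq 0}m_{i_1}\cdots m_{i_s}\,z^{i_1+\ldots+i_s}$, we get
\begin{align*}
  C(zM(z))=1+\sum_{s=1}^\infty \kappa_s z^s M(z)^s=1+\sum_{n=1}^\infty z^n\sum_{s=1}^n\sum_{\substack{i_1,\ldots,i_s\geq 0\\i_1+\ldots+i_s+s=n}}\kappa_s m_{i_1}\cdots m_{i_s}.
\end{align*}
Comparing coefficients of $z^n$ with $M(z)$ shows $C(zM(z))=M(z)$ if and only if the inner sum equals $m_n$ for every $n\in\naturalnumbers$, which is exactly \ref{theorem:moment-cumulant-series-2}.

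For \ref{theorem:moment-cumulant-series-3} $\Leftrightarrow$ \ref{theorem:moment-cumulant-series-4}, I would use the substitution $w\equalperdefinition zM(z)$. Both $M$ and $C$ are power series with constant term $1$, so $M(z)$ is invertible in $\complexnumbers\lsem z\rsem$ and $w=zM(z)$ has compositional inverse $z=w/C(w)$ provided \ref{theorem:moment-cumulant-series-3} holds (since then $C(w)=M(z)$, and thus $zC(w)=zM(z)=w$). Substituting this inverse into \ref{theorem:moment-cumulant-series-3} gives $M(w/C(w))=C(w)$, i.e.\ \ref{theorem:moment-cumulant-series-4}; the reverse implication is obtained by the same substitution in the opposite direction.

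The only real obstacle is bookkeeping in the first step, in particular handling the $i_j=0$ summands correctly with the convention $m_0=1$ and the empty factor $\kappa_{\pi_j}=1$ for the empty partition; everything else is formal. Since these power series identities hold in $\complexnumbers\lsem z\rsem$, no convergence issues arise and the compositional inverse used in the last step is legitimate because $zM(z)$ has no constant term and linear coefficient $1$.
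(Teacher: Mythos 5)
Your proof is correct and follows essentially the same route as the paper: decompose $\pi$ by the block containing $1$ and use multiplicativity for (\ref{theorem:moment-cumulant-series-1})$\Leftrightarrow$(\ref{theorem:moment-cumulant-series-2}), compare coefficients of $z^n$ for (\ref{theorem:moment-cumulant-series-2})$\Leftrightarrow$(\ref{theorem:moment-cumulant-series-3}), and substitute $w=zM(z)$ for (\ref{theorem:moment-cumulant-series-3})$\Leftrightarrow$(\ref{theorem:moment-cumulant-series-4}). The only structural difference is cosmetic: the paper argues via the cycle (\ref{theorem:moment-cumulant-series-1})$\Rightarrow$(\ref{theorem:moment-cumulant-series-2})$\Rightarrow$(\ref{theorem:moment-cumulant-series-3})$\Rightarrow$(\ref{theorem:moment-cumulant-series-1}) plus (\ref{theorem:moment-cumulant-series-3})$\Leftrightarrow$(\ref{theorem:moment-cumulant-series-4}), whereas you prove each equivalence in the chain directly, invoking the same uniqueness-of-recursion observation at a slightly different point.
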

        \begin{proof}
          \ref{theorem:moment-cumulant-series-1} $\Rightarrow$ \ref{theorem:moment-cumulant-series-2}:  Let $n\in \naturalnumbers$ and $\pi\in \setofnoncrossingpartitionsof(n)$ be arbitrary. If we let $V$ be the block of $\pi$ containing $1$, then we can write
            $\pi=\{V\}\cup \pi_1\cup \ldots\cup \pi_s$
and
          \begin{align*}
            V=\{1,\, i_1+2,\, i_1+i_2+3,\, \ldots,\, i_1+\ldots+i_{s-1}+s\}
          \end{align*}
          for certain $s=\# V\in [n]$, $i_1,\ldots,i_s\in [n]$ and $\pi_1\in \setofnoncrossingpartitionsof(i_1),\ldots,\pi_s\in \setofnoncrossingpartitionsof(i_s)$.                     
          \begin{align*}
            \begin{tikzpicture}
              \node [circle,scale=0.4, draw=black, fill=white, label=left:{$1$}] (n1) at (0em,0) {};
              \node [circle,scale=0.4, draw=black, fill=white] (n2) at (1.5em,0) {};
              \node [circle,scale=0.4, draw=black, fill=white] (n3) at (4.5em,0) {};
              \node [circle,scale=0.4, draw=black, fill=white] (n4) at (6em,0) {};
              \node [circle,scale=0.4, draw=black, fill=white] (n5) at (7.5em,0) {};
              \node [circle,scale=0.4, draw=black, fill=white] (n6) at (10.5em,0) {};
              \node [circle,scale=0.4, draw=black, fill=white] (n7) at (12em,0) {};
              \node [circle,scale=0.4, draw=black, fill=white] (n8) at (13.5em,0) {};
              \node [circle,scale=0.4, draw=black, fill=white] (n9) at (16.5em,0) {};
              \node [circle,scale=0.4, draw=black, fill=white] (n10) at (18em,0) {};
              \node [circle,scale=0.4, draw=black, fill=white] (n11) at (19.5em,0) {};
              \node [circle,scale=0.4, draw=black, fill=white] (n12) at (22.5em,0) {};
              \node [circle,scale=0.4, draw=black, fill=white] (n13) at (24em,0) {};
              \node [circle,scale=0.4, draw=black, fill=white] (n13a) at (25.5em,0) {};
              \node [circle,scale=0.4, draw=black, fill=white, label=right:{$n$}] (n14) at (30em,0) {};
              \draw (n1) --  ++ (0,-2em) -| node[pos=0.1,below] {$V$} (n4);
              \draw ($(n4)+(0,-2em)$)  -| (n7);
              \draw (n10) -- ++ (0,-2em) -| (n13);
              \draw ($(n7)+(0,-2em)$)  --++ (0.5em,0);
              \draw ($(n10)+(0,-2em)$)  --++ (-0.5em,0);
              \path (n2) -- node[pos=0.5] {$\ldots$} (n3);
              \path (n5) -- node[pos=0.5] {$\ldots$} (n6);
              \path (n8) -- node[pos=0.5] {$\ldots$} (n9);
              \path (n11) -- node[pos=0.5] {$\ldots$} (n12);
              \path ($(n7)+(0,-2em)$) -- node[pos=0.5] {$\ldots$} ($(n10)+(0,-2em)$);              
              \path (n13a) -- node[pos=0.5] {$\ldots$} (n14);              
              \draw[pattern=north west lines, pattern color=lightgray] ($(n1)+(0.4em,-1.6em)$) rectangle ($(n4)+(-0.4em,-0.5em)$) node [pos=0.5] {$\pi_1$};
              \draw[pattern=north west lines, pattern color=lightgray] ($(n4)+(0.4em,-1.6em)$) rectangle ($(n7)+(-0.4em,-0.5em)$) node [pos=0.5] {$\pi_2$};
              \draw[pattern=north west lines, pattern color=lightgray] ($(n10)+(0.4em,-1.6em)$) rectangle ($(n13)+(-0.4em,-0.5em)$) node [pos=0.5] {$\pi_{s-1}$};
              \draw[pattern=north west lines, pattern color=lightgray] ($(n13)+(0.4em,-1.6em)$) rectangle ($(n14)+(0.25em,-0.5em)$)node [pos=0.5] {$\pi_s$};
              \draw [decorate,decoration={brace,amplitude=3pt,raise=4pt},xshift=0pt]
              ($(n2)+(-0.25em,0)$) -- ($(n3)+(0.25em,0)$) node [black,midway,yshift=1.25em] {$i_1$};
              \draw [decorate,decoration={brace,amplitude=3pt,raise=4pt},xshift=0pt]
              ($(n5)+(-0.25em,0)$) -- ($(n6)+(0.25em,0)$) node [black,midway,yshift=1.25em] {$i_2$};
              \draw [decorate,decoration={brace,amplitude=3pt,raise=4pt},xshift=0pt]
              ($(n11)+(-0.25em,0)$) -- ($(n12)+(0.25em,0)$) node [black,midway,yshift=1.25em] {$i_{s-1}$};
              \draw [decorate,decoration={brace,amplitude=3pt,raise=4pt},xshift=0pt]
($(n13a)+(-0.25em,0)$) -- ($(n14)+(0.25em,0)$) node [black,midway,yshift=1.25em] {$i_{s}$};  
            \end{tikzpicture}
          \end{align*}
          Thus, by using these decompositions for all $\pi\in\setofnoncrossingpartitionsof(n)$,
          \begin{align*}
            m_n&=\sum_{\pi\in\setofnoncrossingpartitionsof(n)}\kappa_\pi.\\
               &=\sum_{s=1}^n\sum_{\substack{i_1,\ldots,i_s\in \{0,\ldots,n-s\}\\i_1+\ldots+i_s+s=n}}\sum_{\pi_1\in \setofnoncrossingpartitionsof(i_1)}\ldots \sum_{\pi_s\in \setofnoncrossingpartitionsof(i_s)} \kappa_s\kappa_{\pi_1}\ldots\kappa_{\pi_s}\\
                           &=\sum_{s=1}^n\sum_{\substack{i_1,\ldots,i_s\in \{0,\ldots,n-s\}\\i_1+\ldots+i_s+s=n}}\kappa_s\cdot \underset{\displaystyle =m_{i_1}}{\underbrace{\Bigl(\sum_{\pi_1\in \setofnoncrossingpartitionsof(i_1)}\kappa_{\pi_1}\Bigr)}}\quad\cdots\quad  \underset{\displaystyle =m_{i_s}}{\underbrace{\Bigl(\sum_{\pi_s\in \setofnoncrossingpartitionsof(i_s)} \kappa_{\pi_s}\Bigr)}}\\
            &=\sum_{s=1}^n\sum_{\substack{i_1,\ldots,i_s\in \{0,\ldots,n-s\}\\i_1+\ldots+i_s+s=n}} \kappa_sm_{i_1}\ldots m_{i_s},
          \end{align*}
          which is what we needed to see.
          \par
          \ref{theorem:moment-cumulant-series-2} $\Rightarrow$ \ref{theorem:moment-cumulant-series-3}: For every $n\in \naturalnumbers$ multiply the expression \ref{theorem:moment-cumulant-series-2} for $m_n$ with $z^n$ and then sum over all $n\in \naturalnumbers$:
          \begin{align*}
            M(z)&=1+\sum_{n=1}^\infty m_nz^n \\
&=1+\sum_{n=1}^\infty\sum_{s=1}^n\sum_{\substack{i_1,\ldots,i_s\in \{0,\ldots,n-s\}\\i_1+\ldots+i_s+s=n}} \kappa_sm_{i_1}\ldots m_{i_s} z^n\\
                &=1+\sum_{s=1}^\infty\sum_{i_1,\ldots,i_s}(\kappa_sz^s)(m_{i_1}z^{i_1})\ldots(m_{i_s}z^{i_s})\\
                &=1+\sum_{s=1}^\infty \kappa_s z^s\bigg(\underset{\displaystyle =M(z)}{\underbrace{\sum_{i=0}^\infty m_iz^i}}\bigg)^s\\
                &=1+\sum_{s=1}^\infty \kappa_s(zM(z))^s\\
            &=C(z\cdot M(z)).
          \end{align*}
          And that proves this implication.
          \par
          \ref{theorem:moment-cumulant-series-3} $\Rightarrow$ \ref{theorem:moment-cumulant-series-1}: Both \ref{theorem:moment-cumulant-series-1} and \ref{theorem:moment-cumulant-series-3} determine a unique relation between $(m_n)_{n\in \naturalnumbers}$  and $(\kappa_n)_{n\in \naturalnumbers}$. Hence, \ref*{theorem:moment-cumulant-series-1} $\Rightarrow$ \ref*{theorem:moment-cumulant-series-3} gives also \ref*{theorem:moment-cumulant-series-3} $\Rightarrow$ \ref*{theorem:moment-cumulant-series-1}.
          \par
          \ref{theorem:moment-cumulant-series-3} $\Rightarrow$ \ref{theorem:moment-cumulant-series-4}: Put $w=z\cdot M(z)$. Then
          \begin{align*}
            z=\frac{w}{M(z)}=\frac{w}{C(z\cdot M(z))}=\frac{w}{C(w)}
          \end{align*}
          and thus
          \begin{align*}
            C(w)=C(z\cdot M(z))=M(z)=M\Bigl(\frac{w}{C(w)}\Bigr).
          \end{align*}
          \par
          \ref{theorem:moment-cumulant-series-4} $\Rightarrow$ \ref{theorem:moment-cumulant-series-3}: The proof is similar to that of \ref*{theorem:moment-cumulant-series-3} $\Rightarrow$ \ref*{theorem:moment-cumulant-series-4}.
        \end{proof}

        \begin{example}
          \begin{enumerate}
          \item Consider the implications of Theorem~\ref{theorem:moment-cumulant-series} in the case of a standard semicircular element: Let $(m_n)_{n\in \naturalnumbers}$ and $(\kappa_n)_{n\in \naturalnumbers}$ be given by the moments respectively cumulants of a standard semicircular in some non-commutative probability space. By~\hyperref[example:free-cumulants-2]{\ref*{example:free-cumulants}~\ref*{example:free-cumulants-2}} we have $\kappa_n=\delta_{n,2}$ for all $n\in \naturalnumbers$. Hence, the formal power series $C(z)\in \complexnumbers\lsem z\rsem$ associated with the cumulants is 
             $C(z)=1+z^2$. 
            And, Relation~\ref{theorem:moment-cumulant-series-3} of Theorem~\ref{theorem:moment-cumulant-series} says about the formal power series $M(z)\in \complexnumbers\lsem z\rsem$ associated with the moments that
            \begin{align*}
              1+z^2 M(z)^2=M(z).
            \end{align*}
            Since odd moments vanish, we can write $M(z)=f(z^2)$ for a formal power series $f(z)\in \complexnumbers\lsem z\rsem$. 
The previous equation for $M(z)$ gives then
            \begin{align*}
              1+zf(z)^2=f(z).
            \end{align*}
            This is the equation for the generating series of the Catalan numbers, see Assignment~\hyperref[assignment-3]{3}, Exercise~1.
          \item Next, let  $(m_n)_{n\in \naturalnumbers}$ and $(\kappa_n)_{n\in \naturalnumbers}$ be  the moments and cumulants of a free Poisson element with parameter $1$. Then, in particular, $\kappa_n=1$ for all  $n\in\naturalnumbers$. Thus, the formal power series in $\complexnumbers\lsem z\rsem$ of the cumulants is given by
            \begin{align*}
              C(z)=1+\sum_{n=1}^\infty z^n=\frac{1}{1-z}.
            \end{align*}
            Here, Relation~\ref{theorem:moment-cumulant-series-3} of Theorem~\ref{theorem:moment-cumulant-series} translates as
            \begin{align*}\frac{1}{1-z M(z)}=M(z),
            \end{align*}
            which is equivalent to
            \begin{align*}
1+z M(z)^2&=M(z).
            \end{align*}
This is again the equation for the generating series of the Catalan numbers.
          \item Consider the single-variable restrictions of the convolution unit $\delta$ and the M\"obius function $\mu$ of $\setofnoncrossingpartitionsof$ defined by
            \begin{align*}
              \mu(\pi)\equalperdefinition\mu(0_n,\pi)\quad\text{and}\quad
              \delta(\pi)\equalperdefinition\delta(0_n,\pi)
            \end{align*}
            for all $n\in \naturalnumbers$ and $\pi\in\setofnoncrossingpartitionsof(n)$.
        In the sense of Theorem~\ref{theorem:moment-cumulant-series}, let $\delta$ induce the moment sequence $(m_n)_{n\in \naturalnumbers}$ and $\mu$ the cumulant sequence $(\kappa_n)_{n\in \naturalnumbers}$. Then, the corresponding formal power series $M(z),C(z)\in\complexnumbers\lsem z\rsem$ are
            \begin{align*}
              M(z)=1+\sum_{n=1}^\infty \underset{\displaystyle =\delta_{n,1}}{\underbrace{\delta(0_n,1_n)}}z^n=1+z \quad\text{and}\quad              C(z)=1+\sum_{n=1}^\infty \mu(0_n,1_n)z^n.
            \end{align*}
                 The identity $\mu\ast\zeta=\delta$, shown in  Proposition~\ref{proposition:moebius-function}, holds for the restricted versions of $\mu$ and $\delta$ as well. It is precisely Relation~\ref{theorem:moment-cumulant-series-1} of Theorem~\ref{theorem:moment-cumulant-series}.
            Hence, the theorem tells us Relation~\ref{theorem:moment-cumulant-series-4} must be true as well. In this case, the latter says
            \begin{align*}
              1+\frac{z}{C(z)}=C(z),
            \end{align*}
            from which we conclude
             $C(z)+z=C(z)^2$. 
            Again, define a formal power series $f(z)\in \complexnumbers\lsem z\rsem$ implicitly by
             $C(z)=1+z f(-z)$. 
            With this transformation, the above identity reads as 
            \begin{align*}
              1+z f(-z)+z=z^2f(-z)^2+2zf(-z)+1,
\qquad
            \text{or}\qquad
              -f(-z)+1=zf(-z)^2.
            \end{align*}
            Replacing now the indeterminate $z$ by $-z$, we obtain the equation
            \begin{align*}
                        1+zf(z)^2&=f(z).
            \end{align*}
            Hence, $f$ is again the generating function for the Catalan numbers. Thus,
            \begin{align*}
              C(z)&=1+zf(-z)\\
                  &=1+z\Bigl[1+\sum_{n=1}^\infty C_n(-z)^n\Bigr]\\
                  &=1+\sum_{n= 0}^\infty C_n(-z)^{n+1}\cdot(-1)\\
              &=1+\sum_{n=1}^\infty\underset{\displaystyle\overset{!}{=}\mu(0_n,1_n)}{\underbrace{(-1)^{n-1}C_{n-1}}} z^n.
            \end{align*}
            Comparing coefficients of $C(z)$ allows us to draw the following conclusion  relating $\mu$ and the Catalan numbers.
          \end{enumerate}
        \end{example}
        \begin{corollary}
          The M\"obius function of $\setofnoncrossingpartitionsof$ satisfies for all $n\in \naturalnumbers$:
          \begin{align*}
            \mu(0_n,1_n)=(-1)^{n-1}C_{n-1}.
          \end{align*}
        \end{corollary}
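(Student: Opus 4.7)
The plan is to apply Theorem~\ref{theorem:moment-cumulant-series} to the single-variable restrictions $\delta(\pi) := \delta(0_n,\pi)$ and $\mu(\pi) := \mu(0_n,\pi)$ of the convolution unit and the Möbius function on $\setofnoncrossingpartitionsof$. The defining identity $\mu \ast \zeta = \delta$ from Proposition~\ref{proposition:moebius-function} becomes, at these restrictions, exactly the moment-cumulant relation $m = \kappa \ast \zeta$ of Theorem~\ref{theorem:moment-cumulant-series}~\ref{theorem:moment-cumulant-series-1}, where
\begin{align*}
  m_n := \delta(0_n,1_n) = \delta_{n,1}
  \qquad\text{and}\qquad
  \kappa_n := \mu(0_n,1_n).
\end{align*}
The associated generating series are therefore $M(z) = 1 + z$ and $C(z) = 1 + \sum_{n\geq 1}\mu(0_n,1_n)\,z^n$, which is precisely the object whose coefficients we want to determine.

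Next I would invoke the equivalence \ref{theorem:moment-cumulant-series-1}~$\Leftrightarrow$~\ref{theorem:moment-cumulant-series-4} of Theorem~\ref{theorem:moment-cumulant-series}, which gives the functional relation $M(z/C(z)) = C(z)$. Since $M(z) = 1+z$, this collapses to the quadratic equation
\begin{align*}
  C(z)^2 = C(z) + z.
\end{align*}
To extract the coefficients, I would introduce a new series $f(z) \in \complexnumbers\lsem z \rsem$ via the substitution $C(z) = 1 + z\,f(-z)$, which transforms the quadratic into
\begin{align*}
  1 + z\,f(z)^2 = f(z),
\end{align*}
the Catalan generating-series equation from the proof of Lemma~\ref{lemma:catalan-numbers} (cf.\ Assignment~3, Exercise~1). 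Hence $f(z) = \sum_{n\geq 0} C_n z^n$.

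Substituting back yields
\begin{align*}
  C(z) = 1 + z\sum_{n\geq 0} C_n (-z)^n = 1 + \sum_{n\geq 1}(-1)^{n-1}C_{n-1}\,z^n,
\end{align*}
and comparing coefficients with the definition of $C(z)$ gives $\mu(0_n,1_n) = (-1)^{n-1}C_{n-1}$ for every $n\in\naturalnumbers$. The only mildly subtle point is recognising the substitution $C(z) = 1 + z\,f(-z)$ that converts the quadratic $C^2 = C + z$ into the standard Catalan equation; otherwise the entire argument is a direct application of the dictionary between combinatorial convolutions and formal power series established in Theorem~\ref{theorem:moment-cumulant-series}.
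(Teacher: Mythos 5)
Your proof is correct and follows exactly the same route as the paper: restricting $\delta$ and $\mu$ to get the sequences $m_n=\delta_{n,1}$ and $\kappa_n=\mu(0_n,1_n)$, using the equivalence $m=\kappa\ast\zeta\Leftrightarrow M(z/C(z))=C(z)$ to obtain $C(z)^2=C(z)+z$, and then the substitution $C(z)=1+zf(-z)$ to reduce to the Catalan equation $1+zf(z)^2=f(z)$. This is identical to the paper's worked example preceding the corollary, down to the choice of substitution.
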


\newpage

        \section{Free Convolution of Compactly Supported Probability Measures and the $R$-Transform}
        \begin{remark}
          Classical convolution of probability measures corresponds to forming the distribution of the sum of independent variables from the distributions of the individual variables. We want to consider the free analogue. The classical distribution of a self-adjoint random variable is not just a collection of moments, but can be identified with a probability measure on $\realnumbers$. We will also consider our free analogue in such an analytical context. In order to avoid problems with unbounded operators and the moment problem we stick for now to probability measures with compact support. Let us collect some relevant facts about this situation. 
        \end{remark}

        \begin{facts}
          \label{facts:analytic-distribution}
          \begin{enumerate}
          \item\label{facts:analytic-distribution-1} Let $\mu$ be a probability measure on $\realnumbers$ with compact support, which means that there exists $M\in \realnumbers$, $M>0$ such that
              $\mu([-M,M])=1$.
            Then, the moments $(m_n)_{n\in \naturalnumbers}$ of $\mu$, defined by
            \begin{align*}
              m_n\equalperdefinition \int_\realnumbers t^n\,d\mu(t)
\qquad \text{(for every $n\in \naturalnumbers$)},
            \end{align*}
            \begin{enumerate}
            \item\label{facts:analytic-distribution-1-1} are all finite,
            \item\label{facts:analytic-distribution-1-2} are exponentially bounded with constant $M$, i.e.,              for all $n\in \naturalnumbers$,
              \begin{align*}
                |m_n|\leq M^n,
              \end{align*}
            \item\label{facts:analytic-distribution-1-3} determine the probability measure $\mu$ uniquely: For every probability measure $\nu$ on $\realnumbers$ with
              \begin{align*}
                \int_\realnumbers t^n\,d\nu(t)=m_n
              \end{align*}
              for all $n\in \naturalnumbers$ it follows that $\nu=\mu$. (One does not need to require $\nu$ to be compactly supported for this to be true.)
            \end{enumerate}
          \item\label{facts:analytic-distribution-2} If $(\mathcal A,\varphi)$ is a $\ast$-probability space and $x$ a self-adjoint random variable in $\mathcal A$ with exponentially bounded moments, i.e.\ such that there exists $M\in \realnumbers$, $M>0$ with $|\varphi(x^n)|\leq M^n$  for all $n\in \naturalnumbers$, then there exists a uniquely determined probability measure $\mu_x$ on $\realnumbers$ such that
            \begin{align*}
              \varphi(x^n)=\int_\realnumbers t^n\,d\mu_x(t)
            \end{align*}
            for all $n\in \naturalnumbers$. Actually, $\mu_x$ is compactly supported with $\mu_x([-M,M])=1$.
          \item\label{facts:analytic-distribution-3} For any compactly supported probability measure $\mu$ on $\realnumbers$ there exists a $\ast$-probability space $(\mathcal A,\varphi)$ and a self-adjoint random variable $x\in \mathcal A$ such that $\mu_x=\mu$.\par
            Indeed, we can choose  $\mathcal A\equalperdefinition\complexnumbers\langle x\rangle$, the polynomials in an indeterminate $x$, (which becomes a $\ast$-algebra via $x^\ast\equalperdefinition x$) and define, for all $p(x)\in \complexnumbers\langle x\rangle$,
            \begin{align*}
              \varphi(p(x))\equalperdefinition \int_\realnumbers p(t)\,d\mu(t).
            \end{align*}
            The positivity of $\mu$ then renders $\varphi$ positive.
          \end{enumerate}
        \end{facts}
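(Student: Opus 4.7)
The plan is to dispose of the three parts in order, since each builds on standard tools from measure theory and real analysis, and the only real work is locating the right classical theorems.

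For Part~\ref{facts:analytic-distribution-1}, I would argue as follows. Let $\mu$ be supported in $[-M,M]$. The integrand $t\mapsto t^n$ is continuous, hence Borel measurable and bounded by $M^n$ on the support, so the integral defining $m_n$ converges absolutely, giving \ref{facts:analytic-distribution-1-1}, and simultaneously yields $|m_n|\leq M^n\cdot\mu([-M,M])=M^n$, which is \ref{facts:analytic-distribution-1-2}. For the uniqueness claim \ref{facts:analytic-distribution-1-3}, first note that if $\nu$ has the same moments as $\mu$, then $\nu$ also has exponentially bounded moments (with the same constant $M$), so its moment generating function $\int e^{zt}\,d\nu(t)$ converges on all of $\mathbb{C}$, and standard arguments (e.g., via the Stone--Weierstrass theorem, or via Carleman's condition, or by showing that the characteristic function $\hat\nu$ extends to an entire function of exponential type determined by the Taylor coefficients $i^n m_n/n!$) force $\hat\nu=\hat\mu$, and hence $\nu=\mu$ by uniqueness of Fourier transforms. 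The cleanest route is Stone--Weierstrass: polynomials are dense in $C([-K,K])$ for any $K>M$, so $\mu$ is determined as a functional on $C_c(\mathbb{R})$, which determines $\mu$ by Riesz--Markov; to apply this to $\nu$ one must first check that $\nu$ is also supported in $[-M,M]$, which follows from Markov's inequality $\nu(|t|>M+\varepsilon)\leq m_{2n}/(M+\varepsilon)^{2n}\leq (M/(M+\varepsilon))^{2n}\to 0$.

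For Part~\ref{facts:analytic-distribution-2}, the main work is constructing $\mu_x$. Define a linear functional $L$ on the polynomial algebra $\mathbb{C}\langle t\rangle$ by $L(t^n)\equalperdefinition\varphi(x^n)$. Since $x$ is self-adjoint, $L$ inherits positivity from $\varphi$: for any real polynomial $p$ we have $L(p^2)=\varphi(p(x)^2)=\varphi(p(x)^\ast p(x))\geq 0$. By the Hamburger moment theorem (or Haviland's theorem), positivity of $L$ together with exponential boundedness of the moments (which implies Carleman's condition trivially) yields a unique probability measure $\mu_x$ on $\mathbb{R}$ reproducing those moments; the same Markov estimate as in Part~\ref{facts:analytic-distribution-1} then confines the support to $[-M,M]$.

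For Part~\ref{facts:analytic-distribution-3} there is essentially nothing to do beyond verifying the definitions: with $\mathcal{A}\equalperdefinition\mathbb{C}\langle x\rangle$ and $x^\ast\equalperdefinition x$, the map $\varphi(p(x))\equalperdefinition\int_\mathbb{R}p(t)\,d\mu(t)$ is linear and satisfies $\varphi(1)=\mu(\mathbb{R})=1$, and for $p\in\mathbb{C}\langle x\rangle$ one has $\varphi(p(x)^\ast p(x))=\int|p(t)|^2\,d\mu(t)\geq 0$, so $\varphi$ is a state. By construction $\varphi(x^n)=\int t^n\,d\mu(t)$, and the uniqueness from Part~\ref{facts:analytic-distribution-1-3} gives $\mu_x=\mu$.

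The only genuine obstacle is Part~\ref{facts:analytic-distribution-2}: one must quote or re-prove a version of the Hamburger moment problem. Everything else reduces to Weierstrass approximation and Markov's inequality on the tails.
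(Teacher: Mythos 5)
Your proposal is correct, and Parts (1) and (3) follow essentially the same path as the paper: (1)(a),(b) are immediate, (1)(c) is done via Stone--Weierstrass after first confining $\nu$ to $[-M,M]$ (your Markov-inequality tail estimate $\nu(|t|>M+\varepsilon)\leq (M/(M+\varepsilon))^{2n}\to 0$ is exactly the point the paper flags as the thing ``one should also note''), and (3) is a routine verification of the state axioms.

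The genuine divergence is in Part (2). You quote the Hamburger moment theorem: positivity of $L(q^\ast q)=\varphi(q(x)^\ast q(x))\geq 0$ gives existence of a representing measure, and exponential boundedness gives Carleman's condition (since $m_{2n}^{-1/(2n)}\geq 1/M$ makes the Carleman sum diverge), hence uniqueness, with the support again pinned down by the tail estimate. The paper instead runs a self-contained GNS-type construction: it defines the inner product $\langle y^n,y^m\rangle=\varphi(x^{n+m})$ on $\complexnumbers\langle y\rangle$, completes to a Hilbert space, realizes $y$ as a symmetric multiplication operator, passes to a self-adjoint extension, and reads off $\mu_x$ as the spectral measure at the cyclic vector; exponential boundedness of the moments then forces the spectral measure to have compact support, so the operator is in fact bounded and the extension (hence $\mu_x$) is unique. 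The two routes are close cousins --- the standard proof of the Hamburger theorem is precisely such a self-adjoint-extension argument --- but yours outsources the analytic core to a named classical theorem, while the paper's version keeps the construction explicit and operator-theoretic, which is more in the spirit of the operator-algebraic material that follows. Both are complete; your version should just make explicit that positivity of $\varphi$ also forces the moments $\varphi(x^n)$ to be real (hermiticity of a state), which the Hamburger theorem presupposes.
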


        \begin{remark}
          \begin{enumerate}
          \item The proofs of Facts~\hyperref[facts:analytic-distribution-1-1]{\ref*{facts:analytic-distribution}~\ref*{facts:analytic-distribution-1}~\ref*{facts:analytic-distribution-1-1}} and~\hyperref[facts:analytic-distribution-1-2]{\ref*{facts:analytic-distribution-1-2}} are trivial. One proves Fact~\hyperref[facts:analytic-distribution-1-3]{\ref*{facts:analytic-distribution}~\ref*{facts:analytic-distribution-1}~\ref*{facts:analytic-distribution-1-3}} via the Stone-Weierstra\ss\ Theorem, where one should also note that the existence and exponential boundedness of the moments of $\nu$ imply that $\nu$ has compact support.
          \item Rough idea of the proof of Fact~\hyperref[facts:analytic-distribution-2]{\ref*{facts:analytic-distribution}~\ref*{facts:analytic-distribution-2}}: Define on the $\ast$-algebra $\complexnumbers \langle y\rangle$ (with $y^\ast\equalperdefinition y$) an inner product by linear extension of the map $\langle\,\cdot\, , \, \cdot \, \rangle$ determined by
            \begin{align*}
              \langle y^n,y^m\rangle=\varphi(x^{n+m}) 
            \end{align*}
            for all $m,n\in \naturalnumbers_0$.
            Dividing by the kernel of this inner product and subsequent completion yields a Hilbert space. The indeterminate $y$ acts on this Hilbert space as a multiplication operator. In general, the operator $y$ is unbounded. It is, however, always symmetric and has a self-adjoint extension $\tilde y$ with
            \begin{align*}
              \langle 1,\tilde{y}^n1\rangle=\langle 1,y^n1\rangle=\varphi(x^n)
            \end{align*}
            for all $n\in \naturalnumbers$. By the spectral theorem, this extends to the spectral measure $\tilde{\mu}$ of $\tilde{y}$, i.e.,
            \begin{align*}
              \langle 1,f(\tilde{y})1\rangle=\int_\realnumbers f(t)\,d\tilde{\mu}(t)
            \end{align*}
            for all bounded measurable functions $f:\,\realnumbers\to\complexnumbers$.
            Since the moments of $\tilde{y}$ agree with those of $x$, they are exponentially bounded, which is why  $\tilde{\mu}$ has compact support and $\tilde{y}$ is bounded, thus also unique. It follows $\tilde{\mu}=\mu_x$.
          \item In Fact~\hyperref[facts:analytic-distribution-3]{\ref*{facts:analytic-distribution}~\ref*{facts:analytic-distribution-3}}, we can also choose a $\ast$-algebra $\mathcal A$ with more analytic structure, like the continuous functions $C(\support(\mu))$ on the support of $\mu$, a $C^\ast$-algebra, or the $\mu$-essentially bounded functions $L^\infty(\mu)$ on the support of $\mu$, a von Neumann algebra.
          \end{enumerate}
        \end{remark}

        \begin{proposition}
          \label{proposition:moments-cumulants-exponentially-bounded}
          Let $(\mathcal A,\varphi)$ be a non-commutative probability space with free cumulants $(\kappa_n)_{n\in \naturalnumbers}$. For $a\in \mathcal A$ we denote the moments and  cumulants of $a$, respectively, by
          \begin{align*}
           m_n^a\equalperdefinition \varphi(a^n)\quad\text{and}\quad \kappa_n^a\equalperdefinition \kappa_n(a,a,\ldots,a) 
          \end{align*}
          for all $n\in \naturalnumbers$. Then, the following statements are equivalent:
          \begin{enumerate}
          \item\label{proposition:moments-cumulants-exponentially-bounded-1} The sequence $(m_n^a)_{n\in \naturalnumbers}$ is exponentially bounded.
          \item\label{proposition:moments-cumulants-exponentially-bounded-2} The sequence $(\kappa_n^a)_{n\in \naturalnumbers}$ is exponentially bounded.            
          \end{enumerate}
        \end{proposition}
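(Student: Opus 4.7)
The plan is to work through the moment-cumulant machinery just developed and in particular through the functional equation $C(z\cdot M(z))=M(z)$ from Theorem~\ref{theorem:moment-cumulant-series}. Recall that for a sequence $(c_n)_{n\in \naturalnumbers}$ of complex numbers, exponential boundedness (i.e.\ $|c_n|\leq R^n$ for some $R>0$ and all large $n$) is equivalent to the formal power series $1+\sum_{n\geq 1} c_nz^n$ converging in some disk around $0$, i.e.\ defining a holomorphic function on a neighborhood of $0\in \complexnumbers$. With this reformulation, the proposition becomes a purely analytic statement about the formal power series $M$ and $C$ associated to $(m_n^a)_{n\in \naturalnumbers}$ and $(\kappa_n^a)_{n\in \naturalnumbers}$.

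For the implication \ref{proposition:moments-cumulants-exponentially-bounded-2} $\Rightarrow$ \ref{proposition:moments-cumulants-exponentially-bounded-1} I would argue directly and combinatorially, without invoking any analysis. Suppose $|\kappa_n^a|\leq K^n$ for some $K>0$ and all $n\in \naturalnumbers$. For any $\pi\in \setofnoncrossingpartitionsof(n)$ with blocks $V_1,\ldots,V_r$, multiplicativity of the cumulant family gives
\begin{align*}
|\kappa_\pi(a,\ldots,a)|=\prod_{i=1}^r|\kappa_{\#V_i}^a|\leq \prod_{i=1}^r K^{\#V_i}=K^n.
\end{align*}
Using $m_n^a=\sum_{\pi\in \setofnoncrossingpartitionsof(n)}\kappa_\pi(a,\ldots,a)$ from Remark~\ref{remark:moment-cumulant-formula} together with $\#\setofnoncrossingpartitionsof(n)=C_n\leq 4^n$, one immediately concludes $|m_n^a|\leq (4K)^n$.

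For the reverse implication \ref{proposition:moments-cumulants-exponentially-bounded-1} $\Rightarrow$ \ref{proposition:moments-cumulants-exponentially-bounded-2}, the cleanest route is analytic. Assume $(m_n^a)_{n\in \naturalnumbers}$ is exponentially bounded, so that $M(z)$ is holomorphic in some neighborhood of $0$ with $M(0)=1$. Then $f(z)\equalperdefinition z\cdot M(z)$ is holomorphic near $0$ with $f(0)=0$ and $f'(0)=M(0)=1$, so by the holomorphic inverse function theorem $f$ has a local holomorphic inverse $g$ around $0$. Substituting into the functional equation from Theorem~\ref{theorem:moment-cumulant-series}~\ref{theorem:moment-cumulant-series-3} gives $C(w)=M(g(w))$ for $w$ in a neighborhood of $0$, exhibiting $C$ as holomorphic near $0$; hence $(\kappa_n^a)_{n\in \naturalnumbers}$ is exponentially bounded.

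The main obstacle I expect is justifying that the formal-power-series identity from Theorem~\ref{theorem:moment-cumulant-series} can be upgraded to an identity of germs of holomorphic functions. This is legitimate because convergent Taylor series at $0$ form a subalgebra of $\complexnumbers\lsem z\rsem$ stable under substitution, so all manipulations valid in $\complexnumbers\lsem z\rsem$ remain valid analytically on a common (possibly small) disk around $0$. Once this ``formal $=$ analytic near $0$'' passage is in place, the inverse function theorem does all the remaining work and no explicit combinatorial bounds on the M\"obius function of $\setofnoncrossingpartitionsof$ are required. Alternatively, one could redo the harder direction by induction from Theorem~\ref{theorem:moment-cumulant-series}~\ref{theorem:moment-cumulant-series-2}, solving for $\kappa_n^a$ in terms of $m_n^a$ and lower-order $\kappa_s^a$, but the bookkeeping there is noticeably less transparent.
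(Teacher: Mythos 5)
Your proof is correct. The direction \enquote{cumulants exponentially bounded $\Rightarrow$ moments exponentially bounded} is exactly the paper's argument: the multiplicative bound $|\kappa_\pi|\leq K^n$ together with $\#\setofnoncrossingpartitionsof(n)=C_n\leq 4^n$. For the converse, however, you take a genuinely different route. The paper stays combinatorial: it inverts via $\kappa_n^a=\sum_{\pi\leq 1_n}m_\pi^a\,\mu(\pi,1_n)$ and uses the estimate $|\mu(\pi,1_n)|\leq 4^n$ (Assignment~7, Exercise~2), yielding the explicit bound $|\kappa_n^a|\leq (16M)^n$. You instead translate exponential boundedness into positivity of the radius of convergence and use the functional equation $C(zM(z))=M(z)$ of Theorem~\ref{theorem:moment-cumulant-series} together with the holomorphic inverse function theorem applied to $f(z)=zM(z)$, $f'(0)=1$. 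This is legitimate: the formal compositional inverse of $f$ is unique, hence coincides with the Taylor series of the analytic local inverse $g$, and $C=M\circ g$ as formal series, so $C$ has positive radius of convergence. Your approach buys freedom from any estimate on the M\"obius function of $\setofnoncrossingpartitionsof$ (the one ingredient the paper outsources to an exercise), at the price of invoking standard complex analysis and of losing the explicit constant $16M$; the paper's argument is elementary and quantitative. One could also note that your analytic argument foreshadows exactly the $R$-transform analysis of Theorem~\ref{theorem:R-transform-compact-support}, where the same inversion of $G$ (a repackaged $M$) is carried out with explicit domains.
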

        \begin{proof}
          \ref{proposition:moments-cumulants-exponentially-bounded-2} $\Rightarrow$ \ref{proposition:moments-cumulants-exponentially-bounded-1}: Let $M\in \realnumbers^+$ and assume $|\kappa_n^a|\leq M^n$ for all  $n\in \naturalnumbers$. Then, for every $n\in \naturalnumbers$,
          \begin{align*}
            |m_n^a|&\leq \sum_{\pi\in \setofnoncrossingpartitionsof(n)}\underset{\displaystyle \leq M^n}{\underbrace{\prod_{V\in \pi}\underset{\displaystyle\leq M^{\# V}}{\underbrace{|\kappa_{\# V}^a|}}}}
                          \leq M^n\cdot \underset{\displaystyle = C_n \leq  4^n}{\underbrace{\#\setofnoncrossingpartitionsof(n)}}
            \leq (4M)^n,
          \end{align*}
          showing that the moments are exponentially bounded.
          \par
          \ref{proposition:moments-cumulants-exponentially-bounded-1} $\Rightarrow$ \ref{proposition:moments-cumulants-exponentially-bounded-2}: Conversely, suppose $M\in \realnumbers^+$ and $|m_n^a|\leq M^n$ for all $n\in \naturalnumbers$. By Assignment~\hyperref[assignment-7]{7}, Exercise~2 it holds that
           $|\mu(\pi,1_n)|\leq 4^n$
          for all $n\in \naturalnumbers$ and $\pi\in\setofnoncrossingpartitionsof(n)$. We conclude, for every $n\in \naturalnumbers$, using $\#\setofnoncrossingpartitionsof(n)\leq 4^n$ once more,
          \begin{align*}
            |\kappa_n^a|&\leq \sum_{\pi\in \setofnoncrossingpartitionsof(n)}\underset{\displaystyle \leq M^n}{\underbrace{\prod_{V\in \pi}|m_{\# V}^a|}}\cdot \underset{\displaystyle \leq 4^n }{\underbrace{|\mu(\pi,1_n)|}}
                          \leq 4^n\cdot 4^n\cdot M^n
            \leq (16M)^n,
          \end{align*}
          which proves the cumulants to be exponentially bounded.
        \end{proof}

        \begin{theorem}
          \label{theorem:free-convolution}
          Let $\mu$ and $\nu$ be two compactly supported probability measures on $\realnumbers$. Then, there exists a $\ast$-probability space $(\mathcal A,\varphi)$ and self-adjoint variables $x,y\in \mathcal A$ such that
          \begin{itemize}
          \item the analytic distributions $\mu_x$ of $x$ and $\mu_y$ of $y$ with respect to $(\mathcal A,\varphi)$ satisfy
            \begin{align*}
            \mu_x =\mu\quad\text{and}\quad\mu_y=\nu,
            \end{align*}            
          \item the random variables $x$ and $y$ are free in $(\mathcal A,\varphi)$.
          \end{itemize}
          The moments $(\varphi((x+y)^n))_{n\in \naturalnumbers}$ of the self-adjoint random variable $x+y\in \mathcal A$ are then exponentially bounded and hence determine uniquely a compactly supported probability measure $\mu_{x+y}$ on $\realnumbers$. The moments of $x+y$ (and thus $\mu_{x+y}$) depend only on $\mu$ and $\nu$ and the fact that $x$ and $y$ are free, and not on the concrete realizations of $x$ and $y$.
        \end{theorem}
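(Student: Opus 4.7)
The plan is to first build an ambient $\ast$-probability space carrying two free self-adjoint elements with the prescribed distributions, and then reduce everything else to the combinatorial machinery of free cumulants already developed. By Fact~\ref{facts:analytic-distribution}, part \ref{facts:analytic-distribution-3}, there exist $\ast$-probability spaces $(\mathcal A_1,\varphi_1)$ and $(\mathcal A_2,\varphi_2)$ containing self-adjoint elements $x_1,x_2$ whose analytic distributions are $\mu$ and $\nu$ respectively. I would form the algebraic free product $\mathcal A\equalperdefinition\mathcal A_1\ast\mathcal A_2$ (the $\ast$-algebra with amalgamation over the unit only) and define a unital linear functional $\varphi$ on $\mathcal A$ by extending each $\varphi_i$ and imposing the moment factorization of Proposition~\ref{proposition:freeness-state-factorization}, which uniquely determines all mixed moments. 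Setting $x\equalperdefinition x_1$ and $y\equalperdefinition x_2$, we obtain self-adjoint elements that are free in $(\mathcal A,\varphi)$ and satisfy $\mu_x=\mu$ and $\mu_y=\nu$.

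Next, I use the cumulant calculus to control the moments of $x+y$. Since $\mu$ and $\nu$ are compactly supported, their moments---and hence those of $x$ and $y$---are exponentially bounded by Fact~\ref{facts:analytic-distribution}, so Proposition~\ref{proposition:moments-cumulants-exponentially-bounded} gives that $(\kappa_n^x)_{n\in\naturalnumbers}$ and $(\kappa_n^y)_{n\in\naturalnumbers}$ are exponentially bounded as well. By multilinearity of the free cumulants together with the vanishing of mixed cumulants between free variables (Theorem~\ref{theorem:freeness-vanishing-mixed-cumulants}), each $\kappa_n^{x+y}$ expands into $2^n$ terms of which only the pure ones survive, yielding $\kappa_n^{x+y}=\kappa_n^x+\kappa_n^y$. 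Consequently $(\kappa_n^{x+y})_{n\in\naturalnumbers}$ is exponentially bounded, and Proposition~\ref{proposition:moments-cumulants-exponentially-bounded} run in the other direction turns this back into an exponential bound on the moments of $x+y$. Fact~\ref{facts:analytic-distribution} then produces the unique compactly supported probability measure $\mu_{x+y}$ on $\realnumbers$ whose moments are precisely $\varphi((x+y)^n)$.

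For the final assertion, one observes that $\varphi((x+y)^n)$ is a linear combination of joint moments of $(x,y)$, and Proposition~\ref{proposition:freeness-state-factorization} asserts that the joint distribution of two free random variables is determined by their individual distributions. Hence the moments of $x+y$, and therefore $\mu_{x+y}$, depend only on $\mu$ and $\nu$ and not on the concrete realization.

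The main obstacle in this plan lies in the existence step: verifying that the functional $\varphi$ on the free product $\mathcal A_1\ast\mathcal A_2$ is positive, so that the ambient space is indeed a $\ast$-probability space and not merely a non-commutative probability space. Proposition~\ref{proposition:freeness-state-factorization} only provides a well-defined unital linear functional; positivity must be supplied separately. The standard remedy is Voiculescu's free product of Hilbert spaces, where each $(\mathcal A_i,\varphi_i)$ is first passed through a GNS-type construction to act on a pointed Hilbert space $(H_i,\xi_i)$, and the free product $(H_1,\xi_1)\ast(H_2,\xi_2)$ carries natural commuting (in the sense of spatial actions) representations of $\mathcal A_1$ and $\mathcal A_2$ whose joint vector state with respect to the distinguished vacuum vector reproduces the desired $\varphi$ and is positive by construction. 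Once this analytic ingredient is in place, the remainder of the argument is a direct assembly of results already established in the paper.
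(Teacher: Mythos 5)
Your proposal follows essentially the same route as the paper: realize $\mu$ and $\nu$ in separate $\ast$-probability spaces via Fact~\ref{facts:analytic-distribution}, form the free product to obtain free self-adjoint $x$ and $y$, deduce $\kappa_n^{x+y}=\kappa_n^x+\kappa_n^y$ from the vanishing of mixed cumulants, transfer exponential boundedness between moments and cumulants via Proposition~\ref{proposition:moments-cumulants-exponentially-bounded}, and conclude via Fact~\ref{facts:analytic-distribution}. The positivity concern you raise is exactly what the paper offloads to Assignment~6, Exercise~4 (which in turn references Nica--Speicher Lecture~6), so your identification of the Hilbert-space free product as the missing analytic input is correct and makes explicit what the paper leaves implicit.
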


        \begin{definition}
          The probability measure $\mu_{x+y}$ from Theorem~\ref{theorem:free-convolution} is called the \emph{free  convolution} of $\mu$ and $\nu$ and is denoted  by $\mu\boxplus \nu$.
        \end{definition}

        \begin{proof}[Proof of Theorem~\ref{theorem:free-convolution}]
          We can realize $\mu$ as $\mu_x$ for the random variable $x$ in a $\ast$-probability space $(\complexnumbers\langle x\rangle,\varphi_x )$ and, likewise, $\nu$ as $\mu_y$ for $y$ in $(\complexnumbers\langle y\rangle,\varphi_y)$. Then we take the free product of these $\ast$-probability spaces (see Assignment~\hyperref[assignment-6]{6}, Exercise~4) and thus realize $x$ and $y$ in the $\ast$-probability space $(\complexnumbers\langle x,y\rangle,\varphi_x\ast\varphi_y)$ in such a way that $x$ and $y$ are free with respect to $\varphi_x\ast\varphi_y$. Let $(\kappa_n)_{n\in \naturalnumbers}$ be the free cumulants of  $(\complexnumbers\langle x,y\rangle,\varphi_x\ast\varphi_y)$ and define
          \begin{align*}
             \kappa^x_n\equalperdefinition \kappa_n(x,\ldots,x),\quad \kappa^y_n\equalperdefinition \kappa_n(y,\ldots,y)\quad\text{and}\quad \kappa_n^{x+y}\equalperdefinition \kappa_n(x+y,\ldots,x+y)
          \end{align*}
          for all $n\in \naturalnumbers$.
          \par
          Since, by Theorem~\ref{theorem:freeness-vanishing-mixed-cumulants-random-variables}, mixed cumulants of free variables vanish, the cumulants of $x+y$ satisfy, for all $n\in \naturalnumbers$,
          \begin{align*}
            \kappa_n^{x+y}= \kappa_n(x+y,\ldots,x+y)
                          =\kappa_n(x,\ldots,x)+\kappa_n(y,\ldots,y)=\kappa_n^x+\kappa_n^y,
          \end{align*}
          and thus are exponentially bounded. Hence, according to Proposition~\ref{proposition:moments-cumulants-exponentially-bounded}, the moments $((\varphi_x\ast\varphi_y)((x+y)^n))_{n\in \naturalnumbers}$ are exponentially bounded and are thus the moments of a compactly supported probability measure $\mu_{x+y}$ by Fact~\hyperref[facts:analytic-distribution-2]{\ref*{facts:analytic-distribution}~\ref*{facts:analytic-distribution-2}}.
          \par
          Because the moments of $x+y$ are determined by the moments of $x$, the moments of $y$ and the freeness condition, it is clear that $\mu\boxplus \nu=\mu_{x+y}$ depends only on $\mu_x=\mu$ and $\mu_\nu=\nu$, but not on the concrete realizations of $x$ and $y$.
        \end{proof}

        \begin{remark}
          \begin{enumerate}
          \item Let $\mu,\nu$ and $\mu_1,\mu_2,\mu_3$ be compactly supported probability measures on $\realnumbers$. It easy to see that the binary operation $\boxplus$ has the following properties:
            \begin{enumerate}
            \item Commutativity: $\mu\boxplus \nu=\nu\boxplus \mu$.
            \item Associativity: $\mu_1\boxplus (\mu_2\boxplus \mu_3)=(\mu_1\boxplus \mu_2)\boxplus \mu_3$.
            \item Neutral element: $\delta_0\boxplus \mu=\mu$.
            \item Translations: Actually, for any $s\in \realnumbers$, free convolution with $\delta_s$  has the effect of shifting the measure by the amount $s$: $\delta_s\boxplus \mu=\mu_{(s)}$,
              where $\mu_{(s)}(B)\equalperdefinition \mu(\{b-s\mid b\in B\})$ for all Borel sets $B\subseteq \realnumbers$. This is because $\delta_s$ is the distribution $\mu_{s\cdot 1}$ of the constant random variable $s\cdot 1$, and the latter is \enquote{free from anything} by Proposition~\ref{proposition:constants-are-free-from-anything}.
            \end{enumerate}
          \item In order to make concrete calculations of $\mu\boxplus \nu$ for  probability measures $\mu$ and $\nu$ on $\realnumbers$, we need an analytic description of the free convolution. In particular, we will encode the information about moments and cumulants in analytic functions which will then also allow us to extend the definition of $\boxplus$ to general, not necessarily compactly supported, probability measures.
            
          \end{enumerate}
        \end{remark}

        \begin{definition}
          For any probability measure $\mu$ on $\realnumbers$ we define its \emph{Cauchy transform} $G_\mu$ by
          \begin{align*}
            G_\mu(z)\equalperdefinition \int_\realnumbers \frac{1}{z-t}\, d\mu(t) \quad\quad\quad
            \begin{tikzpicture}[baseline=1em]
              \draw[->] (-1em,0) -- (5em,0);
              \draw[->] (0,-1em) -- (0,4em);
              \coordinate (t) at (2em,0) {};
              \coordinate (z) at (1em,2em) {};
              \draw (z) -- node[pos=0.5, xshift=1.5em] {$z-t$} (t);
              \draw (t) -- +(0,-0.25em) -- +(0,0.25em);
              \node [below =0.5em of t] {$t$};
              \node [above =0.5em of z] {$z$};
              \node[circle,scale=0.25, fill=black, draw=black] at (z) {};
            \end{tikzpicture}
          \end{align*}
          for all $z\in \complexnumbers^+\equalperdefinition\{z\in \complexnumbers\mid \imaginarypart(z)>0\}$.\par
          Often, (in particular in a random matrix context) a variant $S_\mu$ of $G_\mu$, called \emph{Stieltjes transform}, is studied, which is defined by
          \begin{align*}
            S_\mu(z)=\int_\realnumbers \frac{1}{t-z}\, d\mu(t)=-G_\mu(z)
          \end{align*}
          for all $z\in \complexnumbers^+$.
        \end{definition}

        \begin{theorem}
          \label{theorem:cauchy-transform}
          \begin{enumerate}
          \item\label{theorem:cauchy-transform-1} Let $G\equalperdefinition G_\mu$ be the Cauchy transform of a probability measure $\mu$ on $\realnumbers$. Then,
            \begin{enumerate}
            \item\label{theorem:cauchy-transform-1-1} $G:\complexnumbers^+\to \complexnumbers^-$,  where $\complexnumbers^-\equalperdefinition \{z\in \complexnumbers\mid \imaginarypart(z)<0\}$,
            \item\label{theorem:cauchy-transform-1-2} $G$ is analytic on $\complexnumbers^+$,
              \item\label{theorem:cauchy-transform-1-3} 
we have
$$\lim_{y\to\infty} iyG(iy)=1\qquad \text{and}\qquad \sup_{y>0,x\in \realnumbers}y|G(x+iy)|=1.$$
            \end{enumerate}
          \item\label{theorem:cauchy-transform-2} Any probability measure $\mu$ on $\realnumbers$ can be recovered from the Cauchy transform $G_\mu$ via the \emph{Stieltjes inversion formula}: For all $a,b\in \realnumbers$ with $a<b$,
            \begin{align*}
              -\lim_{\varepsilon \downarrow 0}\frac{1}{\pi}\int_a^b \imaginarypart( G_\mu(x+i\varepsilon))\, dx=\mu((a,b))+\frac{1}{2}\mu(\{a,b\}).
            \end{align*}
Equivalently, with $\lambda$ denoting the Lebesgue measure on $\realnumbers$,
            \begin{align*}
              \begin{tikzpicture}[baseline=1em]
                              \draw (-1em,0) -- (4em,0);
              \draw (0,-1em) -- (0,3em);
              \coordinate (x) at (2em,0) {};
              \coordinate (xe) at (2em,2em) {};
              \draw (x) -- +(0,-0.25em) -- +(0,0.25em);
              \draw [->,shorten >=0.5em] (xe) -- (x);              
              \node [below =0.5em of x] {$x$};
              \node [fill=white,draw,circle,scale=0.25, label=right:{$x+i\varepsilon$}] at (xe) {}; 
              \end{tikzpicture}\qquad
              \underset{\displaystyle\text{density of probability measure}}{\underbrace{-\frac{1}{\pi}\imaginarypart (G(x+i\varepsilon)) }\lambda}\,\hspace{-2.2em}\overset{\varepsilon\downarrow 0}{\longrightarrow} \mu\text{ weakly.}
            \end{align*}
            In particular, for any probability measure $\nu$ on $\realnumbers$ with Cauchy transform $G_\nu$ it follows that $\mu=\nu$ whenever $G_\mu=G_\nu$.
          \item\label{theorem:cauchy-transform-3} Let $G:\complexnumbers^+\to \complexnumbers^-$ be an analytic function which satisfies
            \begin{align*}
              \limsup_{y\to\infty} y|G(iy)|=1.
            \end{align*}
            Then, there exists a unique probability measure $\mu$ on $\realnumbers$ such that $G=G_\mu$.
          \end{enumerate}
        \end{theorem}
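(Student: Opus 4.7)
The plan is to treat each of the three parts in order, with Part (3) being the main difficulty.

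For Part (1), I would start with the pointwise identity, valid for $t\in\realnumbers$ and $z=x+iy$ with $y>0$,
\begin{align*}
  \frac{1}{z-t} = \frac{\overline{z-t}}{|z-t|^2} = \frac{(x-t)-iy}{|z-t|^2},
\end{align*}
so that $\imaginarypart(1/(z-t)) = -y/|z-t|^2 < 0$. Integrating against $\mu$ gives $\imaginarypart G_\mu(z) < 0$, proving (1a). Analyticity (1b) follows by differentiating under the integral sign, justified by dominated convergence and the fact that on any compact $K\subset\complexnumbers^+$ one has $|z-t|\geq c_K > 0$ uniformly in $t\in\realnumbers$. For (1c), the identity $iy G(iy) = \int \frac{iy}{iy-t}\,d\mu(t)$ has integrand bounded by $1$ and converging pointwise to $1$ as $y\to\infty$, so the limit is $1$ by dominated convergence. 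The sup bound comes from $|z-t|\geq y$, hence $y|G_\mu(x+iy)| \leq \int y/|z-t|\,d\mu(t) \leq 1$, and the previous limit shows equality.

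For Part (2), I would compute directly
\begin{align*}
  -\frac{1}{\pi}\imaginarypart G_\mu(x+i\varepsilon) = \int_\realnumbers \frac{1}{\pi}\frac{\varepsilon}{(x-t)^2+\varepsilon^2}\,d\mu(t) = (P_\varepsilon * \mu)(x),
\end{align*}
where $P_\varepsilon$ is the Poisson kernel. This is a standard approximate identity, so $P_\varepsilon * \mu \to \mu$ weakly as $\varepsilon\downarrow 0$. Integrating both sides against $\mathbf{1}_{(a,b)}$ and applying Fubini yields the Stieltjes inversion; the $\frac{1}{2}\mu(\{a,b\})$ correction comes from Fej\'er's lemma applied at the endpoints. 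Uniqueness of $\mu$ from $G_\mu$ is then immediate.

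For Part (3), the hard direction, the plan is constructive. Define for each $\varepsilon>0$ the finite measure $d\mu_\varepsilon(x) = -\frac{1}{\pi}\imaginarypart G(x+i\varepsilon)\,dx$, which is nonnegative by the hypothesis $G:\complexnumbers^+\to\complexnumbers^-$. Using the growth condition $\limsup_{y\to\infty} y|G(iy)|=1$ together with the analogue of the identity in (1c) (but now for the candidate function $G$), I would show $\mu_\varepsilon(\realnumbers) \leq 1$ and in fact $\mu_\varepsilon(\realnumbers)\to 1$ along a suitable scaling. Tightness of the family $\{\mu_\varepsilon\}$ as $\varepsilon\downarrow 0$ follows from the same growth estimate (mass cannot escape to infinity because $y|G(iy)|$ stays bounded by $1$). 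Extract a weakly convergent subsequence with limit $\mu$, a probability measure on $\realnumbers$. The final step is to verify $G_\mu = G$: by the calculation in Part (2) the Cauchy transform $G_\mu$ satisfies the Stieltjes inversion with the same boundary density as $G$, so $G_\mu$ and $G$ are analytic functions on $\complexnumbers^+$ whose imaginary parts agree a.e.\ on $\realnumbers$ in the Poisson sense; a Schwarz reflection / boundary uniqueness argument (or equivalently, Herglotz's representation theorem for Nevanlinna--Pick functions) then forces $G = G_\mu$. The main obstacle is precisely this last identification together with the tightness bookkeeping needed to ensure the limit $\mu$ is a true probability measure rather than a sub-probability measure; both are controlled by the sharp normalization $\limsup_{y\to\infty} y|G(iy)|=1$.
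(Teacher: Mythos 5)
Your parts (1) and (2) follow essentially the same route as the paper: the pointwise computation of $\imaginarypart\bigl(\frac{1}{z-t}\bigr)$, dominated convergence for the limits in (1c), and the Poisson-kernel/arctangent computation for the Stieltjes inversion (the paper writes the inner integral as $\tan^{-1}\bigl(\frac{b-t}{\varepsilon}\bigr)-\tan^{-1}\bigl(\frac{a-t}{\varepsilon}\bigr)$ rather than naming the Poisson kernel, and handles uniqueness by exhausting $(a,b)$ by intervals whose endpoints are not atoms, but these are cosmetic differences). Where you genuinely diverge is part (3): the paper simply cites Nevanlinna's representation theorem for analytic self-maps of $\complexnumbers^+$ and gives no further argument, whereas you propose a constructive scheme -- define $d\mu_\varepsilon = -\frac{1}{\pi}\imaginarypart G(\cdot+i\varepsilon)\,d\lambda$, control the total mass and tightness via the normalization $\limsup_{y\to\infty}y|G(iy)|=1$, extract a weak limit, and identify $G=G_\mu$ by boundary uniqueness. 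This is a legitimate and more self-contained route, but be aware that its two key steps hide the same depth as the cited theorem: the bound $\mu_\varepsilon(\realnumbers)\leq 1$ requires knowing that the positive harmonic function $-\imaginarypart G$ dominates the Poisson integral of its boundary values on the shifted half-plane (equivalently, the Herglotz representation $-\imaginarypart G(x+iy)=\beta y+\int P_y(x-t)\,d\sigma(t)$, with the growth condition killing the linear term $\beta y$), and the final identification $G=G_\mu$ uses that same representation to conclude $G-G_\mu$ has vanishing imaginary part and hence is a real constant, which the decay at $i\infty$ forces to be zero. So your approach buys an explicit construction of $\mu$ at the cost of re-proving (rather than citing) the half-plane Herglotz theorem; the paper's approach is shorter but leaves the analytic content as a black box.
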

        \begin{proof}
          \begin{enumerate}[wide]
          \item
            \begin{enumerate}[wide]
            \item The first claim is immediately clear from the definition.
            \item For all  $w,z\in \complexnumbers^+$, by considering the difference
              \begin{align*}
                G(w)-G(z)&=\int\underset{\displaystyle=\frac{z-w}{(w-t)(z-t)}}{\underbrace{\left(\frac{1}{w-t}-\frac{1}{z-t}\right)}}\, d\mu(t)
              \end{align*}
             it follows for the difference quotient
              \begin{align*}
                 \frac{G(w)-G(z)}{w-z}&=-\int \frac{1}{(w-t)(z-t)}\,d\mu(t)
              \end{align*}
              and thus, by passing to the limit $w\to z$,
              \begin{align*}
                G'(z)&=-\int_\realnumbers\frac{1}{(z-t)^2}\,d\mu(t),
              \end{align*}
              which shows that $G$ is analytic.
            \item The third claim can be equivalently expressed as 
              \begin{align*}
                \begin{aligned}
                  \lim_{y\to\infty}y \imaginarypart (G(iy))= -1\quad \text{and}\quad \lim_{y\to\infty} y\realpart (G(iy))=0.
                  \end{aligned}
              \end{align*}
              We only prove the statement about the imaginary part of $G$. The proof for the real part is similar. For every $y\in \realnumbers$ with $y\neq 0$ it holds
              \begin{align*}
                y \imaginarypart (G(iy))&= \int_\realnumbers y\hspace{-4.85em} \underset{\hspace{5em}
                                        \begin{aligned}
                                          &=\frac{1}{2i}\left(\frac{1}{iy-t}-\frac{1}{-iy-t}\right)\\
&=\frac{2iy}{-2i(y^2+t^2)}         
                                        \end{aligned}
                                        }{\underbrace{\imaginarypart \left(\frac{1}{iy-t}\right)}}\hspace{-5em}\, d\mu(t)\\
                                      &=-\int_\realnumbers \frac{y^2}{y^2+t^2}\, d\mu(t)\\
                                      &=-\int_\realnumbers\underset{\displaystyle \leq 1}{\underbrace{\frac{1}{1+\left(\frac{t}{y}\right)^2}}} \, d\mu(t).
              \end{align*}
              Because, $\lim_{y\to\infty}1/(1+(t/y)^2)=1$ for all $t\in\realnumbers$,
             Lebesgue's dominated convergence theorem (with majorant $t\mapsto 1$) yields $\lim_{y\to \infty}y \imaginarypart (G(iy))=-1$ as claimed.
            \end{enumerate}
          \item For all $\varepsilon>0$ and $x\in \realnumbers$,
            \begin{align*}
                  \imaginarypart(G(x+i\varepsilon))=\int_\realnumbers \imaginarypart \left(\frac{1}{x-t+i\varepsilon}\right)\, d\mu(t)
    =-\int_\realnumbers \frac{y}{(x-t)^2+\varepsilon^2}\, d\mu(t)
            \end{align*}
  and thus, for all $a,b\in \realnumbers$ with $a<b$,
  \begin{align*}
    \int_a^b \imaginarypart (G(x+i\varepsilon))\, dx&=-\int_\realnumbers \int_a^b \frac{\varepsilon}{(x-t)^2+\varepsilon^2}\, dx\,d\mu(t)\\
    &=-\int_\realnumbers \hspace{-5em}\underset{\hspace{5em}
      \begin{aligned}[t] &=\tan^{-1}\left(\frac{b-t}{\varepsilon}\right)-\tan^{-1}\left(\frac{a-t}{\varepsilon}\right)\\
        &\overset{\varepsilon\downarrow 0}{\longrightarrow}
        \begin{cases}
          0,& t\notin [a,b]\\
          \frac{\pi}{2},&t\in \{a,b\}\\
          \pi, & t\in (a,b)
        \end{cases}
      \end{aligned}}{\underbrace{\int_{(a-t)/\varepsilon}^{(b-t)/\varepsilon} \frac{1}{1+x^2}\, dx}} \hspace{-5em}d\mu(t)\\
    &\overset{\varepsilon\downarrow 0}{\longrightarrow} -\pi\bigl[\mu((a,b))+\frac{1}{2}\mu(\{a,b\})\bigr],
  \end{align*}
  which proves one part of the claim.
  \begin{center}
    \begin{tikzpicture}[baseline=3cm]
          \begin{axis}[xmin=-5, ymin=1.15*(-3.14/2), xmax=5, ymax=1.15*(3.14/2), domain=-5:6, samples=500,  axis lines=middle, ytick={-1.57,-0.78,0,0.78,1.57}, yticklabels={$-\frac{\pi}{2}$,$-\frac{\pi}{4}$, $0$, $\frac{\pi}{4}$, $\frac{\pi}{2}$}, title={$\realnumbers \to (-\frac{\pi}{2},\frac{\pi}{2}),\,x\mapsto \tan^{-1}(x)$}]
            \addplot[thick] {rad(atan(x))};
          \end{axis}
          \end{tikzpicture}
        \end{center}
        \par
Now, let $\nu$ be an arbitrary probability measure on $\realnumbers$ with Cauchy transform $G_\nu$ and assume $G_\mu=G_\nu$. By what was just shown, $\mu((a,b))=\nu((a,b))$
for all $a,b\in \realnumbers$ with $a<b$ such that $a,b$ are atoms of neither $\mu$ nor $\nu$.
\par
Since $\mu$ and $\nu$ can each have only countably many atoms, we can write any open interval, $a,b\in \realnumbers$, $a<b$, as a union
\begin{align*}
  (a,b)=\bigcup_{n=1}^\infty (\underset{\text{not  atoms}}{\underbrace{a+\varepsilon_n},\underbrace{b-\varepsilon_n}})
\end{align*}
for some monotonic sequence $(\varepsilon_n)_{n\in \naturalnumbers}$ in $\realnumbers^+$ with $\lim_{n\to\infty} \varepsilon_n=0$ such that, for every $n\in \naturalnumbers$, both $a+\varepsilon_n$ and $b-\varepsilon_n$ are not atoms of $\mu$ nor $\nu$.
\par
By monotone convergence of measures, it follows for all $a,b\in\realnumbers$ with $a<b$:
\begin{align*}
  \mu((a,b))=\lim_{n\to\infty}\mu((a+\varepsilon_n,b-\varepsilon_n))
=\lim_{n\to\infty}\nu((a+\varepsilon_n,b-\varepsilon_n))
&=\nu((a,b)).
\end{align*}
That proves the other half of the claim.
\item The third claim follows from (non-trivial) results of \emph{Nevanlinna} about analytic functions $\varphi:\, \complexnumbers^+\to \complexnumbers^+$. For any such $\varphi$ there exist a unique finite Borel measure $\sigma$ on $\realnumbers$ and unique $\alpha,\beta\in \realnumbers$ with $\beta\geq 0$ such that
  \begin{align*}
    \varphi(z)=\alpha+\beta z+\int_\realnumbers \frac{1+tz}{t-z}\, d\sigma(z)
  \end{align*}
  for any $z\in \complexnumbers^+$.\qedhere
          \end{enumerate}
        \end{proof}

        \begin{proposition}
          \label{proposition:cauchy-transform-series-expansion}
          Let $\mu$ be a probability measure on $\realnumbers$ and $G_\mu$ its Cauchy transform.
          If $\mu$ is compactly supported, say $\mu([-r,r])=1$ for some $r\in \realnumbers$, $r>0$, then $G_\mu$ has a power series expansion (about $\infty$) as follows:
          \begin{align*}
            G_\mu(z)=\sum_{n=0}^\infty \frac{m_n}{z^{n+1}} \quad \text{for all }z\in \complexnumbers^+ \text{ with }|z|>r,
          \end{align*}
          where, for every $n\in\naturalnumbers$, the number $m_n=\int_\realnumbers t^n\, d\mu(t)$  is the $n$-th moment of $\mu$.
        \end{proposition}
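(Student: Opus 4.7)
The plan is to obtain the expansion by Taylor-expanding the integrand $1/(z-t)$ as a geometric series in $t/z$ and justifying the exchange of summation and integration by uniform convergence on the support of $\mu$.

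First I would fix $z \in \complexnumbers^+$ with $|z| > r$ and observe that for every $t \in [-r,r]$ we have $|t/z| \leq r/|z| < 1$, so the geometric series expansion
\begin{align*}
  \frac{1}{z-t} = \frac{1}{z}\cdot \frac{1}{1-t/z} = \sum_{n=0}^\infty \frac{t^n}{z^{n+1}}
\end{align*}
is valid. Moreover, the estimate $|t^n/z^{n+1}| \leq (r/|z|)^n/|z|$ for $t \in [-r,r]$ shows that the partial sums of this series converge uniformly in $t$ on $[-r,r]$, and hence (since $\mu$ is supported in $[-r,r]$) also $\mu$-uniformly on $\support(\mu)$.

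Next I would use this uniform convergence to swap sum and integral: since $\mu$ is a finite measure on the compact set $[-r,r]$, uniform convergence of the partial sums of $\sum_{n=0}^\infty t^n/z^{n+1}$ on $[-r,r]$ implies
\begin{align*}
  G_\mu(z) = \int_\realnumbers \frac{1}{z-t}\, d\mu(t) = \sum_{n=0}^\infty \frac{1}{z^{n+1}} \int_\realnumbers t^n\, d\mu(t) = \sum_{n=0}^\infty \frac{m_n}{z^{n+1}},
\end{align*}
which is the desired identity. Alternatively, one could invoke dominated convergence with majorant $t \mapsto 1/(|z|-r)$ for the partial sums, which works equally well.

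The only thing to check is the validity of the swap, which is routine once the compact-support hypothesis is in play; there is no genuine obstacle here. It is worth noting for future use that, since $|m_n| \leq r^n$ by Facts~\ref{facts:analytic-distribution}~\ref{facts:analytic-distribution-1}~\ref{facts:analytic-distribution-1-2}, the series on the right converges absolutely for all $z \in \complexnumbers$ with $|z| > r$, so $G_\mu$ extends holomorphically to this region and the expansion is a genuine Laurent-type series about $\infty$.
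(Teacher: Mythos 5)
Your proof is correct and follows essentially the same route as the paper: expand $1/(z-t)$ as a geometric series in $t/z$, observe uniform convergence of the partial sums on $[-r,r]$, and swap sum and integral. The additional remarks about dominated convergence as an alternative and about the exponential bound $|m_n|\leq r^n$ are fine but not needed for the statement at hand.
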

        \begin{proof}
          For every $z\in \complexnumbers^+$ with $|z|>r$ we can expand for all $t\in [-r,r]$
          \begin{align*}
            \frac{1}{z-t}=\frac{1}{z\left(1-\frac{t}{z}\right)}=\frac{1}{z}\sum_{n=0}^\infty \Bigl(\frac{t}{z}\Bigr)^n.
          \end{align*}
          Since this series convergences uniformly in $t\in [-r,r]$,
          \begin{align*}
            G_\mu(z)&=\int_{-r}^r\frac{1}{z-t}\,d\mu(t)
            =\sum_{n=0}^\infty \int_{-r}^r \frac{t^n}{z^{n+1}}\,d\mu(t)
            =\sum_{n=0}^\infty \frac{m_n}{z^{n+1}}
          \end{align*}
          for all $z\in \complexnumbers^+$ with $|z|>r$.
        \end{proof}
        
        \begin{remark}
          \label{remark:r-transform-formal-power-series}
          Proposition~\ref{proposition:cauchy-transform-series-expansion} shows that the Cauchy transform $G(z)$ is a version of our moment series
            $M(z)=1+\sum_{n=1}^\infty m_n z^ n$
          from Theorem~\ref{theorem:moment-cumulant-series}, namely
          \begin{align*}
            G(z)={\textstyle\frac{1}{z}} M({\textstyle\frac{1}{z}}).
          \end{align*}
          The relation with the cumulant series
            $C(z)=1+\sum_{n=1}^\infty \kappa_n z^n$
          from Theorem~\ref{theorem:moment-cumulant-series}, which was
            $C(z M(z))=M(z)$,
          implies then
                    \begin{align*}
C(\underset{\displaystyle =G(z)}{\underbrace{{\textstyle\frac{1}{z}} M({\textstyle\frac{1}{z}})}})=M({\textstyle\frac{1}{z}})=z\cdot G(z)
          \end{align*}
          and thus
            ${C(G(z))}/{G(z)}=z$.
          Define the formal Laurent series
            $K(z)\equalperdefinition {C(z)}/{z}$.
          Then, $K(G(z))=z$, hence also $G(K(z))=z$. Since $K(z)$ has a pole $\frac{1}{z}$ we split this off and write
          \begin{align*}
            K(z)=\frac{1}{z}+R(z),
\qquad\text{where}\qquad
            R(z)\equalperdefinition \sum_{n=1}^\infty \kappa_nz^{n-1}.
          \end{align*}
          If $\mu$ has compact support, then the cumulants are exponentially bounded and the above power series $R$ converges for $|z|$ sufficiently small.
          \par
          However, it is at the moment not clear for us, whether
          \begin{itemize}
          \item in the compactly supported case $R$ can be extended to an analytic function on $\complexnumbers^+$ (not true!),
          \item in the case of general $\mu$ the formal power series $R$ makes any sense as an analytic function.
          \end{itemize}
        \end{remark}
        
        \begin{theorem}[Voiculescu 1986, Voiculescu and Bercovici 1992]
          \label{theorem:R-transform-compact-support}
For compactly supported probability measures one has the following analytic properties of the Cauchy and the $R$-transform.
          \begin{enumerate}
          \item\label{theorem:R-transform-compact-support-1} Let $\mu$ be a probability measure on $\realnumbers$  with compact support, contained in an interval $[-r,r]$ for some $r>0$. Consider its Cauchy transform $G\equalperdefinition G_\mu$ as an analytic function
            \begin{align*}
              G:\, \underset{\displaystyle =\colon U}{\underbrace{\{ z\in \complexnumbers\mid |z|>4r \}}} \to \{ z\in \complexnumbers\mid |z|<{\textstyle\frac{1}{3r}}\}.
            \end{align*}
            Then, $G$ is injective on $U$ and $G(U)$ contains $V\equalperdefinition \{ z\in \complexnumbers \mid |z|<\frac{1}{6r}\}$.
            \begin{center}
              \begin{tikzpicture}
                \draw[draw=none, pattern=north east lines, pattern color=lightgray] (-4em,-4em) rectangle (4em,4em);
                \draw[fill=white] (2em,0) arc(0:360:2em);
                \draw (0,0) -- node[pos=0.5,below] {$4r$} ++(25:2em);
                \node at (3em,-3em) {$U$};
                \draw[->] (5em,0) -- node[pos=0.5,above] {$G$} (8em,0);
                \draw (19em,0) arc(360:0:5em);
                \draw[pattern=horizontal lines,pattern color=lightgray] (9.5em,-0.5em) to[out=270,in=130] (10em,-1.8em) to[out=310,in=170] (12.9em,-4.1em) to[out=350,in=210] (14.7em,-3.6em) to[out=30,in=255] (17.4em,-3em) to[out=75,in=290] (16.5em,0.2em) to[out=110,in=345em] (16.8em,2.6em) to[out=165,in=35] (14.8em, 4.3em) to[out=215,in=60] (13.4em,2.6em) to[out=240,in=20] (12.2em, 1.7em) to[out=200,in=90] (9.5em,-0.5em);
                \draw[ pattern=vertical lines, pattern color=lightgray] (15.5em,0) arc(360:0:1.5em);
                \draw (14em,0) -- node[pos=0.85,right] {$\frac{1}{6r}$} ++(30:1.5em);
                \draw (14em,0) -- node[pos=0.65, left] {$\frac{1}{3r}$} ++(305:5em);
                \node at (12.5em,-2.5em) {$G(U)$};
                \node at (13.5em,0.5em) {$V$};
              \end{tikzpicture}
            \end{center}
            Hence, $G$ has an inverse on $V$,
              $K \equalperdefinition G^{<-1>}:\, V\to U$.
            The function $K$ has a pole at $0$ and is of the form
            \begin{align*}
              K(z)={\textstyle\frac{1}{z}}+R(z),
            \end{align*}
            for some analytic function $R:\, V\to \complexnumbers$.
            \par
           We have 
            \begin{align*}
              G(R(z)+{\textstyle{\frac{1}{z}}})=z \quad \text{if } |z|<{\textstyle\frac{1}{6r}}
           \qquad
            \text{and} \qquad
              R(G(z))+{\textstyle\frac{1}{G(z)}}=z \quad \text{if } |z|>7r.
            \end{align*}
          \item\label{theorem:R-transform-compact-support-2} The function $R$ has on $V$ the power series expansion
            \begin{align*}
              R(z)=\sum_{n=1}^\infty \kappa_n z^{n-1} \qquad \text{if } |z|<{\textstyle\frac{1}{6r}},
            \end{align*}
            where $(\kappa_n)_{n\in \naturalnumbers}$ are the cumulants corresponding to the moments $(m_n)_{n\in \naturalnumbers}$ of $\mu$. If we want to indicate the dependence on $\mu$, we will also write $R_\mu$ for $R$.
          \item\label{theorem:R-transform-compact-support-3} Let $\mu,\nu$ be compactly supported probability measures on $\realnumbers$. Then, for all $z\in \complexnumbers$,
            \begin{align*}
              R_{\mu\boxplus \nu}(z)=R_\mu(z)+R_{\nu}(z) \qquad \text{if }|z| \text{ is sufficiently small}.
            \end{align*}
          \end{enumerate}
        \end{theorem}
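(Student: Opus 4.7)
The plan is to prove the three parts in sequence, with Part~\ref{theorem:R-transform-compact-support-1} containing the real analytic work and Parts~\ref{theorem:R-transform-compact-support-2} and~\ref{theorem:R-transform-compact-support-3} being essentially a packaging of earlier combinatorial results.

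First I would exploit the Laurent expansion $G(z)=\sum_{n=0}^\infty m_n/z^{n+1}$ from Proposition~\ref{proposition:cauchy-transform-series-expansion}, valid for $|z|>r$. Since $|m_n|\leq r^n$, a geometric-series estimate gives
\[
\left|G(z)-\tfrac{1}{z}\right|\leq \sum_{n\geq 1}\frac{r^n}{|z|^{n+1}}=\frac{r}{|z|(|z|-r)}, \qquad |G(z)|\leq \frac{1}{|z|-r},
\]
so on $U=\{|z|>4r\}$ one has $|G(z)|\leq 1/(3r)$ together with the sharper bound $|G(z)-1/z|\leq 1/(12r)$, which establishes the claimed mapping $G\colon U\to \{|w|<1/(3r)\}$.

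Next, to construct the inverse $K$ on $V=\{|w|<1/(6r)\}$, I would combine the formal moment--cumulant identity with analytic estimates. By Proposition~\ref{proposition:moments-cumulants-exponentially-bounded}, the free cumulants $(\kappa_n)_{n\in\naturalnumbers}$ of $\mu$ are exponentially bounded, so $R(w)\equalperdefinition\sum_{n\geq 1}\kappa_n w^{n-1}$ converges on some disk around $0$; careful bookkeeping with $|m_n|\leq r^n$ and the bound $|\mu(\pi,1_n)|\leq 4^n$ will show that this disk contains $V$. Setting $K(w)\equalperdefinition 1/w+R(w)$, the functional relation $M(z/C(z))=C(z)$ of Theorem~\ref{theorem:moment-cumulant-series} translates, via the substitution in Remark~\ref{remark:r-transform-formal-power-series}, into $G(K(w))=w$ as a formal Laurent identity, which upgrades to an honest analytic identity on $V$ by convergence of both sides. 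To deduce that $G$ is injective on $U$ and that $V\subseteq G(U)$, I would apply Rouché's theorem to $G(z)-w$ versus $h(z)=1/z-w$: for fixed $w\in V$, on $|z|=4r$ one has $|G(z)-1/z|\leq 1/(12r)$ while $|1/z-w|\geq 1/(4r)-|w|>1/(4r)-1/(6r)=1/(12r)$, giving the strict inequality $|G(z)-1/z|<|1/z-w|$. Applying Rouché on the annuli $\{4r<|z|<R\}$ and letting $R\to\infty$ (using $G(z)-w\to -w\neq 0$ at infinity) shows that $G(z)-w$ has exactly one zero in $U$, namely the solitary zero $z=1/w$ of $h$. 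This simultaneously proves injectivity of $G$ on $U$ and the inclusion $V\subseteq G(U)$, with $K(w)$ being that unique zero.

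Part~\ref{theorem:R-transform-compact-support-2} is then immediate from the construction of $R$ in the previous step. For Part~\ref{theorem:R-transform-compact-support-3}, let $x,y$ be a free realization of $\mu$ and $\nu$ in a common $\ast$-probability space, as in the proof of Theorem~\ref{theorem:free-convolution}. By Theorem~\ref{theorem:freeness-vanishing-mixed-cumulants-random-variables}, mixed cumulants in $x,y$ vanish, so multilinearity of $\kappa_n$ yields $\kappa_n^{x+y}=\kappa_n^\mu+\kappa_n^\nu$ for every $n\in\naturalnumbers$; coefficient-wise summation then gives $R_{\mu\boxplus\nu}(z)=R_\mu(z)+R_\nu(z)$ on a common neighborhood of $0$. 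The main obstacle is the analytic content of Part~\ref{theorem:R-transform-compact-support-1}: the precise radii $4r$ and $1/(6r)$ are chosen so that the Rouché estimate holds only barely on $|z|=4r$, and securing convergence of $R$ on all of $V$ requires sharper estimates than one reads directly from Proposition~\ref{proposition:moments-cumulants-exponentially-bounded}. One must instead track the growth of $\kappa_n$ through the moment--cumulant formula combined with the bound on $|\mu(\pi,1_n)|$. Everything else is either an appeal to earlier results or routine manipulation of power series.
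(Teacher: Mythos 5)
Your overall architecture (Laurent-series bounds for $G$, Rouch\'e for the existence of the inverse, cumulant additivity for Part~(3)) is close to the paper's, and your Rouch\'e computation on $|z|=4r$ is exactly the paper's estimate transported from the variable $\frac 1z$ back to $z$. But there are two genuine gaps. First, and most seriously, you build $K$ out of the power series $R(w)=\sum_{n\geq 1}\kappa_n w^{n-1}$ and need it to converge on all of $V=\{|w|<\frac{1}{6r}\}$. The bound you can actually extract from Proposition~\ref{proposition:moments-cumulants-exponentially-bounded} is $|\kappa_n|\leq (16r)^n$, which gives convergence only for $|w|<\frac{1}{16r}$; the ``careful bookkeeping'' you invoke with $|m_n|\leq r^n$ and $|\mu(\pi,1_n)|\leq 4^n$ is precisely what produces the factor $16r$, and improving it would require exploiting cancellations in the M\"obius function, which is not routine. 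The way out --- and the paper's route --- is to reverse the logical order: use Rouch\'e first to produce an analytic inverse $K=G^{<-1>}$ on $V$, set $R(w)\equalperdefinition K(w)-\frac 1w$ (checking only that the pole at $0$ is simple), and then observe that the formal series $\sum_{n\geq 1}\kappa_n w^{n-1}$ solves the same inversion problem near $0$, hence coincides with $R$ near $0$; convergence on all of $V$ then comes for free because the series is the Taylor expansion of a function already known to be analytic on $V$. As you have ordered the argument, the convergence step cannot be completed, and Part~(2) inherits the same gap.

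Second, your Rouch\'e argument shows that each $w\in V$ has exactly one preimage in $U$; it does not show that $G$ is injective on all of $U$, since $G(U)$ is contained only in $\{|w|<\frac{1}{3r}\}$, and for $|w|$ close to $\frac{1}{3r}$ the inequality $|G(z)-\frac 1z|<|\frac 1z-w|$ fails on $|z|=4r$. The paper proves injectivity by a separate argument: a lower bound $\realpart [f'(z)]\geq \frac 29$ for $f(z)=G(\frac 1z)$ on $|z|<\frac{1}{4r}$, which yields $|f(z_1)-f(z_2)|\geq \frac 29|z_1-z_2|$. You need to add something of this kind. Finally, you do not address the second identity of Part~(1), namely $R(G(z))+\frac{1}{G(z)}=z$ for $|z|>7r$; this requires the additional estimate $|G(z)|<\frac{1}{6r}$ for $|z|>7r$, so that $G(z)$ lands in the domain of $K$. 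Part~(3) is fine as you state it.
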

        \begin{definition}
          For a compactly supported probability measure $\mu$ on $\realnumbers$, the analytic function $R_\mu$ from Theorem~\ref{theorem:R-transform-compact-support} is called the \emph{$R$-transform} of $\mu$.
        \end{definition}
        \begin{proof}
          \begin{enumerate}[wide]
          \item We begin by showing that $G$ is injective on $U$. For all $z\in U$, put $f(z)\equalperdefinition G({z}^{-1})$. Then, $f:\, U\to\complexnumbers$ has a power series expansion
            \begin{align*}
              f(z)=\sum_{n= 0}^\infty m_nz^{n+1} \qquad\text{for all }z\in \complexnumbers \text{ with } |z|<{\textstyle \frac{1}{r}}.
            \end{align*}
            Restricted to all $z\in \complexnumbers$ with $|z|<\frac{1}{4r}$, 
            \begin{align*}
              |f(z)|&\leq \sum_{n=0}^\infty\underset{\displaystyle <r^n}{\underbrace{|m_n|}}\hspace{-0.1em}\cdot\hspace{-1.1em}\overset{\displaystyle<\left(\frac{1}{4r}\right)^{n+1}}{\overbrace{|z|^{n+1}}}\hspace{-1em}<\frac{1}{4r}\cdot\hspace{-0.55em} \underset{\displaystyle=\frac{1}{1-\frac{1}{4}}=\frac{4}{3}}{\underbrace{\sum_{n=0}^\infty\left(\frac{1}{4}\right)^n}}\hspace{-0.55em}=\frac{1}{3r}.   
            \end{align*}
            Now, consider $z_1,z_2\in \complexnumbers$ with $z_1^{-1},z_2^{-1}\in V$, i.e.\ $|z_1|,|z_2|<\frac{1}{4r}$. If $z_1\neq z_2$, then, by the mean value theorem,
            \begin{align*}
              \left|\frac{f(z_1)-f(z_2)}{z_1-z_2}\right|&\geq \realpart\,\left(\frac{f(z_1)-f(z_2)}{z_1-z_2}\right)
                                                        =\int_0^1\realpart[f'(z_1+t(z_2-z_1))]\, dt.
            \end{align*}
            We want to obtain a lower bound for $\realpart(f')$. For all $z\in \complexnumbers$ with $|z|<\frac{1}{4r}$,
            \begin{align*}
              \realpart[f'(z)]=&\realpart\,\left[1+ \sum_{n= 1}^\infty (n+1)m_nz^{n}\right]\\
                              &\geq 1- \sum_{n= 1}^\infty (n+1)\hspace{-1.65em}\underset{\hspace{2em}\displaystyle <r^n\left(\frac{1}{4r}\right)^n}{\underbrace{m_n|z|^{n}}}
                              \geq 2- \underset{\displaystyle =\frac{1}{(1-\frac{1}{4})^2}=\frac{16}{9}}{\underbrace{\sum_{n= 0}^\infty (n+1)\left(\frac{1}{4}\right)^n}}
              =\frac{2}{9}.
            \end{align*}
            By  combining this estimate with the inequality for the difference quotient, we find for all $z_1,z_2\in \complexnumbers$ with $|z_1|, |z_2|<\frac{1}{4r}$:
            \begin{align*}
              |f(z_1)-f(z_2)|\geq \frac{2}{9}|z_1-z_2|.
            \end{align*}
            It follows that, for such $z_1,z_2$, assuming $z_1\neq z_2$ entails $f(z_1)\neq f(z_2)$. That means $f$ is injective on $\{z\in \complexnumbers \mid |z|<\frac{1}{4r}\}$, and thus $G$ injective on $\{z\in \complexnumbers \mid |z|>4r\}$.  So, the first part of the claim has been established.
            \par
            Next, we verify the claims about the nature of $G^{<-1>}$.
            Let $w\in V$, i.e.\ $|w|<\frac{1}{6r}$. We want to show that there exists $z\in \complexnumbers$ with  $z^{-1}\in U$, i.e.\ $|z|<\frac{1}{4r}$, such that $f(z)=G(z^{-1})=w$. In other words, we want to prove that $z\mapsto f(z)-w$ has a zero in $U^{-1}=\{z\in \complexnumbers \mid |z|<\frac{1}{4r}\}$. We will compare this function via the argument principle to $z\mapsto z-w$ in $U^{-1}$. Of course, the latter has exactly one zero, $w$.
            \par
            Let $\Gamma\equalperdefinition \{ z\in \complexnumbers \mid |z|=\frac{1}{4r}\}=\partial U^{-1}$. Then, for every $z\in \Gamma$,\\
            \begin{minipage}{0.6\linewidth}
            \begin{align*}
              |f(z)-w-(z-w)|&=|f(z)-z|\\
                            &=\left|\sum_{n=1}^\infty m_n z^{n+1}\right|\\
                            &\leq \sum_{n=1}^\infty r^n\left(\frac{1}{4r}\right)^{n+1}\\
                            &=r\cdot \left(\frac{1}{4r}\right)^2\cdot \underset{\displaystyle=\frac{4}{3}}{\underbrace{\sum_{k=0}^\infty \left(\frac{r}{4r}\right)^k}}\\
                            &=\frac{1}{12r}\\
              &<|z-w|. 
            \end{align*}  
          \end{minipage}%
          \noindent
            \begin{minipage}{0.4\linewidth}\hspace{0em}
            \begin{tikzpicture}
              \draw (2em,0em) arc (0:360:2em);
              \draw (4em,0em) arc (0:360:4em);
              \draw (0em,0em) -- node[pos=1.15] {$\frac{1}{4r}$} ++(315:4em);
              \draw (0em,0em) -- node[pos=1.25] {$\frac{1}{6r}$} ++(225:2em);
              \draw[->, gray] (0em,0em) -- ++ (0em,5em);
              \draw[->, gray] (0em,0em) -- ++ (5em,0em);
              \draw[gray] (0em,0em) -- ++ (0em,-5em);
              \draw[gray] (0em,0em) -- ++ (-5em,0em);
              \node [circle,fill=black,draw=black,scale=0.15] (w) at (1em,1em) {};
              \node [circle,fill=black,draw=black,scale=0.15] (z) at ($(0em,0em)+(35:4em)$) {};
              \node[below =0 of w] {$w$};
              \node[right =0em of z] {$z$};              
              \node at ($(0em,0em)+(60:4.5em)$) {$\Gamma$};
              \useasboundingbox (-5em, -5em) rectangle (5em,5em);              
            \end{tikzpicture}  
          \end{minipage}
          \\
          Hence, 
          \begin{align*}
            |(f(z)-w)-(z-w)|<|z-w| \quad\text{for all } z\in \Gamma=\partial U^{-1}.
          \end{align*}
          Rouch\'e's Theorem therefore implies
          \begin{align*}
            \#\{\text{zeros of }z\mapsto f(z)-w \text{ in }U^{-1}  \}=\#\{\text{zeros of }z\mapsto z-w \text{ in }U^{-1}  \}
            =1.
          \end{align*}
          Thus, we have shown that there indeed exists $z\in U^{-1}$ with $f(z)=G(z^{-1})=w$.
          In consequence, $f$ has an analytic inverse, $f^{<-1>}:\, V\to U^{-1}$. And thus
            $K\equalperdefinition {1}/{f^{<-1>}}$
          gives the inverse for $G$.
          \par
          Since $f^{<-1>}$ has a simple zero at $0$, the function $K$ has simple pole at $0$. Since there are no other zeros of $f^{<-1>}$, there is an analytic function $R$ such that  $K(z)=\frac{1}{z}+R(z)$ for all $z\in V$. That is another part of the claim verified.
          \par
          It only remains to prove the assertions about the relationship between $G$ and $R$.
          By construction, for all $z\in \complexnumbers$ with  $|z|<\frac{1}{6r}$,
          \begin{align*}
            z=f(f^{<-1>}(z))=G\left({\textstyle\frac{1}{f^{<-1>}(z)}}\right)=G(K(z))=G({\textstyle \frac{1}{z}}+R(z)).
          \end{align*}
          For $z\in \complexnumbers$ with $|z|>7r$ we have to show that $|G(z)|<\frac {1}{6r}$. Then, by construction it follows that $K(G(z))=z$ for all $z\in \complexnumbers$ with $|z|>7r$. Equivalently, we have to show that, for all $z\in \complexnumbers$,
          \begin{align*}
            |f(z)|\overset{!}{<}\frac{1}{6r} \qquad\text{if } |z|<\frac{1}{7r}.
          \end{align*}
          So, let $z$ be such. Then,
          \begin{align*}
            |f(z)|&=\left|\sum_{n=0}^\infty m_n z^{n+1}\right|
                  <\sum_{n=0}^\infty r^n\left(\frac{1}{7r}\right)^{n+1}
                  =\frac{1}{7r}\cdot \sum_{n=0}^\infty \left(\frac{1}{7}\right)^n
                  =\frac{1}{7r}\cdot \frac{1}{1-\frac{1}{7}}
                  =\frac{1}{7r}\cdot\frac{7}{6}
   =\frac{1}{6r}.
          \end{align*}
          That completes the proof of Claim~\ref{theorem:R-transform-compact-support-1}.
        \item Since we know by Remark~\ref{remark:r-transform-formal-power-series}  that for $|z|$  sufficiently small, the function $\tilde R(z):=\sum_{n=1}^\infty \kappa_nz^{n-1}$ solves the equation $G(\tilde R(z)+\frac{1}{z})=z$, it must agree for small $\vert z\vert$ with $R$. Since $z\mapsto R(z)$ is analytic for $|z|<\frac{1}{6r}$, the power series expansion $\tilde{R}$ of $R$ must converge there.
        \item Let $\mu$ and $\nu$ be compactly supported probability measures on $\realnumbers$. Let $(\kappa_n^\mu)_{n\in \naturalnumbers}$, $(\kappa_n^\nu)_{n\in \naturalnumbers}$ and $(\kappa_n^{\mu\boxplus \nu})_{n\in \naturalnumbers}$ be the cumulants corresponding to the moments of $\mu$, $\nu$ and $\mu\boxplus \nu$, respectively. Then, by Part~\ref{theorem:R-transform-compact-support-2}, for all $z\in \complexnumbers$ with $|z|$ sufficiently small,
          \begin{align*}
            R_\mu(z)=\sum_{n=1}^\infty \kappa_n^\mu z^{n-1},\quad
            R_\nu(z)=\sum_{n=1}^\infty \kappa_n^\nu z^{n-1},\quad\text{and}\quad
            R_{\mu\boxplus\nu}(z)=\sum_{n=1}^\infty \kappa_n^{\mu\boxplus \nu} z^{n-1}.
          \end{align*}
          As in the proof of Theorem~\ref{theorem:free-convolution}, if we realize $\mu$ as $\mu_x$ and $\nu$ as $\mu_y$ for random variables $x,y$ in some $\ast$-probability space such that $x$ and $y$ are free there, then $\mu\boxplus \nu$ is given by $\mu_{x+y}$. So, by the vanishing of mixed cumulants in free variables, Theorem~\ref{theorem:freeness-vanishing-mixed-cumulants-random-variables}, we have for all  $n\in\naturalnumbers$:
          \begin{align*}
            \kappa_n^{\mu\boxplus \nu}&=\kappa_n(x+y,\ldots, x+y)
                                      =\kappa_n(x,\ldots,x)+\kappa_n(y,\ldots,y)
            =\kappa_n^\mu+\kappa_n^\nu.
          \end{align*}
          Hence, for all $z\in \complexnumbers$ with  $|z|$  sufficiently small, we have
            $R_{\mu\boxplus \nu}(z)=R_\mu(z)+R_\nu(z)$.
          That is what we needed to show.\qedhere
          \end{enumerate}
        \end{proof}

        \begin{example}
          \label{example:arcsine-distribution}
          We want to determine $\mu\boxplus \nu$ for the special case of 
          \begin{align*}
            \mu= \nu =\frac{1}{2}\left(\delta_{-1}+\delta_{1}\right).
          \end{align*}
          For all $z\in \complexnumbers^+$, the Cauchy transform $G_\mu$ is given by
          \begin{align*}
            G_\mu(z)&=\int\frac{1}{z-t}\, d\mu(t)
                    =\frac{1}{2} \left(\frac{1}{z+1}+ \frac{1}{z-1}\right)
            =\frac{z}{z^2-1}.
          \end{align*}
By Theorem~\hyperref[theorem:R-transform-compact-support-1]{\ref*{theorem:R-transform-compact-support}~\ref*{theorem:R-transform-compact-support-1}} the inverse $K_\mu$ of $G_\mu$ and the $R$-transform $R_\mu$ of $\mu$ exist on suitable domains and the same is true for the respective functions of $\mu\boxplus \nu$. In the following, let $z\in \complexnumbers$ always lie in the appropriate domain for the functions involved. Then, $K_\mu(z)$ is given as the solution of
          \begin{align*}
            z&=G_\mu(K_\mu(z))=\frac{K_\mu(z)}{K_\mu (z)^2-1}
\qquad\text{or, equivalently,}\qquad
            K_\mu(z)^2-\frac{K_\mu(z)}{z}=1.
          \end{align*}
          This has the two solutions
          \begin{align*}
            K_\mu(z)=\frac{1\pm\sqrt{1+4z^2}}{2z}.
          \end{align*}
          For the $R$-transform we hence find
          \begin{align*}
            R_\mu(z)&=K_\mu(z)-\frac{1}{z}=\frac{\pm \sqrt{1+4z^2}-1}{2z}.
          \end{align*}
          Since $R_\mu(0)=0$, we must have
          \begin{align*}
            R_\mu(z)=\frac{\sqrt{1+4z^2}-1}{2z}.
          \end{align*}
          Now, put $R\equalperdefinition R_{\mu\boxplus \nu}$ and $G\equalperdefinition G_{\mu\boxplus \nu}$. Then,           
          \begin{align*}
            R(z)&=R_\mu(z)+R_\mu(z)=2R_\mu(z)=\frac{\sqrt{1+4z^2}-1}{z}.
          \end{align*}
          For the inverse $K\equalperdefinition K_{\mu\boxplus \nu}$ of $G$, it follows
          \begin{align*}
            K(z)=R(z)+\frac{1}{z}=\frac{\sqrt{1+4z^2}}{z}.
          \end{align*}
          And thus $G$ is determined by
          \begin{align*}
            z=K(G(z))=\frac{\sqrt{1+4G(z)^2}}{G(z)}
\qquad\text{or}\qquad
            z^2G(z)^2=1+4G(z)^2.
          \end{align*}
          Solving for $G$ shows
          \begin{align*}
            G(z)=\frac{1}{\sqrt{z^2-4}}.
          \end{align*}
          By Theorem~\ref{theorem:R-transform-compact-support} we know this form of $z\mapsto G(z)$ only for $|z|$ large. But, since $G$ is an analytic function on $\complexnumbers^+$, this explicit form can be extended by analytic continuation to all of $\complexnumbers^+$. Hence, we can also use it for $z\downarrow \realnumbers$ and thus get the form of $\mu\boxplus \mu$ by the Stieltjes inversion formula, \hyperref[theorem:cauchy-transform-2]{\ref*{theorem:cauchy-transform}~\ref*{theorem:cauchy-transform-2}}: For all $t\in \realnumbers$,
          \begin{align*}
            \frac{d(\mu\boxplus \mu)(t)}{dt}&=-\frac{1}{\pi}\lim_{\varepsilon\downarrow 0}\imaginarypart\,\bigl( \frac{1}{\sqrt{(t+i\varepsilon)^2-4}}\bigr)
                                            =-\frac{1}{\pi}\imaginarypart \frac{1}{\sqrt{t^2-4}}
            =
              \begin{cases}
                \frac{1}{\pi}\frac{1}{\sqrt{4-t^2}},& |t|< 2\\
                0,&\text{otherwise.}
              \end{cases}
          \end{align*}
          That means  $\mu^{\boxplus 2}\equalperdefinition \mu\boxplus \mu$ is the \emph{arcsine distribution}.

          \begin{align*}
            \begin{tikzpicture}[baseline=4em]
                        \begin{axis}[xmin=-2.2, ymin=-0.025, xmax=2.2, ymax=0.6, domain=-1.1:1.1, axis lines=middle, xtick={-2,-1,0,1,2}, ytick={0.25,0.5}, yticklabels={$\frac{1}{4}$,$\frac{1}{2}$}, xticklabels={$-2$,$-1$,$0$,$1$,$2$},title={$\frac{1}{2}(\delta_{-1}+\delta_1)$}, width=14em]
                          \addplot[thick, samples=50, smooth] coordinates {(-1,0)(-1,0.5)};
                          \addplot[thick, samples=50, smooth] coordinates {(1,0)(1,0.5)};                          
          \end{axis}
            \end{tikzpicture}
            \boxplus
            \begin{tikzpicture}[baseline=4em]
                        \begin{axis}[xmin=-2.2, ymin=-0.025, xmax=2.2, ymax=0.6, domain=-1.1:1.1, axis lines=middle, xtick={-2,-1,0,1,2}, ytick={0.25,0.5}, yticklabels={$\frac{1}{4}$,$\frac{1}{2}$}, xticklabels={$-2$,$-1$,$0$,$1$,$2$},title={$\frac{1}{2}(\delta_{-1}+\delta_1)$}, width=14em]
                          \addplot[thick, samples=50, smooth] coordinates {(-1,0)(-1,0.5)};
                          \addplot[thick, samples=50, smooth] coordinates {(1,0)(1,0.5)};                          
          \end{axis}
            \end{tikzpicture}
        =
                \begin{tikzpicture}[baseline=4em]
          \begin{axis}[xmin=-2.2, ymin=-0.025, xmax=2.2, ymax=0.6, domain=-1.9999:1.9999, samples=500, ytick={0.25,0.5}, yticklabels={$\frac{1}{4}$,$\frac{1}{2}$}, axis lines=middle, title={$(-2,2) \to \realnumbers,\,x\mapsto \frac{1}{\pi\sqrt{4-t^2}}$},width=14em]  
            \addplot[thick] {(pi*((4-(x^2))^0.5))^-1};
            \addplot[dashed, samples=50, smooth] coordinates {(-2,0)(-2,0.6)};
                          \addplot[dashed, samples=50, smooth] coordinates {(2,0)(2,0.6)};           
          \end{axis}
          \end{tikzpicture}
          \end{align*}
        \end{example} 

        \begin{remark}
          \begin{enumerate}
          \item Example~\ref{example:arcsine-distribution} shows that free convolution behaves quite differently from classical convolution:
            \begin{enumerate}
            \item discrete $\boxplus$ discrete can be continuous -- in the classical case, the result is also discrete, e.g.,
              \begin{align*}
               \textstyle \frac{1}{2}(\delta_{-1}+\delta_1)\ast \frac{1}{2}(\delta_{-1}+\delta_1)=\frac{1}{4}\delta_{-2}+\frac{1}{2}\delta_{0}+\frac{1}{4}\delta_2.
              \end{align*}
            \item The classical case is easy to calculate since $\ast$ is distributive, e.g.,
              \begin{align*}
          \textstyle      {\textstyle\frac{1}{2}}(\delta_{-1}+\delta_{1})\ast {\textstyle\frac{1}{2}}(\delta_{-1}+\delta_{1})&=\textstyle\frac{1}{4}(\underset{\displaystyle =\delta_{-2}}{\underbrace{\delta_{-1}\ast\delta_{-1}}}+\underset{\displaystyle =\delta_0}{\underbrace{\delta_{-1}\ast\delta_1}}+\underset{\displaystyle =\delta_0}{\underbrace{\delta_1\ast\delta_{-1}}}+\underset{\displaystyle =\delta_2}{\underbrace{\delta_1\ast\delta_1}}).
              \end{align*}
              Since, for all $a,b\in \realnumbers$, it still holds that $\delta_a\boxplus \delta_b=\delta_{a+b}$, the above computation shows that $\boxplus$ is not distributive. It is a  non-linear operation. It is linear on the level of the $R$-transforms but the relation between $G$ and $R$ is non-linear.
            \end{enumerate}
          \item Let us generalize Example~\ref{example:arcsine-distribution} for $\mu\equalperdefinition \frac{1}{2}(\delta_{-1}+\delta_1)$ to the calculation of
            \begin{align*}
              \mu^{\boxplus n}=\underset{n\text{ times}}{\underbrace{\mu\boxplus\mu\boxplus\ldots\boxplus\mu}}
            \end{align*}
            for arbitrary $n\in \naturalnumbers$.
            Then, for $R\equalperdefinition R_{\mu^{\boxplus n}}$, $G\equalperdefinition G_{\mu^{\boxplus n}}$ and $z\in \complexnumbers$ with $|z|$ small enough:
            \begin{align*}
              R(z)=n\cdot R_\mu(z)=n\cdot\frac{\sqrt{1+4z^2}-1}{2z}.
            \end{align*}
            Writing $K\equalperdefinition K_{\mu^{\boxplus n}}$, it follows, for all $z\in \complexnumbers$ with $|z|$ sufficiently large, that we have
            \begin{align*}
              z=K(G(z))=\frac{\sqrt{1+4G(z)^2}}{2G(z)}\cdot n-\frac{n-2}{2G(z)},
            \end{align*}
            which has as solution
            \begin{align*}
              G(z)=\frac{n\cdot \sqrt{z^2-4(n-1)}-z(n-2)}{2(z^2-n^2)}.
            \end{align*}
            The density of $\mu^{\boxplus n}$, determined as
            \begin{align*}
              t\mapsto -\frac{1}{\pi}\imaginarypart\, \Bigl(\frac{n\sqrt{t^2-4(n-1)}}{2(t^2-n^2)}\Bigr)\, dt
            \end{align*}
            is, of course, positive whatever $n\in \naturalnumbers$ is. However, the same is true for all $n\in \realnumbers$ with $n\geq 1$, but not anymore for $n<1$. So, it seems that we can extend for our $\mu$ the convolution powers $\mu^{\boxplus n}$ also to real $n\geq 1$.
            This is not true in the classical case!
            In the free case, there is a probability measure $\nu\equalperdefinition \mu^{\boxplus \frac{3}{2}}$, i.e.\  such that
              $\nu^{\boxplus 2}=\mu^{\boxplus 3}$.
            However, classically, there is \emph{no} $\nu$ such that
            \begin{align*}
\nu^{\ast 2}=\mu^{\ast 3} ={\textstyle\frac{1}{8}}(\delta_{-3}+3\delta_{-1}+3\delta_1+\delta_3).
            \end{align*}
            Actually, the existence of such a convolution semigroup $(\mu^{\boxplus t})_{t\geq 1}$ is nothing special for our particular measure $\mu=\frac{1}{2}(\delta_{-1}+\delta_1)$, but is given for any compactly supported probability measure $\mu$ on $\realnumbers$.
          \end{enumerate}
        \end{remark}

        \begin{theorem}[{Nica} and {Speicher} 1996]
          \label{theorem:free-convolution-semigroup}
          Let $\mu$ be a compactly supported probability measure on $\realnumbers$. Then, there exists a semigroup $(\mu_t)_{t\geq 1}$ ($t\in\realnumbers$, $t\geq 1$) of compactly supported probability measures on $\realnumbers$ such that, for all $t\geq 1$ and $z\in \complexnumbers$,
          \begin{align*}
            R_{\mu_t}(z)=t\cdot R_\mu(z) \qquad\text{if } |z|\text{ sufficiently small}.
          \end{align*}
          In particular,
            $\mu_1=\mu$ and $\mu_{s+t}=\mu_s\boxplus \mu_t$ for all $s,t\geq 1$.
     Moreover, for all $n\in \naturalnumbers$, the maps  of the $n$-th moment $t\mapsto m_n^{\mu_t}$ and of the $n$-th cumulant $t\mapsto \kappa_n^{\mu_t}$ of $\mu$ are continuous in $t\geq 1$.
          We write $\mu_t=\colon \mu^{\boxplus t}$ for $t\geq 1$. 
        \end{theorem}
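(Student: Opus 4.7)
The plan is to define, for each $t \geq 1$, candidate cumulants $\kappa_n^{(t)} := t\,\kappa_n^\mu$ ($n \in \naturalnumbers$) and candidate moments $m_n^{(t)} := \sum_{\pi\in\setofnoncrossingpartitionsof(n)} t^{\#\pi}\kappa_\pi^\mu$ via the moment-cumulant formula, and then to exhibit $(m_n^{(t)})$ as the moment sequence of an actual compactly supported probability measure $\mu_t$ on $\realnumbers$. Once $\mu_t$ is in hand, the semigroup and continuity assertions are immediate.

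For the preliminary bookkeeping: since $\mu$ is compactly supported, Proposition~\ref{proposition:moments-cumulants-exponentially-bounded} shows that $(\kappa_n^\mu)$ and hence $(\kappa_n^{(t)})$ are exponentially bounded (the constant blows up only linearly in $t$), and applying that proposition in the other direction gives that $(m_n^{(t)})$ is exponentially bounded as well. By Fact~\hyperref[facts:analytic-distribution-2]{\ref*{facts:analytic-distribution}~\ref*{facts:analytic-distribution-2}}, it is then enough to produce a single self-adjoint element in some $\ast$-probability space whose moments coincide with $m_n^{(t)}$: the compactly supported probability measure $\mu_t$ is then automatic and unique.

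The main obstacle is therefore positivity for non-integer $t$ (for $t = k \in \naturalnumbers$ one simply takes $\mu_k := \mu^{\boxplus k}$, which exists by Theorem~\ref{theorem:free-convolution}). My approach is Voiculescu's compression trick. Enlarge the ambient space to a tracial $\ast$-probability space $(\mathcal A, \varphi)$ which contains a self-adjoint $x$ with $\mu_x = \mu$ together with, freely independent from $x$, a projection $p \in \mathcal A$ with $\varphi(p) = 1/t \in (0, 1]$; such a space is built by free-producting $(\complexnumbers\langle x\rangle, \varphi_x)$ with an abelian tracial algebra such as $L^\infty([0,1],\lambda)$ and taking $p$ to be the characteristic function of an interval of length $1/t$. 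Now form the compressed tracial $\ast$-probability space $(p\mathcal A p,\, t\varphi|_{p\mathcal A p})$ and set $y := t\cdot pxp$. The crucial identity to prove is
\[
\tilde\kappa_n(y, \ldots, y) = t\,\kappa_n^\mu \qquad\text{for every } n\in\naturalnumbers,
\]
where $\tilde\kappa_n$ denotes the free cumulants computed with respect to $t\varphi|_{p\mathcal A p}$. I would establish this by expanding $\tilde\varphi(y^n) = t^{n+1}\,\varphi(pxpx\cdots xp)$ (a length-$(2n+1)$ word), applying the moment-cumulant formula in $(\mathcal A,\varphi)$, invoking vanishing of mixed cumulants between $x$ and $p$ (Theorem~\ref{theorem:freeness-vanishing-mixed-cumulants-random-variables}) to restrict the sum to non-crossing partitions whose blocks live entirely inside the set of $p$-positions or entirely inside the set of $x$-positions, and then reassembling the outcome via Theorem~\ref{theorem:cumulant-product-rule} and the partition embedding $\hat{\cdot}$ of Notation~\ref{notation:cumulant-product-rule} to produce a sum over $\setofnoncrossingpartitionsof(n)$. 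The combinatorial factors from the $p$-blocks and the powers of $t$ conspire precisely with the prefactor $t^{n+1}$ to yield the claimed formula.

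Once existence of $\mu_t$ is secured, the remainder is immediate. By Theorem~\hyperref[theorem:R-transform-compact-support-2]{\ref*{theorem:R-transform-compact-support}~\ref*{theorem:R-transform-compact-support-2}}, $R_{\mu_t}(z) = \sum_{n\geq 1} \kappa_n^{(t)} z^{n-1} = t\cdot R_\mu(z)$ for $|z|$ small, so additivity of the $R$-transform under $\boxplus$ (Theorem~\hyperref[theorem:R-transform-compact-support-3]{\ref*{theorem:R-transform-compact-support}~\ref*{theorem:R-transform-compact-support-3}}) yields $R_{\mu_s\boxplus\mu_t} = (s+t)R_\mu = R_{\mu_{s+t}}$; since the $R$-transform determines a compactly supported probability measure uniquely, $\mu_s \boxplus \mu_t = \mu_{s+t}$. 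Continuity is trivial: $t \mapsto \kappa_n^{(t)} = t\kappa_n^\mu$ is linear and $t \mapsto m_n^{(t)} = \sum_{\pi\in\setofnoncrossingpartitionsof(n)} t^{\#\pi}\kappa_\pi^\mu$ is a polynomial in $t$ of degree at most $n$. The heart of the proof—and its principal obstacle—is therefore the compression identity $\tilde\kappa_n(y,\ldots,y) = t\,\kappa_n^\mu$.
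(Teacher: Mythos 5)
Your proposal is correct and follows essentially the same route as the paper: realize $\mu_t$ by compressing a realization $x$ of $\mu$ with a free projection $p$ of trace $1/t$, rescale by $t$, and verify $\kappa_n^{x_t}=t\,\kappa_n^x$ by expanding $t^{n+1}\varphi(xpxp\cdots xp)$ via the moment--cumulant formula, vanishing of mixed cumulants, and the Kreweras-complement identity $\#\pi+\#K(\pi)=n+1$. The remaining steps (exponential boundedness, $R$-transform additivity, polynomial dependence of the moments on $t$) match the paper's as well.
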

        \begin{proof}
          Let $t\geq 1$ be arbitrary. We will construct $\mu_t$ from a concrete realization. We have to find self-adjoint random variables $x_t$ in some $\ast$-probability space $(\mathcal A,\varphi)$ such that $\mu_{x_t}=\mu_t$, i.e., such that
          $ R_{\mu_{x_t}}=t\cdot R_{\mu_x}$.
          By Theorem~\hyperref[theorem:R-transform-compact-support-2]{\ref*{theorem:R-transform-compact-support}~\ref*{theorem:R-transform-compact-support-2}}, the latter is equivalent to
          \begin{align*}
            \kappa_n(x_t,x_t,\ldots,x_t)=t\cdot \kappa_n(x,x,\ldots,x)
          \end{align*}
          holding for all $n\in \naturalnumbers$; there $x=x_1$ has distribution $\mu_x=\mu$.
          \par
          We claim that we can get $x_t$ by compressing $x$ by a free projection $p$ with trace $\frac{1}{t}$. Start from a random variable $x=x^\ast$ in some $\ast$-probability space $(\mathcal A,\varphi)$ such that $\mu_x=\mu$, which we can find by Fact~\hyperref[facts:analytic-distribution-3]{\ref*{facts:analytic-distribution}~\ref*{facts:analytic-distribution-3}}. Then, by a free product construction, we can assume that there exists a projection $p\in \mathcal A$ (meaning $p^\ast =p=p^2$) such that $x$ and $p$ are free in $(\mathcal A,\varphi)$  and such that, for all $n\in \naturalnumbers$,
            $\varphi(p^n)=\varphi(p)={\textstyle\frac{1}{t}}$.
          Since the free product preserves traciality (see Assignment~\hyperref[assignment-6]{6}, Exercise~4), we can assume that $\varphi$ is a trace (as it is a trace restricted to the $*$-algebras $\algebrageneratedby(1,x)$ and $\algebrageneratedby(1,p))$.
          \par
          Now, we consider $(\mathcal A_t,\varphi_t)$, where
          \begin{align*}
            \mathcal A_t\equalperdefinition \{pap \mid a\in \mathcal A\}
\subseteq \mathcal A
\quad\text{and}\quad
            \varphi_t\equalperdefinition \frac{1}{\varphi(p)}\left.\varphi\right|_{\mathcal A_t},
          \end{align*}
          meaning especially, for all $a\in \mathcal A$,
          \begin{align*}
            \varphi_t(pap)=\frac{\varphi(pap)}{\varphi(p)}.
          \end{align*}
          Then, $(\mathcal A_t,\varphi_t)$ is a $\ast$-probability space with unit $1_{\mathcal A_t}=p$.
          \par
          Consider now the self-adjoint random variable
            $x_t\equalperdefinition p(tx)p$
          in $\mathcal A_t$. For all $n\in \naturalnumbers$ we have (by using $p^2=p$ and traciality in the last step)
          \begin{align*}
            \varphi_t(x^n_t)=t \varphi(x_t^n)
                            =t^{n+1} \varphi((pxp)^n)
                            =t^{n+1}\varphi(xpxp\ldots xp),
          \end{align*}
which we can then calculate 
by the moment-cumulant formula (Remark~\ref{remark:moment-cumulant-formula}) as
  \begin{align*}
t^{n+1}\varphi(xpxp\ldots xp)
                            =t^{n+1}\sum_{
                              \underset{\textstyle\iff \sigma\leq K(\pi)}{\textstyle\underbrace{\substack{\pi\in \setofnoncrossingpartitionsof(1,3,\ldots,2n-1)\\
            \sigma\in \setofnoncrossingpartitionsof(2,4,\ldots,2n)\\
            \pi\cup\sigma\in \setofnoncrossingpartitionsof(2n)}}}
            } \kappa_\pi(x,x,\ldots,x)\kappa_\sigma(p,p,\ldots,p)
          \end{align*}

        For $\pi\in \setofnoncrossingpartitionsof(1,3,\ldots,2n-1)$ and $\sigma\in \setofnoncrossingpartitionsof(2,4,\ldots,2n)$ the condition $\pi\cup \sigma\in \setofnoncrossingpartitionsof(2n)$ is equivalent to $\sigma\leq K(\pi)$, where $K(\pi)$, defined as the join of all such $\sigma$, is the \emph{Kreweras complement} of $\pi$. By Assignment~\hyperref[assignment-7]{7}, Exercise~2(4), $\#\pi+\#K(\pi)=n+1$ in this case. Hence we can continue, by again using the moment-cumulant formula, 
          \begin{align*}
            \varphi_t(x^n_t)&=t^{n+1}\sum_{\pi\in \setofnoncrossingpartitionsof(n)}\kappa_\pi^x \cdot\hspace{-2.5em} \underset{
              \begin{aligned}
                &=\varphi_{K(\pi)}(p,p,\ldots,p)\\
                &=\left({\textstyle\frac{1}{t}}\right)^{\# K(\pi)}
              \end{aligned}
                  }{\underbrace{\sum_{\sigma\leq K(\pi)}\kappa_\sigma ^p}}
                            &=\sum_{\pi\in \setofnoncrossingpartitionsof(n)}\kappa_\pi^x \cdot \underset{\displaystyle =t^{\# \pi}}{\underbrace{t^{n+1-\# K(\pi)}}}
                              &=\sum_{\pi\in \setofnoncrossingpartitionsof(n)}(t\kappa^x)_\pi.
          \end{align*}
          Thus, if  we denote by $\kappa_n^{x_t}$ the $n$-th cumulant of $x_t$ with respect to $(\mathcal A_t,\varphi_t)$ (and \emph{not} with respect to $(\mathcal A,\varphi)$), then we have just proved $\kappa_n^{x_t}=t\kappa_n^x$. By what we observed initially, that proves the existence claim. The continuity is clear from the construction.
        \end{proof}

\newpage

\section{Free Convolution of Arbitrary Probability Measures and the Subordination Function}
\begin{remark}
  \label{remark:subordination-a}
 We now want to extend our theory of free convolution to arbitrary probability measures.
  \begin{enumerate}
    \item
In order to define $\mu\boxplus \nu$ for such  $\mu$ and $\nu$ there are two approaches:
  \begin{enumerate}
  \item\label{remark:subordination-1-1} Realizing $\mu$ and $\nu$ as the distributions of unbounded operators $x$ and $y$ on Hilbert spaces (\enquote{affiliated} to nice von Neumann algebras) and defining $\mu\boxplus \nu$ as the distribution of the unbounded operator $x+y$. 
    This was done by {Bercovici} and {Voiculescu} in 1993. However,
    one has to deal with technical issues of unbounded operators.
  \item\label{remark:subordination-1-2} Trying to extend the approach from the last chapter, via defining the $R$-transforms $R_\mu$ and $R_\nu$ of $\mu$ and $\nu$ implicitly by the equations
    \begin{align*}
      G_\mu[R_\mu(z)+{\textstyle\frac{1}{z}}]=z \qquad\text{and}\qquad G_\nu[R_\nu(z)+{\textstyle\frac{1}{z}}]=z
    \end{align*}
    from Theorem~\hyperref[theorem:R-transform-compact-support-2]{\ref*{theorem:R-transform-compact-support}~\ref*{theorem:R-transform-compact-support-2}}
    (note that $G_\mu$ and $G_\nu$ are well-defined even if $\mu$ and $\nu$ do not have compact support)  and then defining $\mu\boxplus \nu$ implicitly via $G_{\mu\boxplus \nu}$ (and Theorem~\hyperref[theorem:cauchy-transform-2]{\ref*{theorem:cauchy-transform}~\ref*{theorem:cauchy-transform-2}}), which in turn is defined implicitly via $R_{\mu \boxplus \nu}$, by the equations
    \begin{align*}
      R_{\mu\boxplus \nu}(z)=R_\mu(z)+R_\nu(z)
\qquad
\text{and}\qquad   
      G_{\mu\boxplus \nu}[R_{\mu\boxplus \nu}(z)+{\textstyle\frac{1}{z}}]=z.
    \end{align*}    
    This was done  about 2005 independently by {Chistyakov and G\"otze} and by {Belinschi and Bercovici}.
    However, instead of the $R$-transform one describes the theory in terms of subordination functions $\omega_1,\omega_2$, which have better analytic properties.
    \end{enumerate}
  \item\label{remark:subordination-2} Let us first rewrite the $R$-transform description into a subordination form on a formal level, treating the functions as formal power or Laurent series in the indeterminate $z$.
    We define
    \begin{align*}
      \omega_1(z)\equalperdefinition z-R_\nu(G_{\mu\boxplus \nu}(z)),\qquad
      \omega_2(z)\equalperdefinition z-R_\mu(G_{\mu\boxplus \nu}(z)).
    \end{align*}
    Then we have
    \begin{align*}
      G_\mu(\omega_1(z))&=G_\mu[\displaystyle z-\hspace{-8.5em}\underset{\hspace{8.5em}\displaystyle =  \underset{\hspace{0em}\displaystyle =z-\frac{1}{G_{\mu\boxplus \nu}(z)}}{\displaystyle \underbrace{R_{\mu\boxplus \nu}(G_{\mu\boxplus \nu}(z))}}\displaystyle-R_\mu(G_{\mu\boxplus \nu}(z))}{\displaystyle\underbrace{R_\nu(G_{\mu\boxplus \nu}(z))}}\hspace{-8.5em}]\\
                        &=G_\mu[R_\mu(G_{\mu\boxplus \nu}(z))+{\textstyle\frac{1}{G_{\mu\boxplus \nu}(z)}}]\\
      &=G_{\mu\boxplus \nu}(z).
    \end{align*}
    Hence, $G_{\mu\boxplus \nu}$ is subordinated to $G_\mu$ via the subordination function $\omega_1$:
      $G_{\mu\boxplus \nu}(z)=G_\mu(\omega_1(z))$,
    and, in the same way, to $G_\nu$ via $\omega_2$:
      $G_{\mu\boxplus \nu}(z)=G_\nu(\omega_2(z))$.

    The series $\omega_1$ and $\omega_2$ have been defined here via $R_\nu$ and $R_\mu$, respectively. So, this does not seem to give anything new. However, we can reformulate the defining equations for $\omega_1$ and $\omega_2$ in a form not invoking the $R$-transforms. Namely, 
    \begin{align*}
      \omega_1(z)+\omega_2(z)&=z-R_\nu(G_{\mu\boxplus \nu}(z))+z-R_\mu(G_{\mu\boxplus \nu}(z))
      =z+\hspace{0em}\underset{\hspace{0em}\displaystyle =\frac{1}{G_{\mu\boxplus \nu}(z)}}{\underbrace{z-R_{\mu\boxplus \nu}(G_{\mu\boxplus \nu}(z))}}\hspace{0em}.
    \end{align*}
    Thus, by $G_{\mu\boxplus \nu}(z)=G_\mu(\omega_1(z))=G_\nu(\omega_2(z))$, we have
    \begin{align*}
      \omega_1(z)+\omega_2(z)-\frac{1}{G_\mu(\omega_1(z))}=z\qquad
      \text{and}\qquad\omega_1(z)+\omega_2(z)-\frac{1}{G_\nu(\omega_2(z))}=z.
    \end{align*}
    If we put
    \begin{align*}
      H_\mu(z)\equalperdefinition\frac{1}{G_\mu(z)}-z \qquad \text{and} \qquad H_\nu(z)\equalperdefinition\frac{1}{G_\nu(z)}-z,
    \end{align*}
    then we get
    \begin{align*}
      \omega_1(z)&=z+\frac{1}{G_\nu(\omega_2(z))}-\omega_2(z)=z+H_\nu(\omega_2(z))\\   \omega_2(z)&=z+\frac{1}{G_\mu(\omega_1(z))}-\omega_1(z)=z+H_\mu(\omega_1(z)).
    \end{align*}
    Thus, by inserting one into the other, we conclude
    \begin{align}
      \label{eq:subordination-formula}
      \omega_1(z)&=z+H_\nu[z+H_\mu(\omega_1(z))].
    \end{align}
    This is a fixed point equation for $\omega_1$. It only involves the variants $H_\nu$ or $H_\mu$ of $G_\nu$ and $G_\mu$, respectively, but \emph{not} the $R$-transforms $R_\mu$ nor $R_\nu$. 

It will turn out that Equation~\eqref{eq:subordination-formula} is much better behaved analytically than the equation $G[R(z)+\frac{1}{z}]$ for the $R$-transform.
    Let us check this for an example before we address the general theory.
  \end{enumerate}
\end{remark}

\begin{example}
  We reconsider Example~\ref{example:arcsine-distribution} from the subordination perspective, again in formal terms. The Cauchy transform of the measure
  \begin{align*}
    \mu=\nu=\frac{1}{2}(\delta_{-1}+\delta_1)
 \qquad \text{is given by} \qquad
    G_\mu(z)=G_\nu(z)=\frac{z}{z^2-1}.
  \end{align*}
  Hence
  \begin{align*}
    H_\mu(z)=\frac{1}{G_\mu(z)}-z=\frac{z^2-1}{z}-z=-\frac{1}{z}.
  \end{align*}
The above says that we can write $G\equalperdefinition G_{\mu\boxplus \mu}$ as
$ G(z)=G_\mu(\omega(z))$,
 where the subordination function $\omega_1=\omega_2=\omega$ is determined by Equation~\eqref{eq:subordination-formula}  as
  \begin{align*}
    \omega(z)=z-\frac{1}{z-\frac{1}{\omega(z)}},
\qquad\text{i.e.,}\qquad
    z-\omega(z)=\frac{1}{z-\frac{1}{\omega(z)}}.
  \end{align*}
  This identity implies
  \begin{align*}
    1= (z-\omega(z))(z-{\textstyle\frac{1}{\omega(z)}})=z^2-z(\omega(z)+{\textstyle\frac{1}{\omega(z)}})+1.
  \end{align*}
  Thus,
  \begin{align*}
    \omega(z)+\frac{1}{\omega(z)}=z,
  \qquad\text{which has the solution}\qquad
    \omega(z)=\frac{z+\sqrt{z^2-4}}{2},
  \end{align*}
  where we have chosen the one which behaves like $\omega(z)\sim z$ for $|z|\uparrow \infty$.
  Thus,
  \begin{align*}
    G(z)&=G_\mu(\omega(z))
        =\frac{\omega(z)}{\omega(z)^2-1}
        =\frac{1}{\omega(z)-\frac{1}{\omega(z)}}
        =\frac{1}{2\omega(z)-z}
    =\frac{1}{\sqrt{z^2-4}},
  \end{align*}
  which agrees with our result from Examples~\ref{example:arcsine-distribution} (and gives the arcsine distribution for $\mu\boxplus \mu$).
  \par
  But the question remains: what is now the advantage of this subordination function $\omega(z)=\frac{1}{2}(z+\sqrt{z^2-4})$ over the $R$-transform  $R(z)=\frac{1}{z}(\sqrt{1+4z^2}-1)$ from \ref{example:arcsine-distribution}?
  \par
  Note that (in the compactly supported case) our formal calculations could be made rigorous on suitables domains in $\complexnumbers^+$, but $\omega$ and $R$ behave differently with respect to extension to all of $\complexnumbers^+$ or $\complexnumbers^-$:
  \begin{enumerate}
  \item The subordination function $\omega(z)=\frac{1}{2}(z+\sqrt{z^2-4})$, which is a priori only defined for large $z$, can be extended by this formula to $\complexnumbers^+$ by choosing branches for $\sqrt{z^2-4}=\sqrt{z-2}\cdot \sqrt{z+2}$ without cuts in $\complexnumbers^+$. Hence, $\omega:\complexnumbers^+\to \complexnumbers^+$ is a nice analytic object (like $G$).
  \item In contrast, the $R$-transform $R(z)=\frac{1}{z}(\sqrt{1+4z^2}-1)$, which is a priori only defined for small $z$, can by this formula not be extended to $\complexnumbers^+$ nor to $\complexnumbers^-$ since there exists no branch of $\sqrt{1+4z^2}$ in any neighborhood of $z=\frac{i}{2}$ or of $z=-\frac{i}{2}$.
  \end{enumerate}
\end{example}

\begin{remark}
  What we observe in this example is true in general:
  \begin{enumerate}
  \item We can still define $R$-transforms (not necessarily in balls anymore, but in wedge-like regions in $\complexnumbers^-$), but they cannot be analytically extended to all of $\complexnumbers^-$.
  \item The subordination functions, defined by fixed point equations, can always be extended to all of $\complexnumbers ^+$.
  \end{enumerate}
\end{remark}

\begin{definition}
  For $\alpha,\beta>0$ we define the \emph{truncated Stolz angle}
  \begin{align*}
    \Gamma_{\alpha,\beta}\equalperdefinition \{z=x+iy\in \complexnumbers ^+\mid \alpha y>|x|, y>\beta\}
  \end{align*}
  \begin{center}
    \begin{tikzpicture}
\path[pattern= north east lines, pattern color=lightgray] ($(0em,0em)+(130:9em)$) -- ($(0em,0em)+(130:4em)$) -- ($(0em,0em)+(50:4em)$) -- ($(0em,0em)+(50:9em)$) -- cycle;
     \draw[->] (-8em,0em) -- (8em,0em);
     \draw[->] (0em,-0.5em) -- (0em,8em);
     \draw[dashed] (0em,0em) -- ($(0em,0em)+ (50:4em)$);
     \draw[dashed] (0em,0em) -- ($(0em,0em)+ (130:4em)$);
     \draw[dashed] ($(0em,0em)+ (50:4em)$) -- ++(5em,0);     
     \draw[dashed] ($(0em,0em)+ (130:4em)$) -- ++(-5em,0);     
     \draw ($(0em,0em)+ (50:4em)$) -- node [pos=0.75,right] {$\alpha y=|x|$}  ++(50:5em);
     \draw ($(0em,0em)+ (130:4em)$) --++(130:5em);
     \draw ($(0em,0em)+ (50:4em)$) -- node [pos=1.4,left, below] {$y=\beta$} ($(0em,0em)+ (130:4em)$);
     \node at (2em,5em) {$\Gamma_{\alpha,\beta}$};
     \draw[dotted,->] (6em,0em) arc (0:50:6em);
     \node at (3.5em,1em) {$\tan^{-1}(\frac{1}{\alpha})$};
  \end{tikzpicture} 
  \end{center}
  and the \emph{wedge}
  \begin{align*}
    \Delta_{\alpha,\beta}\equalperdefinition \{z\in \complexnumbers^\times \mid z^{-1}\in \Gamma_{\alpha,\beta}\}. 
  \end{align*}
  \begin{center}
    \begin{tikzpicture}
    \draw[->] (-8em,0em) -- (8em,0em);
    \draw[->] (0em,0.5em) -- (0em,-8em);
    \draw[pattern=north east lines, pattern color =lightgray] (0em,0em) --++(230:7em) arc (230:310:7em) node[pos=0.2, below, yshift=-0.25em] {$|z|=\frac{1}{\beta}$} -- cycle;
    \node at (1.7em,-5.25em) {$\Delta_{\alpha,\beta}$};
     \draw[dotted,->] (-6em,0em) arc (180:230:6em);
     \node at (-3.5em,-1em) {$\tan^{-1}(\frac{1}{\alpha})$};    
  \end{tikzpicture} 
  \end{center}  
\end{definition}

\begin{theorem}
\label{theorem:r-transform-general-case}
  Let $\mu$ be a probability measure on $\realnumbers$ with Cauchy transform $G$ and put $F\equalperdefinition\frac{1}{G}$. For every $\alpha>0$ there exists $\beta>0$ such that
  \begin{align*}
    R(z)\equalperdefinition F^{<-1>}({\textstyle\frac{1}{z}})-{\textstyle\frac{1}{z}}
  \end{align*}
  is defined for $z\in \Delta_{\alpha,\beta}$ and such that
  \begin{enumerate}
  \item\label{theorem:r-transform-general-case-1} we have for all $z\in \Delta_{\alpha,\beta}$   $$G[R(z)+\frac{1}{z}]=z;$$
  \item\label{theorem:r-transform-general-case-2} 
we have for all $z\in \Gamma_{\alpha,\beta}$ 
$$R[G(z)]+\frac{1}{G(z)}=z.$$
  \end{enumerate}
\end{theorem}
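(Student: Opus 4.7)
The plan is to reduce everything to showing that $F\equalperdefinition 1/G$ has an analytic right-inverse on a truncated Stolz angle; the two identities then drop out algebraically. The real work is the analytic existence of $F^{<-1>}$.

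First I would collect the properties of $F$ available from Theorem~\ref{theorem:cauchy-transform}. Since $G:\complexnumbers^+\to\complexnumbers^-$ is analytic, $F=1/G:\complexnumbers^+\to\complexnumbers^+$ is analytic, and the normalization $\lim_{y\to\infty}iyG(iy)=1$ translates into $F(iy)/(iy)\to 1$ as $y\to\infty$. Applying the Nevanlinna representation mentioned at the end of the proof of Theorem~\ref{theorem:cauchy-transform} (together with the explicit asymptotic), $F$ admits a representation
\begin{align*}
F(z)=z+b+\int_\realnumbers \frac{1+tz}{t-z}\,d\sigma(t)
\end{align*}
for some $b\in\realnumbers$ and some finite positive Borel measure $\sigma$ on $\realnumbers$.

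The main analytic step is to upgrade the asymptotic $F(iy)/(iy)\to 1$ to a uniform statement on Stolz angles: for any $\alpha>0$ one has $F(z)/z\to 1$ as $|z|\to\infty$ with $z\in\Gamma_{\alpha,\infty}$. This follows from the Nevanlinna representation by dominated convergence applied to the integrand $\frac{1+tz}{t-z}$ (whose absolute value is controlled by $(1+t^2)/|t-z|$ and hence by the finite measure $(1+t^2)\,d\sigma(t)/|\Im z|^2$ away from zero in Stolz angles). In particular, for any $\varepsilon>0$ one can choose $\beta'$ so large that $|F(z)-z|<\varepsilon|z|$ on $\Gamma_{\alpha',\beta'}$, for a fixed $\alpha'>\alpha$.

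Next I would use this closeness-to-identity to produce the inverse. Fix $\alpha>0$ and pick $\alpha'>\alpha$, then $\beta'$ so large that $|F(z)-z|$ is a sufficiently small fraction of $|z|$ on $\Gamma_{\alpha',\beta'}$. A Rouch\'e-type argument, comparing $z\mapsto F(z)-w$ with $z\mapsto z-w$ on the boundary of a suitable truncated subregion of $\Gamma_{\alpha',\beta'}$, shows that for each $w$ in a (possibly smaller) Stolz angle $\Gamma_{\alpha,\beta}$ there is exactly one preimage in $\Gamma_{\alpha',\beta'}$. This simultaneously yields injectivity of $F$ on $\Gamma_{\alpha',\beta'}$ and the inclusion $\Gamma_{\alpha,\beta}\subseteq F(\Gamma_{\alpha',\beta'})$, provided $\beta$ is large enough; shrinking $\beta$ further if needed we can also arrange $F(\Gamma_{\alpha,\beta})\subseteq \Gamma_{\alpha,\beta}$. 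This is the main obstacle, as one must choose parameters carefully and control $F$ on the (unbounded) boundary of the region, which is precisely where the uniform non-tangential asymptotic is essential. We obtain an analytic function $F^{<-1>}:\Gamma_{\alpha,\beta}\to\Gamma_{\alpha',\beta'}$.

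Finally I would define $R$ and verify the identities. For $z\in\Delta_{\alpha,\beta}$, $1/z\in\Gamma_{\alpha,\beta}$, so
\begin{align*}
R(z)\equalperdefinition F^{<-1>}({\textstyle\frac{1}{z}})-{\textstyle\frac{1}{z}}
\end{align*}
is well-defined and analytic on $\Delta_{\alpha,\beta}$. Identity~\ref{theorem:r-transform-general-case-1} is then immediate:
\begin{align*}
G\bigl[R(z)+{\textstyle\frac{1}{z}}\bigr]=G\bigl[F^{<-1>}({\textstyle\frac{1}{z}})\bigr]=\frac{1}{F[F^{<-1>}(1/z)]}=z.
\end{align*}
For identity~\ref{theorem:r-transform-general-case-2}, let $z\in\Gamma_{\alpha,\beta}$. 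Then $F(z)\in\Gamma_{\alpha,\beta}$ by the invariance arranged above, so $1/G(z)=F(z)$ lies in the domain of $F^{<-1>}$ and
\begin{align*}
R[G(z)]+\frac{1}{G(z)}=F^{<-1>}\bigl(F(z)\bigr)-F(z)+F(z)=z.
\end{align*}
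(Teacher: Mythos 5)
Your overall plan is the one the paper intends by its one-line deferral to the compactly supported case: establish the non-tangential asymptotics $F(z)/z\to 1$, run a Rouch\'e comparison against the identity map on truncated Stolz angles (as in Theorem~\ref{theorem:R-transform-compact-support}), and then verify the two identities algebraically --- and your algebraic verification at the end is correct. The one step that would fail as written is the dominated-convergence argument. First, the inequality $\left|\frac{1+tz}{t-z}\right|\leq \frac{1+t^2}{|t-z|}$ is false; for instance at $t=1$, $z=2i$ the left side equals $1$ while the right side is $2/\sqrt 5$. More seriously, your proposed dominating function $(1+t^2)/|\imaginarypart z|^2$ requires $\int(1+t^2)\,d\sigma(t)<\infty$, but the Nevanlinna measure $\sigma$ of a Pick function is only guaranteed to be \emph{finite}; it need not have a finite second moment, so $(1+t^2)\,d\sigma$ can be an infinite measure. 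The correct route through the representation is to work with imaginary parts: one computes
\begin{align*}
\imaginarypart\Bigl(\frac{1+tz}{t-z}\Bigr)=\frac{(1+t^2)\,\imaginarypart z}{|t-z|^2},
\end{align*}
and on the truncated Stolz angle $\Gamma_{\alpha,\beta}$ with $\beta\geq 1$ the quotient $\frac{1+t^2}{|t-z|^2}$ is bounded by $\max(4,\,1+4\alpha^2)$ uniformly in $t\in\realnumbers$ and $z\in\Gamma_{\alpha,\beta}$ (split at $|t|=2\alpha\,\imaginarypart z$: on the first piece use $|t-z|\geq\imaginarypart z$, on the second use $|t-\realpart z|\geq|t|/2$). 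Dominated convergence against the finite measure $\sigma$ then gives $\imaginarypart F(z)/\imaginarypart z\to 1$ uniformly as $|z|\to\infty$ in $\Gamma_{\alpha,\beta}$; together with Lemma~\ref{lemma:subordination-functions} and the Julia--Carath\'eodory theorem at $\infty$ for self-maps of $\complexnumbers^+$ this upgrades to the full $F(z)=z+o(|z|)$ non-tangentially. (In fact, Julia--Carath\'eodory together with $\lim_{y\to\infty}F(iy)/(iy)=1$ yields this directly, bypassing the Nevanlinna formula altogether.) With that asymptotic in hand, your Rouch\'e argument, the construction of $F^{<-1>}$ and the choice of parameters to arrange $F(\Gamma_{\alpha,\beta})\subseteq\Gamma_{\alpha,\beta}$, and the concluding algebra are all sound.
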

\begin{proof}
  The proof is similar to the case of compact support 
(Theorem~\ref{theorem:R-transform-compact-support}), again using Rouch\'e's Theorem to show the existence of $F^{<-1>}$ in a suitable domain.
\end{proof}
\begin{remark}
  \label{remark:subordination-problem}
  \begin{enumerate}
  \item\label{remark:subordination-problem-1} We will now investigate for fixed but arbitrary $z\in \complexnumbers^+$ the subordination fixed point equation
    \begin{align}
      \label{eq:remark-subordination-problem-1}
      \omega_1(z)=z+H_\nu[z+H_\mu(\omega_1(z))]
    \end{align}
    and want to see that this always has a unique solution $\omega_1(z)\in \complexnumbers^+$.
    Note that for given $z\in \complexnumbers^+$, Equation~\eqref{eq:remark-subordination-problem-1} is the fixed point problem for the $z$-dependent mapping
    \begin{align}
      \label{eq:remark-subordination-problem-2}
      w\mapsto z+H_\nu[z+H_\mu(w)].
    \end{align}
    The naive approach for solving a problem like \eqref{eq:remark-subordination-problem-1} would be to iterate the maps from \eqref{eq:remark-subordination-problem-2} and hope that this converges to the solution. Surprisingly, this works! That relies on the fact that the mappings in  \eqref{eq:remark-subordination-problem-2} are analytic self-mappings of $\complexnumbers^+$ for whose iteration strong results are available. The two main ideas are:
    \begin{enumerate}
    \item\label{remark:subordination-problem-1-1} In the domains where the $R$-transforms are defined, say for $z\in \Omega$, we already know that a fixed point of \eqref{eq:remark-subordination-problem-1} exists. An application of the Schwarz lemma shows then that the iterations \eqref{eq:remark-subordination-problem-2} converge to this fixed point.
      \item\label{remark:subordination-problem-1-2} Once convergence for $z\in \Omega$ has been established, Vitali's Theorem shows that the iterations  \eqref{eq:remark-subordination-problem-2} actually converge for all $z\in \complexnumbers^+$. The limits $\omega_1(z)$ are then the unique fixed points of \eqref{eq:remark-subordination-problem-1}.
      \end{enumerate}
      \item\label{remark:subordination-problem-2} For the use of the Schwarz Lemma in Part~\hyperref[remark:subordination-problem-1-1]{\ref*{remark:subordination-problem-1}~\ref*{remark:subordination-problem-1-1}} we should exclude that the map \eqref{eq:remark-subordination-problem-2} is an automorphism of $\complexnumbers^+$. (Actually, we should first check that it is a mapping from $\complexnumbers^+$ to $\complexnumbers^+$.) For this we have to exclude the case that $\mu$ or $\nu$ is of the form $\delta_a$ for some $a\in \realnumbers$. But the latter case can be treated directly.
  \end{enumerate}
\end{remark}

\begin{notation}
  Let $\mu$ be a probability measure on $\realnumbers$ with Cauchy transform $G_\mu$. We put for all $z\in \complexnumbers^+$
  \begin{align*}
    F_\mu(z)\equalperdefinition \frac{1}{G_\mu(z)}
\qquad
  \text{and}\qquad
    H_\mu(z)\equalperdefinition F_\mu(z)-z=\frac{1}{G_\mu(z)}-z.
  \end{align*}
  Since $G_\mu:\complexnumbers^+\to \complexnumbers^-$, we have $F_\mu: \complexnumbers^+\to \complexnumbers^+$.
\end{notation}

\begin{lemma}
  \label{lemma:subordination-functions}
The functions $F_\mu$ and $H_\mu$ have the following properties.
  \begin{enumerate}
  \item\label{lemma:subordination-functions-1} For all $z\in \complexnumbers^+$,
    \begin{align*}
      \imaginarypart(z)\leq \imaginarypart (F_\mu(z)),
    \end{align*}
    with equality holding somewhere only if $\mu$ is a Dirac measure.
  \item\label{lemma:subordination-functions-2} In other words: If $\mu$ is not a Dirac measure, then
    \begin{align*}
      H_\mu: \, \complexnumbers^+\to \complexnumbers^+.
    \end{align*}    
  \item If $\mu=\delta_a$ for some $a\in\realnumbers$, then $H_\mu(z)=-a$ for all $z\in \complexnumbers^+$.
  \end{enumerate}
\end{lemma}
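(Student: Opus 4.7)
The plan is to verify the three parts by direct computation from the integral representation of the Cauchy transform, with Cauchy--Schwarz (equivalently Jensen's inequality) as the single analytic input.

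I would start with part (3), which is a direct computation: for $\mu = \delta_a$ we have $G_{\delta_a}(z) = \int (z-t)^{-1}\,d\delta_a(t) = (z-a)^{-1}$, hence $F_{\delta_a}(z) = z - a$ and thus $H_{\delta_a}(z) = -a$, which is a real constant. This disposes of (3) and also shows that in the Dirac case $H_\mu$ does \emph{not} map $\mathbb{C}^+$ into $\mathbb{C}^+$, explaining why that case must be excluded in (2).

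For part (1), fix $z = x+iy$ with $y > 0$. The identity
\begin{align*}
\mathrm{Im}(F_\mu(z)) = \mathrm{Im}\bigl(1/G_\mu(z)\bigr) = -\frac{\mathrm{Im}(G_\mu(z))}{|G_\mu(z)|^2}
\end{align*}
combined with $\mathrm{Im}(G_\mu(z)) = -y \int \bigl((x-t)^2 + y^2\bigr)^{-1}\,d\mu(t)$ reduces the claim $\mathrm{Im}(F_\mu(z)) \geq y$ to
\begin{align*}
|G_\mu(z)|^2 \;\leq\; \int \frac{1}{(x-t)^2+y^2}\,d\mu(t) \;=\; \int \frac{1}{|z-t|^2}\,d\mu(t).
\end{align*}
This is exactly the Cauchy--Schwarz inequality applied to $t \mapsto (z-t)^{-1}$ against the constant function $1$ in $L^2(\mu)$, and equality holds iff $t \mapsto (z-t)^{-1}$ is constant $\mu$-almost everywhere, i.e.\ iff $\mu$ is a Dirac measure. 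This gives both the inequality and the characterization of equality.

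For part (2), if $\mu$ is not a Dirac measure, then by the strict form of Cauchy--Schwarz established above we have $\mathrm{Im}(F_\mu(z)) > \mathrm{Im}(z) > 0$ for every $z \in \mathbb{C}^+$, so that $\mathrm{Im}(H_\mu(z)) = \mathrm{Im}(F_\mu(z)) - \mathrm{Im}(z) > 0$, which is precisely the assertion $H_\mu : \mathbb{C}^+ \to \mathbb{C}^+$. No real obstacle arises here; the only delicate point is to be careful that strict inequality \emph{everywhere} on $\mathbb{C}^+$ really does follow from the non-Dirac hypothesis, which it does because $\mu$ not being a Dirac measure is a property of $\mu$ alone and does not depend on $z$.
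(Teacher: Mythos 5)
Your proof is correct and follows essentially the same route as the paper: the identity $\imaginarypart(F_\mu(z))=-\imaginarypart(G_\mu(z))/|G_\mu(z)|^2$, the reduction to $|G_\mu(z)|^2\leq \int |z-t|^{-2}\,d\mu(t)$ via Cauchy--Schwarz, the equality case characterizing Dirac measures, and the direct computations for parts (2) and (3). No gaps.
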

\begin{proof}
  \begin{enumerate}[wide]
    \item
 Abbreviating $G\equalperdefinition G_\mu$ and $F\equalperdefinition F_\mu$, we have
   \begin{align*}
     \imaginarypart(F(z))=\imaginarypart\, \left(\frac{1}{G(z)}\right)
                         &=\frac{-\imaginarypart (G(z))}{|G(z)|^2}\\
                         &=-\frac{1}{|G(z)|^2}\int_\realnumbers \underset{\displaystyle =\frac{-\imaginarypart (z)}{|z-t|^2}}{\underbrace{\imaginarypart\, \left(\frac{1}{z-t}\right)}}\, d\mu(t)
     =\frac{\imaginarypart(z)}{|G(z)|^2}\int_\realnumbers \frac{1}{|z-t|^2}\, d\mu(t).
   \end{align*}
   So, we have to show that
   \begin{align*}
     |G(z)|^2\overset{!}{\leq} \int_\realnumbers \frac{1}{|z-t|^2}\, d\mu(t).
   \end{align*}
   This follows by Cauchy-Schwartz:
   \begin{align*}
     |G(z)|^2=\left|\int_\realnumbers \frac{1}{z-t}\, d\mu(t)\right|^2&\leq \underset{\displaystyle= 1}{\underbrace{\int_\realnumbers 1^2\, d\mu(t)}}\cdot \int_\realnumbers \left|\frac{1}{z-t}\right|^2\, d\mu(t)
     =\int_\realnumbers \frac{1}{|z-t|^2}\,d\mu(t).
   \end{align*}
   Equality holds if and only if the maps $t\mapsto 1$ and $t\mapsto \frac{1}{z-t}$ are linearly dependent in $L^2(\mu)$, i.e.\ if $t\mapsto \frac{1}{z-t}$ is $\mu$-almost constant, which is only possible if $\mu$ is a Dirac measure.
 \item If $\mu$ is not a Dirac measure, then by Part~\ref{lemma:subordination-functions-1}
   \begin{align*}
     \imaginarypart(H(z))=\underset{\displaystyle >\imaginarypart(z)}{\underbrace{\imaginarypart(F(z))}}-\imaginarypart (z)>0
   \end{align*}
   for all $z\in \complexnumbers^+$ and hence $H:\,\complexnumbers^+\to \complexnumbers^+$.
   \item Lastly, if $\mu=\delta_a$ for some $a\in \realnumbers$ then it follows that $G_\mu(z)=\frac{1}{z-a}$ for all $z\in \complexnumbers^+$, thus $F_\mu(z)=z-a$ and $H_\mu(z)=-a$, for all $z\in \complexnumbers^+$.\qedhere
   \end{enumerate}
 \end{proof}

 \begin{notation}
   Let $\mu$ and $\nu$ be two probability measures on $\realnumbers$. Then, for every $z\in \complexnumbers^+$, we consider the analytic function
   \begin{align*}
     g_z:\, \complexnumbers^+&\to \complexnumbers,\\
w&\mapsto z+H_\nu[z+H_\mu(w)].
   \end{align*}
 \end{notation}

 \begin{lemma}
   \label{lemma:subordination-no-automorphism}
   For all $z\in \complexnumbers^+$ we have that
     $g_z: \, \complexnumbers^+\to \complexnumbers^+$
   and $g_z$ is not an automorphism of $\complexnumbers^+$. Indeed, for all $w\in \complexnumbers^+$:
     $\imaginarypart(g_z(w)) \geq \imaginarypart(z)$.
 \end{lemma}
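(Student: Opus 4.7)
The plan is to split into cases according to whether $\mu$ or $\nu$ is a Dirac measure and in each case combine Lemma~\ref{lemma:subordination-functions} with a one-line imaginary-part computation. The only real input is the previous lemma, which tells us that $H_\mu$ and $H_\nu$ are either analytic self-maps of $\complexnumbers^+$ or (in the Dirac case) real constants.

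First I would treat the generic case where neither $\mu$ nor $\nu$ is a Dirac measure. Then both $H_\mu$ and $H_\nu$ send $\complexnumbers^+$ into $\complexnumbers^+$ by Lemma~\ref{lemma:subordination-functions}\ref{lemma:subordination-functions-2}. For any $w\in\complexnumbers^+$, applying this twice gives $z+H_\mu(w)\in\complexnumbers^+$, hence $H_\nu[z+H_\mu(w)]\in\complexnumbers^+$, and therefore
\[
\imaginarypart(g_z(w))=\imaginarypart(z)+\imaginarypart\bigl(H_\nu[z+H_\mu(w)]\bigr)>\imaginarypart(z).
\]
This both places $g_z(w)$ in $\complexnumbers^+$ and supplies the inequality. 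It also immediately shows that $g_z$ is not an automorphism: the image lies in the proper half-plane $\{\zeta\in\complexnumbers^+\mid\imaginarypart(\zeta)>\imaginarypart(z)\}$, so $g_z$ fails to be surjective.

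For the remaining cases I would use that a Dirac mass $\delta_a$ produces the real constant $H_{\delta_a}\equiv -a$ (the third part of Lemma~\ref{lemma:subordination-functions}). If $\nu=\delta_b$, then $g_z(w)=z-b$ for every $w\in\complexnumbers^+$, which is a constant in $\complexnumbers^+$ with imaginary part equal to $\imaginarypart(z)$. If instead $\mu=\delta_a$ but $\nu$ is not a Dirac measure, then $g_z(w)=z+H_\nu(z-a)$, again constant in $w$; since $z-a\in\complexnumbers^+$, Lemma~\ref{lemma:subordination-functions} gives $H_\nu(z-a)\in\complexnumbers^+$, so the imaginary part is strictly larger than $\imaginarypart(z)$. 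In both Dirac subcases $g_z$ is a constant function and is therefore trivially not an automorphism.

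I do not expect any serious obstacle here; the whole argument is driven by Lemma~\ref{lemma:subordination-functions}, and the non-automorphism conclusion follows in each subcase either from a non-surjectivity observation or from $g_z$ being constant. The only point worth flagging is why the weak inequality ``$\geq\imaginarypart(z)$'' (rather than a strict one) appears in the statement: it is exactly so that the degenerate subcase $\mu=\delta_a$, $\nu=\delta_b$, where $g_z\equiv z-b$ has imaginary part precisely $\imaginarypart(z)$, is covered.
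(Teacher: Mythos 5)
Your argument is correct and matches the paper's approach: both compute $\imaginarypart(g_z(w))$ via Lemma~\ref{lemma:subordination-functions} and conclude from the resulting lower bound that $g_z$ maps $\complexnumbers^+$ into $\complexnumbers^+$ and cannot be an automorphism. Your explicit case split on whether $\mu$ or $\nu$ is a Dirac mass, together with the closing remark on why the statement carries only a weak inequality, simply unpacks what the paper compresses into a one-line computation with the annotations $z+H_\mu(w)\in\complexnumbers^+$ and $\imaginarypart\,H_\nu(\cdot)\geq 0$.
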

 \begin{proof}
   For all $z,w\in \complexnumbers^+$ we have
   \begin{align*}
     \imaginarypart(g_z(w))=\imaginarypart(z)+\overset{\displaystyle \geq 0}{\overbrace{\imaginarypart [H_\nu(\underset{\displaystyle \in \complexnumbers^+}{\underbrace{z+H_\mu(w)}})]}}\geq \imaginarypart (z)
   \end{align*}
   by Lemma~\hyperref[lemma:subordination-functions-2]{\ref*{lemma:subordination-functions}~\ref*{lemma:subordination-functions-2}}. \qedhere
 \end{proof}

 \begin{theorem}[Belinschi and  Bercovici 2007]
   \label{theorem:subordination-iteration}
   Let $\mu$ and $\nu$ be two probability measures on $\realnumbers$. Then, the function
   \begin{align*}
     g_z:\,\complexnumbers^+\to \complexnumbers^+, \,     w\mapsto g_z(w)
   \end{align*}
   has, for each $z\in \complexnumbers ^+$, a unique fixed point $\omega(z)\in \complexnumbers^+$,  given by
   \begin{align*}
     \omega(z)=\lim_{n\to\infty} \hspace{-4.4em}\underset{\substack{\displaystyle\uparrow\\\displaystyle n\text{-fold composition of }g_z}}{g^{\circ n}_z}\hspace{-4.4em}(z_0),\qquad\qquad
\text{for any $z_0\in \complexnumbers^+$.}
   \end{align*}
    The function
     $\omega:\, \complexnumbers^+\to \complexnumbers ^+, \, z\mapsto \omega(z)$
   is analytic.
 \end{theorem}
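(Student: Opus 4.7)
The plan is to combine the Denjoy-Wolff mechanism (to iterate towards a fixed point once one is known to exist) with Vitali's convergence theorem (to propagate convergence from a small region of $\complexnumbers^+$ to all of $\complexnumbers^+$), in the two-step manner outlined in Remark~\ref{remark:subordination-problem}. First I dispose of the Dirac cases: if $\mu=\delta_a$ then $H_\mu\equiv -a$, so $g_z(w)=z+H_\nu(z-a)$ is constant in $w$ and the iteration stabilizes after a single step; the case where $\nu$ is a Dirac measure is symmetric. Otherwise, Lemma~\ref{lemma:subordination-functions} gives $H_\mu,H_\nu:\complexnumbers^+\to\complexnumbers^+$, hence $g_z:\complexnumbers^+\to\complexnumbers^+$; and Lemma~\ref{lemma:subordination-no-automorphism} ensures $g_z(\complexnumbers^+)\subseteq\{w\mid\imaginarypart(w)\geq\imaginarypart(z)\}\subsetneq\complexnumbers^+$, so $g_z$ is never an automorphism.

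Next I would produce an explicit initial fixed point using the $R$-transforms from Theorem~\ref{theorem:r-transform-general-case}. Choosing $\alpha,\beta>0$ so that both $R_\mu$ and $R_\nu$ are defined on $\Delta_{\alpha,\beta}$, I set $R\equalperdefinition R_\mu+R_\nu$ and $K(w)\equalperdefinition R(w)+\frac{1}{w}$, and invert $K$ on a suitable subwedge by a Rouché argument modelled on Theorem~\ref{theorem:R-transform-compact-support}. This produces on some truncated Stolz angle $\Gamma\subset\complexnumbers^+$ an analytic function $\widetilde G$ with $K(\widetilde G(z))=z$; setting $\omega_\ast(z)\equalperdefinition z-R_\nu(\widetilde G(z))$ and running the algebraic manipulations of Remark~\ref{remark:subordination-a} in reverse then verifies that $\omega_\ast(z)\in\complexnumbers^+$ and $g_z(\omega_\ast(z))=\omega_\ast(z)$ for $z\in\Gamma$.

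With an interior fixed point secured for $z\in\Gamma$, the Schwarz-Pick lemma --- applied after conformally mapping $\complexnumbers^+$ to the unit disk and centering at $\omega_\ast(z)$ --- yields $|g_z'(\omega_\ast(z))|_{\mathrm{hyp}}<1$ (strict inequality, because $g_z$ is not an automorphism), and the standard Denjoy-Wolff argument gives $g_z^{\circ n}(w_0)\to\omega_\ast(z)$ for every $w_0\in\complexnumbers^+$. To extend this to all $z\in\complexnumbers^+$, I fix $w_0$ and view $f_n(z)\equalperdefinition g_z^{\circ n}(w_0)$ as analytic in $z$; each $f_n$ takes values in $\complexnumbers^+$ and therefore omits $\complexnumbers^-$, making $\{f_n\}$ a normal family by Montel's theorem. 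Since $f_n\to\omega_\ast$ on the nonempty open set $\Gamma$, Vitali's theorem upgrades this to locally uniform convergence $f_n\to\omega$ on all of $\complexnumbers^+$, with $\omega$ analytic. Passing to the limit in $f_{n+1}(z)=g_z(f_n(z))$ gives $g_z(\omega(z))=\omega(z)$, and the estimate $\imaginarypart(\omega(z))\geq\imaginarypart(z)>0$ keeps $\omega(z)\in\complexnumbers^+$. Uniqueness is then immediate from the equality case of Schwarz-Pick: two distinct interior fixed points of $g_z$ would force $g_z$ to be an automorphism, contradicting the first paragraph.

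The main obstacle is the construction of the initial fixed point on $\Gamma$. This requires extending the Rouché-type inversion used in Theorem~\ref{theorem:R-transform-compact-support} to the non-compactly-supported setting, using the domain provided by Theorem~\ref{theorem:r-transform-general-case}, and then carefully checking that the formal identities of Remark~\ref{remark:subordination-a} are honest identities between analytic functions on $\Gamma$. Everything afterwards --- normal families, Vitali, Schwarz-Pick, Denjoy-Wolff --- is classical complex analysis; the only remaining subtlety is ensuring that the Vitali limit $\omega$, which a priori could take boundary values in $\overline{\complexnumbers^+}\cup\{\infty\}$, stays strictly in $\complexnumbers^+$, which is handled by the elementary imaginary-part inequality inherited from Lemma~\ref{lemma:subordination-no-automorphism}.
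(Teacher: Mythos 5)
Your proposal follows the same two-step strategy as the paper's proof: first obtain a fixed point for $z$ in a small region via the $R$-transform theory (Theorem~\ref{theorem:r-transform-general-case}) and deduce convergence of the iterates there by the Schwarz lemma (since $g_z$ is not an automorphism), then extend convergence to all of $\complexnumbers^+$ by Vitali/Montel and pass to the limit in the fixed-point equation. The only minor variations are cosmetic — you isolate the Dirac case up front (the paper relegates this to Remark~\hyperref[remark:subordination-problem-2]{\ref*{remark:subordination-problem}~\ref*{remark:subordination-problem-2}}) and keep $\omega$ inside $\complexnumbers^+$ via the imaginary-part bound $\imaginarypart(\omega(z))\geq\imaginarypart(z)$ rather than the paper's non-constancy argument — but the proof is essentially identical.
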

 \begin{proof}
   We choose an arbitrary initial point $u_0\in \complexnumbers^+$ for our iteration and put, for every $n\in \naturalnumbers$ and all $z\in \complexnumbers^+$,
     $f_n(z)\equalperdefinition g^{\circ n}_z(u_0)$.
   Then, for every $n\in \naturalnumbers$, the mapping $f_n:\, \complexnumbers^+\to \complexnumbers^+$ is analytic. We want to see that the sequence $(f_n)_{n\in \naturalnumbers}$ converges to a limit function $\omega$.
   \begin{enumerate}[wide]
   \item By the $R$-transform description, Theorem~\ref{theorem:r-transform-general-case} and Lemma~\ref{lemma:subordination-no-automorphism}, we know that there is an open set $\Omega\subseteq \complexnumbers^+$ such that, for every $z\in \Omega$, the function $g_z:\, \complexnumbers^+\to \complexnumbers^+$ has a fixed point. Then, the Schwarz Lemma implies that, for those $z\in \Omega$, the sequence $(g^{\circ n}_z(u_0))_{n\in \naturalnumbers}$ converges to this (necessarily unique) fixed point. (Via conformal transformations one can map $\complexnumbers^+$ to the disc $\mathbb D\equalperdefinition\{z\in \complexnumbers \mid |z|<1\}$ and assume there also that the transformed version $\tilde g_z$ of $g_z$ satisfies $\tilde g_z(0)=0$. Then, note that $\tilde{g}_z$ is not an automorphism of $\mathbb D$ and apply Schwarz Lemma to it.)
   \item Hence, the sequence $(f_n)_{n\in \naturalnumbers}$ converges on the set $\Omega$. Then, Vitali's Theorem (or Montel's Theorem) implies that $(f_n)_{n\in \naturalnumbers}$ converges to an analytic function on all of $\complexnumbers^+$. Thus, there exists an analytic  function $\omega:\,\complexnumbers^+\to \complexnumbers$ with
        $\lim_{n\to \infty}f_n(z)=\omega(z)$
     for all $z\in \complexnumbers^+$. This limit is not constant (since it is not constant on $\Omega$). Hence, $\omega(z)\in \complexnumbers^+$ for every $z\in \complexnumbers^+$ and thus $\omega:\,\complexnumbers^+\to \complexnumbers^+$.
     \par
     Furthermore, for all $z\in \complexnumbers^+$, we have
     \begin{align*}
       g_z(\omega(z))&=g_z\left(\lim_{n\to\infty}g^{\circ n}_z(u_0)\right)=\lim_{n\to\infty}g^{\circ (n+1)}_z(u_0)=\omega(z),
     \end{align*}
     making $\omega(z)$ our wanted fixed point of $g_z$. It is necessarily unique.\qedhere
   \end{enumerate}
 \end{proof}

 \begin{remark}
   Given probability measures $\mu$ and $\nu$ on $\realnumbers$, we determine $\omega_1\equalperdefinition \omega$ according to Theorem~\ref{theorem:subordination-iteration} and define
     $G(z)\equalperdefinition G_\mu(\omega_1(z))$
   for every $z\in \complexnumbers^+$. One shows then that $\omega_1$ satisfies
   \begin{align*}
     \lim_{y\to\infty}\frac{\omega_1(iy)}{iy}=1,
   \end{align*}
   which implies by Theorem~\hyperref[theorem:cauchy-transform-3]{\ref*{theorem:cauchy-transform}~\ref*{theorem:cauchy-transform-3}} that $G$ is the Cauchy transform of some measure, which we then denote by $\mu\boxplus \nu$.
   \par
   By Remark~\ref{remark:subordination-a}, this measure then has the property that
     $R_{\mu\boxplus \nu}(z)=R_\mu(z)+R_\nu(z)$
   for all $z\in \complexnumbers^+$ such that  $R_{\mu\boxplus \nu}(z)$, $R_\mu(z)$ and $R_\nu(z)$ are defined. Hence, it reduces to our earlier definition in the compactly supported case.
 \end{remark}
 
 \begin{example}
   \begin{enumerate}
   \item Consider $\mu\boxplus \delta_a$ for some $a\in \realnumbers$ and an arbitrary probability measure $\mu$ on $\realnumbers$. Then, by Lemma~\ref{lemma:subordination-functions}, we have for every $z\in \complexnumbers^+$ that
       $H_{\delta_a}(z)=-a$.
     Thus, $\omega_1\equalperdefinition \omega$ is, by Theorem~\ref{theorem:subordination-iteration}, determined by
     \begin{align*}
       \omega_1(z)&=g_z(\omega_1(z))=z+(-a)=z-a
     \end{align*}
     for all $z\in \complexnumbers^+$. Thus, for all $z\in \complexnumbers^+$, we have
     \begin{align*}
       G_{\mu\boxplus \delta_a}(z)=G_\mu(\omega_1(z))=G_\mu(z-a).
     \end{align*}
     Hence, $\mu\boxplus \delta_a$ is also in general a shift of $\mu$ by the amount $a$.
   \item Consider the Cauchy distribution, i.e.\ the probability measure $\nu$ on $\realnumbers$ with density
     \begin{align*}
       d\nu(t)=\frac{1}{\pi}\frac{1}{1+t^2}\, dt
     \end{align*}
     for all $t\in \realnumbers$. (It has no  second or higher order moments.)
     \begin{center}
                       \begin{tikzpicture}[baseline=4em]
          \begin{axis}[xmin=-4.2, ymin=-0.025, xmax=4.2, ymax=0.5, domain=-3.9999:3.9999, samples=50,  axis lines=middle, title={$\realnumbers \to \realnumbers^+,\,t\mapsto \frac{1}{\pi}\frac{1}{1+t^2}$},width=24em, height=16em]  
            \addplot[thick] {(pi^-1)*((1+(x^2))^-1)};
          \end{axis}
          \end{tikzpicture}
     \end{center}
     The value of the Cauchy transform $G_\nu$ of the Cauchy distribution is, for every $z\in \complexnumbers^+$, given by
     \begin{align*}
       G_\nu(z)=\frac{1}{\pi}\int_\realnumbers \frac{1}{z-t}\cdot \frac{1}{1+t^2}\,dt=\frac{1}{z+i}.
     \end{align*}
     Thus, for all $z\in \complexnumbers^+$,
       $H_\nu(z)=z+i-z=i$,
     and hence, for all $w\in \complexnumbers^+$.
       $g_z(w)=z+i$.
This implies, by Theorem~\ref{theorem:subordination-iteration},  
       $\omega(z)=z+i$
     for all $z\in \complexnumbers^+$. Thus, for any probability measure $\mu$ on $\realnumbers$ and all  $z\in \complexnumbers^+$,
       $G_{\mu\boxplus \nu}(z)=G_\mu(z+i)$.
     It  turns out that this is the same result as classical convolution $\mu\ast\nu$ of $\mu$ with the Cauchy distribution $\nu$. This means in particular that the free analogue of the Cauchy distribution \emph{is} the usual Cauchy distribution.
   \end{enumerate}
 \end{example}

\newpage

\section{Gaussian Random Matrices and Asymptotic Freeness}
 \begin{definition}
   For every $N\in \naturalnumbers$, a $\ast$-probability space of \emph{random $N\times N$-ma\-tri\-ces} is given by
     $(M_N(L^{\infty-}(\Omega,\mathbb P)), \normalizedtrace\otimes \mathbb E)$,
where $(\Omega,P)$ is a classical probability space, where
   \begin{align*}
     L^{\infty-}(\Omega,\mathbb P)\equalperdefinition \bigcap_{1\leq p<\infty} L^p(\Omega,P), 
   \end{align*}
   and for any complex algebra $\mathcal A$, 
     $M_N(\mathcal A)\cong M_N(\complexnumbers)\otimes \mathcal {A}$
   denotes the  $N\times N$-matrices with entries from $\mathcal A$. Furthermore, $\mathbb E$ denotes the expectation with respect to $\mathbb P$ and  $\normalizedtrace$  the normalized trace on $M_N(\complexnumbers)$. That means our random variables are of the form
   \begin{align*}
     A=(a_{i,j})_{i,j=1}^N \quad\text{with } a_{i,j}\in L^{\infty-}(\Omega,\mathbb P)\text{ for all }i,j\in [N],
   \end{align*}
   and our relevant state is given by
   \begin{align*}
   \varphi(A)=  (\normalizedtrace  \otimes \expectation)(A)=\expectation[\normalizedtrace(A)]=\frac{1}{N}\sum_{i=1}^N\expectation[a_{i,i}].
   \end{align*}
 \end{definition}
 \begin{remark}
   \label{remark:GUE}
   \begin{enumerate}
   \item\label{remark:GUE-1} Consider a self-adjoint matrix $A\in M_N(\complexnumbers)$, let $\lambda_1,\ldots,\lambda_N$ be the eigenvalues of $A$, counted with multiplicity. Then, we can diagonalize $A$ with a unitary $U\in M_N(\complexnumbers)$ as $A=UDU^\ast$, where
     \begin{align*}
       D=
       \begin{bmatrix}
         \lambda_1& 0 & \hdots & 0\\
         0 &\lambda_2 &\ddots &\vdots\\
         \vdots&\ddots & \ddots& 0\\
         0&\hdots &0 &\lambda_N
       \end{bmatrix}.
     \end{align*}
     Hence, the moments of $A$ with respect to $\normalizedtrace$ are given by
     \begin{align*}
       \normalizedtrace(A^m)=\normalizedtrace((UDU^\ast)^m)
                            =\normalizedtrace(UD^mU^\ast)
                            =\normalizedtrace(D^m)
                            =\frac{1}{N}(\lambda_1^m+\ldots+\lambda_N^m).
     \end{align*}
The latter can be written as
$$ \normalizedtrace(A^m) =\int_\realnumbers t^m\, d\mu_A(t),
     \qquad \text{where}\qquad
       \mu_A\equalperdefinition \frac{1}{N}(\delta_{\lambda_1}+\ldots+\delta_{\lambda_N})
   $$
     is the \emph{eigenvalue distribution} of $A$. In the same way, the distribution of self-adjoint random matrices with respect to $\normalizedtrace\otimes \mathbb E$ is the \emph{averaged} eigenvalue distribution.
   \item\label{remark:GUE-2} We will now consider the \enquote{nicest} kind of self-adjoint random matrix, one where the entries are independent Gaussian random variables. In fact, we will study an entire sequence of such random matrices, one of dimension $N\times N$ for every $N\in \naturalnumbers$. The intent behind that is to form the limit $N\to\infty$ of the averaged eigenvalue distributions of these random matrices. 
     \par
     Recall (from Theorem~\ref{theorem:classical-central-limit-theorem}) that the moments of a  real-valued Gaussian (classical) random variable $x$ of variance $1$ (on some probability space $(\Omega,\mathcal F,\mathbb P)$ with corresponding expectation $\mathbb E$) are given by
     \begin{align*}
       \mathbb E[x^m]=\#\setofpartitionsof_2(m)=\sum_{\pi\in\setofpartitionsof_2(m)}1
     \end{align*}
     for every $m\in \naturalnumbers$. Consider now (classically!) independent real-valued Gaussian random variables $x_1,\ldots,x_r$, $r\in \naturalnumbers$. Then, the joint moments of $(x_1,\ldots,x_r)$ are, for all $m\in \naturalnumbers$ and $i:[m]\to[r]$, given by (where  $n_k\equalperdefinition \#\{s\in [m]\mid i(s)=k\}$ for all $k\in [r]$)
     \begin{align*}
       \mathbb E[x_{i(1)}x_{i(2)}\ldots x_{i(m)}]&=\mathbb E[x_1^{n_1}x_2^{n_2}\ldots x_r^{n_r}], \\
                                                 &=\mathbb E[x_1^{n_1}]\mathbb E[x_2^{n_2}]\ldots \mathbb E[x_r^{n_r}]\\
\quad\\
                                                 &=\sum_{\substack{\pi\in \setofpartitionsof_2(m)\\\pi\leq \ker(i)}}1\\
\quad\\
                                                 &=\sum_{\pi\in \setofpartitionsof_2(m)}\prod_{\substack{\{r,s\}\in \pi\\r<s}}\hspace{-3em}\underset{\hspace{3em}\displaystyle=
                                                   \begin{cases}
                                                     1,&\text{if }i(r)=i(s),\\
                                                     0,&\text{otherwise}
                                                   \end{cases}
                                                   }{\underbrace{\mathbb E[x_{i(r)}x_{i(s)}].}}
     \end{align*}
     The above formula
     \begin{align}
       \label{eq:wick-formula}
       \mathbb E[x_{i(1)}\ldots x_{i(m)}]=\sum_{\pi\in \setofpartitionsof_2(m)}\prod_{\substack{\{r,s\}\in \pi\\r<s}}\mathbb E[x_{i(r)}x_{i(s)}]
     \end{align}
     expresses arbitrary moments of $(x_1,\ldots,x_r)$ in terms of their second moments ($\hateq$ covariance matrix). Since it is linear in all its arguments, it remains true if we replace $x_1,\ldots, x_r$ by linear combinations of them. This yields then a \enquote{Gaussian family} and Equation~\eqref{eq:wick-formula} is called the \enquote{Wick formula} ({Wick} 1950, {Isserlis} 1918).
   \item\label{remark:GUE-3} We will take as entries for our Gaussian random matrices complex Gaussians
     \begin{align*}
       z=\frac{x+iy}{\sqrt{2}},\quad\quad \parbox{20em}{where $x$ and $y$ are independent real-valued Gaussian random variables of variance 1.}
     \end{align*}
     Then, it follows
$$
\expectation[z^2]=\expectation[\overline{z}^2]={\textstyle\frac{1}{2}}\left(\expectation[x^2]-\expectation[y^2]\right)=0,
\qquad
       \expectation [ z \overline{z}]=\expectation[\overline{ z}z]={\textstyle\frac{1}{2}}\left(\expectation[x^2]+\expectation[y^2]\right)=1.
$$
   \item\label{remark:GUE-4} We will also need a scaling adjusted to the matrix dimension $N\in \naturalnumbers$, to have a non-trivial limit of the averaged eigenvalue distributions for $N\to \infty$. Note that, since our random matrix $A_N=(a_{i,j})_{i,j=1}^N$ will be self-adjoint, we have
     \begin{align*}
       \normalizedtrace(A_N^2)=\frac{1}{N}\sum_{i,j=1}^N\mathbb E[a_{i,j}a_{j,i}]=\frac{1}{N}\sum_{i,j=1}^N\hspace{-3em}\underset{\hspace{4em}
\displaystyle{\text{should be}\sim}\frac{1}{N}}{\underbrace{\mathbb E[a_{i,j}\overline{a_{i,j}}]}}.
       \end{align*}
   \end{enumerate}
   
 \end{remark}

 \begin{definition}
   \label{definition:GUE}
   For $N\in \naturalnumbers$, a \emph{(self-adjoint!) Gaussian random $N\times N$-matrix} is a random $N\times N$-matrix $A=(a_{i,j})_{i,j=1}^N$ such that $A=A^\ast$ and such that the entries $(a_{i,j})_{i,j=1}^N$ form a complex Gaussian family with covariance
   \begin{align}
     \label{eq:gaussian-family-covariance}
     \mathbb E[a_{i,j}a_{k,l}]=\frac{1}{N}\delta_{i,l}\delta_{j,k}
   \end{align}
   for all $i,j,k,l\in [N]$. We say then that $A$ is \emph{\GUEN}. (\GUE\ stands for `` Gaussian unitary ensemble''; the distribution of such matrices is invariant under conjugation with unitary matrices.)
 \end{definition}

 \begin{remark}
   \begin{enumerate}
   \item Let $N\in \naturalnumbers$ and  $A=(a_{i,j})_{i,j=1}^N$ \GUEN. Note that $a_{i,j}=\overline{a_{j,i}}$ for all $i,j\in [N]$ since $A=A^\ast$. (In particular, $a_{i,i}$ is real for every $i\in [N]$). Thus, all $\ast$-moments of second order in the $(a_{i,j})_{i,j=1}^N$ are given by Equation~\eqref{eq:gaussian-family-covariance}. And thus all $\ast$-moments of $A$ are determined via the Wick formula \eqref{eq:wick-formula}.
     \item One can also study versions with real entries (\textsc{goe} $\hateq$ Gaussian orthogonal ensemble) or with quaternionic entries (\textsc{gse} $\hateq$ Gaussian symplectic ensemble).
   \end{enumerate}
 \end{remark}

 \begin{theorem} [Genus expansion for \GUE]
   \label{theorem:gue}
   Let $N\in \naturalnumbers$ and let $A$ be \GUEN. Then, for all $m\in \naturalnumbers$,
   \begin{align*}
     (\normalizedtrace\otimes \expectation)(A^m)=\sum_{\pi\in \setofpartitionsof_2(m)}N^{\#(\gamma\pi)-1-\frac{m}{2}},
   \end{align*}
   where we identify every $\pi\in \setofpartitionsof_2(m)$ with an element of the permutation group $S_m$ by declaring the blocks orbits, i.e.\ by defining $\pi(r)\equalperdefinition s$ and $\pi(s)\equalperdefinition r$ for every $\{r,s\}\in \pi$; and where $\gamma$ is the long cycle
     $\gamma= (1 \ 2\ 3\ \ldots \ m)\in S_m$,
   i.e.\ $\gamma(k)\equalperdefinition k+1$ for all $k\in [m-1]$ and $\gamma(m)\equalperdefinition 1$; and where we denote by $\#\sigma$ the number of orbits of $\sigma\in S_m$.
 \end{theorem}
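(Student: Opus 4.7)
The plan is to expand the normalized trace as a sum over cyclic multi-indices, apply the Wick formula from Remark~\ref{remark:GUE}, and then translate the Kronecker delta constraints coming from the \GUE\ covariance into a condition stating that the multi-index is constant on the orbits of the permutation $\gamma\pi$.

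First I would write
\begin{align*}
(\normalizedtrace\otimes\expectation)(A^m)
&=\frac{1}{N}\sum_{i:[m]\to[N]} \expectation\bigl[a_{i(1),i(\gamma(1))}\, a_{i(2),i(\gamma(2))}\cdots a_{i(m),i(\gamma(m))}\bigr],
\end{align*}
using $(A^m)_{i(1),i(1)}=\sum a_{i(1),i(2)}\cdots a_{i(m),i(1)}$ and $\gamma=(1\,2\,\ldots\,m)$. Since the entries form a complex Gaussian family, the Wick formula \eqref{eq:wick-formula} (valid also in the complex case, as all the moments are determined by the second-order $\ast$-moments which are encoded in Definition~\ref{definition:GUE}) expresses the expectation as
\begin{align*}
\expectation\bigl[a_{i(1),i(\gamma(1))}\cdots a_{i(m),i(\gamma(m))}\bigr]
=\sum_{\pi\in\setofpartitionsof_2(m)}\prod_{\{r,s\}\in\pi}\expectation\bigl[a_{i(r),i(\gamma(r))}\,a_{i(s),i(\gamma(s))}\bigr].
\end{align*}
In particular, the sum is over pair partitions only, so odd $m$ will give zero automatically.

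Next I would plug in the covariance \eqref{eq:gaussian-family-covariance}, i.e.\ $\expectation[a_{k,l}a_{p,q}]=\tfrac{1}{N}\delta_{k,q}\delta_{l,p}$. For a single pair $\{r,s\}\in\pi$ this yields
\begin{align*}
\expectation\bigl[a_{i(r),i(\gamma(r))}\,a_{i(s),i(\gamma(s))}\bigr]=\frac{1}{N}\,\delta_{i(r),i(\gamma(s))}\,\delta_{i(\gamma(r)),i(s)}.
\end{align*}
Viewing $\pi$ as the involution with $\pi(r)=s$ and $\pi(s)=r$, these two deltas combined over all blocks say exactly that $i(k)=i(\gamma(\pi(k)))$ for every $k\in[m]$, i.e.\ that the multi-index $i$ is constant on each orbit of the permutation $\gamma\pi\in S_m$. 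The factor $\tfrac{1}{N}$ appears once per block, i.e.\ $m/2$ times in total.

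The conclusion then follows by counting: the number of admissible $i:[m]\to[N]$ for a given $\pi$ equals $N^{\#(\gamma\pi)}$. Combining the prefactor $1/N$, the $m/2$ factors of $1/N$ from the covariance, and the index count gives
\begin{align*}
(\normalizedtrace\otimes\expectation)(A^m)
=\sum_{\pi\in\setofpartitionsof_2(m)}\frac{1}{N}\cdot\frac{1}{N^{m/2}}\cdot N^{\#(\gamma\pi)}
=\sum_{\pi\in\setofpartitionsof_2(m)}N^{\#(\gamma\pi)-1-m/2},
\end{align*}
which is the claim. The main obstacle is purely bookkeeping: getting the identification of the pair partition with the involution consistent with the cyclic reading of the trace, so that the Kronecker constraint from the covariance really becomes invariance of $i$ under $\gamma\pi$. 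Once that identification is made, the counting argument is immediate.
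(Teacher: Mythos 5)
Your proposal is correct and follows essentially the same route as the paper's proof: expand the trace as a sum over cyclic multi-indices, apply the Wick formula with the \GUE\ covariance, rewrite the Kronecker deltas as the constraint that $i$ is constant on the orbits of $\gamma\pi$, and count $N^{\#(\gamma\pi)}$ admissible indices per pairing. No substantive differences.
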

 \begin{proof}
   For every $m\in \naturalnumbers$, applying the Wick formula \eqref{eq:wick-formula} yields
   \begin{align*}
     (\normalizedtrace\otimes \expectation)(A^m)&=\mathbb E[\normalizedtrace(A^m)]\\
                 &=\frac{1}{N}\sum_{i:[m]\to [N]}\hspace{-5.5em}\underset{\hspace{5.5em}\displaystyle =\sum_{\pi\in \setofpartitionsof_2(m)}\prod_{\substack{\{r,s\}\in \pi\\r<s}}\hspace{-2em}\underset{\hspace{2em}\displaystyle =\frac{1}{N}\cdot \delta_{i(r),\hspace{-0.5em}\underset{\scriptstyle=i(\gamma\pi(r))}{\textstyle\underbrace{\scriptstyle i(\gamma(s))}}}\hspace{-0.5em}\cdot\hspace{-0em} \delta_{i(s),\hspace{-0.5em}\underset{\scriptstyle=i(\gamma\pi(s))}{\textstyle \underbrace{\scriptstyle i(\gamma(r))}}}}{\underbrace{\mathbb E[a_{i(r),i(\gamma(r))}a_{i(s),i(\gamma(s))}]}}}{\underbrace{\mathbb E[a_{i(1),i(2)}a_{i(2),i(3)}\ldots a_{i(m),i(1)}]}}\\
                 &=\frac{1}{N^{1+\frac{m}{2}}}\sum_{\pi\in \setofpartitionsof_2(m)}\sum_{i:[m]\to [N]}\hspace{-7.5em} \underset{\hspace{7.5em}\parbox{14em}{$i$ must be constant on each orbit\\ of $\gamma\pi$ in order to contribute}}{\underbrace{\prod_{s=1}^N\delta_{i(s),i(\gamma\pi(s))}}}\\
                 &=\frac{1}{N^{1+\frac{m}{2}}}\sum_{\pi\in \setofpartitionsof_2(m)}N^{\# (\gamma\pi)}\\
     &=\sum_{\pi\in \setofpartitionsof_2(m)}N^{\# (\gamma\pi)-1-\frac{m}{2}},
   \end{align*}
   which is what we needed to show.
 \end{proof}

 \begin{example}
   For $m=4$ we have $\frac{m}{2}+1=3$, and for the three $\pi\in \setofpartitionsof_2(4)$:
   \begin{align*}
     \begin{array}{c|c|c}
       \pi & \gamma\pi & \# (\gamma\pi)-3\\ \hline
       (1 \ 2)\,(3 \ 4) & (1 \ 3) \, (2) \, (4) & 0\\
       (1 \ 3)\,(2 \ 4) & (1 \ 4 \ 3 \ 2) & -2\\
       (1 \ 4)\,(2 \ 3) & (1)\, (2 \ 4) \, (3) & 0.
     \end{array}
   \end{align*}
  Thus $(\normalizedtrace\otimes \expectation)(A^4)=2+N^{-2}$ for $A$ \GUEN.
 \end{example}

 \begin{proposition}
   \label{proposition:characterization-non-crossing-pair-partitions}
   Let $m\in \naturalnumbers$ and $\pi\in \setofpartitionsof_2(m)$.
   \begin{enumerate}
   \item\label{proposition:characterization-non-crossing-pair-partitions-1} 
We have for all $\pi\in \setofpartitionsof_2(m)$ that
\begin{equation}\label{eq:genus-estimate}
\# (\gamma\pi)-1-\frac{m}{2}\leq 0.
\end{equation}
     \item\label{proposition:characterization-non-crossing-pair-partitions-2} Equality holds in Equation \eqref{eq:genus-estimate} if and only if $\pi\in\setofnoncrossingpartitionsof_2(m)$.
   \end{enumerate}
 \end{proposition}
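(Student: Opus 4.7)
The plan is to prove both parts simultaneously by induction on $m/2$, with base case $m=2$ where $\pi=\{\{1,2\}\}$ is non-crossing and $\gamma\pi=\mathrm{id}$ yields $\#(\gamma\pi)=2=1+m/2$. The induction rests on a reduction lemma: if $\pi\in\setofpartitionsof_2(m)$ contains a linear interval pair $\{k,k+1\}$ with $1\leq k\leq m-1$, and $\tilde\pi\in\setofpartitionsof_2(m-2)$ is obtained by removing this pair and relabeling $[m]\setminus\{k,k+1\}$ order-preservingly as $[m-2]$, then
\begin{align*}
\#(\gamma_m\pi)=1+\#(\gamma_{m-2}\tilde\pi).
\end{align*}

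To prove the lemma, first note that $(\gamma_m\pi)(k+1)=\gamma_m(k)=k+1$, so $\{k+1\}$ is a fixed-point orbit of $\gamma_m\pi$, accounting for the $+1$. Setting $j\equalperdefinition\pi(k-1)$, the maps $\gamma_m\pi$ and the relabeled $\gamma_{m-2}\tilde\pi$ agree on $[m]\setminus\{k,k+1\}$ except at $j$, where $\gamma_m\pi(j)=k$ while $\gamma_{m-2}\tilde\pi(j)=k+2$ (the relabeled long cycle skips the deleted pair). The cycle of $\gamma_m\pi$ passing through $j\to k\to k+2$ is thereby replaced by the cycle $j\to k+2$, shrinking by one element but preserving the orbit count. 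The inductive step then splits into two cases. If $\pi$ has a linear interval pair, the reduction lemma and the induction hypothesis yield $\#(\gamma_m\pi)\leq 1+m/2$ with equality iff $\tilde\pi$ is non-crossing, which by Remark~\hyperref[remark:interval-stripping-2]{\ref*{remark:interval-stripping}~\ref*{remark:interval-stripping-2}} is equivalent to $\pi$ being non-crossing. Otherwise $\pi$ has no linear interval pair, and then $\gamma_m\pi$ has at most one fixed point (possibly at $i=1$, if $\{1,m\}\in\pi$), because $(\gamma_m\pi)(i)=i$ for $i\geq 2$ would force $\pi(i)=i-1$, creating a forbidden linear interval pair. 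A short parity argument using that $m$ is even then bounds the remaining fixed-point-free permutation's cycle count and yields $\#(\gamma_m\pi)\leq m/2<1+m/2$. Moreover, $\pi$ is crossing in this case, again by Remark~\ref{remark:interval-stripping-2}, since every non-crossing partition possesses an interval block.

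The main obstacle is the careful orbit bookkeeping in the reduction lemma: one must verify that splicing the arrow chain $j\to k\to k+2$ down to $j\to k+2$ neither merges two cycles nor splits one, but merely shrinks by one element the unique cycle containing $k$. The edge case $j=k+2$ (in which the 2-cycle $(k,k+2)$ of $\gamma_m\pi$ collapses to the fixed point $\{k+2\}$) must be checked separately, but it fits the same count. With the lemma in place, the two-case induction delivers both the bound in part~\ref{proposition:characterization-non-crossing-pair-partitions-1} and the characterization of equality in part~\ref{proposition:characterization-non-crossing-pair-partitions-2} in one stroke.
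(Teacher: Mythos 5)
Your proof is correct and follows essentially the same route as the paper: the reduction lemma (removing an interval pair of $\pi$, equivalently a fixed point of $\gamma\pi$, drops $\#(\gamma\pi)$ by exactly one while dropping $m$ by two) is precisely the paper's key step, and the equality characterization via interval-stripping matches as well. The only cosmetic difference is that you organize the iteration as a formal induction and treat the cyclic pair $\{1,m\}$ inside your second case rather than as a removable ($\gamma$-)interval, which is fine.
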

 \begin{proof}
   Note the following:
   \begin{itemize}
   \item If $\gamma\pi$ has no fixed point, then $|V|\geq 2$ for all $V\in \gamma\pi$, which entails $\#(\gamma\pi)\leq \frac{m}{2}$ and thus $\#(\gamma\pi)-1-\frac{m}{2}<0$.
   \item If $\gamma\pi$ does have a fixed point, say $(\gamma\pi)(i)=i$ for some $i\in [m]$, then this means: $\pi(i)=\gamma^{-1}(i)$, i.e.\ $\pi=\tilde{\pi}\cup \{\gamma^{-1}(i),i\}$ for some partition $\tilde{\pi}$. In this case, we remove $\{\gamma^{-1}(i),i\}$ from $\pi$ and obtain a new pairing $\tilde{\pi}$. Defining a new long cycle $\tilde{\gamma}=(1 \ \ldots \ \gamma^{-2}(i) \ \gamma(i) \ \ldots \ m)$, it then follows $\#(\tilde{\gamma}\tilde{\pi})=\#(\gamma\pi)-1$. (We have lost the  orbit $\{i\}$ of $\gamma\pi$ in $\tilde{\gamma}\tilde{\pi}$, but the orbit encompassing $\{\gamma^{-1}(i),\gamma(i)\}$ of $\gamma\pi$ survives as the orbit containing $\gamma(i)$ in $\tilde{\gamma}\tilde{\pi}$.) Thus, $\#(\tilde{\gamma}\tilde{\pi})-1-\frac{m-2}{2}=\#(\gamma\pi)-1-\frac{m}{2}$.
   \item We iterate this procedure until we find no more fixed points, thus proving Claim~\ref{proposition:characterization-non-crossing-pair-partitions-1}.
   \item Equality can only arise during this iteration if we always find a fixed point to remove until nothing is left.
      \item Since removing a fixed point in $\gamma\pi$ corresponds to removing a pair in $\pi$ which is an interval (i.e.\ consists of neighbors with respect to $\gamma$), the recursive characterization of non-crossing partitions shows that equality in Equation~\eqref{eq:genus-estimate} holds exactly if $\pi\in \setofnoncrossingpartitionsof_2(m)$.\qedhere
   \end{itemize}
 \end{proof}

 \begin{corollary}[{Wigner} 1955]
   \label{corollary:gue}
   For every $N\in \naturalnumbers$, let $A_N$ be a \GUEN. Then, for every $m\in \naturalnumbers$,
   \begin{align*}
     \lim_{N\to\infty}(\normalizedtrace\otimes \expectation)(A^m_N)=\#\setofnoncrossingpartitionsof_2(m).
   \end{align*}
   In other words, $A_N\convergesindistributionto s$ for a standard semicircular variable $s$.
 \end{corollary}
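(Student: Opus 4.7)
The plan is to combine the genus expansion of Theorem~\ref{theorem:gue} with the combinatorial estimate of Proposition~\ref{proposition:characterization-non-crossing-pair-partitions} and then pass to the limit $N\to\infty$ term by term. By Theorem~\ref{theorem:gue}, for each $m,N\in\naturalnumbers$ we have the finite sum
\begin{align*}
(\normalizedtrace\otimes\expectation)(A_N^m)=\sum_{\pi\in\setofpartitionsof_2(m)}N^{\#(\gamma\pi)-1-\frac{m}{2}},
\end{align*}
and the crucial point is that by Proposition~\ref{proposition:characterization-non-crossing-pair-partitions} each exponent $\#(\gamma\pi)-1-\tfrac{m}{2}$ is non-positive, with equality precisely for $\pi\in\setofnoncrossingpartitionsof_2(m)$.

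From this the limit is immediate: I would split the sum into the contributions from $\pi\in\setofnoncrossingpartitionsof_2(m)$ (each contributing $N^{0}=1$) and the contributions from crossing pairings (each contributing $N^{e}$ with some $e\in\{-1,-2,\ldots\}$, independent of $N$). Since $\#\setofpartitionsof_2(m)$ is finite and each non-leading term tends to $0$ as $N\to\infty$, we obtain
\begin{align*}
\lim_{N\to\infty}(\normalizedtrace\otimes\expectation)(A_N^m)=\#\setofnoncrossingpartitionsof_2(m).
\end{align*}
For odd $m$ there is nothing to say because $\setofpartitionsof_2(m)=\emptyset$, so the sum is empty and both sides are $0$, matching $\#\setofnoncrossingpartitionsof_2(m)=0$.

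Finally, to rephrase the result as convergence in distribution to a standard semicircular element $s$, I would recall Definition~\ref{definition:semicircular-variable} together with Lemma~\ref{lemma:catalan-numbers}: the moments of $s$ are $\varphi(s^m)=0$ for odd $m$ and $\varphi(s^{2k})=C_k=\#\setofnoncrossingpartitionsof_2(2k)$ for even $m=2k$. This precisely matches the limit computed above, so in the sense of the definition of convergence in distribution we have $A_N\convergesindistributionto s$.

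There is essentially no obstacle here: the entire content sits in Theorem~\ref{theorem:gue} (the Wick/genus expansion) and Proposition~\ref{proposition:characterization-non-crossing-pair-partitions} (the topological estimate on $\#(\gamma\pi)$), both of which are already at our disposal. The only thing to take care of is the elementary observation that the sum over $\setofpartitionsof_2(m)$ has finitely many terms, so one can exchange the finite sum with the limit in $N$ without any analytic subtlety.
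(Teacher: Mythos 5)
Your proposal is correct and is exactly the argument the paper intends: the corollary is stated without a separate proof precisely because it follows immediately by combining the genus expansion of Theorem~\ref{theorem:gue} with the exponent estimate of Proposition~\ref{proposition:characterization-non-crossing-pair-partitions}, exchanging the (finite) sum with the limit in $N$, and recalling that the moments of a standard semicircular are given by $\#\setofnoncrossingpartitionsof_2(m)$. Nothing is missing.
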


 \begin{remark}
   \begin{enumerate}
   \item We have shown here convergence on average, i.e.\ that, for all $m\in \naturalnumbers$, the sequence $(\mathbb E[\normalizedtrace(A^m_N)])_{N\in \naturalnumbers}$ converges (to the corresponding moment of the semicircle). One can refine this (e.g., by variance estimates) to stronger forms of convergence: The sequence of classical random variables $(\normalizedtrace (A^m_N))_{N\in\naturalnumbers}$ also converges almost surely. We will not consider here such questions, but such stronger versions for the convergence are usually also true for our further results in this section. 

The following figure compares the histogram for the N=3000 eigenvalues of one realization of a \GUEN\  with the semicircular density.

\begin{center}
\includegraphics[width=12cm]{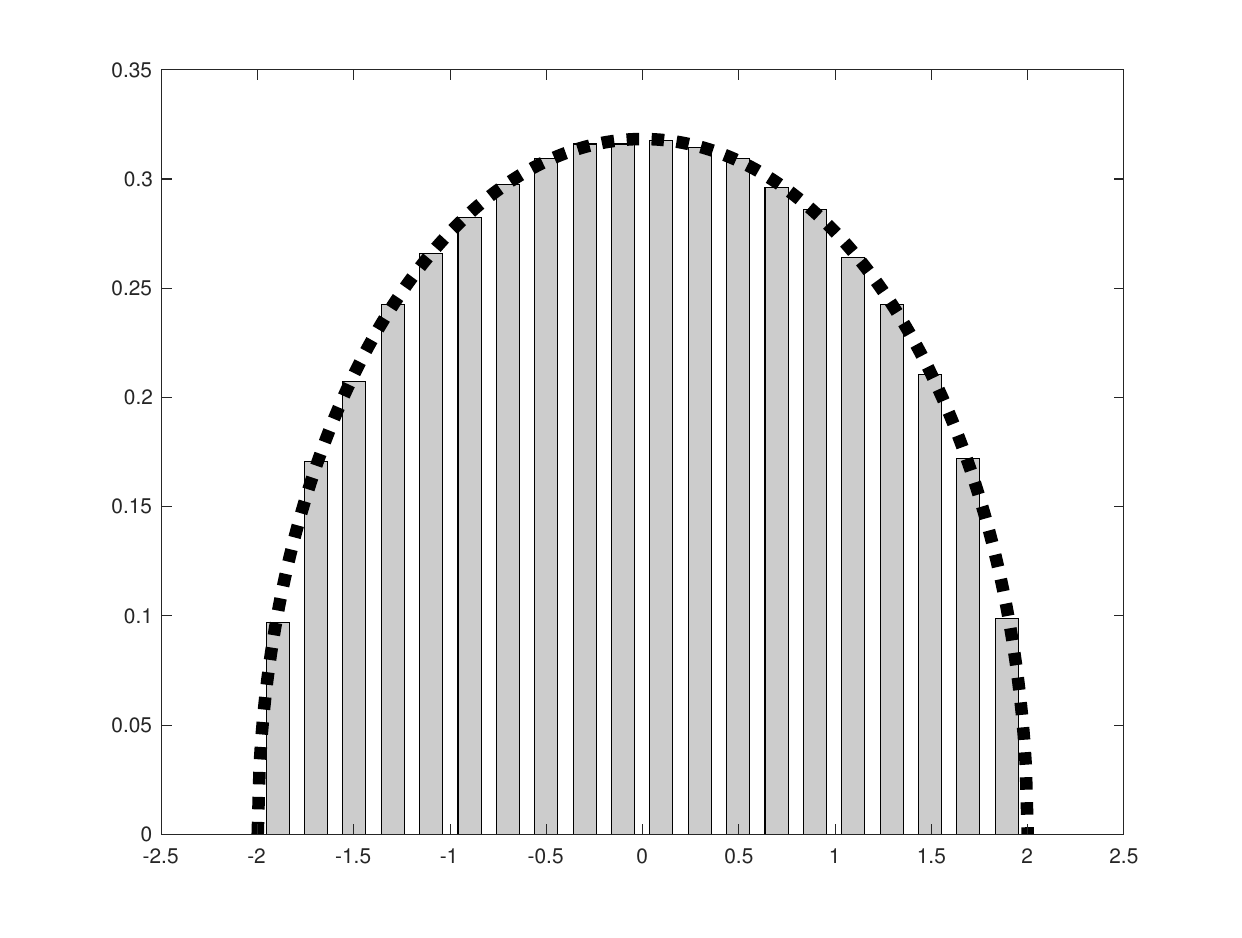}
\end{center}

   \item The occurrence of the semicircle as a basic distribution for limits of sequences of random matrices (long before free probability was invented) hints at a closer relation between free probability and random matrices. Voiculescu made this concrete by showing that also freeness shows up asymptotically in the random matrix world.
   \end{enumerate}
 \end{remark}

 \begin{theorem}[{Voiculescu} 1991]
   \label{theorem:gue-family}
   Let $t\in \naturalnumbers$ be arbitrary and, for every $N\in \naturalnumbers$, let $A_N^{(1)},\ldots, A_N^{(t)}$ be $t$ independent \GUEN, i.e., the upper triangular entries of all the different $(A_N^{(i)})_{i=1}^t$ taken together form a family of independent random variables. Then,
   \begin{align*}
     (A_N^{(1)},\ldots, A_N^{(t)})\convergesindistributionto (s_1,\ldots,s_t),
   \end{align*}
   where $s_1,\ldots,s_t$ are free standard semicircular variables. In particular, the random matrices $A_N^{(1)},\ldots, A_N^{(t)}$ are \emph{asymptotically free}, for $N\to\infty$.
 \end{theorem}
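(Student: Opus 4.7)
The plan is to extend the genus expansion of Theorem~\ref{theorem:gue} to joint moments of the independent $\GUEN$ matrices and then match the surviving terms with the combinatorial formula for moments of a free semicircular family provided by Theorem~\ref{theorem:freeness-vanishing-mixed-cumulants-random-variables}. Fix $m\in\naturalnumbers$ and an index function $i:[m]\to [t]$, and consider
\begin{align*}
(\normalizedtrace\otimes\expectation)(A_N^{(i(1))}\cdots A_N^{(i(m))})
=\frac{1}{N}\sum_{j:[m]\to[N]}\expectation[a^{(i(1))}_{j(1),j(2)}\, a^{(i(2))}_{j(2),j(3)}\cdots a^{(i(m))}_{j(m),j(1)}].
\end{align*}
The entries of $A_N^{(1)},\ldots,A_N^{(t)}$ jointly form a complex Gaussian family, so the Wick formula from Remark~\hyperref[remark:GUE-2]{\ref*{remark:GUE}~\ref*{remark:GUE-2}} applies and expands the expectation as a sum over $\pi\in\setofpartitionsof_2(m)$ of products of pairwise covariances.

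Because the ensembles are independent, each pairwise covariance vanishes unless the two legs involve entries from the same $A_N^{(k)}$, i.e.\ unless $i(r)=i(s)$ for every pair $\{r,s\}\in\pi$. This is precisely the condition $\pi\leq\ker(i)$. Whenever this holds, Equation~\eqref{eq:gaussian-family-covariance} yields the same $\frac{1}{N}\delta_{j(r),j(\gamma(s))}\delta_{j(s),j(\gamma(r))}$ factor as in the single-matrix case. Repeating the counting argument from the proof of Theorem~\ref{theorem:gue} verbatim, the sum over index assignments $j:[m]\to[N]$ produces $N^{\#(\gamma\pi)}$ for each admissible $\pi$, so
\begin{align*}
(\normalizedtrace\otimes\expectation)(A_N^{(i(1))}\cdots A_N^{(i(m))})
=\sum_{\substack{\pi\in\setofpartitionsof_2(m)\\ \pi\leq\ker(i)}} N^{\#(\gamma\pi)-1-m/2}.
\end{align*}
By Proposition~\ref{proposition:characterization-non-crossing-pair-partitions}, every exponent is nonpositive and vanishes exactly when $\pi\in\setofnoncrossingpartitionsof_2(m)$. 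Passing to the limit $N\to\infty$ therefore gives
\begin{align*}
\lim_{N\to\infty}(\normalizedtrace\otimes\expectation)(A_N^{(i(1))}\cdots A_N^{(i(m))})
=\#\{\pi\in\setofnoncrossingpartitionsof_2(m)\mid \pi\leq\ker(i)\}.
\end{align*}

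It remains to recognize the right-hand side as the joint moment $\varphi(s_{i(1)}\cdots s_{i(m)})$ of a free standard semicircular family $s_1,\ldots,s_t$ in some $\ast$-probability space. By Example~\hyperref[example:free-cumulants-2]{\ref*{example:free-cumulants}~\ref*{example:free-cumulants-2}}, each $s_k$ has free cumulants $\kappa_n(s_k,\ldots,s_k)=\delta_{n,2}$, and Theorem~\ref{theorem:freeness-vanishing-mixed-cumulants-random-variables} ensures that all cumulants mixing different $s_k$ vanish. Combined with the moment-cumulant formula from Remark~\ref{remark:moment-cumulant-formula},
\begin{align*}
\varphi(s_{i(1)}\cdots s_{i(m)})=\sum_{\pi\in\setofnoncrossingpartitionsof(m)}\kappa_\pi(s_{i(1)},\ldots,s_{i(m)}),
\end{align*}
the only surviving partitions are the non-crossing pairings all of whose blocks $\{r,s\}$ satisfy $i(r)=i(s)$, each contributing $1$. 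This matches the random matrix limit and proves $(A_N^{(1)},\ldots,A_N^{(t)})\convergesindistributionto(s_1,\ldots,s_t)$; asymptotic freeness of the $A_N^{(k)}$ is then a direct consequence since the limits are free.

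The main obstacle is the bookkeeping in the first step: organizing the Wick expansion so that the independence of the ensembles cleanly manifests itself as the constraint $\pi\leq\ker(i)$ and so that the index counting reduces, unchanged from the single-matrix proof, to counting orbits of $\gamma\pi$. Once that identification is done transparently, the genus estimate of Proposition~\ref{proposition:characterization-non-crossing-pair-partitions} and the cumulant characterization of freeness from Theorem~\ref{theorem:freeness-vanishing-mixed-cumulants-random-variables} do the rest.
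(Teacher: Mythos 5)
Your proposal is correct and follows essentially the same route as the paper: apply the Wick formula to the joint Gaussian family, observe that independence of the ensembles forces $\pi\leq\ker(i)$, reuse the single-matrix orbit count to get the exponent $\#(\gamma\pi)-1-m/2$, and identify the surviving non-crossing pairings with the cumulant expansion of $\varphi(s_{i(1)}\cdots s_{i(m)})$ for a free semicircular family. No gaps.
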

 \begin{proof}
   One can essentially repeat the proof of Theorem~\ref{theorem:gue}, which showed the claim for $t=1$. Let $N\in \naturalnumbers$ be arbitrary and write $A_N^{(r)}=\colon(a_{i,j}^{(r)})_{i,j=1}^N$ for every $r\in [t]$. Then, by our assumption, $\{a_{i,j}^{(r)}\mid r\in [t],\, i,j\in [N]\}$ forms a Gaussian family with covariance
   \begin{align*}
     \mathbb E\left[a_{i,j}^{(r)}a_{k,l}^{(p)}\right]=\frac{1}{N}\delta_{i,l}\delta_{j,k}\delta_{r,p}
   \end{align*}
   for all $i,j,k,l\in [N]$ and $p,r\in [t]$. Consequently, for every $p:[m]\to [t]$, 
   \begin{align*}
     (\normalizedtrace\otimes \expectation)\left(A^{(p(1))}_N\ldots A^{(p(m))}_N\right)
     &=\frac{1}{N}\sum_{i:[m]\to [N]}\mathbb E\left[a^{(p(1))}_{i(1),i(2)}\ldots a^{(p(m))}_{i(m),i(1)}\right]\\
                                             &=\frac{1}{N}\sum_{i:[m]\to [N]}\sum_{\pi\in \setofpartitionsof_2(m)}\prod_{\substack{\{r,s\}\in \pi\\r<s}}\hspace{-3.5em} \underset{\hspace{3.5em}\displaystyle=\frac{1}{N}\delta_{i(r),i(\gamma(s))}\delta_{i(s),i(\gamma(r))}\delta_{p(r),p(s)}}{\underbrace{\mathbb E\left[a_{i(r),i(\gamma(r))}^{(p(r))}a_{i(s),i(\gamma(s))}^{(p(s))}\right]}}\\
     &=\sum_{\substack{\pi\in \setofpartitionsof_2(m)\\\pi\leq \ker(p)}}N^{\#(\gamma\pi)-1-\frac{m}{2}}.
   \end{align*}
   Thus, in the limit, for every $p:[m]\to [t]$,
   \begin{align*}
     \lim_{N\to\infty}(\normalizedtrace\otimes \expectation)\left(A^{(p(1))}_N\ldots A^{(p(m))}_N\right)=\sum_{\substack{\pi\in \setofnoncrossingpartitionsof_2(m)\\\pi\leq \ker(p)}}1
                                                              &=\sum_{\pi\in\setofnoncrossingpartitionsof(m)}\hspace{-6em}\underset{\hspace{6em}\displaystyle =
                                                                \begin{cases}
                                                                  1,&\text{if }\pi\in \setofnoncrossingpartitionsof_2(m), \pi\leq \ker(p),\\
                                                                  0,&\text{otherwise}
                                                                \end{cases}
}{\underbrace{\kappa_\pi(s_{p(1)},\ldots,s_{p(m)})}},
   \end{align*}
and the latter is exactly the formula for the moment
     $\varphi(s_{p(1)}\ldots s_{p(m)})$.
   That concludes the proof.
 \end{proof}
 
 \begin{remark}
   \begin{enumerate}
   \item Theorem~\ref{theorem:gue-family} shows that we can model a free pair of semicirculars asymptotically by independent Gaussian random matrices. Can we also model other distributions in this way? It turns out that we can replace one of the Gaussian random matrix sequences by a sequence of \enquote{deterministic} matrices with arbitrary limit distribution.
   \item For $N\in \naturalnumbers$, \emph{deterministic $N\times N$-matrices} are just elements
     \begin{align*}
       D\in M_N(\complexnumbers)\subseteq M_N(L^{\infty-}(\Omega,\mathbb P)),
     \end{align*}
     i.e.\ ordinary matrices embedded in the algebra of random matrices. Our state $\normalizedtrace\otimes \mathbb E$ on these is then just given by taking the normalized trace: $\normalizedtrace\otimes \mathbb E(D)=\normalizedtrace(D)$.
   \end{enumerate}
 \end{remark}

 \begin{theorem}
   \label{theorem:gue-deterministic}
   Let $t,N\in \naturalnumbers$ be arbitrary, let $A^{(1)},\ldots, A^{(t)}$ be $t$ independent \GUEN and let $D\in M_N(\complexnumbers)$ be a deterministic $N\times N$-matrix. Then, for all $m\in \naturalnumbers$, all $q:[m]\to \naturalnumbers_0$ and all $p:[m]\to [t]$ we have
$$
       (\normalizedtrace\otimes \expectation)\bigl( A^{(p(1))}D^{q(1)}\ldots A^{(p(m))}D^{q(m)} \bigr)
     = \sum_{\substack{\pi\in \setofpartitionsof_2(m)\\\pi\leq \ker(p)}}\normalizedtrace_{\pi\gamma}\bigl[D^{q(1)},\ldots, D^{q(m)}\bigr] \cdot N^{\#(\gamma\pi)-1-\frac{m}{2}},
$$
   where, for all $n\in \naturalnumbers$, $\sigma\in S_n$ and $D_1,\ldots, D_n\in M_N(\complexnumbers)$,
   \begin{align*}
     \normalizedtrace_\sigma(D_1,\ldots, D_n)\equalperdefinition \prod_{\text{$c$ cycle of $\sigma$}}\normalizedtrace \Bigl(\prod_{i\in c}^{\longrightarrow} D_i\Bigr).
   \end{align*}
 \end{theorem}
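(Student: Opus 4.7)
My plan is to repeat the Wick-formula bookkeeping from the proof of Theorem~\ref{theorem:gue-family}, but to carry the deterministic matrix entries through as labels instead of summing them away. Write $A^{(r)} = (a^{(r)}_{i,j})_{i,j=1}^N$ for each $r\in[t]$, $D^{q(k)} = ((D^{q(k)})_{i,j})_{i,j=1}^N$, and expand the normalized trace as
\begin{align*}
(\normalizedtrace\otimes\expectation)\bigl(A^{(p(1))}D^{q(1)}\cdots A^{(p(m))}D^{q(m)}\bigr)
= \frac{1}{N}\sum_{\substack{i_1,\ldots,i_m\\ j_1,\ldots,j_m}} \expectation\Bigl[\prod_{k=1}^m a^{(p(k))}_{i_k,j_k}\Bigr]\prod_{k=1}^m (D^{q(k)})_{j_k,i_{\gamma(k)}},
\end{align*}
where $\gamma=(1\ 2\ \cdots\ m)$. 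The $A$-indices are the new objects here, while the $D$-factors become scalar data to be collected at the end.

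Next I would apply the Wick formula from Remark~\ref{remark:GUE}\ref{remark:GUE-2} to the Gaussian expectation, replacing it by $\sum_{\pi\in \setofpartitionsof_2(m)}\prod_{\{r,s\}\in\pi, r<s} \expectation[a^{(p(r))}_{i_r,j_r}a^{(p(s))}_{i_s,j_s}]$, and then plug in the covariance \eqref{eq:gaussian-family-covariance} upgraded with an independence Kronecker across the ensembles, namely $\expectation[a^{(r)}_{i,j}a^{(p)}_{k,l}] = \frac{1}{N}\delta_{i,l}\delta_{j,k}\delta_{r,p}$. The $\delta_{r,p}$ factors force $\pi\leq \ker(p)$, as in Theorem~\ref{theorem:gue-family}, while the $\delta_{i,l}\delta_{j,k}$ factors impose $j_k = i_{\pi(k)}$ for every $k\in[m]$. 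Altogether this yields the factor $N^{-1-m/2}$ in front, together with the constraint above.

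I would then substitute $j_k = i_{\pi(k)}$ into the $D$-part, turning it into $\prod_{k=1}^m (D^{q(k)})_{i_{\pi(k)},\,i_{\gamma(k)}}$. The key combinatorial step is the re-indexation $x_k := i_{\pi(k)}$, under which the product becomes $\prod_{k=1}^m (D^{q(k)})_{x_k,\,x_{\pi\gamma(k)}}$. Summing over $x_1,\ldots,x_m$ factorizes according to the orbits of $\sigma := \pi\gamma$: each orbit $c=(k_1,k_2,\ldots,k_\ell)$ with $\sigma(k_j)=k_{j+1}$ collapses its telescoping sum into $\trace(D^{q(k_1)}D^{q(k_2)}\cdots D^{q(k_\ell)})$. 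Using $\trace = N\normalizedtrace$, this produces $N^{\#(\pi\gamma)}\cdot \normalizedtrace_{\pi\gamma}[D^{q(1)},\ldots,D^{q(m)}]$, and since $\pi\gamma$ and $\gamma\pi$ are conjugate they share their cycle count, giving the claimed $N^{\#(\gamma\pi)-1-m/2}$ exponent.

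The only real obstacle is notational rather than conceptual: one has to keep the orientation of the trace around each cycle consistent with the ordering on the blocks of $\pi\gamma$ (it is $\pi\gamma$, not $\gamma\pi$, that governs the cyclic order of the $D$-factors within each resulting trace), and verify that the $\delta_{j,k}\delta_{i,l}$ pattern of the covariance — not $\delta_{i,k}\delta_{j,l}$ — is what produces $\pi\gamma$ rather than its inverse. Once this bookkeeping is pinned down, comparing coefficients term-by-term over $\pi\in\setofpartitionsof_2(m)$ with $\pi\leq\ker(p)$ completes the proof.
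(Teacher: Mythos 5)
Your proposal is correct and follows essentially the same route as the paper: expand the normalized trace over index functions $i,j$, apply the Wick formula so that the covariance $\frac1N\delta_{i,l}\delta_{j,k}\delta_{r,p}$ forces $\pi\le\ker(p)$ and $j=i\circ\pi$, substitute into the $D$-factors to get $\prod_k(D^{q(k)})_{j_k,j_{(\pi\gamma)(k)}}$, and recognize the remaining sum as $N^{\#(\pi\gamma)}\normalizedtrace_{\pi\gamma}[\cdots]$ with $\#(\pi\gamma)=\#(\gamma\pi)$. The orientation caveat you flag at the end is exactly the point the paper settles by its explicit example with $\sigma=(1\ 4)(2\ 5)(3)$, so nothing further is missing.
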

 \begin{proof}
   We write $A^{(p)}=\colon (a_{i,j}^{(p)})_{i,j=1}^N$ for every $p\in [t]$ and $D^q=\colon (d_{i,j}^{(q)})_{i,j=1}^N$ for every $q\in \naturalnumbers_0$.  Then, with the help of the Wick formula \eqref{eq:wick-formula} from Remark~\hyperref[remark:GUE-2]{\ref*{remark:GUE}~\ref*{remark:GUE-2}}, we compute  for all $m\in \naturalnumbers$, $p:[m]\to [t]$ and $q:[m]\to\naturalnumbers_0$:
   \begin{align*}
     (\normalizedtrace\otimes &\expectation)\Bigl(\prod_{n\in [m]}^{\longrightarrow} \bigl(A^{(p(n))}D^{q(n)}\bigr)\Bigr)\\
\quad\\
     &=\frac{1}{N}\sum_{i,j:[m]\to[N]}\mathbb E\Bigl[\hspace{-1.7em}\underset{\displaystyle =\Bigl(\prod_{n=1}^ma^{(p(n))}_{i(n),j(n)}\Bigr)\Bigl(\prod_{n=1}^md^{(q(n))}_{j(n),i(\gamma(n))}\Bigr)}{\underbrace{\prod_{n=1}^m\left(a_{i(n),j(n)}^{(p(n))}d_{j(n),i(\gamma(n))}^{(q(n))}\right)}}\hspace{-1.7em}\Bigr]\\
\quad\\
     &=\frac{1}{N}\sum_{i,j:[m]\to [N]}\Bigl(\prod_{n=1}^md^{(q(n))}_{j(n),i(\gamma(n))}\Bigr)\sum_{\pi\in \setofpartitionsof_2(m)}\prod_{\substack{\{r,s\}\in \pi\\r<s}}  \hspace{-3.85em}\underset{\hspace{3.85em}\displaystyle =\frac{1}{N}\underset{\displaystyle=\delta_{i,j\circ\pi}}{\underbrace{\delta_{i(r),j(s)}\delta_{j(r),i(s)}}}\delta_{p(r),p(s)}}{\underbrace{\mathbb E\Bigl[a^{(p(r))}_{i(r),j(r)}a^{(p(s))}_{i(s),j(s)}\Bigr]}}\\
     &=\frac{1}{N^{1+\frac{m}{2}}}\sum_{\substack{\pi\in \setofpartitionsof_2(m)\\\pi\leq \ker(p)}}\sum_{j:[m]\to [N]} \Bigl(\prod_{n=1}^md^{(q(n))}_{j(n),j((\pi\gamma)(n))}\Bigr).
   \end{align*}
   Now, note that for every $m\in \naturalnumbers$, $q:[m]\to\naturalnumbers_0$ and  $\sigma\in S_m$:
   \begin{align*}
     \sum_{j:[m]\to [N]}\Bigl(\prod_{n=1}^md^{(q(n))}_{j(n),j(\sigma(n))}\Bigr)=\trace_\sigma\bigl[D^{q(1)},\ldots, D^{q(m)}\bigr]
     =N^{\# \sigma}\cdot \normalizedtrace_\sigma\bigl[D^{q(1)},\ldots, D^{q(m)}\bigr].
   \end{align*}
   For example, if $m=5$ and $\sigma= (1\ 4)\,(2 \ 5)\, (3)$, then
   \begin{align*}
     \sum_{j:[5]\to [N]}d_{j(1),j(4)}^{(q(1))}&d_{j(2),j(5)}^{(q(2))}d_{j(3),j(3)}^{(q(3))}d_{j(4),j(1)}^{(q(4))}d_{j(5),j(2)}^{(q(5))}\\
     &=\trace(D^{q(1)}D^{q(4)})\cdot \trace(D^{q(2)}D^{q(5)}) \cdot \trace(D^{q(3)})\\
     &=N^3\cdot \normalizedtrace(D^{q(1)}D^{q(4)})\cdot \normalizedtrace(D^{q(2)}D^{q(5)}) \cdot \normalizedtrace(D^{q(3)}).
\end{align*}
   Hence, for every $m\in \naturalnumbers$, $p:[m]\to [t]$ and $q:[m]\to \naturalnumbers_0$ we have
   $$
       (\normalizedtrace\otimes \expectation)\Bigl(\prod_{n\in [m]}^{\longrightarrow} \bigl(A^{(p(n))}D^{q(n)}\bigr)\Bigl)
     =\sum_{\substack{\pi\in \setofpartitionsof_2(m)\\\pi\leq \ker(p)}}\sum_{j:[m]\to [N]}\hspace{-0.7em}\normalizedtrace_{\pi\gamma}\bigl[D^{q(1)},\ldots, D^{q(m)}\bigr]\cdot N^{\# (\gamma\pi)-1-\frac{m}{2}}
$$
   because $\#(\gamma\pi)=\#(\pi\gamma)$. That is what we needed to prove.
 \end{proof}

 \begin{remark}
   For an asymptotic statement $N\to\infty$, our deterministic matrices $(D_N)_{N\in \naturalnumbers}$ need to have a limit in distribution. Hence we assume that
     $D_N\convergesindistributiontoo d$
   for some random variable $d$ in a probability space $(\mathcal A,\varphi)$, i.e.\
     $\lim_{N\to\infty} \normalizedtrace(D^m_N)=\varphi(d^m)$ for all $m\in \naturalnumbers$.
   This then, of course, implies that also
   \begin{align*}
     \lim_{N\to\infty}\normalizedtrace_{\pi\gamma}\bigl[D^{q(1)},\ldots, D^{q(m)}\bigr]=\varphi_{\pi\gamma}(d^{q(1)},\ldots,d^{q(m)})
   \end{align*}
   and thus we obtain the following corollary.
 \end{remark}

 \begin{theorem}
   \label{theorem:gue-deterministic-asymptotics}
   Let $t\in \naturalnumbers$ be arbitrary, for each $N\in \naturalnumbers$, let $A_N^{(1)},\ldots, A_N^{(t)}$ be $t$ independent \GUEN\ and $D_N\in M_N(\complexnumbers)$ a deterministic $N\times N$-matrix such that
     $D_N\convergesindistributiontoo d$
   for some random variable $d$ in a probability space $(\mathcal A,\varphi)$. Then, for all $m\in \naturalnumbers$, $q:[m]\to \naturalnumbers_0$ and $p:[m]\to [t]$,
   \begin{align}
     \label{eq:theorem-gue-deterministic-asymptotics}
     \lim_{N\to\infty}\mathbb E\bigl[\normalizedtrace\bigl(A_N^{(p(1))}D_N^{q(1)}\ldots A_N^{(p(m))}D_N^{q(m)}\bigr)\bigr]&=\sum_{\substack{\pi\in \setofnoncrossingpartitionsof_2(m)\\\pi\leq \ker(p)}}\varphi_{\pi\gamma}\bigl(d^{q(1)},\ldots, d^{q(m)}\bigr).
   \end{align}
   In other words,
     $(A_N^{(1)},\ldots, A_N^{(t)},D_N)\convergesindistributionto (s_1,\ldots, s_t,d)$,
   where $s_1,\ldots,s_t$ are semicirculars and where $s_1,\ldots,s_t,d$ are free.
 \end{theorem}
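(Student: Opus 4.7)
The strategy is to use Theorem~\ref{theorem:gue-deterministic} as a finite-$N$ identity, take the pointwise limit $N\to\infty$ using Proposition~\ref{proposition:characterization-non-crossing-pair-partitions} and the assumption $D_N\to d$, and then recognize the resulting sum as the corresponding joint moment in a free probability space in which $(s_1,\dots,s_t)$ is a free family of standard semicirculars, free from $d$. Since every monomial in $A_N^{(1)},\dots,A_N^{(t)},D_N$ can, by combining adjacent $D$'s and using $D_N^0=I$, be brought into the form $A_N^{(p(1))}D_N^{q(1)}\cdots A_N^{(p(m))}D_N^{q(m)}$ (pure $D$-monomials being handled directly by the hypothesis $D_N\to d$), establishing \eqref{eq:theorem-gue-deterministic-asymptotics} will suffice for the convergence-in-distribution statement.

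\textbf{Passing to the limit.} Starting from Theorem~\ref{theorem:gue-deterministic},
\begin{align*}
(\normalizedtrace\otimes\expectation)\bigl(A_N^{(p(1))}D_N^{q(1)}\cdots A_N^{(p(m))}D_N^{q(m)}\bigr)
=\sum_{\substack{\pi\in\setofpartitionsof_2(m)\\ \pi\leq\ker(p)}}\normalizedtrace_{\pi\gamma}\bigl[D_N^{q(1)},\dots,D_N^{q(m)}\bigr]\cdot N^{\#(\gamma\pi)-1-m/2}.
\end{align*}
Proposition~\ref{proposition:characterization-non-crossing-pair-partitions} tells us that $\#(\gamma\pi)-1-m/2\leq 0$ with equality iff $\pi\in\setofnoncrossingpartitionsof_2(m)$. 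Hence only non-crossing pair partitions contribute in the limit, the others vanishing at rate at least $N^{-2}$. The hypothesis $D_N\to d$ in distribution and the multiplicative (over orbits of $\pi\gamma$) definition of $\normalizedtrace_{\pi\gamma}$ give
\begin{align*}
\lim_{N\to\infty}\normalizedtrace_{\pi\gamma}\bigl[D_N^{q(1)},\dots,D_N^{q(m)}\bigr]=\varphi_{\pi\gamma}\bigl(d^{q(1)},\dots,d^{q(m)}\bigr),
\end{align*}
so the left-hand side of \eqref{eq:theorem-gue-deterministic-asymptotics} is established.

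\textbf{Matching with the free joint moment.} It remains to verify that for free standard semicirculars $s_1,\dots,s_t$ free from $d$, the joint moment $\varphi(s_{p(1)}d^{q(1)}\cdots s_{p(m)}d^{q(m)})$ coincides with the right-hand side of \eqref{eq:theorem-gue-deterministic-asymptotics}. Expanding this moment via the moment-cumulant formula (Remark~\ref{remark:moment-cumulant-formula}) over $\setofnoncrossingpartitionsof(2m)$ and invoking Theorem~\ref{theorem:freeness-vanishing-mixed-cumulants-random-variables}, each surviving partition $\sigma$ splits into blocks on the $s$-positions $\{1,3,\dots,2m-1\}$ and on the $d$-positions $\{2,4,\dots,2m\}$. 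Because $\kappa_n(s_i,\dots,s_i)=\delta_{n,2}$ and mixed $s$-cumulants vanish, the $s$-part is forced to be a pair partition $\pi\in\setofnoncrossingpartitionsof_2(m)$ with $\pi\leq\ker(p)$, each surviving $\pi$ contributing the factor $1$. The $d$-part then ranges over all $\tilde\sigma\leq K(\pi)$, where $K(\pi)$ is the Kreweras complement, and the moment-cumulant formula collapses this inner sum to $\varphi_{K(\pi)}(d^{q(1)},\dots,d^{q(m)})$. Thus
\begin{align*}
\varphi\bigl(s_{p(1)}d^{q(1)}\cdots s_{p(m)}d^{q(m)}\bigr)=\sum_{\substack{\pi\in\setofnoncrossingpartitionsof_2(m)\\ \pi\leq\ker(p)}}\varphi_{K(\pi)}\bigl(d^{q(1)},\dots,d^{q(m)}\bigr).
\end{align*}

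\textbf{Main obstacle.} The one non-formal step is the combinatorial identity $\varphi_{\pi\gamma}=\varphi_{K(\pi)}$ on the level of $d$-arguments, i.e.\ that for $\pi\in\setofnoncrossingpartitionsof_2(m)$ the cycle decomposition of the permutation $\pi\gamma$ on $[m]$ agrees, when read on the $d$-positions, with the blocks of the Kreweras complement. This is the standard bijection between orbits of $\pi\gamma$ and faces of the non-crossing pairing (also used implicitly in the proof of Theorem~\ref{theorem:free-convolution-semigroup}); it can be checked by induction on $m$ via the interval-stripping procedure of Remark~\ref{remark:interval-stripping}, since removing an interval pair in $\pi$ simultaneously removes a fixed point from $\pi\gamma$ and a singleton block of $K(\pi)$. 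Once this identification is in place, the free moment formula matches the limit from step two and the theorem follows.
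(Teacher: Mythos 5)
Your proof is correct and follows essentially the same route as the paper's: take the limit in Theorem~\ref{theorem:gue-deterministic} using Proposition~\ref{proposition:characterization-non-crossing-pair-partitions}, expand the free moment $\varphi(s_{p(1)}d^{q(1)}\cdots s_{p(m)}d^{q(m)})$ via moment--cumulant and vanishing of mixed cumulants to get $\sum_{\pi\leq\ker(p)}\varphi_{K(\pi)}$, and match $\varphi_{K(\pi)}$ with $\varphi_{\pi\gamma}$. Your interval-stripping induction for the identification of $K(\pi)$ with the orbits of $\pi\gamma$ is in fact a more complete justification than the paper's, which only verifies an example.
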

 \begin{proof}
   The limit formula \eqref{eq:theorem-gue-deterministic-asymptotics} follows from Theorem~\ref{theorem:gue-deterministic} since for every $m\in \naturalnumbers$ and $\pi\in \setofpartitionsof_2(m)$ the sequence
     $\normalizedtrace_{\pi\gamma}[D_N^{q(1)},\ldots, D_N^{q(m)}]$
   converges to $\varphi_{\pi\gamma}(d^{q(1)},\ldots,d^{q(m)})$ and since the limit of
     $N^{\#(\gamma\pi)-1-\frac{m}{2}}$
   singles out the non-crossing $\pi\in \setofpartitionsof_2(m)$. By Theorem~\ref{theorem:gue-family} and Assignment~\hyperref[assignment-2]{2}, Exercise~1, it suffices to see asymptotic freeness, for $N\to\infty$, between
     $\{A_N^{(1)},\ldots, A_N^{(t)}\}$ and $D_N$.
   To do so, one has to recall from the proof of Theorem~\ref{theorem:free-convolution-semigroup} that, for every $m\in \naturalnumbers$, $p:[m]\to [t]$ and $q:[m]\to\naturalnumbers_0$,
   \begin{align*}
     \varphi(s_{p(1)}d^{q(1)}\ldots s_{p(m)}d^{q(m)})
     &=\sum_{\sigma\in \setofnoncrossingpartitionsof(2m)}\kappa_\sigma\bigl(s_{p(1)},d^{q(1)},\ldots, s_{p(m)},d^{q(m)}\bigr)\\
     &=\sum_{\substack{\pi\in\setofnoncrossingpartitionsof_2(m)\\\pi\leq \ker(p)}} \underset{\displaystyle =1}{\underbrace{\kappa_\pi\bigl(s_{p(1)},\ldots,s_{p(m)}\bigr)}}\underset{\displaystyle=\varphi_{K(\pi)}\bigl(d^{q(1)},\ldots,d^{q(m)}\bigr)}{\underbrace{\sum_{\substack{\tilde{\pi}\in \setofnoncrossingpartitionsof_2(m)\\\pi\cup \tilde\pi\in \setofnoncrossingpartitionsof(2m)}}\kappa_{\tilde{\pi}}\bigl(d^{q(1)},\ldots, d^{q(m)}\bigr)}},
   \end{align*}
   where we have used the moment-cumulant formula from Remark~\ref{remark:moment-cumulant-formula}.
   This identity implies the formula \eqref{eq:theorem-gue-deterministic-asymptotics} from the claim, provided that for every $\pi\in \setofnoncrossingpartitionsof_2(m)$ the Kreweras complement $K(\pi)$ corresponds, as a  permutation, to $\pi\gamma$. This is indeed the case. We check this just for an example: for the partition
     $\pi=\{\{1,2\},\{3,6\}, \{4,5\},\{7,8\}\}\in \setofnoncrossingpartitionsof_2(8)$ this can be seen from
   \begin{align*}
     \pi\gamma = (1) \, (2\ 6\ 8)\, (3 \ 5)\, (4)\, (7)
   \end{align*}
   \begin{center}
     \begin{tikzpicture}[baseline=-2pt]
      \node[inner sep=1pt] (n1) at (0em,0) {};
      \node[inner sep=1pt] (n2) at (2em,0) {};
      \node[inner sep=1pt] (n3) at (4em,0) {};
      \node[inner sep=1pt] (n4) at (6em,0) {};
      \node[inner sep=1pt] (n5) at (8em,0) {};
      \node[inner sep=1pt] (n6) at (10em,0) {};
      \node[inner sep=1pt] (n7) at (12em,0) {};
      \node[inner sep=1pt] (n8) at (14em,0) {};
      \node[inner sep=1pt] (m1) at (1em,0) {};
      \node[inner sep=1pt] (m2) at (3em,0) {};
      \node[inner sep=1pt] (m3) at (5em,0) {};
      \node[inner sep=1pt] (m4) at (7em,0) {};
      \node[inner sep=1pt] (m5) at (9em,0) {};
      \node[inner sep=1pt] (m6) at (11em,0) {};
      \node[inner sep=1pt] (m7) at (13em,0) {};
      \node[inner sep=1pt] (m8) at (15em,0) {};
      \draw[densely dashed] (n1) --++(0,-1.5em) -| (n2);
      \draw (m1) --++(0,-1em);
      \draw (m4) --++(0,-1em);
      \draw (m7) --++(0,-1em);
      \draw[densely dashed] (n4) --++(0,-1.5em) -| (n5);      
      \draw[densely dashed] (n3) --++(0,-2.5em) -| (n6);
      \draw (m3) --++(0,-2em) -| (m5);      
      \draw[densely dashed] (n7) --++(0,-1.5em) -| (n8);
      \draw (m2) --++(0,-3em) -| (m6);
      \draw ($(m2)+(0,-3em)$)  -| (m8);
      \node [fill=white, right, inner sep=1pt] at ($(n1)+(-0.375em,0em)$) {$1\hspace{0.5em}\overline{1}\hspace{0.5em}2\hspace{0.5em}\overline{2}\hspace{0.5em}3\hspace{0.5em}\overline{3}\hspace{0.475em}4\hspace{0.5em}\overline{4}\hspace{0.5em}5\hspace{0.5em}\overline{5}\hspace{0.5em}6\hspace{0.5em}\overline{6}\hspace{0.5em}7\hspace{0.5em}\overline{7}\hspace{0.5em}8\hspace{0.5em}\overline{8}$};  
    \useasboundingbox (-0.25em,-3.125em) rectangle (15.25em, 0.75em);
  \end{tikzpicture}
\end{center}
and
\begin{align*}
  K(\pi)= \{ \{1\},\{2,6,8\},\{3,5\},\{4\},\{7\} \}.
\end{align*}
That concludes the proof.
\end{proof}

\begin{example}
  \begin{enumerate}
  \item Theorem~\ref{theorem:gue-deterministic-asymptotics} yields now first non-trivial results for asymptotic eigenvalue distributions of random matrices via free convolution. Note that 
      $A_N^{(1)}+A_N^{(2)}$ for two independent
     \GUEN\ is not very interesting because the random matrix $A_N^{(1)}+A_N^{(2)}$ is just another \GUEN, only with different variance. More interesting is a sequence
$A_N+D_N$
    for a \GUE\ $A_N$ and a deterministic sequence $D_N$, with a limit 
$D_N\convergesindistributiontoo d$.
    In that case, 
      $(A_N,D_N)\convergesindistributiontoo (s,d)$, where $s$ is a semicircular variable and $s$ and $d$ are free. Then
      $A_N+D_N \convergesindistributiontoo s+d$, and
        hence the limiting averaged (and also almost sure) eigenvalue distribution of $A_N+D_N$ is given by $\mu_s\boxplus \mu_d$.

Let us give a concrete example for this. In the following figure we have generated a \GUEN\ for $N=3000$ and added to this a diagonal deterministic matrix with eigenvalue distribution
$\mu_d=\frac 14(2\delta_{-2}+\delta_{-1}+\delta_{+1})$. The histogram of the $N$ eigenvalues is compared to $\mu_s\boxplus \mu_d$, which we calculated via our subordination iteration machinery according to Theorem \ref{theorem:subordination-iteration}.

\begin{center}
\includegraphics[width=14cm]{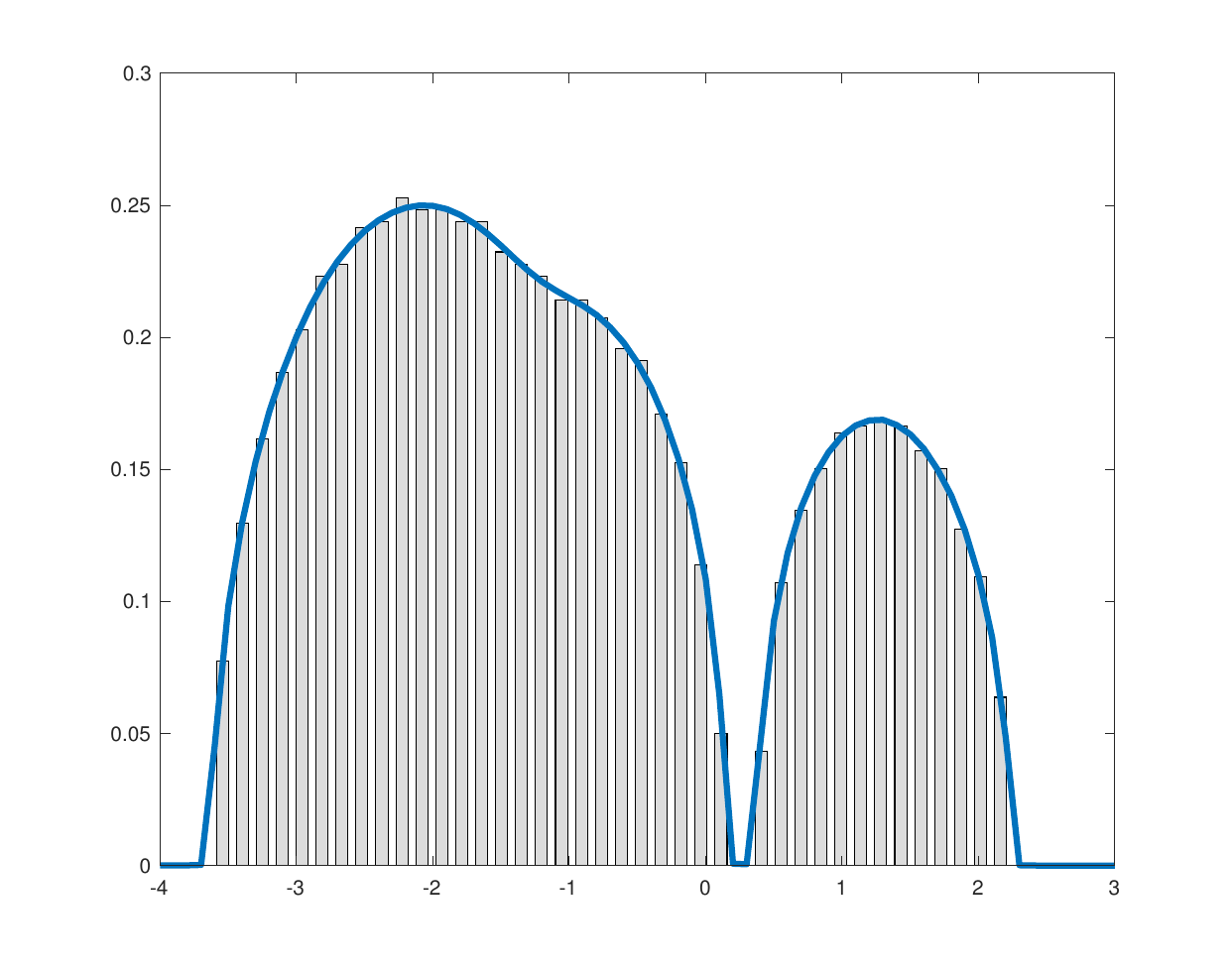}
\end{center}

      \item  Note that according to the proof of Theorem~\ref{theorem:gue-deterministic-asymptotics} the deterministic matrices
         $D^{q(1)}_N,\ldots, D_N^{q(m)}$
        do not need to be powers of one deterministic matrix $D_N$, but can actually be $m$ arbitrary deterministic matrices. For example, we can choose, alternatingly, powers of two deterministic matrices $D_N$ and $E_N$. Then, under the assumption that the sequence
          $(D_N,E_N)$
        has a limit $(d,e)$ in distribution, we obtain
        \begin{align*}
          (A_N,D_N,E_N)\convergesindistributionto (s,d,e), \quad\text{where  $s$ and $\{d,e\}$ are free}.
        \end{align*}
        The freeness between $s$ and $\{d,e\}$ implies (cf. Exercise~3, Assignment~\hyperref[assignment-6]{6}) that $sds$ and $e$ are free as well. Hence, we see that
        \begin{align*}
          (A_ND_NA_N,E_N)\convergesindistributionto (sds,e) \quad\text{where $sds$ and $e$ are free}.
        \end{align*}
        Thus, we can model more free situations asymptotically via random matrices.
        But note: Whereas $d$ and $e$ can be chosen to have arbitrary distributions,  $sds$ will always have a free compound Poisson distribution (see Exercise~3, Assignment~\hyperref[assignment-8]{8}). The limiting eigenvalue distribution of $A_ND_NA_N+E_N$ is thus given by the measure $\mu_{sds}\boxplus \mu_e$. \par
        Can we also model asymptotically via random matrices  distributions $\mu_1\boxplus \mu_2$, where $\mu_1$ and $\mu_2$ are \emph{arbitrary} probability measures on $\realnumbers$?
        For this we have to use unitary random matrices  instead of Gaussian ones. Note: For random variables $u,d\in \mathcal A$ in a $\ast$-probability space $(\mathcal A,\varphi)$ such that $u$ is unitary, the random variable $udu^\ast$ has the same distribution as $d$, provided $\varphi$ is a trace:
        \begin{align*}
          \varphi[(udu^\ast)^m])=\varphi(ud^mu^\ast)=\varphi(d^m).
        \end{align*}
We will elaborate more on this in the next section.
  \end{enumerate}
\end{example}

\newpage

\section{Unitary Random Matrices and Asymptotic Freeness\\
(and a Remark on Wigner Matrices)}

\begin{definition}
  \begin{enumerate}
  \item For every $N\in \naturalnumbers$, let
    \begin{align*}
      \mathcal U(N)\equalperdefinition \{ U\in M_N(\complexnumbers) \mid UU^\ast=1=U^\ast U\}
    \end{align*}
    be the unitary $N\times N$-matrices over $\complexnumbers$. Then, $\mathcal{U}(N)$ is (with respect to matrix multiplication and the subspace topology induced by $M_N(\complexnumbers)$) a compact group and thus there exists a Haar measure $\lambda$ on $\mathcal{U}(N)$. (This means that $\lambda$ is invariant under translations by group elements.) The Haar measure $\lambda$ is finite and uniquely determined by the translation invariance condition up to a scalar factor. So, we can normalize $\lambda$ to a probability measure. Random matrices distributed according to this probability measure will be called \emph{Haar unitary random matrices}.
  \item A unitary $u$ in $\ast$-probability space $(\mathcal A,\varphi)$ is called  a \emph{Haar unitary} if
    \begin{align*}
      \varphi(u^k)=\delta_{k,0} \quad\text{for all } k\in \integers,
    \end{align*}
    meaning $\varphi(1)=1$ and $\varphi(u^n)=0=\varphi((u^\ast)^n)$ for all $n\in \naturalnumbers$.
  \end{enumerate}
\end{definition}

\begin{notation}
  For every $N\in \naturalnumbers$, every $m\in \naturalnumbers$ with $m\leq N$ and every $\alpha\in S_m$, define
  \begin{align*}
    \weingarten(N,\alpha)=\mathbb E\left[u_{1,1}\ldots u_{m,m}\overline{u_{1,\alpha(1)}}\ldots \overline{u_{m,\alpha(m)}}\right],
  \end{align*}
  where $(u_{i,j})_{i,j=1}^N$ is a Haar unitary random $N\times N$-matrix. Then, we call $\weingarten$
  the \emph{Weingarten function}.
\end{notation}

\begin{facts}
  \label{facts:haar-unitary-random-matrices}
  \begin{enumerate}
  \item\label{facts:haar-unitary-random-matrices-1} One can give a \enquote{Wick-type} formula for Haar unitary random matrices:
    Let $N\in \naturalnumbers$ be arbitrary and let $U=(u_{i,j})_{i,j=1}^N$ be a Haar unitary $N\times N$-random matrix. Then, for all $m,m'\in \naturalnumbers$ with $m,m'\leq N$ and all $i,i',j,j':[m]\to [N]$,
    \begin{multline*}
        \mathbb E\bigl[ u_{i(1),j(1)}\ldots u_{i(m),j(m)}\overline{u_{i'(1),j'(1)}}\ldots \overline{u_{i'(m'),j'(m')}} \bigr]\\
\quad\\
    =\delta_{m,m'}\sum_{\alpha,\beta\in S_m}\delta_{i(\beta(1))i'(1)}\ldots \delta_{i(\beta(m)),i'(m)}\\
 \cdot\>\delta_{j(\alpha(1)),j'(1)}\ldots \delta_{j(\alpha(m)),j'(m)}\cdot \weingarten(N,\alpha^{-1}\beta),
\end{multline*}
    \begin{center}
    \begin{tikzpicture}[scale=0.9]
  \coordinate (a1) at (0em, 0em);
  \coordinate (a2) at (6em, 0em);
  \coordinate (a3) at (12em, 0em);
  \coordinate (b1) at (18em, 0em);
  \coordinate (b2) at (24em, 0em);
  \coordinate (b3) at (30em, 0em);
  \draw ($(a3)+(1em,0em)$) -- ++ (0,2em) -| ($(b2)+(-1.25em,0em)$);
  \draw ($(a3)+(-1em,0em)$) -- ++ (0,-2em) -| ($(b2)+(-1.25em,0em)$);
  \draw[->, shorten >=1em] ($(a2)+(1em,0em)$) -- ++ (0,3em) -| node [pos=0.1, below] {$\alpha^{-1}$} ($(b2)+(1.25em,0em)$);
  \draw[->, shorten >=1em] ($(a2)+(-1em,0em)$) -- ++ (0,-3em) -| node [pos=0.1, above] {$\beta$} ($(b1)+(-1.25em,0em)$);
  \draw ($(a1)+(-1em,0em)$) -- ++ (0,4em) -| ($(b3)+(1.25em,0em)$);
  \draw ($(a1)+(-1em,0em)$) -- ++ (0,-4em) -| ($(b3)+(-1.25em,0em)$);
  \node[fill=white, inner sep=2pt] at (a1) {$i(1),j(1)$};
  \node[fill=white, inner sep=2pt] at (a2) {$i(2),j(2)$};
  \node[fill=white, inner sep=2pt] at (a3) {$i(3),j(3)$};
  \node[fill=white, inner sep=2pt] at (b1) {$i'(1),j'(1)$};
  \node[fill=white, inner sep=2pt] at (b2) {$i'(2),j'(2)$};
  \node[fill=white, inner sep=2pt] at (b3) {$i'(3),j'(3)$};
  \useasboundingbox (-2.125em,-4.125em) rectangle (32.225em,4.125em);  
\end{tikzpicture}
\end{center} 
or, equivalently,
$$
    \mathbb E\Bigl[\bigl(\prod_{n=1}^m u_{i(n),j(n)}\bigr)\bigl(\prod_{n'=1}^{m'}\overline{u_{i'(n'),j'(n')}}\bigr)\Bigr]
  =\delta_{m,m'}\sum_{\alpha,\beta\in S_m}\delta_{i\circ \beta,i'}\delta_{j\circ \alpha,j'}\weingarten(N,\alpha^{-1}\beta).
$$
  \item\label{facts:haar-unitary-random-matrices-2} The asymptotics of the Weingarten function can be summarized as follows. For all $m\in \naturalnumbers$ and $\alpha\in S_m$  there is some $\phi(\alpha)\in \complexnumbers$ (depending only on $\alpha$, not on $N$) such that
    \begin{align}\label{eq:weingarten-asymptotics}
      \weingarten (N,\alpha)=\phi(\alpha)N^{\#\alpha-2m}+O\left(N^{\#\alpha-2m-2}\right) \quad (N\to\infty).
    \end{align}
  \end{enumerate}
\end{facts}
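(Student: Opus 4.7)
The plan is to derive both parts from the two--sided invariance of Haar measure and Schur--Weyl duality. Write
\begin{equation*}
I(i,j,i',j') := \mathbb E\Bigl[\prod_{n=1}^m u_{i(n),j(n)}\prod_{n'=1}^{m'}\overline{u_{i'(n'),j'(n')}}\Bigr].
\end{equation*}
Applying Haar invariance under $U\mapsto e^{i\theta}U$ multiplies $I$ by $e^{i\theta(m-m')}$, so $I=0$ unless $m=m'$; this explains the factor $\delta_{m,m'}$. From now on assume $m=m'$.

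Next, I would extract the algebraic structure of the formula using Haar invariance under $U\mapsto V_1 U V_2$ for arbitrary $V_1,V_2\in\mathcal U(N)$. The tensor $T_{i,j,i',j'}=I(i,j,i',j')$ lives in $V^{\otimes m}\otimes (V^*)^{\otimes m}\otimes (V^*)^{\otimes m}\otimes V^{\otimes m}$ (with $V=\complexnumbers^N$) and is invariant under the $\mathcal U(N)\times\mathcal U(N)$--action induced by $(V_1,V_2)$. By Schur--Weyl duality, for $m\leq N$ the space of $\mathcal U(N)$--invariants in $V^{\otimes m}\otimes (V^*)^{\otimes m}$ is spanned by the permutation tensors $P_\sigma$, $\sigma\in S_m$. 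Therefore $T=\sum_{\alpha,\beta\in S_m} c_{\alpha,\beta}\,P_\beta\otimes P_\alpha$ for some scalars $c_{\alpha,\beta}$. Reading this off in coordinates produces precisely the Kronecker--delta pattern
\begin{equation*}
\delta_{i(\beta(1)),i'(1)}\cdots\delta_{i(\beta(m)),i'(m)}\,\delta_{j(\alpha(1)),j'(1)}\cdots\delta_{j(\alpha(m)),j'(m)},
\end{equation*}
and the translation invariance $c_{\gamma\alpha,\gamma\beta}=c_{\alpha,\beta}$ (inherited from relabeling indices) forces $c_{\alpha,\beta}$ to depend only on $\alpha^{-1}\beta$, which we christen $\weingarten(N,\alpha^{-1}\beta)$. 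This establishes Part~\ref{facts:haar-unitary-random-matrices-1}.

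For Part~\ref{facts:haar-unitary-random-matrices-2}, the next step is to characterize $\weingarten(N,\cdot)$ as a matrix inverse. Substituting the formula just derived into the orthogonality $\sum_k u_{i,k}\overline{u_{i',k}}=\delta_{i,i'}$, applied inside one of the expectations, yields after summing out indices the linear system
\begin{equation*}
\sum_{\beta\in S_m} N^{\#(\alpha^{-1}\beta)}\,\weingarten(N,\beta)=\delta_{\alpha,e}\qquad(\alpha\in S_m),
\end{equation*}
so $\weingarten(N,\cdot)$ is the inverse, in the group algebra $\complexnumbers[S_m]$, of the class function $G(\sigma)=N^{\#\sigma}$. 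Writing $G=N^m(1+R)$ where $R(\sigma)=N^{\#\sigma-m}\mathbbm 1_{\sigma\ne e}$ has entries of size $O(N^{-1})$, the Neumann series $G^{-1}=N^{-m}\sum_{k\ge0}(-R)^{*k}$ converges for $N$ large enough. Each term corresponds to chains $e=\sigma_0\to\sigma_1\to\cdots\to\sigma_k=\alpha$ in $S_m$ with weights $N^{\sum(\#\sigma_j^{-1}\sigma_{j+1}-m)}$, so the dominant contribution to $\weingarten(N,\alpha)$ comes from the shortest factorizations, which have total exponent $\#\alpha-2m$; collecting the combinatorial weight from minimal factorizations gives the coefficient $\phi(\alpha)$, while longer chains contribute to the error. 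Using that the Cayley length $m-\#\sigma$ on $S_m$ has the parity of the sign of $\sigma$, one checks that corrections to the leading term decrease by a factor of $N^{-2}$, not merely $N^{-1}$, yielding the advertised remainder $O(N^{\#\alpha-2m-2})$.

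The main obstacle will be the bookkeeping in the Neumann series: identifying the precise coefficient $\phi(\alpha)$ (which turns out to be expressible in terms of a Möbius function on the poset of permutations under Cayley length, or equivalently via characters of $S_m$) and, in particular, verifying that the correction is truly of order $N^{-2}$. The parity argument sketched above is the key point, but making it rigorous requires grouping terms in $G^{-1}$ by Cayley distance and exploiting that $S_m$ is generated by transpositions of parity $-1$, so that any even shift of the exponent requires an even number of extra transpositions.
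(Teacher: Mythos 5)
The paper does not actually prove these Facts. In the lecture notes they are stated as known results from Weingarten calculus, and the subsequent Remark only justifies the $\delta_{m,m'}$ factor using exactly the phase--invariance argument you open with ($U\mapsto\lambda U$ with $|\lambda|=1$ again Haar-distributed). Everything else --- the Schur--Weyl decomposition, the Gram-matrix characterization, and the asymptotics --- is left unproven. So your proposal supplies a proof where the paper gives a citation, and it is worth comparing on its own merits.

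Your outline is the standard Collins--Śniady derivation and is essentially correct. For Part~(1), two-sided invariance $U\mapsto V_1UV_2$ puts $\mathbb E[U^{\otimes m}\otimes \bar U^{\otimes m}]$ in the double commutant of the tensor action; Schur--Weyl identifies the invariants with the span of permutation operators, and these are linearly independent precisely when $m\leq N$, which is the place where that hypothesis enters (worth stating explicitly, since otherwise the \enquote{Gram matrix} below is only a pseudo-inverse). The relabeling argument $c_{\gamma\alpha,\gamma\beta}=c_{\alpha,\beta}$ is correct and, after setting $\gamma=\alpha^{-1}$, reduces $c$ to a function of $\alpha^{-1}\beta$. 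For the Gram relation, the step \enquote{applied inside one of the expectations, yields after summing out indices} deserves to be made concrete: take $i(n)=n$ distinct, $i'=i\circ\tau$, sum over all $j:[m]\to[N]$ with $j'=j$, and use the column-orthogonality $\sum_k u_{ik}\overline{u_{i'k}}=\delta_{ii'}$; this gives $\sum_{\alpha} N^{\#\alpha}\,\weingarten(N,\alpha^{-1}\tau)=\delta_{\tau,e}$, which after $\beta=\alpha^{-1}\tau$ is the linear system you wrote. The Neumann-series asymptotics are also sound: writing $\hat G=N^m(e+\hat R)$ in $\complexnumbers[S_m]$ with $\hat R=\sum_{\sigma\neq e}N^{-|\sigma|}\sigma$, the coefficient of $\alpha$ in $\hat G^{-1}$ receives, from $(-\hat R)^{k}$, only terms $\sigma_1\cdots\sigma_k=\alpha$ with total weight $N^{-\sum_j|\sigma_j|}$; by the triangle inequality and the identity $\operatorname{sgn}(\sigma)=(-1)^{|\sigma|}$ applied to the product relation, $\sum_j|\sigma_j|$ is $\geq |\alpha|$ and of the same parity, so the exponents jump by two, giving the $O(N^{\#\alpha-2m-2})$ remainder. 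Only finitely many $k$ contribute to each fixed power of $N$, so $\phi(\alpha)$ is a finite (M\"obius-type) sum. In short: the approach is correct and more complete than what the paper offers; the only real places to flesh out are the explicit index choices producing the Gram relation and the role of $m\leq N$ in guaranteeing invertibility of $(N^{\#(\alpha^{-1}\beta)})_{\alpha,\beta}$.
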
 

\begin{remark}
  \begin{enumerate}
  \item The fact that, for $m,m',N\in \naturalnumbers$ with $m,m'\leq N$,  a $\ast$-moment
    \begin{align*}
       \mathbb E\left[u_{i(1),j(1)}\ldots u_{i(m),j(m)}\overline{u_{i'(1),j'(1)}}\ldots \overline{u_{i'(m'),j'(m')}}\right]
    \end{align*}
    in the entries of a Haar unitary $U=(u_{i,j})_{i,j=1}^N$ random matrix is zero if $m\neq m'$ follows from the fact that with $U$ also $\lambda U$ is a Haar unitary random matrix for any $\lambda\in \complexnumbers$ with $|\lambda|=1$. Then, the above moment of $U$ must equal the corresponding moment
    \begin{align*}
      \mathbb E\left[\lambda u_{i(1),j(1)}\ldots \lambda u_{i(m),j(m)}\overline{\lambda u_{i'(1),j'(1)}}\ldots \overline{\lambda u_{i'(m'),j'(m')}}\right],
    \end{align*}
    of $\lambda U$, which, due to $\lambda^{-1}=\overline\lambda$, is identical to
    \begin{align*}
      \lambda^{m-m'}\mathbb E\left[u_{i(1),j(1)}\ldots u_{i(m),j(m)}\overline{ u_{i'(1),j'(1)}}\ldots \overline{ u_{i'(m'),j'(m')}}\right].
    \end{align*}
    The only way to satisfy this equation is thus that the moment vanishes if $m\neq m'$.
  \item This shows that for every Haar unitary random matrix $U$ it holds
    \begin{align*}
      (\normalizedtrace\otimes \expectation)[U^k]=0 \quad\text{for all }k\in \integers\backslash \{0\}.
    \end{align*}
    Thus, for each $N\in\naturalnumbers$, a Haar unitrary random $N\times N$-matrix is a Haar unitary. And thus, for a sequence $(U_N)_{N\in\naturalnumbers}$ of Haar unitray random matrices, the limit exists in distribution and is also a Haar unitary $u$, i.e., $U_N\convergesindistributiontoo u$.
    The interesting question is, of course, if Haar unitary random matrices are also asymptotically free from deterministic matrices. We only consider the most relevant case of \enquote{randomly rotated} matrices.
  \end{enumerate}
\end{remark}

\begin{theorem}
  \label{theorem:haar-unitary-deterministic-asymptotics}
  For every $N\in \naturalnumbers$, let $A_N,B_N\in M_N(\complexnumbers)$, and suppose
    $A_N\convergesindistributiontoo a$ and $B_N\convergesindistributiontoo b$
  for random variables $a$ and $b$ in some $\ast$-probability space. Furthermore, for every $N\in \naturalnumbers$, let $U_N$ be a Haar unitary random $N\times N$-matrix. Then,
  \begin{align*}
    (U_NA_NU_N^\ast,B_N) \convergesindistributionto (a,b), \qquad \text{where $a$ and $b$ are free}.
  \end{align*}
  In particular, the random matrices $U_NA_NU_N^\ast$ and $B_N$ are asymptotically free from each other, for $N\to\infty$.
\end{theorem}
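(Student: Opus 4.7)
The plan is to verify the freeness condition directly at the level of limiting mixed moments. Since conjugation by a unitary leaves the normalized trace invariant, the individual moments of $U_N A_N U_N^*$ agree with those of $A_N$ and therefore converge to the moments of $a$. The content of the theorem is thus the asymptotic \emph{freeness} between $U_N A_N U_N^*$ and $B_N$. By linearity and the identity $U_N p(A_N) U_N^* = p(U_N A_N U_N^*)$, this reduces to showing that for any $m \geq 1$ and deterministic sequences $(A_N^{(i)})_N$, $(B_N^{(i)})_N$ ($i = 1, \ldots, m$) whose limiting distributions $a_i, b_i$ satisfy $\varphi(a_i) = 0 = \varphi(b_i)$,
\begin{equation*}
\lim_{N \to \infty} \expectation\bigl[\normalizedtrace\bigl(U_N A_N^{(1)} U_N^* B_N^{(1)} \, U_N A_N^{(2)} U_N^* B_N^{(2)} \cdots U_N A_N^{(m)} U_N^* B_N^{(m)}\bigr)\bigr] = 0.
\end{equation*}

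Expanding the trace in matrix coordinates yields a sum containing $m$ entries of $U_N$ and $m$ entries of $U_N^*$, together with entries of the $A_N^{(i)}$ and $B_N^{(i)}$. Applying the Wick-type Weingarten formula from Facts~\ref{facts:haar-unitary-random-matrices}, the expectation over $U_N$ collapses to a sum over pairs $(\alpha,\beta) \in S_m \times S_m$ weighted by $\weingarten(N, \alpha^{-1}\beta)$, with Kronecker deltas pairing up the row and column indices of the unitary entries according to $\beta$ and $\alpha$ respectively. Carrying out the remaining free index sums then produces --- exactly as in the Gaussian case treated in Theorem~\ref{theorem:gue-deterministic} --- a product of permutation-traces $\normalizedtrace_{\sigma_A}(A_N^{(1)},\ldots,A_N^{(m)}) \cdot \normalizedtrace_{\sigma_B}(B_N^{(1)},\ldots,B_N^{(m)})$, where the permutations $\sigma_A, \sigma_B \in S_m$ are determined by $\alpha, \beta$ and the cyclic order $\gamma = (1\ 2\ \cdots\ m)$.

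Combining the factor $N^{\#\sigma_A + \#\sigma_B}$ from these index sums with the $N^{-1}$ from $\normalizedtrace$ and the leading Weingarten asymptotic $\weingarten(N, \alpha^{-1}\beta) \sim \phi(\alpha^{-1}\beta)\, N^{\#(\alpha^{-1}\beta) - 2m}$, each pair $(\alpha, \beta)$ contributes a term of order
\begin{equation*}
N^{\#\sigma_A + \#\sigma_B + \#(\alpha^{-1}\beta) - 2m - 1}.
\end{equation*}
The standard geodesic inequality on the symmetric group bounds this exponent by $0$, with equality picking out a restricted class of "non-crossing" configurations; in the limit only these saturating pairs survive, and the lower-order Weingarten corrections are absorbed by the a priori bound on all matrix moments.

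The main obstacle is the combinatorial endgame: one must show that every surviving leading term contains at least one singleton factor, i.e., some $\normalizedtrace(A_N^{(i)})$ or $\normalizedtrace(B_N^{(i)})$. Once this is established, the centering hypothesis $\varphi(a_i) = 0 = \varphi(b_i)$ forces the limit to vanish, completing the proof. This singleton-extraction argument runs parallel to the one used in the proof of Theorem~\ref{theorem:gue-deterministic-asymptotics}, and mirrors in spirit the proof of Theorem~\ref{theorem:freeness-vanishing-mixed-cumulants}: in any cyclic non-crossing arrangement of alternating $A$- and $B$-slots, at least one slot must be isolated from the rest in the relevant partition, whereupon centering kills the corresponding trace factor.
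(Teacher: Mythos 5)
Your overall strategy coincides with the paper's: reduce to centered alternating words, expand via the Weingarten formula into a sum over pairs $(\alpha,\beta)\in S_m\times S_m$ weighted by $\normalizedtrace_\alpha[\cdots]\cdot\normalizedtrace_{\beta^{-1}\gamma}[\cdots]\cdot\weingarten(N,\alpha^{-1}\beta)$, bound the exponent of $N$ by the triangle inequality $|\alpha^{-1}\beta|+|\alpha|+|\beta^{-1}\gamma|\geq|\gamma|=m-1$, and then kill each surviving term using the centering hypothesis. Up to that point everything you write matches the paper's proof.

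The one place where your argument is not yet a proof is exactly the step you yourself flag as ``the main obstacle'': that every geodesic pair $(\alpha,\beta)$ forces a singleton trace factor. You justify this by analogy with interval blocks of non-crossing partitions, but the analogy is not a proof --- the object here is a pair of permutations rather than a single non-crossing partition, and what is needed is a fixed point of $\alpha$ \emph{or} of $\beta^{-1}\gamma$ specifically (a fixed point of $\alpha^{-1}\beta$ would not help, since no trace factor is attached to that permutation). The paper closes this with a short counting argument: the geodesic condition forces $|\alpha|+|\beta^{-1}\gamma|\leq m-1$, hence at least one of the two, say $\alpha$, satisfies $|\alpha|\leq\frac{m-1}{2}$; being a product of at most $\lfloor\frac{m-1}{2}\rfloor$ transpositions, $\alpha$ moves at most $m-1$ of the $m$ points and therefore has a fixed point, i.e.\ a singleton orbit, which contributes the factor $\normalizedtrace(f_i(A_N))\to\varphi(f_i(a))=0$. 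With this inserted, your proof is complete and identical in substance to the paper's.
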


 Note that, for every $N\in \naturalnumbers$, the random matrix $U_NA_NU_N^\ast$ has the same distribution as $A_N$. Thus, also
    $U_NA_NU_N^\ast \convergesindistributiontoo a$.
  The point here is that the random rotation between the eigenspaces of $A_N$ and  $B_N$ makes the two matrices asymptotically free.

Before we start the proof let us give an example for the use of this for the calculation of the asymptotic eigenvalue distribution of the sum of such randomly rotated matrices. We take for $A_N$ and $B_N$ diagonal matrices with eigenvalue distributions
$$
\mu_a=\frac 14(\delta_{-2}+\delta_{-1}+\delta_{1}+\delta_{2})\qquad\text{and}\qquad
\mu_b=\frac 14(2\delta_{-2}+\delta_{-1}+\delta_{1}).
$$
We generate then a Haar unitary random $N\times N$-matrix and consider
$U_NA_NU_N^*+B_N$. According to Theorem \ref{theorem:haar-unitary-deterministic-asymptotics} the eigenvalue distribution of this converges, for $N\to\infty$, to $\mu_a\boxplus \mu_b$. We have proved this only for the averaged version; however, it is also true in the almost sure sense, i.e., for generic realizations of $U_N$. In the following figure we compare the histogram of the eigenvalues of $U_NA_NU_N^*+B_N$, for $N=3000$ and for one realization of $U_N$, with the distribution $\mu_a\boxplus \mu_b$, which we calculated via the subordination iteration machinery from 
Theorem \ref{theorem:subordination-iteration}.
\begin{center}
\includegraphics[width=14cm]{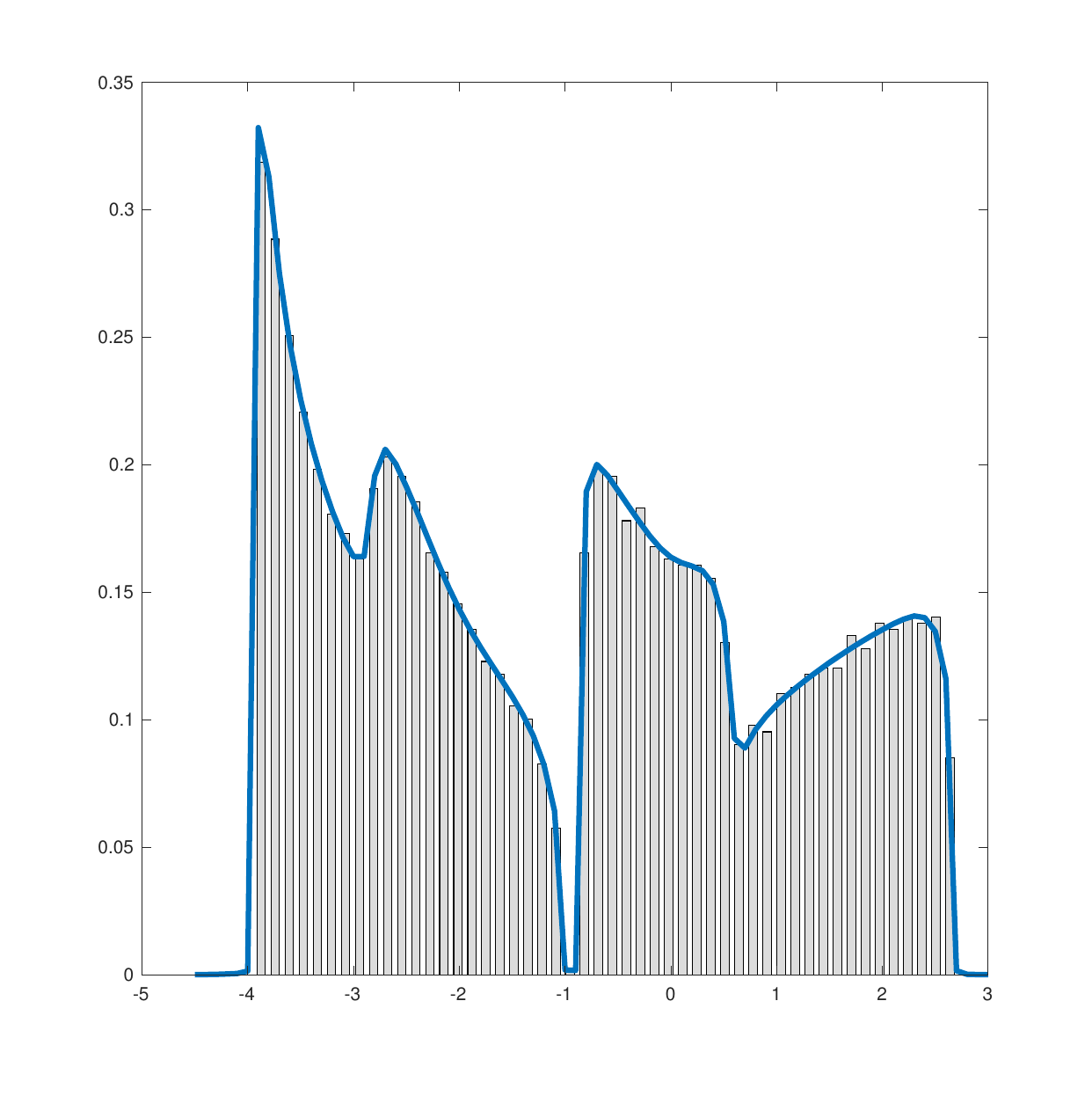}
\end{center}
\begin{proof}
  We have to calculate moments of the form
  \begin{align}
    \label{eq:haar-unitary-deterministic-asymptotics-1}
    (\normalizedtrace\otimes \expectation)\bigl[\prod_{n\in [m]}^{\longrightarrow}\bigl(U_NA_N^{q(n)}U_N^\ast B_N^{p(n)}\bigr)\bigr]
  \end{align}
  for $m,N\in \naturalnumbers$ with $m\leq N$ and for $p,q:[m]\to \naturalnumbers_0$ (where we have used that $(U_NA_NU_N^\ast)^n=U_NA_N^nU_N^\ast$ for all $n\in \naturalnumbers$) and then consider the limit $N\to\infty$. Consider such $m,N,p,q$. We write
  \begin{align*}
    A^{q(k)}_N=\colon (a^{(k)}_{i,j})_{i,j=1}^N\qquad
    \text{and}\qquad B^{p(k)}_N&=\colon (b^{(k)}_{i,j})_{i,j=1}^N
  \end{align*}
  for every $k\in [m]$, as well as
$\gamma \equalperdefinition (1 \ 2 \ \ldots \ m-1 \ m)\in S_m$.
  Then, by Fact~\hyperref[facts:haar-unitary-random-matrices-1]{\ref*{facts:haar-unitary-random-matrices}~\ref*{facts:haar-unitary-random-matrices-1}}, the corresponding moment \eqref{eq:haar-unitary-deterministic-asymptotics-1} is given by
  \begin{align*}
 \eqref{eq:haar-unitary-deterministic-asymptotics-1}    &=\frac{1}{N}\sum_{\substack{i,i',j,j':\\ [m]\to [N]}}\expectation\Bigl[\prod_{n=1}^m\bigl(u_{i(n)j(n)}a^{(n)}_{j(n),j'(n)}\overline{u_{i'(n),j'(n)}}b_{i'(n),i(\gamma(n))}^{(n)}\bigr)\Bigr].\\
     &=\frac{1}{N}\sum_{\substack{i,i',j,j':\\ [m]\to [N]}}\prod_{t=1}^m\left(a^{(t)}_{j(t),j'(t)}b_{i'(t),i(\gamma(t))}^{(t)}\right)\underset{\displaystyle = \sum_{\alpha,\beta\in S_m}\delta_{i\circ \beta,i'}\delta_{j\circ \alpha,j'}\weingarten(N,\alpha^{-1}\beta)}{\underbrace{\expectation\Bigl[\bigl(\prod_{n=1}^mu_{i(n)j(n)}\bigr)\bigl(\prod_{n=1}^{m}\overline{u_{i'(n),j'(n)}}\bigr)\Bigr]}}\\
     &=\frac{1}{N}\sum_{\substack{\alpha,\beta\\\in S_m}} \weingarten(N,\alpha^{-1}\beta)\cdot\underset{\displaystyle =\trace_\alpha[A_N^{q(1)},\ldots,A_N^{q(m)}]}{\underbrace{\Bigl(\sum_{j: [m]\to [N]}\prod_{r=1}^ma^{(r)}_{j(r),j(\alpha(r))}\Bigr)}}\cdot\underset{\displaystyle=\trace_{\beta^{-1}\gamma}[B_N^{p(1)},\ldots,B_N^{p(m)}]} {\underbrace{\Bigl(\sum_{i': [m]\to [N]}\prod_{s=1}^mb_{i'(s),i'(\beta^{-1}\gamma(s))}^{(s)}\Bigr)}}\\
\quad\\
      &=\sum_{\alpha,\beta\in S_m}\weingarten(N,\alpha^{-1}\beta)N^{\#\alpha+\#(\beta^{-1}\gamma)-1}
\cdot  \normalizedtrace_\alpha[A_N^{q(1)},\ldots,A_N^{q(m)}]\cdot\normalizedtrace_{\beta^{-1}\gamma}[B_N^{p(1)},\ldots,B_N^{p(m)}].
  \end{align*}
  We know, by \eqref{eq:weingarten-asymptotics} and with $\phi$ as prescribed there, that for all $\alpha,\beta\in S_m$
  \begin{align*}
    \weingarten(N,\alpha^{-1}\beta) =\phi(\alpha^{-1}\beta)N^{\#(\alpha^{-1}\beta)-2m}+ O\bigl(N^{\#(\alpha^{-1}\beta)-2m-2}\bigr).
  \end{align*}
  Hence, for all $\alpha,\beta\in S_m$, the leading order terms in the above expression for \eqref{eq:haar-unitary-deterministic-asymptotics-1} have the factor
    $N^{\#(\alpha^{-1}\beta)+\#\alpha+\#(\beta^{-1}\gamma)-2m-1}$.
  If we write, for every $\alpha\in S_m$,
    $|\alpha| \equalperdefinition m - \#\alpha$,
  where $\#\alpha$ denotes the number of orbits of $\alpha$, then it is easy to see the following.
  \begin{enumerate}
  \item For every $\alpha\in S_m$, the number $|\alpha|$ is the minimal non-negative integer $k$ such that $\alpha$ can be written as a product of $k$ transpositions. (Transpositions of $S_m$ are of the form $\beta = ( i\ j )$ for $i,j\in [m]$ with $i\neq j$, i.e., $\beta(i)=j$, $\beta(j)=i$ and $\beta(l)=l$ for all $l\in [m]$ with $l\neq i,j$.)
  \item The map $|\cdot|:\, S_m\to\naturalnumbers_0$ satisfies the triangle inequality: For all $\alpha,\beta\in S_m$, we have
      $|\alpha\beta|\leq |\alpha|+|\beta|$.
  \end{enumerate}
  Now, note that for all $\alpha,\beta\in S_m$
  \begin{align*}
    \#(\alpha^{-1}\beta)+\#\alpha+\#(\beta^{-1}\gamma)-2m-1=m-1-|\alpha^{-1}\beta|-|\alpha|-|\beta^{-1}\gamma|\leq 0
  \end{align*}
  since
  \begin{align*}
    |\alpha^{-1}\beta|+|\alpha|+|\beta^{-1}\gamma|\geq |\alpha(\alpha^{-1}\beta)(\beta^{-1}\gamma)|=|\gamma|=m-1.
  \end{align*}
  Thus, in the limit, for every $m\in \naturalnumbers$ and all $p,q:[m]\to \naturalnumbers$,
  \begin{align*}
    \lim_{N\to\infty} (\normalizedtrace\otimes & \expectation)\bigl[\prod_{n\in [m]}^{\longrightarrow}\bigl(U_NA_N^{q(n)}U_N^\ast B_N^{p(n)}\bigr)\bigr]\\
    &=\sum_{\substack{\alpha,\beta\in S_m\\|\alpha^{-1}\beta|+|\alpha|+|\beta^{-1}\gamma|\\=m-1}}\normalizedtrace_\alpha[a^{q(1)},\ldots,a^{q(m)}]\cdot\normalizedtrace_{\beta^{-1}\gamma}[b^{p(1)},\ldots,b^{p(m)}]\cdot\phi(\alpha^{-1}\beta).
  \end{align*}
  By multi-linearity, this result goes over from monomials to arbitrary polynomials $f_1,\ldots,f_m\in \complexnumbers[x]$ and $g_1,\ldots,g_m\in \complexnumbers[x]$:
  \begin{align}
    \label{eq:haar-unitary-deterministic-asymptotics-2}
        \lim_{N\to\infty} (\normalizedtrace\otimes &\expectation)\bigl[\prod_{n\in [m]}^{\longrightarrow}\bigl(U_Nf_n(A_N)U_N^\ast g_n(B_N)\bigr)\bigr]\\
    &=\sum_{\substack{\alpha,\beta\in S_m\\|\alpha^{-1}\beta|+|\alpha|+|\beta^{-1}\gamma|\\=m-1}}\normalizedtrace_\alpha\left[f_1(a),\ldots,f_m(a)\right]\cdot \normalizedtrace_{\beta^{-1}\gamma}\left[g_1(b),\ldots,g_m(b)\right]\cdot\phi(\alpha^{-1}\beta).\notag
  \end{align}
  We have to see that formula \eqref{eq:haar-unitary-deterministic-asymptotics-2} describes variables $a,b$ which are free. So, let $m\in \naturalnumbers$ be arbitrary and let $f_1,\ldots,f_m\in \complexnumbers[x]$ and $g_1,\ldots,g_m\in \complexnumbers[x]$ be such that
    $\varphi(f_i(a))=0$ and $\varphi(g_i(b))=0$  for all $i\in [m]$.
  Then, we have to show 
    $$\varphi[f_1(a)g_1(b)\ldots f_m(a)g_m(b)]=0.$$
  For this, note: If for $m\in \naturalnumbers$ and permutations $\alpha,\beta\in S_m$ it holds $|\alpha^{-1}\beta|+|\alpha|+|\beta^{-1}\gamma|=m-1$, then at least one of the two permutations $\alpha$ and $\beta^{-1}\gamma$ must have a fixed point. (Indeed, the assumption implies that
  \begin{align*}
    |\alpha|\leq \frac{m-1}{2} \quad\text{or}\quad |\beta^{-1}\gamma|\leq \frac{m-1}{2}.
  \end{align*}
  Say this is true for $\alpha$. Then, $\alpha$ can be written as a product of at most $\lfloor \frac{m-1}{2}\rfloor$ transpositions, so can move at most $m-1$ elements. That requires $\alpha$ to have a fixed point.)
  \par
  But then, for all $\alpha,\beta\in S_m$ summed over in \eqref{eq:haar-unitary-deterministic-asymptotics-2}, at least one of the corresponding terms $\normalizedtrace_\alpha\left[f_1(a),\ldots,f_m(a)\right]$ and $\normalizedtrace_{\beta^{-1}\gamma}\left[g_1(b),\ldots,g_m(b)\right]$  is equal to zero. Hence, each term in \eqref{eq:haar-unitary-deterministic-asymptotics-2} vanishes.
\end{proof}

\begin{remark}
  The formula \eqref{eq:haar-unitary-deterministic-asymptotics-2} in the proof of Theorem~\ref{theorem:haar-unitary-deterministic-asymptotics} says that for two free sets $\{a_1,\dots,a_m\}$ and $\{b_1,\dots,b_m\}$ we have the following formula for mixed moments:
  $$
      \varphi(a_1b_1a_2b_2\ldots a_mb_m)
      =\sum_{\substack{\alpha,\beta\in S_m \\ |\alpha^{-1}\beta|+|\alpha|+|\beta^{-1}\gamma|\\=m-1}}\hspace{-1em}\varphi_\alpha(a_1,a_2,\ldots, a_m)\cdot\varphi_{\beta^{-1}\gamma}(b_1,b_2,\ldots, b_m)\cdot\phi(\alpha^{-1}\beta).
$$
  On the other hand, from our combinatorial cumulant machinery we know the formula
  \begin{align*}
    \varphi(a_1b_1a_2b_2\ldots a_mb_m)&=\sum_{\pi\in\setofnoncrossingpartitionsof(m)}\kappa_\pi(a_1,\ldots,a_m)\varphi_{K(\pi)}(b_1,\ldots,b_m)\\
    &=\sum_{\substack{\pi,\sigma\in \setofnoncrossingpartitionsof(m)\\\sigma\leq \pi}}\varphi_\sigma(a_1,\ldots,a_m)\varphi_{K(\pi)}(b_1,\ldots,b_m)\mu(\sigma,\pi).
  \end{align*}
  This suggests a bijective correspondence between the appearing terms in both formulas. This is indeed true. In particular, for every $m\in \naturalnumbers$, the non-crossing partitions $\setofnoncrossingpartitionsof(m)$ can be embedded in $S_m$ as $S_{\setofnoncrossingpartitionsof(\gamma)}\hateq \setofnoncrossingpartitionsof(m)$, where ({Biane} 1997)
  \begin{align*}
    S_{\setofnoncrossingpartitionsof(\gamma)} \equalperdefinition \{\alpha\in S_m\mid |\alpha|+|\alpha^{-1}\gamma|=m-1\}.
  \end{align*}
  If we define, for all $m\in \naturalnumbers$ and $\alpha,\beta\in S_m$,
    $d(\alpha,\beta)\equalperdefinition |\beta \alpha^{-1}|$,
  then $d:\,S_m\times S_m\to\naturalnumbers_0$ is a metric on $S_m$ and the set $S_{\setofnoncrossingpartitionsof(\gamma)}$ consists exactly of all permutations located on a geodesic from the identity permutation to $\gamma$.
\end{remark}

\begin{remark}
  Up to now, our basic random matrix model was the \GUE, corresponding to independent identically Gaussian-distributed entries (up to symmetry). A straightforward generalization of these are \emph{Wigner matrices}, whose entries are also identically distributed (up to symmetry) but the common distribution of which can be arbitrary.
  \par
  From the random matrix literature it is known that Wigner matrices behave in many respects like \GUE. In particular, a sequence of Wigner matrices converges also to a semicircle. But what about asymptotic freeness?
\end{remark}

\begin{definition}
  Let $N\in \naturalnumbers$ and let $\mu$ be a probability measure on $\realnumbers$. A corresponding \emph{Wigner random $N\times N$-matrix}  is of the form
  \begin{align*}
    A=\frac{1}{\sqrt{N}}(a_{i,j})_{i,j=1}^N,
  \end{align*}
  where
  \begin{itemize}
  \item the family $(a_{i,j})_{i,j\in [N], j\leq i}$ is a tuple of independent and identically $\mu$-distributed real random variables,
  \item it holds $A=A^\ast$, i.e.\ $a_{i,j}=a_{j,i}$ for all $i,j\in [N]$.
  \end{itemize}
\end{definition}

\begin{theorem}[Wigner 1955]
  Let $\mu$ be a probability measure on $\realnumbers$ all of whose moments exist and whose first moment is zero. For every $N\in \naturalnumbers$, let $A_N$ be a corresponding $N\times N$-Wigner matrix. Then,
    $A_N\convergesindistributiontoo s$,
  where $s$ is a semicircular element (whose variance is given by the second moment of $\mu$).
\end{theorem}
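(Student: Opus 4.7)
My plan is to mimic the genus-expansion argument used for the \GUE\ (Theorem~\ref{theorem:gue} and Corollary~\ref{corollary:gue}), replacing the Wick/Gaussian input by the combinatorics of independent, mean-zero entries with matching pair indices. Concretely, I would expand, for each $m\in\naturalnumbers$,
\begin{equation*}
  (\normalizedtrace\otimes\expectation)(A_N^m)=\frac{1}{N^{1+m/2}}\sum_{i:[m]\to[N]}\expectation\bigl[a_{i(1),i(2)}a_{i(2),i(3)}\cdots a_{i(m),i(1)}\bigr]
\end{equation*}
and regroup the sum over $i$ by the equivalence relation on the edge multiset $\{\{i(k),i(\gamma(k))\}\mid k\in[m]\}$ induced by equality of unordered pairs (equivalently, by the constraints forced by $a_{k,l}=a_{l,k}$ and independence of the entries above the diagonal). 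This equivalence relation defines a partition $\pi\in\setofpartitionsof(m)$, and by independence the expectation factorizes as $\prod_{V\in\pi} m_{\#V}^\mu$, where $m_s^\mu\equalperdefinition\int t^s\,d\mu(t)$.

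Next I would eliminate the partitions that cannot contribute. Because $\mu$ is centered, any $\pi$ with a singleton block contributes $0$. Hence the remaining sum is over partitions with $\#V\geq 2$ for every $V\in\pi$. For such $\pi$ I would count the number of multi-indices $i:[m]\to[N]$ with the prescribed edge equivalence relation $\pi$; this is a polynomial in $N$ whose leading term is $N^{v(\pi)}$, where $v(\pi)$ is the number of independent vertices in the cycle graph after identifying edges according to $\pi$. A simple Euler-characteristic / tree argument (or, equivalently, the genus estimate of Proposition~\ref{proposition:characterization-non-crossing-pair-partitions}, suitably generalized beyond pairings) shows the bound
\begin{equation*}
  v(\pi)\leq \#\pi+1\leq \frac{m}{2}+1,
\end{equation*}
with both inequalities attaining equality simultaneously exactly when $\pi\in\setofnoncrossingpartitionsof_2(m)$. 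Dividing by $N^{1+m/2}$, every non-extremal $\pi$ produces a factor $N^{v(\pi)-1-m/2}\to 0$, while the surviving contributions are indexed by $\setofnoncrossingpartitionsof_2(m)$, each carrying the factor $\prod_{V\in\pi} m_2^\mu=\sigma^{m}$, where $\sigma^2\equalperdefinition m_2^\mu$.

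Combining these observations gives
\begin{equation*}
  \lim_{N\to\infty}(\normalizedtrace\otimes\expectation)(A_N^m)=\sigma^m\cdot\#\setofnoncrossingpartitionsof_2(m),
\end{equation*}
which vanishes for odd $m$ and equals $\sigma^{2k}C_k$ for $m=2k$; by Definition~\ref{definition:semicircular-variable} these are precisely the moments of a semicircular element of variance $\sigma^2$, proving $A_N\convergesindistributiontoo s$.

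I expect the main technical point to be the sharp dimension count $v(\pi)\leq m/2+1$ with equality characterizing non-crossing pairings. For pure pairings this is exactly Proposition~\ref{proposition:characterization-non-crossing-pair-partitions}, but to handle partitions with blocks of size $\geq 3$ one must argue that merging edges strictly decreases the vertex count per gained edge. The cleanest way I see is an inductive interval-stripping reduction in the spirit of Remark~\ref{remark:interval-stripping}: splitting a block of size $k\geq 3$ into smaller blocks never decreases $v(\pi)$, so the maximum is attained on pairings, at which point Proposition~\ref{proposition:characterization-non-crossing-pair-partitions} takes over. Finiteness of all moments of $\mu$ is used to keep the moment factors $m_{\#V}^\mu$ bounded so that subleading partitions really are negligible.
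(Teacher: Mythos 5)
Your proposal is correct and follows essentially the same moment-method/genus-expansion argument as the paper's own sketch; the only difference is bookkeeping: you index the surviving terms by the partition recording which \emph{edges} (matrix entries) coincide, so the leading contributions are literally the non-crossing pairings, whereas the paper indexes by $\ker(i)$, the partition recording which \emph{vertices} coincide, so its leading contributions appear as Kreweras complements of non-crossing pairings. The key technical point is identical in both versions — the Euler/tree bound $v(\pi)\leq\#\pi+1\leq \frac m2+1$ with equality exactly on non-crossing pairings — and your observation that blocks of size $\geq 3$ already force $\#\pi<\frac m2$ makes the extra "splitting" argument you worry about unnecessary.
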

\begin{proof}[Sketch of proof]
  For all $m,N\in \naturalnumbers$ we calculate
  \begin{align*}
    (\normalizedtrace\otimes\expectation)(A_N^m)&=\frac{1}{N^{1+\frac{m}{2}}}\sum_{i:[m]\to[N]} \expectation\left[ a_{i(1),i(2)}\ldots a_{i(m),i(1)} \right]\\
                  &=\frac{1}{N^{1+\frac{m}{2}}}\sum_{\sigma\in \setofpartitionsof(m)}\sum_{\substack{i:[m]\to [N]\\\ker(i)=\sigma}}\underset{\substack{\displaystyle=\colon \expectation[\sigma]\\\displaystyle\text{depends only on }\sigma}}{\underbrace{\expectation\left[ a_{i(1),i(2)}\ldots a_{i(m),i(1)}\right]}}\\
    &=\frac{1}{N^{1+\frac{m}{2}}}\sum_{\sigma\in \setofpartitionsof(m)}\expectation[\sigma]\cdot \underset{\displaystyle\sim N^{\#\sigma}}{\underbrace{N\cdot (N-1)\cdot \ldots\cdot (N-\#\sigma+1)}}.
  \end{align*}
  For $m\in \naturalnumbers$, identify any partition $\sigma\in \setofpartitionsof(m)$ with a graph $G_\sigma$ as follows:
  \begin{center}
\begin{tikzpicture}
  \coordinate (i1) at (0em,0em);
  \coordinate (i2) at ($(i1)+(300:5em)$);
  \coordinate (i3) at ($(i2)+(240:5em)$);
  \coordinate (i4) at ($(i3)+(180:5em)$);
  \coordinate (i5) at ($(i4)+(120:5em)$);
  \coordinate (i6) at ($(i5)+(60:5em)$);
  \draw (i1) -- node[pos=0.5, right, darkgray] {$a_{i_1,i_2}$} (i2);
  \draw (i2) -- node[pos=0.5, right, darkgray] {$a_{i_2,i_3}$} (i3);
  \draw (i3) -- node[pos=0.5, below, darkgray] {$a_{i_3,i_4}$} (i4);
  \draw (i4) -- node[pos=0.5, left, darkgray] {$a_{i_4,i_5}$} (i5);
  \draw (i5) -- node[pos=0.5, left, darkgray] {$a_{i_5,i_6}$} (i6);
  \draw (i6) -- node[pos=0.5, above, darkgray] {$a_{i_6,i_1}$} (i1);
  \node [circle,scale=0.4, draw=black, fill=lightgray, label=above right:{$i_1$}] at (i1) {};
  \node [circle,scale=0.4, draw=black, fill=lightgray, label=right:{$i_2$}] at (i2) {};
  \node [circle,scale=0.4, draw=black, fill=lightgray, label=below right:{$i_3$}] at (i3) {};
  \node [circle,scale=0.4, draw=black, fill=lightgray, label=below left:{$i_4$}] at (i4) {};
  \node [circle,scale=0.4, draw=black, fill=lightgray, label=left:{$i_5$}] at (i5) {};
  \node [circle,scale=0.4, draw=black, fill=lightgray, label=above left:{$i_6$}] at (i6) {};
  \draw[->] ($(i2)+(4em,0)$) -- node[pos=0.5, above]{identify ver-}node[pos=0.5, below]{tices via $\sigma$} ++(4em,0em);
  \coordinate (j1) at (19em,0em);
  \coordinate (j2) at ($(j1)+(300:10em)$);
  \coordinate (j3) at ($(j2)+(180:10em)$);
    \draw (j1) arc (240:-120:2em) node [pos=0.5, right] {$a_{i_1,i_2}$};  
  \node[circle, draw=black, fill=lightgray] (n1) at (j1) {$1=2=4$};
  \node[circle, draw=black, fill=lightgray] (n2) at (j2) {$5$};
  \node[circle, draw=black, fill=lightgray] (n3) at (j3) {$3=6$};
  \draw (n1) -- node [pos=0.5, right] {$a_{i_4,i_5}$} (n2);
  \draw (n2) -- node [pos=0.5, below] {$a_{i_5,i_6}$} (n3);
  \draw (n3) -- node [pos=0.3, left] {$a_{i_2,i_3}$} node [pos=0.6, left] {$a_{i_3,i_4}$} node [pos=0.9, left] {$a_{i_6,i_1}$} (n1);
\node at (9em,3em) {$\sigma=(1 \ 2 \ 4)\, (3 \ 6)\, (5)$};
\end{tikzpicture}
\end{center}
In order for $\sigma\in \setofpartitionsof(m)$ to contribute a leading order term in the sum, every edge of the corresponding graph  $G_\sigma$ must be labeled  with \emph{exactly two} matrix entries. In that case, $G_\sigma$ is a tree and $\sigma$ is the Kreweras complement of a non-crossing pairing, like, e.g.,
\begin{align*}
  \sigma= \{\{1, 5, 7\}, \{2, 4\}, \{3\}, \{6\}, \{8\}\}
\end{align*}
whose graph is
\begin{center}
  \begin{tikzpicture}
    \coordinate (j1) at (0em,0em);
    \coordinate (j2) at ($(j1)+(220:8em)$);
    \coordinate (j3) at ($(j2)+(315:7em)$);
    \coordinate (j4) at ($(j1)+(310:9em)$);
    \coordinate (j5) at ($(j1)+(350:8em)$);
    \draw (j1) -- node [pos=0.5,shift={(-0.75em,0.75em)}] {$a_{i_1,i_2}$} node [pos=0.5, shift={(0.5em,-1em)}] {$a_{i_4,i_5}$} (j2);
    \draw (j2) -- node [pos=0.5,shift={(-0.75em,-0.75em)}] {$a_{i_2,i_3}$} node [pos=0.5,shift={(1em,0.75em)}] {$a_{i_3,i_4}$} (j3);
    \draw (j1) -- node [pos=0.6, shift={(-1em,-0.5em)}] {$a_{i_5,i_6}$} node [pos=0.6,shift={(1em,0.75em)}] {$a_{i_6,i_7}$} (j4);
    \draw (j1) -- node [pos=0.55, shift={(0em,-0.7em)}] {$a_{i_7,i_8}$} node [pos=0.55, shift={(0em,0.7em)}] {$a_{i_8,i_1}$} (j5);
    \node[draw=black, fill=lightgray, circle] at (j1) {$1=5=7$};
    \node[draw=black, fill=lightgray, circle] at (j2) {$2=4$};
    \node[draw=black, fill=lightgray, circle] at (j3) {$3$};
    \node[draw=black, fill=lightgray, circle] at (j4) {$6$};
    \node[draw=black, fill=lightgray, circle] at (j5) {$8$};
    \node at (-11em,-4.5em) {$G_\sigma=$};    
  \end{tikzpicture}
\end{center}
and whose Kreweras complement is
   \begin{center}
     \begin{tikzpicture}[baseline=-2pt]
      \node[inner sep=1pt] (n1) at (0em,0) {};
      \node[inner sep=1pt] (n2) at (2em,0) {};
      \node[inner sep=1pt] (n3) at (4em,0) {};
      \node[inner sep=1pt] (n4) at (6em,0) {};
      \node[inner sep=1pt] (n5) at (8em,0) {};
      \node[inner sep=1pt] (n6) at (10em,0) {};
      \node[inner sep=1pt] (n7) at (12em,0) {};
      \node[inner sep=1pt] (n8) at (14em,0) {};
      \node[inner sep=1pt] (m1) at (1em,0) {};
      \node[inner sep=1pt] (m2) at (3em,0) {};
      \node[inner sep=1pt] (m3) at (5em,0) {};
      \node[inner sep=1pt] (m4) at (7em,0) {};
      \node[inner sep=1pt] (m5) at (9em,0) {};
      \node[inner sep=1pt] (m6) at (11em,0) {};
      \node[inner sep=1pt] (m7) at (13em,0) {};
      \node[inner sep=1pt] (m8) at (15em,0) {};
      \draw (n1) -- ++ (0,-3em) -| (n5);
      \draw ($(n5)+(0,-3em)$)  -| (n7);
      \draw (n2) -- ++ (0,-2em) -| (n4);
      \draw (n3) -- ++ (0,-1em);
      \draw (n6) -- ++ (0,-1em);
      \draw (n8) -- ++ (0,-1em);
      \draw[densely dotted] (m1) -- ++ (0,-2.5em) -| (m4);
      \draw[densely dotted] (m2) -- ++ (0,-1.5em) -| (m3);
      \draw[densely dotted] (m5) -- ++ (0,-1.5em) -| (m6);
      \draw[densely dotted] (m7) -- ++ (0,-1.5em) -| (m8);                  
      \node [fill=white, right, inner sep=1pt] at ($(n1)+(-0.375em,0em)$) {$1\hspace{0.5em}\overline{1}\hspace{0.5em}2\hspace{0.5em}\overline{2}\hspace{0.5em}3\hspace{0.5em}\overline{3}\hspace{0.475em}4\hspace{0.5em}\overline{4}\hspace{0.5em}5\hspace{0.5em}\overline{5}\hspace{0.5em}6\hspace{0.5em}\overline{6}\hspace{0.5em}7\hspace{0.5em}\overline{7}\hspace{0.5em}8\hspace{0.5em}\overline{8}$};  
    \useasboundingbox (-0.25em,-3.125em) rectangle (15.25em, 0.75em);
  \end{tikzpicture}
  \begin{align*}
    K(\sigma)= \left\{ \left\{\overline 1,\overline 4\right\}, \left\{\overline 2,\overline 3\right\}, \left\{\overline 5,\overline 6\right\}, \left\{\overline 7,\overline 8\right\} \right\}
  \end{align*}
  \end{center}
Thus, for every $m\in \naturalnumbers$, the $m$-th moment is asymptotically given by $\#\setofnoncrossingpartitionsof_2(m)$.
\end{proof}

\begin{remark}
  \begin{enumerate}
  \item Having multiple independent Wigner matrices does not change the combinatorics, but just introduces the additional constraint on partitions that they must connect the same matrices (as in the proof of Theorem~\ref{theorem:gue-family}, compared to that of Theorem~\ref{theorem:gue}). Hence, independent Wigner matrices are also asymptotically free. Inserting deterministic matrices, however, is more tricky.
  \item If the deterministic matrix is diagonal, though, then the combinatorics of the indices are still the same and one obtains asymptotic freeness between Wigner and deterministic matrices. For arbitrary deterministic matrices, the structure gets more complicated and how to deal with it is not obvious. The asymptotic freeness result still holds true, but the proof (\href{http://www.wisdom.weizmann.ac.il/~zeitouni/cupbook.pdf}{[\cite{AGZ}, Theorem 5.4.5]},\href{http://rolandspeicher.com/literature/mingo-speicher/}{[\cite{MSp}, Theorem 4.20]}) is annoyingly complicated.
  \end{enumerate}
\end{remark}

\begin{theorem}
  Let $p\in \naturalnumbers$ and let $\mu_1,\ldots,\mu_p$ be probability measures on $\realnumbers$ all of whose moments exist and whose first moments vanish. For every $N\in \naturalnumbers$, let $A_N^{(1)},\ldots, A_N^{(p)}$ be independent Wigner $N\times N$-matrices, with $A_N^{(i)}$ corresponding to $\mu_i$ for every $i\in [p]$. Furthermore, let $q\in \naturalnumbers$ and, for every $N\in \naturalnumbers$, let $D_N^{(1)},\ldots, D_N^{(q)}$ be deterministic $N\times N$-matrices such that 
  \begin{align*}
    \sup_{N\in \naturalnumbers, r\in [q]}\|D^{(r)}_N\|<\infty,
\qquad\text{and such that}
\qquad
    (D_N^{(1)},\ldots,D_N^{(q)})
\convergesindistributionto (d_1,\ldots,d_q)
\end{align*}
for $N\to\infty$.
  Then, as $N\to\infty$,
  \begin{align*}
    (A_N^{(1)},\ldots, A_N^{(p)}, D_N^{(1)},\ldots,D_N^{(q)}) \convergesindistributionto (s_1,\ldots,s_p,d_1,\ldots,d_q), 
  \end{align*}
  where each $s_i$ is a semicircular and $s_1,\dots,s_p, \{d_1,\dots,d_q\}$
are free.

  In particular, the two  families $\{A_N^{(1)},\ldots, A_N^{(p)}\}$ and  $\{D_N^{(1)},\ldots,D_N^{(q)}\}$ of Wigner and deterministic matrices are asymptotically free from each other.
\end{theorem}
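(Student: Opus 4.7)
\medskip

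The plan is to mimic the genus-expansion arguments of Theorems~\ref{theorem:gue} and~\ref{theorem:gue-deterministic-asymptotics}, with the Wick (Gaussian) formula replaced by a general classical cumulant expansion for the entries of the Wigner matrices. Concretely, I would start by writing out, for fixed $m\in\naturalnumbers$, $p:[m]\to[p]$ and deterministic matrices $D_N^{(r(1))},\ldots,D_N^{(r(m))}$ (which by multilinearity can stand for arbitrary polynomials in the $D_N^{(j)}$), the master expression
\begin{align*}
(\normalizedtrace\otimes\expectation)\Bigl[\prod_{n\in[m]}^{\longrightarrow}\bigl(A_N^{(p(n))}D_N^{(r(n))}\bigr)\Bigr]
=\frac{1}{N^{1+m/2}}\sum_{i,j:[m]\to[N]}\expectation\Bigl[\prod_{n=1}^m a^{(p(n))}_{i(n),j(n)}\Bigr]\cdot\prod_{n=1}^m d^{(r(n))}_{j(n),i(\gamma(n))},
\end{align*}
where $a^{(p)}_{\cdot,\cdot}$ are the unnormalized entries of $\sqrt{N}A_N^{(p)}$ and $\gamma=(1\ 2\ \ldots\ m)$.

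Next I would replace the Gaussian Wick sum by the general classical cumulant expansion
$\expectation[a^{(p(1))}_{\cdot}\cdots a^{(p(m))}_{\cdot}]=\sum_{\sigma\in\setofpartitionsof(m)}\prod_{V\in\sigma}c_V$,
where $c_V$ is the joint classical cumulant of the entries indexed by $V$. Two structural facts then kick in: (i) because the $A_N^{(p)}$ are independent across $p$ and have centered entries, any $\sigma$ containing a block with mixed indices from different $p$'s, or any singleton block, contributes zero, so one may restrict to $\sigma\le\ker(p)$ with all blocks of size $\ge 2$; (ii) because the entries within one Wigner matrix are i.i.d.\ up to the self-adjointness constraint $a^{(p)}_{ij}=\overline{a^{(p)}_{ji}}$, for each block $V=\{k_1<\cdots<k_s\}$ of $\sigma$ the cumulant $c_V$ is a bounded constant (depending only on the $\mu_{p(k_l)}$ and $s$) times a product of Kronecker deltas which force the index pairs $(i(k_l),j(k_l))$ to coincide as unordered pairs. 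So the whole expression becomes a sum over $\sigma\in\setofpartitionsof(m)$ of a bounded constant times the number of index assignments consistent with $\sigma$, times traces of $D_N$-products along the cycles of a permutation $\sigma\gamma$ read off from $\sigma$ just as in Theorem~\ref{theorem:gue-deterministic}.

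The genus argument then takes the following form: if $\sigma$ is a pairing ($|V|=2$ for all $V\in\sigma$), the combinatorics is exactly that of Theorem~\ref{theorem:gue-deterministic-asymptotics}, with the Gaussian covariance replaced by the corresponding second cumulants of the $\mu_{p(i)}$, so the contribution of $\sigma$ is $O(N^{\#(\gamma\sigma)-1-m/2})$, surviving in the limit precisely when $\sigma\in\setofnoncrossingpartitionsof_2(m)$ (Proposition~\ref{proposition:characterization-non-crossing-pair-partitions}). If $\sigma$ contains a block $V$ of size $s\ge 3$, one has to show that the extra gain in the $N$-counting is strictly outweighed by the $N^{-s/2}$ factor coming from the Wigner normalization; this is a counting inequality analogous to Proposition~\ref{proposition:characterization-non-crossing-pair-partitions} and says that replacing $s$ pair blocks by one block of size $\ge 3$ loses at least $N^{-\epsilon}$. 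Here the uniform bound $\sup_{N,r}\|D_N^{(r)}\|<\infty$ is used to dominate the $D_N$-traces along all cycles by a constant, so large-block contributions really are $o(1)$. After this, only non-crossing pairings survive, and their contribution matches the \GUE formula from Theorem~\ref{theorem:gue-deterministic-asymptotics} with the covariance of $\mu_i$ as variance parameter; comparing with the limiting free formula identifies the limit joint distribution as that of $(s_1,\ldots,s_p,d_1,\ldots,d_q)$ with $s_i$ free semicirculars whose variance is the second moment of $\mu_i$, free from $(d_1,\ldots,d_q)$.

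The main obstacle is this large-block estimate in the presence of \emph{non-diagonal} deterministic matrices: for diagonal $D_N^{(r)}$ the index sums along cycles of $\sigma\gamma$ collapse to counting, but in the general case each cycle contributes a normalized trace of a product of $D_N$'s whose absolute value needs a uniform operator-norm bound to be dominated independently of how many indices are free. The cleanest way around this, which I would use if the direct bookkeeping becomes painful, is a Lindeberg-type swapping argument: show that replacing one Wigner entry at a time by a Gaussian entry of matching mean and variance changes $(\normalizedtrace\otimes\expectation)[\prod_n(A_N^{(p(n))}D_N^{(r(n))})]$ by $O(N^{-3/2})$ per entry, hence $O(N^{-1/2})$ after summing over all $O(N^2)$ entries, thereby reducing the statement for $m$ fixed to the \GUE case already handled by Theorem~\ref{theorem:gue-deterministic-asymptotics}. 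Uniform boundedness of $\|D_N^{(r)}\|$ and the existence of all moments of the $\mu_i$ are exactly what is needed to make the swapping estimate (involving third-order Taylor remainders in a single entry) go through.
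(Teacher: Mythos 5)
The paper does not prove this theorem at all: the remark immediately preceding it states that the proof is ``annoyingly complicated'' and defers to \cite{AGZ}, Theorem 5.4.5 and \cite{MSp}, Theorem 4.20. So there is no paper proof to compare against; I can only evaluate your proposal on its own terms.

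Your ``Plan A'' (replace Wick by the general classical cumulant expansion of the entries, and show that partitions with a block of size $\geq 3$ contribute $o(1)$) is in fact the route taken in the cited references, and the difficulty you correctly flag --- controlling the index count along $\sigma\gamma$-cycles for higher-cumulant blocks when the $D_N^{(r)}$ are not diagonal, where the constraints only identify \emph{unordered} pairs $(i(k_l),j(k_l))$ and so leave some orientation freedom --- is precisely what makes that proof laborious. Invoking $\sup_{N,r}\|D_N^{(r)}\|<\infty$ is exactly the hypothesis needed at this point, and you have identified that correctly.

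Your ``Plan B'' (Lindeberg entry-swapping to reduce to the Gaussian case) is a genuinely different, and in my view cleaner, strategy; it is a standard move in the universality literature. However, two things in your sketch need repair. First, the magnitude bookkeeping is off by a factor of $N^{-1}$: the third $t$-derivative of $\normalizedtrace\bigl(P(A_N(t),D_N)\bigr)$ picks up $N^{-3/2}$ from the three insertions of $N^{-1/2}(E_{ij}+E_{ji})$ \emph{and} the additional $N^{-1}$ from the normalized trace (since $\normalizedtrace(XE_{ij})=N^{-1}X_{ji}$), so each swap costs $O(N^{-5/2})$, not $O(N^{-3/2})$; summing over the $O(N^2)$ entries then does give the claimed $O(N^{-1/2})$, so the conclusion is right but the intermediate step as written is contradictory ($N^2\cdot N^{-3/2}=N^{1/2}$ diverges). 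Second, the Wigner matrices here have \emph{real} entries (the paper's definition), so the swap does not land you on a $\GUE$ but on a real symmetric Gaussian (a $\textsc{goe}$-type) ensemble, whose covariance is $\expectation[a_{ij}a_{kl}]=\tfrac{1}{N}(\delta_{ik}\delta_{jl}+\delta_{il}\delta_{jk})$ rather than $\tfrac{1}{N}\delta_{il}\delta_{jk}$. Theorem~\ref{theorem:gue-deterministic-asymptotics} as stated and proved in this paper only covers $\GUE$; for the real-Gaussian analogue the genus expansion has an extra class of terms (coming from $\delta_{ik}\delta_{jl}$, i.e.\ non-orientable gluings) which one must separately show are $o(1)$. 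That lemma is true and not hard, but it is a missing step you would need to supply before the Lindeberg reduction closes.
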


\newpage


\section{Von Neumann Algebras: The Free Group Factors and Relation to Freeness}

\begin{definition}
  \begin{enumerate}
  \item A \emph{von Neumann algebra} is a $\ast$-subalgebra of $\mathcal B(\mathcal H)$ (for a Hilbert space $\mathcal H$) which contains $1_{\mathcal B(\mathcal H)}$ and is closed with respect to the weak operator topology. (The \emph{weak operator topology (WOT)} is the locally convex topology defined by the seminorms $(p_{\xi,\eta})_{\xi,\eta\in \mathcal H}$ with
      $p_{\xi,\eta}(x)\equalperdefinition \left|\left\langle x\xi,\eta\right\rangle\right|$
    for all $\xi,\eta\in \mathcal H$ and $x\in \mathcal B(\mathcal H)$.)

\item For every Hilbert space $\mathcal H$ and every subset $\mathcal A\subseteq \mathcal B(\mathcal H)$ we define the \emph{commutant} of $\mathcal A$ by
  \begin{align*}
    \mathcal A'\equalperdefinition \{ y\in \mathcal B(\mathcal H)\mid xy=yx \text{ for all }x\in \mathcal A\}
  \end{align*}
  and the \emph{bicommutant} by
    $\mathcal A''\equalperdefinition (\mathcal A')'$.
\item A von Neumann algebra $\mathcal M$ is called a \emph{factor} if
    $\mathcal M\cap \mathcal M'=\complexnumbers\cdot 1_{\mathcal B(\mathcal H)}$.
    \end{enumerate}
\end{definition}

\begin{facts}
  \label{facts:von-neumann-algebras}
  \begin{enumerate}
  \item\label{facts:von-neumann-algebras-1} \emph{Bi\-com\-mu\-tant theorem}: Let $\mathcal H$ be a Hilbert space and let $\mathcal A$ be a unital $\ast$-subalgebra of $\mathcal B(\mathcal H)$. Then, $\mathcal A$ is a von Neumann algebra if and only if $\mathcal A=\mathcal A''$.
  \item\label{facts:von-neumann-algebras-2} Von Neumann algebras are closed under \emph{measurable functional calculus}: Let $\mathcal M$ be a von Neumann algebra, $x=x^\ast\in \mathcal M$ and let $f:\, \sigma(x)\to \complexnumbers$ be a measurable bounded function on the spectrum $\sigma(x)$ of $x$. Then $f(x)\in \mathcal M$.
  \item\label{facts:von-neumann-algebras-3} In particular, von Neumann algebras contain lots of projections. Consider $x=x^\ast\in \mathcal M\subseteq\mathcal B(\mathcal H)$. For every $\lambda\in \realnumbers$, we write 
      $E_x(\lambda)\equalperdefinition \mathbbm 1_{(-\infty,\lambda]}(x)$
    for the corresponding spectral projection; then the spectral theorem shows
    \begin{align*}
      x=\int_{\sigma(x)}\lambda \, dE_x(\lambda).
    \end{align*}
    Since the $\mathbbm 1_{(-\infty,\lambda]}$ are measurable functions, we have that all spectral projections $E_x(\lambda)$ of $x$ are in  $\mathcal M$.
  \item\label{facts:von-neumann-algebras-4} Von Neumann algebras are closed under \emph{polar decomposition}: Let $\mathcal H$ be a Hilbert space and $x\in \mathcal B(\mathcal H)$. The operator $x$ has a unique polar decomposition in $\mathcal B(\mathcal H)$ of the form $x=u|x|$, where $|x|\equalperdefinition \sqrt{x^\ast x}$ and where $u$ is a partial isometry (i.e.\ $u=uu^\ast u$) with $\ker(u)=\ker(x)$. For any von Neumann algebra $\mathcal M\subseteq \mathcal B(\mathcal H)$ with $x\in \mathcal M$ it holds
that $|x|,u\in \mathcal M$.
    \item\label{facts:von-neumann-algebras-5} For von Neumann algebras, topological and algebraic properties determine each other. Hence the relevant notion of isomorphism is purely algebraic: Two von Neumann algebras $\mathcal M\subseteq \mathcal B(\mathcal H)$ and $\mathcal N\subseteq \mathcal B(\mathcal K)$ (for Hilbert spaces $\mathcal H$ and $\mathcal K$) are \emph{isomorphic}, $\mathcal M\cong\mathcal N$, if there exists a $\ast$-isomorphism $\Phi:\,\mathcal M\to \mathcal N$. This isomorphism then has automatically the right continuity properties.
  \end{enumerate}
\end{facts}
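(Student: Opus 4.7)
The plan is to treat the five facts in order, most of which are standard consequences of the double commutant theorem plus spectral theory.

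For Fact~\ref{facts:von-neumann-algebras-1} (the bicommutant theorem), one inclusion $\mathcal{A} \subseteq \mathcal{A}''$ is immediate, and $\mathcal{A}'$ (hence $\mathcal{A}''$) is automatically WOT-closed because for each $\xi,\eta \in \mathcal{H}$ and $b \in \mathcal{A}$ the map $x \mapsto \langle (xb-bx)\xi, \eta\rangle$ is WOT-continuous. The essential direction is to show that every $x \in \mathcal{A}''$ lies in the WOT-closure of $\mathcal{A}$; this is the hard part. I would use the standard amplification trick: for any finite tuple $\xi_1,\ldots,\xi_n \in \mathcal{H}$ and any $\varepsilon>0$, observe that the commutant of $\mathcal{A} \otimes 1_n \subseteq M_n(\mathcal{B}(\mathcal{H}))$ equals $M_n(\mathcal{A}')$, so its bicommutant contains $x \otimes 1_n$. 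The closure $K$ of $(\mathcal{A}\otimes 1_n)(\xi_1,\ldots,\xi_n)$ is an $\mathcal{A}$-invariant subspace whose orthogonal projection lies in $(\mathcal{A}\otimes 1_n)'$ and is therefore fixed by $x\otimes 1_n$; this produces $a \in \mathcal{A}$ with $\|(a-x)\xi_i\|<\varepsilon$ for all $i$.

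For Fact~\ref{facts:von-neumann-algebras-2} (measurable functional calculus), I would argue in two steps. First, all spectral projections $E_x(B)$ of a self-adjoint $x \in \mathcal{M}$ lie in $\mathcal{M}$: by the spectral theorem any operator in $\{x\}'$ commutes with each $E_x(B)$, so $E_x(B) \in \{x\}'' \subseteq \mathcal{M}'' = \mathcal{M}$ by Fact~\ref{facts:von-neumann-algebras-1}. Second, any bounded measurable $f:\sigma(x)\to\complexnumbers$ is a uniform limit on $\sigma(x)$ of simple functions $s_n = \sum_k c_{n,k}\, \mathbbm{1}_{B_{n,k}}$; then $s_n(x)$ is a finite linear combination of spectral projections, lies in $\mathcal{M}$, and converges in norm to $f(x)$, which is therefore in $\mathcal{M}$. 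Fact~\ref{facts:von-neumann-algebras-3} is then immediate, applied to $f=\mathbbm{1}_{(-\infty,\lambda]}$.

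For Fact~\ref{facts:von-neumann-algebras-4} (polar decomposition), write $x = u|x|$ with $|x|=(x^*x)^{1/2}$. Since $t\mapsto t^{1/2}$ is continuous on $\sigma(x^*x)\subseteq [0,\infty)$, continuous functional calculus puts $|x|\in\mathcal{M}$ directly. For the partial isometry $u$ I would use a commutant argument: if $v\in\mathcal{M}'$ is unitary, then $vxv^*=x$ and $vx^*v^*=x^*$, so $v|x|v^*=|x|$ and $(vuv^*)|x|=vu|x|v^*=vxv^*=x=u|x|$; since $vuv^*$ is still a partial isometry with kernel $\ker(x)$, uniqueness of the polar decomposition forces $vuv^*=u$. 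Thus $u$ commutes with every unitary in $\mathcal{M}'$, and since $\mathcal{M}'$ is the $\complexnumbers$-linear span of its unitaries, $u\in\mathcal{M}''=\mathcal{M}$.

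Finally, for Fact~\ref{facts:von-neumann-algebras-5} one must show that every abstract $\ast$-iso\-mor\-phism $\Phi:\mathcal{M}\to\mathcal{N}$ between von Neumann algebras is automatically normal (and in particular WOT-continuous on bounded sets). This is the step I expect to be the main obstacle, because it uses genuinely nontrivial $W^*$-algebra machinery rather than the elementary arguments of the other four facts: positivity of $\Phi$ (automatic), Kadison's theorem that a $\ast$-isomorphism of $C^*$-algebras is isometric, and Sakai's theorem that every von Neumann algebra has a unique isometric predual whose weak-$\ast$ topology coincides with the ultraweak (hence, on bounded sets, the WOT) topology. Given these, $\Phi$ transports the predual of $\mathcal{M}$ to that of $\mathcal{N}$ and is therefore automatically weak-$\ast$ continuous. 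I would cite these standard $W^*$-algebra results rather than reproving them, as they lie well outside the combinatorial scope of the paper.
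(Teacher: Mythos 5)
The paper states these Facts as background material and gives no proof at all (it is a block of ``Facts'' in the sense of standard operator-algebra knowledge taken for granted), so there is no in-paper argument to compare against; your submission is the only proof present. That said, what you wrote is essentially the standard textbook treatment, and it is correct. A few remarks on the details: your amplification argument for the bicommutant theorem is the classical one and is fine as a sketch; in Fact~\ref{facts:von-neumann-algebras-2} the crucial hidden input is that the spectral theorem gives $\{x\}' = \{E_x(B) : B \text{ Borel}\}'$, from which $E_x(B)\in\{x\}''\subseteq\mathcal M''=\mathcal M$ follows exactly as you say, and the norm approximation of a bounded measurable $f$ by simple functions is valid because the range of $f$ is bounded; in Fact~\ref{facts:von-neumann-algebras-4} you should spell out briefly that $v\in\mathcal M'$ preserves $\ker(x)$ (since $v$ and $v^\ast$ both commute with $x$, whence $v\ker(x)=\ker(x)$), which is what justifies the claim that $vuv^\ast$ again has kernel $\ker(x)$; and for Fact~\ref{facts:von-neumann-algebras-5} invoking isometry of injective $\ast$-homomorphisms and Sakai's uniqueness of the predual is a legitimate (if heavy) route, whereas an alternative more in the spirit of the other parts is to check directly that a $\ast$-isomorphism preserves suprema of bounded increasing nets of positive elements and is therefore normal. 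None of this represents a gap: the proposal is sound.
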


\begin{example}
  \label{example:von-neumann-group-algebras}
  \begin{enumerate}
  \item\label{example:von-neumann-group-algebras-1} Our basic example is the von Neumann algebra version of the group algebra from Example~\ref{example:group-algebra}. For a (discrete) group $G$ we defined the group algebra as
    \begin{align*}
      \complexnumbers G\equalperdefinition \bigl\{ \sum_{g\in G}\alpha_gg\;\mid\; \alpha:\, G\to \complexnumbers,\, \alpha_g\neq 0\text{ for only finitely many }g\in G   \bigr\}.
    \end{align*}
    We now let $\complexnumbers G$ act on itself by left multiplication -- this is the left regular representation. For infinite $G$ we have to complete $\complexnumbers G$ to  a Hilbert space
    \begin{align*}
      \ell^2(G)\equalperdefinition \bigl\{\sum_{g\in G}\alpha_g g \; \mid \; \alpha:G\to\complexnumbers,\, \sum_{g\in G}|\alpha_g|^2<\infty\bigr\}
    \end{align*}
    with inner product determined by
      $\langle g,h\rangle\equalperdefinition\delta_{g,h}$ for all $g,h\in G$.
    Define now the \emph{left regular representation}
    \begin{align*}
      \lambda: \, \complexnumbers G\to \mathcal B(\ell^2(G)),\, \sum_{g\in G}\alpha_g g\mapsto \sum_{g\in G}\alpha_g\lambda(g),      
    \end{align*}
    where, for all $g\in G$, the operator $\lambda(g)$ is determined by 
      $\lambda (g)h\equalperdefinition gh$ for all $h\in G$,
    meaning for all $\alpha\in \ell^2(G)$:
    \begin{align*}
      \lambda(g)\underset{\displaystyle \in \ell^2(G)}{\underbrace{\sum_{h\in G}\alpha_hh}}=\sum_{h\in G}\alpha_hgh.
    \end{align*}
    Note that, for all $g\in G$,
      $\lambda(g)^\ast=\lambda(g^{-1})$
    and thus
      $\lambda(g)\lambda(g)^\ast=\lambda(g)^\ast\lambda(g)=1$,
    making $\lambda(g)$ a unitary operator. Thus,
      $\lambda(\complexnumbers G)\subseteq \mathcal B(\ell^2(G))$.
    We define now
    \begin{align*}
      L(G)\equalperdefinition \overline{\lambda (\complexnumbers G)}^{\mathrm{WOT}}=\lambda(\complexnumbers G)''\subseteq \mathcal B(\ell^2(G)).
    \end{align*}
    Thus, $L(G)$ is a von Neumann algebra, called a \emph{group von Neumann algebra}.\par
    If $e$ denotes the neutral element of $G$, then the map $\tau\equalperdefinition \tau_G$ from Example~\ref{example:group-algebra} is of the form
    \begin{align*}
      \tau\bigl(\underset{\displaystyle =\colon x}{\underbrace{\sum_{g\in G}\alpha_g g}}\bigr)=\alpha_e=\bigl\langle \underset{\displaystyle= x}{\underbrace{\bigl(\sum_{g\in G}\alpha_gg\bigr)}}e,e\bigr\rangle,
    \end{align*}
i.e.,
      $\tau(x)=\langle xe,e\rangle$ for all $x\in \complexnumbers G$.
    In this form, $\tau$ extends to a state
      $\tau:\, L(G)\to \complexnumbers$
    on $L(G)$. It is faithful and a trace on $L(G)$. (Note that it is defined on all of $\mathcal B(\ell^2(G))$, but that there it is neither faithful nor a trace in the infinite-dimensional case. Moreover, for any Hilbert space $\mathcal H$ with $\dim(\mathcal H)=\infty$, \emph{no} trace exists on $\mathcal B(\mathcal H)$, showing that $L(G)$ cannot be all of $\mathcal B(\ell^2(G))$ in the case of an infinite group.)
  \item\label{example:von-neumann-group-algebras-2} If all conjugacy classes $\{hgh^{-1}\mid h\in G\}$ for $g\neq e$ are infinite (in which case we say that $G$ is \emph{i.c.c}), then $L(G)$ is a factor. Basic examples for this are:
    \begin{enumerate}
    \item\label{example:von-neumann-group-algebras-2-1}  $G\equalperdefinition S_\infty\equalperdefinition\bigcup_{n=1}^\infty S_n$, the infinite permutation group. $\mathcal R\equalperdefinition L(S_\infty)$ is the so-called \emph{hyperfinite factor} ({Murray and von Neumann} 1936).
      (This is the simplest and nicest of the non-trivial von Neumann algebras.)
    \item\label{example:von-neumann-group-algebras-2-2}  
$G\equalperdefinition \mathbb F_n$, $n\in \naturalnumbers\cup \{\infty\}$, the free group on $n$ generators. This is given by words in $n$ non-commuting generators $g_1,\ldots,g_n$ and their inverses $g_1^{-1},\ldots, g_n^{-1}$ only subject to the group relations, i.e.\ $g_ig_i^{-1}=e=g_i^{-1}g_i$ for every $i\in [n]$.  The free group $\mathbb F_n$ is i.c.c for every $n\geq 2$ (including $n=\infty$). Hence, $L(\mathbb F_n)$ is, for $n\geq 2$, also a factor, the so-called \emph{free group factor}.
    \end{enumerate}
  \item\label{example:von-neumann-group-algebras-3} Murray and von Neumann showed that $\mathcal R\not\cong L(\mathbb F_n)$ for all $n$. But, besides that, not much more was known about the $L(\mathbb F_n)$. In particular, at least since Kadison's Baton-Rouge problem list from 1967 the following famous \emph{free group factor isomorphism problem} is still open:
    \begin{center}
      Is $L(\mathbb F_m)\cong L(\mathbb F_n)$ for $2\leq m,n$ and $m\neq n$?
    \end{center}
Voiculescu introduced and developed free probability theory to attack this and similar questions.
  \item\label{example:von-neumann-group-algebras-4} For $n\in \naturalnumbers$, consider $L(\mathbb F_n)$ with its $n$ generators
      $u_i\equalperdefinition \lambda (g_i)$ for $i\in [n]$,
    where $g_1,\ldots,g_n$ are the generators of $\mathbb F_n$. From Part~\ref{example:von-neumann-group-algebras-1} we know that each $u_i\in L(\mathbb F_n)$ is a unitary. Each of them has moments
    \begin{align*}
      \tau(u_i^k)=\langle \underset{\displaystyle=g_i^k}{\underbrace{u_i^ke},e}\rangle=\delta_{k,0} \quad\text{for all }k\in \integers,
    \end{align*}
    i.e., each $u_i$ is a Haar unitrary in the $\ast$-probability space $(L(\mathbb F_n),\tau)$. \par
    Furthermore, $\mathbb F_n$ can be written as the free product in the group-theo\-re\-ti\-cal sense (see Example~\hyperref[example:group-algebra-4]{\ref*{example:group-algebra}~\ref*{example:group-algebra-4}})
  $\mathbb F_n=G_1\ast \ldots\ast G_n$,
where  $G_i$, for $i\in [n]$, is the subgroup of $\mathbb F_n$ generated by $g_i$ (i.e. $G_i\cong \integers$). And, thus, the $\ast$-subalgebras $(\complexnumbers G_i)_{i=1}^n$ are, by Proposition~\ref{proposition:freeness-group-algebras}, freely independent in $(L(\mathbb F_n),\tau)$.
  \end{enumerate}
\end{example}

\begin{remark}
  \begin{enumerate}
  \item Hence, for every $n\in \naturalnumbers$, the free group factor $L(\mathbb F_n)$ is, as a von Neumann algebra, generated by $n$ free Haar unitaries. Note that our definition of freeness only imposes conditions on the joint moments of polynomials in the generators. If this definition is to be of any use for the von Neumann algebra context, these conditions  better transfer to \enquote{measurable functions} in the generators. That means, freeness better pass to the closures of polynomials in the weak operator topology. This is indeed the case.
  \item For the extension of freeness from algebras to generated von Neumann algebras we need, of course, some kind of continuity for our state $\tau$. (On the $C^\ast$-level, positivity gives continuity in norm, on the von Neumann algebra level, however, we need more than positivity!)
    The relevant notion for a linear functional $\tau:\mathcal M\to \complexnumbers$ on a von Neumann algebra $\mathcal M\subseteq \mathcal B(\mathcal H)$ is \enquote{normality}. Such a functional $\tau$ is called \emph{normal} if
      $\tau(\sup_{\lambda\in\Lambda} x_\lambda)=\sup_{\lambda\in \Lambda}\tau(x_\lambda)$
    for every increasing net $(x_\lambda)_{\lambda\in \Lambda}$ of self-adjoint operators from $\mathcal M$.
    This abstract definition (which is independent of the representation of $\mathcal M$ on $\mathcal H$) is, for the concrete realization $\mathcal M\subseteq \mathcal B(\mathcal H)$, equivalent to the following condition: The restriction of $\tau$ to the unit ball of $\mathcal M$ is continuous with respect to the weak operator topology of $\mathcal B(\mathcal H)$. Note in particular that, typically, our states are of the form
      $\tau(x)=\langle x\xi,\xi\rangle$ for some unit vector $\xi\in \mathcal H$.
    And such vector states are \emph{always} normal!
  \item Recall also that, in the von Neumann algebra context, it is often advantageous to switch between the weak and the strong operator topologies. The strong operator topology (SOT) is the locally convex topology generated by the seminorms $(p_\xi)_{\xi\in \mathcal H}$, where
      $p_\xi(x)\equalperdefinition \|x\xi\|$
    for every $\xi\in \mathcal H$ and all $x\in \mathcal B(\mathcal H)$. For a unital $\ast$-subalgebra $\mathcal A\subseteq \mathcal B(\mathcal H)$ the closures of $\mathcal A$ in WOT and in SOT
    agree; furthermore, a normal state $\tau$ is also continuous with respect to the strong operator topology when restricted to the unit ball.
  \end{enumerate}
\end{remark}

\begin{theorem}
  \label{theorem:freeness-transfers-to-von-neumann-algebras}
  Let $\mathcal M$ be a von Neumann algebra and $\tau:\,\mathcal M\to \complexnumbers$ a normal state. Let $(\mathcal A_i)_{i\in I}$ be unital $\ast$-subalgebras of $\mathcal M$ which are free with respect to $\tau$. For every $i\in I$, let $\mathcal M_i\equalperdefinition\vonneumannalgebrageneratedby(\mathcal A_i)$ be the von Neumann subalgebra of $\mathcal M$ generated by $\mathcal A_i$; i.e., $\mathcal M_i=\mathcal A_i^{''}$ is the smallest von Neumann algebra which contains $\mathcal A_i$. Then, the algebras  $(\mathcal M_i)_{i\in I}$ are also free.
\end{theorem}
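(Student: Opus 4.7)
The plan is to verify the defining condition of freeness for the generated von Neumann algebras $(\mathcal M_i)_{i\in I}$ directly from the one satisfied by $(\mathcal A_i)_{i\in I}$, by a bounded SOT-approximation argument that exploits normality of $\tau$. Concretely, I fix $k\in\naturalnumbers$, indices $i(1)\neq i(2)\neq\ldots\neq i(k)$ in $I$ and elements $a_j\in\mathcal M_{i(j)}$ with $\tau(a_j)=0$ for all $j\in[k]$, and aim to show $\tau(a_1\cdots a_k)=0$.

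The key input is Kaplansky's density theorem: since $\mathcal A_i$ is a unital $\ast$-subalgebra of $\mathcal M_i$ with $\mathcal A_i{''}=\mathcal M_i$, the norm-closed unit ball of $\mathcal A_i$ is SOT-dense in that of $\mathcal M_i$. Hence for each $j\in[k]$ there is a net $(a_j^{(\lambda)})_{\lambda\in\Lambda_j}$ in $\mathcal A_{i(j)}$ with $\|a_j^{(\lambda)}\|\leq \|a_j\|$ and $a_j^{(\lambda)}\to a_j$ in SOT. Since $\tau$ is normal, its restriction to bounded sets is WOT-continuous, so $\tau(a_j^{(\lambda)})\to\tau(a_j)=0$. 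Passing to the centered versions $\tilde a_j^{(\lambda)}\equalperdefinition a_j^{(\lambda)}-\tau(a_j^{(\lambda)})\cdot 1\in\mathcal A_{i(j)}$ (using unitality of $\mathcal A_{i(j)}$), the nets stay norm-bounded (say by $2\|a_j\|$) and satisfy $\tilde a_j^{(\lambda)}\to a_j$ in SOT as well, now with $\tau(\tilde a_j^{(\lambda)})=0$. Because the $\tilde a_j^{(\lambda)}$ are centered and alternating with respect to $(\mathcal A_i)_{i\in I}$, the assumed freeness of the $\ast$-subalgebras gives, for every tuple $(\lambda_1,\ldots,\lambda_k)$ in the product directed set $\Lambda_1\times\ldots\times\Lambda_k$,
\begin{equation*}
\tau\bigl(\tilde a_1^{(\lambda_1)}\tilde a_2^{(\lambda_2)}\cdots\tilde a_k^{(\lambda_k)}\bigr)=0.
\end{equation*}

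The remaining step is to pass to the limit along the product net. Multiplication in $\mathcal B(\mathcal H)$ is jointly continuous in SOT when restricted to norm-bounded sets, so iterating yields $\tilde a_1^{(\lambda_1)}\cdots\tilde a_k^{(\lambda_k)}\to a_1\cdots a_k$ in SOT, with the entire net uniformly bounded in norm by $\prod_{j=1}^k 2\|a_j\|$. Applying normality of $\tau$ a second time,
\begin{equation*}
\tau(a_1\cdots a_k)=\lim_{(\lambda_1,\ldots,\lambda_k)}\tau\bigl(\tilde a_1^{(\lambda_1)}\cdots\tilde a_k^{(\lambda_k)}\bigr)=0,
\end{equation*}
which is exactly the freeness condition for $(\mathcal M_i)_{i\in I}$.

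The main obstacle is the failure of joint SOT-continuity of multiplication on unbounded sets; this is precisely why Kaplansky's density theorem, which provides \emph{norm-bounded} SOT-approximants, is indispensable, as opposed to mere SOT-density of $\mathcal A_i$ in $\mathcal M_i$. A secondary subtlety is the centering correction: passing from $a_j^{(\lambda)}$ to $\tilde a_j^{(\lambda)}$ requires that $1\in\mathcal A_{i(j)}$ (unitality) so as not to leave the subalgebra, and that $\tau(a_j^{(\lambda)})\to 0$ (normality) so as not to spoil the bounded SOT-convergence.
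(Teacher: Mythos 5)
Your proposal is correct and follows essentially the same route as the paper's proof: Kaplansky's density theorem to get norm-bounded SOT-approximants from the $\mathcal A_{i(j)}$, centering by subtracting $\tau(a_j^{(\lambda)})\cdot 1$, applying freeness of the $\ast$-subalgebras to the approximants, and passing to the limit via joint SOT-continuity of multiplication on bounded sets together with normality of $\tau$. The only cosmetic difference is that you work over the product directed set $\Lambda_1\times\ldots\times\Lambda_k$ where the paper simply assumes a common index set, and you spell out the bookkeeping (the bound $2\|a_j\|$ and why the centered nets still converge) slightly more explicitly.
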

\begin{proof}
 Consider elements $a_1,\ldots,a_k\in \mathcal M$, where $k\in \naturalnumbers$, $i: [k]\to I$ with $i(1)\neq i(2)\neq \ldots \neq i(k)$, where $a_j\in \mathcal M_{i(j)}$ for every $j\in [k]$, and where $\tau(a_j)=0$ for all $j\in [k]$.
  Then, we have to show that $\tau(a_1\ldots a_k)=0$. For each $j\in [k]$, we can approximate $a_j$ by a net $(a_j^{(\lambda)})_{\lambda\in \Lambda_j}$ from $\mathcal A_{i(j)}$ in the strong operator topology. In fact, we can assume that all $\Lambda_j$ are equal to some directed set $\Lambda$. By Kaplansky's density theorem, one can also ensure that
    $\| a_j^{(\lambda)}\|\leq \|a_j\|$ for all $j\in [k]$ and $\lambda \in \Lambda$.
  (Note that $\|\cdot\|$ is \emph{not} continuous in the weak operator topology.) By replacing
     $a_j^{(\lambda)}$ by $a_j^{(\lambda)}-\tau(a_j^{(\lambda)})$, we can moreover assume that
    $\tau(a_j^{(\lambda)})=0$ for all $j\in [k]$ and $\lambda\in \Lambda$.
  Then, the freeness assumption for the algebras  $(\mathcal A_i)_{i\in I}$ applies, for each $\lambda$, to the random variables $a_1^{(\lambda)},\ldots, a_k^{(\lambda)}$ and implies that
    $\tau(a_1^{(\lambda)}\ldots a_k^{(\lambda)})=0$ for all $\lambda\in \Lambda$.
  But then, since multiplication on bounded sets is continuous with respect to the strong operator topology, we get
  \begin{align*}
   \tau(a_1 \ldots a_k)= \lim_{\lambda \in \Lambda}\underset{\displaystyle =0\text{ for all }\lambda\in \Lambda}{\underbrace{\tau(a_1^{(\lambda)}\ldots a_k^{(\lambda)})}}=0
  \end{align*}
  That is what we needed to show.
\end{proof}

\begin{remark}
  For $n\in\naturalnumbers$, it now makes sense to say that $L(\mathbb F_n)$ is generated as a von Neumann algebra by the $n$ $\ast$-free Haar unitaries $u_1=\lambda(g_1),\ldots, u_n=\lambda(g_n)$. Next, we want to see that not the concrete form of the $u_1,\ldots,u_n$ is important here, but only their $\ast$-distribution.
\end{remark}

\begin{theorem}
  \label{theorem:von-neumann-algebras-generator-isomorphism}
  Let $n\in \naturalnumbers$ and let $\mathcal M$ and $\mathcal N$ be von Neumann algebras, generated as von Neumann algebras by elements $a_1,\ldots, a_n$ in the case of $\mathcal M$ and $b_1,\ldots,b_n$ in the case of $\mathcal N$. Let $\varphi:\, \mathcal M\to \complexnumbers$ and $\psi:\, \mathcal N\to \complexnumbers$ be faithful normal states. If $(a_1,\ldots, a_n)$ and $(b_1,\ldots,b_n)$ have the same $\ast$-distribution with respect to $\varphi$ and $\psi$, respectively, then the map $a_i\mapsto b_i$ ($i=1,\dots,n$) extends to a $\ast$-isomorphism from $\mathcal M$ to $\mathcal N$. In particular, then $\mathcal M\cong \mathcal N$.
\end{theorem}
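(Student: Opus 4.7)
The plan is to pass through the GNS representations of $(\mathcal M,\varphi)$ and $(\mathcal N,\psi)$ and build a spatial isomorphism implemented by a unitary between the two GNS Hilbert spaces. First, I would verify the algebraic step: let $\mathcal A_0\subseteq\mathcal M$ be the unital $\ast$-subalgebra generated by $a_1,\ldots,a_n$ and $\mathcal B_0\subseteq \mathcal N$ the unital $\ast$-subalgebra generated by $b_1,\ldots,b_n$. The prescription $p(a_1,a_1^\ast,\ldots,a_n,a_n^\ast)\mapsto p(b_1,b_1^\ast,\ldots,b_n,b_n^\ast)$ for $\ast$-polynomials $p$ in $2n$ non-commuting indeterminates is a well-defined $\ast$-isomorphism $\Phi_0:\mathcal A_0\to\mathcal B_0$: if $p(a)=0$ in $\mathcal M$ then $\psi(p(b)^\ast p(b))=\varphi(p(a)^\ast p(a))=0$ by the assumed equality of $\ast$-distributions, and faithfulness of $\psi$ forces $p(b)=0$; the reverse implication is symmetric.

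Next I would set up the GNS data. Since $\varphi$ is a faithful normal state, the GNS construction produces a Hilbert space $\mathcal H_\varphi$ (completion of $\mathcal M$ in $\langle x,y\rangle_\varphi\equalperdefinition\varphi(y^\ast x)$), a cyclic and separating unit vector $\xi_\varphi$, and a normal faithful $\ast$-representation $\pi_\varphi:\mathcal M\hookrightarrow\mathcal B(\mathcal H_\varphi)$ whose image is a von Neumann algebra isomorphic to $\mathcal M$; analogously for $(\mathcal N,\psi)$. Because $a_1,\ldots,a_n$ generate $\mathcal M$ as a von Neumann algebra and $\pi_\varphi$ is WOT-continuous on the unit ball, Kaplansky density gives that $\pi_\varphi(\mathcal A_0)\xi_\varphi$ is dense in $\mathcal H_\varphi$, and the analogous density holds for $\mathcal H_\psi$. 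Now define
\begin{equation*}
U:\pi_\varphi(\mathcal A_0)\xi_\varphi\to\pi_\psi(\mathcal B_0)\xi_\psi,\qquad U\bigl(\pi_\varphi(x)\xi_\varphi\bigr)\equalperdefinition\pi_\psi(\Phi_0(x))\xi_\psi\quad(x\in\mathcal A_0).
\end{equation*}
The equality of $\ast$-distributions gives $\|\pi_\psi(\Phi_0(x))\xi_\psi\|^2=\psi(\Phi_0(x^\ast x))=\varphi(x^\ast x)=\|\pi_\varphi(x)\xi_\varphi\|^2$, so $U$ is well-defined and isometric; it extends by density to a unitary $U:\mathcal H_\varphi\to\mathcal H_\psi$.

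Finally I would show that $\Phi(x)\equalperdefinition U\pi_\varphi(x)U^\ast$ defines the desired $\ast$-isomorphism $\mathcal M\to\mathcal N$. Since conjugation by a unitary is a $\ast$-homomorphism and is a homeomorphism for the weak operator topology, $\Phi:\pi_\varphi(\mathcal M)\to\mathcal B(\mathcal H_\psi)$ is a normal $\ast$-homomorphism. Checking on the dense set $\pi_\psi(\mathcal B_0)\xi_\psi$ one verifies $\Phi(\pi_\varphi(a_i))=\pi_\psi(b_i)$ and more generally $\Phi\circ\pi_\varphi=\pi_\psi\circ\Phi_0$ on $\mathcal A_0$. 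Then
\begin{equation*}
\Phi(\pi_\varphi(\mathcal M))=\Phi\bigl(\overline{\pi_\varphi(\mathcal A_0)}^{\mathrm{WOT}}\bigr)\subseteq\overline{\pi_\psi(\mathcal B_0)}^{\mathrm{WOT}}=\pi_\psi(\mathcal N),
\end{equation*}
and the analogous inclusion for $U^\ast\cdot U$ yields the reverse containment, so $\Phi$ restricts to a $\ast$-isomorphism $\pi_\varphi(\mathcal M)\to\pi_\psi(\mathcal N)$. Composing with $\pi_\varphi$ and $\pi_\psi^{-1}$ (both isomorphisms by faithfulness and normality) produces the required extension of $a_i\mapsto b_i$ to a $\ast$-isomorphism $\mathcal M\to\mathcal N$. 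The one genuinely non-trivial point, and the place I expect to have to be careful, is the interplay between normality of the states, Kaplansky density, and the bicommutant theorem that is needed to upgrade the purely algebraic map $\Phi_0$ to something that respects weak operator closures; without normality and faithfulness of both $\varphi$ and $\psi$, the isometry $U$ need not extend to a unitary between the full GNS spaces, and the moment data would not be enough to pin down the von Neumann algebras.
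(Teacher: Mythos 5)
Your proposal is correct and follows essentially the same route as the paper's proof: the paper also reduces the statement to the GNS construction with respect to the faithful normal states, noting that the $\ast$-moments supply all the data for that construction, and it records exactly your algebraic step $p(a_1,\ldots,a_n)=0\Rightarrow\varphi(p(a)^\ast p(a))=0=\psi(p(b)^\ast p(b))\Rightarrow p(b_1,\ldots,b_n)=0$ via faithfulness. You have simply written out in full the unitary intertwiner and the passage to weak closures that the paper leaves as cited facts.
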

\begin{proof}
  This follows from the facts that the GNS construction with respect to a faithful normal state gives a faithful representation of the von Neumann algebra, and that the $\ast$-moments of the generators contain all the information which is needed for this construction. \par
  To give  a more concrete facet of this on the $\ast$-algebra level: If we want to extend $a_i\mapsto b_i$ ($i\in I$) to the generated $\ast$-algebras, then for every polynomial $p$ in $n$ non-commuting variables we need the following implication to hold:
    $p(a_1,\ldots,a_n)=0 \implies p(b_1,\ldots,b_n)=0$.
  That this is indeed the case, can be seen as follows. $p(a_1,\ldots,a_n)=0$ implies
  \begin{align*}
    p(a_1,\ldots,a_n)^\ast p(a_1,\ldots,a_n)=0
\qquad
  \text{and thus}\qquad
\varphi(p(a_1,\ldots,a_n)^\ast p(a_1,\ldots,a_n))=0.
  \end{align*}
  Note that the latter equation involves a sum over $\ast$-moments of $(a_1,\ldots,a_n)$.
  Because those agree, by assumption, with the $*$-moments of $(b_1,\ldots,b_n)$, it follows that
  \begin{align*}
    \psi(p(b_1,\ldots,b_n)^\ast p(b_1,\ldots,b_n))=\varphi(p(a_1,\ldots,a_n)^\ast p(a_1,\ldots,a_n))=0
  \end{align*}
  As $\psi$ is faithful, we can conclude that
    $p(b_1,\ldots,b_n)=0$,
  as claimed.
\end{proof}

\begin{remark}
  \begin{enumerate}
  \item Theorem~\ref{theorem:von-neumann-algebras-generator-isomorphism} yields quite a change of perspective in the study of von Neumann algebras. We can understand von Neumann algebras by looking at the $\ast$-distributions of generators and not at the concrete action on a Hilbert space. (Of course, via the GNS construction this is the same, but still \ldots. Note in this context: The distribution of the real part of the one-sided shift is the semicircle, see Assignment~\hyperref[assignment-3]{3}, Exercise~3. The one-sided shift is one of the most important operators in operator theory. But, apparently, nobody before Voiculescu had been interested in its distribution.) This is very much in the spirit of classical probability theory, where the concrete realization of random variables as functions on some probability space $(\Omega,\mathcal F, \mathbb P)$ is usually not relevant -- the only thing which counts is their distribution.
    \item For every $n\in \naturalnumbers$, Theorem~\ref{theorem:von-neumann-algebras-generator-isomorphism} shows then that $L(\mathbb F_n)$ is generated by $n$ $*$-free Haar unitaries $u_1,\ldots,u_n$, where we do not have to care about how the $u_1,\ldots, u_n$ are conretely realized on a Hilbert space.
    \item But we now also have the freedom of deforming each of the generators $u_1,\ldots, u_n$ of $L(\mathbb F_n)$ by measurable functional calculus for normal elements (see Fact~\hyperref[facts:von-neumann-algebras-2]{\ref*{facts:von-neumann-algebras}~\ref*{facts:von-neumann-algebras-2}}). Instead of $u_1,\ldots,u_n$ we can consider $f_1(u_1),\ldots, f_n(u_n)$, where $f_1,\ldots, f_n$ are measurable bounded functions. If the $f_1,\ldots, f_n$ are invertible (as measurable functions on the corresponding spectra), then we can also \enquote{go backwards} and be assured that $f_1(u_1),\ldots, f_n(u_n)$ generate the same von Neumann algebra as $u_1,\ldots, u_n$, i.e.\ also $L(\mathbb F_n)$.  Since, for each $i\in [n]$, the generator $u_i$ has a diffuse spectrum, i.e.\ no atoms in its distribution, we can change this distribution via invertible measurable functions to any other non-atomic distribution we like. Note that, by Theorem~\ref{theorem:freeness-transfers-to-von-neumann-algebras}, we do not lose the freeness between the deformed variables; and that the $f(u_1),\ldots, f(u_n)$ are still normal.
  \end{enumerate}
\end{remark}

\begin{corollary}
\label{corollary:free-group-factors-isomorphisms}
  Let $\mathcal M$ be a von Neumann algebra and $\tau$ a faithful normal state on $\mathcal M$. Let $n\in \naturalnumbers$ and assume that $x_1,\ldots,x_n\in\mathcal M$ generate $\mathcal M$, $\mathcal M=\text{vN}(x_1,\dots,x_n)$, and that
  \begin{enumerate}
  \item $x_1,\ldots,x_n$ are $\ast$-free with respect to $\tau$;
  \item each $x_i$, for $i\in[n]$, is normal and its distribution $\mu_{x_i}$ with respect to $\tau$ has no atoms.
  \end{enumerate}
  Then, $\mathcal M\cong L(\mathbb F_n)$.
\end{corollary}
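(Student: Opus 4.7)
The plan is to reduce the statement to Theorem~\ref{theorem:von-neumann-algebras-generator-isomorphism} by constructing inside $\mathcal{M}$ a system of $n$ generators having the same $*$-distribution as the natural Haar-unitary generators of $L(\mathbb{F}_n)$. By Example~\hyperref[example:von-neumann-group-algebras-4]{\ref*{example:von-neumann-group-algebras}~\ref*{example:von-neumann-group-algebras-4}}, $L(\mathbb{F}_n)$ is generated as a von Neumann algebra by $n$ $*$-free Haar unitaries $v_1,\ldots,v_n$, and it carries the faithful normal (tracial) state $\tau_{\mathbb{F}_n}$. So it suffices to produce $u_1,\ldots,u_n\in\mathcal{M}$ that are $*$-free Haar unitaries and generate $\mathcal{M}$ as a von Neumann algebra, and then to invoke Theorem~\ref{theorem:von-neumann-algebras-generator-isomorphism}.

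First, for each $i\in[n]$, I would use measurable functional calculus (Fact~\hyperref[facts:von-neumann-algebras-2]{\ref*{facts:von-neumann-algebras}~\ref*{facts:von-neumann-algebras-2}}) to deform $x_i$ into a Haar unitary. Since $x_i$ is normal, its $*$-distribution $\mu_{x_i}$ is a compactly supported Borel probability measure on $\mathrm{spec}(x_i)\subseteq\complexnumbers$ which, by hypothesis, has no atoms. The normalized Haar measure $\lambda$ on the unit circle $\unitcircle$ is likewise atomless, and hence by the standard Borel isomorphism theorem there exists a Borel-measurable, essentially invertible map $f_i:\mathrm{spec}(x_i)\to\unitcircle$ that pushes $\mu_{x_i}$ forward to $\lambda$, with Borel-measurable inverse $g_i:\unitcircle\to\mathrm{spec}(x_i)$. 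Set $u_i\equalperdefinition f_i(x_i)\in\mathcal{M}$. Since $\lvert f_i\rvert\equiv 1$ on $\mathrm{spec}(x_i)$, the element $u_i$ is unitary, and the $*$-distribution of $u_i$ is the pushforward $f_i{}_*\mu_{x_i}=\lambda$, so
\begin{equation*}
\tau(u_i^k)=\int_{\unitcircle}z^k\,d\lambda(z)=\delta_{k,0}\qquad(k\in\integers),
\end{equation*}
which makes each $u_i$ a Haar unitary.

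Next I would check that the $u_i$ generate $\mathcal{M}$ and are $*$-free. The relation $g_i\circ f_i=\mathrm{id}$ modulo $\mu_{x_i}$-null sets translates, under the measurable functional calculus, into $g_i(u_i)=x_i$, so $x_i\in\vonneumannalgebrageneratedby(1,u_i)$ and conversely $u_i\in\vonneumannalgebrageneratedby(1,x_i)$; thus $\vonneumannalgebrageneratedby(1,x_i)=\vonneumannalgebrageneratedby(1,u_i)$, and taking the join over $i$ shows that $u_1,\ldots,u_n$ generate $\mathcal{M}$. For freeness, the $*$-freeness of $x_1,\ldots,x_n$ means that the unital $*$-algebras $\algebrageneratedby(1,x_i,x_i^*)$ are free in $(\mathcal{M},\tau)$; Theorem~\ref{theorem:freeness-transfers-to-von-neumann-algebras} (applicable because $\tau$ is faithful and normal) then promotes this to freeness of the generated von Neumann subalgebras $\vonneumannalgebrageneratedby(1,x_i)=\vonneumannalgebrageneratedby(1,u_i)$, so in particular $u_1,\ldots,u_n$ are $*$-free. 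Combined with the single-variable moment computation above and the universal freeness recipe of Proposition~\ref{proposition:freeness-state-factorization}, the tuple $(u_1,\ldots,u_n)$ has the same joint $*$-distribution in $(\mathcal{M},\tau)$ as $(v_1,\ldots,v_n)$ in $(L(\mathbb{F}_n),\tau_{\mathbb{F}_n})$.

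Finally, Theorem~\ref{theorem:von-neumann-algebras-generator-isomorphism} applied to $(\mathcal{M},\tau,u_1,\ldots,u_n)$ and $(L(\mathbb{F}_n),\tau_{\mathbb{F}_n},v_1,\ldots,v_n)$ -- both faithful normal states, both generator systems with matching $*$-distributions -- yields a $*$-isomorphism $u_i\mapsto v_i$ extending to $\mathcal{M}\cong L(\mathbb{F}_n)$. The main technical hurdle is the construction of the measurable deformation $f_i$ and its inverse $g_i$: this requires the standard Borel isomorphism of atomless probability spaces applied to the spectral measure of each $x_i$, together with a clean verification that $g_i(f_i(x_i))=x_i$ holds as an operator identity via the measurable functional calculus (i.e.\ that the $\mu_{x_i}$-a.e.\ identity $g_i\circ f_i=\mathrm{id}$ suffices, which it does because the spectral projections corresponding to $\mu_{x_i}$-null Borel sets vanish in $\mathcal{M}$). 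Once that is in place, the rest is assembly.
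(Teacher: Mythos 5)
Your proof is correct and follows essentially the same route as the paper: measurable functional calculus to move between atomless distributions, Theorem~\ref{theorem:freeness-transfers-to-von-neumann-algebras} to preserve freeness at the von Neumann algebra level, and Theorem~\ref{theorem:von-neumann-algebras-generator-isomorphism} to conclude from matching $\ast$-distributions of generators. The only (immaterial) difference is the direction of the deformation — you turn the $x_i$ into Haar unitaries, whereas the paper deforms the Haar unitary generators of $L(\mathbb F_n)$ to match the $\mu_{x_i}$.
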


\begin{example}
  Let $\mathcal M$ be a von Neumann algebra, $\tau:\,\mathcal M\to\complexnumbers$ a faithful state and $x\in \mathcal M$.
  If $x$ is normal (i.e. $xx^\ast=x^\ast x$), then the distribution $\mu_x$ of $x$ with respect to $\tau$ is the uniquely determined probability measure $\mu$ on the spectrum $\sigma(x)$ of $x$ with
  \begin{align*}
    \tau(x^k(x^\ast)^l)=\int_{\sigma(x)}z^k\overline{z}^l\, d\mu(z)\qquad
\text{for all $k,l\in \naturalnumbers_0$}.
  \end{align*}
  In particular, for a Haar unitary $u\in\mathcal M$ it holds that $\sigma(u)=\unitcircle=\{z\in \complexnumbers \mid |z|=1\}$ (unit circle) and $\mu_u$ is the uniform distribution on $\unitcircle$. Via the inverse $g$ of the map
  \begin{align*}
     [0,2\pi)\to \unitcircle, \, t\mapsto e^{it}
  \end{align*}
  we can transform $\mu_u$ into the uniform distribution $\mu_{g(u)}$ on $[0,2\pi]$.
  The latter can then be deformed via a map
    \begin{gather*}
    h:\, [0,2\pi]\mapsto [-2,2]\\
                \begin{tikzpicture}[baseline=4em]
                        \begin{axis}[xmin=-0.5, ymin=-0.015, xmax=1.1*2*pi, ymax=.25, axis lines=middle, width=16em, height=11em, xtick={0,1.57,3.14,4.7,6.28}, xticklabels={$0$, $\frac{\pi}{2}$, $\pi$, $\frac{3\pi}{2}$, $2\pi$}, ytick={0.2},yticklabels={$\frac{1}{2\pi}$}]
                          \addplot[thick, samples=50, smooth] coordinates {(0,0.2)(2*pi,0.2)};
          \end{axis}
        \end{tikzpicture}
        \quad\quad\longrightarrow\quad\quad
                \begin{tikzpicture}[baseline=4em]
                        \begin{axis}[xmin=-1.1*2, ymin=0, xmax=1.1*2, ymax=1.1*pi^-1, axis lines=middle, xtick={-2,-1,0,1,2}, xticklabels={$-2$, $-1$, $0$, $1$, $2$}, ytick={0,0.318},yticklabels={$0$,$\frac{1}{\pi}$}, width=16em, height=11em]
                          \addplot[thick, samples=150, smooth] {(0.5*pi^-1)*((4-x^2)^0.5)};
          \end{axis}
        \end{tikzpicture}    
  \end{gather*}
  into a semicircle. Hence, with $f\equalperdefinition h\circ g$, the element $f(u)$ is a semicircular variable. Instead of by $u_1,\ldots,u_n$, we can thus generate $L(\mathbb F_n)$ also by $f(u_1),\ldots, f(u_n)$, i.e.\ by $n$ free semicirculars.
\end{example}

\newpage

\section[Circular and $R$-Diagonal Operators]{Circular and $R$-Diagonal Operators}
\begin{remark}
  In the following we want to realize our operators as matrices, motivated by our asymptotic representation by random matrices. Consider a sequence $(A_{2N})_{N\in \naturalnumbers}$ of \GUE(2$N$),
  \begin{align*}
    A_{2N}=\colon{\frac{1}{\sqrt{2N}}}(a_{i,j})_{i,j=1}^{2N}.
  \end{align*}
  We can then consider $A_{2N}$ as a block $2\times 2$-matrix with ${N}\times {N}$-matrices $A_{i,j}^{(N)}$ as entries,
$$
    A_{2N}=\frac{1}{\sqrt{2N}}\left[
    \begin{array}{ccc|ccc}
      a_{1,1}&\hdots& a_{1,N}& a_{1,N+1} &\hdots & a_{1,2N}\\
      \vdots&\ddots& \vdots& \vdots &\ddots & \vdots\\
      a_{N,1}&\hdots& a_{N,N}& a_{N,N+1} &\hdots & a_{N,2N} \\[0.375em]
      \hline
      a_{N+1,1}&\hdots& a_{N+1,N}& a_{N+1,N+1} &\hdots & a_{N+1,2N}\\
      \vdots&\ddots& \vdots& \vdots &\ddots & \vdots\\
      a_{2N,1}&\hdots& a_{2N,N}& a_{2N,N+1} &\hdots & a_{2N,2N}
    \end{array}\right]
                                                                       =\frac{1}{\sqrt{2}}
                                                                       \begin{bmatrix}
                                                                         A_{1,1}^{(N)} & A_{1,2}^{(N)}\\
                                                                         A_{2,1}^{(N)} & A_{2,2}^{(N)}
                                                                       \end{bmatrix}.
$$
  The two random matrices on the diagonal,
    $A_{1,1}, A_{2,2}$,
  then converge in distribution to a free pair $s_1,s_2$ of semicircular variables, by Theorem~\ref{theorem:gue-family}. But what about $A_{1,2}$ and $A_{2,1}$? Note that for all $N$ we have
    $A_{1,2}^{(N)}=A_{2,1}^{(N)\ast}$. 
  Each of these matrices has independent Gaussian entries without any symmetry condition. For every $N\in \naturalnumbers$ we can write
  \begin{align*}
    A_{1,2}^{(N)}= {\frac{1}{\sqrt{2}}}(B_1^{(N)}+iB_2^{(N)}),
  \end{align*}
  with
  \begin{align*}
    B_1^{(N)}\equalperdefinition {\frac{1}{\sqrt{2}}}(A_{1,2}^{(N)}+A_{1,2}^{(N)^\ast}) \quad\text{and}\quad B_2^{(N)}\equalperdefinition {\frac{1}{\sqrt{2}i}}(A_{1,2}^{(N)}-A_{1,2}^{(N)\ast}),
  \end{align*}
  implying that $B_1^{(N)}$ and $B_2^{(N)}$ are independent and each of them is \GUEN. Since, again by Theorem~\ref{theorem:gue-family},
$(B_1^{(N)},B_2^{(N)})\convergesindistributionto (s_0,\tilde{s}_0)$, where $s_0$ and $\tilde s_0$ are free semicirculars,
 it follows
  \begin{align*}
    A_{1,2}^{(N)}\convergesindistributionto {\frac{1}{\sqrt 2}}(s_0+i\tilde s_0)\quad\text{and}\quad A_{2,1}^{(N)}=A_{1,2}^{(N)*}\convergesindistributionto {\frac{1}{\sqrt 2}}(s_0-i\tilde s_0).
  \end{align*}
  Such a random variable
  \begin{align*}
    c\equalperdefinition  {\frac{1}{\sqrt 2}}(s_0+i\tilde s_0)
  \end{align*}
  is called a \emph{circular} element.\par
  Since $A_{2N}$ itself, by Theorem~\ref{corollary:gue}, converges in distribution to a semicircular element $s$, this element $s$ can be represented as a matrix
  \begin{align*}
    s =\frac{1}{\sqrt{2}}
    \begin{bmatrix}
      s_1 & c \\
      c^\ast & s_2
    \end{bmatrix},
  \end{align*}
  where $s_1$ and $s_2$ are semicirculars, $c$ circular and $s_1,s_2,c$ are $*$-free. (Note for this that the matrices $B_1$ and $B_2$ are also independent from $A_{1,1}$ and $A_{2,2}$.) This shows the following.
\end{remark}

\begin{theorem}
  \label{theorem:semicircle-as-matrix}
  Let $(\mathcal A,\varphi)$ be a $\ast$-probability space, let therein $s_1$ and $s_2$ be semicircular elements, $c$ a circular element and $(s_1,s_2,c)$ $\ast$-free. Then, the random variable
  \begin{align*}
    \frac{1}{\sqrt{2}}
    \begin{bmatrix}
      s_1 & c \\
      c^\ast & s_2
    \end{bmatrix}
  \end{align*}
  is a semicircular element in the $\ast$-probability space $(M_2(\mathcal A),\normalizedtrace \otimes \varphi)$.
\end{theorem}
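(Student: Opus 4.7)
The plan is to compute the moments of $s$ directly by expanding in the matrix basis and reducing to the combinatorics of non-crossing pair partitions, in exact parallel with the GUE calculation from Theorem~\ref{theorem:gue}.

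First I would write $s = \tfrac{1}{\sqrt 2}\sum_{i,j=1}^2 e_{ij}\otimes x_{ij}$ with $x_{11}=s_1$, $x_{22}=s_2$, $x_{12}=c$, $x_{21}=c^*$. Expanding the product and using $e_{i_1 i_2}e_{i_2 i_3}\cdots e_{i_n i_{n+1}}=e_{i_1 i_{n+1}}$ together with $(\normalizedtrace\otimes\varphi)(e_{i_1 i_{n+1}}\otimes y)=\tfrac12\delta_{i_1,i_{n+1}}\varphi(y)$ yields
\begin{align*}
(\normalizedtrace\otimes\varphi)(s^n)=\frac{1}{2^{n/2+1}}\sum_{i:[n]\to[2]}\varphi\bigl(x_{i(1)i(2)}\,x_{i(2)i(3)}\cdots x_{i(n)i(1)}\bigr),
\end{align*}
where the cyclic convention $i(n+1)=i(1)$ is understood.

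Next I would determine the joint free cumulants of $(s_1,s_2,c,c^*)$. By Theorem~\ref{theorem:freeness-vanishing-mixed-cumulants-random-variables}, $*$-freeness forces all cumulants mixing elements from different free subalgebras to vanish. Within each subalgebra the cumulants are inherited from those of semicirculars (Example~\hyperref[example:free-cumulants-2]{\ref*{example:free-cumulants}~\ref*{example:free-cumulants-2}}): using $c=\tfrac{1}{\sqrt 2}(s_0+i\tilde s_0)$ with $s_0,\tilde s_0$ free standard semicirculars and the multilinearity of cumulants, one checks that the only non-vanishing free cumulants among $\{s_1,s_2,c,c^*\}$ are of order two, with values $\kappa_2(s_k,s_k)=1$ for $k=1,2$ and $\kappa_2(c,c^*)=\kappa_2(c^*,c)=1$, while $\kappa_2(c,c)=\kappa_2(c^*,c^*)=0$.

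The moment--cumulant formula (Remark~\ref{remark:moment-cumulant-formula}) then gives
\begin{align*}
\varphi\bigl(x_{i(1)i(2)}\cdots x_{i(n)i(1)}\bigr)=\sum_{\pi\in\setofnoncrossingpartitionsof_2(n)}\prod_{\{a,b\}\in\pi,\,a<b}\kappa_2\bigl(x_{i(a)i(a+1)},x_{i(b)i(b+1)}\bigr),
\end{align*}
which is automatically zero for odd $n$. The four non-vanishing values of $\kappa_2$ all share the structural feature that $\kappa_2(x_{ij},x_{kl})\neq 0$ iff $(k,l)=(j,i)$, in which case it equals $1$. Hence each pair $\{a,b\}\in\pi$ contributes a factor $\delta_{i(a),i(b+1)}\delta_{i(a+1),i(b)}$ --- precisely the constraints that appeared in the proof of Theorem~\ref{theorem:gue} (with $N=2$). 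Identifying $\pi$ with its associated fixed-point-free involution and $\gamma=(1\,2\,\dots\,n)$, the constraints say that $i:[n]\to[2]$ must be constant on the orbits of $\gamma\pi$, giving $2^{\#(\gamma\pi)}$ admissible $i$'s.

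Finally, Proposition~\ref{proposition:characterization-non-crossing-pair-partitions} tells me that $\#(\gamma\pi)=\tfrac{n}{2}+1$ exactly when $\pi\in\setofnoncrossingpartitionsof_2(n)$, which is the only case appearing in our sum. Combining everything, for $n=2m$,
\begin{align*}
(\normalizedtrace\otimes\varphi)(s^{2m})=\frac{1}{2^{m+1}}\sum_{\pi\in\setofnoncrossingpartitionsof_2(2m)}2^{m+1}=\#\setofnoncrossingpartitionsof_2(2m)=C_m,
\end{align*}
while odd moments vanish. By Definition~\ref{definition:semicircular-variable}, $s$ is a standard semicircular element in $(M_2(\mathcal A),\normalizedtrace\otimes\varphi)$. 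The only genuine piece of work is the cumulant computation for $(c,c^*)$ and the orbit-counting identification with the GUE combinatorics; both are essentially mechanical once the framework above is set up.
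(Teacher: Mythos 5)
Your proof is correct, and it takes a genuinely different route from the paper. The paper obtains Theorem~\ref{theorem:semicircle-as-matrix} as an immediate consequence of the Remark preceding it: a $\GUE(2N)$ is decomposed into four $N\times N$ blocks, the diagonal blocks and the off-diagonal block converge (via Theorems~\ref{theorem:gue-family} and~\ref{corollary:gue}) to free $s_1,s_2,c$, and since the $2N\times 2N$ matrix itself converges to a semicircular, the result follows in the limit. Your argument is instead a direct, exact, finite-dimensional computation: you determine the joint free cumulants of $(s_1,s_2,c,c^*)$ (only second-order ones survive, with $\kappa_2(x_{ij},x_{kl})=\delta_{il}\delta_{jk}$), feed them into the moment--cumulant formula, and recognize that the constraint pattern reproduces the orbit-counting combinatorics of Theorem~\ref{theorem:gue} at $N=2$, where Proposition~\ref{proposition:characterization-non-crossing-pair-partitions} gives $\#(\gamma\pi)=\tfrac{n}{2}+1$ on the non-crossing pairings. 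One pleasing feature of your route is that it is self-contained and exact rather than asymptotic --- it makes explicit that the free cumulants play, at the finite matrix size $N=2$, precisely the role the Wick/genus-expansion formula plays for $\GUE$, and that no error term arises because crossing pairings simply never contribute. The paper's route is shorter given the machinery already built in Section~6, and it also explains \emph{why} one should expect the statement (from the block decomposition of a $\GUE$); yours, by contrast, proves it without reference to random matrices and would work verbatim for larger block sizes.

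One small thing worth making explicit if you write this up: the identity $\kappa_2(x_{ij},x_{kl})=\delta_{il}\delta_{jk}$ packages together $\kappa_2(s_k,s_k)=1$, $\kappa_2(c,c^*)=\kappa_2(c^*,c)=1$, $\kappa_2(c,c)=\kappa_2(c^*,c^*)=0$, and the vanishing of all mixed cumulants by Theorem~\ref{theorem:freeness-vanishing-mixed-cumulants-random-variables}; it deserves a one-line verification rather than being left as ``one checks''.
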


\begin{remark}
  \label{remark:circular-elements}
  The $\ast$-distribution of a circular element $c$ in a $\ast$-probability space $(\mathcal A,\varphi)$ is easy to describe in terms of the free cumulants $\kappa_n$ of $(\mathcal A,\varphi)$. Since,
  \begin{align*}
    c= {\frac{1}{\sqrt 2}}(s+i\tilde s) \quad \text{for free standard semicirculars $s$, $\tilde s$,}
  \end{align*}
  the second-order cumulants of $c$ are, according to the vanishing of mixed cumulants, Theorem~\ref{theorem:freeness-vanishing-mixed-cumulants}, of the free variables $s$ and $\tilde s$,
$$
        \kappa_2(c,c)=\frac{1}{2}(\kappa_2(s,s)-\kappa_2(\tilde s,\tilde s))=0,
\qquad\text{and}\qquad
    \kappa_2(c,c^\ast)=\frac{1}{2}(\kappa_2(s,s)+\kappa_2(\tilde s,\tilde s))=1
$$
  and, likewise,
    $\kappa_2(c^\ast,c^\ast)=0$ and $\kappa_2(c^\ast,c)=1$.
  All other $\ast$-cumulants of $c$ vanish; since, by Example~\hyperref[example:free-cumulants-2]{\ref*{example:free-cumulants}~\ref*{example:free-cumulants-2}}, only second-order cumulants of $s$ and $\tilde s$ are non-zero.
  \par
  We also need to understand, if $\mathcal A$ is a von Neumann algebra and $\varphi$ normal, the polar decomposition (see Fact~\hyperref[facts:von-neumann-algebras-4]{\ref*{facts:von-neumann-algebras}~\ref*{facts:von-neumann-algebras-4}}) of a circular element. We will see that this is of the form
    $c=uq$,
  where
  \begin{itemize}
  \item the random variable $q=\sqrt{c^\ast c}$ has a \emph{quartercircular} distribution, (note that $c^\ast c$ has the same distribution as $s^2$) meaning, for all $t\in \realnumbers$,
    \begin{align*}
      d\mu_q(t)=
      \begin{cases}
        \frac{1}{\pi}\sqrt{4-t^2}\, dt, &\text{if }t\in [0;2],\\
        0,&\text{otherwise.}        
      \end{cases}
            \quad\quad
                \begin{tikzpicture}[baseline=4em]
                        \begin{axis}[xmin=-1.1*2, ymin=-0.05, xmax=1.1*2, ymax=.35, axis lines=middle, xtick={-2,-1,0,1,2}, xticklabels={$-2$, $-1$, $0$, $1$, $2$}, ytick={0,0.318},yticklabels={$0$,$\frac{1}{\pi}$}, width=16em, height=11em, domain=0:2]
                          \addplot[thick, samples=150, smooth] {(0.5*pi^-1)*((4-x^2)^0.5)};
          \end{axis}
        \end{tikzpicture}    
    \end{align*}
  \item the random variable $u$ is a Haar unitary and
  \item the random variables $u$ and $q$ are $\ast$-free.
  \end{itemize}
  This can be proved via random matrices (by approximating a circular element by a sequence of non-symmetric Gaussian random matrices and investigating the polar decompositions of those -- that was the original proof of Voiculescu) or by our combinatorial machinery. The latter yields a generalization to the important class of $R$-diagonal operators. (Note also that, by  uniqueness of polar decomposition, the above polar decomposition of $c$ can be proved by showing that $uq$, with $u$ being a Haar unitary, $q$ a quartercircular and $u,q$ $\ast$-free, has a circular distribution.)
  \par
  The main point about the $\ast$-distribution of $c$ is that its non-zero cumulants are alternating in $c$ and $c^\ast$. This pattern is preserved for the polar part.
\end{remark}

\begin{definition}[Nica and Speicher 1997]
  Let $(\mathcal A,\varphi)$ be a tracial $\ast$-probability space. A random variable $a\in \mathcal A$ is called \emph{$R$-diagonal} if, for all $n\in \naturalnumbers$ and $\varepsilon:\,[n]\to \{1,\ast\}$ it holds that
  \begin{align*}
    \kappa_n(a^{\varepsilon(1)},\ldots,a^{\varepsilon(n)})=0
  \end{align*}
  whenever $n$ is odd or $\varepsilon$ not alternating in $1$ and $\ast$. Thus, for every $n\in \naturalnumbers$ the only non-vanishing $n$-th order $\ast$-cumulants of $a$ may be
  \begin{align*}
    \alpha_n\equalperdefinition \kappa_{2n}(a,a^\ast,a,a^\ast,\ldots,a,a^\ast)
= \kappa_{2n}(a^\ast,a,a^\ast,a,\ldots,a^\ast,a).
  \end{align*}
  We call $(\alpha_n)_{n\in \naturalnumbers}$ the \emph{determining sequence} of $a$.
\end{definition}

\begin{example}
  \begin{enumerate}
  \item By Remark~\ref{remark:circular-elements}, every circular element $c$ is $R$-diagonal and has the determining sequence
      $(1,0,0,0,\ldots)$,
    since $\kappa_2(c,c^\ast)=\kappa_2(c^\ast,c)=1$, and all other $*$-cumulants vanish.
  \item In Assignment~\hyperref[assignment-10]{10}, Exercise 2 we have seen that every Haar unitary $u$ is $R$-diagonal  and that its determining sequence is given by
      $\alpha_n=(-1)^{n-1}C_{n-1}$
    for every $n\in \naturalnumbers$, i.e.\
    \begin{align*}
      \kappa_2(u,u^\ast)=1,\qquad
      \kappa_4(u,u^\ast,u,u^\ast)=-1,\qquad
      \kappa_6(u,u^\ast,u,u^\ast,u,u^\ast)=2,\qquad
                                            \dots
    \end{align*}
    Multiples of Haar unitaries are the only \emph{normal} $R$-diagonal elements.
  \end{enumerate}
\end{example}

\begin{theorem}
  \label{theorem:distribution-r-diagonal-operator}
Let $a$ be an $R$-diagonal element with determining sequence  $(\alpha_n)_{n\in \naturalnumbers}$. Then, for all $n\in \naturalnumbers$,
  \begin{align*}
    \kappa_n(aa^\ast,aa^\ast,\ldots, aa^\ast)=\kappa_n(a^\ast a,a^\ast a,\ldots, a^\ast a)=\sum_{\pi\in \setofnoncrossingpartitionsof(n)}\alpha_\pi,
  \end{align*}
  where $(\alpha_\pi)_{\pi\in \setofnoncrossingpartitionsof}$ is the multiplicative function corresponding to $(\alpha_n)_{n\in \naturalnumbers}$ (as in Remark~\ref{remark:moments-cumulants-convolution}), i.e., for all $n\in \naturalnumbers$ and $\pi\in \setofnoncrossingpartitionsof (n)$,
    $\alpha_\pi\equalperdefinition \prod_{V\in \pi}\alpha_{\# V}$.
  Hence, by M\"obius inversion (Corollary~\ref{corollary:moebius-inversion}), the  $\alpha_n$ -- and thus the $\ast$-distribution of $a$ -- are uniquely determined by the distribution of $a^\ast a$.
\end{theorem}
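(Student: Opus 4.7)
The plan is to apply Theorem~\ref{theorem:cumulant-product-rule} to unfold $\kappa_n(aa^*,\ldots,aa^*)$, use $R$-diagonality to prune the resulting sum drastically, and then identify what remains with $\sum_{\sigma\in NC(n)}\alpha_\sigma$ via a block-size-doubling bijection; the $a^*a$ version will follow from traciality.

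First I would apply Theorem~\ref{theorem:cumulant-product-rule} to the $2n$-tuple $(a,a^*,a,a^*,\ldots,a,a^*)$ grouped as $A_j = a\cdot a^*$ for $j=1,\ldots,n$, obtaining
\[
\kappa_n(aa^*,\ldots,aa^*)=\sum_{\substack{\pi\in \setofnoncrossingpartitionsof(2n)\\ \pi\vee \hat 0_n=1_{2n}}}\kappa_\pi(a,a^*,a,a^*,\ldots,a,a^*),
\]
where $\hat 0_n=\{\{1,2\},\{3,4\},\ldots,\{2n-1,2n\}\}$. Next, $R$-diagonality kills most of these summands: since $\kappa_\pi$ is multiplicative over blocks, each factor $\kappa_{|V|}(a^{\varepsilon(i_1)},\ldots,a^{\varepsilon(i_{|V|})})$ vanishes unless $|V|$ is even and the labels $\varepsilon$ restricted to $V$, read in the sorted order of the block, alternate between $1$ and $*$. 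For such admissible $\pi$ the summand reduces to $\prod_{V\in\pi}\alpha_{|V|/2}$.

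The combinatorial heart is to establish a size-doubling bijection $\Psi:\setofnoncrossingpartitionsof(n)\to S_n$, where $S_n$ denotes the admissible set. Explicitly, I would send each block $V=\{i_1<\cdots<i_k\}$ of $\sigma$ to the block $\{2i_s-1:s\in[k]\}\cup\{2i_s-2\bmod 2n:s\in[k]\}$, with $0$ identified with $2n$; equivalently, $\Psi(\sigma)$ is the cyclic rotation by $-1$ of the ``naive doubling'' $i\mapsto\{2i-1,2i\}$ of $\sigma$. Then (i)~block sizes double, so $\kappa_{\Psi(\sigma)}(a,a^*,\ldots)=\prod_{V\in\sigma}\alpha_{|V|}$; (ii)~$\Psi(\sigma)$ is non-crossing, and each block is a union of the cyclic pairs $\{2i-2,2i-1\}$, whose sorted order has alternating parities, so $\Psi(\sigma)$ is $*$-alternating with even blocks; (iii)~$\Psi(\sigma)\vee\hat 0_n=1_{2n}$, because $\Psi(\sigma)$ is coarser than the shifted pairing $\{\{2,3\},\{4,5\},\ldots,\{2n,1\}\}$, whose join with $\hat 0_n$ is already $1_{2n}$. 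Summing $\prod_{V\in\sigma}\alpha_{|V|}$ over $\sigma\in\setofnoncrossingpartitionsof(n)$ gives $\sum_\sigma \alpha_\sigma$, which is the desired formula. The identity $\kappa_n(a^*a,\ldots,a^*a)=\kappa_n(aa^*,\ldots,aa^*)$ then follows from the traciality of $\varphi$: the moments of $(aa^*)^k$ and $(a^*a)^k$ agree for every $k$, hence so do the free cumulants.

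The main obstacle is the surjectivity of $\Psi$. One has to show that every admissible $\pi\in S_n$ is of the described shift-doubled form, i.e.\ that the $*$-alternating structure together with the join condition $\pi\vee\hat 0_n=1_{2n}$ forces each block of $\pi$ to decompose into cyclic pairs $\{2j,2j+1\}$. I would argue this inductively, peeling off interval blocks (whose existence is guaranteed by Remark~\ref{remark:interval-stripping}) and checking that an interval block compatible with the alternation is necessarily a union of such cyclic pairs; the join condition propagates correctly to the smaller problem obtained after removal. This reduction is the only step requiring real care.
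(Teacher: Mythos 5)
Your proposal is correct and follows essentially the same route as the paper: unfold $\kappa_n(aa^\ast,\ldots,aa^\ast)$ via Theorem~\ref{theorem:cumulant-product-rule}, prune by $R$-diagonality, and identify the surviving partitions with $\setofnoncrossingpartitionsof(n)$ via the cyclically shifted block-doubling, each contributing $\alpha_\pi$. The only spot in your sketch that needs tightening is the surjectivity step: alternation alone does not force an interval block to be a union of the cyclic pairs $\{2i-2,2i-1\}$ (an even-length interval aligned with $\hat{0}_n$, starting at an odd position, is also alternating); you must invoke $\pi\vee\hat{0}_n=1_{2n}$ to exclude blocks that are unions of $\hat{0}_n$-pairs, which is exactly the gap argument the paper uses in place of your induction.
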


\begin{proof}
  For all $n\in \naturalnumbers$, by Theorem~\ref{theorem:cumulant-product-rule},
  \begin{align*}
    \kappa_n(aa^\ast,aa^\ast,\ldots,aa^\ast)&=\sum_{\substack{\sigma\in \setofnoncrossingpartitionsof(2n)\\\sigma\vee \hat{0}_n=1_{2n}}}\kappa_\sigma(a,a^\ast,a, a^\ast,\ldots, a,a^\ast),
  \end{align*}
  where $\hat{0}_n=\{ \{1,2\} , \{3,4\}, \ldots, \{2n-1,2n\} \}$.
  In order for a partition $\sigma\in \setofnoncrossingpartitionsof(n)$ to yield a non-zero contribution to this sum, by $R$-diagonality of $a$, in every block of $\sigma$, an $a$ must succeed each occurrence of an $a^\ast$. The condition $\sigma\vee\hat{0}_n=1_{2n}$ enforces then that this $a$ must be the one closest to the $a^\ast$. Hence, the contributing $\sigma$ must at least make the following connections:
  \begin{center}
  \begin{tikzpicture}
    \draw (1.5*0em,0em) -- ++ (0,-1.5em) -| (1.5*12em,0em);
    \draw (1.5*1em,0em) -- ++ (0,-1em) -| (1.5*2em,0em);
    \draw (1.5*3em,0em) -- ++ (0,-1em) -| (1.5*4em,0em);
    \draw (1.5*5em,0em) -- ++ (0,-1em) -| (1.5*6em,0em);
    \draw (1.5*8em,0em) -- ++ (0,-1em) -| (1.5*9em,0em);        
    \draw (1.5*10em,0em) -- ++ (0,-1em) -| (1.5*11em,0em);
    \draw[fill=white,draw=none] (-0.5em,-0.35em) rectangle (18.5em,0.65em);
    \node [right, inner sep=2pt] at (0em,0em) {$\hspace{-0.475em}a\hspace{1em} a^\ast\hspace{-0.2em}, $};
    \node [right, inner sep=2pt] at (1.5*2em,0em) {$\hspace{-0.475em}a\hspace{1em} a^\ast\hspace{-0.2em}, $};
    \node [right, inner sep=2pt] at (1.5*4em,0em) {$\hspace{-0.475em}a\hspace{1em} a^\ast\hspace{-0.2em}, \hspace{1.3em} \ldots \hspace{1.5em} ,$};
    \node [right, inner sep=2pt] at (1.5*9em,0em) {$\hspace{-0.475em}a\hspace{1em} a^\ast\hspace{-0.2em}, $};    
    \node [right, inner sep=2pt] at (1.5*11em,0em) {$\hspace{-0.475em}a\hspace{1em} a^\ast\phantom{\hspace{-0.2em},}$};
  \end{tikzpicture}
\end{center}

  Making the remaining connections for $\sigma$ is now the same problem as choosing a partition $\pi\in \setofnoncrossingpartitionsof(a^\ast a,a^\ast a, \ldots, a^\ast a)$ and the cumulant $\kappa_\sigma(a,a^\ast,a, a^\ast,\ldots, a,a^\ast)$ is then exactly given by $\alpha_\pi$.
\end{proof}

\begin{theorem}
  \label{theorem:product-with-r-diagonal-operator}
  Let $(\mathcal A,\varphi)$ be a tracial $\ast$-probability space, $a,b\in \mathcal A$ and $a,b$ $\ast$-free. If $a$ is $R$-diagonal, then so is $ab$.
\end{theorem}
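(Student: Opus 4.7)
The plan is to show that every $*$-cumulant of $ab$ vanishes outside the $R$-diagonal pattern. For $\varepsilon:[n]\to\{1,*\}$, write the $2n$ factors $c_1,\ldots,c_{2n}$ so that $c_{2i-1}c_{2i}=(ab)^{\varepsilon(i)}$ for every $i$ --- that is, $(c_{2i-1},c_{2i})=(a,b)$ when $\varepsilon(i)=1$ and $(c_{2i-1},c_{2i})=(b^*,a^*)$ when $\varepsilon(i)=*$. An application of Theorem~\ref{theorem:cumulant-product-rule} with $\hat 0_n=\{\{1,2\},\{3,4\},\ldots,\{2n-1,2n\}\}$ yields
\[
\kappa_n\bigl((ab)^{\varepsilon(1)},\ldots,(ab)^{\varepsilon(n)}\bigr)=\sum_{\substack{\pi\in\setofnoncrossingpartitionsof(2n)\\ \pi\vee\hat 0_n=1_{2n}}}\kappa_\pi(c_1,\ldots,c_{2n}).
\]
Two successive filterings cut this sum drastically. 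By the $*$-freeness of $a$ and $b$ together with Theorem~\ref{theorem:freeness-vanishing-mixed-cumulants}, only $\pi=\pi_a\cup\pi_b$ whose blocks never mix $a$- and $b$-type positions survive. By the $R$-diagonality of $a$, each block of $\pi_a$ must be of even size and alternate between $a$ and $a^*$ in positional order.

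An immediate consequence is that $n$ must be even, since otherwise the $n$ $a$-positions cannot be packed into even-sized blocks ($a$-singletons contribute $\kappa_1(a)=0$). Assume therefore $n=2m$. The combinatorial heart of the argument is then to show that $\varepsilon$ itself must alternate. I plan to argue by interval-stripping (Remark~\ref{remark:interval-stripping}): every non-crossing $\pi$ has an interval block $V$. Since each $\hat 0_n$-pair carries exactly one $a$-position and one $b$-position, an interval block of $\pi_a$ of size $\geq 2$ can only be of the form $V=\{2i,2i+1\}$ straddling a pair boundary, which forces $\varepsilon(i)=*$ and $\varepsilon(i+1)=1$; the symmetric statement holds for $\pi_b$-intervals. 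An $a$-singleton immediately kills the contribution; a $b$-singleton is handled by first centering $b=b_0+\varphi(b)\cdot 1$, so that by multilinearity of cumulants it suffices to treat the case $\varphi(b)=0$. Removing the interval block together with the $\hat 0_n$-pairs it involves produces a strictly smaller cumulant of the same shape, and induction forces the local alternation $\varepsilon(i)\neq\varepsilon(i+1)$ at every interval location. Iterating, $\varepsilon$ must alternate globally, confirming that $ab$ is $R$-diagonal.

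The main obstacle I expect to face is verifying that interval-stripping preserves the connectivity condition $\pi\vee\hat 0_n=1_{2n}$ on the remaining partition, rather than producing disconnected components to which the induction does not cleanly apply; the small cases $n=2$ and $n=4$ worked out by hand support the picture, but the general case requires careful bookkeeping. The cyclic invariance of free cumulants (available via traciality of $\varphi$) offers flexibility in normalizing the location of $V$ and should simplify the case analysis. An alternative, conceptually cleaner approach would establish first the auxiliary characterization that $a$ is $R$-diagonal if and only if $ua$ has the same $*$-distribution as $a$ for every Haar unitary $u$ that is $*$-free from $a$; given that, $*$-freeness of $\{u,a\}$ from $b$ (a consequence of associativity of $*$-freeness) gives that $(ua)b$ and $ab$ have equal $*$-distributions, and this ``rotation invariance'' forces $ab$ to be $R$-diagonal by the same characterization.
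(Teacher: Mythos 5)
Your setup coincides with the paper's: expand $\kappa_n\bigl((ab)^{\varepsilon(1)},\ldots,(ab)^{\varepsilon(n)}\bigr)$ via Theorem~\ref{theorem:cumulant-product-rule}, discard partitions whose blocks mix $a$- and $b$-positions (vanishing of mixed cumulants), use $R$-diagonality to force every surviving $a$-block to alternate between $a$ and $a^\ast$, and kill odd $n$ by parity. The gap is in the combinatorial endgame. First, the reduction to $\varphi(b)=0$ by writing $b=b_0+\varphi(b)\cdot 1$ does not go through: $ab=ab_0+\varphi(b)\,a$, and $ab_0$ and $a$ are not free, so the statement for centered $b$ does not imply the statement for $b$; at the level of cumulants the substitution produces $2^n$ mixed terms with some arguments equal to plain $a$ or $a^\ast$, which the centered case does not cover. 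Second, the interval-stripping induction is exactly where the difficulty sits and you have not resolved it: stripping an $a$-interval $\{2i,2i+1\}$ or a $b$-singleton orphans the $\hat{0}_n$-partners of the removed points, so the reduced configuration is no longer a cumulant of products $ab$ and the induction hypothesis does not apply; moreover, interval blocks of the successively reduced partitions join positions that are adjacent only in the reduced index set, so the ``local alternation'' they force does not translate into $\varepsilon(i)\neq\varepsilon(i+1)$ for consecutive original indices. Global alternation of $\varepsilon$ is therefore not established.

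The paper's proof avoids all of this with a local, non-inductive obstruction. By traciality (cyclic invariance of cumulants) and by passing to adjoints, any forbidden pattern exhibits two consecutive arguments equal to $ab$; mark the second $a$ of this defect, at odd position $2i+1$. Its block consists only of $a^{(\ast)}$'s and must alternate, so either the marked $a$ opens its block, which then closes with an $a^\ast$ at an even position $2j$, or the marked $a$ is immediately preceded within its block by an $a^\ast$ at an even position $2j$. In the first case $\{2i+1,\ldots,2j\}$, in the second case $\{2j+1,\ldots,2i\}$, is a nonempty proper union of complete $\hat{0}_n$-pairs which, by non-crossingness, no block of $\pi$ connects to its complement; hence $\pi\vee\hat{0}_n\neq 1_{2n}$ and no partition contributes. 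Note that $b$-blocks, and in particular $b$-singletons, never enter this argument. Your alternative route via the characterization of $R$-diagonality by invariance of the $\ast$-distribution under multiplication with a free Haar unitary is a known, workable strategy, but that characterization is a theorem of comparable depth not available in the paper, and you would additionally need the Haar unitary to be $\ast$-free from $\{a,b\}$ jointly to transfer the invariance to $ab$; it is not a shortcut here.
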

\begin{proof}
  Let $(\kappa_n)_{n\in \naturalnumbers}$ be the free cumulants of $(\mathcal A,\varphi)$. We have to see that non-alternating $\ast$-cumulants in $ab$ vanish. Let $n\in \naturalnumbers$ be arbitrary. It suffices to consider the situation
  \begin{align*}
    \kappa_n(\ldots,ab,ab,\ldots)&=\sum_{\substack{\pi \in \setofnoncrossingpartitionsof(2n)\\\pi\vee \hat{0}_n=1_{2n}}}\kappa_\pi(\ldots,a,b,\overset{\downarrow}{a},b,\ldots).
  \end{align*}
  Since $a,b$ are $\ast$-free, a partition $\pi\in \setofnoncrossingpartitionsof(2n)$ can only yield a non-vanishing contribution if $\pi$ connects among $\{a,a^\ast\}$ and among $\{b,b^\ast\}$. Consider the block of such a $\pi$ containing $\overset{\downarrow}{a}$. Either $\overset{\downarrow}{a}$ is the first element in its block,
  \begin{center}
  \begin{tikzpicture}
      \draw (5.25em,0em) -- ++ (0,-1.5em) -- ++ (2em,0);
    \draw (11.2em,0em) -- ++ (0,-1.5em) -- ++ (-2em,0);
    \draw [dotted, shorten >= 2pt, shorten <= 2pt] ($(5.25em,0em)+(2em,-1.5em)$) --  ($(11.2em,0em)+(-2em,-1.5em)$);
    \node [right, fill=white, inner sep=1pt] at (0em,0.3em) {$\ldots \hspace{0.5em},\hspace{0.25em}a \hspace{0.5em} b\hspace{0.25em},\hspace{0.25em} \overset{\downarrow}{a}\hspace{0.5em} b \hspace{0.25em}, \hspace{0.5em} \ldots \hspace{0.5em} ,\hspace{0.25em}b^\ast \hspace{0.0em}a^\ast\hspace{0.0em} , \hspace{0.5em} \ldots$};
    \node[align=center] at (11.2em,-4em) {last element in this\\block must be $a^\ast$};
    \draw[->] (11.2em,-3em) -- ++ (0,1em);
  \end{tikzpicture}
\end{center}
or there is a preceding element in the block (which again must be an $a^\ast$):
\begin{center}
  \begin{tikzpicture}
          \draw (3.75em,0em) -- ++ (0,-1.5em) -- + (2em,0) -- + (-2em,0);
    \draw (10.5em,0em) -- ++ (0,-1.5em) -- + (-2em,0) -- + (2em,0);
    \draw [dotted, shorten >= 2pt, shorten <= 3pt] ($(3.75em,0em)+(2em,-1.5em)$) --  ($(10.5em,0em)+(-2em,-1.5em)$);
    \draw [dotted, shorten >= 2pt, shorten <= 2pt] ($(3.75em,0em)+(-2em,-1.5em)$) -- ++ (-2em,0em);
    \draw [dotted, shorten >= 2pt, shorten <= 2pt] ($(10.5em,0em)+(2em,-1.5em)$) -- ++ (2em,0em);
    \node [right, fill=white, inner sep=1pt] at (0em,0.3em) {$\ldots \hspace{0.5em} ,\hspace{0.25em}b^\ast \hspace{0.0em}a^\ast\hspace{0.0em} , \hspace{0.5em} \ldots \hspace{0.5em},\hspace{0.25em}a \hspace{0.5em} b\hspace{0.25em},\hspace{0.25em} \overset{\downarrow}{a}\hspace{0.5em} b \hspace{0.25em}, \hspace{0.5em} \ldots $};
    \node[align=center] at (3.75em,-4em) {preceding element\\must be $a^\ast$};
    \draw[->] (3.75em,-3em) -- ++ (0,1em);
  \end{tikzpicture}
\end{center}
  In either situation, the condition $\pi\vee \hat{0}_n=1_{2n}$ is not satisfied. Hence, there is no contributing $\pi$, implying $\kappa_ n(\ldots,ab,ab,\ldots)=0$.
\end{proof}

\begin{theorem}
  \label{theorem:r-diagonal-operator-polar-decomposition-distribution}
  Let $\mathcal M$ be a von Neumann algebra and $\tau:\,\mathcal M\to \complexnumbers$ a faithful normal trace. Let $a\in \mathcal M$ be an $R$-diagonal operator with respect to $\tau$. If $\ker(a)=\{0\}$, then the polar decomposition
    $a=uq$
  of $a$ has the following $\ast$-distribution:
  \begin{enumerate}
  \item[(i)]\label{theorem:r-diagonal-operator-polar-decomposition-distribution-1} The partial isometry $u$ is a Haar unitary.
  \item[(ii)]\label{theorem:r-diagonal-operator-polar-decomposition-distribution-2} The operator $q$ is positive with distribution $\mu_{\sqrt{a^\ast a}}$.
  \item[(iii)]\label{theorem:r-diagonal-operator-polar-decomposition-distribution-3} The operators $u$ and $q$ are $\ast$-free.
  \end{enumerate}
\end{theorem}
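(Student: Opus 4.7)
The plan is to construct, on a separate $\ast$-probability space, a pair $(\tilde{u},\tilde{q})$ that visibly has properties (i)--(iii), form $\tilde{a}\equalperdefinition \tilde{u}\tilde{q}$, verify $\tilde{a}$ has the same $\ast$-distribution as $a$, and then transport the result back to $(u,q)$ via the von Neumann algebra isomorphism given by Theorem~\ref{theorem:von-neumann-algebras-generator-isomorphism}. The key point is that polar decomposition is an operation performed \emph{inside} the von Neumann algebra generated by the element (Fact~\hyperref[facts:von-neumann-algebras-4]{\ref*{facts:von-neumann-algebras}~\ref*{facts:von-neumann-algebras-4}}), so it is determined by the $\ast$-distribution alone.

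More concretely, first I would take $(\tilde{\mathcal{M}},\tilde{\tau})$ to be the free product of a Haar unitary with a positive element whose distribution is $\mu_{\sqrt{a^\ast a}}$; this gives $\tilde{u}$ Haar unitary, $\tilde{q}\geq 0$ with $\mu_{\tilde{q}}=\mu_{\sqrt{a^\ast a}}$, and $\tilde{u},\tilde{q}$ $\ast$-free in a tracial setting (traciality being preserved by the free product). Since Haar unitaries are $R$-diagonal, Theorem~\ref{theorem:product-with-r-diagonal-operator} applied with $\tilde{u}$ and $\tilde{q}$ in place of $a$ and $b$ shows that $\tilde{a}=\tilde{u}\tilde{q}$ is $R$-diagonal. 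Using $\tilde{u}^\ast\tilde{u}=1$, we get $\tilde{a}^\ast\tilde{a}=\tilde{q}^2$, so $\mu_{\tilde{a}^\ast\tilde{a}}=\mu_{a^\ast a}$. Since both $a$ and $\tilde{a}$ are $R$-diagonal, Theorem~\ref{theorem:distribution-r-diagonal-operator} implies that their determining sequences coincide, hence their full $\ast$-distributions agree.

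Next I would invoke Theorem~\ref{theorem:von-neumann-algebras-generator-isomorphism} (passing if needed to the weak closure $\vonneumannalgebrageneratedby(\tilde{a})\subseteq\tilde{\mathcal M}$ and restricting $\tilde{\tau}$ there, using faithfulness of the trace on the free product and on $\mathcal M$): there is a $\ast$-isomorphism $\Phi:\vonneumannalgebrageneratedby(a)\to\vonneumannalgebrageneratedby(\tilde{a})$, intertwining states, with $\Phi(a)=\tilde{a}$. Because $\ker(a)=\{0\}$ and $\tau$ is a faithful trace, we also have $\ker(a^\ast)=\{0\}$, so the polar parts $u$ and $\tilde{u}$ are honest unitaries. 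Both $q=(a^\ast a)^{1/2}$ and $u$ (via $u=\lim_{\varepsilon\downarrow 0}a(a^\ast a+\varepsilon)^{-1/2}$ in SOT), and likewise $\tilde{q}$ and $\tilde{u}$, are measurable functions of $a$ respectively $\tilde{a}$ computed inside the ambient von Neumann algebra. Normality of $\Phi$ then forces $\Phi(q)=\tilde{q}$ and $\Phi(u)=\tilde{u}$. Hence the joint $\ast$-distributions of $(u,q)$ and $(\tilde{u},\tilde{q})$ agree, which yields (i), (ii), (iii) simultaneously.

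The main obstacle I anticipate is the last transport step: one has to make sure that the partial isometry from polar decomposition lies in $\vonneumannalgebrageneratedby(a)$ (not merely in the ambient $\mathcal B(\mathcal H)$) and that $\Phi$, being a normal $\ast$-isomorphism, commutes with the SOT-limit defining $u$. The traciality assumption is doing real work here: it converts the hypothesis $\ker(a)=\{0\}$ into $\ker(a^\ast)=\{0\}$ (via equality of the left and right support projections under $\tau$), which is exactly what promotes the partial isometry to a unitary and hence makes $u$ a genuine Haar unitary rather than merely a ``Haar partial isometry''.
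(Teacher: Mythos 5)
Your proposal follows the same route as the paper's proof: realize $(\tilde u,\tilde q)$ with properties (i)--(iii) abstractly, use Theorem~\ref{theorem:product-with-r-diagonal-operator} to see $\tilde a=\tilde u\tilde q$ is $R$-diagonal, match $\ast$-distributions via Theorem~\ref{theorem:distribution-r-diagonal-operator} and the distribution of $a^\ast a$, and transport the polar decomposition back through the trace-preserving isomorphism of Theorem~\ref{theorem:von-neumann-algebras-generator-isomorphism}. Your extra care about the kernel conditions and the partial isometry being a genuine unitary corresponds to the paper's kernel computation $\ker(\tilde a)=\ker(\tilde q)=\ker(a^\ast a)=\ker(a)=\{0\}$, so the argument is correct as written.
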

\begin{proof}
  Realize elements $\tilde u$ and $\tilde q$ satisfying $(i), (ii), (iii)$ in some von Neumann algebra $\tilde{ \mathcal {M}}$ with respect to a faithful normal trace $\tilde \tau$. Then, by Theorem~\ref{theorem:product-with-r-diagonal-operator}, the element
    $\tilde a  \equalperdefinition \tilde u\tilde q$
  is $R$-diagonal in $(\tilde {\mathcal {M}},\tilde \tau)$. Since the distribution of $\tilde a^\ast\tilde a$ in $(\tilde{ \mathcal{ M}},\tilde \tau)$ is by construction the same as the distribution of $a^\ast a$ in $(\mathcal M,\tau)$, the $\ast$-distributions of $\tilde a$ in $(\tilde{ \mathcal{ M}},\tilde \tau)$ and $a$ in $(\mathcal M,\tau)$ agree by Theorem~\ref{theorem:distribution-r-diagonal-operator}. Hence, the von Neumann algebras generated by $\tilde a$ in $\tilde{\mathcal M}$ and by $a$ in $\mathcal M$ are isomorphic, where this isomorphism also intertwines $\tau$ and $\tilde \tau$. Hence, the $\ast$-distributions of the polar decompositions of $\tilde a$ and of $a$ are the same. But the polar decomposition of $\tilde a$ in $\tilde {\mathcal {M}}$ is  $\tilde a=\tilde u\tilde q$: To see this one has to note
  \begin{align*}
    \ker(\tilde a)=\ker(\tilde q)=\ker(\tilde a^\ast\tilde a)=\ker(a^\ast a)=\ker(a)=\{0\}=\ker(u).
  \end{align*}
  Thus, the $\ast$-distribution of the random variables $(u,q)$ in $(\mathcal M,\tau)$, as appearing in the polar decomposition $a=uq$, is the one of $(\tilde u,\tilde q)$ in $(\tilde{ \mathcal{ M}},\tilde \tau)$, which is given by $(i), (ii), (iii)$.
\end{proof}

\newpage


\section{Applications of Freeness to von Neumann Algebras: Compression of Free Group Factors}

\begin{definition}
  A factor $\mathcal M$ is called $\mathrm{II}_1$-factor if $\mathcal M$ is infinite-dimensional and if there exists a norm-continuous trace $\tau:\mathcal M\to \complexnumbers$.
\end{definition}

\begin{remark}
  \begin{enumerate}
  \item The norm-continuous trace on a $\mathrm{II}_1$-factor is unique and automatically normal and faithful.
  \item Restricted to projections of $\mathcal M$, the trace $\tau$ gives a \enquote{dimension function}:
    \begin{enumerate}
    \item For all $t\in [0,1]$ there exists a projection $p\in \mathcal M$ (i.e.\ $p^\ast=p=p^2$) with $\tau(p)=t$.
    \item Two projections $p,q\in \mathcal M$  are equivalent (i.e.\ there exists a partial isometry $u\in \mathcal M$ with $u^\ast u=p$ and $uu^\ast=q$) if and only if $\tau(p)=\tau(q)$. (The number $\tau(p)$ is measuring the relative size of $\range(p)=p\mathcal H$, where $\mathcal H$ is a Hilbert space with $\mathcal M\subseteq \mathcal B(\mathcal H)$. The range has a \enquote{continuous dimension} $\in[0,1]$.)
    \end{enumerate}
  \item Given a $\mathrm{II}_1$-factor $\mathcal M$, we can build new factors by taking matrices over $\mathcal M$. But one can also go the opposite direction by compressing $\mathcal M$ by projections.
  \end{enumerate}
\end{remark}

\begin{definition}
  Let $\mathcal H$ be a Hilbert space and $\mathcal M\subseteq \mathcal B(\mathcal H)$ a $\mathrm{II}_1$-factor with faithful normal trace $\tau$.
  \begin{enumerate}
  \item Given any projection $p\in \mathcal M$, we consider
    \begin{align*}
      p\mathcal Mp\equalperdefinition \{pxp\mid x\in \mathcal M\}.
    \end{align*}
    This is again a von Neumann algebra with unit $p$, even a $\mathrm{II}_1$-factor, realized in $\mathcal B(p\mathcal H)$. And its von Neumann algebra isomorphism class depends only on the equivalence class of $p$, i.e.\ on $\tau(p)=\colon t\in [0,1]$. This isomorphism class is denoted by $\mathcal M_t$ and called a \emph{compression} of $\mathcal M$. By passing to matrices over $\mathcal M$, i.e.\ considering $M_{n\times n}(\mathcal M)$ for $n\in \naturalnumbers$, by then compressing $M_{n\times n}(\mathcal M)$ by projections and by using the unnormalized trace $\trace_n\otimes \tau$, one can define $\mathcal M_t$ for all $t\in \realnumbers^+$.
  \item We define the \emph{fundamental group} of $\mathcal M$ to be
    \begin{align*}
      \Fundgr(\mathcal M)\equalperdefinition \{t\in \realnumbers^+\mid \mathcal M_t\cong\mathcal M\}.
    \end{align*}
  \end{enumerate}
\end{definition}

\begin{remark}
  \label{remark:compressions-two-one-factors} 
 \begin{enumerate}
  \item\label{remark:compressions-two-one-factors-1} {Murray} and {von Neumann} showed that, for all $\mathrm{II}_1$-factors $\mathcal M$ and all $s,t\in \realnumbers^+$,
      $(\mathcal M_s)_t\cong \mathcal M_{st}$, 
    thus indeed, for all $n\in \naturalnumbers$,
      $(\mathcal M_n)_{{1}/{n}}\cong \mathcal M$ and   $\mathcal M_n\cong M_{n\times n}(\mathcal M)$,
    and that $\Fundgr(\mathcal M)$ is a multiplicative subgroup of $\realnumbers^+$.
  \item\label{remark:compressions-two-one-factors-2} They also proved that for the hyperfinite factor $\mathcal R$ one has
      $\Fundgr(\mathcal R)=\realnumbers^+$.
  \item\label{remark:compressions-two-one-factors-3} Nothing about compressions of the free group factors was known before the work of {Voiculescu}. Now, we know, for all $n,k\in \naturalnumbers$,
    \begin{align*}
      L(\mathbb F_n)_{{1}/{k}}\cong L(\mathbb F_m), \quad\text{where } \frac{m-1}{n-1}=k^2,\qquad
    \text{({Voiculescu} 1990)}
\end{align*}
 and
    \begin{align*}
      \Fundgr(L(\mathbb F_\infty))=\realnumbers^+,
\qquad\qquad\qquad
\text{({Radulescu 1992}).}
    \end{align*}

 We will prove {Voiculescu}'s result for the particular case $n=2$, $k=2$ and $m=5$.
\end{enumerate}
\end{remark}

\begin{theorem}[Voiculescu 1990]
  \label{theorem:voiculescu-free-factor-compression}
$ L(\mathbb F_2)_{{1}/{2}}\cong L(\mathbb F_5)$.
\end{theorem}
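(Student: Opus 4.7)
The approach is Voiculescu's matrix model technique: realize $L(\mathbb F_2)$ concretely as $M_2(\mathcal N)$ for a suitable von Neumann algebra $\mathcal N$, and then identify $\mathcal N$ with $L(\mathbb F_5)$. This immediately gives $L(\mathbb F_2)_{1/2}\cong\mathcal N\cong L(\mathbb F_5)$, since compressing $M_2(\mathcal N)$ by a rank-one diagonal projection returns $\mathcal N$.

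First, use Corollary~\ref{corollary:free-group-factors-isomorphisms} to represent $L(\mathbb F_2)$ as the von Neumann algebra generated by two $*$-free standard semicirculars (these are normal with a diffuse distribution). In a sufficiently large ambient tracial $*$-probability space $(\mathcal A,\tau)$ I would construct six $*$-free elements --- four standard semicirculars $s_1,s_2,s_1',s_2'$ and two standard circulars $c_1,c_2$, with the two triples $(s_1,s_2,c_1)$ and $(s_1',s_2',c_2)$ $*$-free from each other --- and form inside $M_2(\mathcal A)$
\[
\tilde S_1\equalperdefinition\frac{1}{\sqrt{2}}\begin{pmatrix} s_1 & c_1 \\ c_1^* & s_2 \end{pmatrix},\qquad
\tilde S_2\equalperdefinition\frac{1}{\sqrt{2}}\begin{pmatrix} s_1' & c_2 \\ c_2^* & s_2' \end{pmatrix}.
\]
By Theorem~\ref{theorem:semicircle-as-matrix} each $\tilde S_i$ is a standard semicircular in $M_2(\mathcal A)$; and because the two triples of entries are $*$-free from each other, a direct computation using the characterization of freeness by vanishing mixed cumulants (Theorem~\ref{theorem:freeness-vanishing-mixed-cumulants-random-variables}) shows that $\tilde S_1,\tilde S_2$ are themselves $*$-free. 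By Theorem~\ref{theorem:von-neumann-algebras-generator-isomorphism}, $\mathcal M\equalperdefinition\vonneumannalgebrageneratedby(\tilde S_1,\tilde S_2)\subseteq M_2(\mathcal A)$ is isomorphic to $L(\mathbb F_2)$.

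The technical heart now has two prongs: (a) show that the matrix unit $e_{11}$ belongs to $\mathcal M$, so that $e_{11}\mathcal M e_{11}$ is a bona fide compression of $\mathcal M$ realizing $L(\mathbb F_2)_{1/2}$; and (b) identify this corner with $L(\mathbb F_5)$. For (a) I would apply Theorem~\ref{theorem:r-diagonal-operator-polar-decomposition-distribution} to polar-decompose $c_i=u_iq_i$ (with $u_i$ a Haar unitary and $q_i$ a quartercircle, $*$-free), and combine this with the polar decompositions of suitable elements inside $\mathcal M$ --- which remain in $\mathcal M$ by Fact~\ref{facts:von-neumann-algebras}~(4) --- to extract the matrix units $e_{12},e_{21}\in\mathcal M$, whence $e_{11}=e_{12}e_{21}\in\mathcal M$. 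For (b) I would compute $e_{11}\mathcal M e_{11}$ explicitly in terms of the remaining polar-decomposition data of the $c_i$ and the diagonal semicirculars $s_j,s_j'$, and exhibit within it a generating set of five $*$-free normal diffuse elements; Corollary~\ref{corollary:free-group-factors-isomorphisms} would then yield $e_{11}\mathcal M e_{11}\cong L(\mathbb F_5)$.

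The main obstacle is the generator-count bookkeeping in (b). Naively, the six $*$-free entries of $\tilde S_1,\tilde S_2$ give rise to eight free degrees of freedom (four from the semicirculars, plus two each from the circulars via their polar decompositions), whereas the compressed corner must have exactly five free generators. The reduction by three reflects the absorption of the two Haar-unitary parts $u_1,u_2$ into the $M_2(\complexnumbers)$ matrix units of $\mathcal M$ --- these matrix units are not independent generators of the corner --- together with a further constraint tying the two diagonal sides through the matrix structure. Making this absorption rigorous, simultaneously verifying that $e_{11}\in\mathcal M$ and that the right five residual elements of $e_{11}\mathcal M e_{11}$ are mutually $*$-free normal diffuse, is the delicate combinatorial core of the proof.
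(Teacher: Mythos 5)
Your high-level skeleton (realize $L(\mathbb F_2)$ inside $M_2$ of something, then compute the $e_{11}$-corner via Corollary~\ref{corollary:free-group-factors-isomorphisms}) matches the paper, but the choice of generators you make for the copy of $L(\mathbb F_2)$ introduces an obstruction that the paper deliberately engineers around, and I do not see how you can get past it.

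You propose to generate $\mathcal M\cong L(\mathbb F_2)$ by two free standard semicirculars $\tilde S_1,\tilde S_2$, each realized as a full $2\times2$ matrix with semicircular diagonal entries and circular off-diagonal entries, and then to show $e_{11}\in\mathcal M$ and compute $e_{11}\mathcal M e_{11}$. The step ``$e_{11}\in\mathcal M$'' fails. Both $\tilde S_1$ and $\tilde S_2$ have a diffuse semicircular distribution, and any non-constant self-adjoint polynomial in two free semicirculars has a diffuse distribution as well; measurable functional calculus applied to such elements inside $\mathcal M$ cannot output the very specific atomic projection $e_{11}=\mathrm{diag}(1,0)$. Your fallback plan --- polar-decompose the circulars $c_i$ and ``combine with polar decompositions of suitable elements inside $\mathcal M$'' --- does not help either: the off-diagonal matrix $\begin{bmatrix}0&c_i\\0&0\end{bmatrix}$, whose polar decomposition would yield an $e_{12}$-type partial isometry, is \emph{not} an element of $\mathcal M$. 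Only the full semicirculars $\tilde S_i$ and their functions lie in $\mathcal M$, and these never disentangle diagonal from off-diagonal. Without $e_{11}\in\mathcal M$, the corner $e_{11}\mathcal M e_{11}$ is not an intrinsic compression of $\mathcal M\cong L(\mathbb F_2)$ (it is merely some corner of the ambient $M_2(\mathcal A)$), and the argument cannot proceed.

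The paper avoids this by breaking the symmetry between the two generators. It keeps one generator GUE-like, $x_1=\begin{bmatrix}s_1&c\\c^*&s_2\end{bmatrix}$, but takes as the second generator the normal \emph{diagonal} matrix $x_2=\begin{bmatrix}u&0\\0&2u\end{bmatrix}$ with $u$ a Haar unitary. Then $x_2$ is normal with a diffuse $*$-distribution (uniform on $\unitcircle\cup 2\unitcircle$) and $*$-free from $x_1$, so $\vonneumannalgebrageneratedby(x_1,x_2)\cong L(\mathbb F_2)$ by Corollary~\ref{corollary:free-group-factors-isomorphisms}, exactly as in your setup. But now $x_2x_2^*=\begin{bmatrix}1&0\\0&4\end{bmatrix}$ has an isolated spectrum, and $\mathbbm 1_{\{1\}}(x_2x_2^*)=e_{11}$ lies in $\mathcal N$ by measurable functional calculus, Fact~\ref{facts:von-neumann-algebras}~(2). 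Only after $e_{11}$ is secured does the paper replace $c$ by the pieces $v,q$ of its polar decomposition (Theorem~\ref{theorem:r-diagonal-operator-polar-decomposition-distribution} plus Fact~\ref{facts:von-neumann-algebras}~(4)), resolve $1=p^2+\tilde v^*\tilde v$ in each slot of a word $py_{i_1}\cdots y_{i_n}p$, and obtain the five $*$-free normal diffuse generators $s_1,\,vqv^*,\,vs_2v^*,\,u,\,vuv^*$ of the corner. Note also that the paper starts with exactly five free real degrees of freedom ($s_1,s_2$, the real and imaginary parts of $c$, and $u$) and ends with five generators; your construction starts with eight, and your proposed ``reduction by three'' has no mechanism --- this is a symptom of the structural mismatch rather than a bookkeeping detail you could fix afterward.
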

\begin{proof}
  By Remark~\hyperref[remark:compressions-two-one-factors-1]{\ref*{remark:compressions-two-one-factors}~\ref*{remark:compressions-two-one-factors-1}} we have
    $L(\mathbb F_5)\cong  (L(\mathbb F_5)_2)_{{1}/{2}}$ and $L(\mathbb F_5)_2\cong M_{2\times 2}(L(\mathbb F_5))$.
  Thus, the claim is equivalent to  
  \begin{align*}
    L(\mathbb F_2)\overset{!}{=}M_{2\times 2}(L(\mathbb F_5)).
  \end{align*}
  Hence, we should realize $L(\mathbb F_2)$ as $2\times 2$-matrices: Generalizing Theorem~\ref{theorem:semicircle-as-matrix}, we can find a $\ast$-probability space $(\mathcal A,\varphi)$ such that in $(M_{2\times 2}(\mathcal A),\normalizedtrace \otimes \varphi)$ the elements
  \begin{align*}
    x_1\equalperdefinition
    \begin{bmatrix}
      s_1 & c \\
      c^\ast & s_2
    \end{bmatrix}
               \quad\text{and}\quad
               x_2\equalperdefinition
               \begin{bmatrix}
                 u &0 \\
                 0& 2u
               \end{bmatrix}
  \end{align*}
  are $\ast$-free,  where, in $(\mathcal A,\varphi)$, both $s_1$ and $s_2$ are semicircular elements, $c$ is a circular variable,  $u$ is a Haar unitary and $s_1,s_2,c,u$ are $\ast$-free. (This can be proved via random matrices. We approximate $x_1$ by a sequence of \GUE\  and $x_2$ by a sequence of deterministic normal matrices which has as limiting distribution the uniform distribution on $\unitcircle \cup 2\unitcircle$.
  \begin{center}
  \begin{tikzpicture}
    \draw[->] (-5em,0em) to (5em,0em);
    \draw[->] (0em,-5em) to (0em,5em);
    \draw (2em,0em) arc (0:360:2em);
    \draw (4em,0em) arc (0:360:4em);
    \node[align=center] (lan) at (7em,5em) {uniform distribution\\on $\unitcircle\cup 2\unitcircle$};
    \draw[->]  (lan.south) -- (3em,3em);
    \draw[->]  (lan.south) -- (1.7em,1.5em);
    \draw (0em,0em) -- node[pos=0.4, above] {$1$} ++ (135:2em);
    \draw (0em,0em) -- node[pos=0.6, below] {$2$} ++ (225:4em);
    \useasboundingbox (-14em,-6em) rectangle (14em,6.5em);
  \end{tikzpicture}
\end{center}
Note, though, that we could have chosen any other non-atomic limit distribution for the deterministic sequence. The above form is a convenient choice to construct a projection of trace $\frac{1}{2}$, which we need for the compression.)
\par
Moreover, we can assume that $\mathcal A$ is a von Neumann algebra and $\varphi$ faithful and normal. Then, by Corollary~\ref{corollary:free-group-factors-isomorphisms}, the von Neumann subalgebra $\vonneumannalgebrageneratedby(s_1,s_2,c,u)$ of $\mathcal A$ generated by $\{s_1,s_2,c,u\}\cong\{s_1,s_2,\realpart(c),\imaginarypart(c),u\}$ is isomorphic to $L(\mathbb F_5)$. By the same corollary, we can infer $\vonneumannalgebrageneratedby (x_1,x_2)\cong L(\mathbb F_2)$ since $x_1$ and $x_2$ in $M_{2\times 2}(\mathcal A)$ are normal, have non-atomic distributions  and are $\ast$-free. Thus, we now know
\begin{align*}
L(\mathbb F_2)\cong  \mathcal N\equalperdefinition \vonneumannalgebrageneratedby (x_1,x_2)\subseteq M_{2\times 2}(L(\mathbb F_5)),
\end{align*}
but we do not know whether $\{x_1,x_2\}$ generates \emph{all} of $M_{2\times 2}(L(\mathbb F_5))$. That is what we still have to show.
\par
Since $u$ is unitary,
\begin{align*}
  x_2x_2^\ast=\begin{bmatrix}
                 u &0 \\
                 0& 2u
               \end{bmatrix}\begin{bmatrix}
                 u^{-1} &0 \\
                 0& 2u^{-1}
               \end{bmatrix}=
  \begin{bmatrix}
    1 & 0\\
    0 & 4
  \end{bmatrix}\in \mathcal N,
\end{align*}
Via measurable functional calculus, we conclude
\begin{align*}
\hspace{-8em}  \underset{\hspace{8em}\displaystyle =\colon p\hspace{2em} \parbox{6em}{projection of\\ trace $\frac{1}{2}$ in $\mathcal N$ }}{\underbrace{
\begin{bmatrix}
  1& 0\\
  0 & 0
\end{bmatrix}}}  \hspace{-7.1em},\quad
      \begin{bmatrix}
        0 &0 \\
        0 & 1
      \end{bmatrix}
\in \mathcal N,
\end{align*}
where $p$ is now a projection in $\mathcal N$ with trace $\normalizedtrace\otimes \varphi(p)=\frac{1}{2}$.
The presence of $p$ in $\mathcal N$ allows us to infer that $\mathcal N$ is generated by
\begin{align*}
  \begin{bmatrix}
    s_1&0\\
    0& 0
  \end{bmatrix},\quad\begin{bmatrix}
    0&c\\
    0& 0
  \end{bmatrix},\quad
         \begin{bmatrix}
    0&0\\
    0& s_2
  \end{bmatrix},\quad
         \begin{bmatrix}
    0&0\\
    0& u
  \end{bmatrix}\quad\text{and}\quad
         \begin{bmatrix}
    u&0\\
    0& 0
  \end{bmatrix}.
\end{align*}
We now replace the generator
\begin{align*}
  \begin{bmatrix}
    0&c\\
    0& 0
  \end{bmatrix} \qquad\text{by}\qquad  \begin{bmatrix}
    0&v\\
    0& 0
  \end{bmatrix}\quad\text{and}\quad  \begin{bmatrix}
    0&0\\
    0& q
  \end{bmatrix},
\end{align*}
where
\begin{align}
  \label{eq:theorem-voiculescu-free-factor-compression-1}
  \begin{bmatrix}
    0&c\\
    0& 0
  \end{bmatrix}
       =
 \begin{bmatrix}
    0&v\\
    0& 0
  \end{bmatrix}
  \begin{bmatrix}
    0&0\\
    0& q
  \end{bmatrix}
\end{align}
and
$c=vq$
is the polar decomposition of $c$ in $L(\mathbb F_5)$, i.e., $v$ is a Haar unitrary, $q$ is a quartercircular and $u$ and $q$ are $\ast$-free (see Remark~\ref{remark:circular-elements}). It then follows, that the two factors on the right hand side of identity \eqref{eq:theorem-voiculescu-free-factor-compression-1} are the polar
decomposition of the left hand side. Because $\mathcal N$ is a von Neumann algebra, Fact~\hyperref[facts:von-neumann-algebras-4]{\ref*{facts:von-neumann-algebras}~\ref*{facts:von-neumann-algebras-4}} yields that 
\begin{align*}
  \begin{bmatrix}
    0&v\\
    0& 0
  \end{bmatrix},\quad
  \begin{bmatrix}
    0&0\\
    0& q
  \end{bmatrix}\quad\in \mathcal N.
\end{align*}
Hence, we can indeed make the above replacement in the generators and conclude that $\mathcal N$ is generated by
\begin{align*}
  \underset{\displaystyle =\colon y_1}{\underbrace{\begin{bmatrix}
    s_1&0\\
    0& 0
  \end{bmatrix}}},\quad
  \underset{\displaystyle =\colon y_2}{\underbrace{\begin{bmatrix}
    0&v\\
    0& 0
  \end{bmatrix}}},\quad
  \underset{\displaystyle =\colon y_3}{\underbrace{\begin{bmatrix}
    0&0\\
    0& q
  \end{bmatrix}}},\quad
  \underset{\displaystyle =\colon y_4}{\underbrace{\begin{bmatrix}
    0&0\\
    0& s_2
  \end{bmatrix}}},\quad
  \underset{\displaystyle =\colon y_5}{\underbrace{\begin{bmatrix}
    u&0\\
    0& 0
  \end{bmatrix}}},\quad
  \underset{\displaystyle =\colon y_6}{\underbrace{\begin{bmatrix}
    0&0\\
    0& u
  \end{bmatrix}}}.       
\end{align*}
\par
A set of generators for the compression von Neumann algebra $p\mathcal N p$ is given by the collection of all elements 
  $py_{i_1} \ldots y_{i_n}p$
for  $n\in \naturalnumbers$ and  $i:[n]\to [6]$. 
Since
\begin{align*}
  1=
  \underset{\displaystyle = p^2}{\underbrace{\begin{bmatrix}
    1 & 0\\
    0 &0
  \end{bmatrix}}}
        +\hspace{-4.5em}
        \underset{\hspace{5em}\displaystyle =
        \underset{\displaystyle = \tilde{v}^\ast}{\underbrace{\begin{bmatrix}
          0 & 0\\
          v^\ast & 0
        \end{bmatrix}}}
        \underset{\displaystyle =\colon \tilde{v}}{\underbrace{
        \begin{bmatrix}
          0 & v\\
          0 & 0
        \end{bmatrix}}}                   
        }{\underbrace{
\begin{bmatrix}
    0 & 0\\
    0 &1
  \end{bmatrix},}}
\end{align*}
we can replace all instances of $1$ in
  $py_{i_1} \ldots y_{i_n}p=p\cdot y_{i_1}\cdot 1\cdot y_{i_2}\cdot \ldots 1\cdot y_{i_n}\cdot p$ by $1=p^2+\tilde v^*\tilde v$ and thus see that $p\mathcal N p$ is  generated by
$\{\, py_ip,\, py_i\tilde v^\ast,\, \tilde vy_ip,\, \tilde vy_i\tilde v^\ast \mid i\in [6]\, \},$
or, explicitly, by:
\begin{align*}
  \begin{bmatrix}
    s_1 & 0  \\
    0& 0
  \end{bmatrix},\quad
  \begin{bmatrix}
    1 & 0  \\
    0& 0
  \end{bmatrix},\quad
  \begin{bmatrix}
    vqv^\ast & 0  \\
    0& 0
  \end{bmatrix},\quad
  \begin{bmatrix}
    vs_2v^\ast & 0  \\
    0& 0
  \end{bmatrix},\quad
  \begin{bmatrix}
    u & 0  \\
    0& 0
  \end{bmatrix},\quad
  \begin{bmatrix}
    vuv^\ast & 0  \\
    0& 0
  \end{bmatrix},
\end{align*}
where once again we have used $vv^\ast=1$.
\par
By considering $p\mathcal Np$ realized on $p\mathcal{H}$ it is hence generated by
  $s_1, vqv^\ast, vs_2v^\ast, u, vuv^\ast$.
Each of those five elements is normal and has a non-atomic distribution; 
furthermore they are $*$-free since
 $ v\{q,s_2,u\}v^\ast$ is free from $\{s_1,u\}$ and the elements $q,s_2,u$ as well as the elements
$s_1,u$ are free.
\par
Thus, $p\mathcal Np$ is generated by five $\ast$-free elements with non-atomic distributions, proving $p\mathcal Np\cong L(\mathbb F_5)$ by Corollary~\ref{corollary:free-group-factors-isomorphisms}.
\end{proof}

\begin{remark}
  The same proof as for Theorem~\ref{theorem:voiculescu-free-factor-compression} works also for the first identity in Part~\ref{remark:compressions-two-one-factors-3} of Remark~\ref*{remark:compressions-two-one-factors} for arbitrary $n,k\in \naturalnumbers$. Actually, it is also true for $n=\infty$, where it says
    $L(\mathbb F_\infty)_{{1}/{k}}=L(\mathbb F_\infty)$ for all $k\in \naturalnumbers$,
  and hence
    $\rationalnumbers^+\subseteq \Fundgr(L(\mathbb F_\infty))$.
  For $n\in \naturalnumbers$, we obtain no direct consequence for $\Fundgr(L(\mathbb F_n))$ since we do not know whether different $L(\mathbb F_n)$ are isomorphic or not. However, the compression results show that some isomorphisms imply others, e.g.,
  \begin{align*}
    L(\mathbb F_2)\cong L(\mathbb F_3) \quad \text{implies}\quad \underset{\displaystyle =L(\mathbb F_5)}{\underbrace{L(\mathbb F_2)_{{1}/{2}}}}\cong \underset{\displaystyle =L(\mathbb F_9)}{\underbrace{L(\mathbb F_3)_{{1}/{2}}}}.
  \end{align*}
  A refinement of this (by defining also \enquote{interpolated free group factors} $L(\mathbb F_r)$ for all real $r>1$) finally yields the following dichotomy.
\end{remark}

\begin{theorem}[Dykema 1994, Radulescu 1994]
  Exactly one of the following statements is true:
  \begin{enumerate}
  \item All (interpolating) free group factors are isomorphic:
    \begin{align*}
      L(\mathbb F_r)\cong L(\mathbb F_s) \quad\text{for all }r,s\in (1,\infty].
    \end{align*}
    In this case,
    \begin{align*}
      \Fundgr(L(\mathbb F_r))=\realnumbers^+\quad \text{for all }r\in (1,\infty].
    \end{align*}
  \item The (interpolating)  free group factors are pairwise non-isomorphic:
    \begin{align*}
      L(\mathbb F_r)\not\cong L(\mathbb F_s) \quad\text{for all }r,s\in (1,\infty]\text{ with }r\neq s.
    \end{align*}
    In this case,
    \begin{align*}
      \Fundgr(L(\mathbb F_r))=\{1\} \quad \text{for all }r\in (1,\infty).
    \end{align*}
  \end{enumerate}
\end{theorem}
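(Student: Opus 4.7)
\smallskip

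\noindent\textbf{Plan of proof.} The heart of the matter is the construction of the \emph{interpolated} free group factors $L(\mathbb F_r)$ for arbitrary real $r\in(1,\infty)$, due independently to Dykema and Radulescu. I would begin by defining $L(\mathbb F_r)$ for such $r$ so that on the one hand $L(\mathbb F_n)$ for $n\in\naturalnumbers$ recovers the usual free group factor, and on the other hand the two key functorial identities hold:
\begin{align*}
 \text{(C)}\qquad & L(\mathbb F_r)_t \cong L\bigl(\mathbb F_{1+(r-1)/t^2}\bigr) \qquad (t>0),\\
 \text{(F)}\qquad & L(\mathbb F_r)\ast L(\mathbb F_s) \cong L(\mathbb F_{r+s}) \qquad (r,s>1).
\end{align*}
One standard realization is to take a semicircular family together with a suitable number of free projections and a free hyperfinite factor; the interpolation parameter $r$ then tracks the free-dimension combinatorics so that (C) extends Voiculescu's compression identity from Remark~\ref*{remark:compressions-two-one-factors}\,\ref*{remark:compressions-two-one-factors-3} continuously in $r$, and (F) is an analogous continuous extension of the obvious identity $L(\mathbb F_m)\ast L(\mathbb F_n)\cong L(\mathbb F_{m+n})$. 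I will assume the construction (with (C) and (F)) as the technical input, since it is a nontrivial undertaking; the dichotomy itself follows elegantly from (C) and (F).

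\smallskip

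\noindent\textbf{The easy direction (case 2 $\Rightarrow$ triviality of the fundamental group).} Suppose all $L(\mathbb F_r)$ for $r\in(1,\infty]$ are pairwise non-isomorphic. Fix $r\in(1,\infty)$ and $t\in\Fundgr(L(\mathbb F_r))$. By (C) we have $L(\mathbb F_r)\cong L(\mathbb F_r)_t\cong L(\mathbb F_{1+(r-1)/t^2})$. Non-isomorphism forces $1+(r-1)/t^2=r$, hence $t=1$. Thus $\Fundgr(L(\mathbb F_r))=\{1\}$, as required.

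\smallskip

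\noindent\textbf{The hard direction (a single coincidence collapses everything).} Suppose there exist $r\ne s$ in $(1,\infty]$ with $L(\mathbb F_r)\cong L(\mathbb F_s)$. I claim this forces $L(\mathbb F_p)\cong L(\mathbb F_q)$ for \emph{all} $p,q\in(1,\infty]$. I would treat the finite case first: assume $1<r<s<\infty$ and set $\lambda\equalperdefinition (r-1)/(s-1)\in(0,1)$. Applying (C) to both sides of $L(\mathbb F_r)\cong L(\mathbb F_s)$ with the same parameter $t$ yields
\begin{align*}
L\bigl(\mathbb F_{1+(r-1)/t^2}\bigr)\cong L\bigl(\mathbb F_{1+(s-1)/t^2}\bigr) \qquad\text{for every } t>0,
\end{align*}
which reparametrizes to the statement that $L(\mathbb F_a)\cong L(\mathbb F_{1+\lambda^{-1}(a-1)})$ for all $a>1$. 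Iterating, $L(\mathbb F_a)\cong L(\mathbb F_{1+\lambda^n(a-1)})$ for every $n\in\integers$. Because $\lambda\ne 1$, the orbit $\{1+\lambda^n(a-1):n\in\integers\}$ is dense in $(1,\infty)$, so a dense set of interpolated factors are mutually isomorphic. I would then use (F) to promote density to equality: for any target parameter $q>1$, pick a nearby $q'$ in the orbit of $a$, and choose $u>0$ so that $q'+u$ and $q+u$ also both lie in the orbit (this is possible by the density, applied to shifted orbits). Then (F) gives
\begin{align*}
L(\mathbb F_{q'+u})\cong L(\mathbb F_{q'})\ast L(\mathbb F_u)\cong L(\mathbb F_q)\ast L(\mathbb F_u)\cong L(\mathbb F_{q+u}),
\end{align*}
and invoking (F) together with an isomorphism from the orbit at the summed parameters yields $L(\mathbb F_q)\cong L(\mathbb F_a)$. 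Hence all $L(\mathbb F_p)$, $p\in(1,\infty)$, are mutually isomorphic; call this common factor $\mathcal L$. For the $r=\infty$ case I would use (F) with $s=\infty$ (via $L(\mathbb F_\infty)\cong L(\mathbb F_\infty)\ast L(\mathbb F_u)\cong L(\mathbb F_{u+\infty})$) together with Radulescu's result $\Fundgr(L(\mathbb F_\infty))=\realnumbers^+$ to identify $L(\mathbb F_\infty)$ with $\mathcal L$ in the collapsed case. Finally, (C) applied to $\mathcal L\cong L(\mathbb F_r)$ for any fixed $r$ gives $\mathcal L_t\cong L(\mathbb F_{1+(r-1)/t^2})\cong\mathcal L$ for every $t>0$, so $\Fundgr(\mathcal L)=\realnumbers^+$.

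\smallskip

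\noindent\textbf{Expected main obstacle.} The genuinely hard work is hidden in step~1: constructing the interpolated factors $L(\mathbb F_r)$ and verifying (C) and (F) in full generality. The combinatorial bookkeeping (free-dimension accounting through matrix amplifications, compressions by free projections, and amalgamated free products over finite-dimensional algebras) is delicate, and this is precisely the technical core of the Dykema–Radulescu papers. In the dichotomy argument proper, the subtlety I would need to be most careful about is the density-to-equality step: one needs to guarantee that the pair $(q',u)$ can be chosen so that both $q'$ and $q'+u$ (equivalently $q+u$) simultaneously lie in an orbit for which the isomorphism is already known, and this requires combining two dense orbits, which is where the flexibility of (F) becomes essential.
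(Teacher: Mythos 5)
First, note that the paper itself does not prove this theorem: it is stated with references to Dykema and Radulescu, preceded only by a remark sketching the role of the interpolated factors. So your proposal can only be judged on its own merits. Your overall architecture is the right one — the construction of $L(\mathbb F_r)$ for real $r$ together with the compression formula (C) and the free product formula (F) is exactly the technical input of the Dykema--Radulescu papers, and your "easy direction" (pairwise non-isomorphism forces $\Fundgr(L(\mathbb F_r))=\{1\}$ via (C)) is correct, as is the derivation that a single coincidence $L(\mathbb F_r)\cong L(\mathbb F_s)$ yields $L(\mathbb F_a)\cong L(\mathbb F_{1+\lambda^{-1}(a-1)})$ for \emph{all} $a>1$ with $\lambda=(r-1)/(s-1)$.

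However, the hard direction contains a genuine error. The set $\{1+\lambda^n(a-1):n\in\integers\}$ is a two-sided geometric progression; for $\lambda\in(0,1)$ it is a \emph{discrete} subset of $(1,\infty)$ accumulating only at $1$ and at $\infty$, and is certainly not dense. Consequently the entire "density-to-equality" step — choosing $q'$ near $q$ in the orbit and then finding $u$ with $q'+u$ and $q+u$ in the orbit — has no foundation. The standard repair interleaves (F) and (C) in the opposite order. From $L(\mathbb F_r)\cong L(\mathbb F_s)$ with $1<r<s<\infty$ and $\delta\equalperdefinition s-r$, formula (F) gives $L(\mathbb F_a)\cong L(\mathbb F_{a+\delta})$ for all $a\geq r$ (free-multiply both sides by $L(\mathbb F_{a-r})$), hence by iteration $L(\mathbb F_a)\cong L(\mathbb F_{a+n\delta})$ for all $n\in\naturalnumbers$. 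Now compress by $t$ using (C): this yields $L(\mathbb F_c)\cong L(\mathbb F_{c+n\delta/t^2})$ for all $c\geq 1+(r-1)/t^2$ and all $n\in\naturalnumbers$. Given arbitrary $1<p<q<\infty$, choose $n$ large and $t=\sqrt{n\delta/(q-p)}$; then $t\to\infty$ with $n$, so eventually $1+(r-1)/t^2<p$, and $q=p+n\delta/t^2$, whence $L(\mathbb F_p)\cong L(\mathbb F_q)$. The case involving $\infty$ is then handled as you indicate, via $L(\mathbb F_\infty)\cong L(\mathbb F_\infty)\ast L(\mathbb F_u)$ and Radulescu's $\Fundgr(L(\mathbb F_\infty))=\realnumbers^+$. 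With this replacement your argument becomes the standard proof of the dichotomy.
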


\newpage

\section{Some More Operator Algebraic Applications of Free Probability}
\begin{remark}
  The asymptotic freeness results for \GUE\  (in their almost-sure convergence versions) tell us that many random matrix sequences converge in distribution to free semicirculars, i.e.\ to generators of the free group factors. This gives us the hope that typical properties of matrices might survive somehow in the free group factors. This is indeed the case. And this idea resulted in some of the most spectacular operator-algebraic successes of free probability. For example, results as the following can be obtained this way.
\end{remark}

\begin{theorem}
  \label{theorem:more-applications}
  Consider $n\in \naturalnumbers$ with $n\geq 2$. Then, the free group factor $L(\mathbb F_n)$
  \begin{enumerate}
  \item\label{theorem:more-applications-1} does not have property $\Gamma$,
  \item\label{theorem:more-applications-2} does not have a Cartan subalgebra and
  \item\label{theorem:more-applications-3} is prime.
  \end{enumerate}
\end{theorem}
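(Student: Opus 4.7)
My plan is to prove all three assertions by a unified strategy based on Voiculescu's microstates free entropy $\chi$ and the associated free entropy dimension $\delta_0$. The philosophy is exactly the one advertised earlier in the excerpt: the generators of $L(\mathbb{F}_n)$ can be modeled asymptotically by $n$ free semicirculars (equivalently, by $n$ independent \GUEN\ random matrices, by Theorem~\ref{theorem:gue-family}), and quantifying how many matricial microstates approximate them produces an invariant that rules out structural properties incompatible with matrix models of maximal richness.

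First I would introduce, for self-adjoint $x_1,\ldots,x_n\in (\mathcal{M},\tau)$, the set $\Gamma_R(x_1,\ldots,x_n;N,k,\varepsilon)$ of $n$-tuples of self-adjoint $N\times N$ matrices of operator norm at most $R$ whose joint $\ast$-moments up to order $k$ match those of $(x_1,\ldots,x_n)$ up to $\varepsilon$, and then define the microstates free entropy $\chi$ and the free entropy dimension $\delta_0$ by the standard volume/packing formulas. The crucial input from this course is the genus expansion from Theorem~\ref{theorem:gue-family}: the \GUE\ model sits inside $\Gamma_R$ with overwhelming probability, so a concentration computation gives
\begin{equation*}
  \chi(s_1,\ldots,s_n) > -\infty, \qquad \delta_0(s_1,\ldots,s_n) = n,
\end{equation*}
for $n$ free standard semicirculars $s_1,\ldots,s_n$, which by Corollary~\ref{corollary:free-group-factors-isomorphisms} are generators of $L(\mathbb{F}_n)$. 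The first technical step is establishing these two facts rigorously; this is really the core of Voiculescu's random-matrix modeling theorem and is the main obstacle of the whole program.

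With $\chi$ and $\delta_0$ in hand, the three assertions fall out from structural upper bounds on these invariants. For \ref{theorem:more-applications-1}, if $L(\mathbb{F}_n)$ had property $\Gamma$, one can build, for any generating tuple, a non-trivial central sequence of unitaries, and an averaging trick of Voiculescu forces the matricial microstates to cluster in thin tubes, yielding $\delta_0 \leq 1$; this contradicts $\delta_0(s_1,\ldots,s_n)=n\geq 2$. For \ref{theorem:more-applications-2}, suppose $A\subset L(\mathbb{F}_n)$ were a Cartan subalgebra with normalizing unitaries generating the whole factor. A diffuse abelian subalgebra has $\delta_0(A)=1$, and Voiculescu's Cartan-obstruction estimate bounds $\delta_0$ of any tuple in $A$'s normalizer by $1$ as well; again this contradicts $\delta_0=n$. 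For \ref{theorem:more-applications-3}, if $L(\mathbb{F}_n)\cong \mathcal{M}_1\mathbin{\bar{\otimes}}\mathcal{M}_2$ with both factors diffuse $\mathrm{II}_1$, then Ge's tensor-product estimate shows that any finite tuple of generators has $\chi=-\infty$, contradicting $\chi(s_1,\ldots,s_n)>-\infty$.

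The hard part is not the three deductions, each of which is a fairly short calculation once the obstruction lemma (property $\Gamma$ / Cartan / tensor splitting, respectively) is available, but the construction of $\chi$ and the verification that $\chi$ and $\delta_0$ take their expected values on free semicirculars. That in turn requires: (a) the large-deviation lower bound for the \GUE\ via the explicit Gaussian density, (b) the asymptotic freeness of Theorem~\ref{theorem:gue-family} to ensure the \GUE\ samples actually land in $\Gamma_R$ with positive probability, and (c) sharp enough volume estimates on the unitary group, based on the Weingarten asymptotics recalled in Facts~\ref{facts:haar-unitary-random-matrices}, to translate invariance of microstates into upper bounds on $\delta_0$ for generators living in structured subalgebras. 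Given the scope of this course, I would present the three obstruction lemmas as black boxes after building up $\chi$ and $\delta_0$, and spend the bulk of the proof on (a)--(c), where the random-matrix and combinatorial tools developed in the preceding sections do the real work.
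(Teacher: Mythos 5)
Your proposal takes exactly the approach the paper points to: the text presents only a ``very rough sketch'' of part \ref{theorem:more-applications-1} and then explicitly remarks that making it rigorous (and handling \ref{theorem:more-applications-2} and \ref{theorem:more-applications-3}) ``relies on the concept of free entropy, which is a measure for how many matrix tuples $(A_1,\ldots,A_n)$ exist approximating $(x_1,\ldots,x_n)$ in distribution.'' That is precisely your $\chi$/$\delta_0$ framework, and the three obstruction lemmas you invoke (Voiculescu's $\Gamma$ bound, Voiculescu's Cartan bound, Ge's tensor-splitting bound) are the correct inputs, each combined with $\chi(s_1,\ldots,s_n)>-\infty$ and $\delta_0(s_1,\ldots,s_n)=n\geq 2$ for free semicirculars.

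The only substantive difference in flavor: the paper's sketch of \ref{theorem:more-applications-1} is a hands-on matrix computation rather than an appeal to the entropy numerics. Starting from a would-be non-trivial central projection $p_k$ ``away from $0$ and $1$'', it produces approximating projections $Q_k\in M_N(\complexnumbers)$ with $\trace(Q_k)\sim N\tau(p_k)$ and small commutators with the microstates $A_1,\ldots,A_n$, diagonalizes $Q_k$ by a unitary $U$, and observes that the estimate $\|[A_i,Q_k]\|_2<\omega_k$ forces \emph{all} the off-diagonal blocks $C_i$ of $U^*A_iU$ to be $\|\cdot\|_2$-small simultaneously for $i=1,\ldots,n$; since $n\geq 2$, this is incompatible with the generic behavior of independent \GUE\ tuples. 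This is the ``clustering in thin tubes'' intuition you describe, made visible without introducing the invariants $\chi$ and $\delta_0$. Your version is more uniform across the three claims and more faithful to the published proofs, at the cost of needing the full microstates machinery; the paper's version buys an intuitive picture of where the contradiction comes from for \ref{theorem:more-applications-1} while openly deferring the rest. Both are compatible, and you correctly identify that establishing $\chi(s_1,\ldots,s_n)>-\infty$ and $\delta_0(s_1,\ldots,s_n)=n$ for the free semicircular generators of $L(\mathbb F_n)$ is where the genuine technical work lies.
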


\begin{remark}
  \begin{enumerate}
  \item
    \begin{enumerate}
    \item Claim~\ref{theorem:more-applications-1} of Theorem~\ref{theorem:more-applications} is an old result of {Murray} and {von Neumann} and does not require free probability. But it illustrates the method and can be extended to other factors.
    \item Claim~\ref{theorem:more-applications-2} on the other hand is a spectacular result of {Voiculescu} from 1996, as it had been conjectured before  that every $\mathrm{II}_1$-factor has a Cartan subalgebra. (A \emph{Cartan} subalgebra of a von Neumann algebra $\mathcal M$ is a  maximal abelian subalgebra $\mathcal N$ whose normalizer $\{u\in \mathcal M\mid u\mathcal Nu^\ast=\mathcal N\}$ generates $\mathcal M$.)
    \item Lastly, Claim~\ref{theorem:more-applications-3} is a result by {Liming Ge} from 1998 saying that the free group factor $L(\mathbb F_n)$ cannot be written as $\mathcal M_1\otimes \mathcal M_2$ for $\mathrm{II}_1$-factors $\mathcal M_1$ and $\mathcal M_2$.
    \end{enumerate}
  \item Property $\Gamma$ was introduced by {Murray} and {von Neumann} in order to distinguish the hyperfinite and the free group factor(s). The hyperfinite factor $\mathcal R$ has $\Gamma$, the free group factor $L(\mathbb F_n)$ does not.
  \end{enumerate}
\end{remark}

\begin{definition}
  Let $\mathcal M$ be a $\mathrm{II}_1$-factor and $\tau:\mathcal M\to \complexnumbers$ its faithful normal trace.
  \begin{enumerate}
  \item   A bounded sequence $(t_k)_{k\in \naturalnumbers}$ in $\mathcal M$ is \emph{central} if
  \begin{align*}
    \lim_{k\to\infty}\|[x,t_k]\|_2=0 \quad \text{for all }x\in \mathcal M,
  \end{align*}
  where, for all $x,y\in \mathcal M$,
  \begin{align*}
    \|y\|_2\equalperdefinition \sqrt{\tau(y^\ast y)}
\qquad\text{and}\qquad
    [x,y]\equalperdefinition xy-yx.
  \end{align*}
\item A central sequence $(t_k)_{k\in \naturalnumbers}$ in $(\mathcal M,\tau)$ is \emph{trivial} if
  \begin{align*}
    \lim_{k\to\infty} \|t_k-\tau(t_k)\cdot 1\|_2=0.
  \end{align*}
  \item We say that $(\mathcal M,\tau)$ has \emph{property $\Gamma$} if there exists a non-trivial central sequence in  $(\mathcal M,\tau)$.
  \end{enumerate}  
\end{definition}

\begin{proof}[Proof of Part~\ref{theorem:more-applications-1} of Theorem~\ref{theorem:more-applications} (very rough sketch)]
Consider a von Neumann algebra $\mathcal{M}= \vonneumannalgebrageneratedby(x_1,\dots,x_n)$, which is generated by $n$ selfadjoint operators $x_1,\dots,x_n$. Assume that $\mathcal M$ has property $\Gamma$. So, there is a non-trivial central sequence $(t_k)_{k\in \naturalnumbers}$. By taking real or imaginary part, we can suppose that $t_k$ is self-adjoint for every $k\in \naturalnumbers$. By functional calculus we can deform them to projections which have to \enquote{stay away from $0$ and $1$}: There exists a sequence $(p_k)_{k\in \naturalnumbers}$ of projections in $\mathcal M$ such that there exists $\theta\in (0,\frac{1}{2})$ with $\theta<\tau(p_k)<1-\theta$ for all $k\in \naturalnumbers$ and with
    $\lim_{k\to\infty}\|[x,p_k]\|_2=0$ for all $x\in \mathcal M$;
  in particular, we have this for $x=x_i$ for all $i\in [n]$. Now, consider $N\times N$-matrices
$A_1,\dots,A_n$,
  which approximate $x_1,\ldots, x_n$ in distribution. For every $k\in \naturalnumbers$, our projection $p_{k}\in \mathcal M$ can be approximated by polynomials in $x_1,\ldots, x_n$. Hence, applying these polynomials to the matrices $A_1,\ldots, A_n$ and deforming the results again to projections yields a counterpart of  $p_k$ in $M_{N\times N}(\complexnumbers)$. This means that there exist projections $Q_k\in M_{N\times N}(\complexnumbers)$ such that
    $\trace(Q_k)\sim N\cdot \tau (p_k)$ (for $N\to\infty$)
  and such that
    $\|[ A_i,Q_k]\|_2<\omega_k$ for all $i\in [n]$,
for some sequence $(\omega_k)_{k\in \naturalnumbers}$ with $\lim_{k\to\infty}\omega_k=0$. The matrix $Q_k$ can now be diagonalized as
  \begin{align*}
    Q_k=U
    \begin{bmatrix}
      1 & 0\\
      0 &0
    \end{bmatrix}
    U^\ast
  \end{align*}
  for a unitary matrix $U\in M_{N\times N}(\complexnumbers)$.
We now rotate also all the $A_i$ into this basis,
  \begin{align*}
    U^\ast A_iU=
    \begin{bmatrix}
      B_{i}& C_{i}^\ast\\
      C_{i} & D_{i}
    \end{bmatrix}.
  \end{align*}
  Then $\omega_k>\|[ A_i,Q_k]\|_2$
  means that
  \begin{align*}
\omega_k>
    \left\|    \left[
\begin{bmatrix}
      B_{i}& C_{i}^\ast\\
      C_{i} & D_{i}
    \end{bmatrix},
                  \begin{bmatrix}
                    1 &0\\
                    0& 0
                  \end{bmatrix}
                       \right]\right\|_2
    =\left\|
                       \begin{bmatrix}
                        0 & -C_{i}^\ast\\
                         C_{i} &0
                       \end{bmatrix}
                                         \right\|_2
    =\sqrt{\frac{2}{N}\trace(C_{i}C_{i}^\ast)}.
  \end{align*}
  This means that the off-diagonal entries of \emph{all} the $A_1,\ldots, A_n$ (note that $n\geq 2$) in this basis must be small. But this is not what happens for typical independent \GUE s.
\end{proof}

\begin{remark}
  The technical details to make the preceding proof rigorous are quite heavy and rely on the concept of \enquote{free entropy}, which is a measure for how many matrix tuples $(A_1,\ldots,A_n)$ exist approximating $(x_1,\ldots,x_n)$ in distribution. Many of the open questions around the free group factors are related to technical questions about free entropy. This is still a very active area of research.
\end{remark}

\begin{remark}
  Another big result relying on the idea of approximating operators by matrices is the work of Haagerup on hyperinvariant subspaces. The famous \emph{invariant subspace problem} asks whether for every Hilbert space $\mathcal H$ and every operator $a\in \mathcal B(\mathcal H)$ there exists a closed subspace $\mathcal H_0$ of $\mathcal H$ with $\mathcal H\neq \mathcal H_0\neq \{0\}$ and $a\mathcal H_0\subseteq \mathcal H_0$. With respect to the decomposition $\mathcal H=\mathcal H_0\oplus \mathcal H_0^\perp$ the operator $a$ can then be written as
  \begin{align*}
    a=
    \begin{bmatrix}
      a_{11} &a_{12}\\
      0 &a_{22}
    \end{bmatrix}
  \end{align*}
  for operators $a_{11}\in \mathcal B(\mathcal H_0)$, $a_{12}\in \mathcal B(\mathcal H_0^\perp,\mathcal H_0)$ and $a_{22}\in \mathcal B(\mathcal H_0^\perp)$. If $a\in \mathcal M\subseteq \mathcal B(\mathcal H)$ for a $\mathrm{II}_1$-factor $\mathcal M$, the corresponding \emph{hyperinvariant subspace problem} asks additionally that the projection on $\mathcal H_0$ belong to $\vonneumannalgebrageneratedby(a)\subseteq \mathcal M$.
  \par
  For matrices and normal operators such  subspaces always exist. For Banach space versions there are counter examples (by {Enflo} and by {Read}). The big problem is for non-normal operators on Hilbert spaces (or in $\mathrm{II}_1$-factors). The idea of Haagerup goes as follows: Assume that there exists a sequence  $(A_N)_{N\in \naturalnumbers}$ of deterministic matrices such that
   $(A_N,A_N^*)\convergesindistributiontoo (a,a^*)$.
  For every $N\in \naturalnumbers$, the matrix $A_N$ has an invariant subspace and corresponding projections. However, in general, the sequence of those projections need not converge at all. But now, improve the situation by going over from $A_N$ to
    $\tilde A_N\equalperdefinition A_N+\varepsilon_NX_N,$
where $(\varepsilon_N)_{N\in \naturalnumbers}$ is a sequence in $\complexnumbers$ with $\lim_{N\to\infty}\varepsilon_N=0$ and where $(X_N)_{N\in \naturalnumbers}$ is a sequence of non-selfadjoint \GUE. Then, it still holds that
    $(\tilde A_N,\tilde A_N^*)\convergesindistributiontoo (a,a^*)$ {almost surely}.
So we have now many matrices converging in $*$-distribution to $a$. Their generic behavior is much better than that of the original $A_N$ and one can control the asymptotic spectral properties of $\tilde A_N$ for $N\to\infty$. This is, again, laden with tough technical details, but it led, in the end, to the following.
\end{remark}

\begin{theorem}[{Haagerup} and {Schultz} 2009]
  If $a$ is an operator in a $\mathrm{II}_1$-factor $\mathcal M$, then, for every Borel set $B\subseteq \complexnumbers$, there exists a unique closed $a$-invariant subspace affiliated with $\mathcal M$, 
  \begin{align*}
    a=
    \begin{bmatrix}
      a_{11} & a_{12}\\
      0 & a_{22}
    \end{bmatrix},
  \end{align*}
  such that the Brown measures (a generalization of the distribution $\mu_a$ for non-normal $a$) of $a_{11}$ and $a_{22}$ are concentrated on $B$ and $\complexnumbers\backslash B$, respectively. In particular, if the Brown measure of $a$ is not a Dirac measures, then $a$ has a non-trivial hyperinvariant closed subspace.
\end{theorem}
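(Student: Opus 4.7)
\medskip

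\noindent\textbf{Proof plan.}

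The plan is to follow the Haagerup--Schultz strategy sketched in the remark preceding the theorem, organized around the Brown measure and a regularization by a small circular perturbation. First I would recall the construction of the Brown measure $\mu_a$ of an operator $a\in\mathcal M$ via the Fuglede--Kadison determinant $\Delta(a-\lambda)=\exp\tau(\log\lvert a-\lambda\rvert)$: the function $\lambda\mapsto\log\Delta(a-\lambda)$ is subharmonic on $\complexnumbers$, and $\mu_a\equalperdefinition\frac{1}{2\pi}\nabla^2\log\Delta(a-\cdot)$ (in the distributional sense) is the unique probability measure on $\complexnumbers$ with $\int\log\lvert\lambda-z\rvert\,d\mu_a(z)=\log\Delta(a-\lambda)$. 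When $a$ is normal this reduces to the spectral distribution $\mu_a$ used earlier, and when $a\in M_N(\complexnumbers)$ it is the empirical eigenvalue distribution.

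Next I would set up the matrix regularization. Realize $a$ (enlarging $\mathcal M$ if needed by a free product so $a$ is $*$-free from an independent circular element $c$) and choose matrix sequences $(A_N)_{N\in\naturalnumbers}$ with $(A_N,A_N^\ast)\convergesindistributiontoo(a,a^\ast)$ and independent non-self-adjoint Gaussian matrices $X_N$ with $X_N\convergesindistributiontoo c$. Form the perturbed sequence $\tilde A_N\equalperdefinition A_N+\varepsilon_N X_N$ with $\varepsilon_N\downarrow 0$ sufficiently slowly. The key analytic input, which is the main technical work, is that along such a sequence the empirical eigenvalue distributions of $\tilde A_N$ converge almost surely, and in fact their Riesz spectral projections behave well: for each Borel set $B\subseteq\complexnumbers$ one can smoothly approximate $\mathbbm 1_B$, apply holomorphic functional calculus to $\tilde A_N$ (after a small additional smoothing to avoid the boundary $\partial B$ where eigenvalues accumulate), and obtain a sequence of projections $P_N^{(B)}\in M_N(\complexnumbers)$ commuting with $\tilde A_N$ so that $\trace(P_N^{(B)})/N\to\mu_a(B)$.

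Having that, I would extract the invariant subspace as an SOT/$\lVert\cdot\rVert_2$-limit. Passing to a subsequence, the projections $P_N^{(B)}$ converge in $\lVert\cdot\rVert_2$ to a projection $p\equalperdefinition p(B)\in\mathcal M$ satisfying $\tau(p)=\mu_a(B)$; moreover $\lVert(1-P_N^{(B)})\tilde A_N P_N^{(B)}\rVert_2\to 0$, which passes to the limit as $(1-p)ap=0$, i.e.\ $p\mathcal H$ is $a$-invariant. Writing
\begin{align*}
a=\begin{bmatrix} a_{11} & a_{12}\\ 0 & a_{22}\end{bmatrix}
\end{align*}
with respect to $\mathcal H=p\mathcal H\oplus (1-p)\mathcal H$, one identifies $a_{11}$ (resp.\ $a_{22}$) as the limit in $*$-distribution of the compressed blocks of $\tilde A_N$ by $P_N^{(B)}$ (resp.\ $1-P_N^{(B)}$); since the eigenvalues of those compressions lie near $B$ (resp.\ $\complexnumbers\setminus B$), taking a limit gives $\support(\mu_{a_{11}})\subseteq\overline B$ and $\support(\mu_{a_{22}})\subseteq\overline{\complexnumbers\setminus B}$. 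An approximation argument over a shrinking $\varepsilon$-neighbourhood of $\partial B$ upgrades this to concentration on $B$ and $\complexnumbers\setminus B$ respectively. Uniqueness and affiliation with $\mathcal M$ follow by showing $p(B)$ is characterized intrinsically: $p(B)$ is the largest projection $q\in\vonneumannalgebrageneratedby(a)''$ such that $qap=ap$ and the Brown measure of the compression $qaq|_{q\mathcal H}$ is carried by $B$, and this characterization forces any other candidate to coincide with $p(B)$. The last assertion is immediate: if $\mu_a$ is not a Dirac mass, pick $B$ a closed disc containing part but not all of $\support(\mu_a)$, giving $0<\tau(p(B))<1$ and hence a non-trivial hyperinvariant closed subspace.

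The hard part is the analytic control of the regularization $\tilde A_N$. One must prove that the empirical measure of eigenvalues of $\tilde A_N$ actually converges to the Brown measure $\mu_a$ (this is Haagerup--Schultz's hermitization identity combined with bounds ruling out outlier eigenvalues created by the perturbation) and that the spectral projections $P_N^{(B)}$ form a Cauchy sequence in the $\lVert\cdot\rVert_2$-metric of $M_N\otimes L(\mathbb F_\infty)$. Both require delicate estimates on the smallest singular value of $\tilde A_N-\lambda$, which is precisely what the circular perturbation is engineered to control — this is the step I expect to consume the bulk of the argument.
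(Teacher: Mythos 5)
The paper does not prove this theorem; it only cites Haagerup--Schultz (2009) and sketches the guiding idea in the preceding remark, so there is nothing to compare your argument against except that sketch. Your plan reproduces that sketch faithfully and correctly locates the analytic heart of the matter (the hermitization identity, lower bounds on the smallest singular value of $\tilde A_N-\lambda$, and the $\lVert\cdot\rVert_2$-convergence of the spectral projections). But as a proof it has genuine gaps beyond the deferred estimates. First, your construction extracts $p(B)$ as a $\lVert\cdot\rVert_2$-limit of projections built from $\tilde A_N=A_N+\varepsilon_N X_N$; any such limit a priori lives in the von Neumann algebra generated by $a$ \emph{and} the circular element $c$, not in $\vonneumannalgebrageneratedby(a)$. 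The theorem asserts the subspace is affiliated with $\mathcal M$ and, in the final sentence, \emph{hyperinvariant}, which requires the projection to lie in $\vonneumannalgebrageneratedby(a)'\cap\vonneumannalgebrageneratedby(a)''$ --- this is precisely why Haagerup and Schultz define the subspaces intrinsically (via growth conditions of the form $\limsup_n\lVert(a-\lambda)^n\xi_n\rVert^{1/n}\leq r$ on approximating sequences $\xi_n\to\xi$) and use the random matrix model only to verify their properties, rather than to construct them. Your intrinsic ``maximality'' characterization at the end is offered as an afterthought, but proving that it singles out a unique projection with $\tau(p(B))=\mu_a(B)$ is itself one of the main theorems of their paper, not a formality.

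Second, the passage from ``supports contained in $\overline B$ and $\overline{\complexnumbers\setminus B}$'' to ``concentrated on $B$ and $\complexnumbers\setminus B$'' for arbitrary Borel $B$ is not a routine $\varepsilon$-neighbourhood argument: it requires establishing that the map $B\mapsto p(B)$ is a lattice homomorphism compatible with countable intersections and joins, starting from the case of closed discs. And the claim that $\lVert(1-P_N^{(B)})\tilde A_NP_N^{(B)}\rVert_2\to 0$ passes to $(1-p)ap=0$ needs the joint convergence of $(\tilde A_N,P_N^{(B)})$ in $\ast$-distribution together with uniform operator-norm bounds, which you have not arranged. None of this means the strategy is wrong --- it is the right strategy, and it is the one the paper gestures at --- but what you have written is a research programme, essentially coextensive with the content of the Haagerup--Schultz paper, rather than a proof.
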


\newpage

\renewcommand{\theta}{\vartheta}
\renewcommand{\phi}{\varphi}
\renewcommand{\epsilon}{\varepsilon}
\newcommand{\qqed}{\qed\medskip}
\newcommand{\N}{\mathbb N}
\newcommand{\Z}{\mathbb Z}
\newcommand{\Q}{\mathbb Q}
\newcommand{\R}{\mathbb R}
\newcommand{\C}{\mathbb C}
\newcommand{\F}{\mathcal F}
\newcommand{\K}{\mathbb K}
\newcommand{\A}{\mathcal A}
\newcommand{\B}{\mathcal B}
\newcommand{\M}{\mathfrak M}
\newcommand{\D}{\mathcal D}
\newcommand{\norm}[1]{\lVert {#1}\rVert}
\newcommand{\surj}{\twoheadrightarrow}
\newcommand{\inj}{\hookrightarrow}
\newcommand{\limn}[1]{\lim_{#1 \to\infty}}
\newcommand{\tn}[1]{\textnormal{#1}}
\newcommand{\platz}{\vspace{0.5cm}}
\newcommand{\Pkte}[1]{#1 points}
\newcommand{\pn}{\Pkte{10}}
\newcommand{\bw}{\vfill
\begin{flushright}
\textbf{\emph{bitte wenden}}
\end{flushright}
\pagebreak}
\newcommand{\veczwei}[2]{\left(\begin{matrix}#1\\#2\end{matrix}\right)}
\newcommand{\vecdrei}[3]{\left(\begin{matrix}#1\\#2\\ #3 \end{matrix}\right)}
\newcommand{\cA}{\mathcal{A}}
\newcommand{\cB}{\mathcal{B}}
\newcommand{\cP}{\mathcal{P}}
\newcommand{\ff}{\varphi}

\newcommand{\CC}{{\mathbb C}}
\newcommand{\NN}{\mathbb N}
\newcommand{\HH}{\mathcal H}
\newcommand{\DD}{{\mathbb D}}

\newcommand{\kk}{\kappa}

\makeatletter
\def\moverlay{\mathpalette\mov@rlay}
\def\mov@rlay#1#2{\leavevmode\vtop{%
\baselineskip\z@skip \lineskiplimit-\maxdimen
\ialign{\hfil$#1##$\hfil\cr#2\crcr}}}
\makeatother

\def\plangle{\moverlay{(\cr<}}
\def\prangle{\moverlay{)\cr>}}

\renewcommand{\Re}{\operatorname{Re}}
\renewcommand{\Im}{\operatorname{Im}}
\newcommand{\Tr}{\operatorname{Tr}}

\setenumerate{label=(\roman*),itemsep=3pt,topsep=3pt}

\section{Exercises}

\subsection*{Assignment 1}\label{assignment-1}
\begin{auf}
We consider, for a group $G$, the non-commutative probability space $(\CC G,\tau_G)$ from Example~\hyperref[{example:group-algebra}]{1.2}.
\begin{enumerate}
\item
Show that $\tau_G$ is tracial.
\item
We define on $\CC G$ the anti-linear mapping $^*$ by
$$(\sum_g \alpha_g g)^*:=\sum_g \bar \alpha_g g^{-1}.$$
Show that $(\CC G,\tau_G)$ becomes a $*$-probability space with respect to this structure, i.e. that $\tau_G$ is positive.
\item
Show that $\tau_G$ is faithful.
\end{enumerate}
\end{auf}

\begin{auf}
\begin{enumerate}
\item
Consider random variables $a_1,a_2,b_1,b_2$ such that $\{a_1,a_2\}$ and $\{b_1,b_2\}$ are free (which means that $a_1,a_2,b_1,b_2\in\cA$ for some non-commutative probability space $(\cA,\ff)$ such that the unital algebra generated by $a_1$ and $a_2$ is free from the unital algebra generated by $b_1$ and $b_2$). Show, from the definition of freeness, that we have then
$$\ff(a_1b_1a_2b_2)=\ff(a_1a_2)\ff(b_1)\ff(b_2)+\ff(a_1)\ff(a_2)\ff(b_1b_2) -\ff(a_1)\ff(b_1)\ff(a_2)\ff(b_2).$$
\item
Try to find a formula for
$\ff(a_1b_1a_2b_2a_3)$,
if $\{a_1,a_2,a_3\}$ and $\{b_1,b_2\}$ are free. Think about how much time it would take you to calculate a formula for $\ff(a_1b_1a_2b_2a_3b_3)$, if $\{a_1,a_2,a_3\}$ and $\{b_1,b_2,b_3\}$ are free.
\end{enumerate}
\end{auf}

\begin{auf}
\begin{enumerate}
\item
Prove that functions of freely independent random variables are freely independent; more precisely: if $a$ and $b$ are freely independent
and $f$ and $g$ are polynomials, then $f(a)$ and $g(b)$ are freely independent,
too.
\item
Assume that $(\cA,\ff)$ is a $*$-probability space and $\ff$ is faithful. Show the following: If two unital $*$-subalgebras $\cA_1,\cA_2\subseteq\cA$ are freely independent, then
$$\cA_1\cap \cA_2 =\CC 1.$$
\end{enumerate}
\end{auf}

\newpage

\subsection*{Assignment 2}\label{assignment-2}

\setcounter{auf}{0}

\begin{auf}
In this exercise we prove that free independence
behaves well under successive decompositions and thus is associative.
Consider a non-commutative probability space $(\cA,\ff)$.
Let $(\cA_i)_{i\in I}$ be unital subalgebras of $\cA$ and, for each $i\in I$, $(\cB_j^i)_{j\in J(i)}$
unital subalgebras of $\cA_i$. Denote the restriction of $\ff$ to $\cA_i$ by $\ff_i$. Note that then $(\cA_i,\ff_i)$ is, for each $i\in I$, a non-commutative probability space on its own.
Then we have:
\begin{enumerate}
\item
If $(\cA_i)_{i\in I}$ are freely independent in $(\cA,\ff)$ and, for each $i\in I$,
$(\cB_j^i)_{j\in J(i)}$ are freely independent in $(\cA_i,\ff_i)$, then all $(\cB_j^i)_{i\in I; j\in J(i)}$ are freely
independent in $(\cA,\ff)$.
\item
If all $(\cB_j^i)_{i\in I; j\in J(i)}$ are freely independent in $(\cA,\ff)$ and if, for each
$i\in I$, $\cA_i$ is as algebra generated by all $\cB_j^i$ for $j\in J(i)$, then $(\cA_i)_{i\in I}$
are freely independent in $(\cA,\ff)$.
\end{enumerate}
Prove one of those two statements!
\end{auf}

\begin{auf}
Let $(\cA,\ff)$ be a $*$-probability space. Consider a unital subalgebra $\cB\subseteq \cA$ and a Haar unitary $u\in\cA$, such that $\{u,u^*\}$ and $\cB$ are free. Show that then also $\cB$ and $u^*\cB u$ are free, where
$$u^*\cB u:=\{ u^*bu\mid b\in\cB\}.$$
\textit{Remark: A Haar unitary is a unitary $u\in\cA$, i.e., $u^\ast u=1=uu^\ast$ that satisfies $\varphi(u^k)=\delta_{0,k}$ for any $k\in\mathbb{Z}$.}
\end{auf}

\begin{auf}
Let $(\cA,\ff)$ be a non-commutative probability space, and let $a_i\in\cA$ ($i\in I$) be free. Consider a product $a_{i(1)}\dots a_{i(k)}$ with $i(j)\in I$ for $j=1,\dots,k$. Put $\pi:=\ker (i(1),\dots,i(k))\in\cP(k)$. Show the following.
\begin{enumerate}
\item
We can write $\ff(a_{i(1)}\dots a_{i(k)})$ as a polynomial in the moments of the $a_i$ where each summand contains at least $\#\pi$ many factors.
\item
If $\pi$ is crossing then $\ff(a_{i(1)}\dots a_{i(k)})$ can be written as a polynomial in moments of the $a_i$ where each summand contains at least $\#\pi+1$ many factors.
\end{enumerate}
\end{auf}

\newpage

\subsection*{Assignment 3}\label{assignment-3}
\setcounter{auf}{0}

\begin{auf}
Let $f(z)=\sum_{m=0}^\infty C_mz^m$ be the generating function (considered as formal power series) for the numbers $\{C_m\}_{m\in\NN_0}$, where the $C_m$ are defined by $C_0=1$ and by the recursion
$$C_m=\sum_{k=1}^m C_{k-1}C_{m-k},\qquad (m\geq 1).$$
\begin{enumerate}
\item
Show that
$$1+zf(z)^2=f(z).$$
\item
Show that $f$ is also the power series for $$\frac{1-\sqrt{1-4z}}{2z}.$$
\item
Show that
$$C_m=\frac 1{m+1}\binom{2m}m.$$
\end{enumerate}
\end{auf}

\begin{auf}
Show that the moments of the semicircular distribution are given by the Catalan numbers; i.e., for $m\in\NN_0$
$$\frac 1{2\pi}\int_{-2}^{+2} t^{2m} \sqrt{4-t^2}dt=\frac 1{m+1}\binom {2m}m.$$
\end{auf}

\begin{auf}
Let $\HH$ be an infinite-dimensional complex Hilbert space with orthonormal basis $(e_n)_{n=0}^\infty$. On $B(\HH)$ we define the state
$$\ff(a)=\langle e_0,ae_0\rangle.$$
We consider now the creation operator $l\in B(\HH)$ which is defined by linear and continuous extension of
$$l e_n=e_{n+1}.$$
\begin{enumerate}
\item
Show that its adjoint (``annihilation operator'') is given by extension of
$$l^* e_n=\begin{cases}
e_{n-1},& n\geq 1,\\
0,& n=0.
\end{cases}
$$
\item
Show that the operator $x=l+l^*$ is in the $*$-probability space $(B(\HH),\ff)$ a standard semicircular element.
\item
Is $\ff$ faithful on $B(\HH)$? What about $\ff$ restricted to the unital algebra generated by $x$?
\end{enumerate}
\end{auf}

\begin{auf}
Let $(\cA_n,\ff_n)$ ($n\in\NN$) and $(\cA,\ff)$ be non-commutative probability spaces. Let $(b_n^{(1)})_{n\in\NN}$ and $(b_n^{(2)})_{n\in\NN}$ be two sequences of random variables $b_n^{(1)},b_n^{(2)}\in\cA_n$ and let $b^{(1)},b^{(2)}\in\cA$. We say that
$(b_n^{(1)},b_n^{(2)})$ converges in distribution to $(b^{(1)},b^{(2)})$, if we have the convergence of all joint moments, i.e., if
$$\lim_{n\to\infty}\ff_n[b_n^{(i_1)}b_n^{(i_2)}\dots b_n^{(i_k)}]=
\ff( b^{(i_1)}b^{(i_2)}\dots b^{(i_k)})$$
for all $k\in\NN$ and all $i_1,\dots,i_k\in\{1,2\}$.

Consider now such a situation where $(b_n^{(1)},b_n^{(2)})$ converges in distribution to $(b^{(1)},b^{(2)})$. Assume in addition that, for each $n\in\NN$,
$b_n^{(1)}$ and $b_n^{(2)}$ are free in $(\cA_n,\ff_n)$. Show that then freeness goes over to the limit, i.e., that also $b^{(1)}$ and $b^{(2)}$ are free in $(\cA,\ff)$.

\end{auf}

\newpage

\subsection*{Assignment 4}\label{assignment-4}

\setcounter{auf}{0}

\begin{auf}
Let $(\cA,\ff)$ be a non-commutative probability space and let the unital subalgebras $\cA_i$ ($i\in I$) be freely independent.
Assume also that $\cA$ is generated as an algebra by the $\cA_i$. Show the following: If, for each $i\in I$, $\ff\vert_{\cA_i}$ is a homomorphism on $\cA_i$ then $\ff$ is a homomorphism on $\cA$.

\emph{Remark: The map $\ff:\cA\to\CC$ is a homomorphism if it is linear and multiplicative in the sense $\ff(ab)=\ff(a)\ff(b)$ for all $a,b\in\cA$.}

\end{auf}

\begin{auf}
Prove that $\#NC(k)=C_k$ by showing that $\#NC(k)$ satisfies the recursion relation for the Catalan numbers.
\end{auf}

\begin{auf}
Define, for $n\in\NN_0$,  the Bell numbers by $B_n:=\#\cP(n)$, where $B_0=1$.
\begin{enumerate}
\item
Show that the Bell numbers satisfy the recursion:
$$B_{n+1}=\sum_{k=0}^n \binom nk B_k.$$
\item
Show that the Bell numbers are also the moments of a Poisson distribution of parameter $\lambda=1$, i.e., that
$$B_n=\frac 1e \sum_{p=0}^\infty p^n \frac 1{p!}.$$
\end{enumerate}
\end{auf}

\newpage

\subsection*{Assignment 5}\label{assignment-5}

\setcounter{auf}{0}

\begin{auf}
Let $f,g\colon\N\to\C$ be functions and define the Dirichlet convolution by
\begin{align*}
f*g(n)=\sum_{d\mid n} f(d)g(\frac{n}{d}).
\end{align*}
We call $f\colon\N\to\C$ multiplicative if $f(mn)=f(m)f(n)$ for all $n,m$ with $\gcd(n,m)=1$ and define functions $\chi\colon\N\to\C$ by $\chi(n)=1$ for all $n\in\N$ and $\delta:\N\to\C$ by
$$\delta(n)=\begin{cases}
1, & n=1,\\
0, & n\not =1.
\end{cases}$$
Furthermore, we define
the (number theoretic) M\"obius function $\mu\colon\N\to\C$ by
\begin{itemize}
\item
$\mu(1)=1$
\item
$\mu(n)=0$ if $n$ is has a squared prime factor.
\item
$\mu(n)=(-1)^k$ if $n=p_1\dots p_k$ and all primes $p_i$ are different.
\end{itemize}
Note that $\chi$, $\delta$, and $\mu$ are multiplicative.
Show the following.

\begin{enumerate}
\item
Let $f$ and $g$ be multiplicative. Show that $f*g$ is multiplicative.
\item

Show that $\mu$ solves the inversion problem
\begin{align*}
g=f*\chi \iff f=g*\mu.
\end{align*}
[Hint: Show that $\mu*\chi=\delta$.]
\item
Show that Euler's phi function $\phi$ satisfies $\phi=f*\mu$, where $f$ is the identity function, i.e., $f(n)=n$ for all $n$. \\
\textit{Remark: The only information you need about Euler's phi function is that $$\sum_{d\mid n} \phi(d)=n$$}
\end{enumerate}
\end{auf}

\begin{auf}
Let $P=B_n$ be the poset of subsets of $[n]=\{1,\dots,n\}$, where $T\leq S$ means that $T\subseteq S$.
\begin{enumerate}
\item Show that the M\"obius function of this poset is given by
\begin{align*}
\mu(T,S)=(-1)^{\#S-\#T}\qquad\qquad (T\subseteq S\subseteq [n]).
\end{align*}
\item
Conclude from M\"obius inversion on this poset $B_n$ the following inclusion-exclusion principle: Let $X$ be a finite set and $X_1,\dots,X_n\subseteq X$. Then we have
\begin{align*}
\#(X_1\cup\cdots\cup X_n)=\sum_{k=1}^n (-1)^{k-1} \sum_{1\leq i_1<\cdots<i_k\leq n} \# (X_{i_1}\cap \cdots \cap X_{i_k}).
\end{align*}
[Hint: Consider the functions
\begin{align*}
f(I)=\# \bigcap_{i\in I}X_i\quad
\text{and} \quad
g(I)=\#\{ x\in X\mid x\in X_i \ \forall i\in I; x\not\in X_j \ \forall j\not\in I\}.
\end{align*}
You might also assume that $X=X_1\cup\cdots \cup X_n$.]
\end{enumerate}
\end{auf}

\begin{auf}
In this exercise we want to introduce another example of a $\ast$-probability space containing a semicircular random variable, namely the \emph{full Fock space}. Let ($V$,$\langle\cdot,\cdot\rangle$) be an inner product space over $\C$. Then we define the \emph{full Fock space} by
\begin{align*}
\mathcal{F}(V)=\C\Omega\oplus\bigoplus_{n=1}^\infty V^{\otimes n},
\end{align*}
where $\Omega$ is a unit vector, called \emph{vacuum vector}. We define on 
$\mathcal{F}(V)$
an inner product by extension of
\begin{align*}
\langle v_1\otimes\dots\otimes v_n,w_1\otimes\dots\otimes w_m \rangle_{\mathcal{F}(V)}=\delta_{nm} \langle v_1,w_1\rangle\dots\langle v_n ,w_n\rangle.
\end{align*}
(Note that, for $n=0$, this says that $\Omega$ is orthogonal to $V^{\otimes m}$ for all $m\geq 1$.)\\
Defining the vacuum expectation
\begin{align*}
\phi\colon\mathcal{A}:={B}(\mathcal{F}((V))\to\mathbb{C}, \quad a\mapsto\langle \Omega ,a\Omega\rangle_{\mathcal{F}(V)}
\end{align*}
makes $(\mathcal{A},\phi)$ into a $\ast$-probability space.
Let $v\in V$ be a non-zero vector, then we define a linear operator on $\mathcal{F}(V)$ by
\begin{align*}
l(v)\Omega = v,\quad l(v)v_1\otimes\dots\otimes v_n=v\otimes v_1\otimes\dots\otimes v_n
\end{align*}
and its adjoint by
\begin{align*}
l^\ast(v)\Omega = 0, \quad l^\ast(v)v_1\otimes\dots\otimes v_n=\langle v, v_1\rangle v_2\otimes\dots\otimes v_n.
\end{align*}
\begin{enumerate}
\item
Show that $x(v):=l(v)+l^\ast(v)$ is a standard semicircular element in $(\mathcal{A},\varphi)$ if $v$ is a unit vector.
\item
Let $v_1\perp v_2$ be orthogonal unit vectors in $V$. Show that $l(v_1)$ and $l(v_2)$ are $\ast$-free, i.e., that $\textrm{alg}(1,l(v_1),l^\ast(v_1))$ and $\textrm{alg}(1,l(v_2),l^\ast(v_2))$ are free. This means in particular that $x(u_1)$ and $x(u_2)$ are free semicirculars.
\end{enumerate}
\end{auf}

\begin{auf}
Let $(\mathcal{A},\phi)$ be a $\ast$-probability space.
\begin{enumerate}
\item
Assume that $\phi$ is a trace. Show that then $\kappa_n$ is invariant under cyclic permutations, i.e.
\begin{align*}
\kappa_n(a_1,a_2,\dots,a_n)=\kappa(a_2,\dots a_n,a_1)
\end{align*}
for all $a_1,\dots ,a_n\in\mathcal{A}$.
\item
Let $\phi$ be invariant under all permutations, i.e. $\phi(a_1\dots a_n)=\phi(a_{\sigma(1)}\dots a_{\sigma(n)})$ for all $n\in\N$ and all $\sigma\in S_n$. Are the free cumulants $\kappa_n$ invariant under all permutations?
\end{enumerate}
\end{auf}

\newpage

\subsection*{Assignment 6}\label{assignment-6}

\setcounter{auf}{0}

\begin{auf}
Let $b$ be a symmetric Bernoulli random variable; i.e., $b^2=1$ and its distribution (corresponding to the probability measure $\frac 12 (\delta_{-1}+\delta_{1})$) is given by the moments
$$\ff(b^n)=\begin{cases}
1,& \text{$n$ even,}\\
0,& \text{$n$ odd}.
\end{cases}$$
Show that the free cumulants of $b$ are of the form
$$\kappa_n(b,b,\dots,b)=
\begin{cases}
(-1)^{k-1}C_{k-1}, &\text{if $n=2k$ even,}\\
0,& \text{if $n$ odd}.
\end{cases}$$
\end{auf}

\begin{auf}
Prove Theorem~\hyperref[theorem:freeness-vanishing-mixed-cumulants-random-variables]{3.24}; i.e., show that for random variables $a_i$ ($i\in I$) in some non-commutative probability space $(\cA,\ff)$ the following statements are equivalent.

(i) The random variables $a_i$ ($i\in I$) are freely independent.

(ii) All mixed cumulants in the random variables vanish, i.e., for any $n\geq 2$ and all $i(1),\dots, i(n)\in I$ such that at least two of the indices $i(1),\dots,i(n)$ are different we have:
$$\kappa_n(a_{i(1)},a_{i(2)},\dots,a_{i(n)})=0.$$
\end{auf}

\begin{auf}
Let $x$ and $y$ be free random variables. Assume that $x$ is even, i.e., that all its odd moments vanish: $\ff(x^{2k+1})=0$ for all $k\in\NN_0$. Show that then also $y$ and $xyx$ are free.
\end{auf}

\begin{auf}
In this exercise we want to adress the question of the existence of non-commutative random variables for a prescribed distribution. Read Lecture 6 (Pages 91-102) of the book \href{https://rolandspeicher.files.wordpress.com/2019/02/nica-speicher.pdf}{``A. Nica, R. Speicher: Lectures on the Combinatorics of Free Probability''} and do its Exercise 6.16:
\\
Let $\mu$ be a probability measure with compact support on $\R$. Show that one can find a $\ast$-probability space $(\mathcal{A},\phi)$ where
$\phi$ is a faithful trace and a sequence $(x_n)_{n\geq 0}$ of freely
independent selfadjoint random variables in $\mathcal{A}$ such that each of the $x_i$'s has distribution $\mu$.
\end{auf}

\newpage

\subsection*{Assignment 7}\label{assignment-7}

\setcounter{auf}{0}

\begin{auf}
1) Show that the Catalan numbers are exponentially bounded by
$$C_n\leq 4^n\qquad \text{for all $n\in\NN$.}$$

2) Show that we have for the M\"obius function on $NC$ that
$$\vert\mu(0_n,\pi)\vert \leq 4^n\qquad\text{for all $n\in\NN$ and all $\pi\in NC(n)$.}$$
[Hint: Use the multiplicativity of the M\"obius function and the known value of the M\"obius function $\mu(0_n,1_n)=(-1)^{n-1}C_{n-1}$.]
\end{auf}

\begin{auf}
The complementation map $K:NC(n) \to NC(n)$ is defined as follows: We consider additional numbers $\bar 1,\bar 2,\dots,\bar n$
and interlace them with $1,\dots, n$ in the following alternating way:
$$1\, \bar 1 \,2\, \bar 2\, 3\, \bar 3\, \dots n\,\bar n.$$
Let $\pi\in NC(n)$ be a non-crossing partition of $\{1,\dots,n\}$. Then its \emph{Kreweras
complement} $K(\pi)\in NC(\bar 1,\bar 2,\dots,\bar n)\cong NC(n)$ is defined to be the biggest element
among those $\sigma\in NC(\bar 1,\bar 2,\dots,\bar n)$ which have the property
 that
$\pi\cup\sigma\in NC(1,\bar 1,2,\bar 2,\dots,n,\bar n)$.
\begin{enumerate}
\item
Show that $K: NC(n)\to NC(n)$ is a bijection and a lattice anti-isomorphism, i.e., that $\pi\leq \sigma$ implies $K(\pi)\geq K(\sigma)$.
\item
As a consequence of (1) we have that
$$\mu(\pi,1_n)=\mu(K(1_n),K(\pi))\qquad\text{for $\pi\in NC(n)$.}$$
Show from this that we have
$$\vert \mu(\pi,1_n)\vert\leq 4^n\qquad\text{for all $n\in \NN$ and all $\pi\in NC(n)$.}$$
\item
Do we also have in general the estimate
$$\vert \mu(\sigma,\pi)\vert \leq 4^n\qquad\text{for all $n\in\NN$ and all
$\sigma,\pi\in NC(n)$ with $\sigma\leq \pi$?}$$

\item
Show that we have for all $\pi\in NC(n)$ that
$$\#\pi +\# K(\pi)=n+1.$$
\end{enumerate}
\end{auf}

\begin{auf}
In the full Fock space setting of Exercise 3 from Assignment~\hyperref[assignment-5]{5} consider the operators $l_1:=l(u_1)$ and $l_2:=l(u_2)$ for two orthogonal unit vectors; according to part (2) of that exercise $l_1$ and $l_2$ are $*$-free.
Now we consider the two operators
$$a_1:=l_1^*+\sum_{n=0}^\infty \alpha_{n+1}l_1^n\qquad
\text{and}\qquad
a_2:=l_2^*+\sum_{n=0}^\infty \beta_{n+1}l_2^n$$
for some sequences $(\alpha_n)_{n\in\NN}$ and $(\beta_n)_{n\in\NN}$ of complex numbers.
(In order to avoid questions what we mean with these infinite sums in general, let us assume that the $\alpha_n$ and the $\beta_n$ are such that the sum converges; for example, we could assume that $\vert \alpha_n\vert,\vert\beta_n\vert\leq r^n$ for some $0<r<1$.)
By the $*$-freeness of $l_1$ and $l_2$ we know that $a_1$ and $a_2$ are free.
Prove now that $a_1+a_2$ has the same moments as
$$a:=l_1^*+\sum_{n=0}^\infty (\alpha_{n+1}+\beta_{n+1})l_1^n.$$
\end{auf}

\begin{auf}
Let $l$ be the creation operator from Exercise 3, Assignment~\hyperref[assignment-3]{3} (or $l(v)$ from Exercise 3, Assignment~\hyperref[assignment-5]{5}) in the $*$-probability space $(B(\HH),\ff)$.
\begin{enumerate}
\item
Show that the only non-vanishing $*$-cumulants of $l$ (i.e., all cumulants where the arguments are any mixture of $l$ and $l^*$) are the second order cumulants, and that for those we have:
$$\kk_2(l,l)=\kk_2(l^*,l^*)=\kk_2(l,l^*)=0,\qquad \kk_2(l^*,l)=1.$$

\item
For a sequence $(\alpha_n)_{n\in\NN}$ of complex numbers we consider as in the previous exercise the operator
$$a:=l^*+\sum_{n=0}^\infty \alpha_{n+1}l^n.$$

Show that the free cumulants of $a$ are given by
$$\kk_n^a=\kk_n(a,a,\dots,a)=\alpha_n.$$

\item
Combine this with the previous exercise to show that the free cumulants are additive for free variables; i.e., that for the $a_1$ and $a_2$ from the previous exercise we have for all $n\in\NN$ that
$$\kk_n^{a_1+a_2}=\kk_n^{a_1}+\kk_n^{a_2}.$$
[This is of course a consequence of the vanishing of mixed cumulants; however, the present proof avoids the use of the general cumulants functionals and was Voiculescu's original approach to the additivity of the free cumulants.]

\end{enumerate}
\end{auf}

\newpage

\subsection*{Assignment 8}\label{assignment-8}

\setcounter{auf}{0}

\begin{auf}
A free Poisson distribution $\mu_\lambda$ with parameter $\lambda>0$ is defined in terms of cumulants by the fact that all free cumulants are equal to $\lambda$. Calculate from this the $R$-transform and the Cauchy transform of $\mu_\lambda$. Use the latter to get via the Stieltjes inversion formula an explicit form for $\mu_\lambda$.

[Note that for $\lambda<1$ the distribution has atomic parts.]
\end{auf}

\begin{auf}
Let $x_1$ and $x_2$ be free symmetric Bernoulli variables; i.e., $x_1$ and $x_2$ are free, $x_1^2=1=x_2^2$, and
$$\mu_{x_1}=\mu_{x_2}=\frac 12 (\delta_{-1}+\delta_{+1}).$$
\begin{enumerate}
\item
Show directly, by using the definition of freeness, that the moments of $x_1+x_2$ are given by
$$\ff((x_1+x_2)^n)=\begin{cases}
0,&\text{$n$ odd,}\\
\binom {2m}m,& \text{$n=2m$ even}.
\end{cases}
$$
\item
Show that the moments from (1) are the moments of the arcsine distribution, i.e., show that
$$\frac 1\pi\int_{-2}^2  t^{2m} \frac 1{\sqrt{4-t^2}}dt=\binom {2m}m.$$
\end{enumerate}
\end{auf}

\begin{auf}
\begin{enumerate}
\item
Let $\lambda>0$ and $\nu$ be a probability measure on $\R$ with compact support. Show that the limit in distribution for $N\to\infty$ of
$$[(1-\frac \lambda N)\delta_0 +\frac \lambda N\nu]^{\boxplus N}$$
has free cumulants
$(\kk_n)_{n\geq 1}$ which are given by
$$\kk_n=\lambda \cdot m_n(\nu),\qquad
\text{where}\qquad
m_n(\nu)=\int t^n d\nu(t)$$
is the $n$-th moment of $\nu$.

[A distribution with those cumulants is called a \emph{free compound Poisson distribution} (with rate $\lambda$ and jump distribution $\nu$).]
\item
Let $s$ and $a$ be two free selfadjoint random variables in some $*$- probability space $(\cA,\ff)$ such that $s$ is a standard semicircular element and $a$ has distribution $\mu_a=\nu$. Show the following.
The free cumulants of $sas$ are given by
$$\kk_n(sas,sas,\dots,sas)=\ff(a^n)\qquad \text{for all $n\geq 1$},$$
i.e., $sas$ is a compound Poisson element with rate $\lambda=1$ and jump distribution $\nu$.
\end{enumerate}

\end{auf}

\newpage

\subsection*{Assignment 9}\label{assignment-9}

\setcounter{auf}{0}

\begin{auf}
In the proof of Theorem~\hyperref[theorem:subordination-iteration]{5.11} we used the following consequence of the Schwarz Lemma (alternatively, one can address this also as the simple part of the Denjoy-Wolff Theorem). Suppose $f:\DD\to\DD$ is a non-constant holomorphic function on the unit disc $$\DD:=\{z\in\CC\mid \vert z\vert <1\}$$ and it is not an automorphism of $\DD$ (i.e., not of the form $\lambda (z-\alpha)/(1-\bar \alpha z)$ for some $\alpha\in \DD$ and $\lambda \in\CC$ with $\vert \lambda\vert =1$). If there is a $z_0\in\DD$ with $f(z_0)=z_0$, then for all $z\in\DD$,
$f^{\circ n}(z)\to z_0$. In particular, the fixed point is unique.

Prove this by an application of the Schwarz Lemma.
\end{auf}

\begin{auf}
Let $\mu$ be the standard semicircular distribution (i.e., $R_\mu(z)=z$) and $\nu$ be the free Poisson distribution of parameter 1 (i.e., $R_\nu(z)=1/(1-z)$).
Calculate (explicitly or numerically) the distribution $\mu\boxplus \nu$, by producing plots for its density, via
\begin{enumerate}
\item
determining its Cauchy transform from its $R$-transform:
$$R(z)=z+\frac 1{1-z}.$$
\item
determining its Cauchy transform $G$ from the subordination equation:
$$G(z)=G_\nu (z-G(z)).$$
\end{enumerate}

\end{auf}

\begin{auf}
A probability measure $\mu$ on $\R$ is called \emph{infinitely divisible (in the free sense)} if, for each $N\in\NN$, there exists a probability measure $\mu_N$ on $\R$ such that
$$\mu=\mu_N^{\boxplus N}.$$
(This is equivalent to requiring that the free convolution semigroup $\{\mu^{\boxplus t}\mid t\geq 1\}$ can be extended to all $t\geq 0$; the $\mu_N$ from above are then $\mu^{\boxplus 1/N}$.)
\begin{enumerate}
\item
Show that a free compound Poisson distribution (which was defined on Assignment~\hyperref[assignment-8]{8}, Exercise~3) is infinitely divisible.
\item
Show the the $R$-transform of a free compound Poisson distribution with rate $\lambda$ and jump distribution $\nu$ is given by
$$R(z)=\lambda \int \frac t{1-tz}d\nu(t),$$
and thus can be extended as an analytic function to all of $\C^-$.
\item
Show that a semicircular distribution is infinitely divisible.
\item

Show that a semicircular distribution can be approximated in distribution by free compound Poisson distributions.

[One has that any infinitely divisible distribution can be approximated by free compound Poisson distributions. Furthermore, infinitely divisible distributions are characterized by the fact that their $R$-transforms have an analytic extension to $\CC^-$.]

\end{enumerate}
\end{auf}

\subsection*{Assignment 10}\label{assignment-10}

\setcounter{auf}{0}

\begin{auf}
\begin{enumerate}
\item
Let $A_N$ be a \GUEN\  and $D_N$ a deterministic $N\times N$-matrix, such that $D_N$ converges to $d$ where
$$\mu_d=\frac 12(\delta_{-1}+\delta_{+1}).$$
We know that then the eigenvalue distribution of $A_N+D_N$ converges to
$\mu_s\boxplus \mu_d$. Check this by comparing the density of $\mu_s\boxplus \mu_d$ with histograms of matrices $A_N+D_N$ for large $N$.
\item
Let $U_N$ be a Haar unitary $N\times N$ random matrix. Let $A_N$ and $B_N$ be deterministic $N\times N$ matrices such that $A_N$ converges to $a$ and $B_N$ converges to $b$, where
$$\mu_a=\frac 12 (\delta_{-1}+\delta_{+1}),\qquad
\mu_b=\frac 14 (\delta_{-1}+2\delta_0+\delta_{+1}).
$$
We know that then the eigenvalue distribution of $U_NA_NU_N^*+B_N$ converges to $\mu_a\boxplus\mu_b$. Check this by comparing the density of
$\mu_a\boxplus\mu_b$ with eigenvalue histograms of matrices $U_NA_NU_N^*+B_N$ for large $N$.

\end{enumerate}

\end{auf}

\begin{auf}
Calculate the $*$-cumulants of a Haar unitary $u$. (For this compare also Exercise~1 in Assignment~\hyperref[assignment-6]{6}.)
\end{auf}

\begin{auf}

For permutations $\alpha,\beta\in S_m$ we define:
$$\vert \alpha \vert:=m-\# \alpha,\qquad
d(\alpha,\beta):=\vert \beta \alpha^{-1}\vert.$$
\begin{enumerate}
\item
Show that $\vert\alpha \vert$ is equal to the minimal non-negative integer $k$ such that $\alpha$ can be written as a product of $k$ transpositions.
\item
Show that $\vert \cdot\vert$ satisfies: $\vert \alpha \beta\vert \leq \vert\alpha\vert +\vert
\beta\vert$ for all $\alpha,\beta\in S_m$.
\item
Show that $d$ is a distance (or metric).
\end{enumerate}
\end{auf}

\begin{auf}
We put for $\gamma= (1,2,\dots,m-1,m)\in S_m$
$$S_{NC}(\gamma):=\{\alpha\in S_m\mid d(e,\alpha)+d(\alpha,\gamma)=m-1=d(e,\gamma)\};$$
i.e., elements in $S_{NC}(\gamma)$ are those permutations in $S_m$ which lie on a geodesic from the identity element $e$ to the long cycle $\gamma$. One can show that $S_{NC}(\gamma)$ is canonically isomorphic to $NC(m)$; hence one has an embedding from $NC(m)$ into the permutation group $S_m$.
\begin{enumerate}
\item
Identify this embedding for $m=1,2,3,4$.
\item
Check by a non-trivial example for $m=4$ that under this embedding a pair $\sigma,\pi$ in $NC(m)$ with $\sigma\leq \pi$ is mapped to a pair $\alpha,\beta$ in $S_m$ with $\vert \alpha^{-1}\beta\vert+\vert \alpha\vert +\vert \beta^{-1}\gamma\vert=m-1$.
\end{enumerate}
\end{auf}

\newpage


\end{document}